\newtheoremstyle{citedthm}%
{3pt}
{3pt}
{\itshape}
{}
{\bfseries}
{.}
{.5em}
{\thmname{#1} \thmnumber{#2}\thmnote{\normalfont#3}}
\newtheoremstyle{citeddefn}%
{3pt}
{3pt}
{}
{}
{\bfseries}
{.}
{.5em}
{\thmname{#1} \thmnumber{#2}\thmnote{\normalfont#3}}
\theoremstyle{citedthm}
\newtheorem{thm}{Theorem}
\numberwithin{thm}{section}
\newtheorem{prop}[thm]{Proposition}
\newtheorem{lemma}[thm]{Lemma}
\newtheorem{cor}[thm]{Corollary}
\newtheorem{metathm}[thm]{Metatheorem}
\theoremstyle{citeddefn}
\newtheorem{defn}{Definition}
\numberwithin{defn}{section}
\newtheorem{ex}{Example}
\numberwithin{ex}{section}
\numberwithin{nota}{section}
\newtheorem{remark}{Remark}
\newtheorem{remarks}[remark]{Remarks}
\numberwithin{remark}{section}
\newcommandx{\unsure}[2][1=]{\todo[linecolor=red,backgroundcolor=red!25,bordercolor=red,#1]{#2}}
\newcommandx{\change}[2][1=]{\todo[linecolor=blue,backgroundcolor=blue!25,bordercolor=blue,#1]{#2}}
\newcommandx{\info}[2][1=]{\todo[linecolor=OliveGreen,backgroundcolor=OliveGreen!25,bordercolor=OliveGreen,#1]{#2}}
\newcommandx{\improvement}[2][1=]{\todo[linecolor=Plum,backgroundcolor=Plum!25,bordercolor=Plum,#1]{#2}}
\newcommandx{\thiswillnotshow}[2][1=]{\todo[disable,#1]{#2}}
\newcommand{\yo}{\text{\usefont{U}{min}{m}{n}\symbol{'110}}}
\DeclareFontFamily{U}{min}{}
\DeclareFontShape{U}{min}{m}{n}{<-> dmjhira}{}
\newcommand{\ie}{i.e.\@\xspace}
\newcommand{\cfr}{cf.\@\xspace}
\newcommand{\st}{s.t.\@\xspace}
\newcommand{\ac}{`}
\renewcommand{\epsilon}{\varepsilon}
\renewcommand{\phi}{\varphi}
\renewcommand{\Im}{\operatorname{Im}}
\newcommand{\inv}{^{-1}}
\newcommand{\op}{^{\textup{op}}}
\newcommand{\Vop}{^{\textup{V}}}
\newcommand{\co}{^{\textup{co}}}
\newcommand{\can}{^{\textup{can}}}
\newcommand{\fp}{_{\textup{fp}}}
\newcommand{\cont}{_{\textup{cont}}}
\newcommand{\Gr}{^{\textup{Gr}}}
\newcommand{\Gir}{\operatorname{\textup{Gir}}}
\newcommand{\Jeqv}{^{J}}
\newcommand{\etale}{^{\textup{étale}}}
\newcommand{\JPeqv}{^{J_P}}
\newcommand{\Teqv}{^{T}}
\newcommand{\name}[1]{^\ulcorner#1^\urcorner}
\newcommand{\fib}{\mathchoice
	{{\textstyle\int}}
	{{\int}}
	{{\int}}
	{{\int}}
}
\newcommand{\Hom}{\operatorname{Hom}}
\newcommand{\Id}{\operatorname{Id}}
\renewcommand{\lim}{\operatorname{lim}}
\newcommand{\colim}{\operatorname{colim}}
\newcommand{\dom}{{\operatorname{dom}}}
\newcommand{\cod}{{\operatorname{cod}}}
\newcommand{\id}{\operatorname{id}}
\newcommand{\Ev}{\operatorname{Ev}}
\newcommand{\Spec}{\operatorname{Spec}}
\newcommand{\ran}{\mathrm{ran}}
\newcommand{\lan}{\mathrm{lan}}
\newcommand{\Lan}{\mathrm{Lan}}
\newcommand{\Ran}{\mathrm{Ran}}
\newcommand{\comma}[2]			
{\mbox{$(#1\!\downarrow\!#2)$}}
\newcommand{\Ocal}{\mathcal{O}}		
\newcommand{\For}{\operatorname{For}}	
\newcommand{\sheafify}{\mathrm{a}}	
\newcommand{\Giraud}{\mathfrak{G}}	
\newcommand{\powerset}{\mathscr{P}}	
\newcommand{\canst}{\mathcal{S}}	
\newcommand{\trunc}{t}
\newcommand{\CAT}{\mathbf{CAT}}
\newcommand{\Cat}{\mathbf{Cat}}
\newcommand{\Site}{\mathbf{Site}}
\newcommand{\Cosite}{\mathbf{Com}}
\newcommand{\Com}{\mathbf{Com}}
\newcommand{\Topos}{\mathbf{Topos}}
\newcommand{\EssTopos}{\mathbf{EssTopos}}
\newcommand{\LocTopos}{\mathbf{LocTopos}}
\newcommand{\Set}{\mathbf{Set}}
\newcommand{\Top}{\mathbf{Top}}
\newcommand{\Loc}{\mathbf{Loc}}
\newcommand{\Preord}{\mathbf{Preord}}
\newcommand{\Locale}{\mathbf{Locale}}
\newcommand{\Frame}{\mathbf{Frame}}
\newcommand{\onecat}{\mathbbm{1}}
\newcommand{\twocat}{\mathbbm{2}}
\newcommand{\Ring}{\mathbf{Ring}}
\newcommand{\Ind}{\mathbf{Ind}}
\newcommand{\Psh}{\mathbf{Psh}}
\newcommand{\Sh}{\mathbf{Sh}}
\newcommand{\St}{\mathbf{St}}
\newcommand{\Mor}{{\textup{Mor}}}
\newcommand{\Lax}{\mathbf{Lax}}
\newcommand{\Fib}{{\mathbf{Fib}}}
\newcommand{\Cl}{{\mathbf{c}}}
\newcommand{\Spl}{{\mathbf{s}}}
\newcommand{\Etale}{{\mathbf{Etale}}}
\newcommand{\Sub}{\textup{Sub}}
\newcommand{\acat}{\mathbb{A}}
\newcommand{\ccat}{\mathbb{C}}
\newcommand{\dcat}{\mathbb{D}}
\newcommand{\ecat}{\mathbb{E}}
\newcommand{\fcat}{\mathbb{F}}
\newcommand{\icat}{\mathbb{I}}
\newcommand{\xcat}{\mathbb{X}}
\newcommand{\abicat}{\mathcal{A}}
\newcommand{\bbicat}{\mathcal{B}}
\newcommand{\cbicat}{\mathcal{C}}
\newcommand{\dbicat}{\mathcal{D}}
\newcommand{\ebicat}{\mathcal{E}}
\newcommand{\fbicat}{\mathcal{F}}
\newcommand{\gbicat}{\mathcal{G}}
\newcommand{\hbicat}{\mathcal{H}}
\newcommand{\ibicat}{\mathcal{I}}
\newcommand{\jbicat}{\mathcal{J}}
\newcommand{\kbicat}{\mathcal{K}}
\newcommand{\mbicat}{\mathcal{M}}
\newcommand{\pbicat}{\mathcal{P}}
\newcommand{\qbicat}{\mathcal{Q}}
\newcommand{\rbicat}{\mathcal{R}}
\newcommand{\sbicat}{\mathcal{S}}
\newcommand{\ubicat}{\mathcal{U}}
\newcommand{\vbicat}{\mathcal{V}}
\newcommand{\xbicat}{\mathcal{X}}
\newcommand{\zbicat}{\mathcal{Z}}
\newcommand{\Ifrak}{\mathfrak{I}}
\newcommand{\Lfrak}{\mathfrak{L}}
\newcommand{\pfrak}{\mathfrak{p}}
\newcommand{\qfrak}{\mathfrak{q}}
\newcommand{\Etopos}{\mathscr{E}}
\newcommand{\Ftopos}{\mathscr{F}}
\newcommand{\imp}{\!\Rightarrow\!}
\newcommand{\mono}{\rightarrowtail}
\newcommand{\isorightarrow}{\xrightarrow{\sim}}
\newcommand{\Isorightarrow}{\xRightarrow{\sim}}
\newbox\xrat@below
\newbox\xrat@above
\newcommand{\xrightarrowtail}[2][]{%
	\setbox\xrat@below=\hbox{\ensuremath{\scriptstyle #1}}%
	\setbox\xrat@above=\hbox{\ensuremath{\scriptstyle #2}}%
	\pgfmathsetlengthmacro{\xrat@len}{max(\wd\xrat@below,\wd\xrat@above)+.6em}%
	\mathrel{\tikz [>->,baseline=-.75ex]
		\draw (0,0) -- node[below=-2pt] {\box\xrat@below}
		node[above=-2pt] {\box\xrat@above}
		(\xrat@len,0) ;}}
\tikzset{Rightarrow/.style={double equal sign distance,>={Implies},->},
	triple/.style={-,preaction={draw,Rightarrow}}}
\begin{document}

\title{Relative topos theory via stacks}

\author{Olivia Caramello \hspace{0.2cm} and \hspace{0.2cm} Riccardo Zanfa}

\date{July 9, 2021}

\maketitle

\tableofcontents

\chapter*{Conventions and notation}
\addcontentsline{toc}{chapter}{\protect\numberline{}Conventions and notation}

By `site' we will mean `small-generated site' (see \cite[Example 3.24]{shulman.exact}): the pair $(\cbicat,J)$ is a \emph{small-generated site}\index{site!small-generated} if $\cbicat$ is locally small and has a small $J$-dense subcategory. We will do so because most results about small sites still hold for small-generated sites; moreover, this wider approach encompasses into the theory of sites every Grothendieck topos $\Etopos$, seen as the small-generated site $(\Etopos,J\can_\Etopos)$, and every geometric morphism, whose inverse image is then a morphism of sites. This makes the language much more pliable. 

Given a site $(\cbicat,J)$, the topos of sheaves is denoted by $\Sh(\cbicat,J)$, and the sheafification-inclusion embedding is denoted by $\sheafify_J\dashv \iota_J:\Sh(\cbicat,J)\hookrightarrow[\cbicat\op,\Set]$\index{$\iota_J$}\index{$\sheafify_J$}. The symbol $\yo_\cbicat:\cbicat\hookrightarrow[\cbicat\op,\Set]$\index{$\yo_\cbicat$} denotes the Yoneda embedding, while $\ell_J:\cbicat\rightarrow \Sh(\cbicat,J)$\index{$\ell_J$} the composite $\sheafify_J\yo_\cbicat$. In some cases, especially in proofs, we will use the French school notations $\widehat{\cbicat}$\index{$\widehat{\cbicat}$} and $\widetilde{\cbicat}$\index{$\widetilde{\cbicat}$} as a shorthand for respectively $[\cbicat\op,\Set]$ and $\Sh(\cbicat,J)$.

Two adjoints forming a geometric morphism will as always be denoted by $F^*\dashv F_*$; we will denote by $F_!$ a further left adjoint of $F^*$ (when it exists $F$ is said to be \emph{essential}\index{geometric morphism!essential}). 

Natural transformations, and more in general 2-cells in 2-categories, will be denoted with a double arrows $\Rightarrow$. We set as convention that a natural transformation between geometric morphisms $\alpha:F\Rightarrow G$ is a natural transformation between their inverse images: that is, $\alpha: F^*\Rightarrow G^*$. 

In the context of a fixed model of set theory, we denote by $\CAT$ the $2$-category of locally small categories and by $\Cat$ the $2$-category of small categories.

$\Topos$ is the $2$-category of toposes and geometric morphisms, while $\EssTopos$ the $2$-category of toposes and \emph{essential} geometric morphisms. The category of functors from $\abicat$ to $\bbicat$ is indicated with $[\abicat,\bbicat]$; in particular, $[\cbicat\op,\Set]$ denotes the topos of presheaves over $\cbicat$. Generic categories are denoted with the cursive font ($\cbicat$, $\dbicat$...); in particular we will use the calligraphic font $\Etopos$, $\Ftopos$... for toposes. Pseudofunctors will usually be denoted by blackboard letters, such as $\dcat:\cbicat\op\rightarrow\Cat$. 

When speaking about an arrow $y:Y\rightarrow X$ of $\cbicat$ as an object of the slice category $\cbicat/X$, we will refer to it as $[y]$ in square brackets; arrows of $\cbicat/X$ on the other hand will not have a specific notation (so for instance we will write $z:[yz]\rightarrow [y]$).

We shall adopt the standard notation for opposite categories: the opposite of a $1$-category $\cbicat$ will be denoted with $\cbicat\op$, and the same notation will be used for the category obtained by reversing the $1$-cells of a $2$-category; on the other hand, if $\cbicat$ is a $2$-category, we shall denote by $\cbicat\co$\index{$(-)\co$} the $2$-category obtained by reversing its $2$-cells.

\chapter{Introduction}

In this work we develop \emph{relative topos theory}\index{relative topos theory}, that is the theory of toposes over an arbitrary base topos, by using the language of stacks.

Having an efficient formalism for doing topos theory over an arbitrary base topos is essential for several reasons. Relativity techniques for schemes have played a key role in Grothendieck's refoundation of algebraic geometry. We aim for an analogous formalism for toposes, allowing for even more general and powerful change-of-base techniques. In fact, a standard methodology dating back to the old days of topos theory consists in establishing results over a given topos by regarding it as the basic universe relative to which such results are formulated (for instance, by using the internal language); these results are then \emph{externalized} to yield formulations of them in terms of a different universe, most often the classical set-theoretic one. The non-trivial nature of the externalization process makes this an effective technique for proving results in an intuitive and synthetic way, leaving to the topos-theoretic \ac machine' the canonical task of translating such results in the context of alternative universes. Of course, the more efficient is the formalism for doing relative topos theory, the more effective this methodology will be for the desired applications. 

We should mention that an approach to relative topos theory by using the notions of internal category and internal site has been developed since the seventies by a number of category theorists, most notably Lawvere, Diaconescu and Johnstone (see, for instance, \cite{diaconescu.pullback} and \cite{elephant}). This approach is not satisfactory for us, as the notion of internal category is quite rigid and not suitable for expressing natural higher-categorical phenomena such as those arising in geometric situations, for instance when considering the canonical stack of a geometric morphism (which is \emph{not} an internal category). For these reasons, we have resorted to the original approach introduced by Giraud in \cite{giraud.classifying}, developing our theory starting from it. 

This document is a first draft of a much longer text developing the foundations of our theory of relative toposes via stacks; we shall progressively release new versions containing additions, revisions and updates. 

In particular, by using a suitable generalization of the stack semantics of \cite{Shulman2} and \cite{Shulman1}, we shall introduce appropriate relative versions of all the main notions of classical topos theory, such as the notions of flat functor, morphism and comorphism of sites, separating sets, denseness conditions, and prove relative generalizations of the key theorems concerning them, notably including Diaconescu's theorem and Giraud's theorem.     

We shall also investigate the logical counterpart of our geometric approach to relative toposes through relative sites, by introducing \emph{relative geometric logic}; this will be a higher-order parametric logic allowing one to quantify over \ac parameters' coming from the base topos, whilst maintaining the fundamental geometric features allowing for the existence of associated classifying toposes.

\section{Stacks for relative topos theory}

In the literature, stacks have often been called `the right notion of sheaf of categories', or `the right notion of $2$-sheaf'. 
It is superfluous to discuss the relevance of stacks to geometry: their application (which started in the early sixties in Grothendieck's \textit{entourage}) provided a way to effectively manipulate geometric objects such as schemes, and shed new light on several fundamental themes in algebraic geometry. From that point of view, a stack is a sheaf for which \textit{local equality} of data is substituted with \textit{local equality up to canonical isomorphism}: a simple but fundamental example of this is the gluing of schemes (or even topological spaces), that may be locally isomorphic but not locally equal. Standard references on stacks in algebraic geometry are \cite{LaumonBailly} and \cite{vistoli.stack}. In this text we shall be concerned with stacks of categories, but most of our constructions and techniques can be easily modified to obtain analogous results for stacks of groupoids. 

Stacks also provide a more flexible alternative to the notion of internal category. An internal category in a sheaf topos is really just a sheaf whose values happen to live not just in $\Set$, but in $\CAT$. Therefore, this notion is too rigid to encompass most phenomena that occur in nature: consider for instance Example \ref{ex:indexedcategories}(ii), which one would be eager to consider a sheaf of categories, if it were not for the fact that the composition of pullbacks is the pullback along the composite only up to isomorphism. Using the Yoneda lemma for fibrations, any $\cbicat$-indexed category $\dcat$ \textit{can} be strictified and treated as an internal category, but that appears to be an artificious rather than a simplifying strategy: instead of manipulating stacks in their right environment, strictification forces them into a context unnaturally rigid for them. For a treatment of the relationship between stacks and internal categories, we refer the reader to \cite{bunge.stacksandinternalcat} and \cite{BungePare}.

In the context of this work, the role of stacks will be \emph{two-fold}:

\begin{enumerate}[(1)]
	\item On the one hand, the notion of stack represents a higher-order categorical generalization of the notion of sheaf. Accordingly, categories of stacks on a site represent higher-categorical analogues of Grothendieck toposes, which are categories of sheaves on a site (up to equivalence). One can thus expect to be able to lift a number of notions and constructions pertaining to sheaves (resp. Grothendieck toposes) to stacks (resp. categories of stacks on a site). Examples of such notions, which we shall treat in this work, are the stackification construction (which generalizes sheafification), direct and inverse images of stacks, the theory of morphisms between categories of stacks and their relations with functors between the sites. The relationship between sheaves and stacks is indeed very natural; in fact, for any site, the category of sheaves on it embeds as a reflective subcategory of the category of stacks on it (see section \ref{sec:bigpicture}). The categories of stacks on a site inherit some of the pleasant categorical properties of Grothendieck toposes. Our main tool for studying them will be the fundamental adjunction of Chapter \ref{chap:fundadj} between indexed categories and relative geometric morphisms. Indeed, another class of categories which is naturally related to the category of stacks on a site is that of geometric morphisms to a given base topos. Like categories of stacks, these categories satisfy a number of good categorical properties. 
	
	\item On the other hand, as observed above, stacks on a site $({\cal C}, J)$ generalize internal categories in the topos $\Sh({\cal C}, J)$. Since (usual) categories can be endowed with Grothendieck topologies, so stacks on a site can also be endowed with suitable analogues of Grothendieck topologies. This leads to the notion of \emph{site relative to a base topos}, which is instrumental for developing the theory of \emph{relative toposes} (with respect to an arbitrary base topos). The development of this theory parallels that of the classical theory; indeed, by using a general stack semantics, we will be able to introduce, in a canonical, not \emph{ad hoc} way, natural generalizations to relative sites of the classical notions of morphism and comorphism of sites, flat functors, separating sets for a topos, denseness conditions etc.
\end{enumerate} 

As in classical topos theory the two perspectives are unified by the well-known result that every Grothendieck topos is equivalent to the category of sheaves on itself, regarded as a site with the canonical topology, so in relative topos theory we have an analogue of this result, obtained by replacing that canonical (external) site with a relative site whose underlying stack is the canonical stack on the topos.

\section{The big picture}\label{sec:bigpicture}

Our general framework for investigating Grothendieck toposes from a geometric, fibrational point of view is based on a network of 2-adjunctions, as follows:

\begin{equation*}
\begin{tikzcd} 
\Ind_\cbicat \ar[d, "s_{J}", xshift=1ex]  \arrow[r, "\Lambda", yshift=1ex] \ar[r, phantom, "\bot"{yshift=0ex}] &   \ar[l, "\Gamma", yshift=-1ex]  \Topos\slash \Sh({\cal C}, J)\co \\
\St({\cal C}, J) \ar[u, right hook->, xshift=-1ex] \ar[u, phantom, "\vdash"{xshift=0ex}] \ar[d, "E\circ \Lambda'", xshift=1ex]  \arrow[r, "\Lambda'", yshift=1ex] \ar[r, phantom, "\bot"{yshift=0ex}]     &  \EssTopos_{\Sh({\cal C}, J)} \ar[dl, "\quad\:\:\: L"]  \ar[l, "\Gamma'", yshift=-1ex]  \ar[u, right hook->, xshift=-1ex]             \\
\Sh({\cal C}, J) \ar[u, phantom, "\vdash"{xshift=0ex}] \ar[u, right hook->, xshift=-1ex] \ar[ur, "E\quad\:\:", xshift=5ex] \ar[ur, phantom, "\dashv"{sloped, rotate=-90, xshift=1ex, yshift=2ex}]  &  
\end{tikzcd}
\end{equation*}	
In this diagram $\Ind_\cbicat$ denotes the $2$-category of $\cal C$-indexed categories, $\St({\cal C}, J)$ the $2$-category of $J$-stacks on $\cal C$ (where $({\cal C}, J)$ is a small-generated site), $s_{J}$ the stackification functor (see Theorem \ref{thm:stackification_esiste}), $\Topos$ the $2$-category of Grothendieck toposes and geometric morphisms and $\EssTopos_{\Sh({\cal C}, J)}$\index{$\EssTopos_{\Sh({\cal C}, J)}$} the full subcategory of $\Topos\slash \Sh({\cal C}, J)\co$ on the essential geometric morphisms. 

The functor $L$ sends an object $P$ of $\Sh({\cal C}, J)$ to the canonical local homeomorphism $\Sh({\cal C}, J)\slash P\to \Sh({\cal C}, J)$. 

The functor $E$ sends an essential geometric morphism $f:\Etopos\to \Sh({\cal C}, J)$ to the object $f_{!}(1_{\Etopos})$ (where $f_{!}$ is the left adjoint to the inverse image $f^{\ast}$ of $f$), and it acts on morphisms as follows. Given a geometric morphism $f:[p:{\cal F} \to \Sh({\cal C}, J)] \to [q:{\cal E} \to \Sh({\cal C}, J)]$, we have $f^{\ast}\circ q^{\ast}\cong p^{\ast}$; we define the arrow $p_{!}(1_{\cal F})\to q_{!}(1_{\cal E})$ in $\Sh({\cal C}, J)$ associated with $f$ as the transpose of the arrow $1_{\cal F}\to p^{\ast}(q_{!}(1_{\cal E}))\cong f^{\ast}(q^{\ast}(q_{!}(1_{\cal E})))$ obtained by applying $f^{\ast}$ to the unit $1_{\cal E}\to q^{\ast}(q_{!}(1_{\cal E})$ at $1_{\cal E}$ of the adjunction $(q_{!}\dashv q^{\ast})$. 

The functor $\Lambda$ sends a $\cal C$-indexed category $P$ to the geometric morphism $\Sh({\cal G}(P), J_{P})\to \Sh({\cal C}, J)$ induced by the fibration $\pi_{P}:{\cal G}(P)\to {\cal C}$ obtained by applying the Grothendieck construction to $P$, regarded as a comorphism of sites $({\cal G}(P), J_{P})\to ({\cal C}, J)$, where $J_{P}$ is the smallest Grothendieck topology making $\pi_{P}$ a comorphism of sites to $({\cal C}, J)$. In the converse direction, the functor $\Gamma$ associates with a geometric morphism $f:{\Etopos}\to \Sh({\cal C}, J)$ the indexed category sending any object $c$ of $\cal C$ to the category of geometric morphisms over $\Sh({\cal C}, J)$ from $\Sh({\cal C}, J)\slash \ell_{\cal C}(c)$ (where $\ell_{\cal C}$ is the canonical functor ${\cal C}\to \Sh({\cal C}, J)$) to ${\Etopos}$, regarded as a topos over $\Sh({\cal C}, J)$ via $f$. The functor $\Lambda$ takes values in the subcategory $\EssTopos_{\Sh({\cal C}, J)}\hookrightarrow \Topos\slash \Sh({\cal C}, J)\co$ (in fact, more specifically, by Corollary 4.58 \cite{denseness}, it takes values in the full subcategory on the locally connected geometric morphisms). The functor $\Lambda'$ is the restriction of $\Lambda$ to the full subcategories $\St({\cal C}, J)$ and $\EssTopos_{\Sh({\cal C}, J)}$. The functor $\Gamma$ takes values in the subcategory $\St({\cal C}, J)\hookrightarrow \textbf{Ind}_\cbicat$; we denote by $\Gamma'$ the restriction of $\Gamma$ to the subcategories $\EssTopos_{\Sh({\cal C}, J)}$ and $\St({\cal C}, J)$. 

We refer to the adjunction $(\Lambda\dashv \Gamma)$ as to the \emph{fundamental adjunction}. This adjunction is a pointfree generalization of the well-known adjunction between presheaves on a topological space $X$ and bundles to $X$. It allows to interpret many topos-theoretic constructions in a geometric way; for instance, the associated sheaf functor is obtained as the restriction to presheaves of the composition $\Gamma \circ\Lambda$ of the two functors involved in the fundamental adjunction. In particular, the values of this functor can be described in terms of locally defined collections of comorphisms of sites. This adjunction also allows to describe the operations of direct and inverse images of sheaves (and more generally stacks) in a geometric way.

The geometric morphisms in the essential image of the functor $L$ are called the \emph{étale morphisms} or \emph{local homeomorphisms} to $\Sh({\cal C}, J)$; indeed, such morphisms satisfy abstract analogues of the properties of étale maps (or local homeomorphisms) of topological spaces.

The embedding of $\EssTopos_{\Sh({\cal C}, J)}$ into $\Topos\slash \Sh({\cal C}, J)\co$ does \emph{not} admit a left adjoint. Indeed, if such an adjoint existed, its composite with $E$ would yield a left adjoint to the canonical functor $\Sh({\cal C}, J) \to \Topos\slash \Sh({\cal C}, J)\co$, and hence this latter functor would preserve arbitrary limits, which is easily seen not to be the case (this is actually due to the fact that inverse image functors of geometric morphisms do not preserve in general arbitrary limits).  

Of particular interest is the functor $E\circ \Lambda'$, which is left adjoint to the inclusion $\Sh({\cal C}, J)$ into $\St({\cal C}, J)$. As such, this functor can be thought of as a truncation functor which associates with a stack the \ac best sheaf' approximating it: we will study it in detail in Section \ref{sec:truncation_functor}.

The $2$-adjunction $(\Lambda \dashv \Gamma)$ also yields a $2$-adjunction 

	\[
	\begin{tikzcd}
		\phantom{\slash}\Cl\Fib^J_\cbicat \arrow[r, "\Lambda_{\EssTopos\co/\Sh(\cbicat,J)}", bend left, start anchor={north east}, end anchor={north west}] \ar[r,phantom, "\vdash"{rotate=90}]&   \EssTopos\co/\Sh(\cbicat,J)\phantom{{}^J_\cbicat} \arrow[l, "\Gamma_{\EssTopos\co/\Sh(\cbicat,J)}", bend left,  start anchor={south west}, end anchor={south east}]
	\end{tikzcd}
	\]
between $\Ind_\cbicat$ and the $2$-category $\EssTopos\co/\Sh(\cbicat,J)$, where the $2$-functor  $\Lambda_{\EssTopos\co/\Sh(\cbicat,J)}$ acts as $\Lambda$ and the $2$-functor $\Gamma_{\EssTopos\co/\Sh(\cbicat,J)}$ is the subfunctor of $\Gamma$ obtained by taking the essential geometric morphisms (that is, $$\Gamma_{\EssTopos\co/\Sh(\cbicat,J)}(\Etopos):=\EssTopos\co/\Sh(\cbicat,J)(\widetilde{\cbicat/-},\Etopos):\cbicat\op\rightarrow\CAT$$ for any $\Sh({\cal C}, J)$-topos $\Etopos$).

\chapter{Preliminaries}\label{chap:preliminaries}

As already mentioned in the introduction, our point of view on relative topos theory stems from \cite{giraud.classifying}, and it rests on the following two assumptions: that relative toposes over a Grothendieck topos $\Sh(\cbicat,J)$ are to be investigated by using $J$-stacks $\cbicat\op\rightarrow\Cat$, and in turn that stacks can be interpreted as continuous comorphisms over $(\cbicat,J)$. All the relevant tools and techniques, which we shall exploit in the following, are collected in this initial chapter. In order to make our work as self-contained as possible, along with some new results there will be many well-known propositions, of which we tried to give just the necessary details to help a less experienced reader. 

In the first sections, we will recall the notion of $\cbicat$-indexed categories and fibrations over $\cbicat$: in particular, we will recall the equivalence-stable notion of \emph{Street fibration}, and show how one can translate the classical arguments about Grothendieck fibrations to the equivalence-stable setting.  As fibrations over a base category $\cbicat$ generalize presheaves, \emph{stacks} over a site $(\cbicat,J)$ generalize sheaves: their definition, along with the generalization to Street fibrations of some classical results, is provided in Section \ref{sec:stack}. In particular, the \emph{canonical stack} for a site (Subsection \ref{sec:canonical_stack}) will prove to be one of the main protagonists of our later results. We will also observe that stacks and sheaves are connected by a truncation functor, related with factorization systems for geometric morphisms and comorphisms of sites (Section \ref{sec:truncation_functor}).

The interplay between the fibrational and the indexed standpoint will be the \textit{leitmotiv} of many of the following results, and in particular it will be at the core of the fundamental adjunction of Chapter \ref{chap:fundadj}:
\[
\begin{tikzcd}
	\phantom{\slash}\Ind_\cbicat \arrow[r,bend left,"\Lambda", ""{below, name=A}, start anchor={north east}, end anchor={north west}] &   \Topos/\Sh(\cbicat,J)\co \arrow[l, bend left, "\Gamma", ""{above, name=B}, start anchor={south west}, end anchor={south east}] \ar[from=A, to=B, "\top"{rotate=180}, phantom]
\end{tikzcd},
\]
where the left-hand side is the $\cbicat$-indexed standpoint while the right-hand side is the topos-theoretic (fibrational) standpoint. The passage $\Lambda$ from fibrations to toposes will be justified in light of the theory of \emph{continuous comorphisms} of sites, whose basic results are gathered in Sections \ref{section:comorphisms} and \ref{sec:giraudtpl}; on the other hand the right adjoint $\Gamma$ behaves as a hom-functor, meaning that its left adjoint $\Lambda$ is a colimit functor: we have thus collected some results about weighted colimits in Section \ref{sec:colimiti}. In particular, we have focused our attention on the problem of connecting various notions of weighted colimits (pseudocolimits, lax and oplax colimits) by using localizations of fibrations (Section \ref{sec:localization_fibrations}).

\section{Indexed categories and Street fibrations}
Let us start from recalling the basic notions of indexed category and Street fibration: the latter is an equivalence-stable generalization of the more notorious notion of Grothendieck fibration (see \cite{nlab:street_fibration} and its references). From now on, every time we will mention fibrations or fibred categories we will implicitly be talking about fibrations in the sense of Street, unless specified otherwise. It is known that in $\Cat$ Street fibrations are actually \textit{equivalent} to Grothendieck fibrations, therefore all results for Grothendieck fibrations that are equivalence-stable hold also in the Street context; nonetheless we will provide a sketch of proof for some of them. 

Since here and in the following we will be using various notions of slice category, let us begin by setting the notation once and for all:
\begin{defn}\label{def:slice}
	Consider a 1-category $\cbicat$ and an object $X$: by $\cbicat/X$ we shall denote the usual \emph{slice category}\index{category!slice}, such that
	\begin{itemize}
		\item objects are arrows $y:Y\rightarrow X$ of $\cbicat$, and
		\item arrows $z:[w:W\rightarrow X]\rightarrow [y:Y\rightarrow X]$ correspond to arrows $z:W\rightarrow Y$ of $\cbicat$ such that $yz=w$.
	\end{itemize}
	If $\abicat$ is a strict 2-category we can still perform its 1-categorical slice over an object $A$, which we will denote by $\abicat/_1A$\index{$\cbicat/_1X$}. The right 2-categorical notion for an arbitrary lax 2-category $\abicat$ is that of \emph{slice 2-category} $\abicat/A$\index{$\cbicat/X$}, thus defined:
	\begin{itemize}
		\item 0-cells are arrows $b:B\rightarrow A$;
		\item 1-cells from $[p:C\rightarrow A]$ to $[b:B\rightarrow A]$ correspond to pairs $(c,\gamma)$ where $c: C\rightarrow B$ and $\gamma:bc\Isorightarrow p$;
		\item 2-cells from $(c,\gamma)$ to $(d,\delta):[p]\rightarrow[b]$ are 2-cells $\omega:c\Rightarrow d$ such that $\phi=\gamma(b\circ \omega)$.
	\end{itemize}
	If we do not require that $\gamma$ in the definition of 1-cell is invertible, we obtain instead the notion of \emph{lax-slice-2-category} $\abicat\sslash A$\index{$\abicat\sslash A$} (see \cite{nlab:slice_2-category}).
\end{defn}

Categories indexed over a base category $\cbicat$ are nothing but presheaves taking values in $\CAT$ instead of $\Set$; however, it proves necessary to replace the rigid functoriality of presheaves with pseudofunctoriality as follows: 
\begin{defn}\label{def:indexed_category}
	Consider a category $\cbicat$: a \textit{pseudofunctor}\index{pseudofunctor} $\dcat:\cbicat\op\rightarrow \CAT$ is the datum of 
	\begin{itemize}
		\item a category $\dcat(X)$ for each object $X$ in $\cbicat$,
		\item a functor $\dcat(y):\dcat(X)\rightarrow\dcat(Y)$ for each arrow $y:Y\rightarrow X$ in $\cbicat$,
		\item a natural isomorphism $\phi^\dcat_X:1_{\dcat(X)}\Isorightarrow \dcat(1_X)$ for each $X$ in $\cbicat$, and
		\item a natural isomorphism $\phi^\dcat_{y,z}:\dcat(z)\dcat(y)\Isorightarrow \dcat(yz)$ for each composable pair $y,z$ of arrows in $\cbicat$,
	\end{itemize}
	
	satisfying the following compatibility conditions: for any $y:Y\rightarrow X$,
	$$\phi^\dcat_{1_X,y}(\dcat(y)\circ \phi^\dcat_X)=\phi^\dcat_{y,1_Y}(\phi^\dcat_{Y}\circ \dcat(y))=\id_{\dcat(y)}:\dcat(y)\Rightarrow \dcat(y)$$
	and for any $w:W\rightarrow Z$, $z:Z\rightarrow Y$, $y:Y\rightarrow X$,
	$$\phi^\dcat_{y,zw}(\phi^\dcat_{z,w}\circ\dcat(y))=\phi^\dcat_{yz,w}(\dcat(w)\circ \phi^\dcat_{y,z}):\dcat(w)\dcat(z)\dcat(y)\Rightarrow \dcat(yzw).$$
	In short, a $\cbicat$-indexed category is functorial up to canonical 2-isomorphisms; a \emph{strict $\cbicat$-indexed category} is a functor $\dcat:\cbicat\op\rightarrow\CAT$ in the usual sense.
	
	A \textit{pseudonatural transformation}\index{transformation!pseudonatural} $F:\dcat\Rightarrow \ecat$ consists of the following data: for every $X$ in $\cbicat$ of  a functor $F_X:\dcat(X)\rightarrow \ecat(X)$ and for every $y:Y\rightarrow X$ of $\cbicat$ a natural isomorphism $F_y:\ecat(y)F_X\Isorightarrow F_Y\dcat(y)$ 
	satisfying the following compatibility conditions: for every $X$ in $\cbicat$
	$$F_X\circ \phi^\dcat_X=F_{1_X}(\phi^\ecat_X\circ F_X):F_X\Rightarrow F_X\dcat(1_X)$$
	and for every $z:Z\rightarrow Y$, $y:Y\rightarrow X$, 
	$$F_{yz}(\phi^\ecat_{y,z}\circ F_X)=(F_Z\circ \phi^\dcat_{y,z})(F_z\circ \dcat(y))(\ecat(z)\circ F_y): \ecat(z)\ecat(y)F_X\Rightarrow F_Z\dcat(yz).$$
	If we do not ask the components $F_y$ to be invertible, we have what is called a \emph{lax natural transformation}; if moreover we ask that the $F_y$'s go in the opposite direction, we can suitably adapt the two axioms above to obtain the definition of \emph{oplax natural transformation}\index{transformation!lax, oplax}.
	
	A \emph{modification of pseudo-/oplax/lax natural transformations}\index{modification}, $\xi: F\Rrightarrow G$, consists for every $X$ in $\cbicat$ of a natural transformation $\xi_X:F_X\Rightarrow G_X$ such that for every $y:Y\rightarrow X$ the identity
	$$G_y (\ecat(y)\circ \xi_X)=(\xi_Y\circ \dcat(y))F_y: \ecat(y)F_X\Rightarrow G_Y\dcat(y)$$
	is satisfied.
	
	Pseudofunctors $\cbicat\op\rightarrow\CAT$, their pseudonatural transformations and modifications are respectively the 0-cells, 1-cells and 2-cells of a 2-category which we shall denote by $[\cbicat\op,\CAT]_{ps}$\index{$[\cbicat\op,\CAT]_{\bullet}$}. If instead of pseudonatural transformations we consider lax natural transformations as 1-cells we still have a 2-category, denoted by $[\cbicat\op,\CAT]_{lax}$; similarly, if the 1-cells are oplax natural transformations we shall use the notation $[\cbicat\op,\CAT]_{oplax}$.
	
	Pseudofunctors $\cbicat\op\rightarrow\CAT$ can also be thought as \emph{$\cbicat$-indexed categories}\index{indexed category}; their pseudonatural transformations are thus called \emph{$\cbicat$-indexed functors} and the modifications \emph{$\cbicat$-indexed natural   transformations}. This justifies the more compact notation $\Ind_\cbicat$\index{$\Ind_\cbicat$} for the 2-category $[\cbicat\op,\CAT]_{ps}$, which we will adopt whenever the focus is not really on the kind of transformation considered.
\end{defn}

If we consider a set $I$ and a $I$-indexed family of sets $\{X_i\ |\ i\in I \}$, there is an obvious way of gluing them all together: one considers their disjoint union $\coprod_{i\in I} X_i$. The original sets can now be retrieved as fibres of the canonical projection map $\coprod_i X_i \rightarrow I$. In a similar fashion, the whole information contained in a $\cbicat$-indexed category $\dcat$ can be glued together in one single category over $\cbicat$, exploiting the well-known \emph{Grothendieck construction}:
\begin{defn}\index{Grothendieck construction}
	Any $\cbicat$-indexed category $\dcat:\cbicat\op\rightarrow\CAT$ is canonically associated to a functor $p_\dcat:\gbicat(\dcat)\rightarrow\cbicat$, defined as follows: the objects of $\gbicat(\dcat)$ are pairs $(X,U)$ with $X$ in $\cbicat$ and $U$ in $\dcat(X)$, and arrows of $\gbicat(\dcat)$ are pairs $(y,a):(X,V)\rightarrow (X,U)$ where $y:Y\rightarrow X$ and $a:V\rightarrow \dcat(y)(U)$ in $\dcat(Y)$. The identity arrow of $(X,U)$ is the pair $(1_X, \phi^\dcat_X(U))$, while composition of arrows is defined by the equation $(y,a)\circ(z,b)=(yz, \phi_{y,z}(U) \dcat(z)(a) b)$. The functor $p_\dcat$ acts by forgetting the second component.
	
	From a $\cbicat$-indexed functor $F:\dcat\rightarrow\ecat$ we can obtain a functor $\gbicat(F):\gbicat(\dcat)\rightarrow\gbicat(\ecat)$ such that $p_\ecat \gbicat(F)=p_\dcat$. We set $\gbicat(F)(X,U):=(X,F_X(U))$, and for any arrow $(y,a):(Y,V)\rightarrow (X,U)$ in $\gbicat(\dcat)$ we set $\gbicat(F)(y,a)$ to be the arrow $(t, F_y(U)\inv F_Y(a)):(Y, F_Y(V))\rightarrow (X, F_Y(U))$. Moreover, a $\cbicat$-indexed natural transformation $\xi:F\Rightarrow G$ is sent to a natural trasformation $\gbicat(\xi):\gbicat(F)\Rightarrow \gbicat(G)$ satisfying the identity $p_\ecat\circ \gbicat(\xi)=\id_{p^\dcat}$. This means that $\gbicat$ provides a strict 2-functor $\gbicat:\Ind_\cbicat\rightarrow \CAT/\cbicat$\index{$\gbicat$} (which moreover factors through the sub 2-category of strictly commutative triangles over $\cbicat$).
	\[
	\begin{tikzcd}[column sep=1.2cm]
		\dcat\ar[r, bend left, "F", ""{below, name=A}] \ar[r, bend right, "G"', ""{above, name=B}] \ar[from=A, to=B, Rightarrow, "\xi"] &\ecat&\hspace{-1cm}\longmapsto\hspace{-1cm}&\gbicat(\dcat) \ar[dr, "p_\dcat"', bend right] \ar[r, bend left, "\gbicat(F)", ""{name=A, below}] \ar[r, bend right, "\gbicat(G)"', ""{name=B, above}] \ar[ Rightarrow, from=A, to=B, "\gbicat(\xi)"] & \gbicat(\ecat) \ar[d, "p_\ecat"]\\
		&&&& \cbicat
	\end{tikzcd}\]
\end{defn}	
Applying $(-)\op$ to the fibres and transition morphisms of a $\cbicat$-indexed category we end up with another $\cbicat$-indexed category. Since this process will be useful in the following, let us set the following notation:
\begin{defn}\label{def:Vop}\index{$(-)\Vop$}
	Consider a pseudofunctor $\dcat:\cbicat\op\rightarrow \CAT$: we denote by $\dcat\Vop:\cbicat\op\rightarrow \CAT$ the $\cbicat$-indexed category defined as follows:\begin{itemize}
		\item for every $X$ in $\cbicat$ we set $\dcat\Vop(X):=\dcat(X)\op$;
		\item for every $y:Y\rightarrow X$ we set
		\[\dcat\Vop(y):=\dcat(y)\op:\dcat(X)\op\rightarrow \dcat(Y)\op.\]	
	\end{itemize} 
\end{defn}
\begin{remark}
 The Grothendieck construction can be performed on a much wider class of functors, namely that of lax functors $\dcat:\cbicat\op\rightarrow\CAT$ and their oplax natural transformations, and one can again obtain a 2-functor $\Lax(\cbicat\op,\CAT)_{oplax}\rightarrow \CAT/\cbicat$: this is implied, though not explicitly stated, in \cite{2dimcategories}.
	For the later constructions though (in particular the equivalence in Corollary \ref{cor:equivalenza_strfib_indcat}), it is necessary to work with $\cbicat$-indexed categories.
\end{remark}
As it happened in our example with indexed families of sets, the $\cbicat$-indexed category $\dcat$ can be recovered from $p:\gbicat(\dcat)\rightarrow\cbicat$: indeed, once chosen $X$ in $\cbicat$ then the fibre of $p$ at $X$ is the collection of objects $(X,U)$ of $\gbicat(\dcat)$ and of morphisms of the form $(1,a)$ between them, which is isomorphic to the category $\dcat(X)$. This justifies calling $\dcat(X)$ the \emph{fibre of $\dcat$ over $X$}; we can similarly recover each functor $\dcat(y)$, which is called a \emph{transition morphism} between the fibres (the name \emph{pullback functor} can also be found in the literature, though it can be misleading and we will avoid it).

Let us see some examples of $\cbicat$-indexed categories:
\begin{ex}\label{ex:indexedcategories}\noindent 
	\begin{enumerate}[(i)]
		\item Any presheaf $P:\cbicat\op\rightarrow \Set$ can be seen as particular kind of $\cbicat$-indexed category which is strict and \emph{discrete}: that is, it is a strict functor and its fibres are all discrete categories. In this case one usually writes $\fib P$ for $\gbicat(P)$ and calls it the \emph{category of elements}\index{category!of elements}\index{$\fib(-)$} of $P$: objects are still pairs $(X,U)$ with $U\in P(X)$, and arrows are of the form $y:(Y,V)\rightarrow (X,U)$ where $y:Y\rightarrow X$ and $P(y)(U)=V$. The functor $p_P:\fib P\rightarrow \cbicat$ simply forgets the second component. A morphism of presheaves $f:P\Rightarrow Q$ is mapped to a functor $\fib f:\fib P\rightarrow\fib Q$ operating by sending $y:(Y, Py(U))\rightarrow (X,U)$ to $y:(Y, f_YPy(U)) \rightarrow (X, f_X(U))$. In particular, we remark that $\fib \yo(X)\simeq \cbicat/X$, and that for $y:Y\rightarrow X$, $\fib \yo(y)$ acts as the postcomposition functor $y\circ -:\cbicat/Y\rightarrow \cbicat/X$ (in the following we will use the shorthand $\fib y$ for $\fib \yo(y)$).
		
		\item For any geometric morphism $F:\Ftopos\to \Etopos$, we have an $\Etopos$-indexed category ${\mathbb I}_{F}$\index{${\mathbb I}_{F}$} sending a object $E$ of $\Etopos$ to the category ${\Ftopos}\slash F^{\ast}(E)$ and any arrow $g:E\to E'$ in $\Etopos$ to the pullback functor ${\Ftopos}\slash F^{\ast}(E')\to {\Ftopos}\slash F^{\ast}(E)$ along the arrow $F^{\ast}(g):F^{\ast}(E)\to F^{\ast}(E')$. Applying the Grothendieck construction to $\mathbb{I}_F$ we obtain that $\gbicat(\mathbb{I}_F)$ is the comma category $\comma{\Ftopos}{F^*}$, which is endowed with a canonical projection to $\Etopos$.
		
		\item If $\cbicat$ is a finitely complete category with a canonical choice of pullbacks, then it admits the canonical $\cbicat$-indexed category $\cbicat\op\rightarrow\CAT$ mapping every object $X$ to the slice category $\cbicat/X$, and every morphism $y:Y\rightarrow X$ to the pullback functor $y^*:\cbicat/X \rightarrow \cbicat/Y$. Applying the Grothendieck construction, one sees that the corresponding category over $\cbicat$ is the category $\Mor(\cbicat)$ of morphisms of $\cbicat$ and their commutative squares, with the codomain functor $\cod:\Mor(\cbicat)\rightarrow\cbicat$ as structural functor. 
		
		\item If we consider a Grothendieck topos $\Etopos$ and its identity geometric morphism $1_\Etopos$, the $\Etopos$-indexed category $\mathbb{I}_{1_\Etopos}$ of item (ii) coincides with the one in the previous item: this is what we will call the \emph{canonical stack over $\Etopos$} (see Subsection \ref{sec:canonical_stack}).
	\end{enumerate}
\end{ex}
It is now time to introduce the other side of the coin, namely fibrations over $\cbicat$: their axiomatization captures the basic features of functors of the kind $\gbicat(\dcat)\rightarrow \cbicat$.
\begin{defn}
	Consider a functor $p:\dbicat\rightarrow \cbicat$: an arrow $f:A\rightarrow B$ in $\dbicat$ is said to be \emph{cartesian}\index{cartesian arrow} if for every other $h:C\rightarrow B$ and any $g:p(C)\rightarrow p(A)$ such that $p(h)=p(f)g$ there is a unique $k:C\rightarrow A$ such that $p(k)=g$ and $h=fk$.
	\[
	\begin{tikzcd}
		C \ar[dr, dashed, "!k"] \ar[d, "h"']& \\
		B & A\ar[l, "f"]\end{tikzcd}\xmapsto{p}\hspace{1ex}\begin{tikzcd}
		p(C) \ar[d, "p(h)"'] \ar[dr, "g"]&\\
		p(B) & p(A) \ar[l, "p(f)"]
	\end{tikzcd}\]
\end{defn}

\begin{defn}
	Consider a functor $p:\dbicat\rightarrow \cbicat$: it is a \emph{(Street) fibration}\index{fibration!Street} if for every $A$ in $\dbicat$ and any $x:X\rightarrow p(A)$ in $\cbicat$ there are a cartesian arrow $f:B\rightarrow A$ and an isomorphism $\theta:X\isorightarrow p(B)$ such that $p(f)\theta=x$:
	\[
	\begin{tikzcd}
		p(B) \ar[dr, "p(f)"]&\\
		X \ar[u, "\theta"', "\sim"{ sloped}] \ar[r, "x"'] & p(A)	
	\end{tikzcd}\]
	The arrow $f$ is a \emph{cartesian lift} for $x$. The fibration $p$ is said to be \emph{cloven}\index{fibration!cloven}, or \emph{with cleavage}, if for every $x$ and $A$ there is a choice of a cartesian lift and an isomorphism as above: we will denote them respectively by $\hat{x}_A:\dom(\hat{x}_A)\rightarrow A$\index{$\widehat{x}_A$} and $\theta_{f,A}:X\rightarrow \dom(p(\hat{x}_A))$\index{$\theta_{f,A}$}.
	In particular, $p$ is a \emph{Grothendieck fibration}\index{fibration! Grothendieck} if every $x$ has a cartesian lift $f$ such that $p(f)=x$; a \emph{Grothendieck cleavage} is a cleavage where all the isomorphisms $\theta_{f,A}$ are identities.
\end{defn}	
\begin{prop}
	Consider a $\cbicat$-indexed category $\dcat:\cbicat\op\rightarrow\CAT$: then $p:\gbicat(\dcat)\rightarrow\cbicat$ is a cloven Grothendieck fibration.
\end{prop}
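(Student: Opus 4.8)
The plan is to exhibit, for each object $(X,U)$ of $\gbicat(\dcat)$ and each arrow $y:Y\rightarrow X$ in $\cbicat$, an explicit cartesian lift whose image under $p$ is exactly $y$; making these choices canonically will simultaneously show that $p$ is a Grothendieck fibration and equip it with a cleavage. The natural candidate is the arrow $\widehat{y}_{(X,U)}:=(y,\id_{\dcat(y)(U)}):(Y,\dcat(y)(U))\rightarrow(X,U)$. This is well-defined as a morphism of $\gbicat(\dcat)$, since the second component of an arrow $(Y,V)\rightarrow(X,U)$ must be a morphism $V\rightarrow\dcat(y)(U)$ of $\dcat(Y)$, and with $V=\dcat(y)(U)$ the identity $\id_{\dcat(y)(U)}$ is such a morphism. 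Because $p(\widehat{y}_{(X,U)})=y$ on the nose, it will suffice to check that this arrow is cartesian; the comparison isomorphism $\theta$ appearing in the fibration axiom can then be taken to be $\id_Y$, so the cleavage we obtain is in fact a Grothendieck cleavage.

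For the cartesianness, I would take an arbitrary test arrow $h=(h_1,h_2):(Z,W)\rightarrow(X,U)$ together with an arrow $g:Z\rightarrow Y$ of $\cbicat$ satisfying $p(h)=p(\widehat{y}_{(X,U)})\circ g$, that is $h_1=yg$. Any lift $k:(Z,W)\rightarrow(Y,\dcat(y)(U))$ with $p(k)=g$ must be of the form $k=(g,k_2)$ with $k_2:W\rightarrow\dcat(g)(\dcat(y)(U))$, so its first component is already pinned down. Substituting into the composition law $(y,a)\circ(z,b)=(yz,\,\phi^\dcat_{y,z}(U)\,\dcat(z)(a)\,b)$ and using that the functor $\dcat(g)$ preserves identities, the composite $\widehat{y}_{(X,U)}\circ k$ has first component $yg=h_1$ and second component $\phi^\dcat_{y,g}(U)\circ k_2$. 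Hence the equation $\widehat{y}_{(X,U)}\circ k=h$ reduces to $\phi^\dcat_{y,g}(U)\circ k_2=h_2$, which, since $\phi^\dcat_{y,g}(U)$ is invertible, has the unique solution $k_2=\phi^\dcat_{y,g}(U)\inv\circ h_2$. This gives existence and uniqueness of $k$, so $\widehat{y}_{(X,U)}$ is cartesian.

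Putting the two steps together, the assignment $(X,U),y\mapsto\widehat{y}_{(X,U)}$ is a cleavage whose members are Grothendieck-cartesian lifts with identity comparison isomorphisms, so $p$ is a cloven Grothendieck fibration. I expect the only real subtlety to be the bookkeeping with the coherence isomorphisms $\phi^\dcat$: one has to confirm that the identity second component of $\widehat{y}_{(X,U)}$ interacts with composition as claimed, and the single essential use of pseudofunctoriality is the invertibility of $\phi^\dcat_{y,g}(U)$, which is precisely what makes the factorisation $k_2$ both exist and be unique. In the strict case all these isomorphisms are identities and the argument collapses to the classical verification.
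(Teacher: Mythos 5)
Your proof is correct and follows essentially the same route as the paper: both take $(y,1_{\dcat(y)(U)})$ as the canonical lift and verify cartesianness by observing that the unique factorisation of a test arrow $(h_1,h_2)$ through it has second component $\phi^\dcat_{y,g}(U)\inv\circ h_2$, with invertibility of the coherence isomorphism doing all the work. Your write-up just spells out the uniqueness bookkeeping in slightly more detail.
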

\begin{proof}
	Consider an arrow $y:Y\rightarrow X$ of $\cbicat$ and an object $(X,U)$ in the fibre of $X$. Then the arrow $(y,1_{\dcat(y)(U)}):(Y, \dcat(y)(U))\rightarrow (X,U)$ is a cartesian lift of $y$: it obviously projects to $y$, and for any other $(h,b):(Z,W)\rightarrow (X,U)$ of $\gbicat(\dcat)$ such that $h=yz$ for some $z$, then $(h,b)=(y,1)(z,\phi_{y,z}\inv b)$, and $(z, \phi_{y,z}\inv b)$ is a unique lift for $z$. 
\end{proof}

\begin{remarks}
\begin{enumerate}[(i)]\label{rmk:covGroth.i}
    \item The `evilness' of the definition of Grothendieck fibrations stems from the fact that the condition $p(f)=x$ above forces the equality $p(B)=X$ on objects. We will provide an explicit proof later that Street fibrations are precisely Grothendieck fibrations up to equivalence (see Corollary \ref{cor:strfib_grfib_equivalenti}).
    \item Given a Grothendieck fibration $p:\dbicat\rightarrow\cbicat$, an arrow is said to be \emph{vertical}\index{vertical arrow} if its image via $p$ is an identity arrow. In the case of Street fibrations, we define the `non-evil' version of this as follows: an arrow is \emph{vertical} if its image via $p$ is invertible.
    \item The Grothendieck construction, and the symbol $\gbicat$, will always mean for us the construction just defined, \emph{even when applied to covariant pseudofunctors}. Indeed, it will happen later that we will consider covariant pseudofunctors $R:\cbicat\rightarrow\CAT$: by seeing them as $\cbicat\op$-indexed categories, we will perform the Grothendieck construction to obtain a category over $\cbicat\op$,
    \[
    p_R:\gbicat(R)\rightarrow \cbicat\op.
    \]
    We stress this, because there is also a notion of \emph{covariant} Grothendieck construction for covariant pseudofunctors (\cfr Paragraph 2 of \cite{nlab:grothendieck_construction}), but it is \emph{not} the same as applying the contravariant Grothendieck construction to $R$, seen as a $\cbicat\op$-indexed category: instead, the covariant Grothendieck construction associates $R$ with the category over $\cbicat$ \[p_{R\Vop}\op:\gbicat(R\Vop)\op\rightarrow\cbicat.\]
    In general, an \emph{opfibration}\index{opfibration} is any functor $p$ such that its opposite $p\op$ is a fibration: therefore, the functor $p_{R\Vop}\op$ is usually called the Grothendieck opfibration associated to $R$.
\end{enumerate}
    
\end{remarks}

\begin{remarks}\label{remark:proprietà_frecce_cartesiane} Let us list here some useful properties of cartesian arrows:
	\begin{enumerate}[(i)]
		\item Given a cartesian arrow $f:A\rightarrow B$, any other arrow $k:C\rightarrow A$ is determined uniquely by the composite $fk$ and the projection $p(k)$ in $\cbicat$. We will often use this in the following to show that two arrows in $\dbicat$ are equal. 
		\item If $f$ is cartesian, then $fg$ is cartesian if and only if $g$ is cartesian.
		\item Consider an arrow $x:X\rightarrow p(U)$ and two distinct cartesian lifts of $x$, \ie two cartesian arrows $a:A\rightarrow U$ and $b:B\rightarrow U$ with isomorphisms $\alpha:X\isorightarrow p(A)$ and $\beta:X\isorightarrow p(B)$ such that $x=p(a)\alpha=p(b)\beta$: then it is immediate, exploiting the cartesianity of both $a$ and $b$, to prove that there is a unique $\gamma:A\isorightarrow B$ such that $b\gamma=a$ and $p(\gamma)=\beta\alpha\inv$. This has the immediate corollary that whenever $x:X\rightarrow p(U)$ is an isomorphism, then all its cartesian lifts are isomorphisms: this happens because $x$ admits among its lifts the identity arrow $1_U$, and all lifts of $x$ are isomorphic to it. 
	\end{enumerate}
\end{remarks}

\begin{remark}\label{remark:frecce_chi_lambda}
	Let us now introduce two families of canonical arrows that exist for every cloven fibration, relating lifts of compositions of morphisms: we will need them later in order to define the pseudofunctor associated with a cloven Street fibration.
	
	Consider $y:Y\rightarrow X$ and $x:X\rightarrow p(U)$ in $\cbicat$: we want to compare the two lifts $\widehat{x}_U$ and $\widehat{xy}_U$. To do so, consider the arrow $\theta_{x,U}y:Y\rightarrow p(\dom(\widehat{x}_U))$ and its cartesian lift $\widehat{\theta_{x,U}y}_{\dom(\widehat{x}_U)}$: then the composite $\widehat{x}_U\widehat{\theta_{x,U}y}_{\dom(\widehat{x}_U)}$ is still cartesian, and moreover it lifts $xy$, as the following commutative diagram shows:
	\[
	\begin{tikzcd}
		Y \ar[r, "y"] \ar[d, "\theta_{\theta_{x,U}y, \dom(\widehat{x}_U)}"',"\sim"{sloped}]&  X\ar[dr, "x"] \ar[d, "\theta_{x,U}", "\sim"{sloped, below}] &  \\
		p(\dom(\widehat{\theta_{x,U}y}_{\dom(\widehat{x}_U)})) \ar[r, "p(\widehat{\theta_{x,U}y}_{\dom(\widehat{x}_U)})"{below, yshift=-3}]& p(\dom(\widehat{x}_U)) \ar[r, "p(\widehat{x}_U)"']& p(U)
	\end{tikzcd}\]
	Hence, there is a canonical isomorphism that can compare $\widehat{x}_U\widehat{\theta_{x,U}y}_{\dom(\widehat{x}_U)}$ with $\widehat{xy}_U$: that is, a unique $\chi_{x,y,U}:\dom(\widehat{xy}_U)\isorightarrow \dom(\widehat{\theta_{x,U}y}_{\dom(\widehat{x}_U)})$\index{$\chi_{x,y,U}$} such that $\widehat{xy}_U= \widehat{x}_U\widehat{\theta_{x,U}y}_{\dom(\widehat{x}_U)}\chi_{x,y,U}$ and $p(\chi_{x,y,U})=\theta_{\theta_{x,U}y,\dom(\widehat{x}_U)}\theta_{xy,U}\inv$. 
	Let us moreover introduce the notation 
	$$\lambda_{x,y,U}:=\widehat{\theta_{x,U}y}_{\dom(\widehat{x}_U)}\chi_{x,y,U}:\dom(\widehat{xy}_U)\rightarrow \dom(\widehat{x}_U):$$ 
	the arrow $\lambda_{x,y,A}$\index{$\lambda_{x,y,A}$} can also be defined as the unique arrow satisfying $\widehat{x}_U\lambda_{x,y,U}=\widehat{xy}_U$ and $p(\lambda_{x,y,U})=\theta_{x,U}y\theta_{xy,U}\inv$. Since both $\widehat{x}_U$ and $\widehat{xy}_U$ are cartesian, $\lambda_{x,y,U}$ is also cartesian.
	In particular, it is easy to verify that the following identities also hold: $\lambda_{x,1_X,A}= 1_{\dom(\widehat{x}_A)}$, and $\lambda_{x,y,A}\lambda_{xy,z,A}=\lambda_{x,yz,A}$.
\end{remark}
We can define a 2-category of fibred categories over $\cbicat$:
\begin{defn}
	We will define the \emph{2-category of fibrations} over $\cbicat$, denoted by $\Fib_\cbicat$\index{$\Fib_\cbicat$, $\Fib_\cbicat\Gr$}, as the sub-2-category of $\CAT/\cbicat$ defined thus:
	\begin{itemize}
		\item[0-cells:] they are Street fibrations $p:\dbicat\rightarrow \cbicat$;
		\item[1-cells:] given two fibrations $p:\dbicat\rightarrow\cbicat$ and $q:\ebicat\rightarrow\cbicat$, a \emph{morphism of fibrations} is a pair $(F,\phi)$, where $F:\dbicat\rightarrow \ebicat$ is a functor mapping cartesian arrows to cartesian arrows and $\phi$ is a natural isomorphism $q\circ F\xRightarrow{\sim} p$;
		\item[2-cells:] given two fibrations $[p]$ and $[q]$ over $\cbicat$ and two morphisms of fibrations $(F,\phi)$, $(G,\gamma):[p]\rightarrow[q]$, a 2-cell $\alpha:(F,\phi)\Rightarrow (G,\gamma)$ is given by a natural transformation $\alpha:F\Rightarrow G$ such that $ \phi= \gamma (q\circ \alpha)$.
	\end{itemize}
	In particular, we will denote by $\Cl\Fib_\cbicat$\index{$\Cl\Fib_\cbicat$, $\Cl\Fib_\cbicat\Gr$} the full sub-2-category of cloven fibrations. We will denote the (non full) sub-2-category of Grothendieck fibrations by $\Fib_\cbicat\Gr$, and by $\mathcal{U}:\Fib_\cbicat\Gr\rightarrow \Fib_\cbicat$ the inclusion functor; analogously, $\Cl\Fib_\cbicat\Gr$ will denote the full sub-2-category of $\Fib_\cbicat\Gr$ of Grothendieck fibrations endowed with a Grothendieck cleavage.
\end{defn}

Therefore, the Grothendieck construction provides a 2-functor
\[\gbicat:\Ind_\cbicat\rightarrow \Cl\Fib\Gr_\cbicat.\]

Let us finally provide the notion of fibre of a Street fibration: to do so we shall exploit a 2-categorical notion of pullback.
\begin{defn}
	Given two functors $A:\abicat\rightarrow\cbicat$ and $B:\bbicat\rightarrow\cbicat$, their \emph{strict pseudopullback}\index{strict pseudopullback}\footnote{For a reference on the use of the name strict pseudopullback, see \cite{nlab:2-limit}} $\abicat\times_\cbicat \bbicat$ is the category whose objects are triples $(X,U,f)$, where $X$ is an object of $\abicat$, $U$ is an object of $\bbicat$ and $f:A(X)\isorightarrow B(U)$ in $\cbicat$, while morphisms are pairs $(r,s):(X,U,f)\rightarrow (Y,V,g)$ where $r:X\rightarrow Y$, $s:U\rightarrow V$ and $gA(r)=B(s)f$. There are two obvious forgetful functors from $\abicat\times_\cbicat\bbicat$ to $\abicat$ and $\bbicat$ and a natural isomorphism $\sigma$ as in the diagram
	\[
	\begin{tikzcd}
		\abicat\times_\cbicat\bbicat \ar[d, "\pi_\abicat"'] \ar[r, "\pi_\bbicat"] & \bbicat \ar[d, "B"]\\
		\abicat \ar[r, "A"']  \ar[ur, Rightarrow, "\sim"sloped, "\sigma"'] & \cbicat
	\end{tikzcd}\]
	The strict pseudopullback $\abicat\times_\cbicat\bbicat$ satisfies the following universal property: for every other pair of functors $P:\dbicat\rightarrow\abicat$ and $Q:\dbicat\rightarrow \bbicat$ and natural isomorphism $\tau: AP\Isorightarrow BQ$ there is a unique functor $H:\dbicat\rightarrow\abicat\times_\cbicat\bbicat$ such that $\pi_\abicat H=P$, $\pi_\bbicat H=Q$ and $\sigma\circ H=\tau$. Moreover, for any pair two such cones $(P,Q,\tau)$ and $(P',Q',\tau')$, if there are natural transformations $\alpha:P\Rightarrow P'$ and $\beta:Q\Rightarrow Q'$ such that $\tau'(A\circ \alpha)=(B\circ \beta)\tau$ then there exists a unique natural transformation $\eta:H\Rightarrow H'$ such that $\pi_\abicat\circ 1\eta=\alpha$ and $\pi_\bbicat \circ \eta=\beta$. 
\end{defn}

\begin{defn}
	Consider a functor $p:\dbicat\rightarrow \cbicat$: the \emph{essential fibre of $p$ at $X$}, which we will simply call \emph{fibre}\index{fibre}\index{$\dcat(X)$} and denote by $\dcat(X)$, is the strict pseudopullback 
	\[
	\begin{tikzcd}
		\dcat(X) \ar[d] \ar[r] & \dbicat \ar[d, "p"] \\
		\onecat \ar[r, "e_X"'] \ar[ur, Rightarrow, "\sim" sloped] & \cbicat
	\end{tikzcd},\]
	where the functor below is the constant functor with value $X$. In other words, $\dcat(X)$ is the category whose objects are the couples $(A, \alpha:X\isorightarrow p(A))$, and whose arrows $\gamma:(A,\alpha)\rightarrow(B,\beta)$ are indexed by arrows $\gamma:A\rightarrow B$ of $\dbicat$ such that $p(\gamma)\alpha=\beta$.
\end{defn}
\begin{remark}
	We will use strict pseudopullbacks in a moment to build from $p$ a pseudofunctor $\dcat:\cbicat\op\rightarrow \CAT$. One might be concerned with the fact that strict pseudopullbacks are stable under isomorphism of categories but not under equivalence; nonetheless, they act as canonical (and manageable) representatives of pseudopullbacks:
	indeed, any category which is equivalent to the strict pseudopullback $\abicat\times_\cbicat \bbicat$ will be a \textit{pseudopullback} of $A$ and $B$, \ie the induced functor $H$ above will be unique up to a unique 2-isomorphism, and pseudopullbacks are indeed stable under equivalence.
\end{remark}

\section{The equivalence between Street fibrations and indexed categories}

Our first purpose is to show that cloven Street fibrations are equivalent to pseudofunctors, generalizing a well known result about Grothendieck fibrations. First of all, we need a 2-functor from cloven fibrations to $\Ind_\cbicat$:
\begin{prop}
	There is a strict 2-functor $\Ifrak:\Cl\Fib_\cbicat\rightarrow \Ind_\cbicat$\index{$\Ifrak$} operating as follows:
	\begin{itemize}
		\item[0-cells:] consider a cloven fibration $p:\dbicat\rightarrow \cbicat$: the $\cbicat$-indexed category
		$$\Ifrak(p)=\dcat:\cbicat\op\rightarrow\CAT$$
		is defined on object by taking every 
		$X$ in $\cbicat$ to the fibre $\dcat(X)$, while for $y:Y\rightarrow X$ in $\cbicat$ the functor $\dcat(y):\dcat(X)\rightarrow\dcat(Y)$ is defined as
		\[
		\left[(A,\alpha)\xrightarrow{\omega}(B,\beta)  \right]\mapsto\left[(\dom(\widehat{\alpha y}_A), \theta_{\alpha y,A})\xrightarrow{\dcat(y)(w)} (\dom(\widehat{\beta y}_B), \theta_{\beta y,B}) \right]\]
		where $\dcat(y)(\omega):\dom(\widehat{\alpha y}_A)\rightarrow \dom(\widehat{\beta y}_B)$ is the unique arrow satisfying the identities $\widehat{\beta y}_B \dcat(y)(\omega)= \omega \widehat{\alpha y}_A$ and $p(\dcat(y)(\omega))=\theta_{\beta y,B}\theta_{\alpha y,A}\inv$.
		\item[1-cells:] A morphism of fibrations $(F,\phi):[p:\dbicat\rightarrow\cbicat]\rightarrow[q:\ebicat\rightarrow \cbicat]$ produces a $\cbicat$-indexed functor $$\Ifrak^{(F,\phi)}:\dcat\Rightarrow \ecat$$ 
		built as follows: for every $X$ in $\cbicat$, $\Ifrak^{(F,\phi)}_X:\dcat(X)\rightarrow \ecat(X)$ is defined as
		$$\Ifrak^{(F,\phi)}_X:\left[(A,\alpha)\xrightarrow{\gamma} (B,\beta)\right] \mapsto \left[ (F(A), \phi_A\inv \alpha)\xrightarrow{F(\gamma)} (F(B), \phi_B\inv \beta) \right]$$
		while for every arrow $y:Y\rightarrow X$ in $\cbicat$ the canonical isomorphism $\Ifrak^{(F,\phi)}_y:\ecat(y)\Ifrak^{(F,\phi)}_X\Isorightarrow \Ifrak^{(F,\phi)}_Y\dcat(y)$ is defined componentwise thus: the arrow $$\Ifrak^{(F,\phi)}_y(A,\alpha):\dom(\widehat{\phi_A\inv \alpha y}_{F(A)})\rightarrow F(\dom(\widehat{\alpha y}_A))$$ is the unique satisfying the identities $F(\widehat{\alpha y}_A)\Ifrak^{(F,\phi)}_y(A,\alpha)= \widehat{\phi_A\inv \alpha y}_{F(A)}$ and $q(\Ifrak^{(F,\phi)}_y(A,\alpha))= \phi\inv_{\dom(\widehat{\alpha y}_A)}\theta_{\alpha y,A}\theta\inv_{\phi\inv_A \alpha y, F(A)}$.
		\item[2-cells:]	Given a 2-cell of fibrations $\xi:(F,\phi)\Rightarrow (G,\gamma):[p]\rightarrow[q]$, the corresponding $\cbicat$-indexed natural transformation
		$$\Ifrak^{\xi}:\Ifrak^{(F,\phi)}\Rrightarrow \Ifrak^{(G,\gamma)}$$
		is defined componentwise as follows: for $X$ in $\cbicat$, the natural transformation $\Ifrak^\xi_X:\Ifrak^{(F,\phi)}_X\Rightarrow \Ifrak^{(G,\gamma)}_X$ is defined componentwise, for every $(A,\alpha)$ in $\dcat(X)$, as  $\Ifrak^\xi_X(A,\alpha)=\xi_A:(FA, \phi_A\inv\alpha)\rightarrow (GA, \gamma_A\inv \alpha)$.
	\end{itemize} 
\end{prop}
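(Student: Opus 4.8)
The plan is to check, in turn, that $\Ifrak$ is well-defined on $0$-, $1$- and $2$-cells—i.e.\ that its outputs genuinely satisfy the axioms of Definition \ref{def:indexed_category}—and then that it preserves identities and both compositions \emph{strictly}. Throughout, the workhorse is property (i) of Remarks \ref{remark:proprietà_frecce_cartesiane}: every arrow produced by the construction is the unique filler against some cartesian arrow, so any claimed identity between two such fillers can be verified by comparing their postcomposites with the relevant cartesian lift (an equation in $\dbicat$ or $\ebicat$) together with their images under $p$ (resp.\ $q$) (an equation in $\cbicat$). This reduces each coherence check to a pair of routine verifications, one downstairs and one in the base.

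For the $0$-cell clause I would first confirm that each $\dcat(y)$ is a functor: preservation of identities and of composites follows from the uniqueness clause, comparing postcomposites with the lifts $\widehat{\alpha y}_A$. Next I supply the structure isomorphisms. The unitor $\phi^\dcat_X$ exists because $\dcat(1_X)(A,\alpha)=(\dom(\widehat{\alpha}_A),\theta_{\alpha,A})$ and, $\alpha$ being invertible, the lift $\widehat{\alpha}_A$ is an isomorphism by Remarks \ref{remark:proprietà_frecce_cartesiane}(iii), yielding a canonical iso $(A,\alpha)\isorightarrow(\dom(\widehat{\alpha}_A),\theta_{\alpha,A})$ in $\dcat(X)$. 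The compositor $\phi^\dcat_{y,z}\colon\dcat(z)\dcat(y)\Isorightarrow\dcat(yz)$ is precisely $\chi_{\alpha y,z,A}\inv$, the inverse of the canonical comparison of Remark \ref{remark:frecce_chi_lambda} between the iterated lift of $\alpha y$ along $z$ and the direct lift of $\alpha yz$. Naturality of both structure isomorphisms in $(A,\alpha)$ is once more a uniqueness argument. The substance of the $0$-cell case is the pseudofunctor coherence: I expect the two axioms of Definition \ref{def:indexed_category} to unwind, via property (i), into exactly the identities $\lambda_{x,1_X,A}=1_{\dom(\widehat{x}_A)}$ and $\lambda_{x,y,A}\lambda_{xy,z,A}=\lambda_{x,yz,A}$ already recorded at the end of Remark \ref{remark:frecce_chi_lambda}, supplemented by the bookkeeping of the $\theta$-factors in $\cbicat$.

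For the $1$-cell clause, $\Ifrak^{(F,\phi)}_X$ is well-defined because $F$ carries cartesian arrows to cartesian arrows and $\phi$ is a natural isomorphism, and functoriality is immediate. The components $\Ifrak^{(F,\phi)}_y(A,\alpha)$ are defined as unique fillers, hence automatically isomorphisms (their postcomposite and their $q$-image are isomorphisms, cf.\ Remarks \ref{remark:proprietà_frecce_cartesiane}(iii)); naturality and the two pseudonaturality axioms are then verified by the same postcompose-and-project method. The $2$-cell clause, that $\Ifrak^\xi$ is a modification, collapses to a single identity, checked by uniqueness after postcomposition with $\widehat{\alpha y}_A$. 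Finally, strict $2$-functoriality is direct on identity $1$-cells and on vertical and horizontal composition of $2$-cells, since there the assignments merely compose the defining data; preservation of $1$-cell composition amounts to matching the composite filler $\Ifrak^{(G,\gamma)\circ(F,\phi)}_y$ with the pasting of $\Ifrak^{(G,\gamma)}_y$ and $\Ifrak^{(F,\phi)}_y$, again by cartesian uniqueness.

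The main obstacle will be the compositor coherence (the second axiom of Definition \ref{def:indexed_category}): expanding $\phi^\dcat_{y,z}$ through $\chi$ and $\lambda$ while tracking the nested lifts of the form $\widehat{\theta_{\cdot}\,\cdot}_{\dom(\widehat{\cdot})}$ is where the notation is heaviest, and one must ensure that the $\theta$-factors appearing in the $p$-images of the various $\lambda$'s assemble correctly in $\cbicat$. Everything beyond this is a mechanical application of the uniqueness property of cartesian arrows.
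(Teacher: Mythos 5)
Your proposal is correct and follows essentially the same route as the paper: the structure isomorphisms are exactly the ones the paper supplies ($\phi^\dcat_X(A,\alpha)=\widehat{\alpha}_A\inv$, which exists because cartesian lifts of isomorphisms are isomorphisms, and $\phi^\dcat_{y,z}(A,\alpha)=\chi_{\alpha y,z,A}\inv$), and all verifications are reduced, as in the paper, to the uniqueness clause of Remarks \ref{remark:proprietà_frecce_cartesiane}(i) by postcomposing with the relevant cartesian lift and projecting to the base. Your additional observation that the compositor coherence unwinds into the identities $\lambda_{x,1_X,A}=1$ and $\lambda_{x,y,A}\lambda_{xy,z,A}=\lambda_{x,yz,A}$ of Remark \ref{remark:frecce_chi_lambda} is a useful sharpening of what the paper leaves implicit.
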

\begin{proof}
	We only provide definitions for the relevant structure, leaving all the verifications to the reader: we stress that all equalities of arrows in the fibrations are verified by exploiting what we said in the first item of Remark \ref{remark:proprietà_frecce_cartesiane}.
	
	The arrow $\dcat(y)(\gamma)$ is well defined, and it is easy to see that its unicity implies that $\dcat(y)$ is a functor. 
	To see that $\dcat$ is a pseudofunctor, the following canonical natural isomorphisms must be considered: for every $X$ in $\cbicat$, we define
	$\phi^\dcat_X:Id_{\dcat(X)}\Isorightarrow \dcat(1_X)$ componentwise by setting $\phi_X^\dcat(A,\alpha)$ equal to $$ (A,\alpha)\xrightarrow{\widehat{\alpha}_A\inv} (\dom(\widehat{\alpha}_A), \theta_{\alpha, A});$$
	for every $z:Z\rightarrow Y$ and $y:Y\rightarrow X$, we define
	$\phi^\dcat_{y,z}:\dcat(z)\dcat(y)\Isorightarrow \dcat(yz)$ componentwise by setting $\phi^\dcat_{y,z}(A,\alpha)$ equal to
	$$( \dom(\widehat{\theta_{\alpha y,A}z}_{\widehat{\alpha y}_A} ), \theta_{\theta_{\alpha y,A}z, \dom(\widehat{\alpha y}_A)} )\xrightarrow{\chi_{\alpha y,z,A}\inv} (\dom(\widehat{\alpha yz}_A), \theta_{\alpha yz, A}).$$
	We have already shown that the arrows $\chi$ are isomorphisms, while $\widehat{\alpha}_A$ is an isomorphism since it lifts the isomorphism $\alpha$. Here one needs to check that they are arrows of the fibres, that their components are natural and that the identities in the definition of a $\cbicat$-indexed category are satisfied.
	
	For the pseudonatural transformation $\Ifrak^{(F,\phi)}$, notice that $\Ifrak^{(F,\phi)}_y(A,\alpha)$ is well defined because $F$ preserves the cartesianity of arrows: indeed, it is easy to see that since $\widehat{\alpha y}_A$ lifts $\alpha y$ via $p$ then $F(\widehat{\alpha y}_A)$ lifts $\phi_A\inv\alpha y$ via $q$, and hence it is canonically isomorphic to $\widehat{\phi_A\inv \alpha y}_{F(A)}$. The verification that $\Ifrak^{(F,\phi)}$ is a pseudonatural transformation is a matter of computation.
	
	Finally, the verification that the arrows $\Ifrak^\xi_X(A,\alpha)$ provide a natural transformation is an explicit check, as is the verification that $\Ifrak^\xi$ is indeed a $\cbicat$-indexed natural transformation.	To conclude, one can check that for every $[p]$ and $[q]$ as above, we have defined a functor $\Cl\Fib_\cbicat([p],[q])\rightarrow \Ind_\cbicat(\dcat, \ecat)$, that $\Ifrak^{Id_\dbicat}=Id_{\dcat}$ and that for $(F,\phi):[p]\rightarrow [q]$ and $(G,\gamma):[q]\rightarrow[r]$, $\Ifrak^{(G,\gamma)(F,\phi)}=\Ifrak^{(G,\gamma)}\Ifrak^{(F,\phi)}$: this makes $\Ifrak$ into a strict 2-functor of 2-categories.
\end{proof}
In adherence with the standard language for Grothendieck fibrations, we will call the functors $\dcat(y)$ between fibres \emph{transition morphisms} of the fibration $p$.

Though a fibration may admit different cleavages, they are all in a certain sense equivalent, since they produce essentially the same pseudofunctor:
\begin{prop}
	The construction of a pseudofunctor from a cloven fibration is essentially independent from the choice of cleavage: more explicitly, given two different cleavages for $p:\dbicat\rightarrow\cbicat$, the corresponding pseudofunctors $\dcat$ and $\tilde{\dcat}$ are equivalent up to a pseudonatural isomorphism.
\end{prop}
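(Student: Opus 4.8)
The plan is to exploit the fact that the fibre $\dcat(X)$, being defined as the strict pseudopullback of $p$ along the constant functor $e_X$, does not involve the cleavage at all. Hence the two pseudofunctors $\dcat$ and $\tilde{\dcat}$ arising from the two cleavages agree on objects, $\dcat(X)=\tilde{\dcat}(X)$ for every $X$ in $\cbicat$, and differ only in their transition morphisms. Accordingly I would build the desired pseudonatural isomorphism $F:\dcat\Rightarrow\tilde{\dcat}$ by taking each component functor $F_X:\dcat(X)\to\tilde{\dcat}(X)$ to be the identity, so that the only genuine data to produce are the comparison isomorphisms $F_y:\tilde{\dcat}(y)\Isorightarrow\dcat(y)$ between the two families of transition morphisms.

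First I would define $F_y$. Fix $y:Y\to X$ and an object $(A,\alpha)$ of $\dcat(X)$. By construction $\dcat(y)(A,\alpha)=(\dom(\widehat{\alpha y}_A),\theta_{\alpha y,A})$, while $\tilde{\dcat}(y)(A,\alpha)=(\dom(\widetilde{\widehat{\alpha y}}_A),\tilde{\theta}_{\alpha y,A})$, the tilde decorations referring to the cartesian lift and structural isomorphism supplied by the second cleavage. Since $\widehat{\alpha y}_A$ and $\widetilde{\widehat{\alpha y}}_A$ are two cartesian lifts of the \emph{same} arrow $\alpha y:Y\to p(A)$, Remark \ref{remark:proprietà_frecce_cartesiane}(iii) yields a unique isomorphism $\gamma_{(A,\alpha)}:\dom(\widetilde{\widehat{\alpha y}}_A)\isorightarrow\dom(\widehat{\alpha y}_A)$ with $\widehat{\alpha y}_A\,\gamma_{(A,\alpha)}=\widetilde{\widehat{\alpha y}}_A$ and $p(\gamma_{(A,\alpha)})=\theta_{\alpha y,A}\tilde{\theta}_{\alpha y,A}\inv$. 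Comparing projections gives $p(\gamma_{(A,\alpha)})\tilde{\theta}_{\alpha y,A}=\theta_{\alpha y,A}$, so $\gamma_{(A,\alpha)}$ is an arrow $\tilde{\dcat}(y)(A,\alpha)\to\dcat(y)(A,\alpha)$ in the fibre $\dcat(Y)$, and I set the component of $F_y$ at $(A,\alpha)$ to be $\gamma_{(A,\alpha)}$.

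Next I would check that $F_y$ is natural in $(A,\alpha)$ and that the two pseudonaturality axioms of Definition \ref{def:indexed_category} hold. Both verifications follow the same pattern: every arrow involved is cartesian, or becomes cartesian after postcomposition with a cartesian arrow, so by Remark \ref{remark:proprietà_frecce_cartesiane}(i) it suffices to compare the two sides after composing down to a fixed cartesian lift and after projecting along $p$; each such comparison then collapses to an identity between isomorphisms of $\cbicat$. For naturality one feeds an arrow $\omega:(A,\alpha)\to(A',\alpha')$ of $\dcat(X)$ through the defining equations of $\dcat(y)(\omega)$ and $\tilde{\dcat}(y)(\omega)$; for the coherence axioms one unwinds the definitions of $\phi^\dcat_X$, $\phi^{\tilde{\dcat}}_X$ and of $\phi^\dcat_{y,z}$, $\phi^{\tilde{\dcat}}_{y,z}$ in terms of the arrows $\chi$ and $\lambda$ of Remark \ref{remark:frecce_chi_lambda}, which are themselves pinned down by cartesian uniqueness.

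I expect the main obstacle to be bookkeeping rather than conceptual. The coherence axiom for $\phi_{y,z}$ forces one to compare the comparison isomorphism attached to a composite $yz$ with the composite of the comparison isomorphisms attached to $y$ and to $z$, mediated on either side by the respective $\chi$-isomorphisms of the two cleavages; disentangling these nested cartesian factorizations is the delicate point. However, since all the arrows in sight are cartesian, the required equality is forced by the uniqueness clause of Remark \ref{remark:proprietà_frecce_cartesiane}(i), so no genuinely new idea is needed. Finally, as each component $\gamma_{(A,\alpha)}$ is an isomorphism, $F$ is a pseudonatural isomorphism, giving precisely the asserted equivalence $\dcat\simeq\tilde{\dcat}$.
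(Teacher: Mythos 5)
Your proposal is correct and follows essentially the same route as the paper's own proof: both observe that the fibres are cleavage-independent, take the identity on each fibre, and obtain the comparison 2-cells from the unique isomorphism between two cartesian lifts of the same arrow (Remark \ref{remark:proprietà_frecce_cartesiane}(iii)). Your additional remarks on verifying naturality and the coherence axioms via cartesian uniqueness simply make explicit what the paper leaves to the reader.
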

\begin{proof}
	Consider $f:X\rightarrow p(A)$ in $\cbicat$: we will denote by $(\widehat{f}_A, \theta_{f,A})$ the cartesian lift for one cleavage and by $(\widetilde{f}_A, \tilde{\theta}_{f,A})$ the cartesian lift for the other cleavage.
	
	The proof is straightforward. First of all, $\dcat$ and $\tilde{\dcat}$ behave in exactly the same way on the objects of $\cbicat$: this happens because the definition of fibre is independent from the cleavage, which only affects the construction of the transition morphisms. But recall that, given $y:Y\rightarrow X$ and $(A,\alpha)$ in $\dcat(X)$, then 
	$$\dcat(y)(A,\alpha)=(\widehat{\alpha y}_A, \theta_{\alpha y,A}),\ \tilde{\dcat}(y)(A,\alpha)=(\widetilde{\alpha y}_A, \widetilde{\theta}_{\alpha y,A}):$$
	now, since $\widehat{\alpha y}_A$ and $\widetilde{\alpha y}_A$ are both cartesian lifts of $\alpha y$, there is a unique well defined isomorphism $z_y(A,\alpha):(\widehat{\alpha y}_A, \theta_{\alpha y,A})\isorightarrow(\widetilde{\alpha y}_A, \widetilde{\theta}_{\alpha y,A})$ in $\dcat(Y)$, which provides the components for a pseudonatural isomorphism from $\dcat$ to $\tilde{\dcat}$.
\end{proof}
Now, let us denote by 
\[\Ifrak\Gr:\Cl\Fib\Gr_\cbicat \leftrightarrows \Ind_\cbicat:\gbicat\]
the standard equivalence between Grothendieck cloven fibrations and pseudofunctors, which appears for instance in \cite[Section B1.3]{elephant}. When working with Grothendieck fibrations, we obtain essentially the same $\cbicat$-indexed category whether we apply $\Ifrak$ or $\Ifrak\Gr$:
\begin{prop}
	Consider a cloven Grothendieck fibration $p:\dbicat\rightarrow \cbicat$: then $\dcat\Gr:=\Ifrak\Gr(p)$ and $\dcat:=\Ifrak(p)$ are equivalent pseudofunctors. This extends to an equivalence of the two 2-functors
	\[\Ifrak\Gr,\ \Ifrak\mathcal{U}:\Cl\Fib_\cbicat\Gr\rightarrow\Ind_\cbicat, \]
	\ie there exists an invertible 2-natural isomorphism $\Ifrak\Gr\Isorightarrow \Ifrak\mathcal{U}$.
\end{prop}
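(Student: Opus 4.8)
The plan is to build a single comparison $\eta$ that includes each \emph{strict} fibre used by $\Ifrak\Gr$ into the corresponding \emph{essential} fibre used by $\Ifrak$. Fix a cloven Grothendieck fibration $p:\dbicat\to\cbicat$ and write $\dcat\Gr=\Ifrak\Gr(p)$ and $\dcat=\Ifrak(p)$. For each object $X$ of $\cbicat$ I set
\[
\eta_{p,X}:\dcat\Gr(X)\to\dcat(X),\qquad A\longmapsto(A,\id_X),
\]
viewing an object $A$ with $p(A)=X$ as the pair $(A,\id_X)$ of the essential fibre and sending a vertical arrow to itself (legitimate since $p(\omega)=\id_X$ gives $p(\omega)\,\id_X=\id_X$). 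I would first fix $p$ and prove $\dcat\Gr\simeq\dcat$, then promote this to a statement about the two 2-functors.

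For fixed $p$, $\eta_{p,X}$ is faithful by construction and full because an arrow $(A,\id_X)\to(B,\id_X)$ of $\dcat(X)$ is exactly an arrow $\omega:A\to B$ with $p(\omega)=\id_X$, \ie a vertical arrow of the strict fibre. For essential surjectivity, given $(A,\alpha)$ with $\alpha:X\isorightarrow p(A)$ I take the cartesian lift $\widehat{\alpha}_A$ from the cleavage; since the cleavage is \emph{Grothendieck} we have $\theta_{\alpha,A}=\id_X$ and $p(\widehat{\alpha}_A)=\alpha$, so $\dom(\widehat{\alpha}_A)$ lies over $X$, and by Remarks \ref{remark:proprietà_frecce_cartesiane}(iii) the arrow $\widehat{\alpha}_A$ is invertible, giving $(A,\alpha)\cong\eta_{p,X}(\dom(\widehat{\alpha}_A))$. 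Hence each $\eta_{p,X}$ is an equivalence. The decisive simplification is again that all $\theta$'s are identities: the transition functor $\dcat(y)$ of $\Ifrak(p)$ sends $(A,\id_X)$ to $(\dom(\widehat{y}_A),\id_Y)=\eta_{p,Y}(\dcat\Gr(y)(A))$, and the same unique-characterisation (via the first item of Remarks \ref{remark:proprietà_frecce_cartesiane}) identifies the two values on arrows. Thus the comparison 2-cells $\dcat(y)\circ\eta_{p,X}\Isorightarrow\eta_{p,Y}\circ\dcat\Gr(y)$ are all identities, so $\eta_p$ is a strict pseudonatural transformation with equivalence components, proving $\dcat\Gr\simeq\dcat$.

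To reach the statement about 2-functors I would check naturality of $\eta$ in $p$. For a 1-cell $(F,\phi):[p]\to[q]$ of $\Cl\Fib_\cbicat\Gr$ the two composites $\dcat\Gr\to\Ifrak(q)$ act on an object $A$ over $X$ by $(F,\phi)$-transport: along $\Ifrak\mathcal{U}(F,\phi)\circ\eta_p$ we get $(F(A),\phi_A\inv)$, while along $\eta_q\circ\Ifrak\Gr(F,\phi)$ we get $(\dom(\widehat{\phi_A\inv}_{F(A)}),\id_X)$, the object obtained by correcting $F(A)$ back over $X$ through the cleavage of $q$. These are canonically isomorphic through the cartesian arrow $\widehat{\phi_A\inv}_{F(A)}$, invertible by Remarks \ref{remark:proprietà_frecce_cartesiane}(iii) because $\phi_A\inv$ is; I would verify that these isomorphisms are natural in $A$ and compatible with the indexed structure, hence constitute an invertible modification filling the naturality square, and that the 2-cells of $\Cl\Fib_\cbicat\Gr$ are likewise respected. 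Assembling the $\eta_p$ with these modifications produces the desired invertible 2-natural isomorphism $\Ifrak\Gr\Isorightarrow\Ifrak\mathcal{U}$; note that these modifications reduce to identities exactly when $\phi$ is trivial.

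The main obstacle is this last naturality verification. When the structural isomorphism $\phi$ of a 1-cell is non-trivial the two constructions genuinely differ by cleavage-corrections, so the comparison is not strict and one must exhibit coherent invertible modifications and check the modification axiom against the pseudofunctoriality constraints $\phi^{\dcat}_{y,z}$ and $\phi^{\ecat}_{y,z}$. Each such identity, however, reduces via the first item of Remarks \ref{remark:proprietà_frecce_cartesiane} to an equality of two arrows sharing a common cartesian postcomposite and the same projection to $\cbicat$, so the computations are routine, if numerous.
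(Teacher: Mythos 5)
Your proof is correct and follows essentially the same route as the paper's: both compare fibres via the inclusion $A\mapsto(A,1_{p(A)})$ with quasi-inverse given by cleavage lifts, and both exploit the fact that a Grothendieck cleavage forces all the $\theta$'s to be identities, so that the comparison is strictly compatible with the transition functors. The only divergence is at the 2-naturality step: you treat general 1-cells $(F,\phi)$ with non-trivial $\phi$ and therefore construct genuine invertible modifications out of the cartesian lifts $\widehat{\phi_A^{-1}}_{F(A)}$, whereas in the paper $\Fib_\cbicat\Gr$ is by definition a \emph{non-full} sub-2-category whose 1-cells are strict (so $\mathcal{U}(F)=(F,1)$), which makes the two composites around the naturality square literally equal and lets $\kappa^F$ be the identity modification. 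Your more general argument is not wrong --- it restricts to the paper's when $\phi=1$ --- but it does extra work that the definition of $\Cl\Fib_\cbicat\Gr$ renders unnecessary, and it implicitly extends $\Ifrak\Gr$ to non-strict morphisms, which is outside the domain on which the standard equivalence of \cite[Section B1.3]{elephant} is defined.
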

\begin{proof}
	Preliminarily, notice that since the cleavage for $p$ is the cleavage of a Grothendieck fibration, all the lifting isomorphisms $\theta$ are actually identities.
	
	We can start by comparing the fibres. There is an obvious full and faithful functor $\dcat\Gr(X)\hookrightarrow \dcat(X)$ mapping $A$ to $(A, 1_{p(A)})$ and $g:A\rightarrow B$ to $g:(A, 1_{p(A)})\rightarrow (B, 1_{p(B)})$. There is also a functor $K_X:\dcat(X)\rightarrow \dcat\Gr(X)$ in the opposite direction, which acts as follows:
	\[
	\left[ (A,\alpha)\xrightarrow{g}(B,\beta)\right] \mapsto \left[ \dom(\widehat{\alpha}_A) \xrightarrow{\dcat(1_X)(g)} \dom(\widehat{\beta}_B) \right]
	\]
	By definition, $\dcat(1_X)(g)$ is the unique arrow such that $\widehat{\beta}_B\dcat(1_X)(g)=g\widehat{\alpha}_A$ and $p(\dcat(1_X)(g))=1_{X}$. It is now a quick check to see that these are functors, that they are one the quasi-inverse of the other, and that they are  compatible with the transition functors $\dcat(y):\dcat(X)\rightarrow \dcat(Y)$ and $\dcat\Gr(y):\dcat\Gr(X)\rightarrow \dcat\Gr(Y)$: in fact, the composites $\dcat(y) K_X$ and $K_Y\dcat\Gr(y)$ are exactly equal. This shows that $\dcat\Gr\cong \dcat$. 
	
	To prove that this extends to a pseudoequivalence of 2-functors, we should build, for every morphism $F:[p]\rightarrow [q]$ in $\Cl\Fib_\cbicat\Gr$, an invertible modification 
	\[\begin{tikzcd}[column sep=large]
		\dcat\Gr \ar[d, "\Ifrak^{Gr,F}"'] \ar[r, "K^{[p]}"] & \dcat \ar[d, "\Ifrak^{(F,1)}"] \ar[dl, triple, "\kappa^F", "\sim"{sloped}]\\
		\ecat\Gr \ar[r, "K^{[q]}"'] & \ecat
	\end{tikzcd},\]
	where $(F,1)$ is just $\mathcal{U}(F)$. Now, a computation shows that the two pseudonatural transformations $K^{[q]} \Ifrak^{Gr,F}$ and $\Ifrak^{(F,1)}K^{[p]}$ are actually equal, and thus $\kappa^F$ can be defined as the identity modification. This concludes the proof.
\end{proof}
\begin{cor}
	The two functors
	$$ Id_{\Ind_\cbicat},\ \Ifrak\mathcal{U}\gbicat:\Ind_\cbicat\rightarrow \Ind_\cbicat$$
	are equivalent.
\end{cor}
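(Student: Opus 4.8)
The plan is to obtain this corollary by gluing together the two preceding propositions, with no new computation required. The key observation is that the standard equivalence \[\Ifrak\Gr:\Cl\Fib\Gr_\cbicat \leftrightarrows \Ind_\cbicat:\gbicat\] recalled above is an equivalence of 2-categories; in particular its composite in the direction $\Ind_\cbicat\rightarrow\Ind_\cbicat$ comes equipped with an invertible 2-natural transformation $\Ifrak\Gr\gbicat\Isorightarrow Id_{\Ind_\cbicat}$. Concretely, this is the classical fact that an indexed category $\dcat$ is reconstructed, up to canonical pseudonatural isomorphism, from the fibres of its Grothendieck construction $\gbicat(\dcat)$ (see \cite[Section B1.3]{elephant}).

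First I would invoke the previous proposition, which provides an invertible 2-natural isomorphism $\Ifrak\Gr\Isorightarrow \Ifrak\mathcal{U}$ of 2-functors $\Cl\Fib\Gr_\cbicat\rightarrow\Ind_\cbicat$. Whiskering this isomorphism on the right with the 2-functor $\gbicat:\Ind_\cbicat\rightarrow\Cl\Fib\Gr_\cbicat$ yields an invertible 2-natural transformation \[\Ifrak\Gr\gbicat\Isorightarrow \Ifrak\mathcal{U}\gbicat.\] Composing vertically the inverse of this with the isomorphism $\Ifrak\Gr\gbicat\cong Id_{\Ind_\cbicat}$ of the first step then produces the sought equivalence $\Ifrak\mathcal{U}\gbicat\cong Id_{\Ind_\cbicat}$, which is in fact an invertible 2-natural isomorphism, and \emph{a fortiori} an equivalence as claimed.

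There is essentially no obstacle: since whiskering a 2-natural transformation by a 2-functor again yields a 2-natural transformation, and the vertical composite of invertible 2-natural transformations is invertible, the argument is purely formal once the two propositions are in hand. The only point worth a line of justification is that the classical equivalence genuinely supplies $\Ifrak\Gr\gbicat\cong Id_{\Ind_\cbicat}$; this is transparent because $\gbicat$ lands among Grothendieck fibrations carrying a Grothendieck cleavage, for which all the lifting isomorphisms $\theta$ are identities, so that computing $\Ifrak\Gr(\gbicat(\dcat))$ returns the fibres and transition functors of $\dcat$ up to the obvious comparison isomorphism.
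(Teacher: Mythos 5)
Your proposal is correct and follows exactly the paper's own argument: combine the standard isomorphism $Id_{\Ind_\cbicat}\cong \Ifrak\Gr\gbicat$ with the previously established isomorphism $\Ifrak\Gr\cong \Ifrak\mathcal{U}$, whiskered by $\gbicat$. The extra detail you give about whiskering and vertical composition merely spells out what the paper leaves implicit.
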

\begin{proof}
	It is immediate: $Id_{\Ind_\cbicat}\cong \Ifrak\Gr\gbicat$ is standard, and we have just shown that $\Ifrak\Gr\cong \Ifrak\mathcal{U}$, whence $Id_{\Ind_\cbicat}\cong \Ifrak\mathcal{U}\gbicat$.
\end{proof}
The converse is also true, as the following result shows:
\begin{prop} Consider a cloven fibration $p:\dbicat\rightarrow \cbicat$: then it is equivalent to the Grothendieck fibration $\pi:\gbicat(\dcat)\rightarrow \cbicat$. This extends to an equivalence of pseudofunctors
	\[ Id_{\Cl\Fib_\cbicat},\ \mathcal{U}\gbicat\Ifrak: \Cl\Fib_\cbicat\rightarrow \Cl\Fib_\cbicat, \]
	\ie there is an invertible 2-natural transformation $\mathcal{U}\gbicat\Ifrak\Isorightarrow Id_{\Cl\Fib_\cbicat}$.
\end{prop}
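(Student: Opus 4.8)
The plan is to exhibit, for each cloven fibration $p:\dbicat\to\cbicat$ with associated indexed category $\dcat=\Ifrak(p)$, an explicit equivalence in $\Cl\Fib_\cbicat$ between $p$ and $\pi:\gbicat(\dcat)\to\cbicat$, and then to check that this equivalence is $2$-natural in $p$. Recall that an object of $\gbicat(\dcat)$ is a triple $(X,(A,\alpha))$ with $\alpha:X\isorightarrow p(A)$, and that $\pi$ forgets everything but $X$. I would define a functor $\Phi_p:\gbicat(\dcat)\to\dbicat$ by $(X,(A,\alpha))\mapsto A$ on objects and, on an arrow $(y,a):(Y,(A,\alpha))\to(X,(B,\beta))$ --- where by definition $a:A\to\dom(\widehat{\beta y}_B)$ satisfies $p(a)\alpha=\theta_{\beta y,B}$ --- by $(y,a)\mapsto \widehat{\beta y}_B\,a$. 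In the opposite direction I would define $\Psi_p:\dbicat\to\gbicat(\dcat)$ by $A\mapsto(p(A),(A,1_{p(A)}))$, sending an arrow $f:A\to B$ to the pair $(p(f),a_f)$, where $a_f$ is the unique factorisation of $f$ through the cartesian lift $\widehat{p(f)}_B$ provided by cartesianity. Equipping $\Phi_p$ with the natural isomorphism $\phi_{\Phi_p}:p\Phi_p\Isorightarrow\pi$ of components $\alpha\inv$, and noting that $\pi\Psi_p=p$ on the nose so that $\Psi_p$ carries the identity isomorphism, turns both into candidate morphisms of fibrations.

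Next I would verify that $\Phi_p$ and $\Psi_p$ are genuine $1$-cells of $\Cl\Fib_\cbicat$, the only non-formal point being preservation of cartesian arrows. For $\Phi_p$: a cartesian arrow of the Grothendieck fibration $\pi$ is an $(y,a)$ whose fibre-component $a$ is invertible, hence cartesian in $\dbicat$ (isomorphisms are cartesian), so by Remark \ref{remark:proprietà_frecce_cartesiane}(ii) the composite $\widehat{\beta y}_B\,a$ is cartesian. For $\Psi_p$: if $f$ is cartesian then, since $f=\widehat{p(f)}_B\,a_f$ with $\widehat{p(f)}_B$ cartesian, Remark \ref{remark:proprietà_frecce_cartesiane}(ii) forces $a_f$ to be cartesian, and as $p(a_f)=\theta_{p(f),B}$ is invertible, Remark \ref{remark:proprietà_frecce_cartesiane}(iii) makes $a_f$ an isomorphism, so $\Psi_p(f)$ is cartesian in $\gbicat(\dcat)$. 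Functoriality of $\Phi_p$ and $\Psi_p$ is the routine verification using the uniqueness principle of Remark \ref{remark:proprietà_frecce_cartesiane}(i), together with the identities for $\lambda$ recorded in Remark \ref{remark:frecce_chi_lambda}.

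I would then show the two $1$-cells are mutually quasi-inverse. On the one hand $\Phi_p\Psi_p=\Id_\dbicat$ strictly, because $a_f$ was defined precisely so that $\widehat{p(f)}_B\,a_f=f$, and the composite structural isomorphism is the identity. On the other hand $\Psi_p\Phi_p$ sends $(X,(A,\alpha))$ to $(p(A),(A,1_{p(A)}))$, and the arrow $(\alpha,\widehat{\alpha}_A\inv)$ --- whose fibre part is exactly the component $\phi^\dcat_X(A,\alpha)$ of the pseudofunctor unit --- is an isomorphism $(X,(A,\alpha))\isorightarrow\Psi_p\Phi_p(X,(A,\alpha))$. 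Its inverse assembles into a natural isomorphism $\Psi_p\Phi_p\Rightarrow\Id$; the point to check is that it is a $2$-cell of $\Cl\Fib_\cbicat$, i.e. that applying $\pi$ to it recovers the structural isomorphism of the round-trip (of components $\alpha\inv$), which it does since $\pi(\alpha,\widehat{\alpha}_A\inv)\inv=\alpha\inv$. This establishes the claimed equivalence $p\simeq\pi$ in $\Cl\Fib_\cbicat$.

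Finally, for $2$-naturality I would take a morphism of fibrations $(F,\phi):[p]\to[q]$ and check that the square relating $\Phi_p$, $\Phi_q$ and $\mathcal{U}\gbicat\Ifrak(F,\phi)=\gbicat(\Ifrak^{(F,\phi)})$ commutes strictly. Both composites send $(X,(A,\alpha))$ to $F(A)$, and both structural isomorphisms evaluate to $\alpha\inv\phi_A$ at that object, the key fact being that $\gbicat$ lands in strictly commutative triangles, so that $\gbicat(\Ifrak^{(F,\phi)})$ carries the identity isomorphism. I expect the main obstacle to be purely bookkeeping: keeping the orientations of all the structural $2$-isomorphisms consistent so that every composite satisfies the $\Cl\Fib_\cbicat$ $2$-cell axiom $\phi=\gamma(q\circ\alpha)$, and in particular getting the direction of the counit right so that its $\pi$-image matches $\phi_{\Phi_p}$ rather than its inverse. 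Once the orientations are fixed, every remaining identity reduces, via Remark \ref{remark:proprietà_frecce_cartesiane}(i), to comparing cartesian arrows over the same base map with the same projection.
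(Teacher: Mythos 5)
Your proposal is correct and follows essentially the same route as the paper: your $\Psi_p$ and $\Phi_p$ are exactly the paper's functors $T$ and $S$ (with the same structural isomorphisms, namely the identity for $T$ and the components $\alpha\inv$ for $S$), the cartesianity and quasi-inverse checks are the ones the paper sketches, and your observation that the naturality square commutes strictly with both structural $2$-cells evaluating to $\alpha\inv\phi_A$ is precisely the paper's argument that the modification $\zeta^{(F,\phi)}$ can be taken to be the identity.
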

\begin{proof}
	We remark that objects of $\gbicat(\dcat)$ are couples $(X,(A,\alpha))$, where $X$ is an object of $\cbicat$ and $(A,\alpha)$ an object of $\dcat(X)$, \ie $\alpha: X\isorightarrow p(A)$ in $\cbicat$.
	
	We begin by defining a functor $T:\dbicat\rightarrow \gbicat(\dcat)$ as
	\[ 
	\left[A\xrightarrow {g} B \right]\mapsto \left[ (p(A), (A,1_{p(A)})) \xrightarrow{(p(g), \bar{g})} (p(B), (B,1_{p(B)}))  \right],
	\]
	where $\bar{g}:A\rightarrow \dom(\widehat{p(g)}_B)$ is the unique arrow such that $\widehat{p(g)}_B\bar{g}=g$ and $p(\bar{g})=\theta_{p(g),B}$: the verification that it is a functor is based on the unicity of the arrow $\bar{g}$ (notice in particular that $\bar{1_A}=\widehat{1_p(A)}_A\inv=\phi_A(A,1_{p(A)})$, which implies the preservation of identities). If we suppose that $g:A\rightarrow B$ is cartesian, it is also a lift for $p(g)$: this implies that $\bar{g}$ is an isomorphism and hence $(p(g),\bar{g})$ is a cartesian arrow of $\gbicat(\dcat)$. Moreover, it is immediate to see that $\pi T=p$, and hence we have a morphism of fibrations $(T, \id):[p]\rightarrow [\pi]$.
	
	We define its quasi-inverse $S:\gbicat(\dcat)\rightarrow \dbicat$ as
	\[
	\left[ (Y, (B,\beta))\xrightarrow{(y,g)}(X,(A,\alpha))  \right]\mapsto\left[ B \xrightarrow{\widehat{\alpha y}_Ag} A  \right]:
	\]
	again, the verification that it is a functor it lenghty but straightforward. It also maps cartesian arrows to cartesian arrows: indeed, $(y, g)$ is cartesian if $g$ is an isomorphism, which means that its image $\widehat{\alpha y}_Ag$ is also cartesian (as $\widehat{\alpha y}_A$ is). Finally, there is a natural isomorphism $\sigma: ps \Isorightarrow \pi$ defined componentwise as
	\[   
	\left[ pS(X,(A,\alpha))\xrightarrow{\sigma(X, (A,\alpha))}\pi(X,(A,\alpha))\right]=\left[ p(A) \xrightarrow{\alpha\inv} X   \right] \]
	which is natural. The two compositions $(S,\sigma)(T,\id)$ and $(T,\id)(S,\sigma)$ are indeed the two components of an equivalence of fibrations.
	
	Now consider again the morphisms $(S,\sigma):\mathcal{}\gbicat(\dcat)\rightarrow [p]$: we add an apex $S^{[p]},\ \sigma^{[p]}$ to specify that they stem from the fibration $p$. To extend these data into an invertible pseudonatural transformation we also need for each $(F,\phi):[p]\rightarrow [q]$ a 2-isomorphism
	\[
	\begin{tikzcd}[column sep=huge]
		\gbicat(\dcat) \ar[d, "{\gbicat\Ifrak^{(F,\phi)}}"'] \ar[r, "{(S^{[p]}, \sigma^{[p]})}"] & {[p]} \ar[d, "{(F,\phi)}"] \ar[dl, Rightarrow, "{\zeta^{(F,\phi)}}", "\sim"{sloped}]\\
		\gbicat(\ecat) \ar[r, "{(S^{[q]}, \sigma^{[q]})}"'] & {[q]}
	\end{tikzcd}
	\] 
	satisfying some compatibility axioms: luckily, it is easy to see that the the two composites $(F,\phi)(S^{[p]},\sigma^{[p]})$ and $(S^{[q]},\sigma^{[q]})\gbicat\Ifrak^{(F,\phi)}$ are actually equal, so $\zeta$ is an identity, and the axioms are quickly verified to hold.
\end{proof}
Combining the previous results, we obtain the equivalence we wanted:
\begin{cor}\label{cor:equivalenza_strfib_indcat}
	The two 2-functors
	$$\Ifrak:\Cl\Fib_\cbicat\rightarrow \Ind_\cbicat,\ \mathcal{U}\gbicat:\Ind_\cbicat\rightarrow \Cl\Fib_\cbicat$$
	form an equivalence of 2-categories.
\end{cor}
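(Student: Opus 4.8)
The plan is simply to assemble the two results immediately preceding this corollary, since together they already supply all the data required for an equivalence of $2$-categories. Recall that exhibiting $\Ifrak$ and $\mathcal{U}\gbicat$ as such an equivalence means producing invertible pseudonatural transformations witnessing that both round-trip composites are equivalent to the relevant identity $2$-functors; concretely, one needs $\mathcal{U}\gbicat\Ifrak\simeq Id_{\Cl\Fib_\cbicat}$ and $\Ifrak\mathcal{U}\gbicat\simeq Id_{\Ind_\cbicat}$.

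First I would invoke the preceding proposition, which furnishes the invertible $2$-natural transformation $\mathcal{U}\gbicat\Ifrak\Isorightarrow Id_{\Cl\Fib_\cbicat}$ (obtained objectwise from the equivalence of a cloven fibration $p$ with the Grothendieck fibration $\pi:\gbicat(\dcat)\to\cbicat$, via the mutually quasi-inverse morphisms $(S,\sigma)$ and $(T,\id)$, together with the identity $2$-isomorphisms $\zeta^{(F,\phi)}$). This settles the composite landing in $\Cl\Fib_\cbicat$. Then I would invoke the preceding corollary, which gives the equivalence $Id_{\Ind_\cbicat}\cong\Ifrak\mathcal{U}\gbicat$, and so settles the composite landing in $\Ind_\cbicat$. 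Since these two invertible transformations are precisely the pieces of data constituting an equivalence of $2$-categories between $\Cl\Fib_\cbicat$ and $\Ind_\cbicat$, the conclusion follows at once.

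There is no genuine obstacle remaining at this stage: all the substantive work has already been done in the lemmas and propositions above, namely that $\Ifrak$ and $\gbicat$ are well-defined strict $2$-functors, that a cloven fibration is recovered up to equivalence of fibrations from its associated indexed category, and dually that an indexed category is recovered up to pseudonatural isomorphism from the Grothendieck construction. The only point deserving a word of care is that both transformations are invertible \emph{pseudonatural} (indeed $2$-natural) isomorphisms of $2$-functors, rather than merely objectwise equivalences; this is exactly what was verified in the cited statements, so it guarantees a genuine biequivalence and not just a pointwise comparison.
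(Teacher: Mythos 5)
Your proposal is correct and matches the paper exactly: the corollary is stated there with the preamble ``Combining the previous results, we obtain the equivalence we wanted,'' i.e.\ it is obtained precisely by pairing the invertible $2$-natural transformation $\mathcal{U}\gbicat\Ifrak\Isorightarrow Id_{\Cl\Fib_\cbicat}$ from the preceding proposition with the equivalence $Id_{\Ind_\cbicat}\cong\Ifrak\mathcal{U}\gbicat$ from the preceding corollary. Your additional remark that these are genuine (pseudo)natural isomorphisms of $2$-functors rather than mere objectwise equivalences is exactly the right point of care and is indeed what the cited results supply.
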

This has as corollary a fact we mentioned multiple times, namely that Street fibrations are `up-to-equivalence Grothendieck fibrations':
\begin{cor}\label{cor:strfib_grfib_equivalenti}
	Consider a functor $p:\dbicat\rightarrow \cbicat$: then it is a fibration if and only there are a Grothendieck fibration $q:\ebicat\rightarrow\cbicat$, an equivalence of categories $F:\dbicat\isorightarrow\ebicat$ and a natural isomorphism $\phi$ as in the diagram:
	\[
	\begin{tikzcd}
		\dbicat \ar[dr, "p"', ""{name=A,below}] \ar[r, "F"{yshift=0.8ex}, "\sim"{yshift=-0.3ex}] & \ebicat \ar[d, "q"] \ar[to=A, Rightarrow, "\phi", "\sim"{sloped, above}]\\& \cbicat
	\end{tikzcd}\]
\end{cor}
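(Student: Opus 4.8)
The plan is to read the statement off the results just established, treating the two implications separately. The \textbf{only if} direction will follow almost immediately from the structure theorems for cloven fibrations, while the \textbf{if} direction amounts to verifying that the property of being a Street fibration is stable along the equivalence displayed in the diagram.

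For the \textbf{only if} direction, suppose $p:\dbicat\rightarrow\cbicat$ is a Street fibration. Working in our fixed model of set theory, the axiom of choice lets us select, for every object $A$ of $\dbicat$ and every $x:X\rightarrow p(A)$ in $\cbicat$, a cartesian lift together with its comparison isomorphism; this endows $p$ with a cleavage, so $p$ becomes a cloven fibration. We may therefore apply $\Ifrak$ to obtain the $\cbicat$-indexed category $\dcat:=\Ifrak(p)$, whose Grothendieck construction $\pi:\gbicat(\dcat)\rightarrow\cbicat$ is a cloven Grothendieck fibration by the Proposition asserting that $\gbicat(\dcat)\rightarrow\cbicat$ is a cloven Grothendieck fibration for any indexed category. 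The Proposition showing that a cloven fibration is equivalent to $\pi:\gbicat(\dcat)\rightarrow\cbicat$ then supplies an equivalence of fibrations between $[p]$ and $[\pi]$ whose underlying functor $T:\dbicat\rightarrow\gbicat(\dcat)$ is an equivalence of categories satisfying $\pi T=p$ on the nose. Setting $\ebicat:=\gbicat(\dcat)$, $q:=\pi$, $F:=T$ and $\phi:=\id$ realises the required diagram.

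For the \textbf{if} direction, suppose we are given a Grothendieck fibration $q:\ebicat\rightarrow\cbicat$, an equivalence $F:\dbicat\isorightarrow\ebicat$ and a natural isomorphism $\phi:qF\Isorightarrow p$. Every Grothendieck fibration is in particular a Street fibration (taking all comparison isomorphisms $\theta$ to be identities), so it suffices to show that being a Street fibration is preserved along such an equivalence, which is exactly the sense in which Street fibrations are \emph{equivalence-stable}. Fix a quasi-inverse $G$ with natural isomorphisms $\id_\dbicat\cong GF$ and $FG\cong\id_\ebicat$, and combine these with $\phi$ to obtain a natural isomorphism $pG\Isorightarrow q$. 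The key technical point is that $F$ both preserves and reflects cartesian arrows: since the defining universal property of a cartesian arrow is phrased entirely through the hom-sets of $\dbicat$ and the functor $p$, and since $F$ is fully faithful while the triangle commutes up to $\phi$, this property is transported faithfully in both directions.

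Granting this, the lifting property transports directly. Given $A$ in $\dbicat$ and $x:X\rightarrow p(A)$, the composite $\phi_A\inv x:X\rightarrow q(F(A))$ admits, by hypothesis on $q$, a cartesian lift $f:B\rightarrow F(A)$ together with a compatible isomorphism $X\isorightarrow q(B)$. Applying $G$ to $f$ and composing with the component at $A$ of the natural isomorphism $GF\cong\id_\dbicat$ produces an arrow $G(B)\rightarrow A$ of $\dbicat$ which, by the preservation of cartesianity just discussed, is cartesian over $p$ and, after adjusting the base along $\phi$ and $pG\Isorightarrow q$, lifts $x$ up to isomorphism. Hence $p$ is a Street fibration. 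I expect the only genuinely delicate step to be the verification that cartesian arrows are preserved and reflected by the equivalence $F$; once this equivalence-stability of cartesianity is in hand, both implications reduce to bookkeeping with the results already proved.
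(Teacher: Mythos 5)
Your proposal is correct and follows essentially the same route as the paper: the forward direction is read off from the previously established equivalence of a cloven fibration with its Grothendieck construction, and the converse transports a $q$-cartesian lift of $\phi_A^{-1}x$ back along a quasi-inverse of $F$, composing with the unit of the (adjoint) equivalence exactly as in the paper's $\hat{x}:=\eta_D^{-1}G(f)$. The only cosmetic difference is that the paper normalises $F$ to an adjoint equivalence so that the triangle identities make the cartesianity check explicit, whereas you defer that verification to the general fact that equivalences of fibrations preserve and reflect cartesian arrows.
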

\begin{proof}
	We have shown previously that if $p$ is a fibration then it is equivalent to $\gbicat(\dcat)\rightarrow\cbicat$, which is Grothendieck. Conversely, suppose given $F$ and $\phi$: without loss of generality we may assume that $F$ is the left adjoint of an adjoint equivalence, so that the identities  $G(\epsilon)=\eta_G\inv$ and $F(\eta)=\epsilon_F\inv$ hold for the unit and counit of the adjunction. Consider an arrow $x:X\rightarrow p(D)$ in $\cbicat$: then the composite $\phi_D\inv x:X\rightarrow qF(D)$ admits a cartesian lift $f:Y\rightarrow F(D)$ through $q$. A check shows that the arrow $\hat{x}:=\eta_D\inv G(f):G(Y)\rightarrow D$ is still cartesian, and that $x= p(\hat{x})\phi_{GY}q(\epsilon_{Y}\inv)$, making $\hat{x}$ into a lift for $x$.
\end{proof}

To conclude this section, let us consider split Street fibrations. We recall that a Grothendieck cleavage is said to be a \emph{splitting}\index{fibration!split} if it is compatible with identities and compositions: that is equivalent to requiring that the indexed category corresponding to a split fibration be a strict functor. By suitably generalizing the notion of splitting, we obtain an analogue result for Street fibrations:
\begin{prop}
	Consider a cloven fibration $p:\dbicat\rightarrow \cbicat$, then the following are equivalent:\begin{itemize}
		\item The corresponding $\cbicat$-indexed category $\dcat:\cbicat\op\rightarrow\CAT$ is a strict functor (\ie all natural transformations $\phi^\dcat_X$ and $\phi^\dcat_{y,z}$ are identities);
		\item the cleavage for $p$ is a \emph{splitting}, meaning that the two following conditions are verified: \begin{enumerate}[(a)]
			\item for every $\alpha:X\isorightarrow p(A)$, $\widehat{\alpha}_A=1_A$;
			\item for every $z:Z\rightarrow Y$, $y:Y\rightarrow p(A)$, $\widehat{yz}_A=\widehat{y}_A\widehat{\theta_{y,A}z}_{\dom(\widehat{y}_A)}$ and $\theta_{yz,A}=\theta_{\theta_{y,A}z,\dom(\widehat{y}_A)}$.	
		\end{enumerate} 
	\end{itemize}
	This restricts the equivalence $\Cl\Fib_\cbicat\simeq \Ind_\cbicat$ to an equivalence $\Spl\Fib_\cbicat\simeq [\cbicat\op,\CAT]$\index{$\Spl\Fib_\cbicat$}, where $\Spl\Fib_\cbicat$ denotes the full subcategory of $\Cl\Fib_\cbicat$ of split fibrations.
\end{prop}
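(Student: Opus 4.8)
The plan is to reduce everything to the explicit description of the structure isomorphisms $\phi^\dcat_X$ and $\phi^\dcat_{y,z}$ of the pseudofunctor $\dcat=\Ifrak(p)$ recorded in the construction of $\Ifrak$, and to read off conditions (a) and (b) as the exact conditions for these to be identities. Since, by the definition given, $\dcat$ being a strict functor means that every $\phi^\dcat_X$ and every $\phi^\dcat_{y,z}$ is an identity, it suffices to match (a) with \ac$\phi^\dcat_X=\id$ for all $X$' and (b) with \ac$\phi^\dcat_{y,z}=\id$ for all $y,z$'.

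For the first match, the component of $\phi^\dcat_X$ at $(A,\alpha)$ is $\widehat{\alpha}_A\inv:(A,\alpha)\to(\dom(\widehat{\alpha}_A),\theta_{\alpha,A})$, which is an identity exactly when $\widehat{\alpha}_A=1_A$; in that case $\dom(\widehat{\alpha}_A)=A$ and, from $p(\widehat{\alpha}_A)\theta_{\alpha,A}=\alpha$, also $\theta_{\alpha,A}=\alpha$, so source and target coincide and moreover $\dcat(1_X)=\Id_{\dcat(X)}$ (the equality on arrows following from the uniqueness clause defining $\dcat(1_X)(\omega)$). As $(A,\alpha)$ ranges over the fibres this is precisely (a). For the second match, the component of $\phi^\dcat_{y,z}$ at $(A,\alpha)$ is $\chi_{\alpha y,z,A}\inv$, so $\phi^\dcat_{y,z}=\id$ iff $\chi_{\alpha y,z,A}=1$ for all $(A,\alpha)$; by the defining equations of $\chi$ in Remark \ref{remark:frecce_chi_lambda} this unwinds to $\widehat{\alpha yz}_A=\widehat{\alpha y}_A\,\widehat{\theta_{\alpha y,A}z}_{\dom(\widehat{\alpha y}_A)}$ together with $\theta_{\alpha yz,A}=\theta_{\theta_{\alpha y,A}z,\dom(\widehat{\alpha y}_A)}$, i.e. the two identities of (b) with the arrow \ac$y$' there instantiated as $\alpha y:Y\to p(A)$. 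The two quantifications agree: specialising $\alpha=1_{p(A)}$ recovers (b) for an arbitrary $w:Y\to p(A)$, while conversely (b) applied to $\alpha y$ yields $\chi_{\alpha y,z,A}=1$ at every $(A,\alpha)$; here too the functor equality $\dcat(z)\dcat(y)=\dcat(yz)$ follows from (b) on objects and from the uniqueness characterisation of the transition morphisms on arrows. This settles the equivalence of the two displayed conditions.

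For the final sentence I would first observe that, by the equivalence just proved, $\Ifrak$ sends split fibrations to strict functors and hence, together with the fully faithful inclusions $\Spl\Fib_\cbicat\hookrightarrow\Cl\Fib_\cbicat$ and $[\cbicat\op,\CAT]\hookrightarrow\Ind_\cbicat$ (the latter as the full sub-2-category on the strict functors), the 2-functor $\Ifrak$ of Corollary \ref{cor:equivalenza_strfib_indcat} corestricts to a fully faithful $\Ifrak:\Spl\Fib_\cbicat\to[\cbicat\op,\CAT]$. It then remains to establish essential surjectivity, namely that every strict $\dcat$ arises as $\Ifrak(p)$, up to isomorphism, for some split $p$.

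I expect this last point to be the genuine obstacle, and the reason the restriction is not purely formal: neither \emph{split} nor \emph{strict} is stable under equivalence, so one cannot simply transport $\dcat$ along $\mathcal{U}\gbicat$. Indeed $\gbicat(\dcat)$ carries a Grothendieck cleavage, for which the lift of a non-identity isomorphism is never an identity, so (a) fails and $\gbicat(\dcat)$ is \emph{not} split even when $\dcat$ is strict. The remedy is to replace $\gbicat(\dcat)$ by an equivalent fibration equipped with a cleavage that lifts every isomorphism to an identity; the delicate step is to verify that such a choice can simultaneously be arranged to satisfy (b) (the multiplicativity constraint is sensitive to non-trivial automorphisms in $\cbicat$, so a naive re-cleaving of $\gbicat(\dcat)$ does not suffice). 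Once an honestly split fibration $p$ with $\Ifrak(p)\cong\dcat$ is produced, essential surjectivity, and hence the claimed equivalence $\Spl\Fib_\cbicat\simeq[\cbicat\op,\CAT]$, follows.
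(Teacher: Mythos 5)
Your treatment of the equivalence of the two bullet points is correct and follows exactly the paper's route: the text likewise identifies the component $\phi^\dcat_X(A,\alpha)=\widehat{\alpha}_A\inv$ with condition (a) and $\phi^\dcat_{y,z}(A,\alpha)=\chi_{\alpha y,z,A}\inv$ with condition (b), with the same remark that quantifying over the composites $\alpha y$ amounts to quantifying over arbitrary arrows into $p(A)$. Your extra care about $\theta_{\alpha,A}=\alpha$ and about recovering (b) from the case $\alpha=1_{p(A)}$ is fine but does not change the argument.

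The gap is in your last paragraph, and it is a genuine one --- indeed the obstacle you flag is worse than you suggest. (The paper's own proof is silent on the restricted equivalence, so you are not overlooking an argument the text supplies; but your proof stops exactly where the work begins, with the promise that ``an honestly split fibration $p$ with $\Ifrak(p)\cong\dcat$'' can be produced.) In general it cannot. Conditions (a) and (b) together force, for every $y:Y\rightarrow p(A)$ and every isomorphism $\iota:Z\isorightarrow Y$, the identity $\theta_{y\iota,A}=\theta_{y,A}\iota$: apply (b) with $z=\iota$ and observe that $\theta_{y,A}\iota$ is an isomorphism, so (a) gives $\widehat{\theta_{y,A}\iota}_{\dom(\widehat{y}_A)}=1$ and hence $\theta_{\theta_{y,A}\iota,\dom(\widehat{y}_A)}=\theta_{y,A}\iota$. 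Since $\theta_{y,A}$ is invertible, $y\iota=y\iota'$ then forces $\iota=\iota'$; so a splitting cleavage can exist on $p:\dbicat\rightarrow\cbicat$ only if precomposition with isomorphisms acts freely on arrows into objects in the image of $p$. Take $\cbicat$ to be $B(\mathbb{Z}/2)$ with a terminal object $\bullet$ freely adjoined, so that the unique arrow $w:\ast\rightarrow\bullet$ satisfies $w\sigma=w$ with $\sigma\neq 1$: then no fibration with nonempty essential fibre over $\bullet$ admits any splitting cleavage, hence even the terminal strict functor lies outside the essential image of $\Ifrak$ restricted to $\Spl\Fib_\cbicat$, and the claimed equivalence $\Spl\Fib_\cbicat\simeq[\cbicat\op,\CAT]$ fails for this $\cbicat$ with the definitions as written. (The same conflation of Grothendieck-splittings with Street-splittings affects Corollary \ref{cor:eqv_fibr_e_splfibr}, so you cannot appeal to it to finish.) The correct conclusion of your analysis is therefore not that a cleverer re-cleaving is needed, but that the final sentence of the statement requires either a hypothesis on $\cbicat$ (right-cancellability of arrows with respect to isomorphisms) or a weakening of condition (a) to identities only, which would make $\gbicat$ of a strict functor split and render the restriction formal, at the cost of breaking the equivalence with the first bullet as stated.
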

\begin{proof}
	This is immediate recalling the definition of the natural isomorphisms $\phi_X$ and $\phi_{y,z}$ above. Since $\phi_X^\dcat(A,\alpha):=\widehat{\alpha}
	_A\inv$, $\phi^\dcat$ is an identity if and only if the cleavage lifts of isomorphisms are identity arrows. For the second condition, $\phi_{y,z}^\dcat(A,\alpha)=\chi_{\alpha y,z,A}\inv$: by the definition of $\chi$, it is the identity if and only if for all choices of $\alpha$, $y$ and $z$ it holds that $\widehat{\alpha yz}_A=\widehat{\alpha y}_A\widehat{\theta_{\alpha y,A}z}_{\dom(\widehat{\alpha y}_A)}$ and $\theta_{\alpha yz, A}=\theta_{\theta_{\alpha y,A}z, \dom{\widehat{\alpha y}_A}}$, which is evidently equivalent to the condition (b) stated above.
\end{proof}

\section{The fibred Yoneda lemma}
The next significant result we will prove is the extension of Yoneda lemma to Street fibrations. It is really just a matter of computations, therefore we will only sketch how to define the equivalence (also to set our notation).
\begin{prop}[ (fibred Yoneda lemma)] \label{prop:fibered_yoneda}\index{fibred Yoneda lemma}
	Consider a cloven Street fibration $p:\dbicat\rightarrow\cbicat$: then there is an equivalence of categories 
	$$\Fib_\cbicat(\cbicat/X,\dbicat)\simeq \dcat(X)$$ which moreover is pseudonatural in both components, \ie for any $y:Y\rightarrow X$ in $\cbicat$ and $(F,\phi):\dbicat\rightarrow \ebicat$ in $\Cl\Fib_\cbicat$ the two squares
	\[
	\begin{tikzcd}
		\dcat(X) \ar[r, "\sim"] \ar[d, "\dcat(y)"'] & \Fib_\cbicat(\cbicat/X,\dbicat) \ar[d, "-\circ \fib y"]\\
		\dcat(Y) \ar[r, "\sim"] & \Fib_\cbicat(\cbicat/Y,\dbicat) 
	\end{tikzcd}\hspace{1cm}
	\begin{tikzcd}
		\dcat(X) \ar[d, "F_{|\dcat(X)}"] \ar[r, "\sim"] & \Fib_\cbicat(\cbicat/X,\dbicat)\ar[d, "{(F,\phi)\circ-}"]\\
		\ecat(X)\ar[r,"\sim"] & \Fib_\cbicat(\cbicat/X, \ebicat)
	\end{tikzcd}	
	\]
	commute up to canonical natural isomorphisms.
\end{prop}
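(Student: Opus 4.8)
The plan is to reproduce the classical Yoneda lemma for fibrations, being careful that in the Street setting cartesian lifts are only canonical up to the structural isomorphisms $\theta$. First I would record that, viewing the representable $\yo_\cbicat(X)$ as in Example \ref{ex:indexedcategories}(i), the projection $\cbicat/X\to\cbicat$ sending $[u\colon Y\to X]$ to $Y$ is a (discrete) Grothendieck fibration in which \emph{every} arrow is cartesian and $[1_X\colon X\to X]$ is a terminal object. The equivalence is then realised by two explicit functors. In one direction I define the evaluation functor $\Ev\colon\Fib_\cbicat(\cbicat/X,\dbicat)\to\dcat(X)$ sending a morphism of fibrations $(F,\phi)$ to $(F(1_X),\phi_{1_X}\inv)$, which lies in $\dcat(X)$ since $\phi_{1_X}\inv\colon X\isorightarrow pF(1_X)$; on a $2$-cell $\alpha\colon(F,\phi)\Rightarrow(G,\gamma)$ one takes the component $\alpha_{1_X}$, and the defining relation $\phi=\gamma(p\circ\alpha)$ read at $[1_X]$ shows at once that $\alpha_{1_X}$ is a morphism of $\dcat(X)$.

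In the other direction I build $L\colon\dcat(X)\to\Fib_\cbicat(\cbicat/X,\dbicat)$ by transporting an object along the chosen cleavage: for $(A,\alpha)$ I set $L_{(A,\alpha)}(Y,u)$ to be the underlying $\dbicat$-object $\dom(\widehat{\alpha u}_A)$ of $\dcat(u)(A,\alpha)$, with structural isomorphism $\theta_{\alpha u,A}\inv$, and on an arrow $z\colon[u]\to[v]$ of $\cbicat/X$ (so $vz=u$) I use the cartesian arrow $\lambda_{\alpha v,z,A}$ of Remark \ref{remark:frecce_chi_lambda}. Because every such $\lambda$ is cartesian and every arrow of $\cbicat/X$ is cartesian, $L_{(A,\alpha)}$ automatically preserves cartesian arrows, so it is a genuine morphism of fibrations; the identities $\lambda_{x,1_X,A}=\id$ and $\lambda_{x,y,A}\lambda_{xy,z,A}=\lambda_{x,yz,A}$ give its functoriality. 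On a morphism $\gamma\colon(A,\alpha)\to(B,\beta)$ of $\dcat(X)$ the corresponding $2$-cell $L(\gamma)$ has component at $[u]$ the transition map $\dcat(u)(\gamma)$, whose naturality is again forced by cartesianity.

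Next I would check the two functors are quasi-inverse. For $\Ev\circ L\cong\Id$, evaluating $L_{(A,\alpha)}$ at $[1_X]$ returns $(\dom(\widehat{\alpha}_A),\theta_{\alpha,A})$, and since $\alpha$ is invertible its cartesian lift $\widehat{\alpha}_A$ is an isomorphism by Remarks \ref{remark:proprietà_frecce_cartesiane}(iii); the resulting isomorphism is exactly the unitor $\phi^\dcat_X$, natural in $(A,\alpha)$. For $L\circ\Ev\cong\Id$, given $(F,\phi)$ I exploit the canonical arrow $\tilde u\colon[u]\to[1_X]$ of $\cbicat/X$ (namely $u$ itself, since $1_X\,u=u$). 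As $F$ preserves cartesian arrows, $F(\tilde u)$ is a cartesian lift into $F(1_X)$ of $\phi_{1_X}\inv u$, so by the uniqueness in Remarks \ref{remark:proprietà_frecce_cartesiane}(iii) it is canonically isomorphic to the chosen lift $\widehat{\phi_{1_X}\inv u}_{F(1_X)}$, whose domain is $(L\circ\Ev)(F,\phi)$ evaluated at $[u]$. These comparison isomorphisms assemble into a natural isomorphism $F\Isorightarrow L_{\Ev(F,\phi)}$, and the compatibility with $\phi$ (hence the $2$-cell condition in $\Fib_\cbicat$) follows because the comparison respects projections.

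Finally, for the pseudonaturality squares I would argue that both commutations are instances of structure already produced by the $2$-functor $\Ifrak$. The left square, comparing $L_{\dcat(y)(A,\alpha)}$ with $L_{(A,\alpha)}\circ\fib y$, is witnessed by the compositor $\phi^\dcat_{y,v}$, \ie by the isomorphisms $\chi_{\alpha y,v,A}$ of Remark \ref{remark:frecce_chi_lambda}; the right square, comparing $(F,\phi)\circ L_{(A,\alpha)}$ with $L_{\Ifrak^{(F,\phi)}_X(A,\alpha)}$, is witnessed by the components $\Ifrak^{(F,\phi)}_y$ of the pseudonatural transformation attached to $(F,\phi)$. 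I expect the only real work — and the main obstacle — to be the bookkeeping: verifying that the comparison isomorphism in $L\circ\Ev\cong\Id$ is natural in $[u]$ and satisfies the $2$-cell equation of $\Fib_\cbicat$, and that the two witnessing families above are genuine modifications, since all of these reduce to chasing the coherence relations among $\theta$, $\chi$ and $\lambda$. Conceptually nothing goes beyond the ordinary fibred Yoneda lemma; the difficulty is entirely in tracking the structural isomorphisms that replace the strict equalities of the Grothendieck case.
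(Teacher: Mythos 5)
Your proposal is correct and follows essentially the same route as the paper's proof: evaluation at the terminal object $[1_X]$ in one direction, transport along the cleavage via the arrows $\lambda_{\alpha u,z,A}$ in the other, with the quasi-inverse isomorphisms coming from the invertibility of $\widehat{\alpha}_A$ and the uniqueness of comparison maps between cartesian lifts. Your identification of the component of $L(\gamma)$ with $\dcat(u)(\gamma)$ and of the pseudonaturality witnesses with $\chi_{\alpha y,v,A}$ and $\Ifrak^{(F,\phi)}_y$ matches the paper's constructions exactly.
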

\begin{proof}
	One direction of the equivalence is easy: starting with $(F,\phi):\cbicat/X\rightarrow \dbicat$ a morphism of fibrations, we can  consider the pair $(F([1_X]), \phi_{[1_X]}\inv:X\isorightarrow pF([1_X]))$, which is an object of $\dcat(X)$; for a 2-cell $\alpha:(F,\phi)\Rightarrow (G,\gamma)$, we set $\alpha_{[1_X]}:(F([1_X]), \phi_{[1_x]}\inv)\rightarrow (G([1_X]),\gamma_{[1_X]}\inv)$ as its image. This definition is evidently functorial, and hence we have obtained a functor $\Phi:\Fib_\cbicat(\cbicat/X,\dbicat)\rightarrow \dcat(X)$.
	
	The building of the quasi-inverse exploits the cleavage. Consider an object $(A, \alpha:X\isorightarrow p(A))$ in $\dcat(X)$, then we define a morphism of fibrations $(F_{(A,\alpha)},\phi_{(A,\alpha)}):\cbicat/X\rightarrow \dbicat$ as follows:
	\begin{itemize}
		\item we define $F_{(A,\alpha)}([y]):=\dom(\widehat{\alpha y}_A)$;
		\item for $z:[yz]\rightarrow [y]$ we set $F_{(A,\alpha)}(z):=\lambda_{\alpha y,z,A}$.
	\end{itemize}
	Since arrows of the form $\lambda$ were shown to be cartesian in Remark \ref{remark:frecce_chi_lambda}, this functor maps cartesian arrows to cartesian arrows. The natural isomorphism $\phi_{(A,\alpha)}:pF\Isorightarrow p_X$ is defined componentwise as $\phi_{(A,\alpha)}([y]):=\theta_{\alpha y, A}\inv: pF([y])\isorightarrow Y$ and it is indeed natural. 
	
	We want to define our functor on arrows now. Consider $\gamma:(A,\alpha)\rightarrow (B,\beta)$ in $\dcat(X)$, \ie an arrow $\gamma:A\rightarrow B$ of $\dbicat$ such that $p(\gamma)=\beta\alpha\inv$: calling $(F,\phi)$ and $(F', \phi')$ the images of $(A,\alpha) $ and $(B,\beta)$ for brevity, we want to build a 2-cell $F_\gamma$ between them. To do so, it is sufficient to see that for any $y:Y\rightarrow X$ the identity $p(\gamma\widehat{\alpha y}_A)= p(\widehat{\beta y}_B) \theta_{\beta y,B}\theta_{\alpha y,A}\inv$ holds: then by cartesianity there is a unique $F_\gamma([y]):\dom(\widehat{\alpha y}_A)\rightarrow\dom (\widehat{\beta y}_B)$ such that $\widehat{\beta y}_BF_\gamma([y])= \gamma \widehat{\alpha y}_A$ and $p(F_\gamma([y]))=\theta_{\beta y, B}\theta_{\alpha y, A}\inv$. The components $F_\gamma([y])$ define a natural transformation $F\Rightarrow F'$ (it is an easy check using the definition of the arrows $\lambda$), and the identity $\phi= \phi'(p\circ F_\gamma)$ is immediately verified: thus $F_\gamma$ is indeed a 2-cell of $\Fib_\cbicat$. The unicity in the definition of the $F_\gamma([y])$ assures us that this construction is functorial, and hence we have a functor $\Psi:\dcat(X)\rightarrow \Fib_\cbicat(\cbicat/X,\dbicat)$. 
	
	Now to show that the two functors are quasi-inverses. Starting from 
	$(A,\alpha)$ in $\dcat(X)$, consider  $\Phi\Psi(A,\alpha)=(F_{(A,\alpha)}([1_X]),\phi_{(A,\alpha)}({[1_X]})\inv)$ in $\dcat(X)$. Notice that the arrow $\widehat{\alpha}_A:F([1_X])\rightarrow A$ is an isomorphism, since it is a cartesian lift for $\alpha$, which is invertible: since moreover $\alpha=p(\hat{\alpha}_A)\theta_{\alpha,A}$, it is an isomorphism $\hat{\alpha}_A: (\dom(\widehat{\alpha}_A), \theta_{\alpha,A} )\rightarrow(A,\alpha)$. It is easy to check that it is also natural in $(A,\alpha)$, and hence we have a natural isomorphism $\Phi\Psi\Isorightarrow Id_{\dcat(X)}$.
	
	Conversely, start from $(F,\phi):\cbicat/X\rightarrow \dbicat$ morphism of fibrations, consider $\Phi(F,\phi)=(F([1_X]),\phi_{[1_X]}\inv)$ in $\dcat(X)$ and then $\Psi\Phi(F,\phi)=(G,\gamma):\cbicat/X\rightarrow \dbicat$. Notice that for any $y:[y]\rightarrow [1_X]$ in $\cbicat/X$, the arrow $F(y):F([y])\rightarrow F([1_X])$ is cartesian since $F$ is a morphism of fibrations; moreover, it lifts $\alpha y$ (this is an immediate check): thus there is a unique isomorphism $\kappa_{[y]}:F[y]\isorightarrow \dom(\widehat{\alpha y}_A)$, since both lift the same arrow. It is immediate to check that the $\kappa_{[y]}$ are the components of a natural transformation $\kappa: F\Rightarrow G$ and that it is in fact an invertible 2-cell $\kappa:(F,\phi)\Rightarrow (G,\gamma)$. The naturality in $(F,\phi)$ is also a straightforward check, and so we conclude that there is a natural isomorphism $\Psi\Phi\Isorightarrow Id_{\Fib_\cbicat(\cbicat/X,\dbicat)}$. 
	
	Finally, it is lengthy but straightforward to verify that there exist natural isomorphisms making the squares above commute, and this concludes the proof.
\end{proof}
The fibred Yoneda lemma has the corollary that all cloven Street fibrations are split, up to equivalence:
\begin{cor}\label{cor:eqv_fibr_e_splfibr}
	Every Street fibration is equivalent to a split Street fibration.
\end{cor}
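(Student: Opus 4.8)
The plan is to reduce the statement to the fibred Yoneda lemma by producing an explicit \emph{strict} model of the indexed category attached to $p$, thereby avoiding any appeal to an abstract strictification/coherence theorem. First I would observe that every Street fibration admits a cleavage (choose one), so without loss of generality $p:\dbicat\to\cbicat$ is cloven, and set $\dcat:=\Ifrak(p)$. By the proposition preceding Corollary \ref{cor:equivalenza_strfib_indcat} we have $p\simeq\gbicat(\dcat)$ in $\Cl\Fib_\cbicat$, so it suffices to replace the pseudofunctor $\dcat$ by an equivalent strict functor.

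Second, I would define the candidate strict functor $\dcat':\cbicat\op\to\CAT$ by $\dcat'(X):=\Fib_\cbicat(\cbicat/X,\dbicat)$ on objects and $\dcat'(y):=(-)\circ\fib y$ on an arrow $y:Y\to X$, where $\fib y:\cbicat/Y\to\cbicat/X$ is the postcomposition morphism of fibrations of Example \ref{ex:indexedcategories}(i). The crucial point is that, unlike $\dcat$, this assignment is \emph{strictly} functorial: since $\fib y$ is postcomposition by $y$, one has $\fib(1_X)=\id_{\cbicat/X}$ and $\fib(yz)=\fib y\circ\fib z$ on the nose (as morphisms of fibrations, with identity structural isomorphisms), whence $\dcat'(1_X)=\id$ and $\dcat'(z)\dcat'(y)=\dcat'(yz)$ with no mediating $2$-cells. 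Thus $\dcat'$ is an honest $\cbicat$-indexed category which happens to be a strict functor.

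Third, I would invoke the fibred Yoneda lemma (Proposition \ref{prop:fibered_yoneda}): for each $X$ it yields an equivalence $\dcat(X)\simeq\Fib_\cbicat(\cbicat/X,\dbicat)=\dcat'(X)$, and its first naturality square asserts precisely that these equivalences intertwine $\dcat(y)$ with $(-)\circ\fib y=\dcat'(y)$ up to canonical isomorphism. Assembling the levelwise equivalences together with these coherence isomorphisms produces a pseudonatural equivalence $\dcat\simeq\dcat'$ in $\Ind_\cbicat$.

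Finally, I would transport this back along the Grothendieck construction. Since $\gbicat$ is a $2$-functor it preserves equivalences, so $\gbicat(\dcat)\simeq\gbicat(\dcat')$ in $\Cl\Fib_\cbicat$; combined with $p\simeq\gbicat(\dcat)$ this gives $p\simeq\gbicat(\dcat')$. Because $\dcat'$ is strict, the canonical cleavage of $\gbicat(\dcat')$ is a splitting: conditions (a)--(b) of the characterisation of split fibrations proved above hold automatically once all the structure isomorphisms $\phi^{\dcat'}$ are identities. Hence $\gbicat(\dcat')$ is a split Street fibration to which $p$ is equivalent, as required. The only step requiring genuine care is the strict functoriality in the second paragraph; it is exactly the rigidity of the assignment $X\mapsto\cbicat/X$ under postcomposition that rectifies the pseudofunctor $\dcat$, so that the fibred Yoneda lemma does the work of an abstract coherence theorem for us.
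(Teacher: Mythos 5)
Your proof is correct and follows essentially the same route as the paper: the paper's one-line argument is precisely that the fibred Yoneda lemma identifies $\dcat$ with the strict functor $\Fib_\cbicat(\cbicat/-,\dbicat)$, whose associated fibration is split. Your version merely spells out the strict functoriality of $X\mapsto\Fib_\cbicat(\cbicat/X,\dbicat)$ and the transport back along $\gbicat$, which the paper leaves implicit.
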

\begin{proof}
	The fibred Yoneda lemma states precisely that $\dcat\simeq \Fib_\cbicat(\cbicat/-, \dbicat)$ and since $\Fib_\cbicat(\cbicat/-,\dbicat)$ is a functor it corresponds to a split Street fibration.
\end{proof}

\section{Limit-preserving pseudofunctors}
In the present section we shall recall some results about the existence of limits in a Grothendieck fibration, which will become instrumental later.
\begin{lemma}\label{lemma:pb_preserving_psft}
	Consider a pseudofunctor $\dcat:\cbicat\op\rightarrow\CAT$, then the following are equivalent:\begin{enumerate}[(i)]
		\item $\gbicat(\dcat)$ has pullbacks, and the pullback of a horizontal (resp. vertical) arrow is a horizontal (resp. vertical) arrow;
		\item every fibre of $\dcat$ has pullbacks, every transition morphism preserves them and every arrow $y:Y\rightarrow X$ in $\cbicat$ such that $\dcat(X)$ is non-empty has pullbacks.
	\end{enumerate}
\end{lemma}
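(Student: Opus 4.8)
The plan is to organise everything around the vertical--cartesian factorisation of arrows in $\gbicat(\dcat)$. Recall that an arrow $(y,a)\colon (Y,V)\to (X,U)$ is cartesian --- I will call such arrows \emph{horizontal} --- exactly when its fibre component $a$ is an isomorphism, and it is \emph{vertical} exactly when its base component $y$ is invertible; moreover every arrow factors, essentially uniquely, as a vertical arrow followed by a horizontal one. Before computing I would reduce to the case in which $\dcat$ is a \emph{strict} functor: by Corollary \ref{cor:eqv_fibr_e_splfibr} the fibration $\gbicat(\dcat)$ is equivalent over $\cbicat$ to a split one, hence to one of the form $\gbicat(\dcat')$ with $\dcat'$ strict, and every ingredient of the statement (existence of pullbacks, being horizontal or vertical, preservation of pullbacks by the transition morphisms, and non-emptiness of the fibres) is invariant under such an equivalence, while the base $\cbicat$ is untouched so that clause (c) is unchanged. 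Working with strict $\dcat$ erases the coherence isomorphisms $\phi^\dcat$ and turns all the verifications below into strict equalities.

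For (ii)$\Rightarrow$(i), given a cospan $(X_1,U_1)\xrightarrow{(x_1,a_1)}(Z,W)\xleftarrow{(x_2,a_2)}(X_2,U_2)$, I would first form $P=X_1\times_Z X_2$ in $\cbicat$, which exists because $\dcat(Z)\ni W$ is non-empty, so pullbacks along $x_1$ exist by hypothesis; write $q_1,q_2$ for its projections. Reindexing along $q_1,q_2$ gives a cospan $\dcat(q_1)(U_1)\to \dcat(x_1q_1)(W)\leftarrow \dcat(q_2)(U_2)$ inside $\dcat(P)$, whose pullback $U$ exists since every fibre has pullbacks, and I claim $(P,U)$ with the evident projections is the pullback in $\gbicat(\dcat)$. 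The universal property splits into a base part, settled by $P=X_1\times_Z X_2$, and a fibre part: a competing cone over a base arrow $s\colon T\to P$ yields, after applying $\dcat(s)$, a cone over $\dcat(s)(U)$, and here I use \emph{exactly} that the transition morphism $\dcat(s)$ preserves the pullback $U$ to extract the unique mediating arrow. The preservation clauses then follow from the principle that a pullback of an isomorphism is an isomorphism: if $a_1$ is invertible then so is $\dcat(q_1)(a_1)$, which forces the opposite fibre projection to be invertible, i.e.\ the opposite projection to be horizontal; and if $x_1$ is invertible then so is $q_2$, making the opposite projection vertical.

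For (i)$\Rightarrow$(ii) I would read off each clause by feeding to $\gbicat(\dcat)$ pullbacks of ``pure'' cospans. A cospan in a fibre $\dcat(X)$ consists of vertical arrows, so by the vertical-preservation clause its pullback in $\gbicat(\dcat)$ has projections lying over isomorphisms; transporting the pullback object along these isomorphisms places it in $\dcat(X)$, and as any cone in $\dcat(X)$ forces its mediating arrow to be vertical, this is a pullback in the fibre, giving clause (a). For clause (c), fix $W\in\dcat(X)$ (possible since $\dcat(X)\neq\emptyset$) and, for arbitrary $g\colon Z\to X$, take the pullback of the cartesian cospan $(Y,\dcat(y)(W))\xrightarrow{(y,1)}(X,W)\xleftarrow{(g,1)}(Z,\dcat(g)(W))$; by horizontal-preservation both projections are cartesian, and a diagram chase using uniqueness of cartesian lifts shows that applying $p_\dcat$ yields the pullback $Y\times_X Z$, so pullbacks along $y$ exist. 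Clause (b) is the delicate step and the main obstacle: to see that $\dcat(y)$ preserves a pullback $U=U_1\times_W U_2$ of $\dcat(X)$, I would compare the fibre pullback $Q=\dcat(y)(U_1)\times_{\dcat(y)(W)}\dcat(y)(U_2)$ over $Y$ with $\dcat(y)(U)$ by constructing mutually inverse comparison arrows --- one from the universal property of $Q$ applied to the images under $\dcat(y)$ of the projections of $U$, the other from the universal property of the $\gbicat(\dcat)$-pullback $(X,U)$ applied to the cone obtained by post-composing the projections of $(Y,Q)$ with the cartesian lifts $(y,1)$, the relevant squares commuting by naturality of the cleavage --- and conclude by uniqueness of mediating maps. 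The coherence bookkeeping in this final comparison is precisely where the reduction to strict $\dcat$ earns its keep; everything else is routine manipulation of the factorisation system.
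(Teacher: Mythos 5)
Your proof is correct and follows essentially the same strategy as the paper's: both exploit the vertical--cartesian factorisation to reduce general pullbacks in $\gbicat(\dcat)$ to pullbacks in $\cbicat$ of the base components of horizontal cospans and pullbacks in the fibres of vertical cospans, with preservation by transition morphisms supplying the universal property against arbitrary (mixed) cones. The only substantive differences are your preliminary strictification and your explicit two-comparison-map argument showing that (i) forces transition morphisms to preserve fibre pullbacks --- a point the paper's part (c) treats only in the converse direction --- where you correctly route the second comparison through the universal property of $(X,U)$ in $\gbicat(\dcat)$ rather than in the fibre, which is exactly what avoids circularity there.
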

\begin{proof}
	We can split the proof in three parts:
	\begin{enumerate}[(a)]
		\item A quick calculation shows that squares of the kind
		\[
		\begin{tikzcd}
			{(Y,\dcat(y)(V))} \ar[d, "{(1_Y, \dcat(y)(a))}"'] \ar[rr, "{(y,1_{\dcat(y)(V)})}"] & & {(X,V)} \ar[d, "{(1_X,a)}"]\\
			{(Y, \dcat(y)(U))} \ar[rr, "{(y,1_{\dcat(y)(U)})}"'] & & {(X,U)}
		\end{tikzcd}
		\]
		are always pullbacks in a fibration, therefore each vertical arrow admits a pullback along any horizontal arrow, and said pullbacks are still vertical (resp. horizontal). Therefore in item (i) we can reduce to assuming that each horizontal (resp. vertical) arrow has a horizontal (resp. vertical) pullback along any other horizontal (resp. vertical) arrow. General pullbacks are then obtained by gluing pullback squares.
		
		\item Consider the two squares
		\[
		\begin{tikzcd}
			{(P,\dcat(yp)(U))} \ar[d, "{(p,1)}"'] \ar[r, "{(q,1)}"] & {(Z, \dcat(z)(U))} \ar[d, "{(z,1)}"]\\
			{(Y, \dcat(y)(U))} \ar[r, "{(y,1)}"'] &{(X,U)}
		\end{tikzcd},\hspace{1cm}
		\begin{tikzcd}
			P \ar[d, "p"'] \ar[r, "q"] & Z \ar[d, "z"]\\
			Y\ar[r, "y"'] & X
		\end{tikzcd}:
		\]
		One can easily show that one is a pullback if and only if the other is a pullback: therefore $\gbicat(\dcat)$ has pullbacks of horizontal arrows along horizontal arrows, and they are again horizontal, if and only if $\cbicat$ has pullbacks of every arrow $y:Y\rightarrow X$ such that the fibre $\dcat(X)$ is non-empty (of course, if $\dcat(X)$ were empty we would not be able to lift the arrows $y$ and $z$ to $\gbicat(\dcat)$, in order to compute their pullback).
		
		\item Consider the following two squares, the left-hand one being in the fibre $\dcat(X)$:
		\[
		\begin{tikzcd}
			P \ar[d, "p"'] \ar[r, "q"] & W\ar[d, "b"]\\
			V \ar[r, "a"'] & U 	
		\end{tikzcd},\ 
		\begin{tikzcd}
			{(X,P)} \ar[d, "{(1,p)}"'] \ar[r, "{(1,q)}"] & {(X,W)}\ar[d, "{(1,b)}"]\\
			{(X,V)} \ar[r, "{(1,a)}"'] & {(X,U)}	
		\end{tikzcd}.
		\]
		A quick computation shows that if the right-hand one is a pullback square in $\gbicat(\dcat)$ then the left-hand one is a pullback square in the fibre $\dcat(X)$. Conversely, suppose that the left hand square is a pullback in $\dcat(X)$ and consider the following situation:
		\[
		\begin{tikzcd}
			{(Y,R)} \ar[ddr, "{(y,r)}"', bend right] \ar[drr, "{(y,s)}", bend left]&&\\
			&{(X,P)} \ar[d, "{(1,p)}"'] \ar[r, "{(1,q)}"] & {(X,W)}\ar[d, "{(1,b)}"]\\
			&{(X,V)} \ar[r, "{(1,a)}"'] & {(X,U)}	
		\end{tikzcd}.
		\]
		The equality $(1,a)(y,r)=(1,b)(y,s)$ implies that there is a commutative diagram
		\[
		\begin{tikzcd}
			R \ar[ddr, "r"', bend right] \ar[drr, "s", bend left] \ar[dr, dashed, "t"description]&&\\
			&\dcat(y)(P) \ar[d, "\dcat(y)(p)"'] \ar[r, "\dcat(y)(q)"] & \dcat(y)(W)\ar[d, "\dcat(y)(b)"]\\
			&\dcat(y)(V) \ar[r, "\dcat(y)(a)"'] & {\dcat(y)(U)}	
		\end{tikzcd}
		\]
		in $\dcat(Y)$: since $\dcat(y)$ preserves pullback squares, there exists a unique $t:R\rightarrow \dcat(y)(P)$ making it commutative, which allows us to define a unique $(y,t):(Y,R)\rightarrow (X,P)$ presenting the square in $\gbicat(\dcat)$ as a pullback square.
	\end{enumerate}
\end{proof}
In the equivalent hypotheses of the previous lemma the functors $i_X:\dcat(X)\rightarrow \gbicat(\dcat)$ and $p_\dcat:\gbicat(\dcat)\rightarrow\cbicat$ preserve and reflect pullbacks. Notice that this is not always the case for other limits: for instance, terminal objects are almost never preserved by the functors $i_X$. However, similar considerations to those above still hold for limits of a general shape: for instance, given a diagram in $\gbicat(\dcat)$ whose edges are all horizontal, if its limit legs are all horizontal then the limit is preserved by $p_\dcat$. We will not need results of such generality. Still, it is interesting to remark that the existence of limits in fibres and in the base category is enough for them to exist in the fibration:
\begin{prop}
	Consider two categories $\cbicat$ and $\ibicat$ and a pseudofunctor $\dcat:\cbicat\op\rightarrow\CAT$. Suppose that limits of shape $\ibicat$ exist in $\cbicat$ and in every fibre of $\dcat$, and that these latter ones are preserved by transition morphisms: then $\gbicat(\dcat)$ has limits of shape $\ibicat$, and they are preserved by $p_\dcat$.
\end{prop}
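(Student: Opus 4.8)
The plan is to construct the limit of a diagram $D:\ibicat\rightarrow\gbicat(\dcat)$ by separately building its $\cbicat$-component and its fibre-component, then showing the resulting object satisfies the universal property. Write each object of the diagram as $D(i)=(X_i,U_i)$ with $X_i$ in $\cbicat$ and $U_i$ in $\dcat(X_i)$, and each arrow $D(i\rightarrow j)=(x_{ij},a_{ij})$ with $x_{ij}:X_i\rightarrow X_j$ in $\cbicat$ and $a_{ij}:U_i\rightarrow\dcat(x_{ij})(U_j)$. First I would form $L:=\lim_{\ibicat}(p_\dcat\circ D)$ in $\cbicat$, which exists by hypothesis, with limit legs $\pi_i:L\rightarrow X_i$. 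This is the candidate $\cbicat$-component of the limit.

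Next I would build the fibre-component over $L$. The idea is to transport everything into the single fibre $\dcat(L)$ via the transition morphisms $\dcat(\pi_i):\dcat(X_i)\rightarrow\dcat(L)$, obtaining objects $\dcat(\pi_i)(U_i)$ in $\dcat(L)$. For each arrow $i\rightarrow j$ in $\ibicat$, the relation $\pi_j=x_{ij}\pi_i$ together with the pseudofunctoriality isomorphism $\phi^\dcat_{x_{ij},\pi_i}$ gives a coherent comparison $\dcat(\pi_i)(U_i)\rightarrow\dcat(\pi_j)(U_j)$ in $\dcat(L)$, assembled from $\dcat(\pi_i)(a_{ij})$ and the structural isomorphisms of $\dcat$. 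This yields a diagram $\widetilde{D}:\ibicat\rightarrow\dcat(L)$, and since $\dcat(L)$ has limits of shape $\ibicat$ by hypothesis, I would set $W:=\lim_{\ibicat}\widetilde{D}$ in $\dcat(L)$, with legs $\omega_i:W\rightarrow\dcat(\pi_i)(U_i)$. The candidate limit is then $(L,W)$, with projections $(\pi_i,\omega_i):(L,W)\rightarrow(X_i,U_i)$ in $\gbicat(\dcat)$.

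To verify the universal property I would take an arbitrary cone $(\rho_i,t_i):(Z,R)\rightarrow(X_i,U_i)$ over $D$. The $\cbicat$-components $\rho_i:Z\rightarrow X_i$ form a cone over $p_\dcat\circ D$, so they factor uniquely through a map $u:Z\rightarrow L$ with $\pi_i u=\rho_i$. For the fibre component, I would apply $\dcat(u):\dcat(L)\rightarrow\dcat(Z)$ and use that the transition morphisms preserve $\ibicat$-limits: the composite $\dcat(u)$ sends the limit cone $(W,\omega_i)$ to a limit cone over $\dcat(u)\circ\widetilde{D}$ in $\dcat(Z)$, and the $t_i$ (suitably transported into $\dcat(Z)$ via the pseudofunctor isomorphisms relating $\dcat(\rho_i)$ with $\dcat(u)\dcat(\pi_i)$) assemble into a competing cone, yielding a unique factorization $R\rightarrow\dcat(u)(W)$. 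Packaging $u$ with this factorization gives the required unique arrow $(Z,R)\rightarrow(L,W)$, and $p_\dcat$ preserves the limit by construction since its value is exactly $L=\lim(p_\dcat\circ D)$.

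The main obstacle I expect is bookkeeping the pseudofunctoriality coherence isomorphisms $\phi^\dcat_{y,z}$ and $\phi^\dcat_X$ throughout: because $\dcat$ is only a pseudofunctor, the identities $\pi_j=x_{ij}\pi_i$ and $\rho_i=\pi_i u$ hold only up to the structural $2$-isomorphisms, so the transported cones commute strictly only after inserting these isomorphisms correctly. Checking that the preservation of $\ibicat$-limits by the $\dcat(y)$ interacts coherently with these comparison maps — so that the competing cone in $\dcat(Z)$ is genuinely a cone and its factorization is genuinely natural — is the delicate part; everything else is a routine transcription of the ordinary construction of limits in a Grothendieck fibration, and indeed specializes to the pullback case already treated in Lemma \ref{lemma:pb_preserving_psft}.
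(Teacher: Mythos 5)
Your proposal is correct and follows essentially the same route as the paper's proof: take the limit $L$ of $p_\dcat\circ D$ in $\cbicat$, transport the fibre data into $\dcat(L)$ along the legs $\dcat(\pi_i)$ (inserting the structural isomorphisms of $\dcat$) to form a diagram whose limit $W$ gives the candidate $(L,W)$, and then use preservation of $\ibicat$-limits by the transition morphisms to factor an arbitrary cone. The coherence bookkeeping you flag is indeed the only delicate point, and the paper likewise leaves it as a routine verification.
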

\begin{proof}
	Consider a diagram $D:\ibicat\rightarrow \gbicat(\dcat)$: let us denote the image of an object $i$ of $\ibicat$ by $(X_i, U_i)$, and the image of an arrow $s:i\rightarrow j$ by $(x_s,b_s):(X_i,U_i)\rightarrow (X_j,U_j)$. First of all, by composing $D$ with $p_\dcat:\gbicat(\dcat)\rightarrow\cbicat$ we obtain a diagram of shape $\ibicat$ in $\cbicat$, whose vertices are the $X_i$ and whose edges are the $x_s$: therefore, we can compute its limit, an object $X$, and denote by $l_i:X\rightarrow X_i$ the legs of its limit cone. Next, we consider a further diagram $D':\ibicat\rightarrow\dcat(X)$: we map each object $i$ of $\ibicat$ to $\dcat(l_i)(U_i)$, and each arrow $s:i\rightarrow j$ in $\ibicat$ to the arrow
	\[
	\dcat(l_i)(U_i)\xrightarrow{\dcat(l_i)(b_s)}\dcat(l_i)\dcat(x_s)(U_j)\isorightarrow \dcat(l_j)(U_j),
	\]
	where the last arrow is one of the canonical isomorphisms of $\dcat$. We can compute the limit of $D'$: we shall call it $U$, and denote by $\lambda_i:U\rightarrow \dcat(l_i)(U_i)$ the legs of its limit cone. It is now a matter of computations to verify that the arrows
	\[
	(l_i,\lambda_i):(X,U)\rightarrow(X_i,U_i)
	\]
	form the legs of a limit cone for the original diagram $D$. Indeed, consider any cone $(y,_i,a_i):(Y,V)\rightarrow (X_i,U_i)$ over it: the arrows $y_i:Y\rightarrow X_i$ form a cone over $p_\dcat\circ D$, and thus induce a unique $y:Y\rightarrow X$ such that $y_i=l_i\circ y$; on the other hand, since transition morphisms preserve limits for diagrams of shape $\ibicat$, the object $\dcat(y)(U)$ is the limit for the composite diagram $\dcat(y)\circ D'$, and thus the legs $a_i:V\rightarrow \dcat(y)(U_i)$ induce a unique $a:V\rightarrow \dcat(y)(U)$ such that $a_i=\dcat(y)(\lambda_i)\circ a$. This provides the two components of a unique arrow $(y,a):(Y,V)\rightarrow (X,U)$, showing that $(X,U)$ is the limit of $D$.
\end{proof}
\begin{cor}\label{cor:colimits_in_discretefib_from_base}
	Consider two categories $\cbicat$, a discrete fibration $p:\fib P \rightarrow \cbicat$ and a further non-empty category $\ibicat$: if $\cbicat$ is has limits of shape $\ibicat$ then $\fib P$ has limits of shape $\ibicat$, and $p$ preserves them.
	
	In particular, if $\cbicat$ is (finitely) complete and the fibre $P(1_\cbicat)$ is a singleton, then $\fib P$ is (finitely) complete category and $p$ preserves (finite) limits.
\end{cor}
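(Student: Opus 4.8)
The plan is to deduce the statement from the preceding proposition by regarding the presheaf $P:\cbicat\op\rightarrow\Set$ as a strict, \emph{discrete} $\cbicat$-indexed category, so that $\fib P=\gbicat(P)$ and $p=p_P$. With our hypothesis that $\cbicat$ has $\ibicat$-limits already matching one clause of the proposition, the existence and $p$-preservation of $\ibicat$-shaped limits in $\fib P$ is reduced to two purely fibrewise facts: that each fibre $P(X)$, viewed as a discrete category, has limits of shape $\ibicat$, and that each transition function $P(y):P(X)\rightarrow P(Y)$ preserves them. First I would record these two facts and then invoke the proposition verbatim.

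The main work, and the main obstacle, lies in the first fibrewise condition, and it is here that the hypothesis on $\ibicat$ must be read carefully. A functor $\ibicat\rightarrow P(X)$ into a discrete category is constant on each connected component; when $\ibicat$ is non-empty and \emph{connected} such a diagram is globally constant, say with value $c$, and the constant cone exhibits $c$ as its limit, so discrete categories do have all non-empty connected $\ibicat$-limits. Any function between discrete sets trivially preserves these, since it sends a constant diagram to a constant diagram. Thus, for $\ibicat$ non-empty and connected, both fibrewise hypotheses hold automatically and the proposition yields the conclusion. I would stress that connectedness is genuinely needed: for a discrete $\ibicat$ with at least two objects the fibres admit no such limits (a set with more than one element has no binary products), and correspondingly $p$ cannot preserve them — already the domain functor $\cbicat/A\rightarrow\cbicat$, which is the instance $P=\yo_\cbicat(A)$, sends products to fibred products and does not preserve the terminal object.

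Equivalently, and more transparently, one can argue directly, mirroring the proof of the preceding proposition. Given $D:\ibicat\rightarrow\fib P$ with $D(i)=(X_i,u_i)$ and $D(s)$ lying over $x_s:=p(D(s))$, so that $P(x_s)(u_j)=u_i$ for $s:i\rightarrow j$, I would form the limit $L=\lim(p\circ D)$ in $\cbicat$ with legs $l_i:L\rightarrow X_i$. The candidate limit is $(L,w)$ with $w:=P(l_i)(u_i)$; the cone identity $x_s l_i=l_j$ gives $P(l_j)(u_j)=P(l_i)P(x_s)(u_j)=P(l_i)(u_i)$, so $w$ is constant along every arrow of $\ibicat$, hence well defined precisely when $\ibicat$ is connected and non-empty. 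That $(L,w)$ is the limit, and that $p$ returns $L$, then follows from the universal property of $L$ together with the uniqueness of arrows over a fixed base arrow in a discrete fibration: any competing cone $(Y,v)$ induces a unique $y:Y\rightarrow L$ on underlying objects, and $P(y)(w)=v$ holds automatically.

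Finally, for the \emph{in particular} clause I would reduce finite completeness to the existence of pullbacks together with a terminal object. Pullbacks have connected shape and are supplied, and preserved by $p$, by the first part. The terminal object is the remaining and most delicate point: here one uses the hypothesis that the fibre over $1_\cbicat$ is a singleton to single out the object $(1_\cbicat,\ast)$ and to check that it is terminal in $\fib P$, whence $\fib P$ is finitely complete. I expect this terminal-object step, rather than the pullbacks, to be the crux of the clause, since it is exactly where the singleton assumption on $P(1_\cbicat)$ is consumed and where the interplay with $p$ preserving the resulting terminal object must be verified.
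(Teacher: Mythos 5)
Your treatment of the first clause is correct, and it is in fact more careful than the paper's own proof, which discharges the fibrewise hypothesis of the preceding proposition with the assertion that discrete categories ``admit all non-empty limits''. They do not: a discrete category with two objects has no binary product, so for a disconnected shape $\ibicat$ the proposition does not apply, and the clause as stated is false --- your counterexample $P=\yo_\cbicat(A)$, where $\fib P\simeq \cbicat/A$ and binary products are fibred products over $A$, is exactly right. Once $\ibicat$ is required to be non-empty and \emph{connected}, both your reduction to the proposition and your direct construction of the limit $(L,w)$ with $w=P(l_i)(u_i)$ are sound; connectedness is precisely what makes $w$ independent of $i$.

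The genuine gap is the terminal-object step of the ``in particular'' clause, which you explicitly defer (``check that it is terminal in $\fib P$'') and correctly identify as the crux. That check fails. An arrow $(Y,v)\to(1_\cbicat,\ast)$ in $\fib P$ exists precisely when $P(!_Y)(\ast)=v$, so $(1_\cbicat,\ast)$ is terminal if and only if every fibre $P(Y)$ is a singleton, i.e.\ if and only if $P$ is isomorphic to $\yo_\cbicat(1_\cbicat)$, the terminal presheaf; the singleton hypothesis on the single fibre $P(1_\cbicat)$ is far from sufficient. Concretely, take $\cbicat=\twocat$ (the poset $0\le 1$, which is complete with terminal object $1$), $P(1)=\{\ast\}$, $P(0)=\{a,b\}$ and $P(t)(\ast)=a$: then $\fib P$ consists of a single arrow $(0,a)\to(1,\ast)$ together with the isolated object $(0,b)$, and has no terminal object, so the second clause is false as stated and no proof of it can be completed. (The paper's own proof commits the same error, asserting that a terminal object $T$ of the fibre over $1_\cbicat$ yields a terminal object $(1_\cbicat,T)$ of the total category; for a general indexed category this requires $\dcat(!_Y)(T)$ to be terminal in \emph{every} fibre.) What survives, and what is the only thing the paper actually uses (in Corollary \ref{cor:localizationdiscrete}, where only pullbacks and equalizers are needed), is the connected case you proved; the completeness clause would need the stronger hypothesis that $P$ is the terminal presheaf, or should be weakened to the existence and preservation of non-empty connected limits.
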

\begin{proof}
	It follows immediately from the previous result, since the fibres of $p$ are discrete and thus admit all non-empty limits. The last claim holds because if $P(1_\cbicat)$ has a terminal object $T$ then the pair $(1_\cbicat, T)$ is terminal in $\fib P$, but a dicrete category has a terminal object if and only if it is a singleton. 
\end{proof}

\section{Grothendieck construction for a composite fibration}

Let ${\mathbb D}:{\cal C}\op\to \textup{\bf CAT}$ be a $\cal C$-indexed category and ${\mathbb E}:{\cal G}({\mathbb D})\op\to \textup{\bf CAT}$ a ${\cal G}({\mathbb D})$-indexed category. If we consider the fibrations $p_{\mathbb D}:{\cal G}({\mathbb D})\to {\cal C}$ and $p_{\mathbb E}:{\cal G}({\mathbb E})\to {\cal G}({\mathbb D})$: it is a well known fact that the composite functor $p_\dcat p_\ecat$ is still a fibration. We provide here, for future reference, an explicit description of the corresponding pseudofunctor:
\begin{prop}\label{prop:compositefibration}
	Given $\mathbb D$ and $\mathbb E$ as above, consider the $\cal C$-indexed category ${\mathbb E}_{\mathbb D}:{\cal C}\op\to \textup{\bf CAT}$\index{${\mathbb E}_{\mathbb D}$} defined as follows: 
	\begin{itemize}
		\item for any $X$ in $\cbicat$, ${\mathbb E}_{\mathbb D}(X)$ is the category having as objects the pairs $(U, H)$ where $U$ is an object in ${\mathbb D}(X)$ and $H$ in ${\mathbb E}(X, U)$ and as arrows $(U', H')\to (U, H)$ the pairs $(a, h)$ where $a:U'\to U$ in the category ${\mathbb D}(X)$ and $h:H'\to {\mathbb E}(1_{X}, a)(H)$ in the category ${\mathbb E}(X, U')$; 
		
		\item for any arrow $y:Y\to X$ in $\cbicat$,  
		\[
		{\mathbb E}_{\mathbb D}(y):{\mathbb E}_{\mathbb D}(X)\to {\mathbb E}_{\mathbb D}(Y)
		\]
		is the functor sending any object $(U, H)$ of ${\mathbb E}_{\mathbb D}(X)$ to the object $$({\mathbb D}(y)(U), {\mathbb E}(y, 1)(H))$$ of ${\mathbb E}_{\mathbb D}(Y)$ and an arrow $(a, h):(U', H')\to (U, H)$ in ${\mathbb E}_{\mathbb D}(X)$ to the arrow of ${\mathbb E}_{\mathbb D}(Y)$
		\[({\mathbb D}(y)(a), {\mathbb E}(y, 1)(h))\hspace{-0.5ex}:\hspace{-0.5ex}({\mathbb D}(y)(U'), {\mathbb E}(y, 1)(H'))\hspace{-0.5ex}\to\hspace{-0.5ex} ({\mathbb D}(y)(U), {\mathbb E}(y, 1)(H)).\]
	\end{itemize}
	Then the fibration associated with ${\mathbb E}_{\mathbb D}$ is isomorphic to the composite fibration $p_{\mathbb D}\circ  p_{\mathbb E}$. 
\end{prop}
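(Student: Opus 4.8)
The plan is to construct an explicit isomorphism of categories over $\cbicat$,
\[
\Theta\colon \gbicat(\mathbb{E}_{\mathbb{D}})\isorightarrow \gbicat(\mathbb{E}),
\]
where both total categories are fibred over $\cbicat$ by reading off the first ($\cbicat$-)component. Since $\Theta$ will commute \emph{strictly} with these projections, it is automatically a morphism of fibrations with identity comparison $2$-cell, and being invertible as a functor it exhibits the claimed isomorphism of fibrations. Because a functor over $\cbicat$ that is bijective on objects and on each hom-set is invertible in $\CAT/\cbicat$, it suffices to define $\Theta$ on objects and arrows and to check bijectivity and functoriality. One could instead argue abstractly — $p_{\mathbb{D}}$ and $p_{\mathbb{E}}$ are cloven Grothendieck fibrations, hence so is their composite, and one could compute $\Ifrak\Gr(p_{\mathbb{D}}p_{\mathbb{E}})$ and identify it with $\mathbb{E}_{\mathbb{D}}$ — but the direct comparison is cleaner and yields an isomorphism rather than a mere equivalence.

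On objects the bijection is immediate: I set $\Theta(X,(U,H)):=((X,U),H)$. For arrows, unwinding the two Grothendieck constructions shows that a morphism $(Y,(V,K))\to(X,(U,H))$ in $\gbicat(\mathbb{E}_{\mathbb{D}})$ consists of $y\colon Y\to X$, an arrow $a\colon V\to\mathbb{D}(y)(U)$ in $\mathbb{D}(Y)$, and an arrow $h\colon K\to\mathbb{E}(1_Y,a)(\mathbb{E}(y,1)(H))$ in $\mathbb{E}(Y,V)$; whereas a morphism $((Y,V),K)\to((X,U),H)$ in $\gbicat(\mathbb{E})$ consists of the same $y$ and $a$ (packaged as the arrow $(y,a)\colon(Y,V)\to(X,U)$ of $\gbicat(\mathbb{D})$) together with $h'\colon K\to\mathbb{E}(y,a)(H)$. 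The base data coincide, so the only discrepancy is in the $\mathbb{E}$-component.

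The resolution is the factorization $(y,a)=(y,1)\circ(1_Y,a)$ in $\gbicat(\mathbb{D})$ (where $(1_Y,a)$ abbreviates the genuine arrow with second component $\phi^{\mathbb{D}}_Y(\mathbb{D}(y)(U))\circ a$), which holds \emph{strictly}: the composition formula for $\gbicat(\mathbb{D})$ produces the second component $\phi^{\mathbb{D}}_{y,1_Y}(U)\circ\phi^{\mathbb{D}}_Y(\mathbb{D}(y)(U))\circ a$, and the unit axiom of the pseudofunctor $\mathbb{D}$ (Definition \ref{def:indexed_category}) collapses the two structure isomorphisms to the identity, leaving $a$. Consequently the compositor $\phi^{\mathbb{E}}_{(y,1),(1_Y,a)}$ of $\mathbb{E}$ is a natural isomorphism $\mathbb{E}(1_Y,a)\,\mathbb{E}(y,1)\Isorightarrow\mathbb{E}(y,a)$, and I define $\Theta$ on arrows by $\Theta(y,(a,h)):=\bigl((y,a),\,\phi^{\mathbb{E}}_{(y,1),(1_Y,a)}(H)\circ h\bigr)$, whose evident inverse postcomposes with $(\phi^{\mathbb{E}}_{(y,1),(1_Y,a)})^{-1}$. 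This is a bijection on every hom-set.

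It remains to verify functoriality, and this is where I expect the real work. Preservation of identities requires the explicit unitor $\phi^{\mathbb{E}_{\mathbb{D}}}_X$ — assembled from $\phi^{\mathbb{D}}_X$ and the unit of $\mathbb{E}$ — and amounts to checking that $\Theta$ sends the identity of $(X,(U,H))$ to $\bigl((1_X,\phi^{\mathbb{D}}_X(U)),\,\phi^{\mathbb{E}}_{(X,U)}(H)\bigr)$, the identity of $((X,U),H)$ in $\gbicat(\mathbb{E})$. Preservation of composition is the main obstacle: expanding both composites by the Grothendieck composition formula reduces the required equality to the associativity (hexagon) coherence of $\mathbb{E}$, together with the compatibility of $\phi^{\mathbb{D}}_{y,z}$ with the transition functors $\mathbb{E}(-,1)$. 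This is a purely mechanical but lengthy diagram chase among the structure isomorphisms of $\mathbb{D}$ and $\mathbb{E}$; carrying it out simultaneously confirms that $\mathbb{E}_{\mathbb{D}}$ is indeed a pseudofunctor, its coherence data being those transported along $\Theta$ from $\gbicat(\mathbb{E})$. To contain the bookkeeping I would first dispatch the split case, where by Corollary \ref{cor:eqv_fibr_e_splfibr} one may take $\mathbb{D}$ and $\mathbb{E}$ to be strict functors: then all unitors and compositors vanish, $\mathbb{E}(1_Y,a)\,\mathbb{E}(y,1)=\mathbb{E}(y,a)$ on the nose, and $\Theta$ becomes a literal relabelling that is visibly functorial; the general case differs only by insertion of the canonical isomorphisms, controlled exactly by the axioms of Definition \ref{def:indexed_category}.
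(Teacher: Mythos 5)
Your proposal is correct and follows essentially the same route as the paper's proof: both unwind the two Grothendieck constructions, observe that the objects coincide as triples $(X,U,H)$, and identify the arrows via the compositor isomorphism $\mathbb{E}(1_Y,a)\,\mathbb{E}(y,1)\cong\mathbb{E}(y,a)$, leaving the remaining coherence checks as routine computation. Your additional remarks — the strict factorization $(y,a)=(y,1)\circ(1_Y,a)$ via the unit axiom, and the reduction to the split case — are sound refinements of the same argument rather than a different approach.
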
 

\begin{proof}
	The result trivial, once we observe how the two fibrations $p_\dcat p_\ecat:\gbicat(\ecat)\rightarrow\gbicat(\dcat)\rightarrow \cbicat$ and $p_{\ecat_\dcat}:\gbicat(\ecat_\dcat)\rightarrow\cbicat$ are made.
	
	The category $\gbicat(\ecat)$ has objects of the form $((X,U), H)$, where $(X,U)$ is an object of $\gbicat(\dcat)$ and $H$ is an object of $\ecat(X,U)$; a morphism $((y,a),k):((Y,V),K)\rightarrow ((X,U),H)$ is given by a an arrow $(y,a):(Y,V)\rightarrow (X,U)$ in $\gbicat(\dcat)$ and an arrow $k:K\rightarrow \ecat(y,a)(H)$ in $\ecat(Y,V)$. We can simplify this description by saying that the objects of $\gbicat(\ecat)$ are triples $(X,U,H)$ with $X$ in $\cbicat$, $U$ in $\dcat(X)$, $H$ in $\ecat(X,U)$; arrows $(y,a,k):(Y,V,K)\rightarrow (X,U,H)$ are given by an arrow $y:Y\rightarrow X$ of $\cbicat$, an arrow $a:V\rightarrow \dcat(y)(U)$ of $\dcat(Y)$ and an arrow $k:K\rightarrow \ecat(y,a)(U)$ of $\ecat(Y,V)$.
	
	In $\gbicat(\ecat_\dcat)$ the objects  are of the form $(X,(U,H))$, where $(U,H)$ is an object of $\ecat_\dcat(X)$, and a morphism $(y,(a,k')):(Y,(V,K))\rightarrow (X,(U,H))$ of $\gbicat(\ecat_\dcat)$ is indexed by an arrow $y:Y\rightarrow X$ in $\cbicat$ and an arrow $(a,k'):(V,K)\rightarrow \ecat_\dcat(y)(U,H)=(\dcat(y)(U), \ecat(y,1)(H))$ in $\ecat_\dcat(Y)$. We can simplify this description too: objects of $\gbicat(\ecat_\dcat)$ are triples $(X,U,H)$ with $X$ in $\cbicat$, $U$ in $\dcat(X)$ and $H$ in $\ecat(X,U)$ (they are in fact the exact same objects of $\gbicat(\ecat)$); while an arrow $(y,a,k'):(Y,V,K)\rightarrow (X,U,H)$ of $\gbicat(\ecat_\dcat)$ is the given of an arrow $y:Y\rightarrow X$ in $\cbicat$, an arrow $a:V\rightarrow \dcat(y)(U)$ in $\dcat(Y)$ and an arrow $k':K\rightarrow \ecat(1,a)\ecat(y,1)(H)$ of $\ecat(Y,V)$. It is now obvious that by composing $k'$ with the canonical isomorphism $\ecat(1,a)\ecat(y,1)(H)\simeq \ecat(y,a)(H)$ we can go from an arrow $(y,a,k'):(Y,V,K)\rightarrow (X,U,H)$ of $\gbicat(\ecat_\dcat)$ to an arrow $(y,a,k):(Y,V,K)\rightarrow (X,U,H)$ of $\gbicat(\ecat)$ and viceversa. The fact that this provides an equivalence (actually, an isomorphism) of the categories $\gbicat(\ecat_\dcat)$ and $\gbicat(\ecat)$, which is also compatible with the respective fibration functors to $\cbicat$, is a matter of straightforward computations.
\end{proof}

\section{Stacks}\label{sec:stack}
Consider a site $(\cbicat,J)$ and a presheaf $P:\cbicat\op\rightarrow \Set$: we recall that $P$ is called a $J$-separated presheaf (resp. $J$-sheaf) if for every object $X$ of $\cbicat$ and every $J$-covering sieve $m_S:S\rightarrowtail \yo(X)$, the map
$$[\cbicat\op,\Set](\yo(X),P)\xrightarrow{-\circ m_S} [\cbicat\op,\Set](S,P)$$
is injective (resp. a bijection). The extension of this definition to the fibrational context is immediate:
\begin{defn}\label{def:stack_fib}
	Consider a site $(\cbicat,J)$ and a fibration $p:\dbicat\rightarrow \cbicat$: then $p$ is a \emph{$J$-prestack}\index{prestack} (resp. \emph{$J$-stack}\index{stack}) if for every $J$-sieve $m_S:S\rightarrowtail \yo(X)$ the functor
	$$\Fib_\cbicat(\cbicat/X,\dbicat)\xhookrightarrow{- \circ \fib m_S} \Fib_\cbicat(\fib S,\dbicat)$$
	is full and faithful (resp. an equivalence).
\end{defn}
The notion of stack is a precise expansion of that of sheaf, as established by the next result:
\begin{prop}[{\cite[Proposition 4.9]{vistoli.stack}}]\label{prop:psh+stack=sheaf}
	Consider a site $(\cbicat,J)$ and a presheaf $P:\cbicat\op\rightarrow\Set$: then $P$ is $J$-separated (resp. $J$-sheaf) if and only if the fibration $\fib P\rightarrow\cbicat$ is a $J$-prestack (resp. $J$-stack).
\end{prop}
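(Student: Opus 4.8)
The plan is to reduce both equivalences to an elementary fact about functors between \emph{discrete} categories. The crucial observation is that, since $\fib P\rightarrow\cbicat$ is a discrete fibration, for every presheaf $R:\cbicat\op\rightarrow\Set$ the category $\Fib_\cbicat(\fib R,\fib P)$ is discrete and is canonically the discrete category on the hom-set $[\cbicat\op,\Set](R,P)$, naturally in $R$. Granting this, the definition of $J$-prestack and $J$-stack can be read off directly against the definition of $J$-separated presheaf and $J$-sheaf.

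To prove this key lemma I would pass through the $2$-equivalence $\Ifrak:\Cl\Fib_\cbicat\simeq\Ind_\cbicat$ of Corollary \ref{cor:equivalenza_strfib_indcat}. All the fibrations in play are discrete, hence cloven, so their hom-categories computed in $\Fib_\cbicat$ agree with those in $\Cl\Fib_\cbicat$, and the $2$-equivalence yields $\Fib_\cbicat(\fib R,\fib P)\simeq\Ind_\cbicat(\Ifrak(\fib R),\Ifrak(\fib P))$. The essential fibre of the discrete fibration $\fib R$ at $X$ is equivalent to the discrete category $R(X)$ (via $((Y,t),\alpha)\mapsto R(\alpha)(t)$), so $\Ifrak(\fib R)\simeq R$ and $\Ifrak(\fib P)\simeq P$ as discrete-valued indexed categories. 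Finally, since $R$ and $P$ land in discrete categories, every pseudonatural transformation $R\Rightarrow P$ is forced to have identity coherence isomorphisms and is just a natural transformation of presheaves, while every modification between two of them is an identity; hence $\Ind_\cbicat(R,P)$ is exactly the discrete category on $[\cbicat\op,\Set](R,P)$. For $R=\yo(X)$ the ordinary Yoneda lemma and the identification $\fib\yo(X)\simeq\cbicat/X$ from Example \ref{ex:indexedcategories}(i) recover $\Fib_\cbicat(\cbicat/X,\fib P)\simeq P(X)$, in agreement with the fibred Yoneda lemma.

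Next I would check compatibility with restriction: by $2$-functoriality of $\fib(-)$ and of $\Ifrak$, precomposition $-\circ\fib m_S$ corresponds under the lemma to $-\circ m_S:[\cbicat\op,\Set](\yo(X),P)\to[\cbicat\op,\Set](S,P)$, i.e.\ via Yoneda to the comparison map $P(X)\to[\cbicat\op,\Set](S,P)$ appearing in the separatedness/sheaf conditions. It then suffices to note that a functor between discrete categories is automatically faithful, is full iff it is injective on objects, and is an equivalence iff it is bijective on objects. Hence $-\circ\fib m_S$ is full and faithful iff $-\circ m_S$ is injective, and an equivalence iff $-\circ m_S$ is bijective. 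Quantifying over all $J$-covering sieves $m_S:S\mono\yo(X)$ yields exactly ``$\fib P$ is a $J$-prestack $\iff$ $P$ is $J$-separated'' and ``$\fib P$ is a $J$-stack $\iff$ $P$ is a $J$-sheaf''.

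The main obstacle is the discreteness computation at the heart of the lemma: one must verify that no non-trivial $2$-cells survive when the target indexed category is discrete (pseudonatural transformations collapsing to presheaf morphisms, all modifications being identities), and that the resulting equivalence is natural in $R$, so that the non-representable sieve inclusion $m_S$ is transported to honest presheaf restriction. Everything else is routine transport along the equivalences already established.
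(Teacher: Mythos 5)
Your argument is correct. Note first that the paper does not actually prove this proposition: it is stated with a citation to Vistoli (Proposition 4.9 of \cite{vistoli.stack}) and no proof is given, so there is no in-text argument to compare against; your write-up supplies a self-contained proof using the paper's own machinery ($\Ifrak$, the fibred Yoneda lemma, Example \ref{ex:indexedcategories}(i)), which is in the spirit of the standard argument (descent data for a discrete fibration are exactly matching families). The reduction is sound: $\Ind_\cbicat(R,P)$ for discrete-valued strict indexed categories is literally the discrete category on $[\cbicat\op,\Set](R,P)$, since the coherence isomorphisms of a pseudonatural transformation land in discrete fibres and modifications between functors into discrete categories are identities; transporting along $\Ifrak$ and the identification of essential fibres of $\fib R$ with $R(X)$ then gives the key lemma, and the dictionary ``full and faithful $\leftrightarrow$ injective'', ``equivalence $\leftrightarrow$ bijective'' for (categories equivalent to) discrete categories finishes the proof. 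One small imprecision worth fixing: $\Fib_\cbicat(\fib R,\fib P)$ is not literally discrete in the Street setting (the $2$-cells form an equivalence relation, since each component is the unique lift of an invertible arrow of the base), only \emph{equivalent} to the discrete category on $[\cbicat\op,\Set](R,P)$; this is harmless because full-and-faithfulness and being an equivalence are invariant under composing with equivalences, but the statement of your lemma should say ``equivalent to'' rather than ``is''. The naturality-in-$R$ step, which guarantees that $-\circ\fib m_S$ is transported to $-\circ m_S$, is correctly identified as the point that needs checking and follows from the $2$-functoriality of $\fib(-)$ and $\Ifrak$ as you say.
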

\begin{remark}
	Discrete fibrations corresponding to $J$-sheaves, \ie discrete $J$-stacks, are called $J$-gluing fibrations in \cite[Definition 4.65]{denseness}.
\end{remark}

As is custom, separated presheaves and sheaves are often defined in terms of matching families and amalgamations, a definition which is usually more `operatively' useful. The same can be done for (pre)stacks, if we restrict to cloven fibrations and translate the definition above into the $\cbicat$-indexed language: given a site $(\cbicat,J)$ and a $\cbicat$-indexed category $\dcat:\cbicat\op\rightarrow\CAT$, then $\dcat$ is a {$J$-prestack} (resp. {$J$-stack}) if and only if for every sieve $m_S:S\rightarrowtail \yo(X)$ the functor
$$\Ind_\cbicat(\yo(X), \dcat)\xrightarrow{-\circ m_S}\Ind_\cbicat(S, \dcat)$$
is full and faithful (resp. an equivalence), where both $\yo(X)$ and $S$ are interpreted as discrete $\cbicat$-indexed categories. This definition can be unwinded by expliciting what a pseudonatural transformation $\alpha:S\Rightarrow \dcat$ is.  It consists of an object $U_y\in \dcat(Y)$  for every $y\in S(Y)$, and of a family of isomorphisms $\alpha_{y,z}: \dcat(z)(U_y)\simeq U_{yz}$ of $\dcat(Z)$ for every $y\in S(Y)$ and every $z:Z\rightarrow Y$, such that the following identities hold for any $w:W\rightarrow Z$, $z:Z\rightarrow Y$, $y\in S(Y)$ in $\cbicat$:
$$\alpha_{y,1_Y} = \phi^\dcat_Y(U_y)\inv: \dcat(1_Y)(U_y)\rightarrow U_y$$
$$\alpha_{y,zw}\circ\phi^\dcat_{z,w}(U_y)=\alpha_{yz,w}\circ\dcat(w)(\alpha_{y,z}):\dcat(w)\dcat(z)(U_y)\rightarrow U_{yzw}$$
Such a collection $\alpha=(U_y, \alpha_{y,z})_{y\in S}$ is called a \emph{descent datum} for $\dcat$ and $S$. We can see this as the equivalent of a matching family, but with some further `elasticity' due to the fact that we are considering pseudonatural trasformations: for each $y$ in $S$ we have an object $U_y$ in $\dcat(\dom y)$, and these objects are mutually compatible up to canonical isomorphisms. 

The definition of a morphism in the category of descent data, \ie an arrow $\xi:(U_y,\alpha_{y,z})_{y\in S}\rightarrow (V_y, \beta_{y,z})_{y\in S}$, can be retrieved analogously by expliciting the definition of a {modification} $\xi:\alpha \Rrightarrow \beta$: it is the given of an arrow $\xi_y:U_y\rightarrow V_y$ for each $y\in S(Y)$, subject to the condition $\beta_{y,z}\circ \dcat(Z)(\xi_y)=\xi_{yz}\circ \alpha_{y,z}$. We can therefore consider the category of descent data for $\dcat$ and $S$, which we will denote by $\dcat(S)$.

Now, recall that by the fibred Yoneda lemma for Grothendieck fibrations, there is a pseudonatural equivalence $\Ind_\cbicat(\yo(X),\dcat)\simeq \dcat(X)$: then the functor $(-\circ m_S)$ can be expressed as a functor  $L_S:\dcat(X)\rightarrow \dcat(S)$ acting on objects as follows:
$$U\in \dcat(X)\mapsto (\dcat(y)(X), \phi_{y,z}^\dcat(U))_{y\in S}.$$

A descent datum $(U_y, \alpha_{y,z})_{y\in S}$ is said to be \emph{effective} if it lies in the essential image of $L_S$, \ie there are $U$ in $\dcat(X)$ and an isomorphism of descent data $(\dcat(y)(U), \phi^\dcat_{y,z}(U))_S\simeq(U_y, \alpha_{y,z})_S$: we may think the object $U$ as the generalized version of an amalgamation for a matching family. We end up with the following definition of (pre)stack:
\begin{defn}\label{def:stack_ind}
	Consider a site $(\cbicat,J)$ and a $\cbicat$-indexed category $\dcat:\cbicat\op\rightarrow\CAT$: $\dcat$ is called a \emph{$J$-prestack}\index{prestack!for pseudofunctors} (resp.  \emph{$J$-stack}\index{stack!for pseudofunctors}) if for every $J$-sieve $S\rightarrowtail \yo(X)$ the functor
	$$L_S:\dcat(X)\rightarrow \dcat(S)$$
	is full and faithful (resp. an equivalence). More explicitly, $\dcat$ is a stack if and only if for every sieve $S$ of $J$ all descent data for $\dcat$ and $S$ are effective.
	
	Stacks over a site $(\cbicat,J)$ form a 2-full and faithful subcategory of $\Ind_\cbicat$, which we will denote by $\St(\cbicat,J)$\index{$\St(\cbicat,J)$}. In particular, we shall denote the category of stacks on a topos $\Etopos$, with respect to the canonical topology on it, by $\St(\Etopos)$\index{$\St(\Etopos)$}.\footnote{These $2$-categories of stacks are Grothendieck $2$-toposes in the sense of \cite{Street.2dimsheaftheory}.}
\end{defn}
In short, for a stack compatible local data along the arrows of a covering sieve $S$ can be glued together into a global datum in the fibre over the codomain of said sieve, in a similar way to sheaves. We remark that if the category $\cbicat$ has pullbacks, the notion of descent data admits a more manageable definition using $J$-covering families: it can be found for instance in \cite[§ 4.1.2]{vistoli.stack}. 

A further similarity between stacks and sheaves is the fact that, similarly to the sheafification process for presheaves, there exists a stackification process for fibrations:
\begin{thm}\label{thm:stackification_esiste}
	Consider a site $(\cbicat,J)$. There exists a 2-functor
	\[
	s_J:\Fib_\cbicat\rightarrow\St(\cbicat,J)
	,\]
	called \emph{stackification}\index{stackification}\index{$s_J$} (or \emph{associated stack functor}), which is left adjoint to the inclusion $i_J:\St(\cbicat,J)\rightarrow \Fib_\cbicat$.\index{$i_J$}
\end{thm}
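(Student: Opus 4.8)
The plan is to transport the problem to indexed categories and there construct the associated stack by a two-dimensional analogue of the sheaf-theoretic plus construction. By Corollary~\ref{cor:equivalenza_strfib_indcat} the inclusion $\Cl\Fib_\cbicat\hookrightarrow\Fib_\cbicat$ is an equivalence (every Street fibration is cloven up to equivalence, by Corollary~\ref{cor:eqv_fibr_e_splfibr}) and $\Ifrak:\Cl\Fib_\cbicat\simeq\Ind_\cbicat:\mathcal{U}\gbicat$ is a $2$-equivalence under which $\St(\cbicat,J)$ corresponds to the $2$-full subcategory of $J$-stacks. It therefore suffices to produce a left $2$-adjoint to the inclusion $\St(\cbicat,J)\hookrightarrow\Ind_\cbicat$; conjugating it with $\Ifrak$ and $\mathcal{U}\gbicat$ then yields the desired $s_J\dashv i_J$.

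First I would define, for a $\cbicat$-indexed category $\dcat$, the \emph{plus construction} $\dcat^+:\cbicat\op\to\CAT$ by
\[
\dcat^+(X):=\colim_{S}\,\dcat(S),
\]
where $S$ ranges over the $J$-covering sieves of $X$ ordered by reverse inclusion, $\dcat(S)$ is the category of descent data introduced before Definition~\ref{def:stack_ind}, the transition functors are the restrictions $\dcat(S)\to\dcat(S')$ of descent data along refinements $S'\subseteq S$, and the colimit is the pseudofiltered $2$-colimit in $\CAT$. The indexing poset is pseudofiltered since the intersection of two covering sieves is again covering. As the maximal sieve $\yo(X)$ is its bottom element and $\dcat(\yo(X))\simeq\dcat(X)$ by the fibred Yoneda lemma (Proposition~\ref{prop:fibered_yoneda}), the colimit cocone supplies a canonical comparison $\eta_{\dcat,X}:\dcat(X)\to\dcat^+(X)$, pseudonatural in $X$. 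I would then verify that $\dcat^+$ is again a pseudofunctor, that $\eta_\dcat:\dcat\Rightarrow\dcat^+$ is a $\cbicat$-indexed functor, and that $(-)^+$ assembles into a $2$-endofunctor of $\Ind_\cbicat$ with unit $\eta$ (strictifying at the end, via the equivalence results, if a genuinely strict $2$-functor $s_J$ is wanted rather than a pseudofunctor).

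The heart of the matter is a sequence of descent lemmas, all proved by analysing the $\underline{\Hom}$-presheaves $[y]\mapsto\dcat(\dom y)\bigl(\dcat(y)U,\dcat(y)V\bigr)$ on $\cbicat/X$: their separatedness and sheaf conditions encode the full faithfulness half of the stack condition (the prestack condition), while effectivity of descent data encodes the essential-surjectivity half. Using that pseudofiltered $2$-colimits in $\CAT$ commute with the finite limits occurring in descent data, I would show: (L1) $\dcat^+$ always has separated $\underline{\Hom}$-presheaves; (L2) if $\dcat$ is separated then $\dcat^+$ is a prestack; and (L3) if $\dcat$ is a prestack then $\dcat^+$ is a stack (here effectivity follows because a descent datum for $\dcat^+$ unwinds, after passing to a common refinement, to a $\dcat$-descent datum, which is tautologically effective in $\dcat^+$, the prestack hypothesis ensuring the gluing isomorphisms cohere). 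Iterating, $s_J(\dcat):=\dcat^{+++}$ is a stack: one application separates the $\underline{\Hom}$'s, the second turns them into sheaves (exactly as the sheaf plus construction needs two passes), and the third makes descent effective. I would take this $\dcat^{+++}$, together with the composite unit, as the value of the stackification.

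Finally I would establish the $2$-adjunction. The pivotal observation, a direct consequence of Proposition~\ref{prop:psh+stack=sheaf} and the lemmas above, is that $\ebicat\in\Ind_\cbicat$ is a stack precisely when $\eta_\ebicat:\ebicat\to\ebicat^+$ is an equivalence, so that stacks are the local objects for $(-)^+$ and $(-)^+$ is idempotent up to equivalence on them. Commutation of the pseudofiltered colimit with the relevant limits then gives, for every stack $\ebicat$, that precomposition with the unit is an equivalence $\St(\cbicat,J)(s_J\dcat,\ebicat)\simeq\Ind_\cbicat(\dcat,\ebicat)$, which is the required adjunction. I expect the main obstacle to be the two-dimensional coherence bookkeeping: controlling the structural isomorphisms of the pseudofiltered $2$-colimit well enough to make the commutation-with-finite-limits arguments in (L1)--(L3) rigorous, and checking that exactly a \emph{finite} number of applications suffices (rather than a transfinite iteration). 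The latter works here because descent for stacks \emph{of categories} is governed by an ordinary finite-limit condition, but separatedness and effectivity are genuinely distinct defects, which is why three passes, rather than the familiar two, are needed.
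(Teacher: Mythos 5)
The paper gives no proof of this theorem: it cites Chapter II, Section 2 of Giraud's \emph{Cohomologie non ab\'elienne}, where the construction proceeds in two asymmetric stages --- first the \emph{associated prestack}, obtained by keeping the objects of each fibre and replacing the $\Hom$-presheaves by their $J_X$-sheafifications, and then the \emph{associated stack of a prestack}, obtained by adjoining effective descent data. Your route --- a single uniform plus construction $\dcat^+(X)=\colim_S\dcat(S)$ over the directed poset of $J$-covering sieves (directed because the intersection of two covering sieves is covering), iterated three times --- is a genuinely different and by now also classical packaging of the same idea, and it is correct: the count of three iterations accurately reflects the three-level hierarchy (separated $\Hom$'s / prestack / stack), in contrast with the two passes needed for set-valued presheaves, and your lemmas (L1)--(L3) are the exact categorified analogues of ``$P^+$ is separated'' and ``$P$ separated implies $P^+$ a sheaf''. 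What the two-step version buys is that its first stage does not change the objects of the fibres, which makes the local full faithfulness of the unit transparent; what your version buys is uniformity of the construction and of the proofs.

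Two points in your sketch carry the real weight and are stated too loosely. First, the adjunction: knowing that $\eta_\ebicat$ is an equivalence when $\ebicat$ is a stack is strictly weaker than what you need, namely that precomposition with $\eta_\dcat$ induces an equivalence $\Ind_\cbicat(\dcat^+,\ebicat)\rightarrow\Ind_\cbicat(\dcat,\ebicat)$ for \emph{every} $\dcat$. This does not follow from commuting the pseudofiltered colimit past a limit --- the fibrewise colimits defining $\dcat^+$ are indexed by posets varying with $X$ and do not assemble into a single colimit in $\Ind_\cbicat$ --- but from the explicit construction of the essentially unique extension: a morphism $F:\dcat\Rightarrow\ebicat$ is extended by sending a descent datum $(U_f,\alpha_{f,g})_{f\in S}$ to an object of $\ebicat(X)$ glueing the induced $\ebicat$-descent datum $(F(U_f))_{f\in S}$, which exists and is unique up to canonical isomorphism precisely because $\ebicat$ is a stack. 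Second, since the site is only small-generated, the poset of covering sieves of $X$ and the categories $\dcat(S)$ are a priori large; you must restrict to the cofinal subposet of sieves generated by families of arrows from the small $J$-dense subcategory for the colimits to exist in $\CAT$. Neither issue is fatal --- both are resolved by standard arguments --- but they are where the work hides, and the write-up should address them explicitly.
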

\begin{proof}
	See Chapter II, Section 2 of \cite{giraud.cohomologie}.
\end{proof}

We will now show that another well known result about Grothendieck fibrations that still holds for Street fibrations. We recall that for $p:\dbicat\rightarrow\cbicat$ a Grothendieck fibration, and for any $X$ in $\cbicat$ and any pair of objects $A$, $B$ in $\dcat(X)$, there is a presheaf $\Hom(A,B):(\cbicat/X)\op\rightarrow \Set$ defined, for an object $[y:Y\rightarrow X]$ of $\cbicat/X$, as $\Hom(A,B)([y]):=\dcat(Y)\left(\dcat(y)(A),\dcat(y)(B)\right)$. Then the following result holds:
\begin{prop}\label{prop:prestack_Homft_caso_Grothendieck}
	Consider a site $(\cbicat,J)$: a Grothendieck fibration $p:\dbicat\rightarrow\cbicat$ is a prestack if and only if for every $X$ in $\cbicat$ and $A$ and $B$ in $\dcat(X)$ the presheaf $\Hom(A,B):(\cbicat/X)\op\rightarrow\Set$ is a $J_X$-sheaf.
\end{prop}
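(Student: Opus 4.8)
The plan is to reduce the full faithfulness of the comparison functor $L_S$ to the sheaf condition for the hom-presheaves by directly matching the descent language on the stack side with the matching-family language on the presheaf side. Since $p$ is a Grothendieck fibration I may work with its associated pseudofunctor $\dcat$ and the explicit descriptions of $L_S$ and of morphisms of descent data recorded before Definition \ref{def:stack_ind}. Unwinding the prestack condition: $p$ is a $J$-prestack iff for every $X$ and every $J$-sieve $S\rightarrowtail\yo(X)$ the functor $L_S:\dcat(X)\rightarrow\dcat(S)$ is full and faithful, that is, for all $A,B\in\dcat(X)$ the map $\dcat(X)(A,B)\rightarrow \dcat(S)(L_SA,L_SB)$ is a bijection. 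Recall that $L_SA=(\dcat(y)(A),\phi^\dcat_{y,z}(A))_{y\in S}$, and that a morphism of descent data $L_SA\to L_SB$ is a family $\xi_y:\dcat(y)(A)\to\dcat(y)(B)$ subject to $\phi^\dcat_{y,z}(B)\,\dcat(z)(\xi_y)=\xi_{yz}\,\phi^\dcat_{y,z}(A)$ for every $z:Z\to Y$.

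The crucial observation is that this is word-for-word the condition defining a matching family for the presheaf $\Hom(A,B)$ over $S$, viewed as a $J_X$-covering sieve of the terminal object $[1_X]$ of $\cbicat/X$. Indeed, the restriction map $\Hom(A,B)([y])\to\Hom(A,B)([yz])$ induced by $z:[yz]\to[y]$ sends $f$ to $\phi^\dcat_{y,z}(B)\,\dcat(z)(f)\,\phi^\dcat_{y,z}(A)\inv$, so a matching family is exactly a family $(f_y)_{y\in S}$ with $f_{yz}\,\phi^\dcat_{y,z}(A)=\phi^\dcat_{y,z}(B)\,\dcat(z)(f_y)$. This gives a canonical bijection $\dcat(S)(L_SA,L_SB)\cong \mathrm{Match}(S,\Hom(A,B))$. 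On the other side, $\phi^\dcat_X$ yields a bijection $\dcat(X)(A,B)\cong\Hom(A,B)([1_X])=\dcat(X)(\dcat(1_X)(A),\dcat(1_X)(B))$, and I would verify, using the coherence axiom $\phi^\dcat_{1_X,y}(B)\,\dcat(y)(\phi^\dcat_X(B))=\id$ (and likewise for $A$), that the square comparing $L_S$ with the restriction-to-$S$ map commutes. Consequently $L_S$ is full and faithful on $(A,B)$ iff the matching map $\Hom(A,B)([1_X])\to\mathrm{Match}(S,\Hom(A,B))$ is a bijection.

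This treats the sheaf condition only at the top object, whereas being a $J_X$-sheaf requires it at every $[y:Y\to X]$ of $\cbicat/X$. For this I would use that along the equivalence $(\cbicat/X)/[y]\simeq\cbicat/Y$ the presheaf $\Hom(A,B)$ restricts, via the isomorphisms $\phi^\dcat_{y,z}$, to $\Hom(\dcat(y)(A),\dcat(y)(B))$ on $\cbicat/Y$, and that $J_X$-covering sieves of $[y]$ correspond to $J$-covering sieves of $Y$. Thus the sheaf condition at $[y]$ is precisely the matching condition at $[1_Y]$ for the objects $\dcat(y)(A),\dcat(y)(B)\in\dcat(Y)$, which by the bijection above is the full faithfulness of the corresponding $L_{S'}$. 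Taking $X=Y$ and $y=1_Y$ shows every fibre object arises this way, so the two quantifications coincide: $p$ is a prestack iff for all $Y$, all $A',B'\in\dcat(Y)$ and all $J$-sieves $S'$ on $Y$ the relevant $L_{S'}$ is full and faithful, iff for all $X$, all $A,B\in\dcat(X)$, all $[y]$ and all $J_X$-sieves the matching condition for $\Hom(A,B)$ holds, iff every $\Hom(A,B)$ is a $J_X$-sheaf. Both implications of the statement follow.

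I expect the main obstacle to be purely organizational rather than conceptual: checking that the comparison square of the second paragraph commutes on the nose requires a careful manipulation of the coherence isomorphisms $\phi^\dcat_X$ and $\phi^\dcat_{1_X,y}$, and identifying the restricted hom-presheaf on the slice demands keeping track of the associativity constraints $\phi^\dcat_{y,z}$. None of this is deep, but it is where all the bookkeeping lives, and one must be scrupulous that the two compatibility identities (for descent morphisms and for matching families) are matched with the isomorphisms oriented the same way.
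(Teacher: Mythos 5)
Your proof is correct and follows essentially the same route as the paper's: reduce matching families at an arbitrary $[y]$ to matching families at the terminal object of a slice (the content of Corollary \ref{cor:Hom_mfam_su_terminali}), then identify matching families and amalgamations for $\Hom(A,B)$ with the morphism data entering the full-faithfulness condition of the prestack definition. The only difference is one of dialect — you phrase the key identification via $L_S$ and morphisms of descent data in the indexed-category language, whereas the paper's proof (given for the Street generalization) phrases it via $2$-cells between the morphisms of fibrations $\Psi(A,\alpha)$ and $\Psi(B,\beta)$ — and these two formulations of the prestack condition are interchangeable by the fibred Yoneda lemma, so the coherence bookkeeping you flag is indeed the only remaining work.
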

The definition of $\Hom$-presheaf extends to Street fibrations in the following form: 
\begin{defn}
	Consider a cloven fibration $p:\dbicat\rightarrow\cbicat$ and two objects $(A,\alpha:X\isorightarrow p(A))$ and $(B, \beta:X\isorightarrow p(B))$ of $\dcat(X)$: we defined the $\Hom$-presheaf\index{$\Hom((A,\alpha), (B,\beta))$}
	\[\Hom((A,\alpha), (B,\beta)):(\cbicat/X)\op\rightarrow\Set\]
	as follows:
	\begin{itemize}
		\item for any $[y:Y\rightarrow X]$ in $\cbicat/X$, \[\Hom((A,\alpha), (B,\beta))([y]):=\dcat(Y)\left( \dcat(y)(A,\alpha), \dcat(y)(B,\beta)) \right)\]
		more explicitly, elements of $\Hom((A,\alpha),(B,\beta))([y])$ can be seen as arrows $\gamma:\dom(\widehat{\alpha y}_A)\rightarrow \dom(\widehat{\beta y}_B)$ such that $p(\gamma)\theta_{\alpha y,A}=\theta_{\beta y,B}$;
		\item for $z:[yz]\rightarrow [y]$ and $\gamma\in \Hom((A,\alpha),(B,\beta))([y])$, we define the arrow $\Hom((A,\alpha),(B,\beta))(z)(\gamma)$ as the composite $\chi_{\beta y,z,B}\inv \dcat(z)(\gamma)\chi_{\alpha y,z,A}$; explicitely, it is the unique arrow $\gamma':\dom(\widehat{\alpha yz}_A)\rightarrow \dom(\widehat{\beta yz}_B)$ satisfying the identities 
		\[ \widehat{\beta y}_B \gamma \lambda_{\alpha y,z,A}=\widehat{\beta yz}_B \gamma'\]
		\[p(\gamma')=\theta_{\beta yz,B}\theta_{\alpha yz,A}\inv \]
	\end{itemize}
\end{defn}

\begin{remark}
	Consider  $\Hom((A,\alpha),(B,\beta)):(\cbicat/X)\op\rightarrow\Set$ and a $J_X$-sieve $S$ over $[y:Y\rightarrow X]$ in $\cbicat/X$: let us explicit what a matching family ad an amalgamation are in this case.
	
	A \emph{matching family} for $\Hom((A,\alpha),(B,\beta))$ and $S$ is the given for every $f\in S$ of an arrow $\gamma_f:\dom(\widehat{\alpha yf}_A)\rightarrow \dom(\widehat{\beta yf}_B)$ of $\dbicat$ such that $p(\gamma_f)=\theta_{\beta yf,B}\theta_{\alpha yf,A}\inv$, with the condition that whenever $g$ is precomposable to $f$ then $\gamma_{fg}=\Hom((A,\alpha),(B,\beta))(g)(\gamma_f)$, \ie  $\gamma_{fg}:\dom(\widehat{\alpha yfg}_A)\rightarrow \dom(\widehat{\beta yfg}_B)$ is the unique arrow such that $\widehat{\beta yf}_B \gamma_f \lambda_{\alpha y,fg,A}=\widehat{\beta yfg}_B \gamma_{fg}$.
	
	An \emph{amalgamation} for this matching family is an arrow $\gamma:\dom(\widehat{\alpha y}_A)\rightarrow \dom(\widehat{\beta y}_B)$ such that $p(\gamma)=\theta_{\beta y,B}\theta_{\alpha y,A}\inv$ and that for every $f$ in $S$ the arrow $\gamma_f$ is the unique arrow such that $\widehat{\beta y}_B \gamma \lambda_{\alpha y,f,A}=\widehat{\beta yf}_B \gamma_{f}$.

\end{remark}
We now provide two technical lemmas about matching families for $\Hom$-functors.
\begin{lemma}
	Consider a site $(\cbicat,J)$, an arrow $y:Y\rightarrow X$ of $\cbicat$ and a sieve $S\in J(Y)$. Denote by $S_{[1_Y]}$ (resp. $S_{[y]}$) the $J_Y$-sieve over $[1_Y]$ (resp. $J_X$-sieve over $[y]$) whose arrows are those of $S$: then $\lan_{(\fib y)\op}(S_{[1_Y]})\simeq S_{[y]}$ naturally.
\end{lemma}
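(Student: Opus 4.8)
The plan is to recognise $\fib y$ as a slice projection and then transport the computation into the slice of the presheaf topos on $\cbicat/X$. Recall from Example~\ref{ex:indexedcategories}(i) that $\fib y=\fib\yo(y)$ is the postcomposition functor $y\circ-\colon\cbicat/Y\to\cbicat/X$. I would first record the canonical isomorphism of categories $\cbicat/Y\cong(\cbicat/X)/[y]$ which sends an object $[g\colon V\to Y]$ to the arrow $g\colon[yg]\to[y]$ of $\cbicat/X$; under it $\fib y$ becomes precisely the domain projection $\Sigma\colon(\cbicat/X)/[y]\to\cbicat/X$. Since $[1_Y]$ is terminal in $\cbicat/Y$ and corresponds under this isomorphism to the terminal object $\id_{[y]}$ of $(\cbicat/X)/[y]$, the sieve $S_{[1_Y]}$ becomes a subobject $S_{[1_Y]}\mono 1$ of the terminal presheaf on $(\cbicat/X)/[y]$.

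Next I would invoke the standard equivalence $[((\cbicat/X)/[y])\op,\Set]\simeq[(\cbicat/X)\op,\Set]/\yo_{\cbicat/X}([y])$, under which left Kan extension along $\Sigma\op$, \ie $\lan_{(\fib y)\op}$, corresponds to the forgetful functor out of the slice topos. The two properties I need are that this functor carries the terminal presheaf to $\yo_{\cbicat/X}([y])$ and that it preserves monomorphisms. Applying it to $S_{[1_Y]}\mono 1$ then produces a monomorphism $\lan_{(\fib y)\op}(S_{[1_Y]})\mono\yo_{\cbicat/X}([y])$, that is, a sieve on $[y]$; it remains only to identify it.

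To do so I would evaluate the left Kan extension pointwise. At $[w\colon W\to X]$ the comma category indexing the colimit has as objects the pairs $([g\colon V\to Y],\rho\colon W\to V)$ with $(yg)\rho=w$, and as $S_{[1_Y]}$ is subterminal---nonempty exactly when $g\in S$---the colimit reduces to the set of connected components of the full subcategory on those pairs with $g\in S$. The assignment $([g],\rho)\mapsto g\rho$ lands in $S_{[y]}([w])=\{h\colon W\to Y\mid yh=w,\ h\in S\}$, using that $S$ is a sieve, and is constant on components. It is bijective: surjectivity holds because $h$ is the image of $([h\colon W\to Y],\id_W)$, and injectivity because any two objects with common image $h=g\rho$ both admit the morphism $\rho$ into the object $([h],\id_W)$ and so lie in one component. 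These bijections are natural in $[w]$ and compatible with the inclusions into $\yo_{\cbicat/X}([y])$, yielding the stated isomorphism of sieves; naturality in $S$ follows since both sides are built functorially from $S$.

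The one genuinely delicate point is that left Kan extensions need not preserve monomorphisms, so a priori $\lan_{(\fib y)\op}(S_{[1_Y]})$ is only a presheaf equipped with a map to $\yo_{\cbicat/X}([y])$ rather than a sieve. This is exactly what the detour through the slice topos settles, since the forgetful functor $[(\cbicat/X)\op,\Set]/\yo_{\cbicat/X}([y])\to[(\cbicat/X)\op,\Set]$ is mono-preserving; the remaining identification is then a routine, if slightly tedious, check.
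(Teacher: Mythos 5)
Your proof is correct, but it takes a genuinely different route from the paper's. The paper argues directly by the universal property of the left Kan extension: a morphism $S_{[y]}\to H$ in $[(\cbicat/X)\op,\Set]$ is a compatible family of elements $x_z\in H([yz])$ indexed by the arrows $z$ of $S$, and this is \emph{literally} the same data as a morphism $S_{[1_Y]}\to H\circ(\fib y)\op$; the resulting natural bijection of hom-sets exhibits $S_{[y]}$ as $\lan_{(\fib y)\op}(S_{[1_Y]})$ in two lines, and the question of whether the Kan extension is a sieve never arises because $S_{[y]}$ is a sieve by construction. You instead identify $\fib y$ with the slice projection $(\cbicat/X)/[y]\to\cbicat/X$, transport the problem to the slice topos $[(\cbicat/X)\op,\Set]/\yo([y])$ where $\lan_{(\fib y)\op}$ becomes the (mono-preserving) forgetful functor, and then compute the Kan extension pointwise as a $\pi_0$ of a comma category. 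This is longer but buys an explicit pointwise formula and makes structurally transparent why the output is a subobject of $\yo_{\cbicat/X}([y])$ — a point you rightly flag as the delicate one, though your pointwise computation alone already settles it, so the slice-topos detour is a safety net rather than a necessity. One cosmetic slip: in the injectivity step the connecting morphism $\rho$ goes \emph{from} $([h],\id_W)$ \emph{to} $([g],\rho)$ (with the usual convention for the comma category indexing a left Kan extension), not into $([h],\id_W)$; since you only need the two objects to lie in the same connected component, this does not affect the argument.
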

\begin{proof}
	Consider a presheaf $H:(\cbicat/X)\op\rightarrow \Set$: an arrow $\alpha:S_{[y]}\rightarrow H$ in $[(\cbicat/X)\op,\Set]$ is a matching family for $H$ and $S_{[y]}$, \ie the given for the arrows $z:[yz]\rightarrow [y]$ in $S_{[y]}$ of compatible elements $x_z\in H([yz])$. It is immediate to see this is the same as a matching family for $S_{[1_Y]}$ and $H\circ (\fib y)\op$, providing a natural bijection
	\[ [(\cbicat/X)\op,\Set](S_{[y]}, H)\simeq [(\cbicat/Y)\op,\Set](S_{[1_Y]}, H\circ (\fib y)\op)\]
	which implies $S_{[y]}\simeq \lan_{(\fib y)\op}(S_{[1_Y]})$.
\end{proof}
From this it follows that all matching families of the $\Hom$-functors can be interpreted, if we allow a change of slice category, as matching families over the terminal object of the slice:
\begin{cor}\label{cor:Hom_mfam_su_terminali}
	Consider a site $(\cbicat,J)$, a fibration $\dbicat\rightarrow \cbicat$, a $J$-sieve $S$ over $Y$ and two objects $(A,\alpha)$ and $(B,\beta)$ of $\dcat(X)$: a matching family for $\Hom((A,\alpha), (B,\beta))$ and $S$, seen as a $J_X$-sieve over $[y]$ in $\cbicat/X$, is the same as a matching family for $\Hom(\dcat(y)(A,\alpha), \dcat(y)(B,\beta))$ and the sieve $S$ seen as a $J_Y$-sieve over $[1_Y]$ in $\cbicat/Y$. The same holds for amalgamations of matching families. 
\end{cor}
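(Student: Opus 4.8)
The plan is to deduce this directly from the preceding Lemma by reinterpreting the relevant $\Hom$-presheaf along $\fib y$. Writing $H:=\Hom((A,\alpha),(B,\beta)):(\cbicat/X)\op\rightarrow\Set$, the Lemma provides a natural bijection between matching families for $H$ and $S_{[y]}$ and matching families for $H\circ(\fib y)\op$ and $S_{[1_Y]}$, since in either case such families are precisely the arrows $S_{[y]}\rightarrow H$ (resp. $S_{[1_Y]}\rightarrow H\circ(\fib y)\op$) in the respective presheaf categories. It therefore suffices to exhibit a natural isomorphism of presheaves on $(\cbicat/Y)\op$
$$H\circ(\fib y)\op\cong \Hom(\dcat(y)(A,\alpha),\dcat(y)(B,\beta)).$$

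First I would unwind both sides on objects. Recalling that $\fib y$ acts as postcomposition $y\circ -$, for $[z:Z\rightarrow Y]$ in $\cbicat/Y$ the left-hand presheaf evaluates to $H([yz])=\dcat(Z)(\dcat(yz)(A,\alpha),\dcat(yz)(B,\beta))$, while the right-hand one evaluates to $\dcat(Z)(\dcat(z)\dcat(y)(A,\alpha),\dcat(z)\dcat(y)(B,\beta))$. The canonical pseudofunctoriality isomorphism $\phi^\dcat_{y,z}:\dcat(z)\dcat(y)\Isorightarrow\dcat(yz)$ of Definition \ref{def:indexed_category}, taken at the objects $(A,\alpha)$ and $(B,\beta)$, induces by pre- and postcomposition a bijection between these two hom-sets.

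The core of the argument is to verify that this family of bijections is natural in $[z]$, i.e. compatible with the restriction maps of the two presheaves along an arrow $w:[zw]\rightarrow[z]$ in $\cbicat/Y$. Both restriction maps are expressed through the canonical arrows $\chi$ and $\lambda$ of Remark \ref{remark:frecce_chi_lambda}, and the required compatibility is exactly the coherence axiom relating $\phi^\dcat_{y,zw}$, $\phi^\dcat_{z,w}$ and $\phi^\dcat_{yz,w}$. I expect this naturality check to be the only nontrivial step: it is a routine but somewhat lengthy diagram chase, carried out by repeatedly invoking the uniqueness property of cartesian arrows recorded in Remark \ref{remark:proprietà_frecce_cartesiane}(i) to identify the two composites.

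Finally, for amalgamations I would observe that an amalgamation for $H$ and $S_{[y]}$ is an element of $H([y])$ restricting to the given matching family, whereas an amalgamation for $H\circ(\fib y)\op$ and $S_{[1_Y]}$ is an element of $(H\circ(\fib y)\op)([1_Y])=H([y\circ 1_Y])=H([y])$. Under the isomorphism above the latter corresponds to an element of $\Hom(\dcat(y)(A,\alpha),\dcat(y)(B,\beta))([1_Y])$, and the naturality established in the previous step shows that amalgamations are matched under the very same bijection, which completes the proof.
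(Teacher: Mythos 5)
Your proposal is correct and follows exactly the paper's own route: the paper's proof consists of observing that $\Hom(\dcat(y)(A,\alpha), \dcat(y)(B,\beta))\cong \Hom((A,\alpha), (B,\beta))\circ (\fib y)\op$ (the ``rapid computation'' you spell out via the coherence isomorphisms $\phi^\dcat_{y,z}$) and then invoking the preceding lemma. Your additional detail on naturality and on amalgamations is a faithful expansion of what the paper leaves implicit.
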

\begin{proof}
	A rapid computation shows that $\Hom(\dcat(y)(A,\alpha), \dcat(y)(B,\beta))$ is isomorphic to $\Hom((A,\alpha), (B,\beta))\circ (\fib y)\op$: the claim then follows from previous lemma.
\end{proof}
The second technical lemma relates matching families for $\Hom$-functors with 2-cells of fibrations:
\begin{lemma}
	Consider a site $(\cbicat,J)$, a fibration $p:\dbicat\rightarrow \cbicat$, a $J$-sieve $R$ over $X$ and two objects $(A,\alpha)$ and $(B,\beta)$ of $\dcat(X)$: a matching family for $\Hom((A,\alpha), (B,\beta))$ and $R$, seen as a $J_X$-sieve over $[1_X]$ in $\cbicat/X$, is the same thing as a 2-cell of fibrations $\Psi(A,\alpha)\circ m_R\Rightarrow \Psi(B,\beta)\circ m_R$, where $m_R:\fib R\hookrightarrow\cbicat/X$ is the canonical inclusion functor. Analogously, an amalgamation for a matching family as above corresponds to a 2-cell of fibrations $\Psi(A,\alpha)\Rightarrow \Psi(B,\beta)$, \ie to a morphism $(A,\alpha)\rightarrow (B,\beta)$.
\end{lemma}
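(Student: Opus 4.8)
The plan is to unwind both sides of the asserted correspondence into explicit families of arrows indexed by the elements of $R$, and check that the defining conditions match. Write $\Psi(A,\alpha)=(F_{(A,\alpha)},\phi_{(A,\alpha)})$ and $\Psi(B,\beta)=(F_{(B,\beta)},\phi_{(B,\beta)})$ for the morphisms of fibrations produced by the fibred Yoneda lemma (Proposition~\ref{prop:fibered_yoneda}), so that on an object $[f]$ of $\cbicat/X$ one has $F_{(A,\alpha)}([f])=\dom(\widehat{\alpha f}_A)$, on an arrow $g:[fg]\to[f]$ one has $F_{(A,\alpha)}(g)=\lambda_{\alpha f,g,A}$, and the structure isomorphism is $\phi_{(A,\alpha)}([f])=\theta_{\alpha f,A}\inv$ (and symmetrically for $B$). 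Since $m_R$ is the inclusion $\fib R\hookrightarrow\cbicat/X$ of the category of elements of the sieve, a $2$-cell $\omega:\Psi(A,\alpha)\circ m_R\Rightarrow\Psi(B,\beta)\circ m_R$ is by definition a natural transformation whose component at $f\in R$ is an arrow $\omega_f:\dom(\widehat{\alpha f}_A)\to\dom(\widehat{\beta f}_B)$, subject to the $\Fib_\cbicat$-compatibility condition relating the two structure isomorphisms.

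First I would read off that compatibility condition. The defining equation of a $2$-cell of fibrations forces $\phi_{(A,\alpha)}([f])=\phi_{(B,\beta)}([f])\,p(\omega_f)$, that is $\theta_{\alpha f,A}\inv=\theta_{\beta f,B}\inv\,p(\omega_f)$, which is exactly the requirement $p(\omega_f)=\theta_{\beta f,B}\theta_{\alpha f,A}\inv$ characterising the type of an element $\gamma_f$ of a matching family for $\Hom((A,\alpha),(B,\beta))$ over $[f]$. Thus setting $\gamma_f:=\omega_f$ already produces the correct underlying arrows on the nose.

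The heart of the argument, and the only step I expect to require genuine work, is matching the naturality of $\omega$ with the matching-family compatibility $\gamma_{fg}=\Hom((A,\alpha),(B,\beta))(g)(\gamma_f)$. Naturality at $g:[fg]\to[f]$ reads $\omega_f\circ\lambda_{\alpha f,g,A}=\lambda_{\beta f,g,B}\circ\omega_{fg}$. Both sides are arrows $\dom(\widehat{\alpha fg}_A)\to\dom(\widehat{\beta f}_B)$, so, since $\widehat{\beta f}_B$ is cartesian, by the principle that a cartesian arrow determines a map through its composite and its projection it suffices to compare the two composites with $\widehat{\beta f}_B$ and the two $p$-projections. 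A short computation with $p(\lambda_{x,y,U})=\theta_{x,U}\,y\,\theta_{xy,U}\inv$ shows both projections equal $\theta_{\beta f,B}\,g\,\theta_{\alpha fg,A}\inv$; and using the identity $\widehat{\beta f}_B\lambda_{\beta f,g,B}=\widehat{\beta fg}_B$ from Remark~\ref{remark:frecce_chi_lambda}, postcomposing the naturality square with $\widehat{\beta f}_B$ turns it precisely into $\widehat{\beta f}_B\,\omega_f\,\lambda_{\alpha f,g,A}=\widehat{\beta fg}_B\,\omega_{fg}$, which is the cartesian description of $\gamma_{fg}=\Hom((A,\alpha),(B,\beta))(g)(\gamma_f)$ spelled out in the preceding remark. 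Hence $\omega$ is natural iff $(\gamma_f)_{f\in R}$ is a matching family, and one checks the correspondence is bijective and functorial in $R$.

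For the second assertion I would run the same two computations with $R$ replaced by the whole of $\cbicat/X$: this identifies $2$-cells $\Psi(A,\alpha)\Rightarrow\Psi(B,\beta)$ with families $(\gamma_{[f]})_{[f]\in\cbicat/X}$ satisfying the same type and compatibility conditions, whose value $\gamma_{[1_X]}:\dom(\widehat{\alpha}_A)\to\dom(\widehat{\beta}_B)$ is exactly an amalgamation, the naturality clauses at the arrows $f:[f]\to[1_X]$ being precisely the conditions asserting that $\gamma_{[1_X]}$ amalgamates the matching family obtained by restriction along $m_R$. Finally, the fibred Yoneda lemma yields $\Fib_\cbicat(\cbicat/X,\dbicat)(\Psi(A,\alpha),\Psi(B,\beta))\simeq\dcat(X)((A,\alpha),(B,\beta))$, so such a $2$-cell is the same datum as a morphism $(A,\alpha)\to(B,\beta)$ in the fibre, which completes the identification.
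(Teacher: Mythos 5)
Your proof is correct and follows the same route as the paper's: identify the components of $\Psi(A,\alpha)\circ m_R\Rightarrow\Psi(B,\beta)\circ m_R$ with the components $\gamma_f$ of a matching family, the $2$-cell compatibility with the structure isomorphisms giving the condition $p(\gamma_f)=\theta_{\beta f,B}\theta_{\alpha f,A}\inv$ and naturality giving the matching condition via cartesianness of $\widehat{\beta f}_B$. The paper leaves these verifications as "immediate"; you have simply carried them out explicitly, and your treatment of amalgamations via the fibred Yoneda lemma matches the intended argument.
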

\begin{proof}
	Remember that $\Psi(A,\alpha)\circ m_R:\fib R\rightarrow \dbicat$ operates as follows: every $[y]$ object of $\fib R$, \ie every arrow $y$ in $R$, is sent to $\dom(\widehat{\alpha y}_A)$, and every morphism $z:[yz]\rightarrow [y]$ to $\lambda_{\alpha y,z,A}:\dom(\widehat{\alpha yz}_A)\rightarrow \dom(\widehat{\alpha y}_A)$. It is now immediate to see that the components of a matching family for $\Hom((A,\alpha), (B,\beta))$, being arrows $\gamma_y:\dom(\widehat{\alpha y}_A)\rightarrow \dom(\widehat{\beta y}_B)$, provide exactly the components for a 2-cell of fibrations $\Psi(A,\alpha)\circ m_R\Rightarrow \Psi(B,\beta)\circ m_R$, and viceversa.
\end{proof}
We are now ready to prove the following:
\begin{prop}
	Consider a site $(\cbicat,J)$ and a cloven fibration $p:\dbicat\rightarrow\cbicat$: then $p$ is a $J$-prestack if and only if if for every $X$ in $\cbicat$ and every $(A,\alpha)$, $(B,\beta)$ in $\dcat(X)$ the presheaf $\Hom((A,\alpha),(B,\beta)):(\cbicat/X)\op\rightarrow\Set$ is a $J_X$-sheaf.
\end{prop}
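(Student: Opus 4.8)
The plan is to reduce everything, via the fibred Yoneda lemma (Proposition \ref{prop:fibered_yoneda}) and the two technical lemmas just established, to a purely bookkeeping comparison of quantifiers, since the genuinely computational content --- that matching families and amalgamations for the $\Hom$-presheaves are the same data as $2$-cells of fibrations --- has already been extracted. First I would fix $X$ and a $J$-sieve $S$ over $X$, and observe that under the identification $\fib\yo(X)\simeq\cbicat/X$ the inclusion $\fib m_S$ is exactly the functor $m_R:\fib R\hookrightarrow\cbicat/X$, where $R=S$ is viewed as a $J_X$-sieve over the terminal object $[1_X]$. By Definition \ref{def:stack_fib}, $p$ is a $J$-prestack precisely when the precomposition functor $-\circ\fib m_S$ is full and faithful for every such $S$.

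To check full faithfulness it suffices, since $\Fib_\cbicat(\cbicat/X,\dbicat)\simeq\dcat(X)$ is an equivalence and hence every object of the source is isomorphic to some $\Psi(A,\alpha)$, to verify that for all $(A,\alpha),(B,\beta)$ in $\dcat(X)$ the map
$$\Fib_\cbicat(\cbicat/X,\dbicat)(\Psi(A,\alpha),\Psi(B,\beta))\xrightarrow{-\circ\fib m_S}\Fib_\cbicat(\fib S,\dbicat)(\Psi(A,\alpha)\circ\fib m_S,\Psi(B,\beta)\circ\fib m_S)$$
is a bijection. Here the second technical lemma above supplies canonical bijections identifying the domain with the set of morphisms $(A,\alpha)\to(B,\beta)$, that is, with the amalgamations, and the codomain with the set of matching families for $\Hom((A,\alpha),(B,\beta))$ and $S$ over $[1_X]$; the key point I would verify is that these bijections are compatible with the restriction maps, so that $-\circ\fib m_S$ corresponds exactly to the operation sending an amalgamation to the matching family it induces. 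Granting this, full faithfulness of $-\circ\fib m_S$ for the sieve $S$ is literally the assertion that every matching family for $\Hom((A,\alpha),(B,\beta))$ and $S$, viewed over $[1_X]$, admits a \emph{unique} amalgamation.

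It then remains to match quantifiers. For the implication from the sheaf condition to the prestack condition this is immediate: if each $\Hom((A,\alpha),(B,\beta))$ is a $J_X$-sheaf then in particular the sheaf condition holds for sieves over the terminal object $[1_X]$, which by the previous paragraph yields full faithfulness of every $-\circ\fib m_S$. For the converse I would use Corollary \ref{cor:Hom_mfam_su_terminali} to promote the condition over terminal objects to the condition over an arbitrary slice object $[y:Y\to X]$: a matching family (resp.\ an amalgamation) for $\Hom((A,\alpha),(B,\beta))$ and a $J_X$-sieve over $[y]$ is the same as one for $\Hom(\dcat(y)(A,\alpha),\dcat(y)(B,\beta))$ and the corresponding $J_Y$-sieve over $[1_Y]$, and the latter objects lie in $\dcat(Y)$ while the sieve is a genuine $J$-sieve over $Y$ (as recorded in the first technical lemma). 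Applying the prestack hypothesis at $Y$ to these data then gives the required existence and uniqueness of amalgamations over $[y]$, so that $\Hom((A,\alpha),(B,\beta))$ is a $J_X$-sheaf.

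The main obstacle is not conceptual but precisely the verification flagged in the second paragraph: that the bijections of the technical lemma intertwine the fibration-theoretic restriction $-\circ\fib m_S$ with the presheaf-theoretic passage from a global section to its matching family. Once that compatibility square is checked, full faithfulness unwinds into exactly the sheaf axiom --- faithfulness encoding uniqueness and fullness encoding existence of amalgamations --- and both implications follow by the quantifier manipulation above.
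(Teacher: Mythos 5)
Your proposal is correct and follows essentially the same route as the paper's own proof: both reduce via Corollary \ref{cor:Hom_mfam_su_terminali} to matching families over the terminal object, invoke the lemma identifying matching families and amalgamations with $2$-cells of fibrations, and read off the sheaf axiom from full faithfulness of the restriction functor (fullness giving existence, faithfulness giving uniqueness of amalgamations). The compatibility of the two bijections with restriction, which you rightly flag as the only real verification, is likewise left implicit in the paper's argument.
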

\begin{proof}
	The proof is a generalization of the usual argument for Grothendieck fibrations. In the following we will use the notations $(F,\phi):=\Psi(A,\alpha)$ and $(G,\gamma)=\Psi(B,\beta)$.
	
	First of all, by the previous lemma we may reduce to consider matching families over the terminal $[1_X]$ of $\cbicat/X$. So suppose $\dbicat$ is a prestack and consider a $J$-covering sieve $R$ over $X$ and a matching family for $R$ over $[1_X]$ and $\Hom((A,\alpha), (B,\beta))$: by the previous lemma, it corresponds to a 2-cell $\alpha:(F_{|R},\phi_{|R})\Rightarrow (G_{|R},\gamma_{|R})$ in $\Fib_\cbicat(\fib R, \dbicat)$. If $\dbicat$ is a prestack the functor $\dcat(X)\simeq\Fib_\cbicat(\cbicat/X, \dbicat)\rightarrow \Fib_\cbicat(\fib R, \dbicat)$ is full and faithful, therefore $\alpha$ is the image of a unique $\bar{\alpha}:(A,\alpha)\rightarrow (B,\beta)$: this in turn corresponds to an amalgamation for the original matching family and hence the $\Hom$-presheaf we were considering is a $J_X$-sheaf. 
	
	If conversely all $\Hom$-presheaves are sheaves, start by considering $(F,\phi)$ and $(G,\gamma)$ as above and a 2-cell $\alpha:(F_{|R},\phi_{|R})\Rightarrow (G_{|R},\gamma_{|R})$. The 2-cell $\alpha$ corresponds to a matching family for $R$ over $[1_X]$ and $\Hom((A,\alpha), (B,\beta))$, by the previous lemma, and that has a unique amalgamation. Such an amalgamation corresponds to a unique 2-cell $\bar{\alpha}:(F,\phi)\Rightarrow (G,\gamma)$ extending the original $\alpha$, and hence $\Fib_\cbicat(\cbicat/X,\dbicat)\rightarrow \Fib_\cbicat(\fib R,\dbicat)$ is fully faithful.
\end{proof}

It is fundamental to remark that every site is canonically associated with its stack, as follows:
\begin{defn}\label{def:stack_canonica_su_sito} \label{sec:canonical_stack}
	Consider a site $(\cbicat,J)$: the \emph{canonical fibration over $(\cbicat,J)$}\index{fibration!canonical}\index{stack!canonical}\index{$\canst_{(\cbicat,J)}$} is the fibration associated to the 2-functor $\canst_{(\cbicat,J)}:\cbicat\op\rightarrow\CAT$ such that
	\[
	\left[Y\xrightarrow{y}X\right]\xmapsto{\canst_{(\cbicat,J)}}\left[\Sh(\cbicat,J)/\ell_J(X)\xrightarrow{\ell_J(y)^*}\Sh(\cbicat,J)/\ell_J(Y)\right].
	\]
	Using the content of Section \ref{section:comorphisms} and in particular Proposition \ref{prop:fib_discreta_slice_topos} we can define $\canst_{(\cbicat,J)}$ in terms of the functors $\fib y: \cbicat/Y\rightarrow \cbicat/X$ as 
	\[
	\left[Y\xrightarrow{y}X\right]\xmapsto{\canst_{(\cbicat,J)}}\left[\Sh(\cbicat/X,J_X)\xrightarrow{C_{\fib y}^*}\Sh(\cbicat/Y,J_Y)\right]
	\]
	Notice in particular that $C_{\fib y}^*$ acts as $-\circ (\fib y)\op$ by Proposition \ref{prop:fib_e_morfib_sono_can.comorf}.
\end{defn}
\begin{remark}
	Notice that, since there is a canonical choice of pullbacks in $\Sh(\cbicat,J)$, the canonical fibration over $(\cbicat,J)$ is always cloven.
\end{remark}
We can use both definitions of the 2-functor $\canst_{(\cbicat,J)}$ in order to describe its fibration.
Using the first definition of $\canst_{(\cbicat,J)}$, it is immediate to see that the fibration $\gbicat(\canst_{(\cbicat,J)})\rightarrow \cbicat$ is the comma category $\comma{1_{\Sh(\cbicat,J)}}{\ell_J}\rightarrow \cbicat$: objects are arrows $h:H\rightarrow \ell_J(X)$ of $\Sh(\cbicat,J)$, and arrows $(k:K\rightarrow \ell_J(Y))\rightarrow (h:H\rightarrow \ell_J(X))$ are pairs $(g,y)$ where $g:K\rightarrow H$, $y:Y\rightarrow X$ and $\ell_J(y)k=hg$. In particular, $(g,y)$ is cartesian if the square formed with $h$ and $k$ is a pullback square in $\Sh(\cbicat,J)$.

Using instead the second description, we see that objects of $\gbicat(\canst_{(\cbicat,J)})$ are couples $(X, P:\cbicat/X\op\rightarrow \Set)$, where $X$ is an object of $\cbicat$ and $P$ is a $J_X$-sheaf. Arrows of $\gbicat(\canst_{(\cbicat,J)})$ are pairs $(y,\alpha):(Y,Q)\rightarrow (X,P)$, where $y:Y\rightarrow X$ in $\cbicat$ and $\alpha: Q\Rightarrow P\circ  (\fib y)\op$. In particular, a cartesian arrow of $\gbicat(\canst_{(\cbicat,J)})$ is of the form $(y,\alpha):(Y, Q)\rightarrow (X,P)$ with $\alpha$ an isomorphism. In the following we will denote by $\canst_{(\cbicat,J)}$ indifferently the fibration and the $\cbicat$-indexed category.
\begin{remark}
	Using the well known equivalence $\Etopos\simeq \Sh(\Etopos, J\can_\Etopos)$, one can immediately see that the canonical fibration over $(\Etopos, J\can_\Etopos)$ coincides with the fibration $\cod:\Mor(\Etopos)\rightarrow\Etopos$ in Example \ref{ex:indexedcategories}(iv).
\end{remark}
The following result will be fundamental later: 
\begin{thm}\label{thm:canst_stack}
	Consider a site $(\cbicat,J)$: the canonical fibration $\canst_{(\cbicat,J)}$ is a $J$-stack, called the \emph{canonical stack of $(\cbicat,J)$}.
\end{thm}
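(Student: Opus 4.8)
The plan is to verify the two halves of the stack condition of Definition \ref{def:stack_ind} separately: that $\canst_{(\cbicat,J)}$ is a $J$-prestack (so that each comparison functor $L_S$ is fully faithful) and that every descent datum is effective (so that each $L_S$ is essentially surjective). Throughout I would exploit the second description of the canonical fibration, under which the fibre over $X$ is $\Sh(\cbicat/X,J_X)\simeq\Etopos/\ell_J(X)$ (writing $\Etopos:=\Sh(\cbicat,J)$, using Proposition \ref{prop:fib_discreta_slice_topos}) and the transition morphism along $y:Y\to X$ is the restriction $-\circ(\fib y)\op$, equivalently the pullback $\ell_J(y)^*$. The point of this reduction is that $\canst_{(\cbicat,J)}$ is then nothing but the self-indexing $E\mapsto\Etopos/E$ of the topos $\Etopos$ precomposed with $\ell_J\op:\cbicat\op\to\Etopos\op$; the theorem thereby becomes the assertion that \emph{a topos is a stack over itself}, transported back to $(\cbicat,J)$. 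Recall moreover that the canonical fibration is cloven, so the indexed-category formulations apply.

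For the prestack part I would invoke the $\Hom$-presheaf criterion established in the previous Proposition: it suffices to check that, for every $X$ and every pair $A,B$ of objects of $\Sh(\cbicat/X,J_X)$, the presheaf $\Hom((A,\alpha),(B,\beta))$ on $\cbicat/X$ is a $J_X$-sheaf. Its value at $[y:Y\to X]$ is $\Hom_{\Sh(\cbicat/Y,J_Y)}(A\circ(\fib y)\op,\,B\circ(\fib y)\op)$, and since both restrictions are sheaves this $\Hom$ is computed in presheaves. Using the slice equivalence $\Psh(\cbicat/X)/\yo([y])\simeq\Psh((\cbicat/X)/[y])\simeq\Psh(\cbicat/Y)$, under which the restrictions $A\circ(\fib y)\op$ and $B\circ(\fib y)\op$ are the transports of the projections $\yo([y])\times A$ and $\yo([y])\times B$, one identifies this set with $\Hom_{\Psh(\cbicat/X)}(\yo([y])\times A,\,B)$, that is with the value $B^A([y])$ of the exponential computed in $\Psh(\cbicat/X)$. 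Thus $\Hom((A,\alpha),(B,\beta))\cong B^A$ naturally in $[y]$; since $B$ is a $J_X$-sheaf and sheaves form an exponential ideal in presheaves, $B^A$ is a $J_X$-sheaf, which gives the prestack condition.

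For effectiveness, let $S$ be a $J$-covering sieve on $X$ and unwind a descent datum through the fibre identification: it amounts to objects $a_y:\abicat_y\to\ell_J(Y)$ of $\Etopos/\ell_J(Y)$, one for each $(y:Y\to X)\in S$, together with coherent isomorphisms $\ell_J(z)^{*}a_y\cong a_{yz}$ for every $z:Z\to Y$. Because $\ell_J=\sheafify_J\yo_\cbicat$ sends the covering sieve $S$ to an isomorphism after sheafification, the family $\{\ell_J(Y)\xrightarrow{\ell_J(y)}\ell_J(X)\}_{y\in S}$ is epimorphic in $\Etopos$, and the data above constitute exactly a descent datum for the self-indexing of $\Etopos$ relative to this family (equivalently, relative to the $J\can_\Etopos$-sieve it generates). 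Invoking effective descent in $\Etopos$ — that its self-indexing is a $J\can_\Etopos$-stack, which rests on the Giraud exactness properties (universal and effective colimits) — one obtains an object $a:\abicat\to\ell_J(X)$ with $\ell_J(y)^{*}a\cong a_y$ compatibly; transported back, $a$ amalgamates the original descent datum. Together with the prestack property, this shows each $L_S$ is an equivalence, whence $\canst_{(\cbicat,J)}$ is a $J$-stack.

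The main obstacle is precisely this last step, since the genuine content of the theorem is effective descent of objects along covering families in $\Etopos$. Two points demand care: first, matching the $J$-covering sieve $S$ with an epimorphic family (equivalently a canonical covering sieve) in $\Etopos$; and second, checking that the coherence isomorphisms of a $\canst_{(\cbicat,J)}$-descent datum correspond \emph{precisely} to topos-theoretic descent data across the equivalence $\Sh(\cbicat/X,J_X)\simeq\Etopos/\ell_J(X)$, so that ``a topos is a stack over itself'' can be applied (note that $\cbicat$ need not have pullbacks, so I would keep the robust sieve formulation of descent rather than the family formulation). The prestack half, by contrast, is formal once the $\Hom$-presheaf is recognised as an exponential; I would therefore concentrate the effort on making the fibre equivalence strictly compatible with the transition functors and on the faithful translation of descent data through it.
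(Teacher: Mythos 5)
Your argument is correct in outline, but it follows neither of the two proofs the paper actually gives; it is essentially the classical descent-theoretic proof (the one the paper attributes to Giraud and notes is usually carried out only for sites with finite limits), adapted to the sieve formulation. The paper's first proof (Corollary \ref{cor:canst_via_immdiretta}) shares your reduction to the topos case, but implements it abstractly: $\canst_{(\cbicat,J)}$ is the direct image of $\canst_{(\Sh(\cbicat,J),\, J^{\textup{can}})}$ along the continuous functor $\ell_J$, and direct images along continuous functors preserve stacks (Proposition \ref{prop:ftcont_preservano_stack}); this packages your two delicate translation steps (matching the $J$-sieve $S$ with the epimorphic family $\{\ell_J(y)\}$, and transporting coherence data across the fibre equivalences) into the adjunction $s_K\Lan_{F\op}\dashv (-\circ F\op)\,i_K$, so they never have to be done by hand. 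The paper's second proof (Corollary \ref{cor:canst_via_aggfond}) is genuinely different and is the one that avoids your ``main obstacle'' altogether: it deduces the stack condition from the equivalences $\Gir_J(\dcat)\simeq\Ind_\cbicat(\dcat\Vop,\canst_{(\cbicat,J)})$ and $\Sh(\fib R,J_R)\simeq\Sh(\cbicat/X,J_X)$, with no appeal to effective descent in a topos. That appeal is the one real weak point of your proposal: ``a topos is a stack over itself'' is, for the site $(\Sh(\cbicat,J),J^{\textup{can}})$, precisely an instance of the theorem being proved, and the paper is explicit that its first proof is conditional on it (``which we must suppose to be a stack''). So your proof is sound as a reduction, and your prestack half via the identification $\Hom((A,\alpha),(B,\beta))\cong B^A$ and the exponential-ideal property of sheaves is a clean, correct argument; but to be complete you would either have to prove effective descent of objects along epimorphic families in a Grothendieck topos from the Giraud exactness properties (a nontrivial piece of work you only gesture at), or fall back on the fundamental-adjunction route, which obtains the conclusion without it.
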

This is a well known result, which appears for instance as Proposition 3.4.4 in \cite{giraud.cohomologie}. It can be proved explicitely using descent data: in literature this is most commonly done in the simplifying hypothesis that the site has finite limits (see for instance Example 4.11 of \cite{vistoli.stack}). We will provide two alternative proofs of this result: one appears as Corollary \ref{cor:canst_via_immdiretta}, and sees $\canst_{(\cbicat,J)}$ as the image along a direct image functor of the canonical stack over $\Sh(\cbicat,J)$ (which we must suppose to be a stack); the other proof will rely instead on the fundamental adjunction, and appears as Corollary \ref{cor:canst_via_aggfond}.

\section{The truncation functor}\label{sec:truncation_functor}
Let us now consider in more depth the relationship between sheaves and stacks: it is provided by an adjunction which has multiple interpretations, both at the level of categories and of toposes. Since we will be comparing stacks with sheaves over a site, in the present section we will consider \emph{small} stacks, \ie those take values in $\Cat$ instead of $\CAT$. We shall denote the 2-category of small stacks over a site $(\cbicat,J)$ by $\St^s(\cbicat,J)$\index{$\St^s(\cbicat,J)$}.

We have already mentioned in Proposition \ref{prop:psh+stack=sheaf} that a presheaf over $\cbicat$ is a $J$-sheaf if and only if, when seen as a discrete $\cbicat$-indexed category, it is in fact a $J$-stack: this provides us with a functor 
\[
{j_J}:\Sh(\cbicat,J)\rightarrow \St^s(\cbicat,J),
\]\index{$j_J$}
which acts by seeing every $J$-sheaf as a discrete $J$-stack. In fact, if we consider the (2-)adjunction 
\[\begin{tikzcd}
	{\Set}\ar[r, "{Disc}"', hook, bend right, start anchor={south east}, end anchor={south west}] \ar[r, phantom, "\dashv"{rotate=270}] & {\Cat} \ar[l,"{\pi_0}"', bend right, start anchor={north west}, end anchor={north east}]
\end{tikzcd},\]
where $Disc$ maps each set to the corresponding discrete category while $\pi_0$ sends a category to its set of connected components, then $j_J$ acts by mapping a $J$-sheaf $P:\cbicat\op\rightarrow\Set$ to the composite $Disc\circ P$. By standard consederations about adjunctions and functor categories (\cfr for instance Lemma \ref{lemma:aggiunzione_tra_cat_funtori}),
we can conclude that $j_J:=(Disc\circ -)$ also has a left adjoint, and that it induces an adjunction at the level of sheaves and stacks: 
\begin{prop}
	Consider a site $(\cbicat,J)$: there is an adjunction
	\[
	\begin{tikzcd}
		{\phantom{\slash}\Sh(\cbicat,J)}\ar[r, "{j_J}"', hook, bend right, start anchor={south east}, end anchor={south west}] \ar[r, phantom, "\dashv"{rotate=270}] & {\St^s(\cbicat,J)\phantom{\slash}} \ar[l,"{\trunc_J}"', bend right, start anchor={north west}, end anchor={north east}]
	\end{tikzcd},\]
	where $j_J$ includes $J$-sheaves over $\cbicat$ as discrete stacks in $\St^s(\cbicat,J)$, while the \emph{$J$-truncation functor}\index{functor!truncation}\index{$\trunc_J$} $\trunc_J$ maps each small $J$-stack $\dcat:\cbicat\op\rightarrow\Cat$ to the $J$-sheaf $\sheafify_J(\pi_0\circ \dcat)$, where $\pi_0:\Cat\rightarrow\Set$ is the connected components functor. In other words, the $J$-truncation of a stack is computed first by considering its presheaf of connected components, and the by sheafifying it with respect to $J$. 
	
	In particular, when $J$ is the trivial topology over $\cbicat$, we shall denote the truncation-inclusion adjunction simply by $\trunc_\cbicat\dashv j_\cbicat$.
\end{prop}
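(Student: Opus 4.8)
The plan is to establish $\trunc_J\dashv j_J$ by exhibiting, for every small $J$-stack $\dcat$ and every $J$-sheaf $P$, a bijection
\[
\Sh(\cbicat,J)\bigl(\trunc_J\dcat,\,P\bigr)\;\cong\;\St^s(\cbicat,J)\bigl(\dcat,\,j_J P\bigr)
\]
natural in both arguments, by chaining three ingredients that are already available: the sheafification adjunction $\sheafify_J\dashv\iota_J$, the postcomposition adjunction $(\pi_0\circ-)\dashv(Disc\circ-)$ lifted from $\pi_0\dashv Disc$ to functor $2$-categories, and the full faithfulness of the inclusion $\St^s(\cbicat,J)\hookrightarrow[\cbicat\op,\Cat]_{ps}$ recorded in Definition \ref{def:stack_ind}. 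The first point to settle is that $\pi_0\circ\dcat$ really is a \emph{strict} presheaf $\cbicat\op\to\Set$ for an arbitrary pseudofunctor $\dcat$: since naturally isomorphic functors induce equal maps on connected components, $\pi_0$ carries the coherence isomorphisms $\phi^\dcat_{y,z}$ and $\phi^\dcat_X$ to genuine identities, so $\pi_0\circ\dcat$ is an honest object of $\widehat{\cbicat}$ and $\trunc_J\dcat=\sheafify_J(\pi_0\circ\dcat)$ is well defined.

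With this in hand the computation is formal. Writing $\dcat$ also for its image under the inclusion into $[\cbicat\op,\Cat]_{ps}$, we have
\[
\Sh(\cbicat,J)(\sheafify_J(\pi_0\circ\dcat),P)\;\cong\;\widehat{\cbicat}(\pi_0\circ\dcat,\,\iota_J P)\;\cong\;[\cbicat\op,\Cat]_{ps}(\dcat,\,Disc\circ\iota_J P),
\]
the first isomorphism being the sheafification adjunction and the second the postcomposition adjunction. Now $Disc\circ\iota_J P$ is precisely the discrete stack $j_J P$ regarded as an indexed category, and since $\St^s(\cbicat,J)$ sits as a $2$-full and faithful subcategory of $[\cbicat\op,\Cat]_{ps}$, this last hom coincides with $\St^s(\cbicat,J)(\dcat,j_J P)$. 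Each step is natural in $\dcat$ and in $P$, so their composite furnishes the required bijection, and hence the adjunction.

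The one point that genuinely needs care — and which I would isolate as the short preliminary lemma presumably contained in the cited Lemma \ref{lemma:aggiunzione_tra_cat_funtori} — is the second isomorphism above: the adjunction $\pi_0\dashv Disc$ between $\Set$ and $\Cat$ must be upgraded to an adjunction $(\pi_0\circ-)\dashv(Disc\circ-)$ between the ordinary presheaf category $\widehat{\cbicat}$ and the $2$-category $[\cbicat\op,\Cat]_{ps}$ of pseudofunctors. The delicate feature is the mismatch between the strict $\Set$-valued side and the pseudo $\Cat$-valued side; it is resolved exactly by the strictification observation above, which guarantees that $(\pi_0\circ-)$ lands among strict presheaves, while $(Disc\circ-)$ produces (strict, discrete) pseudofunctors, so that the unit and counit of $\pi_0\dashv Disc$ can be whiskered levelwise into a unit and counit satisfying the triangle identities. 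I would also record that both hom-categories on the right are in fact discrete — a modification into the discrete stack $j_J P$ is forced to have identity components — so the bijection of hom-\emph{sets} already carries the full $2$-categorical content, and no separate coherence check on $2$-cells is needed.
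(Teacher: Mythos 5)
Your proof is correct and follows essentially the same route as the paper's: the paper also obtains $\trunc_J\dashv j_J$ by composing the sheafification adjunction $\sheafify_J\dashv\iota_J$ with the postcomposed adjunction $(\pi_0\circ-)\dashv(Disc\circ-)$ (its Lemma \ref{lemma:aggiunzione_tra_cat_funtori}) and then restricting along the fully faithful inclusion $i_J$ via the identity $j_\cbicat\circ\iota_J=i_J\circ j_J$ (its Lemma \ref{lemma:adj_restriction_subcat}), which is exactly your hom-set chain unwound. Your preliminary observations — that $\pi_0$ kills the coherence isomorphisms so $\pi_0\circ\dcat$ is an honest presheaf, and that the hom-categories into a discrete stack are discrete — are correct fine points that the paper leaves implicit.
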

\begin{proof}
	As we mentioned above, the functor $j_\cbicat:=(Disc\circ-)$ has a left adjoint $\trunc_\cbicat:=(\pi_0\circ -)$. Now consider the commutative diagram
	\[
	\begin{tikzcd}
		{\St^s(\cbicat,J)} & {\Ind_\cbicat} \\
		{\Sh(\cbicat,J)} & {[\cbicat\op,\Set]}
		\arrow["{\iota_J}"', shift right=2, hook, from=2-1, to=2-2]
		\arrow["{\sheafify_J}"', shift right=2, from=2-2, to=2-1]
		\arrow["\dashv"{anchor=center,rotate=270}, draw=none, from=2-1, to=2-2]
		\arrow["{j_\cbicat}"', shift right=2, hook, from=2-2, to=1-2]
		\arrow["{\trunc_\cbicat}"', shift right=2, from=1-2, to=2-2]
		\arrow["\dashv"{anchor=center}, draw=none, from=1-2, to=2-2]
		\arrow["{i_J}"', shift right=2, hook, from=1-1, to=1-2]
		\arrow["{s_J}"', shift right=2, from=1-2, to=1-1]
		\arrow["\dashv"{anchor=center, rotate=270}, draw=none, from=1-1, to=1-2]
		\arrow["{j_J}"', shift right=2, hook, from=2-1, to=1-1]
		\arrow["{\trunc_J}"', dashed, shift right=2, from=1-1, to=2-1]
		\arrow["\dashv"{anchor=center}, draw=none, from=2-1, to=1-1]
	\end{tikzcd}:
	\]
	The identity $j_\cbicat\circ \iota_J=i_J\circ j_J $ is obvious, and thus by standard arguments about adjoints (for instance by applying Lemma \ref{lemma:adj_restriction_subcat}) one immediately concludes that the composite $\sheafify_J\circ \trunc_\cbicat\circ i_J$ provides the dashed left adjoint to $j_J$.
\end{proof}
When multiple topologies are involved, truncation functors act naturally with respect to sheafification and stackification:
\begin{lemma}
	Consider a site $(\cbicat,J)$, a further topology $K\supseteq J$ and the diagram
	\[
	\begin{tikzcd}
		{\St^s(\cbicat,K)} & {\St^s(\cbicat,J)} \\
		{\Sh(\cbicat,K)} & {\Sh(\cbicat,J)}
		\arrow["{\iota_K}"', shift right=2, hook, from=2-1, to=2-2]
		\arrow["{\sheafify_J}"', shift right=2, from=2-2, to=2-1]
		\arrow["\dashv"{anchor=center,rotate=270}, draw=none, from=2-1, to=2-2]
		\arrow["{j_J}"', shift right=2, hook, from=2-2, to=1-2]
		\arrow["{\trunc_J}"', shift right=2, from=1-2, to=2-2]
		\arrow["\dashv"{anchor=center}, draw=none, from=1-2, to=2-2]
		\arrow["{i_K}"', shift right=2, hook, from=1-1, to=1-2]
		\arrow["{s_K}"', shift right=2, from=1-2, to=1-1]
		\arrow["\dashv"{anchor=center, rotate=270}, draw=none, from=1-1, to=1-2]
		\arrow["{j_K}"', shift right=2, hook, from=2-1, to=1-1]
		\arrow["{\trunc_K}"', shift right=2, from=1-1, to=2-1]
		\arrow["\dashv"{anchor=center}, draw=none, from=2-1, to=1-1]
	\end{tikzcd}.
	\]
	Then the following hold:
	\begin{align*}
		j_J\circ \iota_K &= i_K\circ j_K,\\
		\sheafify_J\circ \trunc_J &\cong \trunc_K\circ s_K,\\
		\sheafify_K&\cong \trunc_K\circ s_K\circ j_J.
	\end{align*}
\end{lemma}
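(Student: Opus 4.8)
The plan is to establish the three relations in order, deriving each from the previous one together with purely formal properties of adjunctions; no genuine computation with descent data is required.

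For the first identity, $j_J\circ\iota_K = i_K\circ j_K$, I would argue directly on objects and morphisms, exactly as in the identity $j_\cbicat\circ\iota_J = i_J\circ j_J$ used in the proof of the previous proposition. A $K$-sheaf $P$ is, via $\iota_K$, the same presheaf regarded as a $J$-sheaf, and $j_J$ then sends it to the discrete $\cbicat$-indexed category $Disc\circ P$; on the other branch, $j_K$ sends $P$ to $Disc\circ P$ regarded as a $K$-stack, and $i_K$ views this same indexed category as a $J$-stack. Both composites therefore produce the identical $\cbicat$-indexed category $Disc\circ P$, and both act as $Disc\circ(-)$ on morphisms, so the two functors are literally equal (hence the $=$ rather than $\cong$).

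For the second relation I would invoke the essential uniqueness of adjoints. Since $\trunc_J\dashv j_J$ and $\sheafify_J\dashv\iota_K$, the composite $\sheafify_J\circ\trunc_J$ is left adjoint to $j_J\circ\iota_K$; similarly, from $s_K\dashv i_K$ and $\trunc_K\dashv j_K$, the composite $\trunc_K\circ s_K$ is left adjoint to $i_K\circ j_K$. By the first relation these two right adjoints coincide, so $\sheafify_J\circ\trunc_J$ and $\trunc_K\circ s_K$ are both left adjoint to the same functor and are therefore canonically naturally isomorphic.

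For the third relation I would precompose the second with $j_J$, obtaining $\trunc_K\circ s_K\circ j_J\cong\sheafify_J\circ\trunc_J\circ j_J$. Because $j_J$ is a fully faithful inclusion, the counit of the adjunction $\trunc_J\dashv j_J$ is invertible, i.e. $\trunc_J\circ j_J\cong\Id_{\Sh(\cbicat,J)}$; hence $\trunc_K\circ s_K\circ j_J\cong\sheafify_J$. Finally I would note that the reflection $\sheafify_J\colon\Sh(\cbicat,J)\to\Sh(\cbicat,K)$ appearing here is exactly the restriction of the presheaf-level $K$-sheafification $\sheafify_K$ along $\iota_J$ (both being left adjoint to the inclusion of $K$-sheaves into $J$-sheaves), which yields the stated identification $\sheafify_K\cong\trunc_K\circ s_K\circ j_J$. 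The whole argument is formal, so there is no serious obstacle; the only points requiring care are the bookkeeping of the various inclusions and their left adjoints—in particular keeping straight that $\sheafify_J$ in this diagram denotes the reflection between the two sheaf categories (the restriction of $\sheafify_K$) rather than $J$-sheafification of presheaves—and verifying that $j_J$ is fully faithful so that its counit is invertible.
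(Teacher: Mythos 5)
Your proof is correct and follows exactly the paper's own argument: the first identity is checked directly on objects (both composites send a $K$-sheaf $P$ to $Disc\circ P$), the second follows by uniqueness of left adjoints of the equal composites in the first, and the third by precomposing with $j_J$ and using $\trunc_J\circ j_J\cong \id$. Your explicit remark that the reflection $\Sh(\cbicat,J)\to\Sh(\cbicat,K)$ is the restriction of $K$-sheafification is a helpful clarification of a notational point the paper leaves implicit, but it does not change the argument.
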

\begin{proof}
	The first identity is obvious; since the composites in the second isomorphism are the left adjoints of those in the first equality, the second isomorphism must also hold; finally, the third isomorphism follows from the second one and the fact that $\trunc_J\circ j_J\cong \id_{\Sh(\cbicat,J)}$.
\end{proof}

As we already mentioned at the beginning of the section, the truncation functor can be described in other ways. We will briefly show that from the point of view of toposes the truncation of stacks can be built from our fundamental adjunction (see Chapter \ref{chap:fundadj}); on the other hand, at the level of sites it can be interpreted using the tools of \cite[Section 4.7]{denseness}. 

Let us begin by the topos-theoretic point of view: we shall need the adjunction of Corollary \ref{prop:fundadj_stack_topos},
\[\begin{tikzcd}
	{\phantom{\slash}\St(\cbicat,J)}\ar[r, "\Lambda'", bend left, start anchor={north east}, end anchor={north west}] \ar[r, phantom, "\dashv"{rotate=270}] & {\EssTopos\co/\Sh(\cbicat,J)} \ar[l,"\Gamma'", bend left, start anchor={south west}, end anchor={south east}]
\end{tikzcd},\]
where in particular $\Gamma'$ maps an essential $\Sh(\cbicat,J)$-topos $\Etopos$ to the $J$-stack 
\[
\EssTopos\co/\Sh(\cbicat,J)(\Sh(\cbicat/-, J_{(-)}), \Etopos):\cbicat\op\rightarrow \CAT,
\]
while $\Lambda'$ maps a $J$-stack $\dcat:\cbicat\op\rightarrow\CAT$ to Giraud's topos $\Gir_J(\dcat)\rightarrow \Sh(\cbicat,J)$ for $\dcat$ (see Definition \ref{def:Giraud_topology_classifying_topos}). This adjunction can be restricted to small stacks and relatively small toposes (see Definition \ref{def:relativelysmallmorphism}).

Consider now the (2-)adjunction
\[\begin{tikzcd}
	{\phantom{\slash}\Sh(\cbicat,J)}\ar[r, "{\Sh(\cbicat,J)/-}"', hook, bend right, start anchor={south east}, end anchor={south west}] \ar[r, phantom, "\dashv"{rotate=270}] & {\EssTopos\co/\Sh(\cbicat,J)} \ar[l,"L"', bend right, start anchor={north west}, end anchor={north east}]
\end{tikzcd}:\]
the right adjoint $\Sh(\cbicat,J)/-$ maps a $J$-sheaf $P$ to the local homeomorphism $\Sh(\cbicat,J)\hspace{-0.7pt}/P\hspace{-0.7pt}\rightarrow \Sh(\cbicat,J)$ (see Definition \ref{def:topos_etale}), and it is full and faithful by Lemma \ref{lemma:morfgeom_tra_topos_slice_su_base}; on the other hand, the left adjoint $L$ maps a topos $E:\Etopos\rightarrow \Sh(\cbicat,J)$ to the image $E_!(1_\Etopos)$ of the terminal object via the essential image. The fact that the two form an adjunction follows immediately from Lemma 4.59 of \cite{denseness}.
\begin{remark}
	Alternatively, we could derive this adjunction from the discrete adjunction of Proposition \ref{prop:fundadj_discrete}, by restricting it to $J$-sheaves and essential $\Sh(\cbicat,J)$-toposes.
\end{remark}

By composing the two adjunctions, we can recover $j_J$ and $\trunc_J$:
\begin{prop}
	Consider the adjunction
	\[\begin{tikzcd}
		{\phantom{\slash}\Sh(\cbicat,J)}\ar[r, "{\Gamma'\circ\Sh(\cbicat,J)/-}"', bend right, start anchor={south east}, end anchor={south west}] \ar[r, phantom, "\dashv"{rotate=270}] & {\St^s(\cbicat,J)\phantom{\slash}} \ar[l,"L\circ \Lambda'"', bend right, start anchor={north west}, end anchor={north east}]
	\end{tikzcd}:\]
	then $\Gamma'\circ \Sh(\cbicat,J)/-\cong j_J$ and $L\circ \Lambda'\cong \trunc_J$. In particular, the truncation of a stack $\dcat$ may be defined as either of the two $J$-sheaves 
	\begin{align*}
		\trunc_J(\dcat)&\simeq (C_{p_\dcat})_!(1_{\Gir_J(\dcat)})\\
		&\simeq \colim_{(X,U)\in \gbicat(\dcat)} \ell_{J}(X).
	\end{align*}
\end{prop}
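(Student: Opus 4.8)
The plan is to exploit the fact that the two functors in the statement are obtained by composing two adjunctions that have already been established, namely $L\dashv \Sh(\cbicat,J)/-$ and $\Lambda'\dashv\Gamma'$. Composing adjoints immediately gives that $L\circ\Lambda'\dashv \Gamma'\circ\Sh(\cbicat,J)/-$ is an adjunction between $\St^s(\cbicat,J)$ and $\Sh(\cbicat,J)$. Since $\trunc_J\dashv j_J$ is also an adjunction between the same two categories, and adjoints are unique up to canonical isomorphism, it suffices to prove just one of the two claimed isomorphisms. I would establish the one on the right adjoints, $\Gamma'\circ\Sh(\cbicat,J)/-\cong j_J$, and then deduce $L\circ\Lambda'\cong\trunc_J$ by uniqueness of left adjoints.

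To prove $\Gamma'\circ\Sh(\cbicat,J)/-\cong j_J$, I would unwind the definition of $\Gamma'$ on the local homeomorphism attached to a $J$-sheaf $P$: for each object $X$ of $\cbicat$,
\[
(\Gamma'\circ\Sh(\cbicat,J)/-)(P)(X)=\EssTopos\co/\Sh(\cbicat,J)\bigl(\Sh(\cbicat/X,J_X),\,\Sh(\cbicat,J)/P\bigr).
\]
Using the identification $\Sh(\cbicat/X,J_X)\simeq\Sh(\cbicat,J)/\ell_J(X)$, both the source and the target lie in the image of the slice embedding $\Sh(\cbicat,J)/-$, which is full and faithful by Lemma \ref{lemma:morfgeom_tra_topos_slice_su_base}. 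Hence this hom-category is equivalent to the set $\Sh(\cbicat,J)(\ell_J(X),P)$ regarded as a discrete category, and by the adjunction $\sheafify_J\dashv\iota_J$ together with the Yoneda lemma this set is naturally isomorphic to $P(X)$. This is precisely $j_J(P)(X)=\mathrm{Disc}(P(X))$. The remaining work is bookkeeping: one checks that, as $X$ varies, the transition functors of $\Gamma'(\Sh(\cbicat,J)/P)$ match the restriction maps of $P$, and that the whole construction is pseudonatural in $P$, so that the pointwise equivalences assemble into an equivalence $\Gamma'\circ\Sh(\cbicat,J)/-\cong j_J$ of $\cbicat$-indexed categories. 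Note that all the relevant $2$-cells are trivial here, so the reversal of $2$-cells implicit in $(-)\co$ plays no role.

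Having identified $L\circ\Lambda'\cong\trunc_J$, I would read off the two explicit descriptions of the truncation. The first, $\trunc_J(\dcat)\simeq(C_{p_\dcat})_!(1_{\Gir_J(\dcat)})$, is immediate from the definitions: $\Lambda'(\dcat)$ is the Giraud geometric morphism $C_{p_\dcat}\colon\Gir_J(\dcat)\to\Sh(\cbicat,J)$ induced by the comorphism $p_\dcat$, and $L$ sends an essential $\Sh(\cbicat,J)$-topos $E$ to $E_!(1)$. For the colimit formula, I would use that the terminal object of $\Gir_J(\dcat)=\Sh(\gbicat(\dcat),J_\dcat)$ is the colimit of representables $\colim_{(X,U)\in\gbicat(\dcat)}\ell_{J_\dcat}(X,U)$ --- which follows from density of representables in the presheaf topos together with the fact that $\sheafify_{J_\dcat}$ preserves colimits. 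Since $(C_{p_\dcat})_!$ is a left adjoint it preserves this colimit, and the theory of continuous comorphisms of Sections \ref{section:comorphisms} and \ref{sec:giraudtpl} yields $(C_{p_\dcat})_!\circ\ell_{J_\dcat}\cong\ell_J\circ p_\dcat$; as $p_\dcat(X,U)=X$, this gives $\trunc_J(\dcat)\simeq\colim_{(X,U)\in\gbicat(\dcat)}\ell_J(X)$.

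The main obstacle I anticipate is the pseudonaturality bookkeeping in the second paragraph: transporting the full-faithfulness equivalence of the slice embedding coherently through all the hom-categories $\Gamma'(\Sh(\cbicat,J)/P)(X)$, so as to obtain an honest pseudonatural isomorphism of indexed categories rather than merely a family of pointwise equivalences, requires care with the coherence data of $\Gamma'$ and of the identification $\Sh(\cbicat/X,J_X)\simeq\Sh(\cbicat,J)/\ell_J(X)$. By contrast, the colimit computation is routine once the comorphism results recalled above are taken as given.
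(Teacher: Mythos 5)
Your proposal is correct and follows essentially the same route as the paper: the same pointwise chain of equivalences (identify $\Sh(\cbicat/X,J_X)$ with the slice $\Sh(\cbicat,J)/\ell_J(X)$, use full faithfulness of the slice embedding from Lemma \ref{lemma:morfgeom_tra_topos_slice_su_base}, then Yoneda), followed by uniqueness of left adjoints to get $L\circ\Lambda'\cong\trunc_J$, and the identical colimit-of-representables argument using that $(C_{p_\dcat})_!$ preserves colimits and $(C_{p_\dcat})_!\circ\ell_{J_\dcat}\cong\ell_J\circ p_\dcat$. The only cosmetic difference is that you make the appeal to uniqueness of adjoints explicit where the paper leaves it implicit.
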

\begin{proof}
	It is sufficient to notice that for a $J$-sheaf $P$ the following chain of equivalences holds:
	\begin{align*}
		(\Gamma'\hspace{-1.2pt}\circ\hspace{-1pt} \Sh(\cbicat,J)/-)(P)&:=\Gamma'(\Sh(\cbicat,J)/P)\\
		&=\EssTopos\co\hspace{-0.75pt}/\Sh(\cbicat,J)(\Gir_J(\cbicat/-), \Sh(\cbicat,J)/P)\\
		&\simeq \EssTopos\co\hspace{-0.75pt}/\Sh(\cbicat,J)(\Sh(\cbicat,J)/\ell_J(-), \Sh(\cbicat,J)/P)\\
		&\simeq \Sh(\cbicat,J)(\ell_J(-),P)\\
		&\simeq P.
	\end{align*}
	The third line is justified since for each $X$ there is a natural equivalence $\Gir_J(\cbicat/-)\simeq \Sh(\cbicat,J)/\ell_J(-)$, by Proposition \ref{prop:fib_discreta_slice_topos}; the fourth line holds by Lemma \ref{lemma:morfgeom_tra_topos_slice_su_base} and the final by Yoneda's lemma. Therefore, the right adjoint $\Gamma'\circ \Sh(\cbicat,J)/-$ acts as the inclusion $j_J:\Sh(\cbicat,J)\hookrightarrow\St(\cbicat,J)$, meaning that the composite $L\circ \Lambda'$ is isomorphic to the truncation functor $\trunc_J$: by spelling out explicitly the composite $L\circ \Lambda'$ we get the first description of $\trunc_J(\dcat)$ in the claim. The second expression is obtained by seeing the terminal $1_{\Gir_J(\dcat)}$ as the colimit of all the representables in $\Gir_J(\dcat)$, and exploiting the commutativity of $(C_{p_\dcat})_!$ with colimits and the equivalence $(C_{p_\dcat})_!\circ \ell_{J_\dcat}\cong \ell_J\circ p_\dcat$ we have that
	\begin{align*}
		\trunc_J(\dcat)&\simeq (C_{p_\dcat})_!\left(\colim_{(X,U)\in \gbicat(\dcat)}\ell_{J_\dcat}(X,U)\right)\\
		&\simeq \colim_{(X,U)\in \gbicat(\dcat)} \left( (C_{p_\dcat})_!\ell_{J_\dcat}(X,U)\right)\\
		&\simeq \colim_{(X,U)\in \gbicat(\dcat)} \ell_{J}(X).
	\end{align*}
\end{proof}
Let us recall from \cite[Propositions 4.62]{denseness} that an essential geometric morphism $f:\Etopos\rightarrow \Ftopos$ always admits a factorization into a terminally connected geometric morphism $f'$ followed by a local homeomorphism (see Definition \ref{def:topos_etale}): in particular, the local homeomorphism is given by the object $f_!(1_\Etopos)$ of $\Ftopos$, so that $f$ is isomorphic to the composite
\[
\Etopos\xrightarrow{f'} \Ftopos/f_!(1_\Etopos) \xrightarrow{\prod_{f_!(1_\Etopos)}} \Ftopos.
\]
Now, consider in particular a $J$-stack $\dcat:\cbicat\op\rightarrow\Cat$: when shall show later in Section \ref{section:comorphisms} that if $\gbicat(\dcat)$ is endowed with Giraud's topology $J_\dcat$, then the fibration  $p_\dcat:\gbicat(\dcat)\rightarrow \cbicat$ becomes a continuous comorphism of sites $p_\dcat:(\gbicat(\dcat), J_\dcat)\rightarrow (\cbicat,J)$ and it induces the canonical essential geometric morphism
\[
\Gir_J(\dcat) \xrightarrow{C_{p_\dcat}} \Sh(\cbicat,J).
\]
from Giraud's topos for $\dcat$. If we apply the (terminally connected, local homeomorphism)-factorization to $C_{p_\dcat}$, we have that the object of $\Sh(\cbicat,J)$ providing the local homeomorphism factor is $(C_{p_\dcat})_!(1_{\Gir_J(\dcat)})$, \ie the $J$-truncation of $\dcat$ by our last result. Thus the truncation of stacks is intimately connected with one of the many factorization systems for geometric morphisms.

This is not all, since the (terminally connected, local homeomorphism)-factorization can actually be presented at the level of sites (cfr. \cite[Proposition 4.70(ii)]{denseness}) using the $J$-comprehensive factorization of functors: this will provide a further way of interpreting the truncation of a stack, directly at the level of fibrations.
\begin{defn}[{\cite[Definition 4.67]{denseness}}] \label{def:comprehensive_factorization}
	Consider a site $(\cbicat,J)$. Every functor $p:\dbicat\rightarrow\cbicat$ admits a
	\emph{$J$-comprehensive (orthogonal) factorization}
	\[
	\begin{tikzcd}
		{\dbicat}\ar[rr,"p"] \ar[dr,"\bar{p}"'] && {\cbicat}\\
		& {\fib p_J} \ar[ur,"\pi"'] &
	\end{tikzcd},\]
	where
	\begin{itemize}
		\item[-] $p_J$ is the $J$-sheaf $\colim (\ell_J\circ p)$ and $\pi$ its associated discrete $J$-stack;
		\item[-] $\bar{p}$ is a $M^{\pi}_J$-cofinal functor (cfr. \cite[Definition 2.23]{denseness}), where $M^{\pi}_J$ is Giraud's topology for $p_J$ (see Definition \ref{def:Giraud_topology_classifying_topos}). 
	\end{itemize}
	This factorization is the unique (up to equivalence) factorization of $p$ into a cofinal functor followed by a discrete $J$-stack. Moreover, if $p$ is a continuous comorphism of sites, at the level of toposes its $J$-comprehensive factorization induces the (terminally connected, local homeomorphism)-factorization of $C_p$.
\end{defn}
Now, consider a $J$-stack $\dcat:\cbicat\op\rightarrow \Cat$: if we consider the functor $p_\dcat:\gbicat(\dcat)\rightarrow\cbicat$, the $J$-sheaf $p_J$ appearing in its $J$-comprehensive factorization is coincides with $(C_{p_\dcat})_!(1_{\Gir_J(\dcat)})$, \ie the $J$-truncation of $\dcat$. Therefore, we may conclude the following:
\begin{cor}\label{cor:truncation_fibrations}
	From a fibrational point of view, the $J$-truncation functor
	\[
	\trunc_J:\St^s(\cbicat,J)\rightarrow \Sh(\cbicat,J)
	\]
	acts by mapping a $J$-stack $p:\dbicat\rightarrow \cbicat$ to the second component $\pi$ in its $J$-comprehensive factorization
	\[
	\begin{tikzcd}
		\dbicat\ar[rr,"p_\dcat"] \ar[dr,"\overline{p_\dcat}"'] && {(\cbicat,J)}\\
		& {(\fib (p_\dcat)_J, M^{\pi}_J)} \ar[ur,"\pi"'] &
	\end{tikzcd}.\]
\end{cor}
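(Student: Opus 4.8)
The plan is to read the statement directly off the colimit description of $\trunc_J$ obtained in the previous Proposition, matching it term by term with the sheaf produced in Definition \ref{def:comprehensive_factorization}. The entire content is to check that the two colimits in question literally coincide, so that the discrete $J$-stack $\pi$ arising from the comprehensive factorization is exactly the discrete stack corresponding to the sheaf $\trunc_J(\dcat)$.

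First I would recall that, by the preceding Proposition, for a small $J$-stack $\dcat$ one has
\[
\trunc_J(\dcat)\simeq\colim_{(X,U)\in\gbicat(\dcat)}\ell_J(X).
\]
On the other hand, applying the $J$-comprehensive factorization of Definition \ref{def:comprehensive_factorization} to the fibration $p_\dcat:\gbicat(\dcat)\rightarrow\cbicat$ produces, as its first datum, the $J$-sheaf $(p_\dcat)_J:=\colim(\ell_J\circ p_\dcat)$, with $\pi$ its associated discrete $J$-stack and $\overline{p_\dcat}$ an $M^{\pi}_J$-cofinal functor. Since the Grothendieck projection $p_\dcat$ forgets the second coordinate, the composite $\ell_J\circ p_\dcat$ sends an object $(X,U)$ of $\gbicat(\dcat)$ to $\ell_J(X)$ and an arrow $(y,a):(Y,V)\to(X,U)$ to $\ell_J(y)$; hence its colimit is, on the nose, the colimit displayed above. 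This yields $(p_\dcat)_J\simeq\trunc_J(\dcat)$ as $J$-sheaves, which is precisely the identification already anticipated in the prose preceding the statement.

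It then remains to reconcile types. The functor $\trunc_J$ takes values in $\Sh(\cbicat,J)$, whereas the factorization exhibits its middle term as the discrete fibration $\fib (p_\dcat)_J$ equipped with the discrete $J$-stack $\pi$. Under the correspondence of Proposition \ref{prop:psh+stack=sheaf} between $J$-sheaves and discrete $J$-stacks, the sheaf $(p_\dcat)_J\simeq\trunc_J(\dcat)$ corresponds exactly to $\pi$, so the second component of the $J$-comprehensive factorization is the discrete $J$-stack associated with $\trunc_J(\dcat)$, as claimed. I do not anticipate a genuine obstacle here: the result is a formal consequence of the preceding Proposition together with the explicit recipe for $(p_\dcat)_J$ in Definition \ref{def:comprehensive_factorization}. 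The only point requiring a moment's care is the bookkeeping identifying the indexing category $\gbicat(\dcat)$ of the truncation colimit with the domain of $p_\dcat$, and thereby the two colimits; once this is observed, the corollary follows immediately.
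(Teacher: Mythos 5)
Your argument is correct and is essentially the paper's own: the corollary is justified there by the prose immediately preceding it, which likewise identifies the sheaf $(p_\dcat)_J=\colim(\ell_J\circ p_\dcat)$ from Definition \ref{def:comprehensive_factorization} with the colimit expression $\colim_{(X,U)\in \gbicat(\dcat)}\ell_J(X)\simeq (C_{p_\dcat})_!(1_{\Gir_J(\dcat)})\simeq \trunc_J(\dcat)$ supplied by the preceding Proposition. Your additional remark about passing between the sheaf and its associated discrete stack via Proposition \ref{prop:psh+stack=sheaf} is the same implicit bookkeeping the paper relies on.
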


\section{Localizations of fibrations}\label{sec:localization_fibrations}
The present section is dedicated to analysing the relationship between fibrations and localizations. We will see in the next section that localizations are a fundamental tool to compute pseudocolimits of categories.

We begin by proving that the category of fibrations is closed under localization with respect to vertical arrows:
\begin{prop}\label{prop:localizationoffibrations}
	Let $p:{\cal D}\to {\cal C}$ be a fibration, $W$ a class of arrows of $\cal D$ and $j_{W}$ the canonical functor ${\cal D}\to {\cal D}[W^{-1}]$. Then the following conditions are equivalent:
	\begin{enumerate}[(i)]
		\item There is a fibration $p_{W}:{\cal D}[W^{-1}] \to {\cal C}$ such that $j_{W}$ is a morphism of fibrations $p \to p_{W}$:
		\[\begin{tikzcd}
			{{\cal D}} & {{\cal D}[W^{-1}]} \\
			{{\cal C}}
			\arrow["p"', from=1-1, to=2-1]
			\arrow["{p_{W}}", from=1-2, to=2-1]
			\arrow["{j_{W}}", from=1-1, to=1-2]
		\end{tikzcd}\]
		
		\item Every arrow in $W$ is vertical with respect to $p$ and, denoting by $p_{W}$ the unique functor (determined by the universal property of the localization) ${\cal D}[W^{-1}] \to {\cal C}$ such that  $p_{W}\circ j_{W}=p$, $j_{W}$ sends arrows which are cartesian with respect to $p$ to arrows which are cartesian with respect to $p_{W}$.   
	\end{enumerate}
\end{prop}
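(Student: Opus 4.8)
The plan is to prove the two implications separately, relying throughout on one elementary fact that I would record first: \emph{cartesianness is stable under natural isomorphism of the projection functor}. Precisely, if $q,q':\abicat\to\bbicat$ are functors and $\psi:q\Isorightarrow q'$ is a natural isomorphism, then an arrow of $\abicat$ is cartesian for $q$ if and only if it is cartesian for $q'$; this is verified by transporting the defining universal property along the components of $\psi$, a diagram chase of the sort already used in Remark~\ref{remark:proprietà_frecce_cartesiane}. Recall also that, for Street fibrations, an arrow is \emph{vertical} exactly when its image under $p$ is invertible.

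For the implication (ii)$\Rightarrow$(i), the hypothesis that every arrow of $W$ is $p$-vertical says precisely that $p$ inverts $W$, so by the universal property of the localization $p$ factors uniquely as $p=p_W\circ j_W$; this is the functor $p_W$ named in (ii). It remains to show that $p_W$ is a fibration and that $(j_W,\id)$ is then a morphism of fibrations. Given an object $D$ of ${\cal D}[W^{-1}]$ (which is an object of ${\cal D}$) and an arrow $x:X\to p_W(D)=p(D)$ of ${\cal C}$, I would take a $p$-cartesian lift $f:B\to D$ in ${\cal D}$ together with $\theta:X\isorightarrow p(B)$ satisfying $p(f)\theta=x$, which exists since $p$ is a fibration. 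By hypothesis $j_W(f)$ is $p_W$-cartesian, and since $p_W\circ j_W=p$ we have $p_W(j_W(f))=p(f)$ and $\theta:X\isorightarrow p_W(j_W(B))$ with $p_W(j_W(f))\theta=x$; hence $j_W(f)$ is a cartesian lift of $x$ for $p_W$. Thus $p_W$ is a fibration, and $(j_W,\id)$ is a morphism of fibrations $p\to p_W$ because $j_W$ preserves cartesian arrows (hypothesis) and $\id:p_W\circ j_W=p$ is a (trivial) natural isomorphism.

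For the converse (i)$\Rightarrow$(ii), write $(j_W,\phi)$ for the given morphism of fibrations, with $\phi:p_W\circ j_W\Isorightarrow p$. First I would deduce verticality of $W$: for $w\in W$ the arrow $j_W(w)$ is invertible in ${\cal D}[W^{-1}]$, hence $p_W(j_W(w))$ is invertible, and the naturality square of $\phi$ at $w$ expresses $p(w)$ as a composite of $p_W(j_W(w))$ with components of $\phi$, so $p(w)$ is invertible. Now that $p$ inverts $W$, the universal property produces the \emph{strict} factorization $\tilde p_W$ with $\tilde p_W\circ j_W=p$ — this is the functor $p_W$ of statement (ii). To transport the cartesian-preservation property from the fibration of (i) to $\tilde p_W$, I would note that precomposition with $j_W$ is fully faithful on functor categories, so the natural isomorphism $\phi:p_W\circ j_W\Isorightarrow p=\tilde p_W\circ j_W$ lifts to a natural isomorphism $p_W\Isorightarrow\tilde p_W$. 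By the stability fact recorded above, an arrow is $p_W$-cartesian if and only if it is $\tilde p_W$-cartesian; since $j_W$ sends $p$-cartesian arrows to $p_W$-cartesian arrows by hypothesis, it sends them to $\tilde p_W$-cartesian arrows, which is exactly the remaining clause of (ii).

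The only genuinely delicate point, and the one I would treat with care, is the reconciliation of the two notions of projection: in (i) the comparison $\phi$ is merely a natural isomorphism, whereas (ii) is phrased in terms of the strict factorization. The invariance of cartesianness under natural isomorphism of the base functor, together with the $2$-dimensional universal property of the localization used to lift $\phi$ to $p_W\Isorightarrow\tilde p_W$, is what closes this gap; everything else is the routine lifting argument carried out above.
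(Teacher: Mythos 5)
Your proof is correct and follows the same skeleton as the paper's: verticality of $W$ is deduced from the fact that $j_W$ inverts $W$ and the projection of an isomorphism is an isomorphism, and the fibration property of $p_W$ is obtained by pushing $p$-cartesian lifts forward along $j_W$. The one place where you genuinely go beyond the paper is the reconciliation of the two projections: the paper simply writes $p_W(j_W(f))=p(f)$ in the implication (i)$\imp$(ii), implicitly treating the triangle of (i) as strictly commutative, whereas the definition of morphism of fibrations only supplies a natural isomorphism $\phi:p_W\circ j_W\Isorightarrow p$. Your two auxiliary observations — that cartesianness of an arrow is invariant under natural isomorphism of the projection functor, and that $\phi$ lifts along the fully faithful precomposition $(-\circ j_W)$ to an isomorphism $p_W\Isorightarrow \tilde p_W$ with the strict factorization — are both correct and close this gap cleanly. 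This extra care buys a proof that works verbatim for the pseudo-commutative reading of (i); the paper's version is shorter but tacitly assumes strictness.
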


\begin{proof}
	(i) $\imp$ (ii) Let $f$ be an arrow in $W$; then $j_{W}(f)$ is an isomorphism by definition of the localization $j_{W}$, so $p_W(j_{W}(f))$ is also an isomorphism by functoriality; but $p_W(j_{W}(f))=p(f)$, so $p(f)$ is an isomorphism, that is, $f$ is vertical. The fact that $j_{W}$ sends cartesian arrows to cartesian arrow follows from the fact that $j_{W}$ is a morphism of fibrations. 
	
	(ii) $\imp$ (i) Since every arrow in $W$ is vertical with respect to $p$, we have a functor ${\cal D}[W^{-1}] \to {\cal C}$ such that $p_{W}\circ j_{W}=p$. It remains to show that this functor is a fibration. But this follows immediately from the fact that $p$ is a fibration by using the fact that $j_{W}$ sends cartesian arrows to cartesian arrows. Indeed, the functor $j_{W}$ is essentially surjective by the construction of ${\cal D}[W^{-1}]$, and given an arrow $c \to p_{W}(j_{W}(d))=p(d)$ in $\cal C$ and a cartesian lift $g:d'\to d$ of it with respect to $p$, the arrow $j_{W}(g)$ is clearly a cartesian lift of it with respect to $p_{W}$, by the equality $p_{W}\circ j_{W}=p$.  
\end{proof}
In particular, when working with a pseudofunctor $\dcat:\cbicat\op\rightarrow\Cat$ and its fibration $\gbicat(\dcat)$, the request that $\gbicat(\dcat)$ is localized with respect to a family $W$ of vertical arrows can be understood as a localization which already takes place at the level of fibres. In this case, one can then verify that computing the fibration $\gbicat(\dcat)$ and then localizing with respect to $W$ is the same as performing a fibrewise localization of $\dcat$ and then moving to the corresponding fibration:	
\begin{lemma}\label{lemma:pointwiselocalization}
 	Consider a pseudofunctor $\dcat:\cbicat\op\rightarrow \Cat$ and suppose given for each category $X$ in $\cbicat$ a class of arrows $S_X$ of $\dcat(X)$ such that each transition morphism $\dcat(y):\dcat(X)\rightarrow \dcat(Y)$ restricts to a transition morphism $\dcat(X)[S_X\inv]\rightarrow \dcat(X)[S_Y\inv]$. Denote by $\bar{\dcat}:\cbicat\op\rightarrow\Cat$ the pseudofunctor obtained by the pointwise localization of $\dcat$: then $\gbicat(\bar{\dcat})$ is a localization of $\gbicat(\dcat)$ with respect to all arrows $(y,a):(Y,V)\rightarrow (X,U)$ such that $y$ is invertible and $a$ belongs to $S_Y$.
 \end{lemma}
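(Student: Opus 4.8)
The plan is to exhibit the canonical comparison functor $Q:\gbicat(\dcat)\to\gbicat(\bar{\dcat})$ induced by the fibrewise localizations as a localization of $\gbicat(\dcat)$ at $W$, by verifying its universal property after reducing the question to a purely fibrewise one. Writing $L_X:\dcat(X)\to\dcat(X)[S_X\inv]$ for the localization functors, I would first make $\bar{\dcat}$ into a pseudofunctor: the hypothesis that each $\dcat(y)$ descends means $L_Y\dcat(y)$ inverts $S_X$, so $\bar{\dcat}(y)$ is the unique functor with $\bar{\dcat}(y)L_X=L_Y\dcat(y)$; since precomposition with a localization functor is fully faithful on functor categories, the structural isomorphisms $\phi^\dcat_X$ and $\phi^\dcat_{y,z}$ descend uniquely to $\phi^{\bar{\dcat}}_X$ and $\phi^{\bar{\dcat}}_{y,z}$ and the coherence axioms are forced by the same full faithfulness. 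The family $(L_X)_X$ then assembles into a $\cbicat$-indexed functor $L:\dcat\Rightarrow\bar{\dcat}$ whose components $L_y$ are identities by construction, so that $Q:=\gbicat(L)$ is the identity on objects and sends $(y,a)\mapsto(y,L_Y(a))$.

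The heart of the argument is that inverting $W$ is a fibrewise condition. Every arrow $(y,a)$ of $\gbicat(\dcat)$ factors as a vertical arrow over $1_Y$ followed by the cartesian lift $(y,1)$; when $y$ is invertible this cartesian lift is already an isomorphism of $\gbicat(\dcat)$ by Remark \ref{remark:proprietà_frecce_cartesiane}(iii), hence is sent to an isomorphism by \emph{every} functor. Consequently a functor $G:\gbicat(\dcat)\to\ebicat$ inverts $W$ if and only if, for every $X$, the restriction $G\circ i_X:\dcat(X)\to\ebicat$ along the fibre inclusion inverts $S_X$. Applied to $Q$ itself this shows $Q$ inverts $W$, since $(y,L_Y(a))$ with $y$ and $L_Y(a)$ both invertible is invertible in $\gbicat(\bar{\dcat})$.

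Given this reduction I would check the universal property. For $G$ inverting $W$, each $G\circ i_X$ factors uniquely through $L_X$ as $\bar{G}_X\circ L_X$; on objects $\bar{G}$ is forced to equal $G$, and on a general arrow of $\gbicat(\bar{\dcat})$, factored into its cartesian part (handled by $G$ on the cartesian lift) and its fibre part (handled by the relevant $\bar{G}_Y$), the value of $\bar{G}$ is forced. Uniqueness is then immediate, because $Q$ is bijective on objects and every arrow of $\gbicat(\bar{\dcat})$ is a composite of $Q$-images and formal inverses of arrows in $Q(W)$; the corresponding statement for natural transformations follows for the same reason.

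The main obstacle is precisely showing that this prescription is functorial and satisfies $\bar{G}Q=G$: this amounts to verifying that the fibrewise factorizations $\bar{G}_Y$ are compatible with the transition functors and the cartesian arrows, which entails bookkeeping with the coherence isomorphisms $\phi^\dcat_{y,z}$. I expect this to be routine but tedious, and I would make it painless by first reducing to a split fibration: by Corollary \ref{cor:eqv_fibr_e_splfibr} we may replace $\dcat$ up to equivalence by a strict functor, for which all structural isomorphisms are identities, the composition law in $\gbicat(\dcat)$ becomes $(y,a)(z,b)=(yz,\dcat(z)(a)b)$, and $i_X(a)=(1_X,a)$; since localizations are invariant under equivalence and the fibrewise localizations correspond under the fibrewise equivalences, no generality is lost, and both the functoriality check and the equivalence ``$G$ inverts $W$ $\Leftrightarrow$ $G\circ i_X$ inverts $S_X$'' become transparent. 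Alternatively, one may argue structurally: $W$ is a class of vertical arrows of $p_\dcat$, so once one checks that $Q$ preserves cartesian arrows, Proposition \ref{prop:localizationoffibrations} yields that $\gbicat(\dcat)[W\inv]\to\cbicat$ is again a fibration; identifying its fibres with the $\dcat(X)[S_X\inv]$ and its transition functors with the $\bar{\dcat}(y)$ then gives the conclusion through Corollary \ref{cor:equivalenza_strfib_indcat}.
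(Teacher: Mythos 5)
Your proposal is correct in substance, and its central observation --- that a functor out of $\gbicat(\dcat)$ inverts $W$ exactly when its restrictions along the fibre inclusions $i_X$ invert the classes $S_X$, because every arrow $(y,a)$ with $y$ invertible factors as a vertical arrow followed by a cartesian lift of an isomorphism, which any functor inverts --- is also the pivot of the paper's argument (the paper phrases it as: it suffices to localize at the vertical arrows $(1,a)$ with $a\in S_X$). Where you genuinely diverge is in how the universal property is then established. The paper invokes Proposition \ref{prop:laxcolim_grothconstr} to identify $\gbicat(\dcat)$ and $\gbicat(\bar{\dcat})$ with the lax colimits $\colim_{lax}\dcat$ and $\colim_{lax}\bar{\dcat}$: a functor $h$ inverting the vertical $S_X$-arrows is a lax cocone under $\dcat$ whose legs $h\circ i_X$ factor through the $\bar{\dcat}(X)$, one checks that the resulting data assemble into a lax cocone under $\bar{\dcat}$, and the essentially unique factoring functor $\bar{h}$ then comes for free from the colimit property. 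This buys exactly what you identify as your ``main obstacle'': the compatibility of the fibrewise factorizations with the transition functors and the coherence isomorphisms is absorbed into the cocone formalism, with no strictification required. Your route --- explicit fibrewise factorization followed by a reduction to the split case via Corollary \ref{cor:eqv_fibr_e_splfibr} --- is viable and arguably more elementary, at the cost of the bookkeeping you anticipate. One caution about your second alternative: invoking Proposition \ref{prop:localizationoffibrations} and then ``identifying the fibres of $\gbicat(\dcat)[W\inv]$ with the $\dcat(X)[S_X\inv]$'' is not a shortcut, since a priori an arrow of the abstract localization lying over an identity is a zigzag whose intermediate objects may leave the fibre; showing that this fibre is nevertheless $\dcat(X)[S_X\inv]$ is essentially the content of the lemma itself, so that route is close to circular unless supplemented by the same fibrewise analysis you carry out in your main argument.
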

 \begin{proof}
 	We set the notations as in the following diagram (where as usual $y:Y\rightarrow X$ in $\cbicat$):
 	\[\begin{tikzcd}
 		{\dcat(X)} && {\dcat(Y)} \\
 		& {\gbicat(\dcat)} &&& \hbicat \\
 		{\bar{\dcat}(X)} && {\bar{\dcat}(Y)} \\
 		& {\gbicat(\bar{\dcat})}
 		\arrow["{\dcat(y)}"{pos=0.2}, from=1-1, to=1-3]
 		\arrow[""{name=0, anchor=center, inner sep=0}, "{i_X}"', from=1-1, to=2-2]
 		\arrow["{i_Y}", from=1-3, to=2-2]
 		\arrow["{q_X}"'{pos=0.8}, from=1-1, to=3-1]
 		\arrow["{q_Y}"{pos=0.8}, from=1-3, to=3-3]
 		\arrow["{\bar{\dcat}(y)}"{pos=0.2}, from=3-1, to=3-3]
 		\arrow[""{name=1, anchor=center, inner sep=0}, "{\bar{\imath}_X}"', from=3-1, to=4-2]
 		\arrow["{\bar{\imath}_Y}", from=3-3, to=4-2]
 		\arrow["{\bar{\imath}_y}"{pos=0.4}, shorten <=7pt, shorten >=13pt, Rightarrow, from=3-3, to=1]
 		\arrow["h", from=2-2, to=2-5,crossing over]
 		\arrow["q"{pos=0.2}, from=2-2, to=4-2, crossing over]
 		\arrow["{\bar{h}}"', bend right, from=4-2, to=2-5]
 		\arrow["{i_y}"{pos=0.4}, shorten <=7pt, shorten >=7pt, Rightarrow, from=1-3, to=0]
 	\end{tikzcd}.\]
 	First of all, we know by Proposition \ref{prop:laxcolim_grothconstr} that $\colim_{lax}(\dcat)\simeq \gbicat(\dcat)$, with $i_X$ and $i_y$ the components of its colimit cocone. Notice that by hypothesis all the squares such as that in the background of the diagram commute up to isomorphism, and thus there exists an essentially unique functor $q$ simply by the universal property of colimits. Our aim is to show that $q$ is in fact the localization of $\gbicat(\dcat)$ with respect to the class of arrows $(y,a):(Y,V)\rightarrow (X,U)$ such that $y$ is invertible and $a\in S_Y$. Notice that it is actually enough to show that $q$ localizes with respect to all vertical arrows  $(1,a):(X,U)\rightarrow (X,U')$ with $a\in S_X$ for some $X$. To show this, consider a functor $h:\gbicat(\dcat)\rightarrow\hbicat$ such that every arrow $(1,a):(X,U)\rightarrow (X,U')$ with $a\in S_X$ is inverted: this means that the composite functor $h i_X$ factors through $\bar{\dcat}(X)$. If $h$  inverts the vertical arrows in $i_X(S_X)$ 
 	for each $X$ in $\cbicat$, we can therefore build a lax cocone under the diagram $\bar{\dcat}$, and thus an essentially unique functor $\bar{h}:\gbicat(\bar{\dcat})\rightarrow\hbicat$ which factors $h$. This entails that $q$ presents $\gbicat(\bar{\dcat})$ as the localization of $\gbicat(\dcat)$ we desired.
 \end{proof}

Localizations are conveniently calculated when the class $W$ of morphisms to be inverted admits a calculus of fractions. The following proposition shows that if $W$ admits a right calculus of fractions then $p_{W}$ is automatically a fibration and $j_{W}$ a morphisms of fibrations from $p$ to $p_{W}$:

\begin{prop}\label{propcalculusoffractions}
	Let $p:{\cal D}\to {\cal C}$ be a fibration and $W$ a class of vertical arrows of $\cal D$ admitting a right calculus of fractions. Then $p_{W}$ is a fibration and $j_{W}$ yields a morphism of fibrations from $p$ to $p_{W}$.	
\end{prop}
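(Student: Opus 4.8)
The plan is to deduce the statement from the equivalence of Proposition \ref{prop:localizationoffibrations}: it suffices to verify condition (ii) there. Since $W$ consists of vertical arrows by hypothesis, $p$ sends every arrow of $W$ to an isomorphism, so the universal property of the localization already furnishes the unique functor $p_{W}:{\cal D}[W^{-1}]\to{\cal C}$ with $p_{W}\circ j_{W}=p$; the only thing left to check is that $j_{W}$ carries $p$-cartesian arrows to $p_{W}$-cartesian arrows. All the work thus reduces to this single preservation property, and the right calculus of fractions is exactly the tool that makes it accessible, since it describes the morphisms of ${\cal D}[W^{-1}]$ concretely as right fractions $h s^{-1}$, represented by spans $C\xleftarrow{s}C'\xrightarrow{h}B$ with $s\in W$ (recall also that $j_{W}(a)=j_{W}(b)$ if and only if $at=bt$ for some $t\in W$).

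Let $\phi:A\to B$ be $p$-cartesian; I would first establish the existence of lifts. Given a morphism $H:C\to B$ of ${\cal D}[W^{-1}]$ and an arrow $g:p(C)\to p(A)$ of ${\cal C}$ with $p_{W}(H)=p(\phi)\,g$, write $H=h s^{-1}$ with $s\in W$ and $h:C'\to B$. Applying $p_{W}$ gives $p(h)p(s)^{-1}=p(\phi)g$, hence $p(h)=p(\phi)\,(g\,p(s))$, and since $\phi$ is cartesian there is a unique $k:C'\to A$ in ${\cal D}$ with $p(k)=g\,p(s)$ and $\phi k=h$. Setting $K:=j_{W}(k)\,j_{W}(s)^{-1}$, a direct computation gives $p_{W}(K)=p(k)p(s)^{-1}=g$ and $j_{W}(\phi)\,K=j_{W}(\phi k)\,j_{W}(s)^{-1}=h s^{-1}=H$, so $K$ is a lift.

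For uniqueness, given two lifts $K_{1},K_{2}$ I would use the Ore condition to bring them to a common denominator, writing $K_{i}=j_{W}(\tilde k_{i})\,j_{W}(s)^{-1}$ for a single $s\in W$ and arrows $\tilde k_{i}:P\to A$. From $j_{W}(\phi)K_{1}=j_{W}(\phi)K_{2}$ I cancel the isomorphism $j_{W}(s)^{-1}$ to get $j_{W}(\phi\tilde k_{1})=j_{W}(\phi\tilde k_{2})$, and the cancellation axiom of the right calculus of fractions yields $t\in W$ with $\phi\tilde k_{1}t=\phi\tilde k_{2}t$; meanwhile $p_{W}(K_{1})=p_{W}(K_{2})$ forces $p(\tilde k_{1})=p(\tilde k_{2})$, and hence $p(\tilde k_{1}t)=p(\tilde k_{2}t)$. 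Now $\tilde k_{1}t$ and $\tilde k_{2}t$ have the same image under $\phi$ and the same image under $p$, so the uniqueness clause in the cartesianness of $\phi$ gives $\tilde k_{1}t=\tilde k_{2}t$; cancelling $t\in W$ in the localization gives $j_{W}(\tilde k_{1})=j_{W}(\tilde k_{2})$, whence $K_{1}=K_{2}$.

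The conceptual content is concentrated entirely in reducing a lifting problem posed in ${\cal D}[W^{-1}]$ to one in ${\cal D}$, where the cartesian property is available, the Ore condition being precisely what permits one to \emph{clear denominators}. The main obstacle I expect is therefore not conceptual but the bookkeeping with fractions: specifically the common-denominator reduction in the uniqueness step, and the careful translation of the cartesian uniqueness (which lives in ${\cal D}$) across the equality relation of the localization.
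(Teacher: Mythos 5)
Your proof is correct and follows essentially the same route as the paper's: reduce via Proposition \ref{prop:localizationoffibrations} to showing that $j_{W}$ preserves cartesian arrows, obtain existence of lifts by clearing denominators exactly as you do, and obtain uniqueness from the fraction-equality criterion of the localization combined with the uniqueness clause of cartesianness in ${\cal D}$. The only (immaterial) difference is in the uniqueness step, where the paper compares an arbitrary lift directly with the constructed one via the Ore condition, while you bring two arbitrary lifts to a common denominator and cancel; both rest on the same facts about right calculi of fractions.
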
	

\begin{proof}
	By Proposition \ref{prop:localizationoffibrations}, we only have to show that the canonical functor $j_{W}:{\cal D}\to {\cal D}[W^{-1}]$ sends $p$-cartesian arrows to $p_{W}$-cartesian arrows. 
	
	Let $f:A \to B$ be a $p$-cartesian arrow in $\cal D$. We want to show that $j_{W}(f):j_{W}(A)\to j_{W}(B)$ is $p_{W}$-cartesian. For this, we suppose that $g:p_{W}(j_{W}(C))\to p_{W}(j_{W}(A))$ is an arrow in $\cal C$ and $h:j_{W}(C)\to j_{W}(B)$ is an arrow in ${\cal D}[W^{-1}]$ such that $p(f)\circ g=p_{W}(h)$. We want to show the existence and uniqueness of an arrow $r:j_{W}(C)\to j_{W}(A)$ such that $g=p_{W}(r)$ and $j_{W}(f)\circ r=h$. Let us start with the existence proof.
	
	Let us represent $h$ as $j_{W}(h')\circ {j_{W}(v)}^{-1}$, where $h'$ is an arrow $C'\to B$ in $\cal D$ and $v$ is an arrow $C' \to C$ in $W$. Since $p(f)\circ g=p_{W}(h)$, composing both sides with $p(v)$ we get $p(f)\circ (g\circ p(v))=p(h')$, whence, since $f$ is $p$-cartesian, there is an arrow (in fact, a unique one) $k:C'\to A$ in $\cal D$ such that $f\circ k=h'$ and $g\circ p(v)=p(k)$: 

	\[\begin{tikzcd}
		{C'} &&&& {p(C')} \\
		&&&&& {p(C)} \\
		A && B && {p(A)} &&& {p(B)}
		\arrow["{p(f)}", from=3-5, to=3-8]
		\arrow["g"', from=2-6, to=3-5]
		\arrow["{p_{W}(h)}", from=2-6, to=3-8]
		\arrow["{p(v)}", from=1-5, to=2-6]
		\arrow["{g\circ p(v)=p(k)}"', from=1-5, to=3-5]
		\arrow["{p(h')}", bend left=36, from=1-5, to=3-8]
		\arrow["k"', from=1-1, to=3-1]
		\arrow["f", from=3-1, to=3-3]
		\arrow["{h'}", bend left=24, from=1-1, to=3-3]
	\end{tikzcd}\]
	
	Therefore the arrow $j_{W}(k)\circ (j_{W}(v))^{-1}$ satisfies the desired property.
	
	It now remains to prove uniqueness. We shall do so by showing that any arrow $r:j_{W}(C)\to j_{W}(A)$ such that $g=p_{W}(r)$ and $j_{W}(f)\circ r=h$ is necessarily equal to $j_{W}(k)\circ (j_{W}(v))^{-1}$ in ${\cal D}[W^{-1}]$. Let us represent $r$ as $j_{W}(r')\circ {j_{W}(v')}^{-1}$, where $r'$ is an arrow $C''\to A$ in $\cal D$ and $v$ is an arrow $C'' \to C$ in $W$. The equality $j_{W}(f)\circ r=h$ in ${\cal D}[W^{-1}]$ implies, by the construction of the localization at a class admitting a right calculus of fractions, that we can find arrows $u:C''' \to C$ in $\cal D$ and $z:C''' \to C'$ such that $v'\circ u=v\circ z\in W$ and the following diagram commutes:
	\[\begin{tikzcd}
		{C'''} & {C''} \\
		{C'} & C \\
		&& B
		\arrow["u", from=1-1, to=1-2]
		\arrow["{v'}", from=1-2, to=2-2]
		\arrow["v", from=2-1, to=2-2]
		\arrow["z"', from=1-1, to=2-1]
		\arrow["{h'}"', bend right=12, from=2-1, to=3-3]
		\arrow["{f\circ r'}", bend left=12, from=1-2, to=3-3]
	\end{tikzcd}\]
	
	That is, $f\circ r'\circ u=h'\circ z$. Now, consider the arrows $r'\circ u$ and $k \circ z$. We have	
	\begin{align*}
		p(r'\circ u) & = p_{W}(j_{W}(r')\circ j_{W}(u))=p_{W}(r\circ j_{W}(v') \circ j_{W}(u))\\
		&=p_{W}(r\circ j_{W}(v)\circ j_{W}(z)) =p_{W}(r)\circ p_{W}(j_{W}(v))\circ p_{W}(j_{W}(z))  \\
		& =g\circ p(v)\circ p(z)=p(k)\circ p(z)=p(k\circ z).
	\end{align*}
	Also, as remarked above, $f\circ (r'\circ u)=h'\circ z=f\circ k\circ z$. Therefore, as $f$ is $p$-cartesian, we can conclude that $r'\circ u=k \circ z$. This in turn implies that $r=j_{W}(k)\circ (j_{W}(v))^{-1}$ in ${\cal D}[W^{-1}]$, since the following diagram commutes:
	\[\begin{tikzcd}
		{C'''} & {C''} \\
		{C'} & C \\
		&& A
		\arrow["u", from=1-1, to=1-2]
		\arrow["{v'}", from=1-2, to=2-2]
		\arrow["v", from=2-1, to=2-2]
		\arrow["z"', from=1-1, to=2-1]
		\arrow["k"', bend right=12, from=2-1, to=3-3]
		\arrow["{r'}", bend left=12, from=1-2, to=3-3]
	\end{tikzcd}\]
\end{proof}
\begin{remark}\label{remarklocalizationoffibration}
	Notice that the family $W=\bigcup_{X\in \cbicat}S_X$ in $\gbicat(\dcat)$ satisfies condition (ii) in Proposition \ref{prop:localizationoffibrations}.
\end{remark}

The following corollary shows that, under some natural conditions on a pseudofunctor ${\mathbb D}$ and on the class $W$, the hypotheses of Proposition \ref{propcalculusoffractions} are satisfied. We shall be able to apply this result in our cases of interest.  

\begin{cor}\label{cor:rightcalculusfractions}
	Let ${\mathbb D}:{\cal C}^{\textup{op}}\to \Cat$ be a pseudofunctor satisfying the equivalent conditions of Lemma \ref{lemma:pb_preserving_psft} and such that each ${\mathbb D}(c)$ has (weak) equalizers, $p$ the associated fibration ${\cal G}({\mathbb D}) \to {\cal C}$ and $W$ a class of vertical arrows in ${\cal G}({\mathbb D})$ which contains all the identities and is stable under composition and pullback along arbitrary arrows of ${\cal G}({\mathbb D})$. Then $W$ admits a right calculus of fractions, whence $p_{W}$ is a fibration and $j_{W}$ yields a morphism of fibrations from $p$ to $p_{W}$. 
	
	In particular, this condition is satisfied if $W$ is the collection of \emph{all} the vertical arrows.  
\end{cor}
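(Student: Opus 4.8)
The plan is to verify directly that $W$ satisfies the three axioms defining a right calculus of fractions; the final clause (\emph{whence} $p_W$ is a fibration and $j_W$ a morphism of fibrations) is then nothing but an application of Proposition \ref{propcalculusoffractions}. Throughout I would use that, since $\mathbb{D}$ satisfies the equivalent conditions of Lemma \ref{lemma:pb_preserving_psft}, the total category $\gbicat(\mathbb{D})$ has pullbacks, computed in the fibrewise manner described there: in particular the pullback of a vertical arrow is again vertical, and pullbacks restrict to pullbacks inside each fibre $\mathbb{D}(X)$. The first axiom — that $W$ contains the identities and is closed under composition — holds by hypothesis, so only the Ore condition and the cancellation condition remain to be checked.

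For the right Ore condition I would take an arbitrary cospan $A \xrightarrow{f} C \xleftarrow{w} B$ with $w \in W$ and simply form its pullback in $\gbicat(\mathbb{D})$, which exists by the above. The projection onto $A$ is by construction a base-change of $w$; since $w$ lies in $W$ and $W$ is stable under pullback along arbitrary arrows, this projection again lies in $W$, and the resulting commutative square is exactly the completion demanded by the Ore condition. This axiom is therefore immediate, and is precisely where the stability of $W$ under pullback does its work.

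The cancellation condition is the delicate point, and it is here that the (weak) equalizers enter. Given a parallel pair $f,g\colon A\to B$ and a member of $W$ exhibiting their agreement after composition, I would first observe that, every arrow of $W$ being vertical, applying $p$ forces $f$ and $g$ to lie over one and the same arrow $y$ of $\cbicat$. Writing $A=(Y,V)$ and $B=(X,U)$ and factoring $f,g$ through the cartesian lift of $y$, the whole problem reduces to a purely fibrewise statement about a parallel pair $a,b\colon V\to \mathbb{D}(y)(U)$ in $\mathbb{D}(Y)$ together with the relevant arrows of $W$ lying in that fibre. The required witness in $W$ is then to be extracted from the weak equalizer of $a,b$ in $\mathbb{D}(Y)$, using that the arrows of $W$ in $\mathbb{D}(Y)$ are themselves stable under pullback inside the fibre (a consequence of the fibrewise description of pullbacks). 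In the particular case where $W$ consists of \emph{all} vertical arrows this last step is transparent: the fibrewise weak equalizer is automatically a vertical arrow, hence already belongs to $W$, which is exactly why nothing beyond the existence of weak equalizers is needed there.

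The main obstacle I anticipate is precisely this last verification — arranging, for a \emph{general} pullback-stable class rather than only the all-vertical one, that the comparison arrow produced from the weak equalizer can be placed inside $W$. The tension is between the limit flavour of weak equalizers and the pullback-stability of $W$: the plan is to exploit the standing hypothesis of the cancellation axiom (that the two maps already agree after composing with a member of $W$) in order to realise the comparison arrow as a base-change of an arrow of $W$, so that stability under pullback pushes it back into $W$. Once this fibrewise cancellation is secured, reassembling it into a statement about $\gbicat(\mathbb{D})$ through the cartesian-lift factorization is routine, and the conclusions about $p_W$ and $j_W$ follow formally from Proposition \ref{propcalculusoffractions}.
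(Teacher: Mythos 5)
Your argument for the right Ore condition, and the overall shape of your cancellation argument, coincide with the paper's proof: the paper also forms the pullback of a cospan with one leg in $W$ and invokes pullback-stability for the Ore condition, and for cancellation it also observes that verticality of the equalizing arrow $v$ forces the two base components to coincide, reduces to a parallel pair $\alpha,\beta:x\to {\mathbb D}(f)(x')$ in a single fibre, and takes a (weak) equalizer $e$ there, yielding a vertical arrow $(1,e)$ with the required equalizing property.

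The obstacle you single out, however, is not one you can expect to overcome, and it is worth being precise about why. The paper's own proof simply asserts that the vertical arrow $(1,e)$ is ``as required'' and never checks that it belongs to $W$; this is legitimate only in the highlighted special case where $W$ consists of \emph{all} vertical arrows (or, more generally, when $W$ happens to contain the relevant fibrewise equalizers). Your proposed repair --- realising the comparison arrow as a base-change of an arrow of $W$ so that pullback-stability places it back in $W$ --- cannot succeed in general. The equalizer of $\alpha,\beta$ is the pullback of the \emph{diagonal} of the codomain (or of the relative diagonal of $v$) along $\langle\alpha,\beta\rangle$; diagonals of arrows in $W$ need not lie in $W$, and no amount of rearranging will produce the equalizer as a pullback of $v$ itself. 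Indeed, the statement for a general pullback-stable $W$ is simply false: take ${\cal C}=\onecat$, so that ${\cal G}({\mathbb D})$ is a single fibre with all arrows vertical, let the fibre be the category of finite sets, and let $W$ be the class of surjections. This $W$ contains the identities and is stable under composition and pullback, yet for the two points $f,g:1\to 2$ and the unique map $v:2\to 1$ one has $v\circ f=v\circ g$ while no surjection $w$ onto $1$ can satisfy $f\circ w=g\circ w$, since that would force $f=g$. So the cancellation axiom fails and $W$ admits no right calculus of fractions.

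The upshot is that your plan proves exactly what the paper's proof proves: the Ore condition for any pullback-stable $W$, and the cancellation condition only for classes $W$ that contain the fibrewise (weak) equalizer arrows $(1,e)$ --- in particular for $W$ the class of all vertical arrows, which is the case the paper actually uses in its applications (e.g.\ Corollary \ref{cor:localizationdiscrete}). You should not spend effort trying to close the gap for arbitrary $W$; instead, either restrict the statement to the all-vertical case, or add the hypothesis that $W$ is closed under the formation of these equalizer arrows.
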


\begin{proof}
	The right Ore condition follows from the fact that pullbacks of arrows in $W$ along arbitrary arrows of ${\cal G}({\mathbb D})$ exist and yield arrows in $W$, while the last condition in the definition of right calculus of fractions holds since all the fibres of $\mathbb D$ have equalizers. Indeed, suppose that $(f, \alpha)$ and $(g, \beta)$ are two arrows $(c, x)\to (c', x')$ in ${\cal G}({\mathbb D})$ such that $v\circ (f, \alpha)=v\circ (g, \beta)$ where $v$ is a vertical arrow with domain $(c', x')$; since $v$ is vertical, the above equality entails the equality $f=g$. So we have two arrows $\alpha, \beta:x \to {\mathbb D}(f)(x')$ in the category ${\mathbb D}(c)$. By taking $e:z \mono x$ to be the equalizer of (or simply an arrows which equalizes) these two arrows in ${\mathbb D}(c)$, we have a vertical arrow $(1, e):(c, z)\to (c, x)$ in ${\cal G}({\mathbb D})$ such that $(f, \alpha)\circ (1, e)=(g, \beta)\circ (1, e)$, as required.    
	
	Concerning the last statement of the corollary, given a pseudofunctor $\mathbb D$ satisfying the conditions of Lemma \ref{lemma:pb_preserving_psft}, the collection of its vertical arrows clearly contains all the identities and is closed under composition; moreover, the lemma ensures that pullbacks of vertical arrows are vertical, so the collection of vertical arrows satisfies the hypotheses of the corollary. 
\end{proof}

\begin{cor}\label{cor:localizationdiscrete}
	Let $F:{\cal C}\to {\cal D}$ be a cartesian functor and $p:{\cal P}\to {\cal C}$ a discrete fibration. Then the collection of vertical arrows of $(1_{\cal D}\downarrow (F\circ p))\rightarrow \dbicat$ admits a right calculus of fractions. 
\end{cor}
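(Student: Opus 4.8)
The plan is to recognise the projection $(1_{\mathcal D}\downarrow F p)\to \mathcal D$ as the Grothendieck fibration of an explicit pseudofunctor and then to invoke Corollary \ref{cor:rightcalculusfractions}. Concretely, I would first check that this projection is (isomorphic to) $\gbicat(\mathbb E)$ for the strict functor $\mathbb E:\mathcal D\op\to\Cat$ sending an object $D$ to the comma category $(D\downarrow F p)$ — whose objects are pairs $(P,f\colon D\to F p(P))$ with $P$ in $\mathcal P$, and whose morphisms $(P,f)\to (P',f')$ are arrows $a\colon P\to P'$ of $\mathcal P$ with $F p(a)\circ f=f'$ — and sending an arrow $d\colon D'\to D$ to the precomposition functor $(-)\circ d$. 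A cartesian lift of $d$ at $(D,P,f)$ is $(d,1_P)\colon (D',P,f\circ d)\to (D,P,f)$, so the projection is a Grothendieck fibration and its vertical arrows are exactly those of the form $(1_D,a)$.

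The heart of the argument is to show that $\mathbb E$ satisfies the hypotheses of Corollary \ref{cor:rightcalculusfractions}, and this is where both assumptions on $p$ and $F$ enter. Since $p$ is a discrete fibration and $\mathcal C$ is cartesian, Corollary \ref{cor:colimits_in_discretefib_from_base} guarantees that $\mathcal P$ has pullbacks and equalizers and that $p$ preserves them. I would then build the limits in each fibre $(D\downarrow F p)$ out of those of $\mathcal P$: given a cospan (resp.\ a parallel pair) in $(D\downarrow F p)$, take its pullback (resp.\ equalizer) $R$ in $\mathcal P$; because $p$ preserves the limit and $F$ is cartesian, $F p(R)$ is the corresponding limit in $\mathcal D$, so the compatible cone formed by the $\mathcal D$-components $f$ of the given data factors uniquely through $F p(R)$, furnishing the required arrow $D\to F p(R)$ and hence the limiting object of $(D\downarrow F p)$. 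The transition functors, being the precompositions $(-)\circ d$, visibly preserve these limits, since they leave the $\mathcal P$-component untouched.

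With the fibrewise limits in hand, $\mathbb E$ satisfies the equivalent conditions of Lemma \ref{lemma:pb_preserving_psft} (each fibre has pullbacks, the transition morphisms preserve them, and $\mathcal D$ has pullbacks, being cartesian) and moreover each fibre has equalizers. The collection $W$ of all vertical arrows of $\gbicat(\mathbb E)=(1_{\mathcal D}\downarrow F p)$ then contains the identities, is closed under composition, and is closed under pullback along arbitrary arrows (the lemma ensures the pullback of a vertical arrow is again vertical); hence Corollary \ref{cor:rightcalculusfractions} applies and yields the desired right calculus of fractions.

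The main obstacle is the fibrewise limit computation of the second paragraph: one must verify carefully that the $\mathcal D$-component of the candidate limit is well defined, and it is precisely here that the discreteness of $p$ (to transport pullbacks and equalizers from $\mathcal C$ to $\mathcal P$) and the cartesianness of $F$ (to reconstruct the $\mathcal D$-leg by a universal property) are used in tandem. I note in passing that one can dispense with the appeal to limits in $\mathcal D$ altogether by verifying the right Ore and cancellation conditions directly: both reduce, through the discreteness of $p$ and the preservation of the relevant limits by $F$, to the existence of pullbacks and equalizers in $\mathcal C$, which makes transparent that the statement is really one about $\mathcal C$ and $F$ alone.
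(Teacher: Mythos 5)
Your proof is correct and follows essentially the same route as the paper's: both rest on Corollary \ref{cor:colimits_in_discretefib_from_base} to transport pullbacks and equalizers from $\cal C$ to $\cal P$, then verify the hypotheses of Lemma \ref{lemma:pb_preserving_psft} and Corollary \ref{cor:rightcalculusfractions}. The only cosmetic difference is that you enter Lemma \ref{lemma:pb_preserving_psft} through its fibrewise condition (ii), constructing limits in each comma fibre $(D\downarrow F\circ p)$, whereas the paper computes pullbacks and equalizers directly in the total category $(1_{\cal D}\downarrow (F\circ p))$ — i.e. condition (i) — but these are the two equivalent sides of the same lemma.
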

\begin{proof}
	Since $\cal C$ is cartesian and $p$ is discrete, ${\cal P}$ has all non-empty limits and $p$ preserves them (by Corollary \ref{cor:colimits_in_discretefib_from_base}): in particular, it has pullbacks. Pullbacks in $\pbicat$ can be used, together with pullbacks in $\dbicat$, to compute pullbacks in $\comma{1_\dbicat}{F\circ p}$; moreover, in the fibration $\comma{1_\dbicat}{F\circ p}$ pullback of horizontal (resp. vertical) arrows are horizontal (resp, vertical), and thus it satisfies the equivalent conditions of Lemma \ref{lemma:pb_preserving_psft}. In a similar way, one can compute equalizers in $\comma{1_\dbicat}{F\circ p}$, and thus $(1_{\cal D}\downarrow (F\circ p))\to {\cal D}$ satisfies the hypotheses of Corollary \ref{cor:rightcalculusfractions}. 
\end{proof}
\begin{remark}
	Corollary \ref{cor:localizationdiscrete} can be notably applied to obtain a very concrete fibrational description of the inverse image of a sheaf along a cartesian morphism of sites.  
\end{remark}

\section{Weak colimits of categories}\label{sec:colimiti}

In the following we will meet many instances of bicategorical colimits, both when working with base change for stacks and in the context of the fundamental adjunction. Therefore, we shall devote the present section to some technical results about the theory of bicolimits, especially in $\Cat$. For a thorough tractation of these concept we refer to \cite[Chapter 5]{2dimcategories}; some of the following results may already have appeared in the literature, and we have cited them whenever possible: nonetheless a sketch of the explicit proofs is always provided, both for the sake of self-containment and as a warm-up for the reader. 

\subsection{Review of lax, pseudo and weighted colimits}

Let us start by recalling how colimits are defined in the 2-categorical setting:
\begin{defn} 
	Consider two weak 2-categories $\cbicat$ and $\kbicat$, a pseudofunctor $\dcat:\cbicat\op\rightarrow\CAT$ and a pseudofunctor $R:\cbicat\rightarrow \kbicat$: the \emph{$\dcat$-weighted lax colimit of $R$}\index{ lax/oplax/pseudo-colimit} is an object $L$ of $\kbicat$ such that there a pseudonatural equivalence
	\[\kbicat(L,K)\simeq [\cbicat\op,\CAT]_{lax}(\dcat, \kbicat(R(-),K)). \]
	Similarly, the \emph{$\dcat$-weighted oplax colimit} $L$ will satisfy the condition
	\[\kbicat(L,K)\simeq [\cbicat\op,\CAT]_{oplax}(\dcat, \kbicat(R(-),K)). \]
	If we further restrict to pseudonatural transformations, we obtain the notion of \emph{$\dcat$-weighted pseudocolimit}:
	\[\kbicat(L,K)\simeq [\cbicat\op,\CAT]_{ps}(\dcat, \kbicat(R(-),K)). \]
	We will denote these colimits respectively by $\colim_{lax}^\dcat R$, $\colim_{oplax}^\dcat R$ and $\colim_{ps}^\dcat R$\index{$\colim^\dcat_{\bullet}R$}.
	
	Any of these colimits is said to be if the weight is the constant pseudofunctor $\Delta\onecat:\cbicat\op\rightarrow\CAT$ with walue the terminal category $\onecat$: in this case we will omit mentioning the weight altogether. We will use the notations $\colim_{lax}R$, $\colim_{oplax}R$ and $\colim_{ps}R$.
\end{defn}
\begin{remark}
	In 2-categorical literature what we have just defined is usually called \textit{bicolimit}, while a \textit{colimit} is an object $L$ producing a natural \emph{isomorphism} of the hom-categories above; since we will not have to draw the distinction between the two concepts anywhere in the following, we have dropped the \textit{bi-} prefix.
\end{remark}
The lax/oplax/pseudonatural transformations appearing in the definition of colimit are called the \emph{lax/oplax/pseudonatural cocones with vertex $X$ under the diagram $R$}. Let us describe explicitly, for instance, the data of a lax transformation $F:\dcat\Rightarrow \kbicat(R-,K)$ of pseudofunctors from $\cbicat\op$ to $\CAT$. It consists: \begin{enumerate}[(i)]
	\item for every $X$ in $\cbicat$ of a functor $F_X:\dcat(X)\rightarrow \kbicat(R(X),K)$: that is, for every $X$ in $\cbicat$ and every $U$ in $\dcat(X)$ we have a 1-cell $F_X(U):R(X)\rightarrow K$ in $\kbicat$, and for every $a:U\rightarrow V$ in $\dcat(X)$ we have a 2-cell $F_X(a):F_X(U)\Rightarrow F_X(V)$ of $\kbicat$.
	\item for every arrow $y:Y\rightarrow X$ in $\cbicat$ of a natural transformation $F_y$ as in the diagram:
	\[
	\begin{tikzcd}
		{\dcat(X)} \ar[r, "F_X"] \ar[d, "\dcat(y)"'] & \kbicat(R(X),K) \ar[d, "-\circ R(y)"] \ar[dl, Rightarrow, "F_y"']\\
		{\dcat(Y)} \ar[r, "F_Y"'] & \kbicat(R(Y),K)
	\end{tikzcd}
	\]
	satisfying the same axioms of a pseudonatural transformation (see Definition \ref{def:indexed_category}). Therefore the component of $F_y$ at every $U$ of $\dcat(X)$ is a 2-cell $F_y(U):F_X(X)\circ R(y)\Rightarrow F_Y(\dcat(y)(U))$ of $\kbicat$.
\end{enumerate}
We can visualize the cocone $F$ in $\kbicat$ as in the following figure, for $y:Y\rightarrow X$ in $\cbicat$ and $a:U\rightarrow V$ in $\dcat(X)$:
\[
\begin{tikzcd}[row sep=10ex, column sep=10ex]
	R(X) \arrow[d, "F_X(V)"', ""{name=B}, bend right=40] \arrow[d, "F_X(U)", ""{name=A}, ""{name=D, xshift=-1ex}, bend left=40] &    R(Y)  \arrow[l, "R(y)"'] \arrow[ld, "F_Y(\dcat(y)(U))", ""{name=C, below, yshift=3ex, xshift=2ex}, bend left]  \ar[from=D, to=B, Rightarrow, "F_X(a)"'] \\
	K &  	\arrow["{F_y(U)}", Rightarrow, from={1-1}, shorten <=4pt, to=C]
\end{tikzcd}.\]
As we can see, the arrows of $\cbicat$ produce the usual triangles of the cocone, while the arrows in each $\dcat(X)$ produce a `spindle' underneath $R(X)$. The compatibility conditions that said arrows must satisfy, aside from the functoriality of $F_X$, are the following:
\begin{enumerate}[(i)]
	\item naturality of $F_y$: for each $a:U\rightarrow V$ in $\dcat(X)$ and each $y:Y\rightarrow X$,the two diagrams
	\[
	\begin{tikzcd}[row sep=10ex, column sep=10ex]
		R(X) \arrow[d, "F_X(U)"', ""{name=B}, bend right=40] \arrow[d, "F_X(V)", ""{name=A}, ""{name=D, xshift=-1ex}, bend left=40] &    R(Y)  \arrow[l, "R(y)"'] \arrow[ld, "F_Y(\dcat(y)(V))", ""{name=C, below, yshift=3ex, xshift=2ex}, bend left]  \ar[from=D, to=B, Leftarrow, "F_X(a)"'] \\
		K &  	\arrow["{F_y(V)}", Rightarrow, from={1-1}, shorten <=4pt, to=C]
	\end{tikzcd},\  
	\begin{tikzcd}[row sep=10ex, column sep=10ex]
		R(X) \ar[d, "F_X(U)"']& \ar[l, "R(y)"'] R(Y) \ar[dl, bend right, "F_Y(\dcat(y)(U))"{yshift=1ex}, ""{name=B, xshift=-1ex, yshift=-1ex}, ""{name=D, below}] \ar[dl, bend left=50, "F_Y(\dcat(y)(V))", ""{name=C, below, xshift=-1ex, yshift=-1ex}] \\
		K& \ar[from=1-1, to=D, Rightarrow,"F_y(U)"{xshift=-1ex}] \ar[from=B, to=C, "F_Y(\dcat(y)(a))"{ description,xshift=-0.5ex}, shorten >=2pt, Rightarrow]
	\end{tikzcd}
	\]
	coincide in $\kbicat$.
	\item lax transformation axioms: up to canonical 2-isomorphisms, for every $y:Y\rightarrow X$ and $z:Z\rightarrow Y$ in $\cbicat$ and every $U$ in $\dcat(X)$ the two diagrams
	\[
	\begin{tikzcd}
		R(X) \ar[dd, "F_X(U)"'] & R(Y) \ar[l, "R(y)"'] \ar[ddl, "F_Y(\dcat(y)(U))"{xshift=2.5ex, yshift=3ex}, ""{name=A, below}, ""{name=B}] & R(Z) \ar[l, "R(z)"'] \ar[lldd, "F_Z(\dcat(z)(\dcat(y)(U)))", bend left=40, ""{name=C, below, xshift=-1ex, yshift=1ex}]\\
		&&\\
		K && \ar[from=1-1, to=A, Rightarrow, "F_y(U)"{xshift=-0.5ex}] \ar[from=B, to=C, Rightarrow,"F_z(\dcat(y)(U))"{ xshift=-1ex}]
	\end{tikzcd}\] 
	and
	\[ 
	\begin{tikzcd}
		R(X) \ar[d, "F_X(U)"'] & R(Z) \ar[l, "R(yz)"'] \ar[dl, "F_Z(\dcat(yz)(U))",bend left, ""{name=A, below}] \ar[from=1-1, to=A, Rightarrow, "F_{yz}(U)"{xshift=-0.5ex}]\\
		K &
	\end{tikzcd}
	\] 	
	coincide in $\kbicat$. Moreover, up to canonical 2-isomorphisms, the 2-cell $F_{1_X}(U): F_X(U)\circ R(1_X)\Rightarrow F_X(\dcat(1_X)(U))$ coincides with the identity of $F_X(U)$.
\end{enumerate}
If we consider an \emph{oplax cocone}, what changes is the direction of all the natural trasformations of the kind $F_y(U)$; finally, if we consider \emph{pseudonatural transformations} all the $F_y(U)$ are natural isomorphisms. In particular if the weight were $\Delta\onecat:\cbicat\op\rightarrow\CAT$ we would have no `spindle' underneath each of the $R(X)$, only triangles: this explains why said 2-colimits are called conical.

Finally, weights and colimits play a symmetric role in colimits:
\begin{prop}\label{prop:commutativity_peso_diag_colimiti}\index{ lax/oplax/pseudo-colimit! commutativity of diagrams and weights}
	Consider a category $\cbicat$ and two pseudofunctors $\dcat:\cbicat\op\rightarrow\CAT$ and $\ecat:\cbicat\rightarrow \CAT$: there are isomorphisms of categories
	
	\[
	[\cbicat\op,\CAT]_{lax}(\dcat, \CAT(\ecat(-), \kbicat))\simeq [\cbicat,\CAT]_{oplax}(\ecat, \CAT(\dcat(-),\kbicat))
	\]
	\[
	[\cbicat\op,\CAT]_{oplax}(\dcat, \CAT(\ecat(-), \kbicat))\simeq [\cbicat,\CAT]_{lax}(\ecat, \CAT(\dcat(-),\kbicat))
	\]
	\[
	[\cbicat\op,\CAT]_{ps}(\dcat, \CAT(\ecat(-), \kbicat))\simeq [\cbicat,\CAT]_{ps}(\ecat, \CAT(\dcat(-),\kbicat))
	\]
	that are natural in $\kbicat$. This implies in particular the equivalences
	\[
	\colim_{lax}^\dcat \ecat\simeq \colim_{oplax}^\ecat\dcat,\ \colim_{oplax}^\dcat \ecat\simeq \colim_{lax}^\ecat\dcat,\ \colim_{ps}^\dcat \ecat\simeq \colim_{ps}^\ecat\dcat.
	\]
\end{prop}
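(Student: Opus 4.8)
The plan is to reduce both sides of each claimed isomorphism to one and the same explicit description of data indexed over $\cbicat$, and then to observe that this common description is manifestly symmetric in the roles of $\dcat$ and $\ecat$. The engine is the Cartesian closed structure of $\CAT$: for every object $X$ the currying bijection
\[
\CAT\big(\dcat(X),\CAT(\ecat(X),\kbicat)\big)\;\cong\;\CAT\big(\dcat(X)\times\ecat(X),\kbicat\big)\;\cong\;\CAT\big(\ecat(X),\CAT(\dcat(X),\kbicat)\big)
\]
is an isomorphism of categories, natural in all three variables. It identifies a component functor $F_X$ of a transformation on the left-hand side with a component functor $G_X$ of a transformation on the right-hand side, both corresponding to a single bifunctor $\Phi_X:\dcat(X)\times\ecat(X)\rightarrow\kbicat$.

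Next I would unwind the $1$-cell data. Since $\CAT(\ecat(-),\kbicat)$ sends $y:Y\rightarrow X$ to precomposition $-\circ\ecat(y)$, for a lax transformation $F$ on the left the structure $2$-cell $F_y:\big(-\circ\ecat(y)\big)\circ F_X\Rightarrow F_Y\circ\dcat(y)$ has, at $U\in\dcat(X)$ and $V\in\ecat(Y)$, a single arrow of $\kbicat$
\[
\Phi_X\big(U,\ecat(y)(V)\big)\longrightarrow \Phi_Y\big(\dcat(y)(U),V\big).
\]
Dually, $\CAT(\dcat(-),\kbicat)$ sends $y$ to $-\circ\dcat(y)$, and I would check that for an \emph{oplax} transformation $G$ on the right the structure $2$-cell $G_y$ produces, after currying, an arrow in exactly this same direction. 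The asymmetry in variance ($\dcat$ contravariant, $\ecat$ covariant) is precisely what forces lax on the left to match oplax on the right; when the structure cells are instead invertible the direction is immaterial, which is why pseudo matches pseudo. Having matched the $1$-cells, I would verify that the naturality axiom for $F_y$ (resp.\ $G_y$) and the two coherence conditions (the unit condition and the composite condition over $z:Z\rightarrow Y$, $y:Y\rightarrow X$) translate under currying into literally the same equations of arrows of $\kbicat$, so that the two axiom systems coincide.

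For the $2$-cells I would run the same currying on modifications: a modification $\xi$ on the left is a family $\xi_X:F_X\Rightarrow F'_X$ compatible with the $F_y$, and currying turns each $\xi_X$ into a natural transformation $\Xi_X:\Phi_X\Rightarrow\Phi'_X$ whose compatibility with the structure arrows above is exactly that required of a modification on the right. This exhibits the assignment as a functor with an evident inverse, hence an isomorphism of categories; and as every step is stable under post-composition with a functor $\kbicat\rightarrow\kbicat'$, the isomorphism is natural in $\kbicat$. The colimit equivalences are then immediate: by the defining property, $\colim^\dcat_{lax}\ecat$ represents the $2$-functor $\kbicat\mapsto[\cbicat\op,\CAT]_{lax}(\dcat,\CAT(\ecat(-),\kbicat))$ while $\colim^\ecat_{oplax}\dcat$ represents $\kbicat\mapsto[\cbicat,\CAT]_{oplax}(\ecat,\CAT(\dcat(-),\kbicat))$, and since the isomorphism just established identifies these $2$-functors naturally in $\kbicat$, the $2$-categorical Yoneda lemma yields $\colim^\dcat_{lax}\ecat\simeq\colim^\ecat_{oplax}\dcat$; the remaining two equivalences follow in the same way from the oplax/lax and pseudo/pseudo isomorphisms.

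I expect the only genuine work to be in the second paragraph: pinning down the precise direction of the oplax structure cell for the covariant pseudofunctors $\ecat,\CAT(\dcat(-),\kbicat):\cbicat\rightarrow\CAT$, and confirming that after currying it agrees arrow-for-arrow with the lax cell on the other side, together with the matching of the unit and associativity coherences. Everything else is formal bookkeeping enabled by the naturality of currying in $\CAT$.
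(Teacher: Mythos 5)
Your proposal is correct and follows essentially the same route as the paper's proof: both arguments curry the component functors (the paper writes $R_X(M)(-):=F_X(-)(M)$ where you pass through the bifunctor $\Phi_X$, which is the same transposition), transpose the structure $2$-cells componentwise via $R_y(M)(U):=F_y(U)(M)$, check that the lax axioms on one side correspond to the oplax axioms on the other, extend to modifications, and deduce the colimit equivalences from the identification of the representing $2$-functors. The only cosmetic difference is your explicit appeal to the Cartesian closed structure of $\CAT$ and the $2$-categorical Yoneda lemma, which the paper leaves implicit.
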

\begin{proof}
	Let us consider the first isomorphism of categories. An object in the left-hand category is a  lax natural transformation $F:\dcat\Rightarrow \CAT(\ecat(-), \kbicat)$, \ie the given, for every $X$ in $\cbicat$, of a functor $F_X:\dcat(X)\rightarrow \CAT(\ecat(X), \kbicat)$, and for every arrow $y:Y\rightarrow X$ of a natural transformation
	\[
	\begin{tikzcd}
		\dcat(X)\ar[d, "\dcat(y)"'] \ar[r, "F_X"] & {\CAT(\ecat(X), \kbicat)}\ar[d, "-\circ \ecat(y)"] \ar[dl, "F_y", Rightarrow]\\
		\dcat(Y) \ar[r, "F_Y"'] & {\CAT(\ecat(Y), \kbicat)}
	\end{tikzcd}
	\] 
	satisfying suitable compatibility conditions. 
	
	Consider now any object $M$ in $\ecat(X)$: if we set the rule $R_X(M)(-):=F_X(-)(M)$, where the blank space stands either for an object or an arrow of $\dcat(X)$, then this defines a functor $R_X(M):\dcat(X)\rightarrow \kbicat$. One can also check functoriality in $M$ so that we have obtained a functor $R_X:\ecat(X)\rightarrow \CAT(\dcat(X), \kbicat)$: indeed, one can easily check that $R_X$ is a functor is and only if $F_X$ is a functor. We can now consider for $y:Y\rightarrow X$ in $\cbicat$ the components of $F_y$, \ie the natural transformations $F_y(U):F_X(U)\circ \ecat(y)\Rightarrow F_Y(\dcat(y)(U))$: if we fix $M$ in $\ecat(Y)$, we obtain arrows $F_y(U)(M):F_X(U)(\ecat(y)(M))\rightarrow F_Y(\dcat(y)(U))(M)$ in the category $\kbicat$. Using our definition of the functors $R_X$, notice that $F_y(U)(M)$ is an arrow $R_X(\ecat(y)(M))(U)\rightarrow R_Y(M)(\dcat(y)(U))$: setting $R_y(M)(U):=F_y(U)(M)$ we obtain a natural transformation $R_y(M):R_X(\ecat(y)(M))\Rightarrow R_Y(M)\circ \dcat(y)$, and all the natural transformations $R_y(M)$ in turn provide the components on one natural transformation
	\[
	\begin{tikzcd}
		\ecat(X) \ar[r, "R_X"] \ar[dr, Rightarrow, "R_y"]& {\CAT(\dcat(X),\kbicat)} \\
		\ecat(Y) \ar[u, "\ecat(y)"] \ar[r, "R_Y"'] & \CAT(\dcat(Y),\kbicat) \ar[u, "-\circ \dcat(y)"']
	\end{tikzcd}.
	\]
	Indeed, one can check that $R_y$ is a natural transformation if and only if $F_y$ is a natural transformation. Finally, one can also check that $F_y$ satisfies the axioms of a lax natural transformation if and only if $R_y$ satisfies those of an oplax natural transformation. A similar correspondence can be established between modifications, and this proves the first isomorphism of categories. The second and third isomorphism of categories are proved in the exact same fashion.
	
	Finally, the three equivalences between colimits are a straightforward consequence of the former three equivalences of hom-categories.
\end{proof}

\begin{remarks}
    \begin{enumerate}[(i)]
        \item This result appears in the enriched setting as Formula 3.9 of \cite[Section 3.1]{kelly2005}. More precisely, it is shown \emph{loc. cit.} that both $\colim^R\dcat$ and $\colim^\dcat R$ (in their strict- $\Cat$-enriched meaning) can be computed as the \textit{coend} of the functor $R\cdot\dcat:{\cal C}\times {\cal C}\op \to \textbf{Cat}$ acting as $(R\cdot \dcat)(X,Y):=R(X)\times \dcat(Y)$: thus the commutativity of the product in $\Cat$, which allows us to switch $R$ and $\dcat$, is the abstract reason behind the commutativity of weights and diagrams in colimits.
		\item Given a small category $\cbicat$ and a functor $A:\cbicat\rightarrow\Set$, there is a functor \[-\otimes_\cbicat A:[\cbicat\op,\Set]\rightarrow \Set\] acting as left Kan extension of $A$ along $\yo_\cbicat$, \ie the left adjoint to the functor \[R_A(-):=\Set(A(=),-):\Set\rightarrow[\cbicat\op,\Set]\] defined by $R_A(S):=\Set(A(-),S)$. Then for any presheaf $P:\cbicat\op\rightarrow\Set$ there is an isomorphism $$P\otimes_\cbicat A\cong A\otimes_{\cbicat\op}P$$ of sets (\cfr Section 5.1.4 of \cite{caramello.libro}). The commutativity of weights and diagrams is thus a generalization of the commutativity of the tensor of functors: indeed, if we introduce the notation
		\[-\otimes_\cbicat \ecat:=\colim_{ps}^{(-)} \ecat:[\cbicat\op,\CAT]_{ps}\rightarrow \CAT\] for the left adjoint of the 2-functor \[\CAT(\ecat(=),-):\CAT\rightarrow [\cbicat\op,\Set]_{ps},\] we have precisely that $$\dcat\otimes_\cbicat \ecat\simeq \ecat\otimes_{\cbicat\op}\dcat.$$
	\end{enumerate}
\end{remarks}

\subsection{The conification of op-/lax colimits}

Let us now analyse how weighted colimits can be `conified'\index{ lax/oplax/pseudo-colimit!conification}, \ie they can be interpreted as conical colimits: we will see in later sections that this provides an effective method of computing colimits in $\Cat$, for there is an easy way of computing lax conical colimits via the Grothendieck construction.

The idea behind conification is rather simple. Take a $\dcat$-weighted lax cocone under a diagram $R:\cbicat\rightarrow\Cat$: then we can open up the spindles under each node $R(X)$ into triangles, to obtain a conical cocone,
\[
\begin{tikzcd}[row sep=10ex, column sep=10ex]
	R(X) \arrow[d, "F_X(V)"', ""{name=B}, bend right=40] \arrow[d, "F_X(U)", ""{name=A}, ""{name=D, xshift=-1ex}, bend left=40] &    R(Y)  \arrow[l, "R(y)"'] \arrow[ld, "F_Y(\dcat(y)(U))", ""{name=C, below, yshift=3ex, xshift=2ex}, bend left]  \ar[from=D, to=B, Rightarrow, "F_X(a)"'] \\
	K &  	\arrow["{F_y(U)}", Rightarrow, from={1-1}, shorten <=4pt, to=C]
\end{tikzcd}\hspace{-10.5pt}\rightsquigarrow\hspace{-10.5pt}
\begin{tikzcd}[row sep=10ex, column sep=10ex]
	R(X) \ar[dr, "F_X(V)"', ""{name=B}]\ar[r, equal] &	R(X) \arrow[d, "F_X(U)", ""{name=A}, ""{name=D, xshift=-1ex, yshift=2pt}] &    R(Y)  \arrow[l, "R(y)"'] \arrow[ld, "F_Y(\dcat(y)(U))", ""{name=C, below, yshift=3ex, xshift=2ex}, bend left]  \ar[from=D, to=B, Rightarrow, "F_X(a)"{xshift=1ex, above}] \\
	&	K &  	\arrow["{F_y(U)}", Rightarrow, from={1-2}, shorten <=4pt, to=C]
\end{tikzcd}.
\]
Notice that our `conified' cocone is no longer under the diagram $R$, since the nodes are indexed by the pairs $(X,U)$ where $U$ belongs to $\dcat(X)$: it will instead be a cocone under a diagram over the category $\gbicat(\dcat)$. By making this process precise, we end up with the following two results:
\begin{prop}\label{prop:eqv_categorie_laxoplaxps}
	Consider two $\cbicat$-indexed categories $\dcat$, $\ecat:\cbicat\op\rightarrow\CAT$; in particular, by $\ecat\Vop$ we shall denote the $\cbicat$-indexed category operating as $X\mapsto \ecat(X)\op$ (see Definition \ref{def:Vop}), and by $p_\dcat:\gbicat(\dcat)\rightarrow\cbicat$ the Grothendieck fibration associated to $\dcat$. Then
	\[[\cbicat\op,\CAT]_{oplax}(\dcat, \ecat)\cong [\gbicat(\dcat)\op,\CAT]_{oplax}(\Delta\onecat, \ecat\circ p_\dcat\op),\]
	\[[\cbicat\op,\CAT]_{lax}(\dcat,\ecat)\cong [\gbicat(\dcat\Vop)\op,\CAT]_{lax}(\Delta\onecat, \ecat\circ p_{\dcat\Vop}\op).\]
	If moreover $\dcat$ is discrete, \ie it is in fact a presheaf $P:\cbicat\op\rightarrow\Set$, then the equivalences above reduce to
	\[[\cbicat\op,\CAT]_{ps}(P, \ecat)\cong [(\fib P)\op,\CAT]_{ps}(\Delta\onecat, \ecat\circ p_P).\]
\end{prop}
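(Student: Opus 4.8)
The plan is to unwind both sides of each isomorphism into their defining data and to exhibit an explicit, mutually inverse pair of assignments realizing the \ac conification' sketched above. I would treat the oplax isomorphism in full and deduce the others from it. On the left, an object is an oplax natural transformation $F:\dcat\Rightarrow\ecat$, that is functors $F_X:\dcat(X)\rightarrow\ecat(X)$ together with $2$-cells $F_y:F_Y\dcat(y)\Rightarrow\ecat(y)F_X$ for each $y:Y\rightarrow X$, subject to the oplax coherence axioms. On the right, an object is an oplax cocone $\tilde F:\Delta\onecat\Rightarrow \ecat\circ p_\dcat\op$, which amounts to an object $\tilde F_{(X,U)}\in\ecat(X)$ for every $(X,U)$ in $\gbicat(\dcat)$ together with, for each arrow $(y,a):(Y,V)\rightarrow(X,U)$, a morphism $\tilde F_{(Y,V)}\rightarrow\ecat(y)(\tilde F_{(X,U)})$ of $\ecat(Y)$ satisfying the conical oplax axioms.

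First I would define the passage $F\mapsto\tilde F$ by setting $\tilde F_{(X,U)}:=F_X(U)$ and, for an arrow $(y,a):(Y,V)\rightarrow(X,U)$ of $\gbicat(\dcat)$ (so $a:V\rightarrow\dcat(y)(U)$ in $\dcat(Y)$), taking the composite $F_y(U)\circ F_Y(a):F_Y(V)\rightarrow F_Y(\dcat(y)(U))\rightarrow\ecat(y)(F_X(U))$; this is exactly the \ac opening' of the spindle $F_Y(a)$ followed by the triangle $F_y(U)$. Conversely I would recover $F_X$ from the restriction of $\tilde F$ to the fibre over $X$, i.e.\@ the vertical arrows $(1_X,-)$, and recover $F_y(U)$ from the value of $\tilde F$ on the cartesian arrow $(y,1_{\dcat(y)(U)})$. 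In both directions one must insert the structure isomorphisms $\phi^\dcat_X$ and $\phi^\dcat_{y,z}$ of $\dcat$ (and $\phi^\ecat_X$) that appear in the identities and composition law of $\gbicat(\dcat)$, but these are uniquely determined, so the two assignments are inverse bijections on objects. A modification on either side is a family of fibrewise morphisms compatible with the structure $2$-cells, and the same pairing carries modifications to modifications; this yields the functoriality of both assignments and hence the claimed isomorphism of categories.

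For the lax isomorphism the components $F_y$ point in the opposite direction, so realizing a lax transformation as a conical lax cocone requires reversing the fibre arrows; this is precisely why $\dcat$ is replaced by $\dcat\Vop$, whose Grothendieck construction has vertical arrows running the other way, and the argument is otherwise verbatim. For the pseudo case with $\dcat$ a discrete presheaf $P$, each fibre $P(X)$ has no non-identity arrows, so there is no spindle to open: $F_X$ is merely a family of objects, $\gbicat(P)=\fib P$, pseudonaturality makes each $F_y$ invertible (matching the conical pseudo-cocone $2$-cells), and since $P\Vop=P$ the lax/oplax distinction collapses, giving the single stated isomorphism.

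The only genuine obstacle is verifying that the coherence axioms correspond under the pairing. Concretely, the oplax composition axiom for $F$ — built from $F_y$, $F_z$ and the canonical isomorphism $\phi^\dcat_{y,z}$ — must reproduce the single conical cocone condition on composites $(y,a)(z,b)$ in $\gbicat(\dcat)$, whose composition law \emph{itself} involves $\phi^\dcat_{y,z}$; checking that these two occurrences of the structure isomorphisms cancel correctly, and that the unit axiom matches through $\phi^\dcat_X$, is the delicate bookkeeping. It is nonetheless a finite diagram chase, so I would carry it out by direct computation and record it as lengthy but straightforward.
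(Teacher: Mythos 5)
Your proposal is correct and follows essentially the same route as the paper: the same explicit pairing $\tilde F_{(X,U)}:=F_X(U)$, $\tilde F_{(y,a)}:=F_y(U)\circ F_Y(a)$, with the inverse recovered from vertical and cartesian arrows, the lax case obtained by passing to $\dcat\Vop$, and the discrete case by observing that the fibres have only identities so pseudonaturality of $F$ matches invertibility of the cocone components. The paper likewise relegates the coherence bookkeeping to a routine check, so nothing is missing.
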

\begin{proof}
	An oplax transformation $F:\dcat\Rightarrow \ecat$ is the given of functors $F_X:\dcat(X)\rightarrow \ecat(X)$ for each $X$ in $\cbicat$, and of natural trasformations $F_y:F_Y\dcat(y)\Rightarrow \ecat(y)F_X$ for each $y:Y\rightarrow X$ satisfying suitable identities. On the other side, an oplax trasformation $\bar{F}:\Delta\onecat\rightarrow \ecat\circ p\op$ consists of an object $\bar{F}_{(X,U)}\in \ecat(X)$ for each $(X,U)$ in $\gbicat(\dcat)$, and of an arrow $\bar{F}_{(y,a)}:\bar{F}_{(Y,V)}\rightarrow \ecat(y)(\bar{F}_{(X,U)})$ for each $(y,a):(Y,V)\rightarrow (X,U)$, again satisfying suitable identities. Starting from $F$, we can define $\bar{F}$ as follows:
	\[\bar{F}_{(X,U)}:=F_X(U),\ \bar{F}_{(y,a)}:=F_y(U) F_Y(a)\mbox{ for }(y,a):(Y,V)\rightarrow (X,U) \]
	Conversely, starting from $\bar{F}$ we can define $F$ by setting
	\[ F_X(U):=\bar{F}_{(X,U)},\ F_X(a):= \phi^\ecat_X(\bar{F}_{(X,U')})\inv \bar{F}_{(1_X, \phi^\dcat_X(U')a)}\mbox{ for }a:U\rightarrow U' \]
	\[F_y(U):=\bar{F}_{(y, 1_{\dcat(y)(U)})} \] 
	We leave to the reader to check that the associations $F\mapsto \bar{F}$ and $\bar{F}\mapsto F$ can be extended to modifications, and that they provide the isomorphism of categories in the claim.
	
	The second identity follows from the first one by applying the isomorphism
	\begin{equation*}\label{eq:identita_Vop}
		[\cbicat\op,\CAT]_{lax}(\dcat, \ecat\Vop)\cong [\cbicat\op,\CAT]_{oplax}(\dcat\Vop,\ecat),
	\end{equation*}
	which can be readily checked, and the equality $(\ecat\circ p_{\dcat\Vop}\op)\Vop=\ecat\Vop\circ p_{\dcat\Vop}\op$.
	
	The last claim is an immediate consequence of the equivalence we defined above. Indeed, if $\dcat$ is a presheaf $P$, the only arrows in its fibres are identity morphisms. Then notice that $\bar{F}_{(y,1_{\dcat(y)(U)})}=F_y(U)$ holds, and hence $F_y$ is invertible if and only if every $\bar{F}_{(y,1_\dcat(y)(U))}$ is: but this means precisely that $F$ is pseudonatural if and only if $\bar{F}$ is. 
\end{proof}
Now, if we consider in particular a pseudofunctor $R:\cbicat\rightarrow\kbicat$, an object $K$ in $\kbicat$ and set $\ecat:=\kbicat(R(-),K):\cbicat\op\rightarrow\Cat$, the previous result has the following corollary:
\begin{cor}\label{cor:colimiti_pesati_conificati}
	Consider a $\cbicat$-indexed category $\dcat$, with $p_\dcat:\gbicat(\dcat)\rightarrow\cbicat$ its corresponding Grothendieck fibration, and a pseudofunctor $R:\cbicat\rightarrow \kbicat$: the $\dcat$-weighted lax colimit of $R$ is isomorphic to the conical colimit of $Rp_{\dcat\Vop}$:
	\[ \colim_{lax}^\dcat R\simeq \colim_{lax}Rp_{\dcat\Vop} \]
	Similarly, the $\dcat$-weighted oplax colimit of $R$ is isomorphic to the conical colimit of $Rp_\dcat$:
	\[ \colim_{oplax}^\dcat R\simeq \colim_{oplax} Rp_\dcat \]
	Finally, for a presheaf $P:\cbicat\op\rightarrow\Set$ with Grothendieck fibration $p:\fib P\rightarrow \cbicat$, the $P$-weighted pseudocolimit of $R$ is isomorphic to the conical colimit of $Rp$:
	\[ 
	\colim_{ps}^P R\simeq \colim_{ps} Rp
	\]
\end{cor}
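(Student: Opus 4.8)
The plan is to apply Proposition \ref{prop:eqv_categorie_laxoplaxps} to the representing-object definition of the weighted colimit, taking the weight $\ecat$ there to be the representable pseudofunctor attached to $R$ and a test object $K$. Concretely, for a fixed $K$ in $\kbicat$ I would set
\[
\ecat:=\kbicat(R(-),K):\cbicat\op\rightarrow\CAT,
\]
which is a pseudofunctor precisely because $R$ is; moreover $Rp_\dcat$, $Rp_{\dcat\Vop}$ and $Rp$ are all pseudofunctors into $\kbicat$, being composites of the pseudofunctor $R$ with the (strict) fibration functors.

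I would treat the oplax case first. By the definition of weighted colimit, $\colim_{oplax}^\dcat R$ is the object representing $K\mapsto[\cbicat\op,\CAT]_{oplax}(\dcat,\kbicat(R(-),K))$. Applying the first isomorphism of Proposition \ref{prop:eqv_categorie_laxoplaxps} with this $\ecat$ identifies that hom-category with $[\gbicat(\dcat)\op,\CAT]_{oplax}(\Delta\onecat,\ecat\circ p_\dcat\op)$. The key computation is to recognise the composite weight: for an object $(X,U)$ of $\gbicat(\dcat)$ one has $(\ecat\circ p_\dcat\op)(X,U)=\kbicat(R(X),K)=\kbicat((Rp_\dcat)(X,U),K)$, so that $\ecat\circ p_\dcat\op=\kbicat((Rp_\dcat)(-),K)$. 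Hence the right-hand hom-category is exactly the category of conical oplax cocones under $Rp_\dcat$ with vertex $K$, which by definition is represented by $\colim_{oplax}Rp_\dcat$. Comparing representing objects yields $\colim_{oplax}^\dcat R\simeq\colim_{oplax}Rp_\dcat$.

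The lax case is entirely analogous, using instead the second isomorphism of Proposition \ref{prop:eqv_categorie_laxoplaxps}: one computes $\ecat\circ p_{\dcat\Vop}\op=\kbicat((Rp_{\dcat\Vop})(-),K)$ in the same pointwise manner and recognises the resulting hom-category as that of conical lax cocones under $Rp_{\dcat\Vop}$. The pseudocolimit case follows identically from the third (discrete) isomorphism, with $P$ in place of $\dcat$ and $p$ in place of $p_\dcat$, giving $\ecat\circ p=\kbicat((Rp)(-),K)$ and hence $\colim_{ps}^P R\simeq\colim_{ps}Rp$.

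The only point requiring genuine care — and the step I expect to be the main (if minor) obstacle — is the \emph{pseudonaturality in $K$} of this chain of equivalences. To pass from an equivalence of hom-categories for each fixed $K$ to an equivalence of the representing objects, via the bicategorical Yoneda lemma, one must verify that the isomorphisms of Proposition \ref{prop:eqv_categorie_laxoplaxps} are compatible with postcomposition along an arrow $k:K\rightarrow K'$ of $\kbicat$. This is immediate because the formulas $F\mapsto\bar F$ exhibited in the proof of that proposition are uniform and do not interact with the $\kbicat$-variable: postcomposing a cocone with $k$ commutes on the nose with opening the spindles into triangles. I would therefore remark that the construction is functorial in $\ecat$, instantiate this functoriality along the pseudofunctor $K\mapsto\kbicat(R(-),K)$, and conclude.
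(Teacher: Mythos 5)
Your argument is exactly the one the paper intends: the corollary is stated as an immediate consequence of Proposition \ref{prop:eqv_categorie_laxoplaxps} applied to the weight $\ecat=\kbicat(R(-),K)$, and your pointwise identification $\ecat\circ p_\dcat\op=\kbicat((Rp_\dcat)(-),K)$ together with the remark on pseudonaturality in $K$ correctly fills in the (omitted) details. No issues.
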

The commutativity of weights and diagrams expressed by Proposition \ref{prop:commutativity_peso_diag_colimiti} can also be expressed in a conified version as follows:
\begin{cor}\label{cor:commute_weight_diagram_conified}
	Given two pseudofunctors $\dcat:\cbicat\op\rightarrow\CAT$ and $\ecat:\cbicat\rightarrow\CAT$: then 
	\[
	\colim_{lax}\ecat p_{\dcat\Vop}\simeq \colim_{oplax}\dcat p_\ecat.
	\]
\end{cor}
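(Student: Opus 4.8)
The plan is to deduce the statement purely formally, by combining the commutativity of weights and diagrams (Proposition~\ref{prop:commutativity_peso_diag_colimiti}) with the two conification formulas of Corollary~\ref{cor:colimiti_pesati_conificati}, applied once over the base $\cbicat$ and once over the base $\cbicat\op$. No genuine computation is needed: all the content lies in tracking the variances correctly.

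First I would rewrite the left-hand side as a weighted colimit. Taking the diagram to be $R:=\ecat:\cbicat\rightarrow\CAT$ and the weight to be $\dcat$, the first (lax) formula of Corollary~\ref{cor:colimiti_pesati_conificati} gives
\[
\colim_{lax}\ecat p_{\dcat\Vop}\simeq \colim_{lax}^\dcat\ecat.
\]
Proposition~\ref{prop:commutativity_peso_diag_colimiti} then lets me exchange weight and diagram, yielding
\[
\colim_{lax}^\dcat\ecat\simeq \colim_{oplax}^\ecat\dcat.
\]

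The delicate step is to conify this last oplax weighted colimit, which now lives over the base category $\cbicat\op$ rather than $\cbicat$. Here the diagram is $\dcat:\cbicat\op\rightarrow\CAT$, regarded as a covariant pseudofunctor on $\cbicat\op$, and the weight is $\ecat$, regarded as a pseudofunctor $(\cbicat\op)\op\rightarrow\CAT$. Applying the second (oplax) formula of Corollary~\ref{cor:colimiti_pesati_conificati} with base $\cbicat\op$, diagram $\dcat$ and weight $\ecat$, and recalling that by our convention the Grothendieck construction of the covariant pseudofunctor $\ecat$ produces precisely the fibration $p_\ecat:\gbicat(\ecat)\rightarrow\cbicat\op$, I obtain
\[
\colim_{oplax}^\ecat\dcat\simeq \colim_{oplax}\dcat p_\ecat.
\]
Chaining the three equivalences gives $\colim_{lax}\ecat p_{\dcat\Vop}\simeq \colim_{oplax}\dcat p_\ecat$, as desired. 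The asymmetry between $p_{\dcat\Vop}$ on one side and $p_\ecat$ on the other is forced by the two formulas of Corollary~\ref{cor:colimiti_pesati_conificati}: the lax conification introduces the $(-)\Vop$, whereas the oplax conification does not.

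I expect the only real obstacle to be the bookkeeping in the third equivalence: one must make sure that the oplax formula of Corollary~\ref{cor:colimiti_pesati_conificati}, stated for a diagram covariant on the base and a weight contravariant on it, is being applied over the \emph{opposite} base $\cbicat\op$ with the roles of $\ecat$ and $\dcat$ interchanged, so that the relevant weight-fibration is $p_\ecat$ with codomain $\cbicat\op$. Once the variances are pinned down, the three cited equivalences compose with no further work.
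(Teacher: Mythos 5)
Your proof is correct and is exactly the route the paper intends: the corollary is obtained by chaining the lax conification formula of Corollary \ref{cor:colimiti_pesati_conificati}, the commutativity of weights and diagrams from Proposition \ref{prop:commutativity_peso_diag_colimiti}, and the oplax conification formula applied over the base $\cbicat\op$ (with $p_\ecat:\gbicat(\ecat)\rightarrow\cbicat\op$ as in Remark \ref{rmk:covGroth.i}). Your handling of the variances, including the observation that only the lax conification introduces the $(-)\Vop$, matches the paper's chain $\colim_{lax}\ecat p_{\dcat\Vop}\simeq\colim_{lax}^\dcat\ecat\simeq\colim_{oplax}^\ecat\dcat\simeq\colim_{oplax}\dcat p_\ecat$ as restated in Proposition \ref{prop:pseudocolimiti_localizzazioni}.
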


\begin{remark}\label{rmk:pseudocolim_non_conificano}
Proposition \ref{prop:eqv_categorie_laxoplaxps} and Corollary \ref{cor:colimiti_pesati_conificati} show that we can conify a pseudocolimit with a discrete weight, but this is not true for a general pseudocolimit. To understand this, consider two pseudofunctors $\dcat:\cbicat\op\rightarrow \Cat$, $R:\cbicat\rightarrow\Cat$ and a $\dcat$-weighted cocone under $R$ as the one below on the left. When we open the spindles of the pseudocolimit cocone,
    \[
    \begin{tikzcd}[row sep=10ex, column sep=10ex]
    	R(X) \arrow[d, "F_X(V)"', ""{name=B}, bend right=40] \arrow[d, "F_X(U)", ""{name=A}, ""{name=D, xshift=-1ex}, bend left=40] &    R(Y)  \arrow[l, "R(y)"'] \arrow[ld, "F_Y(\dcat(y)(U))", ""{name=C, below, yshift=3ex, xshift=2ex}, bend left]  \ar[from=D, to=B, Rightarrow, "F_X(a)"'] \\
    	K &  	\arrow["{F_y(U)}"{yshift=0.5ex, xshift=0.2ex},"\sim" sloped, Rightarrow, from={1-1}, shorten <=4pt, to=C]
    \end{tikzcd}\hspace{-10.5pt}\rightsquigarrow\hspace{-10.5pt}
    \begin{tikzcd}[row sep=10ex, column sep=10ex]
    	R(X) \ar[dr, "F_X(V)"', ""{name=B}]\ar[r, equal] &	R(X) \arrow[d, "F_X(U)", ""{name=A}, ""{name=D, xshift=-1ex, yshift=2pt}] &    R(Y)  \arrow[l, "R(y)"'] \arrow[ld, "F_Y(\dcat(y)(U))", ""{name=C, below, yshift=3ex, xshift=2ex}, bend left]  \ar[from=D, to=B, Rightarrow, "F_X(a)"{xshift=1ex, above}] \\
    	&	K &  	\arrow["{F_y(U)}"{yshift=0.5ex,xshift=0.2ex},"\sim"sloped, Rightarrow, from={1-2}, shorten <=4pt, to=C]
    \end{tikzcd},
\]
we can see that the natural transformations of the form $F_y(U)$ are invertible, but in general not those of the form $F_X(a)$: therefore, the cone on the right cannot be the cocone of a \textit{pseudo}-colimit. We thus can say that in general none of the colimits
\[
\colim_{ps}^\dcat \ecat,\ \colim_{ps}(\ecat\circ p_\dcat)\,  \colim_{ps}(\ecat\circ p_{\dcat\Vop})
\]
are equivalent. We have provided an explicit example of this using Giraud toposes in Remark \ref{rmk:esempio_colimite_pesato_non_conificabile}. This justifies our interest in \emph{lax} colimits: the commutativity of weights and diagrams in their conified expression allows for some extra elasticity in their expression. We will exploit this when computing weighted pseudocolimits as localizations of lax colimits, in the next section.
\end{remark}

\subsection{Computation of weighted pseudocolimits in $\Cat$}
It is now time to provide an explicit way of computing weighted pseudocolimits in $\Cat$. We begin by recalling that conical op-/lax colimits are easily computed by applying the Grothendieck construction:
\begin{prop}\label{prop:laxcolim_grothconstr}
	Consider a pseudofunctor $\dcat:\cbicat\op\rightarrow \Cat$: then \[\colim_{lax}\dcat\simeq \gbicat(\dcat),\]
	\[\colim_{oplax}\dcat\simeq \gbicat(\dcat\Vop)\op.
	\]
\end{prop}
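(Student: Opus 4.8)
The plan is to verify directly that $\gbicat(\dcat)$ (respectively $\gbicat(\dcat\Vop)\op$) satisfies the universal property defining $\colim_{lax}\dcat$ (respectively $\colim_{oplax}\dcat$); that is, to produce equivalences
\[
\Cat(\gbicat(\dcat),K)\simeq [\cbicat,\CAT]_{lax}(\Delta\onecat, \CAT(\dcat(-),K)),
\]
pseudonatural in $K$, together with its oplax analogue, and then to invoke the (bicategorical) Yoneda lemma. Rather than build these equivalences by hand, I would assemble them from the two transformation-level results already available: the commutativity of weights and diagrams (Proposition \ref{prop:commutativity_peso_diag_colimiti}) and the conification isomorphisms (Proposition \ref{prop:eqv_categorie_laxoplaxps}), applied to the constant weight and to a constant second argument $\ecat:=\Delta K$.

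For the lax statement, I would first apply the second isomorphism of Proposition \ref{prop:commutativity_peso_diag_colimiti} with weight $\Delta\onecat:\cbicat\rightarrow\CAT$, using $\CAT(\Delta\onecat(-),K)=\Delta K$, to rewrite the lax cocones as
\[
[\cbicat,\CAT]_{lax}(\Delta\onecat, \CAT(\dcat(-),K)) \simeq [\cbicat\op,\CAT]_{oplax}(\dcat, \Delta K).
\]
Next I would apply the first isomorphism of Proposition \ref{prop:eqv_categorie_laxoplaxps} with $\ecat:=\Delta K$, which (since $\Delta K\circ p_\dcat\op=\Delta K$) turns the right-hand side into $[\gbicat(\dcat)\op,\CAT]_{oplax}(\Delta\onecat,\Delta K)$. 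The final step is the elementary observation that, because $\gbicat(\dcat)$ is an ordinary $1$-category (only identity $2$-cells), an oplax transformation between the constant $2$-functors $\Delta\onecat$ and $\Delta K$ on $\gbicat(\dcat)\op$ is precisely a functor $\gbicat(\dcat)\rightarrow K$: the structure cells, running in the oplax direction, are exactly the images of the morphisms, and the coherence axioms collapse to functoriality. Thus this last hom-category is $\Cat(\gbicat(\dcat),K)$, and chaining the three equivalences, all natural in $K$, gives $\colim_{lax}\dcat\simeq\gbicat(\dcat)$.

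The oplax case is entirely parallel: I would use instead the first isomorphism of Proposition \ref{prop:commutativity_peso_diag_colimiti} to obtain $[\cbicat,\CAT]_{oplax}(\Delta\onecat,\CAT(\dcat(-),K))\simeq[\cbicat\op,\CAT]_{lax}(\dcat,\Delta K)$, then the second isomorphism of Proposition \ref{prop:eqv_categorie_laxoplaxps} (the one featuring $\dcat\Vop$) to reach $[\gbicat(\dcat\Vop)\op,\CAT]_{lax}(\Delta\onecat,\Delta K)$, and finally the collapse of lax transformations into constant $2$-functors over the $1$-category $\gbicat(\dcat\Vop)\op$, which now yields covariant functors and hence $\Cat(\gbicat(\dcat\Vop)\op,K)$. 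This gives $\colim_{oplax}\dcat\simeq\gbicat(\dcat\Vop)\op$.

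The step most in need of care is the bookkeeping of handedness: one must keep straight which variance (lax versus oplax) appears at each link and the exact roles of $(-)\Vop$ and $(-)\op$, since a single sign error interchanges the two cases. An alternative, fully self-contained route bypasses the cited propositions and builds the equivalence $\Cat(\gbicat(\dcat),K)\simeq[\cbicat,\CAT]_{lax}(\Delta\onecat,\CAT(\dcat(-),K))$ by hand: a functor $G:\gbicat(\dcat)\rightarrow K$ restricts along the fibre inclusions $\iota_X:\dcat(X)\hookrightarrow\gbicat(\dcat)$ to functors $G_X:=G\iota_X$, while its values on the cartesian arrows $(y,1):(Y,\dcat(y)(U))\rightarrow(X,U)$ supply the lax structure cells $G_Y\dcat(y)\Rightarrow G_X$; conversely a lax cocone $(G_X,\tau_y)$ recovers $G$ by sending $(y,a):(Y,V)\rightarrow(X,U)$ to $(\tau_y)_U\circ G_Y(a)$. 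In this direct approach the main obstacle is checking functoriality of the reconstructed $G$ on composites, which is exactly where the coherence isomorphisms $\phi^\dcat_{y,z}$ of the pseudofunctor $\dcat$ enter; these verifications are routine once the correspondence is set up.
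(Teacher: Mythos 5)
Your argument is correct, but it takes a genuinely different route from the paper. The paper simply cites the first equivalence to the literature (Johnson--Yau, Section 10.2) and then derives the oplax case from the lax one by a duality chain passing through $(\Delta\onecat)\Vop$, $\Cat(\dcat(-),\kbicat)\Vop$ and $\kbicat\op$, ending at $\Cat((\colim_{lax}(\dcat\Vop))\op,\kbicat)$. You instead prove \emph{both} equivalences internally, by chaining the commutativity of weights and diagrams (Proposition \ref{prop:commutativity_peso_diag_colimiti}, specialized to the weight $\Delta\onecat$) with the conification isomorphisms (Proposition \ref{prop:eqv_categorie_laxoplaxps}, specialized to $\ecat=\Delta K$), and then observing that op-/lax transformations between constant $2$-functors on a $1$-category collapse to ordinary (co- or contra-variant) functors. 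I checked the handedness at each link and it is right: the second isomorphism of Proposition \ref{prop:commutativity_peso_diag_colimiti} followed by the first of Proposition \ref{prop:eqv_categorie_laxoplaxps} lands you in $[\gbicat(\dcat)\op,\CAT]_{oplax}(\Delta\onecat,\Delta K)\cong\Cat(\gbicat(\dcat),K)$, and the dual chain lands in $[\gbicat(\dcat\Vop)\op,\CAT]_{lax}(\Delta\onecat,\Delta K)\cong\Cat(\gbicat(\dcat\Vop)\op,K)$; neither cited proposition depends on the statement being proved, so there is no circularity. What your route buys is self-containedness and a uniform treatment of the two cases; what the paper's route buys is brevity and an explicit illustration of the $(-)\Vop$/$(-)\op$ duality. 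The only point worth making explicit is that the isomorphisms of Proposition \ref{prop:eqv_categorie_laxoplaxps} are (pseudo)natural in $\ecat$, hence in $K$ -- the statement there does not assert this, though it is immediate from the explicit formulas, and it is needed to conclude that you have exhibited a pseudonatural equivalence of hom-categories and not merely an objectwise one.
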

\begin{proof}
	The first claim is proved in \cite[Section 10.2]{2dimcategories}, while the second is sketched for covariant pseudofunctors in the paragraph \textit{As an oplax colimit} of \cite{nlab:grothendieck_construction}.  We remark that the second claim can also be proved as a consequence of the first, since the following chain of natural equivalences holds: 
	\begin{align*}
		\Cat(\colim_{oplax}\dcat, \kbicat)
		&\simeq [\cbicat,\Cat]_{oplax}(\Delta\onecat, \Cat(\dcat(-),\kbicat))\\
		&\simeq [\cbicat,\Cat]_{oplax}((\Delta\onecat)\Vop, \Cat(\dcat(-),\kbicat))\\
		&\simeq [\cbicat,\Cat]_{lax}(\Delta\onecat, \Cat(\dcat(-),\kbicat)\Vop)\\
		&\simeq [\cbicat,\Cat]_{lax}(\Delta\onecat, \Cat(\dcat\Vop(-),\kbicat\op))\\
		&\simeq \Cat(\colim_{lax}(\dcat\Vop), \kbicat\op)\\
		&\simeq \Cat((\colim_{lax}(\dcat\Vop))\op, \kbicat).
	\end{align*}
\end{proof}
The colimit cocone of $\gbicat(\dcat)$ is made of the following triangles:
\[\begin{tikzcd}
	{\dcat(X)} && {\dcat(Y)} \\
	& {\gbicat(\dcat)}
	\arrow[""{name=0, anchor=center, inner sep=0}, "{i_X}"', from=1-1, to=2-2]
	\arrow["{i_Y}", from=1-3, to=2-2]
	\arrow["{\dcat(y)}", from=1-1, to=1-3]
	\arrow["{i_y}", shorten <=13pt, shorten >=7pt, Rightarrow, from=1-3, to=0]
\end{tikzcd}.\]
Each functor $i_X:\dcat(X)\rightarrow \gbicat(\dcat)$ is the usual inclusion of fibres, which maps an object $U$ to the object $(X,U)$ and acts on arrows accordingly. The natural transformation $i_y$ is defined componentwise, for $U$ in $\
\dcat(X)$, as the arrow 
\[
i_y(U):=(y, 1_{\dcat(y)(U)}): (Y,\dcat(y)(U)) \rightarrow (X,U). 
\]

On the other hand, objects of the category $\gbicat(\dcat\Vop)\op$ are still pairs $(X,U)$ with $X$ in $\cbicat$ and $U$ in $\dcat(X)$, but an arrow $(y,a):(X,U)\rightarrow (Y,V)$ is indexed by an arrow $y:Y\rightarrow X$ and an arrow $a:\dcat(y)(U)\rightarrow V$. The colimit cocone
\[
\begin{tikzcd}
	\dcat(X) \ar[dr, "j_X"'] \ar[rr, "\dcat(y)"]&& \dcat(Y)\ar[dl, "j_Y", ""{name=A, above}]\\
	& \colim_{oplax}\dcat \arrow[from={1-1},to=A,"j_y" near end, shorten <=2ex, Rightarrow]
\end{tikzcd}
\]
is defined as follows: the functor $j_X$ maps an object $U$ in $\dcat(X)$ to $(X,U)$, and acts on arrows accordingly. The natural transformation $j_y:j_X\Rightarrow j_Y\circ \dcat(y)$ is defined componentwise, for some $U$ in $\dcat(X)$, as
\[
j_y(U):=(y,1_{\dcat(y)(U)}):(X,U)\rightarrow(Y, \dcat(y)(U)).
\]

Now that we know how to compute conical op-/lax colimits, we can also compute weighted op-/lax colimits, simply by applying conification (see Corollary \ref{cor:colimiti_pesati_conificati}). Weighted \textit{pseudo}colimits can be computed instead by localizing op-/lax colimits, as the next result shows:
\begin{prop}\label{prop:pseudocolimiti_localizzazioni}
	Consider a category $\cbicat$ and two pseudofunctors $\dcat:\cbicat\op\rightarrow \CAT$ and $R:\cbicat\rightarrow \CAT$. By Corollary \ref{cor:colimiti_pesati_conificati} and Proposition \ref{prop:commutativity_peso_diag_colimiti}, the following two chains of equivalences hold:
	\[
	\colim_{lax} Rp_{\dcat\Vop}\simeq \colim_{lax}^\dcat R\simeq \colim^R_{oplax}\dcat\simeq \colim_{oplax} \dcat p_R,
	\]
	\[
	\colim_{oplax} Rp_{\dcat}\simeq \colim_{oplax}^\dcat R\simeq \colim^R_{lax}\dcat\simeq \colim_{lax} \dcat p_{R\Vop}.
	\]
	Then $\colim_{ps}^\dcat R$ can be presented as a localization of any of the eight categories above, as follows:
	\begin{enumerate}[(i)]
		\item as a localization of $\colim_{oplax}^\dcat R$ or of $\colim_{lax}^\dcat R$: if 
		\[
		\begin{tikzcd}[row sep=10ex, column sep=10ex]
			R(X) \arrow[d, "F_{U'}"', ""{name=B}, bend right=40] \arrow[d, "F_{U}", ""{name=A}, ""{name=D, xshift=-1ex}, bend left=40] &    R(Y)  \arrow[l, "R(y)"'] \arrow[ld, "F_{\dcat(y)(U)}", ""{name=C, below, yshift=3ex, xshift=2ex}, bend left]  \ar[from=D, to=B, Rightarrow, "F_a"'] \\
			\colim_{oplax}^\dcat R &  	\arrow["{F_{y,U}}", Leftarrow, from={1-1}, shorten >=4pt, to=C]
		\end{tikzcd}\]
		is the colimit cocone of $\colim_{oplax}^\dcat R$ (with $y:Y\rightarrow X$ in $\cbicat$ and $a:U\rightarrow U'$ in $\dcat(X)$), the essentially unique functor $\colim_{oplax}^\dcat R\rightarrow\colim_{ps}^\dcat R$ is a localization with respect to the components of each natural trasformation of the kind $F_{y,U}$. A similar statement holds by considering the colimit cocone of $\colim_{lax}^\dcat R$.
		\item as a localization of $\colim_{lax}^R\dcat$ or of $\colim_{oplax}^R\dcat$: if
		\[
		\begin{tikzcd}[row sep=10ex, column sep=10ex]
			\dcat(X) \arrow[d, "G_{A'}"', ""{name=B}, bend right=40] \arrow[d, "G_{R(y)(A)}", ""{name=A}, ""{name=D, xshift=-1ex}, bend left=40] &    \dcat(Y)  \arrow[l, leftarrow, "\dcat(y)"'] \arrow[ld, "G_{A}", ""{name=C, below, yshift=3ex, xshift=2ex}, bend left]  \ar[from=D, to=B, Rightarrow, "G_b"'] \\
			\colim_{lax}^R \dcat &  	\arrow["{G_{y,A}}"',Rightarrow, to={1-1}, shorten <=4pt, from=C]
		\end{tikzcd}\]
		is the colimit cocone of $\colim_{lax}^R \dcat$ (with $y:Y\rightarrow X$ in $\cbicat$, $A$ in $R(Y)$ and $b:R(y)(A)\rightarrow A'$ in $R(X)$), then the essentially unique functor $\colim_{lax}^R\dcat \rightarrow \colim_{ps}^\dcat R$ is a localization with respect to the components of each natural transformation $G_{y,A}$. A similar argument holds for $\colim_{oplax}^R\dcat$.
		\item as a localization of $\colim_{oplax} (Rp_\dcat)$ or of $\colim_{lax} (Rp_{\dcat\Vop})$: if
		\[
		\begin{tikzcd}[row sep=10ex]
			R(X) \ar[d, "H_{(X,U)}"', ""{name=A}] & R(Y) \ar[l, "R(y)"'] \ar[dl, "H_{(Y,V)}"] \ar[to=A, Rightarrow, "H_{(y,a)}"']\\
			\colim_{oplax}( Rp_\dcat)&
		\end{tikzcd}
		\]
		is the colimit cocone of $\colim_{oplax} (Rp_\dcat)$ (with $(y,a):(Y,V)\rightarrow (X,U)$ in $\gbicat(\dcat)$), then the induced functor $\colim_{oplax} (Rp_\dcat)\rightarrow\colim_{ps}^\dcat R$ is a localization with respect to the components of each natural trasformation of the kind $H_{(y,a)}$ such that $(y,a):(Y,V)\rightarrow (X,U)$ is a cartesian arrow of $\gbicat(\dcat)$ (\ie the component $a$ is invertible). The same considerations hold for $\colim_{lax}Rp_{\dcat\Vop}$.
		\item as a localization of $\colim_{lax} (\dcat p_{R\Vop})$ or of $\colim_{oplax} (\dcat p_R)$: if
		\[
		\begin{tikzcd}[row sep=10ex]
			\dcat(X) \ar[r, "\dcat(y)"] \ar[d, "K_{(X,B)}"', ""{name=A}] & \dcat(Y) \ar[dl, "K_{(Y,A)}"] \\
			\colim_{lax}\dcat p_{R\Vop}& \ar[to=A, from=1-2, Rightarrow, "K_{(y,b)}"']
		\end{tikzcd}
		\]
		is the colimit cocone of $\colim_{lax} (\dcat p_{R\Vop})$ (with $(y,b):(X,B)\rightarrow (Y,A)$ in $\gbicat(R\Vop)$), then the induced functor $\colim_{lax} (\dcat p_{R\Vop})\rightarrow \colim_{ps}^\dcat R$ is a localization with respect to the components of all natural transformations of the form $K_{(y,b)}$ where $(y,b)$ is a cartesian arrow of $\gbicat(R\Vop)$ (\ie $b$ is invertible). Similar considerations hold for $\colim_{oplax} (\dcat p_R)$.
	\end{enumerate}
\end{prop}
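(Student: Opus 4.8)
The unifying observation is that a pseudonatural transformation is exactly an oplax (resp.\ lax) natural transformation all of whose structural $2$-cells are invertible; hence, for any target $\kbicat$ and any $K$, the category $[\cbicat\op,\CAT]_{ps}(\dcat,\kbicat(R(-),K))$ is the full subcategory of $[\cbicat\op,\CAT]_{oplax}(\dcat,\kbicat(R(-),K))$ cut out by requiring the components $F_y(U)$ to be isomorphisms. On the other hand, a localization $\mathcal{A}[W^{-1}]$ is characterised by a natural isomorphism $\CAT(\mathcal{A}[W^{-1}],K)\cong\CAT_W(\mathcal{A},K)$, where $\CAT_W(\mathcal{A},K)$ denotes the full subcategory of functors inverting $W$. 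The whole proposition therefore reduces to identifying, in each of the eight cocones, the class $W$ whose inversion corresponds precisely to the invertibility of the comparison $2$-cells of a pseudo cocone; once $W$ is pinned down the statement follows by comparing universal properties, naturally in $K$.

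I plan to carry out case (i) as the prototype and then transport it. For the oplax weighted colimit, precomposition with its universal cocone gives the defining equivalence $\CAT(\colim_{oplax}^\dcat R,K)\simeq[\cbicat\op,\CAT]_{oplax}(\dcat,\kbicat(R(-),K))$. A functor $G\colon\colim_{oplax}^\dcat R\to K$ sends this cocone to an oplax cocone whose structural $2$-cell at $(y,U)$ is $G$ applied to the component $F_{y,U}$; thus $G$ inverts all components of all the $F_{y,U}$ if and only if the induced oplax cocone is pseudo, \ie lies in $[\cbicat\op,\CAT]_{ps}(\dcat,\kbicat(R(-),K))$. Taking $W$ to be the collection of all such components, I obtain $\CAT_W(\colim_{oplax}^\dcat R,K)\simeq[\cbicat\op,\CAT]_{ps}(\dcat,\kbicat(R(-),K))\simeq\kbicat(\colim_{ps}^\dcat R,K)$ naturally in $K$, so the comparison functor $\colim_{oplax}^\dcat R\to\colim_{ps}^\dcat R$ (induced, by the universal property of the oplax colimit, from the universal pseudo cocone viewed as an oplax one) exhibits $\colim_{ps}^\dcat R$ as the localization at $W$. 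The case of $\colim_{lax}^\dcat R$ is verbatim with every structural $2$-cell reversed.

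The remaining six presentations are obtained by transporting this computation along the equivalences already established, each of which respects cocones and therefore carries the localizing class to a corresponding one. Statement (ii) follows from the weight--diagram symmetry of Proposition \ref{prop:commutativity_peso_diag_colimiti}, which swaps the lax and oplax flavours and yields $\colim_{lax}^R\dcat\simeq\colim_{oplax}^\dcat R$ together with $\colim_{ps}^\dcat R\simeq\colim_{ps}^R\dcat$; under it the class $W$ of case (i) becomes the components of the cocone $2$-cells $G_{y,A}$. For the conified presentations (iii) and (iv) I would invoke the isomorphisms of Corollary \ref{cor:colimiti_pesati_conificati} together with the explicit conification of Proposition \ref{prop:eqv_categorie_laxoplaxps}: there the conical structural arrow decomposes as $\bar{F}_{(y,a)}=F_y(U)\,F_Y(a)$, so that it reduces to $F_y(U)$ exactly on the cartesian arrows $(y,1_{\dcat(y)(U)})$ of $\gbicat(\dcat)$. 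Consequently the family $W$ transports to the components of the $2$-cells $H_{(y,a)}$ (resp.\ $K_{(y,b)}$) indexed by the cartesian arrows, which is precisely the class stated in (iii) and (iv).

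The subtle point, and the only place where real care is needed, is exactly this last bookkeeping. In the conified cocones one must invert \emph{only} the cartesian components: the non-cartesian structural arrows contain the ``spindle'' factor $F_Y(a)$ coming from arrows of the fibres $\dcat(X)$, and in a pseudo cocone these $2$-cells remain arbitrary and non-invertible, as emphasised in Remark \ref{rmk:pseudocolim_non_conificano}. Inverting them as well would collapse too much and would not compute $\colim_{ps}^\dcat R$. Thus the heart of the proof is to verify that inverting precisely the cartesian family---neither more nor fewer arrows---cuts out the pseudo cocones among the (op)lax ones, and that each equivalence in the chain (Corollary \ref{cor:colimiti_pesati_conificati}, Propositions \ref{prop:commutativity_peso_diag_colimiti} and \ref{prop:eqv_categorie_laxoplaxps}) carries the localizing class to the class claimed; everything else is a formal manipulation of the relevant universal properties.
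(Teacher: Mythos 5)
Your proposal is correct and follows essentially the same route as the paper: item (i) is established by matching the universal property of the localization against that of the pseudocolimit (pseudo cocones being exactly the (op)lax cocones with invertible structural $2$-cells), and items (ii)--(iv) are obtained by transporting the localizing class along the equivalences of Proposition \ref{prop:commutativity_peso_diag_colimiti} and Proposition \ref{prop:eqv_categorie_laxoplaxps}, using the decomposition $H_{(y,a)}=F_{y,U}\circ F_a$ to see that only the cartesian components need to be inverted. The subtle point you single out is precisely the one the paper addresses.
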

\begin{proof}
	We will prove the first item, and others will follow from it by expliciting how the colimit cocones relate to one another.
	\begin{enumerate}[(i)]
		\item Similarly to che colimit cocone of $\colim_{oplax}^\dcat R$ we provided above, let us denote by 
		\[
		\begin{tikzcd}[row sep=10ex, column sep=10ex]
			R(X) \arrow[d, "\bar{F}_{U'}"', ""{name=B}, bend right=40] \arrow[d, "\bar{F}_{U}", ""{name=A}, ""{name=D, xshift=-1ex}, bend left=40] &    R(Y)  \arrow[l, "R(y)"'] \arrow[ld, "\bar{F}_{\dcat(y)(U)}", ""{name=C, below, yshift=3ex, xshift=2ex}, bend left]  \ar[from=D, to=B, Rightarrow, "\bar{F}_a"'] \\
			\colim_{ps}^\dcat R &  	\arrow["{\bar{F}_{y,U}}", Leftarrow, from={1-1}, shorten >=4pt, to=C]
		\end{tikzcd}.\]
		the colimit cocone of $\colim_{ps}^\dcat R$. Notice that, being $\colim_{ps}^\dcat R$ a pseudocolimit, the components of each natural transformation $\bar{F}_y(U)$ are invertible. There is a (essentially) unique functor $\xi:\colim_{oplax}^\dcat R\rightarrow\colim_{ps}^\dcat R$: it satisfies in particular the identity $\xi\circ F_y(U)=\bar{F}_y(U)$, and thus it maps the components of every $F_y(U)$ to an invertible map. It is now obvious that any functor $K:\colim_{oplax}^\dcat R\rightarrow \kbicat$ to any category $\kbicat$ factors through $\colim_{ps}^\dcat R$ if and only if $K$ inverts all the components of the natural transformations of the form $F_y(U)$: therefore, $\colim_{ps}^\dcat R$ is obtained as a localization of $\colim_{oplax}^\dcat R$ with respect to all the components of all the natural transformations of the form $F_y(U)$.
		\item We can apply the natural equivalences in Proposition \ref{prop:commutativity_peso_diag_colimiti} in order to translate the colimit cocone of $\colim_{oplax}^\dcat R$ into the colimit cocone of $\colim_{lax}^R \dcat$: in particular, for any $U$ in $\dcat(X)$ and $A$ in $R(Y)$ the identity $G_{y,A}(U)=F_{y,U}(A)$ holds. Thus, since $\colim_{oplax}^\dcat R\rightarrow \colim_{ps}\dcat R$ localizes with respect to all the components of the natural transformations $F_{y,U}$, the functor $\colim_{lax}^R\dcat\rightarrow \colim_{ps}\dcat R$ localizes with respect to all the components of the natural transformations $G_{y,A}$.
		\item We can apply the natural equivalences in Proposition \ref{prop:eqv_categorie_laxoplaxps} in order to translate the colimit cocone of $\colim_{oplax}^\dcat R$ into the colimit cocone of $\colim_{oplax}( Rp_\dcat)$. In particular, the natural transformations $H_{(y,a)}$ are defined componentwise, for $A$ in $R(Y)$, as $H_{(y,a)}(A):= F_{y,U}(A)\circ F_a(A)$. Therefore, since $\colim_{oplax}^\dcat R\rightarrow \colim_{ps}^\dcat R$ localizes with respect to the components of each $F_{y,U}$ it follows that $\colim_{oplax}( Rp_\dcat)\rightarrow \colim_{ps}^\dcat R$ localizes with respect to the components of each $H_{(y,1)}$. Finally, we recall that if $a$ is invertible then $F_a$ is invertible, by functoriality, and thus localizing with respects to the components of the $H_{(y,1)}$ is exactly the same as localizing with respect to the components of the $H_{(y,a)}$ with $(y,a):(Y,V)\rightarrow (X,U)$ cartesian in $\gbicat(\dcat)$.
		\item We can apply the natural equivalences of Proposition \ref{prop:eqv_categorie_laxoplaxps} in order to translate the colimit cocone of $\colim_{lax}^R \dcat$ into the colimit cocone of $\colim_{lax} (\dcat p_{R\Vop})$. In particular each natural transformation $K_{(y,b)}$ is defined componentwise, for $U$ in $\dcat(X)$, as $K_{(y,b)}(U):= G_b(U)\circ G_{y,A}(U)$. With considerations similar to those of the previous item, we can conclude that $\colim_{lax}(\dcat p_{R\Vop})\rightarrow \colim_{ps}^\dcat R$ is a localization with respect to the components of all natural transformations $K_{(y,b)}$ with $(y,b):(X,B)\rightarrow (Y,A)$ cartesian in $\gbicat(R\Vop)$. 
	\end{enumerate}
\end{proof}
\begin{remark}
    Since $\gbicat(\dcat)\simeq \colim_{lax}\dcat$, $\colim_{ps}\dcat$ is the localization of $\gbicat(\dcat)$ with respect to its cartesian arrows: this result first appeared in Paragraph 6.4.0 of \cite[Exposé VI]{SGA4_II}, and was the motivation for our whole analysis of weighted colimits as localizations.
\end{remark}

The previous result allows us to choose, for a colimit $\colim_{ps}^\dcat R$, the representation which we deem more fitting or easier to compute. The two representations in items (i) and (ii) still maintain the distinction between weight and diagram, eventually exchanging them; on the other hand, the conified representations in items (iii) and (iv) are localizations of a `mixed structure' built from the data of both $R$ and $\dcat$, because they see $\colim_{ps}^\dcat R$ as a localization of any of the four colimits 
\[
\colim_{lax}(\dcat p_{R\Vop}),\ \colim_{lax}(R p_{\dcat\Vop}),\ \colim_{oplax}(Rp_\dcat),\ \colim_{oplax}(\dcat p_R).
\]
In the next proposition we shall compute explicitly the first of these colimits and the localization, to show what kind of `mixed structure' is at play: similar considerations hold for the other three cases.
\begin{prop}\label{prop:pscolim_localizz_struttura_mista}
Consider two pseudofunctors $\dcat:\cbicat\op\rightarrow \CAT$ and $R:\cbicat\rightarrow\CAT$: the colimit
\[
\colim_{lax}(\dcat \circ p_{R\Vop})
\]
is equivalent to the category whose objects are triples $(X,U,B)$, with $X$ in $\cbicat$, $U$ in $\dcat(X)$ and $B$ in $R(X)$, while an arrow
\[
(y,a,b):(Y,V,A)\rightarrow (X,U,B)
\]
is indexed by three arrows $y:Y\rightarrow X$ in $\cbicat$, $a:V\rightarrow \dcat(y)(U)$ in $\dcat(Y)$ and $b:R(y)(A)\rightarrow B$ in $R(X)$. 
There is a square
\[\begin{tikzcd}
{\colim_{lax}(\dcat\circ p_{R\Vop})}\ar[d, "p"'] \ar[r, "q"] & \gbicat(\dcat) \ar[d, "p_\dcat"]\\
\gbicat(R\Vop)\op \ar[r, "p_{R\Vop}\op"'] & \cbicat    
\end{tikzcd},\]
where $p$ is a Grothendieck fibration and $q$ maps $p$-cartesian arrows to $p_\dcat$-cartesian arrows. The functor $p$ forgets the second component, the functor $q$ the third and the diagonal both components. In particular, the square is a strict pullback.

The functor
\[
\colim_{lax}(\dcat\circ p_{R\Vop})\rightarrow \colim_{ps}^\dcat R
\]
acts by localizing $\colim_{lax}(\dcat\circ p_{R\Vop})$ with respect to all of the morphisms that are $p$-cartesian and that are mapped through $q$ to $p_\dcat$-cartesian arrows: that is, it localizes with respect to all morphisms $(y,a,b)$ where both $a$ and $b$ are invertible.
\end{prop}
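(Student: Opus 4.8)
The plan is to split the statement into three tasks: realise the lax colimit as a Grothendieck construction and read off the triples, recognise the square as a strict pullback (which gives the fibration claims), and finally re-express the localization of Proposition~\ref{prop:pseudocolimiti_localizzazioni}(iv) in terms of the arrows $(y,a,b)$. For the first task I would invoke Proposition~\ref{prop:laxcolim_grothconstr}, which realises every conical lax colimit as a Grothendieck construction, to write $\colim_{lax}(\dcat\circ p_{R\Vop})\simeq\gbicat(\dcat\circ p_{R\Vop})$ over the base $\gbicat(R\Vop)\op$, and then unwind the construction. Recall from Remark~\ref{rmk:covGroth.i} that $p_{R\Vop}$ is the fibration $\gbicat(R\Vop)\to\cbicat\op$ of the covariant pseudofunctor $R\Vop$, so $\dcat\circ p_{R\Vop}$ is a pseudofunctor $\gbicat(R\Vop)\to\CAT$ whose fibre over $(X,B)$ is $\dcat(X)$. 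Hence an object of the Grothendieck construction is a pair $((X,B),U)$ with $U\in\dcat(X)$, i.e.\ exactly a triple $(X,U,B)$; and a morphism consists of a base arrow $(X,B)\to(X'',B'')$ of $\gbicat(R\Vop)\op$ — which, unwinding the definitions of $R\Vop$ and of the opposite of a Grothendieck construction, is a pair $(y,b)$ with $y:X\to X''$ in $\cbicat$ and $b:R(y)(B)\to B''$ in $R(X'')$ — together with a fibre component $a:U\to\dcat(y)(U'')$. This is precisely the datum $(y,a,b)$ of the statement, with composition induced by the Grothendieck construction.

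For the square I would note that $\dcat\circ p_{R\Vop}$ is exactly the reindexing of $\dcat$ along $p_{R\Vop}\op:\gbicat(R\Vop)\op\to\cbicat$. Since the Grothendieck construction sends reindexing of a pseudofunctor to the pullback of the associated fibration, the category just computed is the strict pullback $\gbicat(R\Vop)\op\times_\cbicat\gbicat(\dcat)$, with $p$ and $q$ the two projections and the diagonal their common composite to $\cbicat$; comparing objects and morphisms shows the evident comparison functor is an isomorphism, so the square commutes strictly and is a strict pullback. As the pullback of the Grothendieck fibration $p_\dcat$, the projection $p$ is again a Grothendieck fibration, its cartesian arrows being those $(y,a,b)$ for which $q(y,a,b)=(y,a)$ is $p_\dcat$-cartesian, namely those with $a$ invertible; in particular $q$ carries $p$-cartesian arrows to $p_\dcat$-cartesian arrows. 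Dually, $q$ is the pullback of the opfibration $p_{R\Vop}\op$ and is therefore an opfibration, whose opcartesian arrows are exactly those with $b$ invertible.

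Finally, for the localization I would start from Proposition~\ref{prop:pseudocolimiti_localizzazioni}(iv): the comparison $\colim_{lax}(\dcat\circ p_{R\Vop})\to\colim_{ps}^\dcat R$ is the localization at the components of the cocone $2$-cells $K_{(y,b)}$ with $(y,b)$ cartesian in $\gbicat(R\Vop)$, i.e.\ with $b$ invertible. Reading off the colimit cocone of a Grothendieck construction (as described after Proposition~\ref{prop:laxcolim_grothconstr}), the component $K_{(y,b)}(U)$ is the canonical arrow of $\tbicat$ with trivial $\dcat$-component and $R$-component the invertible $b$; thus the localizing class is $\{(y,\mathrm{id},b)\mid b\ \text{invertible}\}$. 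This is contained in $W:=\{(y,a,b)\mid a,b\ \text{invertible}\}$, and conversely any $(y,a,b)\in W$ factors as $(y,\mathrm{id},b)\circ(1,a,1)$, where the vertical factor $(1,a,1)$ has all three components invertible and is therefore an isomorphism of the strict pullback $\tbicat$ (a morphism of a strict pullback is invertible iff both its projections are). Such isomorphisms are automatically inverted in any localization, so inverting $\{(y,\mathrm{id},b)\mid b\ \text{invertible}\}$ inverts exactly the arrows of $W$; since $W$ is precisely the class of arrows that are simultaneously $p$-cartesian ($a$ invertible) and $q$-opcartesian ($b$ invertible), this yields the stated description.

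The most delicate point is the variance bookkeeping of the first two steps: one must track the three opposites packaged into $R\Vop$, into the covariant Grothendieck construction $p_{R\Vop}:\gbicat(R\Vop)\to\cbicat\op$, and into the passage to $\gbicat(R\Vop)\op$, in order to obtain the $R$-component with the correct variance $b:R(y)(A)\to B$ and to identify $p$ (rather than $q$) as the Grothendieck fibration. Once the strict pullback is in place, the fibration statements and the localization reduction are essentially formal; the only genuine subtlety there is the identification of the cocone components $K_{(y,b)}(U)$ and the reduction from them to the full class $W$ by absorbing fibrewise isomorphisms.
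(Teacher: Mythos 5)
Your proof is correct and follows essentially the same route as the paper's: compute $\colim_{lax}(\dcat\circ p_{R\Vop})$ as the Grothendieck construction of $\dcat\circ p_{R\Vop}$ via Proposition \ref{prop:laxcolim_grothconstr}, identify the square as a strict pullback, and read off the localizing class from Proposition \ref{prop:pseudocolimiti_localizzazioni}(iv). The only difference is that you spell out the step the paper elides — that inverting the cocone components $(y,1,b)$ with $b$ invertible already inverts every $(y,a,b)$ with $a,b$ invertible, via the factorization through the isomorphism $(1,a,1)$ — which is a welcome addition of detail rather than a change of method.
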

\begin{proof}
The colimit $\colim_{lax}(\dcat\circ p_{R\Vop})$ can be computed, by Proposition \ref{prop:laxcolim_grothconstr}, as the Grothendieck fibration associated to the pseudofunctor
\[
\gbicat(R\Vop)\xrightarrow{p_{R\Vop}}\cbicat\op\xrightarrow{\dcat}\CAT.
\]
The fibration $p_{R\Vop}:\gbicat(R\Vop)\rightarrow\cbicat\op$ is made of objects $(X,B)$ with $B$ in $R(X)$, and arrows $(y,b):(X,B)\rightarrow (Y,A)$ such that $y:Y\rightarrow X$ and $b:R(y)(A)\rightarrow B$. Thus the fibration $p:\gbicat(\dcat\circ p_{R\Vop})\rightarrow\gbicat(R\Vop)\op$ is made of triples $((X,B),U)$, where $(X,B)$ belongs to $\gbicat(R\Vop)$ and $U\in \dcat(p_{R\Vop}(X,B))=\dcat(X)$, and an arrow
\[
((Y,A),V)\xrightarrow{((y,b),a)}((X,B),U)
\]
is indexed by an arrow $(y,a):(X,B)\rightarrow (Y,A)$ in $\gbicat(R\Vop)$ and another arrow $a:V\rightarrow \dcat(p_{R\Vop}(y,b))(U)=\dcat(y)(U)$ in $\dcat(Y)$: it is obvious that this description of $\colim_{lax}(\dcat\circ p_{R\Vop})$ is equivalent to the one provided in the claim. 

The composite functor
\[
\gbicat(\dcat\circ p_{R\Vop})\xrightarrow{p}\gbicat(R\Vop)\op\xrightarrow{p_{R\Vop}\op} \cbicat
\]
acts by forgetting the second and the third components. Notice that it is not a fibration, as it originates from the composition of the fibration $p$ with the opfibration $p_{R\Vop}\op$. This functor factors through $p_\dcat:\gbicat(\dcat)\rightarrow \cbicat$, and the factor
\[
q:\colim_{lax}(\dcat\circ p_{R\Vop})\rightarrow \gbicat(\dcat)
\]
acts by forgetting the third components in $\colim_{lax}(\dcat\circ p_{R\Vop})$. The check that the square above is a strict pullback of categories is straightforward.

Finally, if we explicit the colimit cocone 
\[
		\begin{tikzcd}[row sep=10ex]
			\dcat(X) \ar[r, "\dcat(y)"] \ar[d, "K_{(X,B)}"', ""{name=A}] & \dcat(Y) \ar[dl, "K_{(Y,A)}"] \\
			\colim_{lax}\dcat p_{R\Vop}& \ar[to=A, from=1-2, Rightarrow, "K_{(y,b)}"']
		\end{tikzcd},
\]
we see that the legs act thus, 
\[
K_{(X,B)}:\left[U\xrightarrow{\vphantom{(}a} U' \right] \mapsto \left[ (X,U,B) \xrightarrow{(1,a,1)} (X,U',B)\right],
\]
while for each $U$ in $\dcat(X)$ the component of $K_{(y,b)}$ at $U$ is the arrow
\[
K_{(y,b)}(U):= \left[ (Y,\dcat(y)(U),A) \xrightarrow{(y,1,b)} (X,U,B)\right].
\]
We have omitted, for the sake of readability, any reference to the canonical isomorphisms of the pseudofunctors $R$ and $\dcat$. 
Finally, we recall that the functor
\[
\colim_{lax}(\dcat\circ p_{R\Vop})\rightarrow \colim_{ps}^\dcat R
\]
localizes with respect to the components of the natural transformations $K_{(y,b)}$ such that $(y,b)$ is cartesian, \ie with respect to all the arrows $(y,1,b)$ with $b$ invertible: but this is the same as localizing with respect to the arrows in the statement.
\end{proof}
\begin{remark}\label{rmk:laxcolim_weighted_is_pb_grfibr}
	The pullback square above, which shows the connection between $\colim_{lax}(\dcat\circ p_{R\Vop})$ and $\gbicat(\dcat)$, is a particular instance of the concept of direct image of a $\cbicat$-indexed category along a functor, which we will analyse in Section \ref{sec:dir_imm_fibrations}: for further details, see Remark \ref{rmk:laxcolim_weighted_is_pb_grfibr2}(ii). 
\end{remark}
Every category $\cbicat$ is endowed in particular with the covariant pseudofunctor $\cbicat/-:\cbicat\rightarrow \CAT$, which maps each object $X$ to the slice over it. The previous result has the consequence that weighted pseudocolimits for $\cbicat/-$ can be interpreted as lax colimits of the weight:
\begin{cor}\label{cor:Grothconstruction}
	Consider $R:\cbicat\rightarrow \Cat/\cbicat$ mapping each $X$ to $\cbicat/X$: then the $\dcat$-weighted colimit of $R$ is equivalent to the lax colimit of $\dcat$.
\end{cor}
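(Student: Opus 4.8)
The plan is to derive the statement from the explicit description of $\colim_{ps}^\dcat R$ as a localization furnished by Proposition \ref{prop:pscolim_localizz_struttura_mista}, specialised to the covariant pseudofunctor $R=\cbicat/-$. First I would record that $R$ is (strictly) functorial, with $R(y)=y\circ-\colon\cbicat/Y\to\cbicat/X$ postcomposition (cf.\ Example \ref{ex:indexedcategories}(i)), and that each fibre $R(X)=\cbicat/X$ has a terminal object, namely $[1_X]$. By Proposition \ref{prop:pscolim_localizz_struttura_mista}, $\colim_{ps}^\dcat R$ is the localization of the category $\mathcal{M}:=\colim_{lax}(\dcat\circ p_{R\Vop})$, whose objects are triples $(X,U,[b\colon B_0\to X])$ with $U$ in $\dcat(X)$, and whose morphisms $(y,a,\beta)$ get inverted precisely when $a$ and $\beta$ are invertible.

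Next I would introduce the comparison functor $\Theta\colon\mathcal{M}\to\gbicat(\dcat)$ sending $(X,U,[b\colon B_0\to X])$ to $(B_0,\dcat(b)(U))$. On a morphism $(y,a,\beta)\colon(Y,V,[a_0\colon A_0\to Y])\to(X,U,[b_0\colon B_0\to X])$ — where the slice condition reads $b_0\beta=y a_0$ — the functor $\Theta$ produces the $\gbicat(\dcat)$-morphism over $\beta\colon A_0\to B_0$ whose fibre component $\dcat(a_0)(V)\to\dcat(\beta)\dcat(b_0)(U)$ is built from $\dcat(a_0)(a)$ together with the canonical isomorphism $\dcat(\beta)\dcat(b_0)\cong\dcat(a_0)\dcat(y)$ coming from $b_0\beta=y a_0$. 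Since the transition morphisms preserve isomorphisms and the canonical comparisons of $\dcat$ are invertible, $\Theta$ sends every inverted morphism to an isomorphism, so it factors through the localization.

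The core of the argument is then to show that $\Theta$ exhibits $\gbicat(\dcat)$ as the localization of $\mathcal{M}$. The mechanism is exactly the presence of terminal objects in the slices: for any object $(X,U,[b\colon B_0\to X])$ the triple $(b,1,1)\colon(B_0,\dcat(b)(U),[1_{B_0}])\to(X,U,[b])$ is an inverted morphism (both its $a$- and $\beta$-components are identities), so in the localization every object becomes isomorphic to one of the \emph{terminal-slice} form $(X,U,[1_X])$. On the full subcategory of such objects $\Theta$ restricts to the evident assignment $(X,U,[1_X])\mapsto(X,\dcat(1_X)(U))\cong(X,U)$ onto $\gbicat(\dcat)$, the $\beta$-component of a morphism between terminal-slice objects being forced, since it is the unique map into a terminal object of the target slice. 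Combining the two facts — that every object is isomorphic to a terminal-slice object and that $\Theta$ is an isomorphism on these — yields $\colim_{ps}^\dcat R\simeq\gbicat(\dcat)$, and the latter is $\colim_{lax}\dcat$ by Proposition \ref{prop:laxcolim_grothconstr}.

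The main obstacle I anticipate is this final identification: a priori, morphisms of the localized category between terminal-slice objects are represented by zig-zags, so some care is needed to see that the restricted $\Theta$ is fully faithful. I expect the cleanest way around this is to verify the equivalence directly on universal properties rather than on the localization presentation. Evaluation at the terminal object $[1_X]$ of $\cbicat/X$, combined with the canonical maps $[y]\to[1_X]$, sends a pseudonatural cocone $\dcat\Rightarrow\Cat(\cbicat/-,K)$ to a cocone under $\gbicat(\dcat)$ with vertex $K$ whose structural $2$-cells point as the legs $i_y$ of Proposition \ref{prop:laxcolim_grothconstr}; one checks this assignment is an equivalence naturally in $K$, whence the bicategorical Yoneda lemma gives $\colim_{ps}^\dcat R\simeq\gbicat(\dcat)$ and reconfirms the localization computation. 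Alternatively, whenever $\dcat$ and the chosen class of vertical arrows meet the hypotheses of Corollary \ref{cor:rightcalculusfractions}, the inverted class admits a right calculus of fractions and every morphism reduces to a single fraction, making the fully-faithfulness of the restricted $\Theta$ a routine verification.
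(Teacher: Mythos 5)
Your proposal follows essentially the same route as the paper: the same localization presentation of $\colim_{ps}^\dcat R$ via Proposition \ref{prop:pscolim_localizz_struttura_mista}, the same comparison functor (your $\Theta$ is the paper's $L$), and the same key observation that the morphisms $(b,1,1):(B_0,\dcat(b)(U),[1_{B_0}])\to(X,U,[b])$ lie in the inverted class because each slice has a terminal object. The one point where you diverge is the final verification, and you correctly flag it: showing that every object becomes isomorphic to a terminal-slice object and that $\Theta$ is an isomorphism on the subcategory of such objects does not by itself identify the localization, because of the zig-zag issue. The paper resolves this not by analysing morphisms of $\mathcal{M}[S^{-1}]$ but by directly verifying the universal property of the localization: given any functor $H:\mathcal{M}\to\abicat$ inverting $S$, the inverted arrows $(X,\dcat(1_X)(U),[1_X])\to(X,U,[1_X])$ force the definition of $\bar H$ on objects, one sets $\bar H(y,a):=H(y,a,y)$ on a morphism $(y,a):(Y,V)\to(X,U)$ of $\gbicat(\dcat)$, and the morphisms $(w,1,1)\in S$ supply the natural isomorphism $\bar H\circ L\cong H$. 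This stays entirely inside the localization framework and is simpler than either of your two fallbacks. Your first fallback (checking the colimit's universal property by evaluating cocones at the terminal objects $[1_X]$) is viable and morally equivalent, but your second fallback does not work in the stated generality: Corollary \ref{cor:rightcalculusfractions} requires $\dcat$ to satisfy the hypotheses of Lemma \ref{lemma:pb_preserving_psft} and to have (weak) equalizers in its fibres, and moreover the class $S$ here consists of arrows $(y,a,b)$ with $y$ arbitrary, not of vertical arrows, so that corollary would not apply to it even under those hypotheses. With the factorization argument in place of the fractions appeal, your proof is correct.
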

\begin{proof}
	For the sake of brevity, in this proof we shall denote by $\kbicat$ the category $\colim_{lax}(\dcat\circ p_{R\Vop})$, and also suppress any mention of $\dcat$'s structural isomorphisms.
    Applying the previous result to $R:=\cbicat/-:\cbicat\rightarrow \CAT$ provides the following description of $\kbicat$: objects are triples $(X,U\in\dcat(X),w:W\rightarrow X)$ with $X$,$W$ and $w$ in $\cbicat$ and $U$ in $\dcat(X)$, and an arrow
    \[
    (Y,V,z:Z\rightarrow Y) \xrightarrow{(y,a,b)}(X,U,w:W\rightarrow X) 
    \]
    is indexed by two arrows $y:Y\rightarrow X$, $b:Z\rightarrow W$ in $\cbicat$ such that $w\circ b=y\circ z$, and a further arrow $a:V\rightarrow \dcat(y)(U)$ in $\dcat(Y)$. We can then define a functor
    \[
    L:\kbicat\rightarrow \gbicat(\dcat)
    \]
    acting as follows:
    \[
    \left[(Y,V,[z])\xrightarrow{(y,a,b)}(X,U,[w])\right]\mapsto\left[(Z,\dcat(z)(V))\xrightarrow{(b,\dcat(z)(a))}(W, \dcat(w)(U)) \right].
    \]
    If we denote by $S$ the class of arrows $(y,a,b)$ in $\kbicat$ with both components $a$ and $b$ invertible, then $L$ inverts all arrows in $S$: if we prove that $L$ satisfies the universal property of the localization with respect to $S$, we can conclude that 
\[
\colim_{lax}\dcat \simeq\gbicat(\dcat)\simeq \kbicat[S\inv]\simeq \colim_{ps}^\dcat R,
\]
by the previous result and by Proposition \ref{prop:laxcolim_grothconstr}. To show this consider any functor $H:\kbicat\rightarrow\abicat$ mapping any arrow in $S$ to an invertible arrow in $\abicat$: we want to define an essentially unique functor
\[\bar{H}:\gbicat(\dcat)\rightarrow\abicat\]
such that $\bar{H}\circ L\cong H$. This condition forces the definition of $\bar{H}$, up to natural isomorphism: indeed, if we consider the invertible arrows 
\[
L(X,U,1_X)=(X,\dcat(1_X)(U))\xrightarrow{(1,1)} (X,U)
\]
of $\gbicat(\dcat)$, they must be mapped via $\bar{H}$ to isomorphisms
\[
\bar{H}\circ L(X,U,1_X)=H(X,U,1_X)\xrightarrow{\bar{H}(1,1)}\bar{H}(X,U).
\]
Therefore we can define $\bar{H}$ as follows: for any $(y,a):(Y,V)\rightarrow (X,U)$ in $\gbicat(\dcat)$, we set its image via $\bar{H}$ as the arrow
\[
H(Y,V, 1_Y)\xrightarrow{H(y,a,y)} H(X,U,1_X).
\]
Finally, we prove that $\bar{H}\circ L\cong H$. For this, notice that every object $(X,U,w:W\rightarrow X)$ in $\kbicat$ admits a canonical morphism
\[
(W, \dcat(w)(U),1_W) \xrightarrow{(w,1,1)} (X,U, w),
\]
which belongs to $S$: its image via $H$ is therefore an isomorphism
\[
H(W, \dcat(w)(U),1_W)\hspace{-0.5pt}=\hspace{-0.5pt}\bar{H}(W,\dcat(w)(U))=\bar{H}\circ L(X,U,w) \xrightarrow{(w,1,1)} H(X,U, w),
\]
and a quick naturality check proves that this is the component in $(X,U,w)$ of a natural isomorphism $\bar{H}\circ L\cong H$. 
\end{proof}
\begin{remarks}
\begin{enumerate}[(i)]
    \item This result can be seen as a consequence of Corollary \ref{cor:colimite_in_cat_su_C}, which shows that $\colim_{ps}^\dcat (\cbicat/-)\simeq \gbicat(\dcat)$ by exploiting the right adjoint of the 2-functor $\gbicat$. We will provide a further point of view on this equivalence in Remark \ref{rmk:Grothconstruction}, by using inverse images of fibrations.
    \item Curiously, the localization $L:\colim_{lax}(\dcat\circ p_{R\Vop})\rightarrow \colim_{ps}^\dcat R\simeq \gbicat(\dcat)$ acts by `restriction' of the diagram, and this happens because every fibre $R(X)=\cbicat/X$ of $R$ has a terminal object $[1_X]$.
    First of all, notice that
        \[
    \gbicat(R\Vop)\xrightarrow{p_{R\Vop}}\cbicat\op
    \]
    is the opposite of the codomain functor $\cod:\Mor(\cbicat)\rightarrow \cbicat$: objects in $\gbicat(R\Vop)$ are arrows $[w:W\rightarrow X]$, and a morphism $(y,b):[w:W\rightarrow X]\rightarrow [z:Z\rightarrow Y]$ is given by two arrows $y:Y\rightarrow X$ and $b:Z\rightarrow W$ such that $w\circ b=y\circ z$. The fact that each fibre of $R$ has a terminal implies that we can define a functor
    \[
    T:\cbicat\rightarrow \gbicat(R\Vop)\op=\Mor(\cbicat)
    \]
    by mapping every object $X$ to $[1_X]$ and every $y:Y\rightarrow X$ to $(y,y):[1_Y]\rightarrow [1_X]$. This functor is (a right adjoint and) a section of $\cod$, \ie $\cod \circ T= \id_\cbicat$: therefore, since $p_{R\Vop}\cong \cod\op$, we can conclude that
    \[
    \colim_{lax}\dcat \simeq \colim_{lax}(\dcat\circ p_{R\Vop}\circ T\op)=\colim_{lax}\left( (\dcat\circ p_{R\Vop})\circ T\op\right).
    \]
    We can really think of $\gbicat(\dcat)$ as obtained by restricting the cocone of $\colim_{lax} (\dcat\circ p_{R\Vop})$ along the functor $T$. As we have formulated it, it is evident that this happens everytime the covariant pseudofunctor $R$ we consider (not just $R=\cbicat/-$) has fibres with terminal objects.
\end{enumerate}
\end{remarks}    

To conclude this section, let us go back to conified pseudocolimits: we already mentioned in Remark \ref{rmk:pseudocolim_non_conificano} that one in general is not allowed to `conify' pseudocolimits, \ie suppose that $\colim^\dcat_{ps}R\simeq \colim_{ps}(R\circ p_\dcat)$ (it may still happen in some cases, such as when the weight is discrete: see the last claim of Corollary \ref{cor:colimiti_pesati_conificati}). One may however wonder how to express the conified pseudocolimit $\colim_{ps}(R\circ p_\dcat)$. Proposition \ref{prop:pseudocolimiti_localizzazioni} showed that a weighted pseudocolimit $\colim_{ps}^\dcat R$ can be recovered by localizing suitable conical op-/lax colimits stemming from the data $R$ and $\dcat$: for instance, one may localize $\colim_{oplax} (R\circ p_\dcat)$ with respect to the components of all the natural transformations $H_{(y,a)}$ of its colimit cocone such that $(y,a)$ is cartesian in $\gbicat(\dcat)$. If instead we localize $\colim_{oplax} (R\circ p_\dcat)$ with respect to the components of \emph{all} the natural transformations $H_{(y,a)}$, we obtain $\colim_{ps}(R\circ p_\dcat)$ instead.

Notice that the localization $\colim_{oplax}(R\circ p_\dcat)\twoheadrightarrow\colim_{ps}(R\circ p_\dcat)$ essentially crushes all the information coming from the fibres of $\dcat$: this allows us to express $\colim_{ps}(R\circ p_\dcat)$ in a further way, which combines commutation of weights and colimits with the fibrewise groupoidal localization of $\dcat$, as the next result shows.
\begin{prop}\label{prop:pseudocolim_conico_conncmpts}
Consider two pseudofunctors $\dcat:\cbicat\op\rightarrow\Cat$ and $R:\cbicat\rightarrow \Cat$. Denote by $\overline{\dcat}:\cbicat\op\rightarrow\Cat$ the pseudofunctor mapping each $X$ in $\cbicat$ to the groupoid obtained by inverting all arrows in $\dcat(X)$, with transition morphisms obtained by restriction of those of $\dcat$. Consider the colimit cocone
\[
		\begin{tikzcd}[row sep=10ex]
			\overline{\dcat}(X) \ar[r, "\overline{\dcat}(y)"] \ar[d, "\overline{K}_{(X,B)}"', ""{name=A}] & \overline{\dcat}(Y) \ar[dl, "\overline{K}_{(Y,A)}"] \\
			{\colim_{lax}(\overline{\dcat}\circ p_{R\Vop})}& \ar[to=A, from=1-2, Rightarrow, "\overline{K}_{(y,b)}"']
		\end{tikzcd}
		\]
where $(y,b):(X,U)\rightarrow (Y,A)$ is an arrow of $\gbicat(R\Vop)$: then the pseudocolimit $\colim_{ps}(R\circ p_\dcat)$ can be computed as the localization of $\colim_{lax}(\overline{\dcat}\circ p_{R\Vop})$ with respect to the components of all the natural transformations $K_{(y,b)}$ such that $(y,b)$ is cartesian (\ie $b$ is invertible).
\end{prop}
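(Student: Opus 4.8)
The plan is to reduce the statement to Proposition \ref{prop:pseudocolimiti_localizzazioni}(iv), applied not to the pair $(\dcat,R)$ but to the pair $(\overline{\dcat},R)$, and then to identify the weighted pseudocolimit that results with the conical one $\colim_{ps}(R\circ p_\dcat)$. Concretely, since $\overline{\dcat}:\cbicat\op\rightarrow\Cat$ is again a pseudofunctor, Proposition \ref{prop:pseudocolimiti_localizzazioni}(iv) may be read with $\overline{\dcat}$ in the role of the weight: it then asserts precisely that the localization of $\colim_{lax}(\overline{\dcat}\circ p_{R\Vop})$ with respect to the components of the natural transformations $\overline{K}_{(y,b)}$ with $(y,b)$ cartesian in $\gbicat(R\Vop)$ (that is, $b$ invertible) computes $\colim_{ps}^{\overline{\dcat}} R$. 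Thus the whole content to be proved is the equivalence
\[
\colim_{ps}^{\overline{\dcat}} R \simeq \colim_{ps}(R\circ p_\dcat),
\]
to which I would devote the main effort.

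To establish this I would compare the two universal properties, exhibiting for every test category $\abicat$ a natural equivalence of hom-categories
\[
[\gbicat(\dcat)\op,\CAT]_{ps}(\Delta\onecat, \Cat(R\circ p_\dcat(-), \abicat)) \simeq [\cbicat\op,\CAT]_{ps}(\overline{\dcat}, \Cat(R(-), \abicat)),
\]
the left-hand side being the category of conical pseudococones under $R\circ p_\dcat$ and the right-hand side that of $\overline{\dcat}$-weighted pseudococones under $R$. The correspondence is the one underlying the conification procedure of Proposition \ref{prop:eqv_categorie_laxoplaxps}: a conical pseudococone assigns to each object $(X,U)$ of $\gbicat(\dcat)$ a $1$-cell $\ell_{(X,U)}:R(X)\rightarrow \abicat$, and factoring an arbitrary arrow $(y,a):(Y,V)\rightarrow (X,U)$ of $\gbicat(\dcat)$ as a vertical arrow followed by a cartesian one splits the cocone $2$-cells into a fibrewise part, indexed by the arrows of each $\dcat(X)$, and a horizontal part, indexed by the arrows of $\cbicat$. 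The horizontal part is exactly the datum of pseudonaturality in $\cbicat$, whereas the fibrewise part organises the assignment $U\mapsto \ell_{(X,U)}$ into a functor $\dcat(X)\rightarrow \Cat(R(X),\abicat)$.

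The decisive observation, anticipated in Remark \ref{rmk:pseudocolim_non_conificano}, is that in the \emph{pseudo} setting all cocone $2$-cells are invertible, so this functor sends every arrow of $\dcat(X)$ to an isomorphism and therefore factors uniquely through the fibrewise groupoidification $\overline{\dcat}(X)$. This is exactly the step that fails for a genuine $\dcat$-weighted pseudococone (which is why pseudocolimits do not conify in general), but which becomes available once the fibres are replaced by their localizations. Conversely, an $\overline{\dcat}$-weighted pseudococone precomposed with the localization functors $\dcat(X)\rightarrow\overline{\dcat}(X)$ recovers a conical pseudococone, and the two assignments are mutually inverse up to coherent isomorphism; the compatibility with modifications is verified in the same fashion, yielding the equivalence of hom-categories and hence, being natural in $\abicat$, the desired equivalence of the two pseudocolimits.

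The main obstacle I anticipate lies entirely in the bookkeeping of this equivalence: one must check that the splitting of the $2$-cells along the (vertical, cartesian) factorisation is compatible with the structural isomorphisms $\phi^\dcat$ of $\dcat$ and with the coherence axioms for pseudonatural transformations and modifications, so that the correspondence is genuinely functorial and natural. This is routine but lengthy, being entirely parallel to the verifications already carried out in Proposition \ref{prop:eqv_categorie_laxoplaxps}; no idea beyond the fibrewise groupoidification of the weight is needed, and together with Proposition \ref{prop:pseudocolimiti_localizzazioni}(iv) it closes the argument.
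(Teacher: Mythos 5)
Your proof is correct, but it takes a genuinely different route from the paper's. The paper never introduces the weighted pseudocolimit $\colim_{ps}^{\overline{\dcat}}R$: it works entirely inside the mixed lax colimit $\colim_{lax}(\dcat\circ p_{R\Vop})$, first invoking Lemma \ref{lemma:pointwiselocalization} to recognise $\colim_{lax}(\overline{\dcat}\circ p_{R\Vop})$ as its localization at the vertical arrows, and then observing --- via the explicit formulas, recorded in the proof of Proposition \ref{prop:pseudocolimiti_localizzazioni}, expressing the cocone of $\colim_{lax}(\dcat\circ p_{R\Vop})$ in terms of that of $\colim_{oplax}(R\circ p_{\dcat})$ --- that the localization $\colim_{oplax}(R\circ p_\dcat)\rightarrow\colim_{ps}(R\circ p_\dcat)$ at the components of \emph{all} the natural transformations $H_{(y,a)}$ corresponds, on the lax side, to inverting exactly the vertical arrows together with the components of the $\overline{K}_{(y,b)}$ with $b$ invertible; staging this localization in two steps gives the statement. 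You instead reduce to Proposition \ref{prop:pseudocolimiti_localizzazioni}(iv) applied to $\overline{\dcat}$ and then prove the equivalence $\colim_{ps}^{\overline{\dcat}}R\simeq\colim_{ps}(R\circ p_\dcat)$ directly, by restricting the conification isomorphism of Proposition \ref{prop:eqv_categorie_laxoplaxps} to the cocones all of whose $2$-cells are invertible and using that the fibrewise components then factor through the groupoidifications $\overline{\dcat}(X)$. Both arguments are sound and, given the surrounding results, interderivable. The paper's version is more economical, reusing computations already on the page and requiring no new coherence checks; yours isolates a statement of independent interest --- that conical pseudocolimits over a Grothendieck construction \emph{do} conify once the weight is groupoidified fibrewise, which sharpens Remark \ref{rmk:pseudocolim_non_conificano} --- at the cost of the verification (routine, since the localization functors $\dcat(X)\rightarrow\overline{\dcat}(X)$ are bijective on objects and the pseudonaturality squares descend along them) that the factorization is compatible with the structural isomorphisms and the modification data.
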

\begin{proof}
The passage from $\dcat$ to $\overline{\dcat}$ corresponds to a pointwise localization with respect to all the arrows of the fibres: therefore, by Lemma \ref{lemma:pointwiselocalization} the functor $\gbicat(\dcat)\simeq\colim_{lax}(\dcat\circ p_{R\Vop})\rightarrow \colim_{lax}(\overline{\dcat}\circ p_{R\Vop})$ is a localization with respect to all the vertical arrows in $\gbicat(\dcat\circ p_{R\Vop})$. Now it is sufficient to recall how we performed the passage of colimit cocones
\[
		\begin{tikzcd}[row sep=10ex]
			R(X) \ar[d, "H_{(X,U)}"', ""{name=A}] & R(Y) \ar[l, "R(y)"'] \ar[dl, "H_{(Y,V)}"] \ar[to=A, Rightarrow, "H_{(y,a)}"']\\
			\colim_{oplax}( R\circ p_\dcat)&
		\end{tikzcd}\rightsquigarrow
		\begin{tikzcd}[row sep=10ex]
			\dcat(X) \ar[r, "\dcat(y)"] \ar[d, "K_{(X,B)}"', ""{name=A}] & \dcat(Y) \ar[dl, "K_{(Y,A)}"] \\
			\colim_{lax}(\dcat\circ p_{R\Vop})& \ar[to=A, from=1-2, Rightarrow, "K_{(y,b)}"']
		\end{tikzcd},
\]
		where $(y,a):(Y,V)\rightarrow (X,U)$ in $\gbicat(\dcat)$ and $(y,b):(X,B)\rightarrow (Y,A)$ in $\gbicat(R\Vop)$. We defined each functor $K_{(X,A)}$ by mapping an arrow $a:U\rightarrow U'$ in $\dcat(X)$ to the arrow $H_{(1,a)}(A):H_{(X,U)}(A)\rightarrow H_{(X,U')}(A)$. On the other hand, for each $U$ in $\dcat(X)$ we defined the $K_{(y,b)}(U)$ as the composite
		\[
		H_{(Y,\dcat(y)(U))}(A)\xrightarrow{H_{(y,1)}(A)}H_{(X,U)}(R(y)(A))\xrightarrow{H_{(X,U)}(b)}H_{(X,U)}(B). 
		\]
		Therefore, if the functor $\colim_{oplax}(R\circ p_\dcat)\rightarrow \colim_{ps}(R\circ p_\dcat)$ inverts the components of all the natural transformations $H_{(y,a)}$, then the composite functor
		\[\colim_{lax}(\dcat\circ p_{R\Vop})\isorightarrow\colim_{oplax}(R\circ p_\dcat)\rightarrow \colim_{ps}(R\circ p_\dcat)
		\]
    must invert not only the components of each natural transformation $K_{(y,b)}$ with $b$ invertible, but also all the morphisms in the image of each functor of the form $K_{(X,B)}$, which are precisely the vertical arrows of $\colim_{lax}(\dcat\circ p_{R\Vop})$. Therefore, by the universal property of localizations we obtain a functor $\colim_{lax}(\overline{\dcat}\circ p_{R\Vop})\rightarrow \colim_{ps}({\dcat}\circ p_{R\Vop})$, which localizes with respect to the components of the natural transformations $\overline{K}_(y,b)$ indexed by cartesian arrows of $\gbicat(R\Vop)$.
\end{proof}

\section{Comorphisms of sites}
\label{section:comorphisms}
We conclude the chapter by recalling some basic aspects in the theory of comorphisms of sites. We have already anticipated that fibrations (and their morphisms) on a site $(\cbicat,J)$ can be naturally seen as continuous comorphisms of sites by exploiting what we will call \emph{Giraud topologies}: this is the topic of this and the following section. This point of view, first introduced by Jean Giraud in \cite{giraud.classifying} but not explored thoroughly in the literature, provides powerful tools to the study of stacks from a geometrical and logical point of view. In particular, one can associate to every fibration over $\cbicat$ its classifying topos, which will have a prominent role in defining the fundamental adjunction of Chapter \ref{chap:fundadj}. 

In order to define comorphisms of sites, we need first to recall that any functor between small categories induces an essential geometric morphism between presheaf toposes:
\begin{prop}[{\cite[Example A4.1.4]{elephant}}]\label{prop:lan_ran}\index{Kan extension}\index{$\lan_{p\op}$}\index{$\ran_{p\op}$}
	A functor $p:\dbicat\rightarrow\cbicat$ between small categories induces an adjoint triple $\lan_{p\op}\dashv p^* \dashv \ran_{p\op}:[\dbicat\op,\Set]\rightarrow[\cbicat\op,\Set]$, where $p^*:=(-\circ p\op)$ and its left and right adjoints are the \emph{left and right Kan extension functors along $p\op$}.
\end{prop}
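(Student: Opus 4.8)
The plan is to construct the two adjoints explicitly as pointwise Kan extensions along $p\op:\dbicat\op\to\cbicat\op$ and then to read off the required universal properties; the whole argument rests only on the smallness of $\cbicat$ and $\dbicat$ together with the completeness and cocompleteness of $\Set$. First I would observe that $p^*=(-\circ p\op)$ is simply precomposition with $p\op$, hence a bona fide functor $[\cbicat\op,\Set]\to[\dbicat\op,\Set]$, and that it preserves all small limits and colimits, since these are computed pointwise in presheaf categories and precomposition merely reindexes the values.

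For the left adjoint, I would set, for a presheaf $F:\dbicat\op\to\Set$ and an object $X$ of $\cbicat$,
\[
(\lan_{p\op}F)(X):=\colim\left(\comma{p\op}{X}\longrightarrow \dbicat\op\xrightarrow{\,F\,}\Set\right),
\]
the colimit being taken over the comma category whose objects are pairs $(d, f\colon X\to p(d))$; since $\dbicat$ is small this diagram is small and the colimit exists in the cocomplete category $\Set$, and functoriality in $X$ is routine. Dually, using the completeness of $\Set$, I would define $\ran_{p\op}F$ by the analogous pointwise limit over the opposite comma category. The adjunctions $\lan_{p\op}\dashv p^*\dashv\ran_{p\op}$ are then exactly the defining property of pointwise Kan extensions: I would verify the natural bijection
\[
[\cbicat\op,\Set](\lan_{p\op}F,G)\cong[\dbicat\op,\Set](F,p^*G)
\]
objectwise from the colimit universal property together with the Yoneda lemma, the unit $F\Rightarrow p^*\lan_{p\op}F$ being given by the coprojections into the colimit, and obtain the other adjunction dually. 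Alternatively, one may argue conceptually: $\lan_{p\op}$ is the cocontinuous extension of $\yo_\cbicat\circ p$ along the free cocompletion $\yo_\dbicat$, whose right adjoint sends $G$ to $d\mapsto [\cbicat\op,\Set](\yo_\cbicat p(d),G)\cong G(p(d))=(p^*G)(d)$, so that $p^*$ is that right adjoint; being cocontinuous between presheaf (hence locally presentable) categories, $p^*$ in turn admits a right adjoint $\ran_{p\op}$ by the adjoint functor theorem.

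There is no deep obstacle here — the statement is the standard adjoint triple attached to a functor of presheaf toposes. The only point demanding care is the variance bookkeeping: because $p^*$ is precomposition with $p\op$ rather than with $p$, the comma categories governing the two Kan extensions must be formed over $p\op$, and one has to check that the colimit (resp.\ limit) formulas land in $[\cbicat\op,\Set]$ with the correct functoriality. Once the op's are tracked consistently, the cocompleteness and completeness of $\Set$ supply everything needed.
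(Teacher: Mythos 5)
Your proposal is correct. Note that the paper offers no proof of its own here: the statement is quoted as a standard result from \cite[Example A4.1.4]{elephant}, so there is no argument to compare against. What you write is precisely the standard argument one would supply — the pointwise colimit formula $(\lan_{p\op}F)(X)=\colim\bigl((p\op\downarrow X)\to\dbicat\op\xrightarrow{F}\Set\bigr)$ (with the comma category correctly formed over $p\op$, so that its objects are arrows $X\to p(d)$ in $\cbicat$), its dual for $\ran_{p\op}$, and the verification of the adjunctions from the universal properties; your alternative conceptual route via the cocontinuous extension of $\yo_\cbicat\circ p$ along $\yo_\dbicat$ and the adjoint functor theorem is equally valid and is in fact the perspective the paper implicitly relies on later when it uses the identity $\lan_{p\op}\yo_\dbicat\simeq\yo_\cbicat p$.
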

In particular, this provides a functor from $\Cat$ to $\EssTopos$ mapping a small category $\cbicat$ to $[\cbicat\op,\Set]$ and a functor $p:\dbicat\rightarrow\cbicat$ to the essential geometric morphism $\ran_{p\op}$. If we wish to extend this to natural transformations we have to take into account that the involution $(-)\op$ reverses their direction, \ie if $\alpha:p\Rightarrow q:\dbicat\rightarrow \cbicat$ then $\alpha\op:q\op\Rightarrow p\op$. This means that there is an induced natural trasformation $\alpha^*:q^*\Rightarrow p^*:[\cbicat\op,\Set]\rightarrow[\dbicat\op,\Set]$ acting as precomposition with $\alpha\op$, and thus the functor $\Cat\rightarrow\EssTopos$ extends to a 2-functor $\Cat\rightarrow \EssTopos\co$.

One can now wonder what properties the functor $p$ must satisfy for the geometric morphism $p^*\dashv\ran_{p\op}$ to restrict toposes of sheaves. The answer lies precisely in the notion of comorphism of sites:
\begin{defn}[{\cite[Proposition C2.3.18]{elephant}}]\label{def:comorfismo}
	Consider two sites $(\cbicat, J)$ and $(\dbicat,K)$: a functor $p:\dbicat\rightarrow\cbicat$ is said to be a \emph{comorphism of sites}\index{comorphism} if one of the following equivalent conditions holds:
	\begin{enumerate}[(i)]
		\item The functor $p$ satisfies the \emph{covering-lifting property}, \ie for every $D$ in $\dbicat$ and every $J$-covering sieve $S$ over $p(D)$ there is a $K$-covering sieve $R$ over $D$ such that $p(R)\subseteq S$.
		\item The functor $\ran_{p\op}:[\dbicat\op,\Set]\rightarrow[\cbicat\op,\Set]$ maps $K$-sheaves to $J$-sheaves, \ie there exists a geometric morphism $C_p:\Sh(\dbicat,K)\rightarrow \Sh(\cbicat,J)$ satisfying $\iota_J{C_p}_*\simeq\ran_{p\op}\iota_K$.
		\item $p^*:[\cbicat\op,\Set]\rightarrow[\dbicat\op,\Set]$ maps $J$-dense monomorphisms to $K$-dense monomorphisms (we recall that a monomorphism $m$ is $J$-dense if $\sheafify_J(m)$ is an isomorphism).
	\end{enumerate}
\end{defn}
If we call $\Cosite$\index{$\Com$} the 2-category of small-generated sites, comorphisms and natural trasformations, the association $(\cbicat,J)\mapsto \Sh(\cbicat,J)$ and $p\mapsto C_p$ describes a functor $\Cosite\rightarrow\Topos$. This functor also extends to natural trasformations reversing their direction, and thus we have in fact a 2-functor 

\[C_{(-)}:\Cosite\rightarrow\Topos\co.\]\index{$C_{(-)}$}

A fact which will prove to be essential in the following is that every functor to a site can be made into a comorphism in a minimal way:
\begin{prop}[{\cite[Proposition C2.3.19(i)]{elephant}}]\label{prop:top_minima_comorfismo} Consider a site $(\cbicat,J)$ and a functor $A:\dbicat\rightarrow \cbicat$: there exists a smallest topology $M^A_J$\index{$M^A_J$} on $\dbicat$ such that $A:(\dbicat,M^A_J)\rightarrow (\cbicat,J)$ is a comorphism of sites. In particular, it is generated by the pullback-closed family (or \emph{coverage}) of sieves of the form $S_D:=\{f:\dom(f)\rightarrow D\ |\ p(f)\in S\}$ for any $D$ in $\dbicat$ and $S\in J(p(D))$.
\end{prop}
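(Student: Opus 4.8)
The plan is to construct $M^A_J$ directly as the Grothendieck topology generated by the family $\mathcal{B}$ of sieves $S_D := A^{-1}(S) = \{f : E \to D \mid A(f) \in S\}$, indexed by objects $D$ of $\dbicat$ and $J$-covering sieves $S \in J(A(D))$, and then to check the two halves of the statement against this description: that $A$ becomes a comorphism, and that the topology so obtained sits inside every topology making $A$ a comorphism.

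First I would verify that $\mathcal{B}$ is genuinely a coverage, i.e. stable under pullback. Given $S \in J(A(D))$ and $g : D' \to D$ in $\dbicat$, unwinding the definitions gives
\[
g^*(S_D) = \{h : E \to D' \mid A(g)\,A(h) \in S\} = A^{-1}\big((A(g))^* S\big),
\]
and $(A(g))^* S$ again lies in $J(A(D'))$ by the stability axiom of $J$; hence $g^*(S_D)$ is once more a generator $S_{D'}$. Since the maximal sieve on $A(D)$ is $J$-covering and pulls back under $A^{-1}$ to the maximal sieve on $D$, the family $\mathcal{B}$ contains all maximal sieves and is pullback-closed, so it generates a topology, which I name $M^A_J$. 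The covering-lifting property (condition (i) of Definition \ref{def:comorfismo}) for $A : (\dbicat, M^A_J) \to (\cbicat, J)$ is then immediate: for any $S \in J(A(D))$ the sieve $S_D$ is $M^A_J$-covering by construction and satisfies $A(S_D) \subseteq S$, so it is the desired lift.

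The remaining and principal point is minimality, which is where the topology axioms do the work. Let $K$ be any topology on $\dbicat$ for which $A$ is a comorphism; since $M^A_J$ is by definition the least topology containing $\mathcal{B}$, it suffices to show $\mathcal{B} \subseteq K$. Fixing a generator $S_D = A^{-1}(S)$, the covering-lifting property for $K$ supplies a $K$-covering sieve $R$ over $D$ with $A(R) \subseteq S$; but $A(R) \subseteq S$ says exactly that every arrow of $R$ lands in $S$ under $A$, i.e. $R \subseteq S_D$. Invoking the standard saturation property of Grothendieck topologies — a sieve containing a covering sieve is itself covering, which follows from the stability and transitivity axioms — we conclude $S_D \in K(D)$. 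Thus $\mathcal{B} \subseteq K$ and hence $M^A_J \subseteq K$. I expect no real obstacle beyond keeping the pullback identity and the saturation step honest; the content is entirely in the computation $g^*(A^{-1}(S)) = A^{-1}((A(g))^* S)$ and the observation that lifting plus saturation forces every $S_D$ into any competing topology.
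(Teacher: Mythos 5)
The paper does not include a proof of this proposition --- it is quoted directly from \cite[Proposition C2.3.19(i)]{elephant} --- so there is no in-text argument to compare against; judged on its own, your proof is correct and is essentially the standard one. The identity $g^*(A^{-1}(S)) = A^{-1}((A(g))^*S)$ correctly gives pullback-closure of the generating family, the covering-lifting property for $M^A_J$ is immediate since $A(S_D)\subseteq S$ by construction, and the minimality step (a lift $R$ in any competing topology $K$ satisfies $R\subseteq S_D$, so $S_D$ is $K$-covering by saturation) is exactly what is required. The only point worth making explicit is that the smallest Grothendieck topology containing a given family of sieves exists because topologies are stable under intersection, which you use implicitly when naming $M^A_J$.
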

The topologies of the form $M^A_J$ satisfy another fundamental property:
\begin{lemma}[{\cite[Corollary 3.6]{denseness}}]
	\label{lemma:comorfismi_via_tplgir} Take a comorphism  $p:(\dbicat,K)\rightarrow (\cbicat,J)$ and two functors $A:\dbicat\rightarrow \ebicat$, $q:\ebicat\rightarrow\cbicat$: if $qA\cong p$ then $A$ is a comorphism of sites $(\dbicat,K)\rightarrow (\ebicat, M^q_J)$.
\end{lemma}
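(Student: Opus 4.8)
The plan is to work with characterization~(i) of Definition~\ref{def:comorfismo}, the covering-lifting property: I must show that for every object $D$ of $\dbicat$ and every $M^q_J$-covering sieve $T$ over $A(D)$ there is a $K$-covering sieve $R$ over $D$ with $A(R)\subseteq T$. The central simplification is that, by Proposition~\ref{prop:top_minima_comorfismo}, the topology $M^q_J$ is generated by the coverage consisting of the sieves $(S)_E:=\{g:\dom(g)\to E\mid q(g)\in S\}$ for $E$ in $\ebicat$ and $S\in J(q(E))$. So I would first reduce the covering-lifting condition to these generating sieves, and then verify it on them directly.

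For the reduction I would introduce the class $\Lambda$ of \emph{liftable} sieves: a sieve $T$ on an object $E$ of $\ebicat$ is liftable if for every $D$ in $\dbicat$ and every morphism $e:A(D)\to E$ there is $R\in K(D)$ with $A(R)\subseteq e^{*}T$ (equivalently $e\circ A(r)\in T$ for all $r\in R$). I would then check that $\Lambda$ satisfies the axioms of a Grothendieck topology: the maximal sieve is liftable (lift by the maximal $K$-cover), stability under pullback is immediate from $(ge')^{*}=e'^{*}g^{*}$, and transitivity is obtained by composing covers. Granting these, $\Lambda$ is a Grothendieck topology; since it contains the generating coverage (next paragraph) and $M^q_J$ is by Proposition~\ref{prop:top_minima_comorfismo} the smallest topology doing so, we get $M^q_J\subseteq\Lambda$. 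Hence every $M^q_J$-cover $T$ over $A(D)$ is liftable, and taking $e=\mathrm{id}_{A(D)}$ yields exactly the covering-lifting property for $A$.

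It then remains to place the generators $(S)_E$ in $\Lambda$. Fixing $D$ and $e:A(D)\to E$, a direct computation gives $e^{*}(S)_E=(q(e)^{*}S)_{A(D)}$, where $q(e)^{*}S\in J(q(A(D)))$ by stability of $J$, and $q(A(D))=qA(D)\cong p(D)$ through the given isomorphism $\iota:qA\cong p$. Transporting $q(e)^{*}S$ along $\iota_D$ to a $J$-cover of $p(D)$ and applying the covering-lifting property of the comorphism $p$, I obtain $R\in K(D)$ with $p(R)$ contained in that cover; naturality of $\iota$ together with closure of sieves under precomposition then upgrades this to $q(A(r))\in q(e)^{*}S$, i.e.\ $A(r)\in e^{*}(S)_E$, for every $r\in R$. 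Thus $A(R)\subseteq e^{*}(S)_E$ and $(S)_E\in\Lambda$.

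I expect the only genuine work to be the verification of the transitivity axiom for $\Lambda$, where one must assemble the lift $R$ of $T$ from a lift $R'$ of an auxiliary cover $T'$ together with lifts of the pullbacks $f^{*}T$ along the members $f$ of $T'$; this requires invoking the local-character axiom of $K$ on $\dbicat$ and some careful bookkeeping of the composites $r'\circ s$ in the generated sieve. The remaining steps are routine once the isomorphism $qA\cong p$ is used to translate $J$-covers over $q(A(D))$ into $J$-covers over $p(D)$.
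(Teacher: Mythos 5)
Your proof is correct. Note that the paper does not actually prove this lemma itself --- it is imported verbatim from \cite[Corollary 3.6]{denseness} --- so there is no internal argument to compare against; judged on its own, your argument is a complete and standard one. The key reduction (showing that the class $\Lambda$ of ``liftable'' sieves is a Grothendieck topology, so that by the minimality clause of Proposition \ref{prop:top_minima_comorfismo} it contains $M^q_J$ as soon as it contains the generating coverage) is sound: stability follows from $(ge')^{*}=e'^{*}g^{*}$ as you say, and the transitivity step you flag as the only real work does go through --- given a lift $R'\in K(D)$ of $T'$ and, for each $r'\in R'$, a lift $R_{r'}\in K(\dom r')$ of $(e\circ A(r'))^{*}T$, the sieve generated by the composites $r'\circ s$ is $K$-covering by the local character axiom (its pullback along each $r'$ contains $R_{r'}$) and is sent by $A$ into $e^{*}T$. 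The verification on generators is also right: $e^{*}(S_E)=(q(e)^{*}S)_{A(D)}$, and the naturality square $\iota_D\circ qA(r)=p(r)\circ\iota_{D'}$ together with closure of sieves under precomposition by the isomorphism $\iota_{D'}$ is exactly what converts the covering-lifting property of $p$ into the required containment $A(R)\subseteq e^{*}(S_E)$. The only cosmetic remark is that the liftability condition is vacuous for objects $E$ not receiving any arrow from the image of $A$, which is harmless.
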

Let us denote by $\Com^s$ the category of small sites, with their comorphisms: then the minimal topologies of the form $M^A_J$ can be interpreted as providing a left adjoint, as follows. Denote by
\[\Giraud:\Cat/\cbicat\rightarrow \Cosite^s/(\cbicat,J)\]\index{$\Giraud$}
the 2-functor mapping a 0-cell $[q:\ebicat\rightarrow \cbicat]$ to $q:(\ebicat, M^q_J)\rightarrow (\cbicat,J)$. By Lemma \ref{lemma:comorfismi_via_tplgir}, this is a well-defined 2-functor, since every 1-cell in $\Cat/\cbicat$ is sent to a comorphism of sites. In fact, for \emph{any} comorphism $p:(\dbicat,K)\rightarrow (\cbicat,J)$, a 1-cell $(F,\phi):[p:\dbicat\rightarrow \cbicat]\rightarrow [q:\ebicat\rightarrow\cbicat]$ in $\Cat/\cbicat$ corresponds to a 1-cell $(F,\phi):[p:(\dbicat,K)\rightarrow (\cbicat,J)]\rightarrow [q:(\ebicat,M^q_J)\rightarrow (\cbicat,J)]$ of $\Cosite^s/(\cbicat,J)$. This proves the following:
\begin{cor}\label{cor:girtpl_aggiunto_sx}
	Consider a site $(\cbicat,J)$: then there is a 2-adjunction (see Definition \ref{def:2-adjoint})
	\[
	\begin{tikzcd}
		{\Cosite^s/(\cbicat,J)} \ar[r, bend left, "\For", start anchor={north east}, end anchor={north west}] \ar[r, phantom, "\vdash"{rotate=90}] & {\Cat/\cbicat} \ar[l, bend left, "\Giraud", start anchor={south west}, end anchor={south east}]
	\end{tikzcd}
	\]
	where $\For$ is the usual forgetful functor and $\Giraud$ is defined on 0-cells by mapping $[q:\ebicat\rightarrow \cbicat]$ to $[p:(\ebicat, M^q_J)\rightarrow (\cbicat,J)]$. The 2-functor $\Giraud$ is also locally fully faithful, \ie every functor $\Cat/\cbicat ([p], [q])\rightarrow \Cosite^s/(\cbicat,J)(\Giraud([p]), \Giraud([q]))$ is full and faithful.
\end{cor}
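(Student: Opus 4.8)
The plan is to realise the claimed $2$-adjunction $\For\dashv\Giraud$ as a strict isomorphism of hom-categories
\[
\Cosite^s/(\cbicat,J)([p],\Giraud[q])\cong\Cat/\cbicat(\For[p],[q]),
\]
$2$-natural in the object $[p\colon(\dbicat,K)\rightarrow(\cbicat,J)]$ of $\Cosite^s/(\cbicat,J)$ and in the object $[q\colon\ebicat\rightarrow\cbicat]$ of $\Cat/\cbicat$; by Definition \ref{def:2-adjoint} this is exactly what is required. The comparison functor is the evident one forgetting the comorphism structure: a $1$-cell $[p]\rightarrow\Giraud[q]$ is a pair $(F,\phi)$ with $F\colon(\dbicat,K)\rightarrow(\ebicat,M^q_J)$ a comorphism of sites and $\phi\colon qF\Isorightarrow p$, and we read off the same pair $(F,\phi)$ as a $1$-cell $\For[p]\rightarrow[q]$ of $\Cat/\cbicat$.

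First I would verify bijectivity on $1$-cells. Injectivity is immediate, since the underlying datum $(F,\phi)$ is untouched. Surjectivity is precisely the content of Lemma \ref{lemma:comorfismi_via_tplgir}: starting from an arbitrary $1$-cell $(F,\phi)\colon\For[p]\rightarrow[q]$ of $\Cat/\cbicat$, the isomorphism $\phi\colon qF\Isorightarrow p$ together with the hypothesis that $p\colon(\dbicat,K)\rightarrow(\cbicat,J)$ is a comorphism forces $F$ to be a comorphism $(\dbicat,K)\rightarrow(\ebicat,M^q_J)$, so $(F,\phi)$ lifts (uniquely, there being no choice involved) to a $1$-cell of the site slice. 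Next I would check bijectivity on $2$-cells: in both slices a $2$-cell from $(F,\phi)$ to $(G,\gamma)$ is a natural transformation $\omega\colon F\Rightarrow G$ subject to the single slice-compatibility condition of Definition \ref{def:slice}, and being a comorphism is a property of the underlying functor which places no constraint on $\omega$. Thus the $2$-cells agree on the nose and the comparison functor is full and faithful, hence an isomorphism of categories.

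The $2$-naturality in $[p]$ and $[q]$ is then a routine unwinding of the action of $\For$ and $\Giraud$ on $1$-cells, using that composites of comorphisms are comorphisms. It is illuminating to record the resulting unit and counit: since $\For\Giraud[q]=[q]$ strictly, the counit $\For\Giraud\Rightarrow\Id_{\Cat/\cbicat}$ is the identity, while the unit at $[p]$ is the $1$-cell $(\Id_\dbicat,\id_p)\colon[p]\rightarrow\Giraud\For[p]=[p\colon(\dbicat,M^p_J)\rightarrow(\cbicat,J)]$. Here one must check that $\Id_\dbicat\colon(\dbicat,K)\rightarrow(\dbicat,M^p_J)$ really is a comorphism; this holds because $M^p_J\subseteq K$ by minimality of $M^p_J$ (Proposition \ref{prop:top_minima_comorfismo}), so every $M^p_J$-covering sieve is already $K$-covering and the covering-lifting property of Definition \ref{def:comorfismo}(i) is satisfied by taking $R=S$.

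Finally, local full faithfulness of $\Giraud$ is obtained by the same bookkeeping: for $[p],[q]$ in $\Cat/\cbicat$ the functor $\Cat/\cbicat([p],[q])\rightarrow\Cosite^s/(\cbicat,J)(\Giraud[p],\Giraud[q])$ sends each $1$-cell $(F,\phi)$ to itself and acts as the identity on $2$-cells, so it is full and faithful by inspection. I expect no genuine obstacle in the argument; the only two points deserving care are the surjectivity step, which is exactly where Lemma \ref{lemma:comorfismi_via_tplgir} does the work, and the well-definedness of the unit, which rests on the inclusion $M^p_J\subseteq K$.
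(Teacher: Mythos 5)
Your proof is correct and follows essentially the same route as the paper: the whole content is that Lemma \ref{lemma:comorfismi_via_tplgir} forces any $1$-cell $(F,\phi)\colon\For[p]\rightarrow[q]$ of $\Cat/\cbicat$ to already be a comorphism $(\dbicat,K)\rightarrow(\ebicat,M^q_J)$, so the two hom-categories coincide on the nose. Your additional checks (the unit via $M^p_J\subseteq K$, the agreement of $2$-cells, local full faithfulness by inspection) are all sound and merely make explicit what the paper leaves implicit.
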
 

For the subsequent results we will focus on a smaller class of comorphisms of sites, namely those that are also \emph{continuous} with respect to topologies:
\begin{defn}[{\cite[Definition 4.7]{denseness}}]
	Consider a site $(\cbicat,J)$ and a topos $\Etopos$: a functor $A:\cbicat\rightarrow\Etopos$ is \emph{$J$-continuous} if the induced right adjoint $R_A:\Etopos\rightarrow [\cbicat\op,\Set]$ factors through $\Sh(\cbicat,J)$. 
	More generally, given two sites $(\dbicat,K)$ and $(\cbicat,J)$, a functor $p:\dbicat\rightarrow\cbicat$ is \emph{$(K,J)$-continuous}\index{functor!$(K,J)$-continuous} if the composite functor $\ell_J\circ p:\dbicat\rightarrow \Sh(\cbicat,J)$ is $K$-continuous.
\end{defn}
\begin{prop}[{\cite[Propositions 4.8 and 4.13]{denseness}}]  \label{prop:ft_cont_caratterizzazione} Given two sites $(\cbicat,J)$ and $(\dbicat,K)$ and a functor $p:\dbicat\rightarrow\cbicat$, the following are equivalent:\begin{enumerate}[(i)]
		\item $p$ is $(K,J)$-continuous;
		\item $p^*=(-\circ p\op):[\cbicat\op,\Set]\rightarrow[\dbicat\op,\Set]$ maps $J$-sheaves to $K$-sheaves;
		\item $p$ is \textit{cover-preserving}, \ie if $S$ is a $K$-covering sieve then $p(S)$ is a $J$-covering family, and it satisfies the following \textit{cofinality condition}: for any $K$-covering
		sieve $S$ on an object $D$ and any commutative square 
		\[
		\begin{tikzcd}
			X \ar[d, "f"'] \ar[r, "g"] & p(E') \ar[d, "p(e')"]\\
			p(E) \ar[r, "p(e)"'] & p(D)
		\end{tikzcd}
		\]
		with $e$ and $e'$ belonging to $S$, there exists a $J$-covering family $\{y_i:Y_i\rightarrow X\ |\ i\in I \}$ such that for each $i$ the composites $fy_i$ and $gy_i$ belong to the same connected component of the comma category $\comma{Y_i}{D^p_S}$, where $D^p_S$ denotes the composite functor 
		\[\fib S \xrightarrow{\pi}\dbicat \xrightarrow{p}\cbicat.\]
	\end{enumerate}
\end{prop}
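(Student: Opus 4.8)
The plan is to dispatch (i) $\iff$ (ii) by a direct adjunction computation, and then to reduce (ii) $\iff$ (iii) to a single statement about the sheafified left Kan extension $\lan_{p\op}$, which splits cleanly into the two clauses of (iii).

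For (i) $\iff$ (ii), recall that $\ell_J\circ p:\dbicat\rightarrow\Sh(\cbicat,J)$ is $K$-continuous precisely when its induced right adjoint $R_{\ell_J p}$ factors through $\Sh(\dbicat,K)$. I would first compute this right adjoint on a $J$-sheaf $F$: for $D$ in $\dbicat$,
\[
R_{\ell_J p}(F)(D)=\Sh(\cbicat,J)(\ell_J p(D),F)=\Sh(\cbicat,J)(\sheafify_J\yo_\cbicat p(D),F)\cong [\cbicat\op,\Set](\yo_\cbicat p(D),\iota_J F)\cong (\iota_J F)(p(D)),
\]
using $\ell_J=\sheafify_J\yo_\cbicat$, the adjunction $\sheafify_J\dashv\iota_J$, and Yoneda. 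Hence $R_{\ell_J p}(F)\cong p^*(\iota_J F)$ naturally, so $R_{\ell_J p}$ lands in $K$-sheaves for every $J$-sheaf $F$ if and only if $p^*$ sends $J$-sheaves to $K$-sheaves; this is exactly (i) $\iff$ (ii).

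For (ii) $\iff$ (iii) I would exploit the adjunction $\lan_{p\op}\dashv p^*$ on presheaf categories. Since a presheaf on $\dbicat$ is a $K$-sheaf iff it is right-orthogonal to all inclusions $m_S:S\hookrightarrow\yo_\dbicat(D)$ of $K$-covering sieves, and a presheaf on $\cbicat$ is a $J$-sheaf iff it is local for $\sheafify_J$, transposing orthogonality across the adjunction gives: $p^*F$ is a $K$-sheaf for every $J$-sheaf $F$ iff $\lan_{p\op}(m_S)$ is inverted by every $J$-sheaf, i.e. iff $\sheafify_J\big(\lan_{p\op}(m_S)\big)$ is an isomorphism for every $K$-covering sieve $S$. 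Thus (ii) becomes the single condition that $\sheafify_J\lan_{p\op}(m_S)$ be an isomorphism for all such $S$.

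The combinatorial heart is then to unwind this isomorphism condition. By the pointwise formula for left Kan extensions one has $\lan_{p\op}(S)(W)\cong\pi_0\big(\comma{W}{D^p_S}\big)$ and $\lan_{p\op}(\yo_\dbicat D)\cong\yo_\cbicat(p(D))$, so $\lan_{p\op}(m_S)$ is the canonical map $\pi_0(\comma{(-)}{D^p_S})\to\yo_\cbicat(p(D))$ sending $[(E,e),\phi]\mapsto p(e)\circ\phi$. As $\sheafify_J$ is exact, I would test "isomorphism" by testing epi and mono separately after $\sheafify_J$. The map is epi iff it is $J$-locally surjective, which (evaluating at $\mathrm{id}_{p(D)}$) says exactly that the sieve generated by $\{p(e):e\in S\}$ is $J$-covering, i.e. cover-preservation. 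It is mono iff it is $J$-locally injective, which for two classes $[(E,e),f]$ and $[(E',e'),g]$ with $p(e)f=p(e')g$ — that is, a commutative square as in (iii) with $e,e'\in S$ — says precisely that there is a $J$-covering family $\{y_i:Y_i\to X\}$ along which $fy_i$ and $gy_i$ land in the same connected component of $\comma{Y_i}{D^p_S}$, i.e. the cofinality condition. Combining, (ii) holds iff both clauses of (iii) do. The points demanding care — and the only places where the equivalence is genuinely pinned down rather than formal — are the verification that every element of $\pi_0(\comma{W}{D^p_S})$ is represented by a pair with $e\in S$ (so the mono clause is quantified over exactly the squares in (iii)) and the precise translation of "$\sheafify_J f$ mono/epi" into $J$-local injectivity/surjectivity; the orthogonality reformulation of the third paragraph is the conceptual key, after which the rest is bookkeeping around the $\pi_0$-description of $\lan_{p\op}$.
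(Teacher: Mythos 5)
Your proposal is correct. There is nothing in the paper to compare it against line by line: the proposition is imported by citation ({\cite[Propositions 4.8 and 4.13]{denseness}}) and no proof is reproduced in the text. That said, the pivotal step of your argument --- transposing the sheaf condition across $\lan_{p\op}\dashv p^*$ to reduce (ii) to the single condition that $\sheafify_J(\lan_{p\op}(m_S))$ be an isomorphism for every $K$-covering sieve $S$ --- is exactly the content (with the roles of the two sites exchanged) of the technical lemma proved at the start of the section on change of base for stacks, and the proof given there is the same orthogonality computation as your third paragraph; so that part of your route is in effect endorsed by the paper. Your (i) $\iff$ (ii) computation of $R_{\ell_J p}(F)\cong p^*(\iota_J F)$ is the standard one and is fine. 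In the combinatorial part, the identification $\lan_{p\op}(S)(W)\cong\pi_0(\comma{W}{D^p_S})$ and the description of the comparison map are correct, and the two clauses of (iii) do fall out as local surjectivity and local injectivity. Two points you leave implicit are genuinely needed but routine: the converse of the epi clause (from cover-preservation back to local surjectivity at an arbitrary $\alpha:W\to p(D)$, not just at $1_{p(D)}$) uses pullback-stability of $J$-covering sieves, and the translation ``$\sheafify_J(f)$ epi/mono iff $f$ locally surjective/injective'' is the standard fact about sheafification in a topos. With those supplied, the argument is complete.
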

Every $(J,K)$-continuous functor $F:\cbicat\rightarrow \dbicat$ induces an adjunction $\Sh(F)^*\dashv \Sh(F)_*:\Sh(\dbicat,K)\rightarrow \Sh(\cbicat,J)$\index{$\Sh(F)$}: in adherence with \cite{denseness}, we shall call any such adjunction a \emph{weak geometric morphism} of toposes. We recall that a \emph{morphism of sites}\index{morphism of sites} $F:(\cbicat,J)\rightarrow (\dbicat,K)$ is a functor inducing a geometric morphism $\Sh(F):\Sh(\dbicat,K)\rightarrow \Sh(\cbicat,J)$ such that $\Sh(F)_*:=(-\circ F\op)$: therefore, every morphism of sites is in particular $(J,K)$-continuous. 

Notice that if $p$ is a $(K,J)$-continuous comorphism then the induced geometric morphism $C_p$ is \textit{essential}: this happens because the inverse image $C_p^*$ by continuity admits a further left adjoint, denoted by $(C_p)_!$. Continuous comorphisms of sites, whose 2-category we will denote by $\Com\cont$\index{$\Com\cont$}, are indeed a key ingredient in the theory of essential geometric morphisms, as the following result shows: 
\begin{prop}[\protect{\cite[Theorem 4.20]{denseness}}] \label{prop:eqv_essgeomorf_comorfcont} Consider a small-generated site $(\cbicat,J)$ and a Grothendieck topos $\Etopos$: then there is an equivalence of categories
	\[ 
	\EssTopos\co(\Sh(\cbicat,J), \Etopos)\simeq \Cosite\cont((\cbicat,J), (\Etopos, J\can_\Etopos))
	\]
	acting as follows: 
	\begin{itemize}
		\item An essential geometric morphism $F:\Sh(\cbicat,J)\rightarrow \Etopos$ is sent to the functor $F_!\ell_J:\cbicat\rightarrow \Etopos$, and a natural transformation $\Omega:F\Rightarrow G:\Sh(\cbicat, J)\rightarrow\Etopos$ to $\Omega_!\circ \ell_J$, where $\Omega_!:G_!\Rightarrow F_!$ is the 2-cell induced by $\Omega: F^*\Rightarrow G^*$ on the essential images;
		\item a $(J, J\can_\Etopos)$-continuous comorphism of sites $A:\cbicat\rightarrow \Etopos$ is sent to $C_A:\Sh(\cbicat,J)\rightarrow \Sh(\Etopos, J\can_\Etopos)$ and then composed with the canonical equivalence $\Sh(\Etopos, J\can_\Etopos)\simeq \Etopos$; the same holds for a natural transformation $\omega:A\Rightarrow B:\cbicat\rightarrow\Etopos$.
	\end{itemize}
\end{prop}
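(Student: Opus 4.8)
The plan is to show that the two assignments are mutually quasi-inverse by isolating, inside the datum of an essential geometric morphism $F:\Sh(\cbicat,J)\rightarrow\Etopos$, exactly the two conditions on the functor $A:=F_!\ell_J$ that make it a continuous comorphism of sites. The organising tool is the nerve--realization adjunction attached to any functor $A:\cbicat\rightarrow\Etopos$: since $\Etopos$ is cocomplete, the functor $R_A:=\Etopos(A(-),-):\Etopos\rightarrow[\cbicat\op,\Set]$ has a left adjoint $L_A$ (the left Kan extension of $A$ along $\yo_\cbicat$), and I will keep track of how the triple $F_!\dashv F^*\dashv F_*$ is recovered from $R_A$ and $L_A$ once $A$ is continuous.

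First I would treat the forward assignment $F\mapsto A=F_!\ell_J$. For $E$ in $\Etopos$ and $c$ in $\cbicat$, transposing along $F_!\dashv F^*$ and using $\ell_J=\sheafify_J\yo_\cbicat$ together with $\sheafify_J\dashv\iota_J$ gives
\[
R_A(E)(c)=\Etopos(F_!\ell_J(c),E)\cong\Sh(\cbicat,J)(\ell_J(c),F^*(E))\cong(\iota_J F^*(E))(c),
\]
so $R_A\cong\iota_J\circ F^*$ factors through $\Sh(\cbicat,J)$; this is precisely the $J$-continuity of $A$ (Proposition~\ref{prop:ft_cont_caratterizzazione}(ii)). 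The corestriction $\bar R_A$ of $R_A$ then coincides with the inverse image $F^*$, and its right adjoint $F_*$ exists because $F$ is a geometric morphism; by the adjoint triple of Proposition~\ref{prop:lan_ran} and uniqueness of adjoints, this forces $\ran_{A\op}$ to preserve $J$-sheaves, that is, $A$ is a comorphism of sites to $(\Etopos,J\can_\Etopos)$ (Definition~\ref{def:comorfismo}(ii)). Hence $A$ is a continuous comorphism.

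Next I would treat the backward assignment and the round trips. For a continuous comorphism $A$ the text already records that $C_A$ is essential, with inverse image the sheaf-corestriction of $A^*=(-\circ A\op)$; under $\Etopos\simeq\Sh(\Etopos,J\can_\Etopos)$ this is exactly $\bar R_A$, and transposing
\[
\Etopos(A(c),E)\cong\Sh(\cbicat,J)(\ell_J(c),(C_A)^*(E))\cong\Etopos((C_A)_!\ell_J(c),E)
\]
with the Yoneda lemma in $\Etopos$ yields $(C_A)_!\ell_J\cong A$, so one composite is the identity up to isomorphism. For the other composite, both $F$ and $C_{F_!\ell_J}$ are essential morphisms whose leftmost adjoints preserve colimits and agree with $A=F_!\ell_J$ after precomposition with $\ell_J$; since the sheafified representables $\ell_J(\cbicat)$ generate $\Sh(\cbicat,J)$ under colimits (sheaves are colimits of representables and $\sheafify_J$ preserves colimits), such a colimit-preserving extension is unique up to isomorphism, whence $F_!\cong(C_{F_!\ell_J})_!$ and therefore $F\cong C_{F_!\ell_J}$.

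Finally, on $2$-cells I would send $\Omega:F\Rightarrow G$ (that is, $\Omega:F^*\Rightarrow G^*$) to $\Omega_!\circ\ell_J$, where $\Omega_!:G_!\Rightarrow F_!$ is the mate of $\Omega$ on left adjoints; the same transposition read backwards shows this is a bijection onto the natural transformations of the associated functors, the $(-)\co$ reversal of $2$-cells making the variances agree. Together with the essential surjectivity obtained from the round trips, this fully faithful action yields the stated equivalence. I expect the main obstacle to be the precise matching in the forward step, namely that the mere existence of a right adjoint $F_*$ to $F^*\cong\bar R_A$ forces the covering-lifting (comorphism) property of $A$ and not only its continuity; establishing this cleanly requires combining Definition~\ref{def:comorfismo} with Proposition~\ref{prop:ft_cont_caratterizzazione} and the reflection $\sheafify_J\dashv\iota_J$, and it is also the place where the small-generated (size) hypotheses must be invoked so that the adjoint triple $\lan_{A\op}\dashv A^*\dashv\ran_{A\op}$ and the nerve--realization adjunction are legitimate.
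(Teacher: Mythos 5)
The paper does not actually prove this proposition --- it is imported verbatim as Theorem 4.20 of \cite{denseness} --- so your argument has to stand on its own, and it essentially does. The nerve--realization decomposition is the right skeleton: the chain $R_A(E)(c)\cong\Sh(\cbicat,J)(\ell_J(c),F^*(E))\cong(\iota_JF^*(E))(c)$ correctly identifies $J$-continuity of $A=F_!\ell_J$ with the factorization $R_A\cong\iota_J F^*$; the computation $(C_A)_!\ell_J\cong A$ via Yoneda is right; and the second round trip and the treatment of $2$-cells go through because $\ell_J(\cbicat)$ is a dense generating family of $\Sh(\cbicat,J)$, so left adjoints out of it (equivalently their right adjoints) are determined up to isomorphism by restriction along $\ell_J$. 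The only place where genuine work is left implicit is the one you flag yourself: that the existence of $F_*$ forces the comorphism property of $A$. As stated, ``uniqueness of adjoints'' does not do this, because $\iota_J$ has a \emph{left} rather than a right adjoint, so you cannot simply transpose the square $A^*\circ\iota_{J\can_\Etopos}\cong\iota_J\circ\bar R_A$ to right adjoints. The gap closes with one explicit computation: for a $J$-sheaf $P$ and $E$ in $\Etopos$,
\[
\ran_{A\op}(\iota_J P)(E)\;\cong\;[\cbicat\op,\Set](R_A(E),\iota_J P)\;\cong\;\Sh(\cbicat,J)(F^*(E),P)\;\cong\;\Etopos(E,F_*(P)),
\]
naturally in $E$ (the first isomorphism by Yoneda and $A^*\dashv\ran_{A\op}$, the second by $\sheafify_J\dashv\iota_J$ and $R_A\cong\iota_J F^*$), which exhibits $\ran_{A\op}(\iota_J P)$ as the representable $\yo_\Etopos(F_*(P))$, hence a $J\can_\Etopos$-sheaf; this is exactly condition (ii) of Definition \ref{def:comorfismo} and simultaneously identifies $(C_A)_*\cong F_*$. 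Once this is inserted, your second round trip also becomes immediate --- $F^*\cong\bar R_A\cong(C_A)^*$ gives $F\cong C_{F_!\ell_J}$ without appealing to uniqueness of colimit-preserving extensions --- so the density argument you use there can be shortened to a remark.
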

In Chapter \ref{chap:classification} we will provide a functorialization of this classification result, along with similar classification results for geometric morphisms.

We conclude this section by presenting a particular instance of topologies of kind $M^A_J$, namely when $A$ is a discrete fibration over $\cbicat$:
\begin{prop}\label{prop:fib_discreta_slice_topos}
	Consider a site $(\cbicat,J)$ and a presheaf $P:\cbicat\op\rightarrow\Set$ with corresponding Grothendieck fibration $\pi_P:\fib P\rightarrow \cbicat$. Denote by $J_P$\index{$J_P$} the topology $M^{\pi_P}_J$: a family of arrows $\{y_i:(Y_i, P(y_i)(U))\rightarrow (X, U)\ |\ i\in I \}$ in $\fib P$ is $J_P$-covering if and only if $\{y_i:Y_i\rightarrow X\ |\ i\in I \}$ is $J$-covering in $\cbicat$. 
	
	The composite functor $\fib P\xrightarrow{\pi_P}\cbicat\xrightarrow{\ell_J} \Sh(\cbicat,J)\rightarrow \Sh(\cbicat,J)/\sheafify_J(P)$ is flat and $J_P$-continuous, and it induces an equivalence of toposes
	$$\Sh(\fib P, J_P)\simeq \Sh(\cbicat,J)/\sheafify_J(P)$$
	which is pseudonatural in $P$ in the sense that for any $g:P\Rightarrow Q$ the square
	\[
	\begin{tikzcd}
		{\Sh(\fib Q, J_Q)} \ar[r, "\sim", no head]\ar[d, "C_{\fib g}^*"']& {\Sh(\cbicat,J)/\sheafify_J(Q)} \ar[d, "\sheafify_J(g)^*"]\\
		{\Sh(\fib P, J_P)} \ar[r, "\sim", no head] &{\Sh(\cbicat,J)/\sheafify_J(P)}  
	\end{tikzcd}\]
	is commutative up to natural isomorphism. Moreover, the induced equivalence makes the diagram
	\[\begin{tikzcd}[column sep=1ex]
		\Sh(\cbicat,J)/\sheafify_J(P) \ar[rr, "\sim", no head] \ar[dr, "\prod_{\sheafify_J(P)}"'] && \Sh(\fib P, J_P) \ar[dl, "C_{\pi_P}"]\\
		& \Sh(\cbicat,J)&
	\end{tikzcd}\]
	commutative up to isomorphism, where $\prod_{\sheafify_J(P)}$\index{$\prod_{f}$} denotes the canonical essential geometric morphism from a slice topos to its base topos (called the \emph{dependent product along $\sheafify_J(P)$}\index{functor!dependent product}\index{geometric morphism!dependent product}: see \cite{dependent}).
\end{prop}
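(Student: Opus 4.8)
The plan is to prove the three assertions in turn, building the equivalence from the standard description of slices of presheaf toposes and then cutting it down to sheaves. First I would dispatch the combinatorial claim about covers. Since $J_P=M^{\pi_P}_J$ by definition, Proposition \ref{prop:top_minima_comorfismo} tells us it is generated by the sieves $S_{(X,U)}=\{f\mid \pi_P(f)\in S\}$ for $S\in J(X)$. Because $\pi_P:\fib P\to\cbicat$ is a \emph{discrete} fibration, each $y:Y\to X$ in $S$ has a unique lift $(Y,P(y)(U))\to (X,U)$, so $S_{(X,U)}$ is exactly $\{y:(Y,P(y)(U))\to (X,U)\mid y\in S\}$. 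This immediately yields the stated bijection between $J_P$-covering families in $\fib P$ and $J$-covering families in $\cbicat$, and in particular shows $\pi_P$ is cover-preserving.

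Next come flatness and $J_P$-continuity of the composite $A:\fib P\to \Sh(\cbicat,J)/\sheafify_J(P)$. For flatness I would exhibit $A$ as $f^{*}\circ \yo_{\fib P}$ for a geometric morphism $f$: the canonical inclusion $\Sh(\cbicat,J)\hookrightarrow\widehat{\cbicat}$ (with inverse image $\sheafify_J$), transported across the standard equivalence $\widehat{\fib P}\simeq \widehat{\cbicat}/P$ and composed with the étale projections, assembles into a geometric morphism $\Sh(\cbicat,J)/\sheafify_J(P)\to \widehat{\fib P}$ whose inverse image precomposed with Yoneda is exactly $A$; any functor of this form is flat. For continuity I would use that $\pi_P$ is a continuous comorphism — a comorphism by construction of $J_P$, and continuous because in Proposition \ref{prop:ft_cont_caratterizzazione}(iii) the cofinality condition is automatic by unique lifting — and then check that slicing along $\sheafify_J(P)$ preserves the factorisation through $\Sh(\fib P,J_P)$ of the right adjoint $R_A$ demanded by the definition of $J_P$-continuity. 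Equivalently one may invoke the already-referenced Proposition \ref{prop:fib_e_morfib_sono_can.comorf}, by which (discrete) fibrations are canonical continuous comorphisms with $C^{*}_{\pi_P}=(-\circ\pi_P\op)$.

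The heart of the proof is the equivalence itself. I would start from the classical isomorphism $\widehat{\fib P}\simeq \widehat{\cbicat}/P$, under which the adjoint triple $\lan_{\pi_P\op}\dashv \pi_P^{*}\dashv \ran_{\pi_P\op}$ of Proposition \ref{prop:lan_ran} corresponds to the slice adjoint triple $\Sigma_P\dashv (-\times P)\dashv \Pi_P$; concretely, a presheaf $Q$ on $\fib P$ matches the projection $\bigl(\coprod_{U\in P(X)}Q(X,U)\bigr)_X\to P$. It then remains to verify that this equivalence carries the $J_P$-sheaves onto the reflective subcategory $\Sh(\cbicat,J)/\sheafify_J(P)$ of $\widehat{\cbicat}/P$ (a subtopos because $\sheafify_J$ is left exact, so that $\Sh(\cbicat,J)/\sheafify_J(P)\hookrightarrow \widehat{\cbicat}/P$ arises from a left-exact localization). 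Using the explicit generators $S_{(X,U)}$ from the first part, the descent condition for $Q$ over a sieve $S_{(X,U)}$ unwinds precisely into the $J$-sheaf condition, fibrewise over $P$, for the total presheaf; matching the two localizations then gives $\Sh(\fib P,J_P)\simeq \Sh(\cbicat,J)/\sheafify_J(P)$. This topology-matching — in particular keeping straight the discrepancy between $P$ and $\sheafify_J(P)$, and showing the two reflective subcategories genuinely coincide rather than merely correspond — is the step I expect to be the main obstacle.

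Finally, the commuting triangle and pseudonaturality are bookkeeping across the equivalence. Since on presheaves $\pi_P^{*}$ corresponds to $(-\times P)$ and $\sheafify_J$ is left exact, on sheaves $C^{*}_{\pi_P}$ corresponds to $(-\times \sheafify_J(P))=\prod_{\sheafify_J(P)}^{*}$, whence $C_{\pi_P}\cong \prod_{\sheafify_J(P)}$ over $\Sh(\cbicat,J)$. For pseudonaturality, a map $g:P\Rightarrow Q$ induces the morphism of discrete fibrations $\fib g$, whose inverse image $C^{*}_{\fib g}=(-\circ (\fib g)\op)$ corresponds, under the two slice equivalences, to pullback along $\sheafify_J(g):\sheafify_J(P)\to\sheafify_J(Q)$, i.e. to $\sheafify_J(g)^{*}$; this is exactly the commutativity of the displayed square, and the required isomorphisms are natural by functoriality of the slice equivalence.
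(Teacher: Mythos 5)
You should be aware that the paper does not actually prove this proposition in the text: its ``proof'' consists of the citations \cite[Example 5.7]{denseness} and \cite[Theorem 2.2]{dependent}, so there is no in-text argument to compare yours against. What you propose is essentially the standard direct proof that those references carry out: describe the generated topology $M^{\pi_P}_J$ via unique lifts along the discrete fibration, pass to the presheaf-level equivalence $\widehat{\fib P}\simeq\widehat{\cbicat}/P$ (under which the adjoint triple of Proposition \ref{prop:lan_ran} becomes the slice triple), and then match the two left-exact localizations. Your treatment of the covering families, of flatness and continuity, of the triangle over $\Sh(\cbicat,J)$, and of pseudonaturality in $P$ is correct.

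The step you rightly flag as the main obstacle does need a sharper formulation than ``the descent condition unwinds into the $J$-sheaf condition, fibrewise over $P$''. Under $\widehat{\fib P}\simeq\widehat{\cbicat}/P$ the $J_P$-sheaves do \emph{not} correspond to the objects $[E\to P]$ with $E$ a $J$-sheaf: the essential image of $\Sh(\cbicat,J)/\sheafify_J(P)\hookrightarrow\widehat{\cbicat}/P$ consists of those $[E\to P]$ for which the naturality square of the unit of $\sheafify_J\dashv\iota_J$ at $E\to P$ is a pullback, and when $P$ is not itself a sheaf this class contains $[\mathrm{id}_P]$ (the terminal presheaf on $\fib P$ is always a $J_P$-sheaf) even though $P$ is not a $J$-sheaf. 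The clean way to close the gap is not to compare sheaf conditions but to compare \emph{dense monomorphisms}: a monomorphism $Q'\hookrightarrow Q$ in $\widehat{\fib P}$ is $J_P$-dense iff for every $(X,U)$ and every $q\in Q(X,U)$ the sieve of arrows pulling $q$ into $Q'$ has $J$-covering image under $\pi_P$ (this is exactly your first part, or Proposition \ref{prop:girtpl_descrizione_sieves} with every arrow cartesian, since $\pi_P$ is discrete), and this holds iff the corresponding monomorphism $E_{Q'}\hookrightarrow E_{Q}$ over $P$ is inverted by $\sheafify_J$, i.e.\ is dense for the localization $\widehat{\cbicat}/P\to\Sh(\cbicat,J)/\sheafify_J(P)$. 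Since a left-exact localization of a presheaf topos is determined by its class of dense monomorphisms, the two subtoposes coincide and the equivalence follows; the rest of your write-up then goes through as stated.
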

\begin{proof}
	See \cite[Example 5.7]{denseness} and \cite[Theorem 2.2]{dependent}.
\end{proof}
\begin{remark}
	We remark that not only each functor $\pi_P$ is a comorphism of sites $(\fib P, J_P)\rightarrow (\cbicat,J)$, but it is also $(J_P,J)$-continuous; moreover, for every $g:P\rightarrow Q$ in $[\cbicat\op,\Set]$ the induced functor $\fib g:\fib P\rightarrow \fib Q$ is a $(J_P, J_Q)$-continuous comorphism of sites. These are all consequences of Lemma \ref{lemma:comorfismi_via_tplgir}.
\end{remark}

\begin{ex}\label{ex:topologia_su_slice}
	One particular instance of this minimal topology is when $P=\yo(X)$ for some $X$ in $\cbicat$: then as we have already mentioned $\fib \yo(X) \simeq \cbicat/X$. We will denote by $J_X$\index{$J_X$} the topology $J_{\yo(X)}$: in particular, a presieve $\fbicat=\{z_i:[yz_i]\rightarrow[y]\ |\ i\in I \}$ over $[y]$ in $\cbicat/X$ is $J_X$-covering if and only if $\{z_i:\dom(z_i)\rightarrow Y\ |\ i\in I \}$ is $J$-covering in $\cbicat$.
\end{ex}
In particular, we remark that the category of small-generated sites is `closed' with respect to discrete fibrations, in the following sense:
\begin{lemma}\label{lemma:discfib_sono_esssmall}
Given a small-generated site $(\cbicat,J)$ and a presheaf $P:\cbicat\op\rightarrow \Set$, the site $(\fib P, J_P)$ is small-generated.
\end{lemma}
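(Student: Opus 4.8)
The statement unwinds, via the definition of small-generated site recalled in the conventions, into two conditions: that $\fib P$ is locally small, and that $(\fib P, J_P)$ admits a small $J_P$-dense subcategory. The plan is to dispatch the first immediately and to obtain the second by lifting a small $J$-dense subcategory of $\cbicat$ along $\pi_P$. Local smallness is essentially free: a morphism $(Y,V)\to (X,U)$ of $\fib P$ is exactly an arrow $y:Y\to X$ of $\cbicat$ with $P(y)(U)=V$, so $\Hom_{\fib P}((Y,V),(X,U))$ is a subset of the small set $\Hom_\cbicat(Y,X)$, whence $\fib P$ is locally small.

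For the dense subcategory, let $\mathcal{D}$ be a small $J$-dense subcategory of $\cbicat$, which exists since $(\cbicat,J)$ is small-generated. I would define $\mathcal{D}'$ to be the full subcategory of $\fib P$ on the objects $(D,U)$ with $D\in\Ob(\mathcal{D})$ and $U\in P(D)$. The first thing to check is that $\mathcal{D}'$ is small: its object collection is $\Ob(\mathcal{D}')=\coprod_{D\in\Ob(\mathcal{D})}P(D)$, a small coproduct of small sets, hence small; here I use crucially that $P$ is $\Set$-valued, so each fibre $P(D)$ is a small set. Since $\mathcal{D}'$ is a subcategory of the locally small $\fib P$, it is locally small as well, and therefore a small category.

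The core step is to verify that $\mathcal{D}'$ is $J_P$-dense, i.e. that for every object $(X,U)$ of $\fib P$ the sieve $T$ on $(X,U)$ consisting of all arrows of $\fib P$ that factor through an object of $\mathcal{D}'$ is $J_P$-covering. By $J$-density of $\mathcal{D}$, the sieve $S$ on $X$ of all morphisms of $\cbicat$ factoring through an object of $\mathcal{D}$ is $J$-covering. Recalling that $J_P=M^{\pi_P}_J$, Proposition \ref{prop:top_minima_comorfismo} tells us that the sieve $S_{(X,U)}:=\{f:\dom(f)\to (X,U)\mid \pi_P(f)\in S\}$ is a generating $J_P$-covering sieve on $(X,U)$. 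I would then show $S_{(X,U)}\subseteq T$: given $f:(Y,V)\to (X,U)$ in $S_{(X,U)}$, its projection $y:=\pi_P(f)\in S$ factors as $y=d\circ g$ with $d:D\to X$, $D\in\mathcal{D}$, and $g:Y\to D$; setting $U':=P(d)(U)\in P(D)$, functoriality of $P$ gives $P(g)(U')=P(g)(P(d)(U))=P(dg)(U)=P(y)(U)=V$, so that $g:(Y,V)\to (D,U')$ and $d:(D,U')\to (X,U)$ are genuine arrows of $\fib P$ with $f=d\circ g$ and $(D,U')\in\mathcal{D}'$. Hence $f\in T$. Since $T$ contains the $J_P$-covering sieve $S_{(X,U)}$, it is itself $J_P$-covering, which establishes $J_P$-density and completes the proof. (One may instead phrase this last step through the explicit covering characterization of Proposition \ref{prop:fib_discreta_slice_topos}, but the generating-coverage formulation is the most direct.)

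I do not expect a deep obstacle here: the content lies entirely in keeping the bookkeeping of the category of elements straight and in invoking the correct generating coverage for $M^{\pi_P}_J$. The one point that genuinely carries the hypothesis is the smallness of $\mathcal{D}'$, which rests on $P$ taking values in small sets; the same recipe would \emph{not} go through verbatim for a general stack valued in $\Cat$ unless one knew its fibres to be small, which is exactly why the small-stacks restriction appears elsewhere in the text.
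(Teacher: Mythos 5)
Your proof is correct and follows essentially the same route as the paper's: both lift a small $J$-dense subcategory $\mathcal{D}\hookrightarrow\cbicat$ to the full subcategory of $\fib P$ on objects with first component in $\mathcal{D}$, observe it is small because each fibre $P(D)$ is a small set, and verify $J_P$-density by lifting $J$-covering data along $\pi_P$. The only (immaterial) difference is that you check density via the generating coverage of $M^{\pi_P}_J$ and an explicit sieve containment, whereas the paper directly lifts a covering family $\{y_i:A_i\to X\}$ to $\{y_i:(A_i,P(y_i)(U))\to(X,U)\}$ using the explicit description of $J_P$-covering families.
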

\begin{proof}
    Denote by $\abicat\hookrightarrow\cbicat$ a small $J$-dense full subcategory of $\cbicat$. Consider the full subcategory $\bbicat\hookrightarrow\fib P$ whose objects are pairs $(A,U)$ with $A$ in $\abicat$: then $\bbicat$ as a set of objects and is locally small, therefore it is small. Now consider an object $(X,U)$ of $\fib P$: since $\abicat\hookrightarrow\cbicat$ is $J$-dense, there exists a $J$-covering family $\{y_i:A_i\rightarrow X\ |\ i\in I\}$ such that each $A_i$ belongs to $\abicat$. This implies that the family $\{y_i:(A_i, P(y_i)(U))\rightarrow (X,U)\ |\ i\in I\}$ is $J_P$-covering, and thus that $\bbicat$ is $J_P$-dense in $\fib P$.
\end{proof}
\begin{remark}
    We will introduce the notion of essential $J$-smallness for a fibration in Definition \ref{def:esssmall_fibration}: then the previous lemma is stating precisely that a discrete fibration $\fib P\rightarrow\cbicat$ is essentially $J$-small with respect to any topology on the base category $\cbicat$ making $(\cbicat,J)$ small-generated. 
\end{remark}

Another technical lemma that we will exploit later is the fact that, given a $J$-covering sieve $R\rightarrowtail \yo(X)$, the site $(\fib R, J_R)$ is Morita-equivalent to $(\cbicat/X, J_X)$:
\begin{lemma}\label{lemma:topos_su_sieve_eqv_topos_su_slice}
	Consider a site $(\cbicat,J)$ and a $J$-covering sieve $m_R:R\rightarrowtail \yo(X)$: then $\fib m_R:(\fib R, J_R)\rightarrow (\cbicat/X, J_X)$ induces an equivalence of toposes $C_{\fib m_R}: \Sh(\fib R, J_R)\isorightarrow \Sh(\cbicat/X, J_X)$.
\end{lemma}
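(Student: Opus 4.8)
The plan is to obtain the statement as an immediate instance of the pseudonaturality of the equivalence in Proposition \ref{prop:fib_discreta_slice_topos}, applied to the monomorphism $m_R\colon R\mono\yo(X)$ viewed as a morphism of presheaves.

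First I would check that everything is well posed. By the Remark following Proposition \ref{prop:fib_discreta_slice_topos}, for any $g\colon P\to Q$ in $[\cbicat\op,\Set]$ the functor $\fib g$ is a $(J_P,J_Q)$-continuous comorphism of sites; taking $g=m_R$, and recalling $\fib\yo(X)\simeq\cbicat/X$ and $J_{\yo(X)}=J_X$ from Example \ref{ex:topologia_su_slice}, the functor $\fib m_R\colon(\fib R,J_R)\to(\cbicat/X,J_X)$ is a continuous comorphism of sites and hence induces a well-defined (essential) geometric morphism $C_{\fib m_R}$.

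Next I would instantiate the pseudonaturality square of Proposition \ref{prop:fib_discreta_slice_topos} at $g=m_R$, that is, with $P=R$ and $Q=\yo(X)$:
\[
\begin{tikzcd}
{\Sh(\cbicat/X,J_X)} \ar[r,"\sim",no head]\ar[d,"C_{\fib m_R}^*"'] & {\Sh(\cbicat,J)/\ell_J(X)} \ar[d,"\sheafify_J(m_R)^*"]\\
{\Sh(\fib R,J_R)} \ar[r,"\sim",no head] & {\Sh(\cbicat,J)/\sheafify_J(R)}
\end{tikzcd}
\]
which commutes up to natural isomorphism, the two horizontal arrows being equivalences by that proposition. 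The key step is then to observe that $\sheafify_J(m_R)$ is an isomorphism: this holds precisely because $R$ is a $J$-covering sieve on $X$, so $m_R\colon R\mono\yo(X)$ is a $J$-dense monomorphism, i.e.\ $\sheafify_J(m_R)$ is invertible (this is the standard identification of $J$-covering sieves with $J$-dense subobjects of representables; cf.\ the notion of $J$-dense monomorphism recalled in Definition \ref{def:comorfismo}). Consequently the right-hand vertical functor $\sheafify_J(m_R)^*$, being base change along an isomorphism, is an equivalence. Since in the commuting square the two horizontal functors and the right vertical functor are equivalences, so is the left vertical functor $C_{\fib m_R}^*$, whence $C_{\fib m_R}$ is an equivalence of toposes.

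I do not anticipate a genuine obstacle: the whole argument is a diagram chase once Proposition \ref{prop:fib_discreta_slice_topos} is available. The only points requiring minor care are the bookkeeping of the variances in the pseudonaturality square and the (entirely standard) fact that a $J$-covering sieve is a $J$-dense subobject of $\yo(X)$, which is exactly what forces $\sheafify_J(m_R)$ to be invertible.
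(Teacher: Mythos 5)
Your proof is correct and is essentially the argument the paper gives: both reduce the claim to the pseudonaturality square of Proposition \ref{prop:fib_discreta_slice_topos} at $m_R$ and the fact that $\sheafify_J(m_R)$ is invertible because $R$ is $J$-covering. The only (immaterial) difference is that you phrase the square in terms of inverse images while the paper states it with the geometric morphisms themselves.
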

\begin{proof}
	This is a corollary of Proposition \ref{prop:fib_discreta_slice_topos}, since the square
	\[
	\begin{tikzcd}
		\Sh(\fib R, J_R) \ar[d, "C_{\fib m_R}"'] \ar[r, "\sim"] & \Sh(\cbicat, J)/\sheafify_J(R) \ar[d, "\prod_{\sheafify_J(m_R)}", "\sim"{sloped, below}]\\
		\Sh(\cbicat/X, J_X) \ar[r, "\sim"] & \Sh(\cbicat, J)/\ell_J(X)
	\end{tikzcd}\]
	commutes up to natural isomorphism, where $\prod_{\sheafify_J(m_R)}$ is an equivalence since $\sheafify_J(m_R)$ is invertible. 
\end{proof}

\section{Giraud topologies and Giraud toposes}\label{sec:giraudtpl}
\begin{defn}[{\cite[\S 2.2]{giraud.classifying}}]\label{def:Giraud_topology_classifying_topos}
	Consider a site $(\cbicat,J)$ and a fibration $p:\dbicat\rightarrow \cbicat$: we call \emph{Giraud's topology for $p$}\index{topology!Giraud -} \index{site!Giraud -} the smallest topology $J_\dbicat$\index{$J_\dbicat$} over $\dbicat$ making $p$ into a comorphism of sites (see Proposition \ref{prop:top_minima_comorfismo}). The sheaf topos 
	\[
	C_p:\Sh(\dbicat, J_\dbicat)\rightarrow \Sh(\cbicat,J)
	\]
	will be called the \emph{classifying topos of the fibration $p$}\index{topos! classifying a fibration}\index{topos!Giraud -}, and it will be denoted by $\Gir_J(p)$\index{$\Gir_J(-)$}: it is the image of the fibration $p$ through the 2-functor
	\[
	\Fib_\cbicat \xrightarrow{\Giraud} \Cosite/(\cbicat,J)\xrightarrow{C_{(-)}}\Topos/\Sh(\cbicat,J)\co.\]
\end{defn}
When $\dbicat$ corresponds to a $\cbicat$-indexed fibration $\dcat$ we will use the notations $J_\dcat$ and $\Gir_J(\dcat)$.

\begin{remarks}
\begin{enumerate}[(i)]
    \item In particular, when $\dcat$ is a presheaf $P:\cbicat\op\rightarrow \Set$ we have denoted Giraud's topology by $J_P$, and when $\dcat=\yo(X)$ by $J_X$: see Proposition \ref{prop:fib_discreta_slice_topos} and the remark following it. In particular, Proposition \ref{prop:fib_discreta_slice_topos} shows that for every presheaf $P:\cbicat\op\rightarrow \Set$ there is an equivalence of functors
\[
\Sh(\cbicat,J)/\sheafify_J(P)\simeq \Gir_J(P),
\]
which is moreover natural in $P$.
    \item there is a size issue that needs to be addressed: there is no reason for the site $(\dbicat,J_\dbicat)$ to be small, or even small-generated, and thus for the category $\Gir_J(\dbicat):=\Sh(\dbicat, J_\dbicat)$ to be a topos. We will address this problem later in the context of the fundamental adjunction by introducing essentially $J$-small fibrations (see Definition \ref{def:esssmall_fibration}).
\end{enumerate}
\end{remarks}

Giraud's topologies can be explicitly described as follows:

\begin{prop}[{\cite[Theorem 3.13]{denseness}}]\label{prop:girtpl_descrizione_sieves}
	Given a fibration $p:\dbicat\rightarrow\cbicat$, a sieve $R$ is $J_\dbicat$-covering if and only if the collection of cartesian arrows in it is sent to a $J$-covering family through $p$. 
\end{prop}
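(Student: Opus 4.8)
The plan is to identify Giraud's topology $J_\dbicat$ with the collection
\[
K(D)=\{R\text{ sieve on }D : p(\{f\in R: f\text{ cartesian}\})\text{ is }J\text{-covering on }p(D)\}.
\]
Recall that by Definition \ref{def:Giraud_topology_classifying_topos} together with Proposition \ref{prop:top_minima_comorfismo}, $J_\dbicat=M^p_J$ is the topology generated by the (pullback-closed) coverage consisting of the sieves $S_D:=\{f : p(f)\in S\}$ for $S\in J(p(D))$. I would prove $K=J_\dbicat$ by two inclusions, exploiting that $J_\dbicat$ is the \emph{smallest} topology containing this coverage.

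For $J_\dbicat\subseteq K$ it suffices to check that each generating sieve $S_D$ lies in $K(D)$ and that $K$ is a Grothendieck topology. For the first point, given a $J$-cover $S$ on $p(D)$ and $x\in S$, the cartesian lift $\widehat{x}_D$ (which exists since $p$ is a fibration) belongs to $S_D$, because $p(\widehat{x}_D)=x\,\theta_{x,D}\inv\in S$; since precomposing with the isomorphism $\theta_{x,D}$ does not change the generated sieve, the cartesian arrows of $S_D$ already project to a generating family for $S$, so $S_D\in K(D)$. To see $K$ is a topology: maximality is immediate as $1_D$ is cartesian and projects to $1_{p(D)}$; for stability under $g:D'\to D$, given $R\in K(D)$ with associated $J$-cover $S$, I would take any $u\in p(g)^*S$, lift it to a cartesian $\widehat{u}_{D'}$, and use that $p(g\widehat{u}_{D'})=p(g)u\,\theta\inv$ factors through some cartesian $f\in R$ to produce (by cartesianity of $f$) a $k$ with $fk=g\widehat{u}_{D'}$, whence $\widehat{u}_{D'}\in g^*R$; the projections of these cartesian arrows generate $p(g)^*S$, which is a $J$-cover by stability of $J$. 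For transitivity I would reduce to transitivity of $J$: if $f\in R$ is cartesian and $h$ is a cartesian arrow of $f^*Q$, then $fh$ is a cartesian arrow of $Q$ by Remark \ref{remark:proprietà_frecce_cartesiane}(ii), so $p(h)$ lies in the pullback along $p(f)$ of the sieve generated by the cartesian projections of $Q$; checking this on the generators of $S$ and invoking stability then yields the desired $J$-cover.

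The reverse inclusion $K\subseteq J_\dbicat$ is short. Given $R\in K(D)$, let $S\in J(p(D))$ be the sieve generated by the projections of the cartesian arrows of $R$. I claim $S_D\subseteq R$: if $p(h)\in S$ then $p(h)=p(f)w$ for some cartesian $f\in R$, and cartesianity of $f$ yields $k$ with $fk=h$ and $p(k)=w$, so $h\in R$ since $R$ is a sieve. As $S_D$ is a generating cover of $J_\dbicat$ and $S_D\subseteq R$, we conclude $R\in J_\dbicat(D)$.

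The main obstacle is verifying that $K$ is a Grothendieck topology, specifically the stability and transitivity axioms; the recurring technical point is the Street-fibration bookkeeping of the lifting isomorphisms $\theta$, which is handled uniformly by the observation that precomposition with an isomorphism leaves a generated sieve unchanged, together with the factorization-through-cartesian-arrows arguments from Remark \ref{remark:proprietà_frecce_cartesiane}. Everything else is routine once those factorizations are in place.
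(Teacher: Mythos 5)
Your argument is correct. Note that this paper does not actually prove the statement: it imports it by citation from \cite{denseness} (Theorem 3.13 there), so there is no in-text proof to compare against; judged on its own, your proof is a valid, self-contained replacement. The structure is the natural one: since $J_\dbicat=M^p_J$ is by Proposition \ref{prop:top_minima_comorfismo} the smallest topology containing the sieves $S_D$, the two inclusions $J_\dbicat\subseteq K$ and $K\subseteq J_\dbicat$ follow respectively from ($K$ is a topology containing every $S_D$) and (every $R\in K(D)$ contains some $S_D$ with $S\in J(p(D))$), and you verify both. The individual steps all check out: the cartesian lift $\widehat{x}_D$ lies in $S_D$ because $p(\widehat{x}_D)=x\,\theta_{x,D}\inv\in S$ as $S$ is a sieve; stability works because for $u\in p(g)^*S$ the composite $g\widehat{u}_{D'}$ factors through a cartesian generator of $R$ by cartesianity, so $\widehat{u}_{D'}\in g^*R$; and transitivity correctly reduces to local character of $J$ checked on generators via Remark \ref{remark:proprietà_frecce_cartesiane}(ii) (composites of cartesian arrows are cartesian) together with stability of $J$ to pass from generators of the covering sieve to all of its members. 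One observation worth making explicit, which your two inclusions establish en passant: your class $K(D)$ coincides exactly with the class of sieves containing some $S_D$ with $S$ $J$-covering, so the closure of the coverage of Proposition \ref{prop:top_minima_comorfismo} under the topology axioms adds nothing beyond supersieves --- this is the real content of the characterization, and stating it would make the proof read more transparently.
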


Another fundamental result is the fact that when Giraud's topologies are involved, fibrations and their morphisms, are naturally \emph{continuous} comorphisms of sites:

\begin{prop}[{\cite[Theorem 4.44 and Corollary 4.47]{denseness}}]\label{prop:fib_e_morfib_sono_can.comorf}
	Consider a fibration $p:\dbicat\rightarrow\cbicat$ and a topology $J$ over $\cbicat$: then $p$ is a $(J_\dbicat,J)$-continuous comorphism of sites. More generally, given two fibrations $p:\dbicat\rightarrow\cbicat$ and $q:\ebicat\rightarrow\cbicat$ and a morphism of fibrations $(F,\phi):[p]\rightarrow [q]$ between them, then $F$ is a $(J_\dbicat, J_\ebicat)$-continuous comorphism of sites.
\end{prop}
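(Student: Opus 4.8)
The plan is to handle the two statements separately, reducing both to the continuity criterion of Proposition \ref{prop:ft_cont_caratterizzazione}. For the first statement, observe that $p$ is a comorphism of sites $(\dbicat,J_\dbicat)\to(\cbicat,J)$ by the very definition of $J_\dbicat$ (Definition \ref{def:Giraud_topology_classifying_topos}, together with Proposition \ref{prop:top_minima_comorfismo}), so the entire content is the $(J_\dbicat,J)$-continuity. I would establish this through condition (ii) of Proposition \ref{prop:ft_cont_caratterizzazione}, namely by showing that $p^*=(-\circ p\op)$ sends every $J$-sheaf $P$ to a $J_\dbicat$-sheaf $P\circ p\op$.

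Concretely, fix a $J_\dbicat$-covering sieve $R$ on an object $D$ and a matching family $(x_f)_{f\in R}$ for $P\circ p\op$. By Proposition \ref{prop:girtpl_descrizione_sieves} the cartesian arrows $\{c_i\colon C_i\to D\}$ contained in $R$ project along $p$ to a $J$-covering family of $p(D)$. The point is that, since $p$ is a fibration, every test arrow downstairs lifts: given $u\colon W\to p(C_i)$ and $v\colon W\to p(C_j)$ with $p(c_i)u=p(c_j)v=:w$, a cartesian lift $\hat w$ of $w$ factors through both $c_i$ and $c_j$ (by their cartesianity) and hence lies in $R$, as sieves are closed under precomposition; the compatibility of the matching family then gives $P(u)(x_{c_i})=x_{\hat w}=P(v)(x_{c_j})$ (up to the canonical cleavage isomorphisms). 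Thus $(x_{c_i})_i$ is a matching family for $P$ on the $J$-cover $\{p(c_i)\}$, and the $J$-sheaf property of $P$ yields a unique amalgamation $x\in P(p(D))$. Lifting an arbitrary $f\in R$ through its cartesian part and invoking separatedness of $P$ gives $P(p(f))(x)=x_f$ for all $f\in R$, and uniqueness is immediate. This settles the first statement.

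For the second statement the comorphism part is free: $p\colon(\dbicat,J_\dbicat)\to(\cbicat,J)$ is a comorphism and $qF\cong p$ via $\phi$, so Lemma \ref{lemma:comorfismi_via_tplgir} (with $M^q_J=J_\ebicat$) immediately yields that $F$ is a comorphism $(\dbicat,J_\dbicat)\to(\ebicat,J_\ebicat)$. It remains to prove $(J_\dbicat,J_\ebicat)$-continuity, and I would use condition (iii). Cover-preservation is again straightforward: since $F$ carries cartesian arrows to cartesian arrows and $qF\cong p$, the $F$-image of the cartesian arrows of a $J_\dbicat$-cover of $D$ consists of cartesian arrows projecting along $q$ to a $J$-cover of $qF(D)$, so by Proposition \ref{prop:girtpl_descrizione_sieves} it generates a $J_\ebicat$-cover of $F(D)$.

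The genuine obstacle is the cofinality half of condition (iii) for $F$, where the clean lifting trick above breaks down: the relevant test arrows now live in $\ebicat$ and cannot be lifted to $\dbicat$, since $F$ is a comorphism and not a fibration. My plan is to exploit the continuity of $q$, which we have already proved. Given a cofinality square $F(e)f=F(e')g=h$ in $\ebicat$ with $e,e'\in S$, applying $q$ produces a cofinality square $p(e)\,q(f)=p(e')\,q(g)=q(h)$ for $p$, so the continuity of $p$ supplies a $J$-cover of $q(Z)$ together with zigzags in the comma categories $\comma{Y_i}{D^p_S}$ connecting $q(f)$ and $q(g)$, where $D^p_S=q\circ D^F_S$. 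Lifting this cover along the fibration $q$ to a $J_\ebicat$-cover $\{z_i\}$ of $Z$, I would then transport the downstairs zigzags upward along the canonical functor $\comma{Z_i}{D^F_S}\to\comma{Y_i}{D^p_S}$ induced by $q$; this functor is a discrete opfibration, so forward morphisms of $\fib S$ lift canonically via $F$. The delicate point — and the step I expect to be hardest — is to ensure that the lifted zigzag actually joins the prescribed endpoints $(e,f z_i)$ and $(e',g z_i)$, which amounts to controlling the backward morphisms of the zigzag, presumably after a further refinement of the cover. This is precisely the content packaged in \cite[Corollary 4.47]{denseness}.
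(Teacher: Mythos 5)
The paper does not actually prove this proposition: it is imported verbatim from \cite[Theorem 4.44 and Corollary 4.47]{denseness}, so there is no in-paper argument to compare yours against, and your proposal has to stand on its own. Your treatment of the first statement does stand: reducing to condition (ii) of Proposition \ref{prop:ft_cont_caratterizzazione}, extracting the $J$-cover $\{p(c_i)\}$ from the cartesian part of $R$ via Proposition \ref{prop:girtpl_descrizione_sieves}, and checking matching by factoring a cartesian lift $\hat w$ through $c_i$ and $c_j$ is correct. The closing step (``lifting an arbitrary $f\in R$ through its cartesian part and invoking separatedness'') is compressed --- one must restrict $P(p(f))(x)$ and $x_f$ along the pullback cover $p(f)^*\langle p(c_i)\rangle$ and use that $f\hat u$ lies in $R$ and factors through some $c_i$ by cartesianity --- but the idea is right and the details are routine. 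Likewise, the comorphism part and the cover-preservation part of the second statement are fine.

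The genuine gap is the cofinality half of condition (iii) for $F$, which is the actual content of the second statement, and your argument for it does not close. You correctly diagnose the obstruction yourself: the induced functor $\comma{Z_i}{D^F_S}\to\comma{Y_i}{D^p_S}$ lifts \emph{forward} morphisms of a zigzag uniquely once a lift of the source is chosen, but traversing a zigzag also requires lifting backward morphisms against a \emph{prescribed target}, which a discrete opfibration does not do; you then defer precisely this step --- the one you call the hardest --- to the corollary being proved. A proof whose crux is outsourced to the citation is not a proof. Nor is it clear that ``a further refinement of the cover'' can repair it, since both the comma categories and the zigzags supplied by the continuity of $p$ change with the refinement, so the endpoints you need to join move with you.

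The gap can be closed without zigzags by running the second statement through condition (ii) as well: for a $J_\ebicat$-sheaf $W$ and a $J_\dbicat$-cover $R$ of $D$ with cartesian part $\{c_i\}$, extend a matching family for $W\circ F\op$ on $R$ to one for $W$ on the $J_\ebicat$-cover generated by $\{F(c_i)\}$ via $y_{F(c_i)k}:=W(k)(x_{c_i})$, checking well-definedness after restriction along a cover of cartesian lifts. The structural fact that makes this work --- and which your zigzag transport lacks --- is that any arrow of $\ebicat$ into an object $F(D')$ factors, up to vertical isomorphism, as $F(\widehat{w}_{D'})\circ v$ with $\widehat{w}_{D'}$ a cartesian lift in $\dbicat$ and $v$ vertical, because $F$ sends cartesian lifts to cartesian lifts and these are unique up to isomorphism; this is what permits the ``division by $F(k)$'' that backward morphisms require.
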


The study of Giraud's topologies can provide some insight on the fibration considered. As a basic example of this, in the hypothesis of subcanonicity of $J$, the property of being a prestack can be checked directly by analysing Giraud's topology:
\begin{prop}\label{prop:sitosubcan-prestack_sse_Girtpl_subcanonica}
	Consider a subcanonical site $(\cbicat, J)$ and a cloven fibration $p:\dbicat\rightarrow\cbicat$: then $p$ is a prestack if and only if Giraud's topology $J_\dbicat$ is subcanonical.
\end{prop}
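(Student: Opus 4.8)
The plan is to prove both implications by reducing everything to the characterization of prestacks via $\Hom$-presheaves (the Proposition immediately preceding Definition \ref{def:stack_canonica_su_sito}): $p$ is a $J$-prestack if and only if, for every $X$ and every pair of objects of $\dcat(X)$, the associated $\Hom$-presheaf on $\cbicat/X$ is a $J_X$-sheaf. Before starting I would simplify the bookkeeping by reducing to the split Grothendieck case. Both conditions in the statement are invariant under equivalence of fibrations: an equivalence $\dbicat\simeq\dbicat'$ over $\cbicat$ preserves cartesian arrows and commutes with the projections up to isomorphism, hence by Proposition \ref{prop:girtpl_descrizione_sieves} carries $J_\dbicat$-covering sieves to $J_{\dbicat'}$-covering ones, so it is an equivalence of sites and subcanonicity is transported across it; likewise the prestack condition is manifestly equivalence-invariant. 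Since every Street fibration is equivalent to a split one (Corollary \ref{cor:eqv_fibr_e_splfibr}), I may assume $p$ split, so that cartesian lifts, transition functors and their domains are all strict: $\widehat{y}_A$ projects to $y$ on the nose, $\dom(\widehat{y}_A)=\dcat(y)(A)$, and the isomorphisms $\theta,\chi,\lambda$ are identities.

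The technical heart is a reduction of the sheaf condition for representables to cartesian covers. By Proposition \ref{prop:top_minima_comorfismo} the topology $J_\dbicat=M^p_J$ is generated by the coverage of sieves obtained by lifting $J$-covers along $p$, and using Proposition \ref{prop:girtpl_descrizione_sieves} one checks that the families of cartesian lifts $\mathcal{C}_{B,S}:=\{\widehat{s}_B:\dom(\widehat{s}_B)\to B\mid s\in S\}$, for $B$ in $\dbicat$ and $S\in J(p(B))$, form a coverage generating $J_\dbicat$ (the required refinement axiom holds because the composite of an arbitrary arrow with a cartesian lift factors through a cartesian lift of the projected arrow). Hence, by the standard fact that a presheaf is a sheaf precisely when it satisfies the sheaf axiom for a generating coverage, $\yo_\dbicat(A)$ is a $J_\dbicat$-sheaf if and only if it satisfies the amalgamation condition for every such $\mathcal{C}_{B,S}$. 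This is what lets me index matching families by cartesian lifts $s\in S$ and thereby split them into a base part and a fibre part; without it one would have to cope with arbitrary arrows of a covering sieve, whose projections are unconstrained.

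For ``prestack $\Rightarrow$ $J_\dbicat$ subcanonical'' I would fix $A$ over $X:=p(A)$ and $B$ over $Y:=p(B)$, a cover $\mathcal{C}_{B,S}$, and a matching family $(g_s:\dcat(s)(B)\to A)_{s\in S}$. Applying $p$ gives a matching family $(p(g_s))_{s\in S}$ for $\yo_\cbicat(X)$ over the $J$-cover $S$ of $Y$; since $(\cbicat,J)$ is subcanonical this has a unique amalgamation $t:Y\to X$, which must be the projection of any amalgamation of $(g_s)$. Writing $A':=\dcat(t)(A)\in\dcat(Y)$ and factoring each $g_s$ through the cartesian lift $\widehat{ts}_A$ yields vertical arrows $\overline{g}_s:\dcat(s)(B)\to\dcat(s)(A')$, which (by the lemma identifying $\Hom$-matching families with $2$-cells of fibrations, together with Corollary \ref{cor:Hom_mfam_su_terminali} reading $S$ as a $J_Y$-sieve over $[1_Y]$) constitute a matching family for $\Hom((B,1_Y),(A',1_Y))$. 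The prestack hypothesis makes this $\Hom$-presheaf a $J_Y$-sheaf, so there is a unique vertical amalgamation $\overline{g}:B\to A'$; then $g:=\widehat{t}_A\,\overline{g}:B\to A$ is the required amalgamation, its uniqueness following from that of $t$ and of $\overline{g}$. The converse ``$J_\dbicat$ subcanonical $\Rightarrow$ prestack'' runs backwards: given $A,B\in\dcat(X)$ and a matching family for $\Hom((A,1_X),(B,1_X))$ over a $J$-cover $S$ of $X$ (read over $[1_X]$ via Corollary \ref{cor:Hom_mfam_su_terminali}), that is compatible vertical arrows $\gamma_s:\dcat(s)(A)\to\dcat(s)(B)$, I set $g_s:=\widehat{s}_B\,\gamma_s$ and obtain a matching family for $\yo_\dbicat(B)$ over the $J_\dbicat$-cover $\mathcal{C}_{A,S}$; subcanonicity of $J_\dbicat$ gives a unique amalgamation $g:A\to B$. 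The place where subcanonicity of the base re-enters is the check that $g$ is vertical: one has $p(g)\circ s=p(g_s)=s$ for every $s\in S$, and since $\yo_\cbicat(X)$ is $J$-separated this forces $p(g)=1_X$, so $g$ is the sought amalgamation in $\dcat(X)$.

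The step I expect to be the genuine obstacle is the coverage reduction of the second paragraph together with the verification that the fibrewise data $(\overline{g}_s)$ really is a $\Hom$-matching family (and, conversely, that $g$ is uniquely determined): this is where the compatibility conditions of a descent datum must be matched, arrow by arrow, against the cocycle conditions defining a matching family for the representable. In the split setting these identifications are bijective and essentially formal, which is exactly why I would carry out the equivalence-invariant reduction to split fibrations first; in the general cloven case the same argument goes through but requires threading the canonical isomorphisms $\theta$, $\chi$ and $\lambda$ of Remark \ref{remark:frecce_chi_lambda} through every comparison, which is purely routine but notationally heavy.
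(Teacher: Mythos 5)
Your proof is correct and follows essentially the same route as the paper's: project the matching family to the base, use subcanonicity of $J$ to obtain the base component, and use the $\Hom$-presheaf characterization of prestacks to obtain the fibre component (and conversely, with separatedness of the representables on the base guaranteeing verticality of the amalgamation). The only organizational differences are your preliminary reduction to split fibrations and your explicit isolation of the coverage reduction to cartesian covers; the paper instead works directly with the cloven Street fibration, threading the canonical isomorphisms $\theta$, $\chi$, $\rho$, $\sigma$ through the same computations and leaving the corresponding verifications as ``lengthy but straightforward'' calculations.
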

\begin{proof}
	We will provide only the definition of the relevant arrows, leaving all calculations to the reader.
	
	We start by supposing that $\dbicat$ is a prestack. Consider a $J_\dbicat$-covering family $R=\{f_i:\dom(f_i)\rightarrow D\ |\ i\in I\}$: then $p(R)=\{p(f_i)\}$ is a $J$-covering family of $p(D)$. We will call $S$ the $J_{p(D)}$-covering family $\{p(f_i):[p(f_i)]\rightarrow[1_{p(D)}]\ i\in I \}$ for $[1_{p(D)}]$ in $\cbicat/p(D)$. Consider now some other object $X$ of $\dbicat$: a matching family for $R$ and $\yo(X)$ is the given, for any $i\in I$, of an arrow $\alpha_i:\dom(f_i)\rightarrow X$, subject to the condition that for every span of arrows $h$ and $k$ such that $f_ih=f_jk$ it holds that $\alpha_ih=\alpha_jk$.
	
	We start by considering the family of arrows $\{p(\alpha_i):\dom(p(f_i))\rightarrow p(X) \}$: it is easy to verify that they are a matching family for $\yo(p(X))$ and $p(R)$, and hence since $J$ is subcanonical there exists a unique $\beta: p(D)\rightarrow p(X)$ such that $p(\alpha_i)=\beta p(f_i)$. 
	
	Using $\beta$ we can now `move over' $p(D)$ by considering the $\Hom$-functor $\Hom((D,1_{p(D)}), \dcat(\beta)(X,1_{p(X)})):(\cbicat/p(D))\op\rightarrow\Set$. We will now build a matching family for it and $S$. To do so, notice first that $f_i$ and $\widehat{p(f_i)}_D$ are both cartesian lifts of $p(f_i)$: therefore, there is a unique canonical isomorphism $\rho_i:\dom(\widehat{p(f_i)}_D)\rightarrow\dom(f_i)$ comparing them. There is also a canonical isomorphism $\sigma_i:\dom(\alpha_i)\rightarrow \dom(\widehat{p(\alpha_i)}_X)$. So we can consider the composite arrows $\gamma_i:=\chi_{\beta, p(f_i),X}\sigma_i\rho_i$: a lenghty calculation shows that the constitute a matching family for $\Hom((D,1_{p(D)}), \dcat(\beta)(X,1_{p(X)}))$ and $S$: if $\dbicat$ is a prestack they admit an amalgamation $\gamma:\dom(\widehat{1_{p(D)}}_D)\rightarrow\dom(\widehat{\theta_{\beta,X}}_{\dom(\widehat{\beta}_X)})$. Now, if we consider the arrow $\alpha$ defined as the composite
	\[D \xrightarrow{\widehat{1_{p(D)}}_D\inv}\dom(\widehat{1_{p(D)}}_D)\xrightarrow{\gamma}\dom(\widehat{\theta_{\beta,X}}_{\dom(\widehat{\beta}_X)})\xrightarrow{\widehat{\theta_{\beta,X}}_{\dom(\widehat{\beta}_X)}}\dom(\widehat{\beta}_X)\xrightarrow{\widehat{\beta}_X}X \]
	it is again lenghty but straightforward to see that it provides an amalgamation for the matching family $\{\alpha_i\}$, proving that $\yo(X)$ is a $J_\dbicat$-sheaf and hence that $J_\dbicat$ is subcanonical.
	
	Conversely, suppose $J_\dbicat$ is subcanonical and consider a $J_X$-covering sieve $S$ for it and $\Hom((A,\alpha),(B,\beta))$: without loss of generality we may assume that $S=\{f_i:[f_i]\rightarrow [1_X]\ |\ i\in I \}$ covers the terminal object of $\cbicat/X$, by Corollary \ref{cor:Hom_mfam_su_terminali}, so that the matching family consists of arrows $\gamma_i:\dom(\widehat{\alpha f_i}_A)\rightarrow\dom(\widehat{\beta f_i}_B)$ satisfying suitable compatibility conditions. We can now consider the $J_\dbicat$-covering family $R=\{\widehat{\alpha f_i}_A\ |\ i\in I \}$ over $A$, and the arrows $\mu_i:=\widehat{\beta f_i}_B \gamma_i:\dom(\widehat{\alpha f_i}_A)\rightarrow B$. A calculation shows that it is a matching family for $R$ and $\yo(B)$, and thus it admits a unique amalgamation $\gamma:A\rightarrow B$, which also proves to be a unique amalgamation for the original matching family $\{\gamma_i\}$: hence all $\Hom$-functors are sheaves and we conclude that $\dbicat$ is a prestack.  
\end{proof}

\begin{remark}
	This is a broad generalization of Proposition 2.1(3) of \cite{giraud.classifying}, where $\dbicat$ is supposed to be a lex stack (and a Grothendieck fibration) and the site is of the form $(\Etopos, J\can_\Etopos)$. 
\end{remark}
\begin{ex}
	It may happen that $K$ is subcanonical and $p$ is a prestack without $J$ begin subcanonical. Indeed, consider $\cbicat$ to be the arrow category $t:0\rightarrow 1$, $\dbicat$ the empty category and $p:\dbicat\rightarrow \cbicat$ the unique functor. Notice that $\dbicat$ corresponds to the discrete $\cbicat$-indexed category $\dcat:\cbicat\op\rightarrow \CAT$ with constant value the empty category. Now consider on $\cbicat$ the topology $J$ whose only non-trivial sieve is the singleton $\{t\}$: it is not subcanonical, since $\yo(0)$ is not a $J$-sheaf. $\dcat$ is a sheaf for this topology, and hence it is a $J$-stack; moreover, the induced topology $K$ on $\dbicat$ is the only topology on the empty category, which is trivially subcanonical.
\end{ex}

\chapter{Change of base functors}\label{chap:base_change}

It is well known that a continuous map of topological spaces $f:X\rightarrow Y$ induces a geometric morphism
\[
\Sh(f):\Sh(X)\rightarrow \Sh(Y)
\]
ot toposes of sheaves, acting as follows: \begin{itemize}
	\item given a sheaf $P\in\Sh(X)$, the \emph{direct image} of $P$ along $f$, \ie the presheaf $\Sh(f)_*(P)$, is defined for any $U\subseteq Y$ open as \[\Sh(f)_*(P)(U):=P(f\inv(U));\]
	\item starting from a sheaf $Q\in \Sh(Y)$, and denoted by $p_Q:E_Q\rightarrow Y$ its étale bundle over $Y$ (see Section \ref{sec:adj_topologica}), we can pull it back along $f:X\rightarrow Y$ in order to obtain an étale bundle over $X$: the corresponding sheaf is the \emph{inverse image} $\Sh(f)^*(Q)$ of $Q$ along $f$.
\end{itemize}
In fact, $\Sh(f)$ is the geometric morphism induced by homomorphism of frames $f\inv:\Ocal(Y)\rightarrow \Ocal(X)$, seen as a morphism of sites with respect to the canonical topologies. 

One can consider the same kind of change of base processes when working with fibrations and stacks (one standard reference for these constructions is provided by Sections I.2 and II.3 of \cite{giraud.cohomologie}). 

In this chapter we shall recall some results about these concepts, and establish a number of new results. For instance, we shall provide an explicit description of the inverse image of fibrations in terms of pseudocolimits of categories, and show that the inverse image of a fibration can be understood as a localization of a particular fibration of generalized elements. We will also study the relationship between these functors and the relative comprehensive factorization of functors introduced in \cite{denseness}. Moreover, for the purpose of extending to categories of stacks the constructions of the geometric morphisms induced by morphisms or comorphisms of sites, we shall introduce pseudo-Kan extensions and apply them to prove that, similarly to the sheaf-theoretic setting, such functors between sites also induce adjunctions between the corresponding categories of stacks. We also discuss the notion of continuous functor between sites from the point of view of stacks, and show that, not only composing with a continuous functor transforms sheaves into sheaves, but it also transforms stacks into stacks. Lastly, we show that pullbacks of Giraud toposes can already be computed at the level of sites, if one adopts the point of view of comorphisms of sites.

\section{Direct image of fibrations}\label{sec:dir_imm_fibrations}

The simplest change of base technique for fibrations is the \emph{direct image}, which, as the name suggests, consists in pulling back a fibration along a functor: indeed, it is well known that if $q:\ebicat\rightarrow\dbicat$ is a Grothendieck fibration, its pullback along a functor $F:\cbicat\rightarrow \dbicat$ provides a fibration over $\cbicat$. Let us show here that weakening to pseudopullbacks allows us to talk about the direct image of Street fibrations:
\begin{prop}\label{prop:directimage_Street_fibrations}
	Consider a functor $F:\cbicat\rightarrow\dbicat$, a fibration $q:\ebicat\rightarrow \dbicat$ and the strict pseudopullback
	\[
	\begin{tikzcd}
		\cbicat\times_\dbicat\ebicat \ar[d, "p"'] \ar[r, "\pi"] & \ebicat \ar[d, "q"] \ar[dl, equal, "\sim"sloped]\\
		\cbicat \ar[r, "F"'] & \dbicat
	\end{tikzcd}:\] 
	the functor $p$ is a fibration over $\cbicat$, called the \emph{direct image of $q$ along $F$}\index{functor!direct image}, and is denoted by $F_*(q)$; the functor $\pi$ maps $p$-cartesian arrows to $q$-cartesian arrows. In particular, if $q$ is cloven then its inverse image $p$ is also cloven.
	Computing the direct image of fibrations over $\dbicat$ along the functor $F$ provides a 2-functor \[F_*:\Fib_\dbicat\rightarrow \Fib_\cbicat,\]\index{$F_*$} defined by mapping a morphism $(A,\alpha):[q:\ebicat\rightarrow \dbicat]\rightarrow [r:\rbicat\rightarrow\dbicat]$ to the functor
	\[
	F_*(A,\alpha):\cbicat\times_\dbicat\ebicat\rightarrow \cbicat\times_\dbicat\rbicat,
	\]
	defined on objects as $F_*(A,\alpha)(X,E,f:q(E)\isorightarrow F(X)):=(X, A(E), f\alpha_E)$ and on arrows as $F_*(A,\alpha)(t,s)=(t, A(s))$. Similarly, a 2-cell $\gamma:(A,\alpha)\Rightarrow (B,\beta)$ of morphisms of fibrations is sent to the 2-cell $F_*(\gamma):F_*(A,\alpha)\Rightarrow F_*(B,\beta)$, defined componentwise by
	\[
	F_*(\gamma)(X,E,f):=(1, \gamma_E):(X, A(E), f\alpha_E)\rightarrow (X, B(E), f\beta_E).
	\]
	
	If we consider the pseudofunctor $\Ifrak([q])=\ecat$, then $\Ifrak(F_*([q]))\simeq \ecat\circ F\op$: that is, on $\dbicat$-indexed categories the direct image along $F$ can be described as precomposition with $F\op$.
\end{prop}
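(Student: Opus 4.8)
The plan is to produce, for each object $X$ of $\cbicat$, an explicit equivalence between the fibre of $F_*(q)$ at $X$ and the fibre $\ecat(F(X))$, and then to promote this to a pseudonatural equivalence of $\cbicat$-indexed categories. Writing $\dcat:=\Ifrak(F_*([q]))$, the fibre $\dcat(X)$ has as objects the pairs $\big((X',E,f),\alpha\big)$, where $(X',E,f)$ is an object of the strict pseudopullback $\cbicat\times_\dbicat\ebicat$ (so $f$ is an isomorphism between $F(X')$ and $q(E)$) and $\alpha:X\isorightarrow X'$, while $(\ecat\circ F\op)(X)=\ecat(F(X))$ has as objects the pairs $(E,\eta)$ with $\eta:F(X)\isorightarrow q(E)$. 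First I would define $\Phi_X:\dcat(X)\rightarrow\ecat(F(X))$ by $\big((X',E,f),\alpha\big)\mapsto (E,\,f\circ F(\alpha))$, sending an arrow $(r,s)$ to $s$, together with $\Psi_X:\ecat(F(X))\rightarrow\dcat(X)$ sending $(E,\eta)\mapsto\big((X,E,\eta),1_X\big)$ and $\sigma\mapsto(1_X,\sigma)$. A direct check using the compatibility equation $q(s)\circ f_1=f_2\circ F(r)$ of the pseudopullback shows both are well-defined functors; moreover $\Phi_X\Psi_X=\id$ on the nose, and the isomorphisms $(\alpha,1_E):\big((X,E,fF(\alpha)),1_X\big)\isorightarrow\big((X',E,f),\alpha\big)$ of $\dcat(X)$ assemble into a natural isomorphism $\Psi_X\Phi_X\cong\id_{\dcat(X)}$, so each $\Phi_X$ is an equivalence.

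The conceptual reason behind this fibrewise equivalence, which I would record as motivation, is the pasting law for pseudopullbacks: the essential fibre of $F_*(q)$ at $X$ is the pseudopullback of $F_*(q)$ along the constant functor $e_X:\onecat\rightarrow\cbicat$, and since the defining square of $F_*(q)$ is itself a pseudopullback this coincides, up to equivalence, with the pseudopullback of $q$ along $F\circ e_X=e_{F(X)}$, which is exactly the fibre $\ecat(F(X))$. (One only gets an equivalence rather than an isomorphism here, consistently with the fact that strict pseudopullbacks are not equivalence-stable.)

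For pseudonaturality I would compute the transition morphisms using the cleavage of $F_*(q)$ induced from that of $q$. For $y:Y\rightarrow X$ in $\cbicat$, the $F_*(q)$-cartesian lift of $\alpha y$ ending at $(X',E,f)$ is obtained by taking the $q$-cartesian lift $\widehat{fF(\alpha y)}_E$, so that $\dcat(y)\big((X',E,f),\alpha\big)=\big((Y,\dom(\widehat{fF(\alpha y)}_E),\theta_{fF(\alpha y),E}),1_Y\big)$, the base component being lifted identically. Applying $\Phi_Y$ yields $(\dom(\widehat{fF(\alpha y)}_E),\theta_{fF(\alpha y),E})$; on the other side, $\ecat(F(y))\big(\Phi_X((X',E,f),\alpha)\big)=\ecat(F(y))(E,fF(\alpha))=(\dom(\widehat{fF(\alpha)F(y)}_E),\theta_{fF(\alpha)F(y),E})$. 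Since $F$ is a strict functor, $F(\alpha)F(y)=F(\alpha y)$, so the two agree, and the square relating $\Phi_Y\dcat(y)$ with $\ecat(F(y))\Phi_X$ commutes (strictly on underlying data).

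The hard part will be the coherence bookkeeping of the last step: verifying that the induced cleavage of $F_*(q)$ really has the form asserted above, and that the pseudofunctor structure isomorphisms $\phi^\dcat_X$ and $\phi^\dcat_{y,z}$ are transported by $\Phi=(\Phi_X)_X$ precisely to $\phi^\ecat_{F(X)}$ and $\phi^\ecat_{F(y),F(z)}$, so that $\Phi$ is a genuine $1$-cell of $\Ind_\cbicat$ and not merely a fibrewise equivalence. This amounts to tracking the canonical isomorphisms $\theta$, $\chi$ and $\lambda$ of Remark \ref{remark:frecce_chi_lambda} through $\Phi_X$ and $\Psi_X$; it is routine but is the only place where genuine care is required. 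Once this compatibility is established, $\Phi$ is a pseudonatural transformation each of whose components is an equivalence, whence $\Ifrak(F_*([q]))\simeq\ecat\circ F\op$ in $\Ind_\cbicat$, as claimed.
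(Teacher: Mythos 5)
Your argument for the final assertion, $\Ifrak(F_*([q]))\simeq\ecat\circ F\op$, is correct and is in substance the same fibrewise comparison the paper uses: the paper's proof sketches exactly your $\Psi_X:(E,\eta)\mapsto\big((X,E,\eta),1_X\big)$ and $\Phi_X:\big((X',E,f),\alpha\big)\mapsto(E,fF(\alpha))$, and your observation that $\Phi_X\Psi_X=\id$ on the nose while $\Psi_X\Phi_X\cong\id$ via $(\alpha,1_E)$ is the right way to make that sketch precise. The aside explaining the equivalence via pasting of pseudopullbacks along $e_X$ is a genuinely useful conceptual supplement (and correctly flagged as yielding only an equivalence, not an isomorphism).

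The gap is one of coverage rather than of correctness: the proposition asserts considerably more than the indexed-category description, and your proposal presupposes all of it. Concretely, you never establish (i) that $p=F_*(q)$ is a fibration, i.e.\ that the lift of $y:Y\rightarrow X$ at $(X,E,f)$ obtained by taking a $q$-cartesian lift $\bar{y}:E'\rightarrow E$ of $fF(y)$ with $q(\bar{y})g=fF(y)$ actually satisfies the universal property of a cartesian arrow in $\cbicat\times_\dbicat\ebicat$ (this is where the compatibility equation of the pseudopullback has to be used to produce and to force uniqueness of the factorisation); (ii) that $\pi$ sends $p$-cartesian arrows to $q$-cartesian arrows; and (iii) that $F_*$ is a $2$-functor, which requires checking that $F_*(A,\alpha)$ as described lands in the pseudopullback (equivalently, is induced by its universal property), that it preserves cartesian arrows because $A$ does, and that $F_*(\gamma)$ is a legitimate $2$-cell of $\Fib_\cbicat$. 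Your discussion of the cleavage of $F_*(q)$ in the pseudonaturality step implicitly uses (i) and (iii) without proving them; since $\Ifrak$ is only defined on cloven fibrations, the comparison you carry out is not even well-posed until (i) and the inheritance of the cleavage are in place. These verifications are routine (the paper itself dispatches them as "lengthy but straightforward"), but they must at least be stated and sketched for the proof to stand as a proof of the proposition as a whole.
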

\begin{proof}
	Start with an object $(X,E,f:F(X)\isorightarrow q(E))$ of $\cbicat\times_\dbicat\ebicat$ and an arrow $y:Y\rightarrow X$: we want to build a cartesian lift of $y$ with codomain $(X,E,f)$. To do so, consider the composite arrow  $f F(y):F(Y)\rightarrow q(E)$ in $\dbicat$: since $q$ is a fibration, there are a cartesian arrow $\bar{y}:E'\rightarrow E$ in $\ebicat$ and an isomorphism $g:F(Y)\isorightarrow q(E')$ such that $q(\bar{y})g=fF(y)$. This provides an arrow $(y,\bar{y}):(Y, E',g)\rightarrow (X,E,f)$ such that $p(y,\bar{y})=y$. The verification that it is a cartesian lift is lenghty but straightforward.
	
	It is also obvious that if $q$ is cloven then $p$ is also cloven, since a canonical choice of lifting for $q$ entails a canonical choice of liftings for $p$.
	The fact that $F_*$ is a 2-functor is just a straightforward verification: notice in particular that the functor $F_*(A,\alpha)$ exists by the universal property of pseudopullbacks, while the fact that it preserves cartesian arrows follows immediately from the fact that $A$ does.
	
	Finally, the verification that $F_*$ acts on $\dbicat$-indexed categories as a precomposition is a lengthy but easy calculation. Let us sketch how it works by showing how to define on objects the fibre-wise equivalence $\ecat(F(X))\simeq \Ifrak(F_*([q]))(X)$, for $X$ an object of $\cbicat$ and $\ecat:=\Ifrak([q])$. By unwinding the definition of $\Ifrak$ we have that $\Ifrak(F_*([q]))(X)$ has as objects quadruples
	\[
	(Y,E, f:F(Y)\isorightarrow q(E), \gamma:X\isorightarrow Y),
	\]
	where $Y$ belongs to $\cbicat$ and $E$ to $\ebicat$. Then one can associate to each object $(E,\alpha:F(X)\isorightarrow q(E))$ of $\ecat(F(X))$ the object $(X,E, \alpha, 1_X)$ of $\Ifrak(F_*([q]))(X)$, and conversely to the object $(Y,E,f, \gamma)$ above one can associate the object $(E, fF(\gamma):F(X)\isorightarrow q (E))$ of $\dcat(F(X))$.
\end{proof}
\begin{remarks}\label{rmk:laxcolim_weighted_is_pb_grfibr2}
	\begin{enumerate}[(i)]
	\item By their very definition, we can now think of fibres as direct images. Given a fibration $p:\dbicat\rightarrow\cbicat$ and an object $X$ in $\cbicat$, the fibre of $p$ at $X$ is defined as the pseudopullback of $p$ along the functor from the one-object category,
	\[		e_X:\onecat\rightarrow \cbicat,
	\]
    which selects the object $X$. In short, the fibre of $p$ at $X$ is the domain of $e_X^*([p])$.
    \item The fact that the pullback of a fibration is still a fibration can be interpreted as a result about the compatibility of pullbacks and colimits.  Indeed, given a pseudofunctor $\ecat:\dbicat\op\rightarrow\CAT$, by Proposition \ref{prop:laxcolim_grothconstr} we know that $ \gbicat(\ecat)\simeq\colim_{lax}\ecat $ in $\CAT$, and thus \textit{a fortiori} in $\CAT/\dbicat$. If now we consider the previous result, we have that for any functor $F:\cbicat\rightarrow\dbicat$ the pseudopullback of $\colim_{lax}(\ecat)$ along $F$ is equivalent to the colimit $\colim_{lax}(\ecat\circ F\op)$.
    \item The argument in the previous item allows us to apply arguments about direct images of fibrations also to the topic of commutativity of weights and diagrams. Given two pseudofunctors $\dcat:\cbicat\op\rightarrow\CAT$ and $R:\cbicat\rightarrow\CAT$, we know by Corollary \ref{cor:commute_weight_diagram_conified} that there exists an equivalence $\colim_{oplax}(R\Vop\circ p_{\dcat})\simeq \colim_{lax}(\dcat\circ p_R)$. However, we can think of $\colim_{lax}(\dcat\circ p_R)$ as the fibration associated to the direct image of $\dcat$ along $p_{R}\op$; in other words, we have a pullback diagram
    \[
    \begin{tikzcd}[column sep=50]
        \colim_{lax}(\dcat\circ p_R) \ar[d] \ar[r, no head, "\sim"] & \gbicat(\dcat\circ p_R) \ar[d] \ar[r,"{(p_R\op)^*[p_\dcat]}"] & \gbicat(R)\op\ar[d,"p_R\op"] \\
        \colim_{lax}(\dcat)\ar[r,"\sim"]&\gbicat(\dcat) \ar[r, "p_\dcat"']& \cbicat \ar[ul, phantom, "\lrcorner" near end]
    \end{tikzcd}.
    \]
    If now we apply $\op$ to the square on the right, it is still a pullback square, but this time we can see it as computing the pullback of the fibration $p_R$ along the functor $p_\dcat\op$, \ie the inverse image of $R$ along $p_\dcat\op$:
    \[
    \begin{tikzcd}
        \colim_{lax}(R\circ p_\dcat)\ar[d, "\sim"{below, sloped}, no head] \ar[r] & \colim_{lax}(R)\ar[d,"\sim"sloped, no head] \\
        \gbicat(R\circ p_\dcat)\ar[d,"{(p_\dcat\op)^*[p_R]}"'] \ar[r] & \gbicat(R)\ar[d,"p_R"] \\
        \gbicat(\dcat)\op \ar[r, "p_\dcat\op"']& \cbicat\op \ar[ul, phantom, "\lrcorner" near end]
    \end{tikzcd}.
    \]
    This implies that there is an equivalence of categories
    \[\colim_{lax}(R\circ p_\dcat)\simeq \colim_{lax}(\dcat\circ p_R)\op,\]
    which expresses an alternative form of the commutativity of diagrams and colimits. Combining this with the identity from Corollary \ref{cor:commute_weight_diagram_conified} we also obtain the further equivalence
    \[
    \colim_{oplax}(R\Vop\circ p_\dcat)\simeq \colim_{lax}(R\circ p_\dcat)\op.
    \]
	\end{enumerate}
\end{remarks}

\section{Inverse image of fibrations}\label{sec:inv_imm_fibrations}
The behaviour of the direct image on pseudofunctors restricts to presheaves: its restriction
\[
(-\circ F\op):[\dbicat\op,\Set]\rightarrow [\cbicat\op,\Set]
\]
is a functor which we have already met in the context of comorphisms of sites, and which admits both a left and a right adjoint (see Proposition \ref{prop:lan_ran}). The same happens at the level of indexed categories, and this provides the notion of \emph{pseudo-Kan extensions} and of \emph{inverse image}:

\begin{prop}\label{prop:pseudoKan}\index{Kan extension! pseudo-}
	Denote by $\Ind_\cbicat^s$\index{$\Ind_\cbicat^s$} the sub-2-category of $\Ind_\cbicat$ of pseudofunctors with values in $\Cat$ (\ie `small' $\cbicat$-indexed categories). Consider any functor $F:\cbicat\rightarrow \dbicat$ and the \emph{direct image} 2-functor
	\[
	F^*:\Ind^s_\dbicat\rightarrow \Ind^s_\cbicat
	\]
	which acts by precomposition with $F\op$. The 2-functor $F^*$ has both a left and a right 2-adjoint, denoted respectively by $\Lan_{F\op}$\index{$\Lan_{F\op}$} and $\Ran_{F\op}$\index{$\Ran_{F\op}$}, which act as follows: 
	\begin{itemize}
		\item for any $D$ in $\dbicat$ denote by $\pi^D_F:\comma{D}{F}\rightarrow \cbicat$ the canonical projection functor: then for $\ecat:\cbicat\op\rightarrow \Cat$, its image $\Lan_{F\op}(\ecat):\dbicat\op\rightarrow\Cat$ is defined componentwise as
		\[
		\Lan_{F\op}(\ecat)(D):=\colim_{ps}\left( \comma{D}{F}\op\xrightarrow{(\pi^D_F)\op}\cbicat\op\xrightarrow{\ecat}\Cat \right)
		\] 
		The pseudofunctor $\Lan_{F\op}(\ecat)$ is called the \emph{inverse image of $\ecat$ along $F$}\index{functor!inverse image}.
		\item for any $D$ in $\dbicat$ denote by $\pi'^D_F:\comma{F}{D}\rightarrow \cbicat$ the canonical projection functor: then for $\ecat:\cbicat\op\rightarrow \Cat$, its image $\Ran_{F\op}(\ecat):\dbicat\op\rightarrow\Cat$ is defined componentwise as
		\[
		\Ran_{F\op}(\ecat)(D):=\lim_{ps}\left( \comma{F}{D}\op\xrightarrow{(\pi'^D_F)\op}\cbicat\op\xrightarrow{\ecat}\Cat \right)
		\] 
	\end{itemize}
\end{prop}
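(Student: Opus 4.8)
The plan is to recognise the two displayed formulas as the pointwise expressions for the left and right \emph{pseudo}-Kan extensions of $\ecat$ along $F\op$, and to derive the two adjunctions from the universal properties of weighted pseudo(co)limits established in Section~\ref{sec:colimiti}. The organising remark is that, for a fixed $D$ in $\dbicat$, the comma category $\comma{D}{F}$ is the category of elements of the hom-functor $\dbicat(D, F-):\cbicat\rightarrow\Set$, with $\pi^D_F$ the corresponding (discrete) projection to $\cbicat$; dually, $\comma{F}{D}$ is the category of elements of $\dbicat(F-, D)$. Thus the a priori weighted colimit computing the pointwise left Kan extension has a \emph{discrete} weight, and by the conification results of the previous section (in particular the last claim of Corollary~\ref{cor:colimiti_pesati_conificati}) it may be rewritten as the conical pseudocolimit of $\ecat\circ(\pi^D_F)\op$ — exactly the right-hand side of the formula for $\Lan_{F\op}(\ecat)(D)$ — and symmetrically for the pseudolimit computing $\Ran_{F\op}$.

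First I would check that these pointwise assignments organise into genuine pseudofunctors $\dbicat\op\rightarrow\Cat$. For an arrow $d:D'\rightarrow D$ of $\dbicat$, precomposition by $d$ sends an object $[\phi:D\rightarrow Fc]$ of $\comma{D}{F}$ to $[\phi d:D'\rightarrow Fc]$, defining a functor $\comma{D}{F}\rightarrow\comma{D'}{F}$ over $\cbicat$, which by the universal property of the pseudocolimit induces an essentially unique functor $\Lan_{F\op}(\ecat)(D)\rightarrow\Lan_{F\op}(\ecat)(D')$; the structural $2$-isomorphisms and their coherence come from comparing the two reindexings of a composite, again through the uniqueness clause of the universal property. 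The right-hand formula is handled dually, with $\comma{F}{D}$ and pseudolimits.

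Next I would establish $\Lan_{F\op}\dashv F^*$ by producing a pseudonatural equivalence
\[
\Ind^s_\dbicat(\Lan_{F\op}(\ecat),\dcat)\;\simeq\;\Ind^s_\cbicat(\ecat, \dcat\circ F\op).
\]
Unwinding the left side via the universal property of the pointwise pseudocolimits presents a $1$-cell $\Lan_{F\op}(\ecat)\Rightarrow\dcat$ as a family of pseudococones from $\ecat\circ(\pi^D_F)\op$ to $\dcat(D)$ that is itself pseudonatural in $D$; collating this family — switching the representable weight $\dbicat(D,F-)$ with the diagram $\ecat$ by the commutativity of weights and diagrams of Proposition~\ref{prop:commutativity_peso_diag_colimiti} — yields precisely a $1$-cell $\ecat\Rightarrow\dcat\circ F\op$, the correspondence being invertible up to coherent isomorphism and functorial on $2$-cells. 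The adjunction $F^*\dashv\Ran_{F\op}$ is obtained by the dual argument, replacing pseudococones by pseudocones and $\comma{D}{F}$ by $\comma{F}{D}$.

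I expect the main difficulty to lie not in the strategy but in the two-dimensional coherence. Each of the correspondences above is an equivalence of hom-\emph{categories} that must be checked pseudonatural in both $\ecat$ and $\dcat$, and the pointwise pseudocolimits must be glued into a pseudofunctor whose comparison isomorphisms satisfy the two axioms of Definition~\ref{def:indexed_category}. These verifications are long but mechanical, being controlled throughout by the uniqueness-up-to-unique-invertible-$2$-cell in the universal property of pseudo(co)limits; in keeping with the style of this chapter I would fix the comparison functors and $2$-cells explicitly and leave the routine coherence checks to the reader.
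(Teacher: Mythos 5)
Your proposal is correct and follows essentially the same route as the paper: the paper likewise identifies the pointwise value as the weighted pseudocolimit $\colim_{ps}^{\dbicat(D,F(-))}\ecat$ in the style of Kelly's enriched Kan extensions, and then conifies it via the last part of Corollary \ref{cor:colimiti_pesati_conificati} by observing that $\pi^D_F$ is the (op)fibration of the discrete weight $\dbicat(D,F(-))$. The only difference is that where the paper delegates the adjunction itself to the citation of Kelly, you sketch it directly through pseudococones and the commutativity of weights and diagrams, which is a faithful unpacking of the same argument.
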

\begin{proof}
	The approach used to compute enriched Kan extensions in \cite[Chapter 4]{kelly2005} adapts immediately to this context. For instance, it is shown \textit{loc. cit.} that the left Kan extension $\Lan_{F\op}\ecat$ can be computed componentwise in $D$ as the pseudocolimit $\colim_{ps}^{\dbicat(D, F(-))}\ecat$; the equivalence with our expression above results from applying the last part of Corollary \ref{cor:colimiti_pesati_conificati} and noticing that $\pi^D_F$ is the opfibration corresponding to $\dbicat(D,F(-))$.
\end{proof}
Colimits also provide a way of expressing the inverse image of a pseudofunctor in fibrational terms. To formulate it, we must introduce the functor
\[
\tau_F:\cbicat\rightarrow\Cl\Fib_\dbicat,
\]\index{$\tau_F$}
which maps every object $X$ in $\cbicat$ to the discrete fibration $\comma{1_\dbicat}{F(X)}\rightarrow \dbicat$. We can also think of $\tau_F$ as a functor with values in $\Ind_\dbicat$, where each $X$ is mapped to the pseudofunctor $\comma{-}{F(X)}=\dbicat(-,F(X)):\dbicat\op\rightarrow \Cat$.

\begin{prop}\label{prop:inverseimage_as_weighted_colimit}
Consider a functor $F:\cbicat\rightarrow \dbicat$ and a pseudofunctor $\ecat:\cbicat\op\rightarrow \Cat$: the inverse image $\Lan_{F\op}(\ecat)$ is computed as the weighted pseudocolimit
\[
\colim_{ps}^\ecat(\tau_F).
\]
\end{prop}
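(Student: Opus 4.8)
The plan is to verify that $\Lan_{F\op}(\ecat)$ satisfies the universal property defining the weighted pseudocolimit $\colim_{ps}^\ecat(\tau_F)$ in the $2$-category $\Ind_\dbicat$, by chaining together the Yoneda lemma for pseudofunctors and the adjunction $\Lan_{F\op}\dashv F^*$ established in Proposition \ref{prop:pseudoKan}. Fix an arbitrary pseudofunctor $\Gtopos:\dbicat\op\rightarrow\CAT$. By the definition of weighted pseudocolimit, the object $\colim_{ps}^\ecat(\tau_F)$ is characterised by a pseudonatural equivalence
\[
\Ind_\dbicat(\colim_{ps}^\ecat(\tau_F),\Gtopos)\simeq [\cbicat\op,\CAT]_{ps}\bigl(\ecat,\ \Ind_\dbicat(\tau_F(-),\Gtopos)\bigr),
\]
so it suffices to exhibit $\Lan_{F\op}(\ecat)$ as an object enjoying the same property, pseudonaturally in $\Gtopos$.

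The crucial computation concerns the weight appearing on the right, namely $\Ind_\dbicat(\tau_F(-),\Gtopos)$. Since $\tau_F(X)=\dbicat(-,F(X))$ is a representable pseudofunctor, the bicategorical Yoneda lemma yields an equivalence
\[
\Ind_\dbicat(\tau_F(X),\Gtopos)=\Ind_\dbicat(\dbicat(-,F(X)),\Gtopos)\simeq \Gtopos(F(X))=(F^*\Gtopos)(X).
\]
First I would check that this equivalence is pseudonatural in $X$ — compatibly with the transition morphisms $\dbicat(-,F(y))$ of $\tau_F$ on one side and those of $F^*\Gtopos$ on the other — so that it assembles into an equivalence $\Ind_\dbicat(\tau_F(-),\Gtopos)\simeq F^*\Gtopos$ of objects of $\Ind_\cbicat$. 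Substituting this identification and invoking the adjunction of Proposition \ref{prop:pseudoKan} then gives the chain
\[
[\cbicat\op,\CAT]_{ps}\bigl(\ecat,\ \Ind_\dbicat(\tau_F(-),\Gtopos)\bigr)\simeq \Ind_\cbicat(\ecat,\ F^*\Gtopos)\simeq \Ind_\dbicat(\Lan_{F\op}(\ecat),\Gtopos).
\]
Reading the composite from right to left shows that $\Lan_{F\op}(\ecat)$ has precisely the universal property of $\colim_{ps}^\ecat(\tau_F)$; as every step is pseudonatural in $\Gtopos$, a $2$-categorical Yoneda argument lets me conclude that $\Lan_{F\op}(\ecat)\simeq \colim_{ps}^\ecat(\tau_F)$.

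The main obstacle I anticipate is the bookkeeping of pseudonaturality: concretely, verifying that the Yoneda equivalence $\Ind_\dbicat(\dbicat(-,F(X)),\Gtopos)\simeq \Gtopos(F(X))$ respects the pseudofunctorial structure in $X$, together with the naturality of the adjunction equivalence in $\Gtopos$, so that the final Yoneda step is legitimate. As an alternative route that bypasses some of this, one may instead compute $\colim_{ps}^\ecat(\tau_F)$ pointwise, since colimits in the pseudofunctor $2$-category $\Ind_\dbicat$ are computed objectwise: evaluating at an object $D$ of $\dbicat$ gives $\colim_{ps}^\ecat(\dbicat(D,F(-)))$, which by the commutativity of weights and diagrams (Proposition \ref{prop:commutativity_peso_diag_colimiti}) is equivalent to $\colim_{ps}^{\dbicat(D,F(-))}\ecat$; this is exactly the componentwise description of $\Lan_{F\op}(\ecat)(D)$ recorded in the proof of Proposition \ref{prop:pseudoKan}, and one would then check that the identifications are pseudonatural in $D$.
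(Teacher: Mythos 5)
Your proposal is correct and follows essentially the same route as the paper: both identify $\Ind_\dbicat(\tau_F(-),\Gtopos)$ with $F^*\Gtopos$ via the (fibred/bicategorical) Yoneda lemma applied to the representables $\tau_F(X)=\dbicat(-,F(X))\simeq \dbicat/F(X)$, and then invoke the adjunction $\Lan_{F\op}\dashv F^*$ of Proposition \ref{prop:pseudoKan} together with a Yoneda argument in $\Gtopos$ to conclude. Your closing remark about the pointwise computation via $\colim_{ps}^{\dbicat(D,F(-))}\ecat$ is likewise consistent with the discussion of evaluation functors that the paper gives immediately after this proposition.
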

\begin{proof}
This follows from the identity $\tau_F(-)=\dbicat/F(-)$ and Yoneda's lemma: for every $\fcat:\dbicat\op\rightarrow\Cat$ we have the chain of natural equivalences
 \begin{align*}
 \Ind_\dbicat(\colim_{ps}^\ecat \tau_F, \fcat )&\simeq \Ind_\cbicat(\ecat, \Ind_\dbicat(\tau_F(-), \fcat))\\
 &\simeq \Ind_\cbicat(\ecat, \Ind_\dbicat(\dbicat/F(-), \fcat))\\
 &\simeq \Ind_\cbicat(\ecat, \fcat(F(-)))\\
  &\simeq \Ind_\cbicat(\ecat, F^*(\fcat)),
\end{align*}
implying that $\Lan_{F\op}(\ecat)\simeq \colim^\ecat_{ps}\tau_F$. 
\end{proof}
\begin{remarks}\label{rmk:Grothconstruction}
\begin{enumerate}[(i)]
    \item This result justifies in particular the equivalence
    \[
     \colim_{ps}^\dcat \cbicat/- \simeq \colim_{lax}\dcat
    \]
     of Corollary \ref{cor:Grothconstruction}. One can easily see that $\cbicat/-$ is the functor 
    \[
    \tau_{1_\cbicat}:\cbicat\rightarrow \Cl\Fib_\cbicat,
    \]
    and thus the first colimit is the inverse image of $p_\dcat$ along the functor $1_\cbicat$: but obviously direct and inverse images along an identity functor are identity functors themselves, and hence we have that
    \[
    \colim_{ps}^\dcat \cbicat/- \simeq \Lan_{1_\cbicat\op}[p_\dcat]\simeq \gbicat(\dcat)\simeq \colim_{lax}\dcat
    \]
    as fibrations over $\cbicat$.
    \item Kan extensions of presheaves can be expressed as the (co)ends of the product of two $\Set$-valued functors, and thus they admit a double colimit representation. We shall generalize this double colimit representation in Section \ref{sec:computationKanextensions} by using lax colimits. 
    \item The restriction to small $\cbicat$-indexed categories is needed to ensure the existence of the colimits involved in the definition of $\Lan_{F\op}$. In principle, this prevents us from calculating inverse images of many stacks that naturally arise in mathematical practice, such as the canonical stack of a site; however, we will see that in practice many $\cbicat$-indexed categories can be `presented by a set of generators': in this case, the colimits involved will be computed on small subdiagrams by arguments of cofinality.
\end{enumerate}
\end{remarks}

The fibrewise description of $\Lan_{F\op}(\ecat)$ of Proposition \ref{prop:pseudoKan} can be recovered from Proposition \ref{prop:inverseimage_as_weighted_colimit} by exploiting evaluation functors. Once fixed an object $D$ of $\dbicat$, the \emph{evaluation 2-functor}
\[\Ev_D:\Cl\Fib_\dbicat\rightarrow \CAT\]\index{$\Ev_D$} 
acts by mapping each fibration $q:\qbicat\rightarrow\dbicat$ to its fibre in $D$; or analogously, each $\dbicat$-indexed category $\fcat$ to its fibre $\fcat(D)$. The 2-functor $\Ev_D$ behaves as the direct image along the functor $e_D:\onecat\rightarrow \dbicat$ which selects the object $D$ in $\dbicat$: from a fibrational point of view, this is true because the fibre at $D$ is computed precisely as the direct image along $e_D$ (see Remark \ref{rmk:laxcolim_weighted_is_pb_grfibr2}(i)); from an indexed point of view, because the inverse image of a pseudofunctor $\fcat:\dbicat\op\rightarrow\CAT$ along $e_D$ is the composite
\[
\onecat\op \xrightarrow{(e_D)\op}\dbicat\op\xrightarrow{\fcat}\CAT,
\]
which via the equivalence $[\onecat\op,\CAT]_{ps}\simeq \CAT$ is uniquely determined by its value $\fcat(D)$. This has the consequence that $\Ev_D$ has both a left and a right adjoint, \ie the right and left Kan extensions of Proposition \ref{prop:pseudoKan}, and thus that it preserves all kinds of limits and colimits:
\begin{lemma}
    Left 2-adjoints preserve any kind of bicolimit: more explicitly, given two 2-functors $L:\abicat\rightarrow\bbicat$ and $R:\bbicat\rightarrow\abicat$ such that $L\dashv R$, a category $\cbicat$, two pseudofunctors $\dcat:\cbicat\op\rightarrow \Cat$ and $I:\cbicat\rightarrow \abicat$, then
    \[
    L(\colim_\bullet^\dcat I)\simeq \colim_\bullet^\dcat (L\circ I),
    \]
    where $\bullet$ is either $lax$, $oplax$ or $ps$.
\end{lemma}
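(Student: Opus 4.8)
The plan is to verify the universal property defining $\colim_\bullet^\dcat(L\circ I)$ directly for the object $L(\colim_\bullet^\dcat I)$, via the bicategorical Yoneda lemma: it suffices to produce, for every object $B$ of $\bbicat$, an equivalence
\[
\bbicat\big(L(\colim_\bullet^\dcat I), B\big)\simeq [\cbicat\op,\CAT]_\bullet\big(\dcat, \bbicat((L\circ I)(-),B)\big)
\]
that is pseudonatural in $B$. First I would assemble the chain of equivalences of categories
\begin{align*}
\bbicat\big(L(\colim_\bullet^\dcat I), B\big)
&\simeq \abicat\big(\colim_\bullet^\dcat I, R(B)\big)\\
&\simeq [\cbicat\op,\CAT]_\bullet\big(\dcat, \abicat(I(-),R(B))\big)\\
&\simeq [\cbicat\op,\CAT]_\bullet\big(\dcat, \bbicat(L(I(-)),B)\big),
\end{align*}
where the first equivalence is the hom-category equivalence of the $2$-adjunction $L\dashv R$, the second is the defining universal property of the weighted bicolimit $\colim_\bullet^\dcat I$ in $\abicat$ (the representable description of op-/lax and pseudo colimits recalled at the start of Section~\ref{sec:colimiti}), and the third is once more $L\dashv R$, now applied objectwise at each $I(c)$.

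The crux is the third step, where I must check that the objectwise equivalences $\abicat(I(c),R(B))\simeq \bbicat(L(I(c)),B)$ assemble into a \emph{pseudonatural} equivalence of the pseudofunctors $\cbicat\op\rightarrow\CAT$ sending $c\mapsto \abicat(I(c),R(B))$ and $c\mapsto \bbicat(L(I(c)),B)$. This is precisely the $2$-naturality of the adjunction hom-equivalence in its contravariant variable, whiskered by $I$: since $L\dashv R$ is a $2$-adjunction, the family $\bbicat(L(-),B)\simeq \abicat(-,R(B))$ is $2$-natural in the first variable, and precomposing with the pseudofunctor $I$ preserves pseudonaturality. Granting this, I would invoke the general fact that postcomposition with a pseudonatural equivalence $\Phi\colon G\Isorightarrow G'$ of pseudofunctors $\cbicat\op\rightarrow\CAT$ induces an equivalence $[\cbicat\op,\CAT]_\bullet(\dcat,G)\simeq [\cbicat\op,\CAT]_\bullet(\dcat,G')$ by whiskering — \emph{uniformly} for $\bullet\in\{lax,oplax,ps\}$. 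Indeed, whiskering a lax (resp.\ oplax, pseudo) natural transformation $\dcat\Rightarrow G$ with $\Phi$ yields one of the same flavour, since the invertible structure $2$-cells of $\Phi$ never obstruct the (possibly non-invertible) structure cells of the transformation being whiskered, and whiskering by an equivalence is an equivalence on the corresponding categories of transformations and modifications.

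Finally I would observe that every equivalence in the chain is pseudonatural in $B$: the outer two by naturality of the $2$-adjunction, the middle one because the representable description of $\colim_\bullet^\dcat I$ is itself pseudonatural in its testing object. Hence the composite is a pseudonatural equivalence of representables, and the bicategorical Yoneda lemma forces $L(\colim_\bullet^\dcat I)\simeq \colim_\bullet^\dcat(L\circ I)$. The main obstacle is not conceptual but the careful bookkeeping of the pseudonaturality claims of the previous paragraph; the apparent need to treat three separate cases $\bullet\in\{lax,oplax,ps\}$ collapses to the single observation that whiskering by a pseudonatural equivalence respects each of the lax, oplax and pseudo transformation $2$-categories.
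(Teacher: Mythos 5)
Your proof is correct and is essentially the paper's own argument: both verify the universal property by the same chain of hom-category equivalences (the $2$-adjunction equivalence, the defining property of the weighted bicolimit, and the objectwise adjunction equivalence whiskered into the $\bullet$-transformation category), merely read in the opposite direction. Your additional care over the pseudonaturality of the third step and over the uniform treatment of $\bullet\in\{lax,oplax,ps\}$ is a welcome elaboration of details the paper leaves implicit, but it does not change the route.
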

\begin{proof}
This is a consequence for the following chain of natural equivalences, which hold for every $B$ in $\bbicat$:
    \begin{align*}
        \bbicat\left(\colim_{\bullet}^\dcat (L\circ I) , B\right)&\simeq [\cbicat\op,\Cat]_\bullet \left(\vphantom{\colim^\dcat}\dcat, \bbicat((L\circ I)(-),B)\right)\\
        &\simeq [\cbicat\op,\Cat]_\bullet \left(\vphantom{\colim^\dcat}\dcat, \abicat(I(-),R(B))\right)\\
        &\simeq \abicat\left(\colim_\bullet^\dcat I, R(B)\right)\\
        &\simeq \abicat\left(L(\colim_\bullet^\dcat I), B\right). 
    \end{align*}
\end{proof}
By considering in particular the functor $\Ev_D:\Ind^s_\dbicat\rightarrow \Cat$, which is a left adjoint, the following equivalences hold for any diagram $I:\cbicat\rightarrow \Ind_\dbicat$ with weight $\dcat:\cbicat\op\rightarrow\Cat$:
\begin{align*}
\Ev_D(\colim^\dcat_{lax}(I))\simeq& \colim_{lax}(\Ev_D\circ I),\\ \Ev_D(\colim^\dcat_{oplax}(I))\simeq& \colim^\dcat_{oplax}(\Ev_D\circ I),\\ \Ev_D(\colim^\dcat_{ps}(I))\simeq& \colim^\dcat_{ps}(\Ev_D\circ I).    
\end{align*}
This means that colimits of any kind are computed fibrewise in $\Ind^s_\dbicat$.

Now, we can apply these considerations to diagrams of the kind
\[\tau_F^D:\cbicat\xrightarrow{\tau_F} \Cl\Fib_\dbicat \xrightarrow{\Ev_D}\Set.\]\index{$\tau_F^D$}
Notice that each of these functors takes values in $\Set$, since $\tau_F^D(X)=\dbicat(D,F(X))$: indeed, it is the fibre in $D$ of $\comma{1_\dbicat}{F(X)}$, which is a discrete fibration over $\dbicat$. Since evaluation functors commute with bicolimits, we have that
\begin{align*}
    \Lan_{F\op}(\ecat)(D)&:=\Ev_D(\Lan_{F\op}(\ecat))\\
    &=\Ev_D(\colim_{ps}^\ecat \tau_F)\\
    &\simeq \colim_{ps}^\ecat(\tau^D_F)\\
    &\simeq \colim_{ps}^{\tau_F^D}\ecat
\end{align*}
where the last step holds by the commutativity of weights and diagrams (see Proposition \ref{prop:commutativity_peso_diag_colimiti}): but the last colimit is precisely the one defined as $\Lan_{F\op}(\ecat)(D)$ in the proof of Proposition \ref{prop:pseudoKan}.

\section{Computation of inverse images}\label{sec:computationKanextensions}
Since an inverse image $\Lan_{F\op}(\ecat)$ is a weighted pesudocolimit, we have now the perfect opportunity to apply the techniques about colimits and localizations developed in Chapter \ref{chap:preliminaries}. First of all, notice that by Proposition \ref{prop:pseudocolimiti_localizzazioni} the colimit $\Lan_{F\op}(\ecat)\simeq \colim_{ps}^\ecat \tau_F$ can be seen as a localization of either of the following:
	\[
	\colim_{lax} \tau_Fp_{\ecat\Vop}\simeq \colim_{lax}^\ecat \tau_F\simeq \colim^{\tau_F}_{oplax}\ecat\simeq \colim_{oplax} \ecat p_{\tau_F},
	\]
	\[
	\colim_{oplax} \tau_Fp_{\ecat}\simeq \colim_{oplax}^\ecat \tau_F\simeq \colim^{\tau_F}_{lax}\ecat\simeq \colim_{lax} \ecat p_{\tau_F\Vop}.
	\]
Each of these localizations is performed with respect to the components of a certain class of natural transformations appearing in the colimit cocone, either indexed by arrows in the base category $\cbicat$ (for the weighted colimits) or by cartesian arrows in the $\gbicat(\ecat)$ or $\gbicat(\tau_F)$ (for the conified counterparts). Moreover, Remark \ref{rmk:laxcolim_weighted_is_pb_grfibr2}(iii) implies that $\colim_{ps}^\ecat \tau_F$ can also be seen as a localization of $\colim_{lax}(\tau_F\Vop\circ p_\ecat)\op$. In the following, we will focus on the description of the localization
\[
\colim_{lax}(\ecat \circ p_{\tau_F\Vop})\rightarrow \colim_{ps}^\ecat \tau_F
\]
appearing in Proposition \ref{prop:pscolim_localizz_struttura_mista}, which is based on an application of Proposition \ref{prop:laxcolim_grothconstr} to explicitly compute lax colimits using Grothendieck fibrations. We also remark that computing any of the other colimits using Proposition \ref{prop:laxcolim_grothconstr} yields exactly the same category: indeed, computing op-/lax colimits with the Grothendieck construction is an optimal way of computing them, and irons out all the conceptual differences between the possible different presentations.

First of, let us compute the fibrations associated with the functors $\tau_F$ and $\tau_F^D$. If we consider the functor 
\[
\tau_F:\cbicat\rightarrow \Cl\Fib_\dbicat,
\]
as a functor with values in $\Cat$, a quick computation shows that the opfibration corresponding to $\tau_F$, \ie the opposite of the functor $p_{\tau_F\Vop}:\gbicat(\tau_F\Vop)\rightarrow \cbicat\op$ (see Remark \ref{rmk:covGroth.i}(i)), is precisely the comma category
\[
\pi_\cbicat:\comma{1_\dbicat}{F}\rightarrow \cbicat,
\]
with $\pi_\cbicat$ forgetting the component in $\dbicat$. The same category is also fibred over $\dbicat$, by considering the functor
\[
\pi_\dbicat:\comma{1_\dbicat}{F}\rightarrow \dbicat
\]
which forgets the component in $\cbicat$. In a similar way, each functor
\[
\tau_F^D:\cbicat\rightarrow \Set
\]
is associated with the opfibration
\[
\pi^D_\cbicat:\comma{D}{F}\rightarrow\cbicat.
\]
We are now ready to prove the following:
\begin{prop}\label{prop:inverseimage_as_localization}
	Consider a pseudofunctor $\ecat:\cbicat\op\rightarrow\Cat$ and a functor $F:\cbicat\rightarrow\dbicat$. Consider the category
	$
	\comma{D}{F\circ p_\ecat},
	$
	whose objects are arrows $[d:D\rightarrow F\circ p_\ecat(X,U)]$ and whose morphisms $(y,a):[d':D\rightarrow F\circ p_\ecat(Y,V)]\rightarrow [d:D\rightarrow F\circ p_\ecat(X,U)]$ are indexed by arrows $(y,a):(Y,V)\rightarrow (X,U)$ such that $F(y)\circ d'=d$. Consider the class of arrows
	\begin{equation*}
	\begin{gathered}
	    S_D:=\left\{[d']\xrightarrow{(y,a)}[d]\ \middle| \ (y,a)\mbox{ cartesian in }\gbicat(\ecat)  \right\}:
	\end{gathered}
	\end{equation*}
	then 
	\[
    \Lan_{F\op}(\ecat)(D)\simeq\comma{D}{F\circ p_\ecat}[S_D\inv].
    \]
	
	From a fibred point of view, let $p:\pbicat\rightarrow\cbicat$ be a cloven fibration. Consider the fibration of generalized elements 
	\[\comma{1_\dbicat}{(F\circ p)}\xrightarrow{r}\dbicat\]
	of the functor $F\circ p$, whose objects are arrows $[d:D\rightarrow (F\circ p)(U)]$ of $\dbicat$, and a morphism
	\[
	(e,\alpha):[d':D'\rightarrow (F\circ p)(V)]\rightarrow[d:D\rightarrow (F\circ p)(U)]
	\]
	is indexed by an arrow $e:D'\rightarrow D$ in $\dbicat$ and an arrow $\alpha:V\rightarrow U$ in $\pbicat$ such that $(F\circ p)(\alpha)\circ d'=d\circ e$. Consider the class of arrows
	\[
    S:=\{(e,\alpha):[d']\rightarrow [d]\ |\ (e,\alpha)\mbox{ $r$-vertical, }\alpha\mbox{ cartesian in }\pbicat \}: 
	\]
	then 
	\[
	\Lan_{F\op}([p])\simeq\comma{1_\dbicat}{(F\circ p)}[S\inv].
	\]
\end{prop}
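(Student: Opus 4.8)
The plan is to reduce both statements to the localization description of weighted pseudocolimits of Proposition~\ref{prop:pscolim_localizz_struttura_mista}, applied to the weight $\ecat$ and the diagram $\tau_F$. Recall from Proposition~\ref{prop:inverseimage_as_weighted_colimit}, together with the fibrewise computation carried out just before this section, that for each object $D$ of $\dbicat$ one has $\Lan_{F\op}(\ecat)(D)\simeq\colim_{ps}^\ecat(\tau_F^D)$, where $\tau_F^D:\cbicat\rightarrow\Set$ is the $\Set$-valued diagram $X\mapsto\dbicat(D,F(X))$. I would therefore instantiate Proposition~\ref{prop:pscolim_localizz_struttura_mista} with $\dcat:=\ecat$ and $R:=\tau_F^D$, so that $\colim_{ps}^\ecat(\tau_F^D)$ is presented as the localization of $\colim_{lax}(\ecat\circ p_{(\tau_F^D)\Vop})$ at the class of arrows $(y,a,b)$ whose components $a$ and $b$ are both invertible.

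The first key step is the purely bookkeeping identification of this lax colimit with the comma category in the statement. Unwinding the triple description of $\colim_{lax}(\ecat\circ p_{(\tau_F^D)\Vop})$ given in Proposition~\ref{prop:pscolim_localizz_struttura_mista}, its objects are triples $(X,U,d)$ with $X$ in $\cbicat$, $U$ in $\ecat(X)$ and $d\in\tau_F^D(X)$, i.e. $d:D\rightarrow F(X)=F(p_\ecat(X,U))$; an arrow $(y,a,b):(Y,V,d')\rightarrow(X,U,d)$ consists of $y:Y\rightarrow X$, $a:V\rightarrow\ecat(y)(U)$ and $b:\tau_F^D(y)(d')\rightarrow d$. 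Since $\tau_F^D$ is discrete, the component $b$ is forced to be the identity witnessing $F(y)\circ d'=d$. This matches, on the nose, the description of $\comma{D}{F\circ p_\ecat}$ and of its morphisms $(y,a):[d']\rightarrow[d]$ indexed by arrows $(y,a)$ of $\gbicat(\ecat)$ with $F(y)\circ d'=d$, so the two categories are isomorphic. Moreover, because $b$ is always invertible, the localizing class ``$a$ and $b$ invertible'' collapses to ``$a$ invertible'', i.e. to ``$(y,a)$ cartesian in $\gbicat(\ecat)$'', which is exactly $S_D$. Proposition~\ref{prop:pscolim_localizz_struttura_mista} then yields $\Lan_{F\op}(\ecat)(D)\simeq\comma{D}{F\circ p_\ecat}[S_D\inv]$, proving the indexed statement.

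For the fibred statement I would assemble these fibrewise localizations into a single fibration over $\dbicat$. Set $\ecat:=\Ifrak([p])$ and write $\mathbb{G}:\dbicat\op\rightarrow\Cat$ for the strict indexed category $D\mapsto\comma{D}{F\circ p}$ with transition morphisms given by precomposition; a direct check of the Grothendieck construction shows that $\gbicat(\mathbb{G})$ is precisely the generalized-elements category $\comma{1_\dbicat}{(F\circ p)}$ and that $p_{\mathbb{G}}$ is the structural functor $r$. In particular $r$ is a fibration, with cartesian lift of $e:D'\rightarrow D$ at $[d]$ given by $(e,1):[d\circ e]\rightarrow[d]$. In each fibre $\mathbb{G}(D)=\comma{D}{F\circ p}$ I take the class $S^{\mathbb{G}}_D$ of arrows that are cartesian in $\pbicat$; these are stable under the transition morphisms, since precomposition leaves the $\pbicat$-component unchanged, so Lemma~\ref{lemma:pointwiselocalization} (cf. Remark~\ref{remarklocalizationoffibration}) applies: the fibrewise localization $\bar{\mathbb{G}}$ satisfies $\gbicat(\bar{\mathbb{G}})\simeq\gbicat(\mathbb{G})[W\inv]$, where $W$ is the class of arrows $(e,\alpha)$ with $e$ invertible (i.e. $r$-vertical, in the sense of Remark~\ref{rmk:covGroth.i}) and $\alpha$ cartesian in $\pbicat$. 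This $W$ is exactly the class $S$ of the statement, so $\comma{1_\dbicat}{(F\circ p)}[S\inv]\simeq\gbicat(\bar{\mathbb{G}})$ is a fibration over $\dbicat$. Finally, each fibre $\bar{\mathbb{G}}(D)=\comma{D}{F\circ p}[(S^{\mathbb{G}}_D)\inv]$ is identified, through the equivalence of fibrations $\pbicat\simeq\gbicat(\ecat)$ of Corollary~\ref{cor:equivalenza_strfib_indcat} (which preserves and reflects cartesian arrows, hence carries $S^{\mathbb{G}}_D$ to $S_D$), with $\comma{D}{F\circ p_\ecat}[S_D\inv]\simeq\Lan_{F\op}(\ecat)(D)$ from the first part; hence $\bar{\mathbb{G}}\simeq\Lan_{F\op}(\ecat)$ and $\gbicat(\bar{\mathbb{G}})\simeq\Lan_{F\op}([p])$.

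The main obstacle I anticipate lies not in any single step but in making this last identification pseudonatural in $D$: one must check that the fibrewise equivalences $\bar{\mathbb{G}}(D)\simeq\Lan_{F\op}(\ecat)(D)$ are compatible with the transition morphisms on both sides, so that they upgrade to an equivalence of $\dbicat$-indexed categories rather than a merely pointwise one. Both families of transition morphisms are induced by precomposition (respectively by the universal property of the defining pseudocolimits), so the compatibility is a routine, if bureaucratic, diagram chase. I would isolate it as the only genuinely computational part of the argument, and leave the remaining verifications—functoriality of the comma constructions and the cartesian-lift computation for $r$—to the reader, in keeping with the style of the surrounding results.
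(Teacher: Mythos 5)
Your proposal is correct and follows essentially the same route as the paper: both reduce the fibrewise statement to Proposition \ref{prop:pscolim_localizz_struttura_mista} applied to the weight $\ecat$ and the discrete diagram $\tau_F^D$ (using that the third component of the triples is forced to be an identity, so the localizing class collapses to $S_D$), and both obtain the fibred statement by viewing $\comma{-}{F\circ p}$ as a $\dbicat$-indexed category and invoking Lemma \ref{lemma:pointwiselocalization} for the fibrewise localization. Your additional attention to the pseudonaturality in $D$ of the fibrewise identifications is a point the paper passes over more quickly, but it does not change the argument.
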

\begin{proof}
	Proposition \ref{prop:pseudocolimiti_localizzazioni} provided us with various possibilities to compute a pseudocolimit such as $\Lan_{F\op}(\ecat)(D):=\colim_{ps}^{\tau_F^D}\ecat$ by localizing a lax colimit: in this circumstance, exploiting Proposition \ref{prop:pscolim_localizz_struttura_mista} seems to provide the simplest and most suggestive description of the inverse image. Said result states that we can consider the pullback
\[\begin{tikzcd}
	{\colim_{lax}(\ecat\circ p_{(\tau_F^D)\Vop})}\ar[d, "(\pi^D)'"'] \ar[r, "\pi^D_\ecat"] & \gbicat(\ecat) \ar[d, "p_\ecat"]\\
	\gbicat((\tau_F^D)\Vop)\op \ar[r, "p_{(\tau_F^D)\Vop}\op"'] & \cbicat    
\end{tikzcd},\]
and $\colim_{ps}^{\tau_F^D}\ecat$ is the localization of $\colim_{lax}(\ecat\circ p_{(\tau_F^D)\Vop})$ with respect to the morphisms that are $(\pi^D)'$-cartesian and are mapped via $\pi^D_\ecat$ to $p_\ecat$-cartesian arrows. Now we can perform the following simplifications: first of all $(\tau_F^D)\Vop\cong\tau_F^D$, since it is a discrete functor; secondly, as we mentioned at the beginning of this section, the functor $p\op_{\tau_F^D}$ corresponds to the canonical projection
\[
\pi^D_\cbicat:\comma{D}{F}\rightarrow \cbicat;
\]
finally, the explicit description of $\colim_{lax}(\ecat\circ p_{(\tau_F^D)\Vop})$ of Proposition \ref{prop:pscolim_localizz_struttura_mista} provides, in this case, the category $\comma{D}{F\circ p_\ecat}$. Therefore, the pullback above can be rewritten as
\[\begin{tikzcd}
	{\comma{D}{F\circ p_\ecat}}\ar[d, "(\pi^D)'"'] \ar[r, "\pi^D_\ecat"] & \gbicat(\ecat) \ar[d, "p_\ecat"]\\
	{\comma{D}{F}} \ar[r, "\pi^D_F"'] & \cbicat    
\end{tikzcd}.\]
Finally, notice that an arrow \[
[d':D\rightarrow F\circ p_\ecat(Y,V)]\xrightarrow{(y,a)} [d:D\rightarrow F\circ p_\ecat(X,U)]
 \]
in $\comma{D}{F\circ p_\ecat}$ is $(\pi^D)'$-cartesian if and only if $(y,a)$ is $p_\ecat$-cartesian as an arrow in $\gbicat(\ecat)$, which is the same as to say that it is mapped to a $p_\ecat$-cartesian arrow via the forgetful functor $\pi^D_\ecat$: therefore, $\Lan_{F\op}(\ecat)(D)$ is computed localizing $\comma{D}{F\circ p_\ecat}$ with respect to all its arrows whose component in $\gbicat(\ecat)$ is cartesian, \ie with respect to the class $S_D$.

	Now, if we consider each category $\comma{D}{F\circ p}$ as the fibre in $D$ of the pseudofunctor
	\[
	\comma{-}{F\circ p}:\dbicat\op\rightarrow\Cat,
	\]
	then $\Lan_{F\op}[p](D)$ is a localization of it; moreover, said localization is compatible with transition morphisms, \ie given $g:D'\rightarrow D$ in $\dbicat$, the transition morphism
	\[
	-\circ e: \comma{D}{F\circ p}\rightarrow \comma{D'}{F\circ p}
	\]
	restricts to a functor
	\[
	\comma{D}{F\circ p}[S_D\inv]\rightarrow \comma{D'}{F\circ p}[S_{D'}\inv].
	\]
	Therefore, we may apply Lemma \ref{lemma:pointwiselocalization}: the fibration associated to the localized pseudofunctor
	\[
	\comma{-}{F\circ p}[S_{(-)}\inv]:\dbicat\op\rightarrow \Cat
	\]
	is the localization of the fibration associated to $\comma{-}{F\circ p}$ , which is the fibration of generalized elements 
	\[\comma{1_\dbicat}{F\circ p}\xrightarrow{r}\dbicat,\] 
	with respect to the class of arrows 
	\[
	(e,\alpha):[d':D'\rightarrow Fp(V)]\rightarrow[d:D\rightarrow Fp(U)]\] 
	that are $\pi_\dbicat$-vertical (\ie $e$ is invertible) and such that their component $\alpha$ in the fibre belongs to $S_{D'}$. This provides the description of $\Lan_{F\op}([p])$ in the statement.
\end{proof}	

\begin{remarks}
	\begin{enumerate}[(i)]
	    \item We could also have computed $\Lan_{F\op}([p])$ directly by using our results on colimits: it is however instructive to see that it is in fact a fibrewise localization of another pseudofunctor. This shows that, when computing inverse images, we can follow two different paths: we can first act a fibrewise localization of $\comma{D}{F\circ p_\ecat}$, and the `universalize' it, \ie compute the associated fibration; or alternatively, we can universalize and then localize the associated fibration.
	    \item The fibrational description of invere images could also be proven directly without resorting to the indexed formalism at all, but instead by showing explicitly the equivalence of categories
	    \[
	    \Cl\Fib_\cbicat([p],F^*[q])\simeq \Cl\Fib_\dbicat(\comma{1_\dbicat}{F\circ p}[S\inv], [q])
	    \]
	    for $p:\pbicat\rightarrow \cbicat$ and $q:\qbicat\rightarrow \dbicat$ fibrations. The proof goes as follows: by definition of pseudopullback, a pair $(G,\gamma)$ as in the diagram
	    \[
	    \begin{tikzcd}
	        \pbicat\ar[r, "G"] \ar[dr, "p"', ""{name=A}]&{ \cbicat\times_\dbicat\qbicat} \ar[d,"F^*(q)"]\ar[to=A, Rightarrow, "\gamma", "\sim"{above, sloped}]\\
	        &\cbicat
	    \end{tikzcd}
	    \]
	    corresponds to a pair $(G_1,\gamma_1)$ as in the diagram
	    \[
	    \begin{tikzcd}
	    \pbicat \ar["p"',d]  \ar[r, "G"]& \qbicat\ar[d,"q"] \ar[dl, "\gamma_1", "\sim"{above, sloped}, Rightarrow] \\
	    \cbicat\ar[r,"F"'] & \dbicat
	    \end{tikzcd};
	    \]
	    since $q$ is a cloven fibration we can apply Corollary \ref{cor:genelts_aggiunto_G}, and we obtain a unique pair $(G_2, \gamma_2)$ as in the diagram
	    \[
	    \begin{tikzcd}
	        {\comma{1_\dbicat}{F\circ p}} \ar[rd,"\pi_\dbicat"', ""{name=A}] \ar[r, "G_2"] & \qbicat\ar[d,"q"] \ar[to=A, Rightarrow, "\gamma_2", "\sim"{above, sloped}] \\
	        & \dbicat
	    \end{tikzcd},
	    \]
	    and moreover the functor $G_2$ is a morphism of fibrations. What we said so far is true for any \emph{functor} $G$: one can then verify explicitly that $G$ is a morphism of fibrations if and only if $G_2$ inverts all arrows in $S$, and thus factors through the localization as a functor $\bar{G}:\comma{1_\dbicat}{F\circ p}\rightarrow \qbicat$. The correspondence between $G$ and $\bar{G}$ describes (on objects) the equivalence between hom-categories given by the adjunction.
	    
	    \item The explicit description of inverse images can be notably applied to describe the inverse image of a stack along a morphism of sites, as well as the essential image of a stack along a continuous comorphism of sites (see Section \ref{sec:dirinv_image_stack}).
	\end{enumerate}
\end{remarks}
Let us sum up all the categories at hand in the following diagram:
\[\begin{tikzcd}[column sep=huge, row sep=huge]
	{\comma{D}{F\circ p}} & {\comma{1_\dbicat}{F\circ p}} & {\gbicat(\ecat)} \\
	{\comma{D}{F}} & {\comma{1_\dbicat}{F}} & \cbicat \\
	\onecat & \dbicat
	\arrow["p", from=1-3, to=2-3]
	\arrow["{\pi'}", from=1-2, to=2-2]
	\arrow["{\pi_\ecat}", from=1-2, to=1-3]
	\arrow["{\pi_\cbicat}", from=2-2, to=2-3]
	\arrow[hook, from=2-1, to=2-2]
	\arrow[hook, from=1-1, to=1-2]
	\arrow["{(\pi^D)'}"', from=1-1, to=2-1]
	\arrow["{!}"', from=2-1, to=3-1]
	\arrow["{\pi_\dbicat}", from=2-2, to=3-2]
	\arrow["{e_D}"', from=3-1, to=3-2]
	\arrow["\lrcorner"{anchor=center, pos=0.125}, draw=none, from=2-1, to=3-2]
	\arrow["\lrcorner"{anchor=center, pos=0.125}, draw=none, from=1-2, to=2-3]
	\arrow["\lrcorner"{anchor=center, pos=0.125}, draw=none, from=1-1, to=2-2]
	\arrow["r"'{pos=0.2}, bend right=50, from=1-2, to=3-2, crossing over]
	\arrow["{\pi^D_\cbicat}"'{pos=0.2}, bend right=10, from=2-1, to=2-3, end anchor={south west}, start anchor={south east}, crossing over]
	\arrow[from=1-1, to=1-3, bend left=10, start anchor={north east}, end anchor={north west}, "\pi^D_\ecat"]
\end{tikzcd},\]
where all the squares are pullbacks since they correspond to the computation of direct images of fibrations: this implies that all the vertical functors are all fibrations. On the other hand, $\pi_\cbicat=p_{\tau_F\Vop}\op$ is an opfibration, and thus its pullback $\pi_\ecat$ is also an opfibration. The fibration $\Lan_{F\op}[p]$ is the localization of $\comma{1_\dbicat}{F\circ p} $ at the class of arrows that are $r$-vertical and whose image via $\pi_\ecat$ is $p$-cartesian; similarly, the single fibre $\Lan_{F\op}(\ecat)(D)$ is the localization of $\comma{D}{F\circ p}$ at the class of arrows whose image via $\pi^D_\ecat$ is $p$-cartesian: notice that in this case there is no need to consider the vertical arrows, since every arrow in $\comma{D}{F\circ p}$ is vertical with respect to the unique functor to $\onecat$.

\section{Change of base for stacks}\label{sec:dirinv_image_stack}

We now reintroduce topologies into the discourse: starting from two sites $(\cbicat,J)$ and $(\dbicat,K)$, we wish to study direct and inverse images of stacks along a functor $F:\cbicat\rightarrow \dbicat$. In analogy with the case of presheaves, the main ingredient is the \emph{$(J,K)$-continuity} of $F$, which was necessary to restrict the adjunction $\lan_{F\op}\dashv F^*:[\dbicat\op,\Set]\rightarrow[\cbicat\op,\Set]$ to a weak geometric morphism $\Sh(F):\Sh(\dbicat,K)\rightarrow \Sh(\cbicat,J)$ (see Proposition \ref{prop:ft_cont_caratterizzazione}). 
 
Let us begin by the following technical lemma:
\begin{lemma}
	Consider two small-generated sites $(\cbicat,J)$ and $(\dbicat,K)$ and a functor $F:\cbicat\rightarrow \dbicat$: then $F$ is $(J,K)$-continuous if and only if for every $J$-covering sieve $m:R\rightarrowtail \yo(X)$ the arrow $\sheafify_K(\lan_{F\op}(m))$ of $\Sh(\dbicat,K)$ is an isomorphism. 
\end{lemma}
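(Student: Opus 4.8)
The plan is to reduce the statement to a transposition argument across the adjunction $\lan_{F\op}\dashv F^*$. First, by the evident analogue for $F:\cbicat\rightarrow\dbicat$ of Proposition \ref{prop:ft_cont_caratterizzazione} (applied with the roles of the two sites interchanged), $F$ is $(J,K)$-continuous if and only if $F^*=(-\circ F\op):[\dbicat\op,\Set]\rightarrow[\cbicat\op,\Set]$ sends $K$-sheaves to $J$-sheaves. By Proposition \ref{prop:lan_ran} the functor $F^*$ has a left adjoint $\lan_{F\op}$, and the whole argument rests on the natural isomorphism $[\cbicat\op,\Set](A,F^*Q)\cong[\dbicat\op,\Set](\lan_{F\op}A,Q)$.

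Next I would unwind the sheaf condition using the definition recalled at the start of Section \ref{sec:stack}. Fix a $K$-sheaf $Q$. Then $F^*(Q)$ is a $J$-sheaf precisely when, for every $J$-covering sieve $m:R\rightarrowtail\yo(X)$, the map $-\circ m:[\cbicat\op,\Set](\yo(X),F^*Q)\rightarrow[\cbicat\op,\Set](R,F^*Q)$ is a bijection. Transposing both hom-sets across the adjunction and invoking naturality of the bijection, this map is identified with $-\circ\lan_{F\op}(m):[\dbicat\op,\Set](\lan_{F\op}\yo(X),Q)\rightarrow[\dbicat\op,\Set](\lan_{F\op}R,Q)$. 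Consequently $F^*$ sends \emph{all} $K$-sheaves to $J$-sheaves if and only if, for every $J$-covering sieve $m$ and every $K$-sheaf $Q$, the map $-\circ\lan_{F\op}(m)$ is a bijection.

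Finally I would apply the standard orthogonality characterization of local isomorphisms for the reflective localization $\sheafify_K\dashv\iota_K$. Using $[\dbicat\op,\Set](A,\iota_K Q)\cong\Sh(\dbicat,K)(\sheafify_K A,Q)$, the map $-\circ\lan_{F\op}(m)$ is a bijection for every $K$-sheaf $Q$ if and only if $-\circ\sheafify_K(\lan_{F\op}(m))$ is a bijection on every hom-set $\Sh(\dbicat,K)(-,Q)$, which by the Yoneda lemma inside $\Sh(\dbicat,K)$ holds exactly when $\sheafify_K(\lan_{F\op}(m))$ is an isomorphism. Combining the three equivalences, and noting that the two universal quantifiers (over $m$ and over $Q$) commute freely, yields the claim. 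The only genuinely delicate points are keeping the variances straight—so that $\lan_{F\op}$, rather than $\ran_{F\op}$, is the adjoint relevant to testing sheaves by precomposition—and the orthogonality fact, which is routine for a reflective localization; no size problems arise since $(\cbicat,J)$ and $(\dbicat,K)$ are small-generated, so $\sheafify_K$ is available.
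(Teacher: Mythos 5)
Your argument is correct and is essentially the paper's own proof: the paper transposes the precomposition map $-\circ m$ across the composite adjunction $\sheafify_K\lan_{F\op}\dashv(-\circ F\op)\,\iota_K$ in a single commutative square of hom-sets and then applies Yoneda in $\Sh(\dbicat,K)$, which is exactly your two-step transposition (first through $\lan_{F\op}\dashv F^*$, then through $\sheafify_K\dashv\iota_K$) packaged into one. No gaps; the variance bookkeeping and the appeal to Proposition \ref{prop:ft_cont_caratterizzazione}(ii) match the paper's reasoning.
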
	
\begin{proof}
	This can be immediately proven by considering, for every $J$-covering sieve $m:R\rightarrowtail \yo(X)$ and every $K$-sheaf $P$, the commutative square of Hom-sets
	\[
	\begin{tikzcd}
		{\Sh(\dbicat,K)(\sheafify_K\lan_{F\op}(\yo(X)),P)} \ar[r, no head, "\sim"] \ar[d, "{-\circ \sheafify_K\lan_{F\op}(m)}"']& {[\cbicat\op,\Set](\yo(X), P\circ F\op)} \ar[d, "{-\circ m}"]\\
		{\Sh(\dbicat,K)(\sheafify_K\lan_{F\op}(R),P)} \ar[r, no head, "\sim"]& {[\cbicat\op,\Set](R, P\circ F\op)}
	\end{tikzcd},
	\]
	where the horizontal isomorphisms are given by the adjunction $\sheafify_K\lan_{F\op} \dashv (-\circ F\op)i_K$. The left-hand vertical map is an isomorphism if and only if $\sheafify_K\lan_{F\op}(m)$ is an isomorphism, while the right-hand one if and only if $F\circ R\op$ is a $J$-sheaf, \ie if and only if $F$ is $(J,K)$-continuous.
\end{proof}
\begin{remark}
	This result can also be obtained from Exposé II, Proposition 5.3 and Exposé III, Proposition 1.2 of \cite{SGA4_I}. 
\end{remark}
We are now ready to state that continuous functors are precisely those whose direct image preserves the property of being a stack, extending the content of Proposition \ref{prop:ft_cont_caratterizzazione}(i)-(ii):
\begin{prop}\label{prop:ftcont_preservano_stack}
	Consider two sites $(\cbicat,J)$ and $(\dbicat,K)$ and a functor $F:\cbicat\rightarrow\dbicat$: then $F$ is $(J,K)$-continuous functor if and only if $F^*:\Ind_\dbicat\rightarrow \Ind_\cbicat$ restricts to a 2-functor $\St(\dbicat,K)\rightarrow \St(\cbicat,J)$.
\end{prop}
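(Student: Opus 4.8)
The plan is to prove the two implications separately, the necessity of continuity being the quick direction and the sufficiency being where the genuine work lies.

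\textbf{Necessity.} Suppose $F^*=(-\circ F\op)$ restricts to a $2$-functor $\St(\dbicat,K)\to\St(\cbicat,J)$. Since precomposition with $F\op$ carries discrete indexed categories to discrete indexed categories, and since by Proposition \ref{prop:psh+stack=sheaf} a presheaf is a $K$-sheaf exactly when it is a $K$-stack as a discrete indexed category, the restriction of $F^*$ to discrete stacks is precisely $(-\circ F\op)\colon[\dbicat\op,\Set]\to[\cbicat\op,\Set]$, and by hypothesis it sends $K$-sheaves to $J$-sheaves. By the equivalence (i)$\Leftrightarrow$(ii) of Proposition \ref{prop:ft_cont_caratterizzazione}, $F$ is $(J,K)$-continuous.

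\textbf{Sufficiency.} Assume $F$ is $(J,K)$-continuous and let $\dcat$ be a $K$-stack; I will show that $F^*\dcat=\dcat\circ F\op$ is a $J$-stack. Fix a $J$-covering sieve $m_S\colon S\rightarrowtail\yo(X)$. First I would reduce the stack condition to a statement purely about $\dcat$: by the fibred Yoneda lemma (Proposition \ref{prop:fibered_yoneda}) one has $(F^*\dcat)(X)\simeq\dcat(F(X))$, while by the discrete case of Proposition \ref{prop:eqv_categorie_laxoplaxps} the descent category $(F^*\dcat)(S)=\Ind_\cbicat(S,F^*\dcat)$ is the pseudolimit $\lim_{ps,\,(Y,u)\in(\fib S)\op}\dcat(F(Y))$, with the comparison functor $L_S$ the canonical cone map. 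Thus it suffices to prove that $\dcat(F(X))\to\lim_{ps,\,(Y,u)\in(\fib S)\op}\dcat(F(Y))$ is an equivalence.

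The key step is to identify this pulled-back pseudolimit with a genuine descent category of $\dcat$. Let $T$ be the $K$-sieve on $F(X)$ generated by $\{F(y)\colon F(Y)\to F(X)\mid y\in S\}$; cover-preservation (part of continuity, Proposition \ref{prop:ft_cont_caratterizzazione}(iii)) guarantees $T\in K(F(X))$. I would then show that restriction along the image assignment $\fib S\to\fib T$ induces an equivalence $\dcat(T)\simeq\lim_{ps,\,(Y,u)\in(\fib S)\op}\dcat(F(Y))$, compatibly with the two comparison functors out of $\dcat(F(X))$. Granting this, $L_S$ is identified with the canonical map $\dcat(F(X))\to\dcat(T)$, which is an equivalence because $\dcat$ is a $K$-stack; hence $F^*\dcat$ is a $J$-stack and $F^*$ restricts as claimed.

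\textbf{Main obstacle.} The whole argument turns on the key step, namely that the descent diagram indexed by $\fib S$ and pulled back through $F$ has the same pseudolimit as the descent diagram over the honest $K$-covering sieve $T$ on $F(X)$. This is the $2$-categorical upgrade of the sheaf-level fact that $\sheafify_K(\lan_{F\op}(m_S))$ is an isomorphism (the lemma preceding this proposition), and it splits into a fully-faithful part and an essential-surjectivity part. For full faithfulness I would reduce, via the $\Hom$-presheaf characterisation of prestacks, to the sheafhood of the presheaves $\Hom_\dcat(A,B)$ restricted along the induced slice functor $\cbicat/X\to\dbicat/F(X)$, which is itself $(J_X,K_{F(X)})$-continuous; for essential surjectivity I would transport a descent datum for $F^*\dcat$ and $S$ to a descent datum for $\dcat$ and $T$ and invoke effectiveness for the $K$-stack $\dcat$. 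In both parts it is exactly the cofinality clause of Proposition \ref{prop:ft_cont_caratterizzazione}(iii) that reconciles the indexing by $\fib S$ with the indexing by $T$ — ensuring that compatibility of local data and morphisms along $S$ transports to, and is uniquely determined by, compatibility along $T$ — so the careful handling of that condition is the technical heart of the proof.
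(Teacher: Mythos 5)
Your argument is correct in structure, but it follows a genuinely different route from the paper's own proof. The paper proves sufficiency by a short adjunction argument: it transports the test functor $-\circ m_S$ across the $2$-adjunction $s_K\Lan_{F\op}\dashv (-\circ F\op)\,i_K$, so that it becomes precomposition with $s_K\Lan_{F\op}(m_S)$ between the hom-categories $\St(\dbicat,K)(s_K\Lan_{F\op}(\yo(X)),\dcat)$ and $\St(\dbicat,K)(s_K\Lan_{F\op}(S),\dcat)$; since $s_K$ and $\Lan_{F\op}$ act on presheaves as $\sheafify_K$ and $\lan_{F\op}$, the lemma immediately preceding the proposition says that $(J,K)$-continuity is exactly the condition that $s_K\Lan_{F\op}(m_S)$ be an isomorphism, whence the test functor is an equivalence with no further work. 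Your proof instead carries out the explicit descent-data computation — pulling the pseudolimit over $\fib S$ back to the descent category over the generated $K$-sieve $T$ via the cofinality clause of Proposition \ref{prop:ft_cont_caratterizzazione}(iii) — which is precisely the alternative the paper sketches in the remark following the proposition and describes as ``burdened by lengthy calculations.'' What the paper's route buys is that the entire $2$-categorical content is absorbed into the adjunction, so the only input is a statement about sheaves of sets; what your route buys is an explicit description of how descent data transform under $F$, at the cost of the identification $\dcat(T)\simeq\lim_{ps}$ over $(\fib S)\op$, which you correctly isolate as the technical heart but leave as a plan (fully-faithful part via $\Hom$-presheaves, essential surjectivity via transport of descent data) rather than a completed argument. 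Your necessity direction coincides with the paper's. If you pursue your version, the one step demanding real care is the well-definedness of the extended descent datum $(U'_f,\alpha'_{f,g})_{f\in T}$ when $f$ admits several factorizations $f=F(y)h=F(y')h'$ with $y,y'\in S$: it is exactly here that the cofinality condition is needed, and a clean write-up of that verification is what separates your sketch from a complete proof.
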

\begin{proof}
	One implication is obvious: if $F_*$ maps $K$-stacks to $J$-stacks, in particular it maps $K$-sheaves to $J$-sheaves, and thus $F$ is $(J,K)$-continuous.
	Now suppose instead that $F$ is $(J,K)$-continuous, and consider a $K$-stack $\ecat:\dbicat\op\rightarrow\CAT$: we have to show that $\ecat\circ F\op:\cbicat\op\rightarrow\CAT$ is a $J$-stack, \ie that for every $J$-covering sieve $m:S\rightarrowtail \yo(X)$ the functor
	\[
	\Ind_\cbicat(\yo(X), \ecat\circ F\op)\xrightarrow{-\circ m} \Ind_\cbicat(S, \ecat\circ F\op)
	\]
	is an equivalence (see Definition \ref{def:stack_ind}). To do so, we exploit the 2-adjunction $s_K\Lan_{F\op}\dashv (-\circ F\op)i_K$: the functor $-\circ m$ translates into the functor
	\[
	L'_S:\St(\dbicat,K)(s_K\Lan_{F\op}(\yo(X)),\ecat)\rightarrow \St(\dbicat,K)(s_K\Lan_{F\op}(S),\ecat)
	\]
	acting by precomposition with $-\circ s_K\Lan_{F\op}(m)$. The 2-functors $s_K$ and $\Lan_{F\op}$ act on presheaves respectively like $\sheafify_K$ and $\lan_{F\op}$: but by the previous lemma $F$ is $(J,K)$-continuous if and only if each $\sheafify_K\lan_{F\op}(m)\simeq s_K\Lan_{F\op}(m)$ is an isomorphism of sheaves, and thus $L_S'$ is an equivalence.
\end{proof}
\begin{remark}
	An explicit proof of this, though burdened by lengthy calculations, is not hard to provide. Recall from Section \ref{sec:stack} that a pseudonatural transfomation $S\Rightarrow \ecat\circ F\op$ can be interpreted as \emph{descent data} $(U_y, \alpha_{y,z})_{y\in S}$, \ie the given of objects $U_y\in \ecat(F(Y))$  for every $y:Y\rightarrow X$ in $ S(Y)$, and of a family of isomorphisms $\alpha_{y,z}: \ecat(F(z))(U_y)\simeq U_{yz}$ of $\ecat(F(Z))$ for every $y$ in $S$ and every $z:Z\rightarrow Y$, satisfying suitable compatibility properties. Since $F$ is continuous it is in particular cover-preserving, and hence $F(S)$ generates a $K$-covering sieve $R$ over $F(X)$ in $\dbicat$. Now, starting from the descent data $(U_y, \alpha_{y,z})$, one can obtain data $(U'_f, \alpha'_{f,g})_{f\in R}$ for $\ecat$. Indeed, any arrow $f\in R$ is of the kind $F(y)h$ for some $y\in S$, and thus we set $U'_f=\ecat(h)(U_{y})$, and the isomorphisms $\alpha'$ are then defined in the obvious way: the fact that this yields well-defined descent data for $\ecat$ can be proved exploiting the cofinality condition for $(J,K)$-continuous functors in Proposition \ref{prop:ft_cont_caratterizzazione}(iii). Now we can work with $\ecat$, which we assumed to be a stack: therefore the descent data $(U'_f, \alpha'_{f,g})_{f\in R
	}$ admit a `gluing' $U'\in \ecat(F(X))$, which one can the prove is also a gluing for the original descent datum $(U_y, \alpha_{y,z})$ of the indexed category $\ecat\circ F\op$. A similar argument works for morphisms of descent data, proving that the functor $-\circ m$ in the previous proposition is in fact an equivalence.
\end{remark}
We can now conclude that, similarly to what happens with sheaves, $(J,K)$-continuous functors and in particular morphisms of sites induce an adjunction between categories of stacks:
\begin{cor}\label{cor:morphsites_functor_between_stacks}
    Consider a morphism of sites $F:(\cbicat,J)\rightarrow(\dbicat,K)$, or more generally a $(J,K)$-continuous functor: it induces a 2-adjunction
    \[
    \begin{tikzcd}
	{\St^s(\cbicat,J)} \arrow[r, "\St(F)^*", bend left, start anchor={north east}, end anchor={north west}] &   {\St^s(\dbicat,K)} \arrow[l, "\St(F)_*", bend left, start anchor={south west}, end anchor={south east}] \ar[l, "\dashv"{rotate=270}, phantom]
    \end{tikzcd},
    \]    
    whose pair we shall refer to simply by $\St(F)$\index{$\St(F)$}.
    The 2-functor $\St(F)_{\ast}$ is called the \emph{direct image of stacks along $F$} and acts as the precomposition \[F^*:=(-\circ F\op):\Ind_\dbicat\rightarrow \Ind_\cbicat;\] In terms of fibrations, a stack $q:\ebicat\rightarrow\dbicat$ is mapped by $\St(F)_*$ to its strict pseudopullback $p:\pbicat\rightarrow\cbicat$ along $F$.
    The left adjoint $\St(F)^{\ast}$ is the \emph{inverse image of stacks along $F$} and acts as the composite
    \[
    \St^s(\cbicat,J) \xrightarrow{i_J} \Ind^s_\cbicat \xrightarrow{\Lan_{F\op}}\Ind^s_\dbicat\xrightarrow{s_K}\St^s(\dbicat,K),
    \]
    where $s_K$ denotes the stackification functor and $\Lan_{F\op}$ can be computed as in Proposition \ref{prop:pseudoKan}. In terms of fibrations, a stack $p:\pbicat\rightarrow\cbicat$ is mapped by $\St(F)^*$ to the stackification of its inverse image $\Lan_{F\op}([p])$ along $F$, which can be computed as in Proposition \ref{prop:inverseimage_as_localization}.
    
    In particular, any geometric morphism $f:\Ftopos\to \Etopos$ induces a pair of adjoint functors, denoted by $\St(f)$: 
	\[
	\begin{tikzcd}
		{\St^s(\Etopos)} \arrow[r, "\St(f)^*", bend left, start anchor={north east}, end anchor={north west}] &   {\St^s(\Ftopos)} \arrow[l, "\St(f)_*", bend left, start anchor={south west}, end anchor={south east}] \ar[l, "\dashv"{rotate=270}, phantom]
	\end{tikzcd}.
	\] 	
\end{cor}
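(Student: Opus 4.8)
The plan is to obtain the adjunction $\St(F)^*\dashv\St(F)_*$ purely by assembling three results already established, the only non-formal ingredient being the continuity of $F$. First I would record the two $2$-adjunctions at hand: the stackification adjunction $s_K\dashv i_K$ of Theorem~\ref{thm:stackification_esiste} (with $i_K:\St^s(\dbicat,K)\hookrightarrow\Ind^s_\dbicat$ the inclusion), and the pseudo-Kan adjunction $\Lan_{F\op}\dashv F^*$ of Proposition~\ref{prop:pseudoKan} at the level of small indexed categories. The decisive input is Proposition~\ref{prop:ftcont_preservano_stack}: since $F$ is $(J,K)$-continuous, the direct image $F^*=(-\circ F\op)$ sends $K$-stacks to $J$-stacks, so $F^*i_K$ factors as $i_J\circ\St(F)_*$ for a well-defined $2$-functor $\St(F)_*:\St^s(\dbicat,K)\to\St^s(\cbicat,J)$; this corestriction is precisely the asserted direct image of stacks.

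Next I would verify that $\St(F)^*:=s_K\circ\Lan_{F\op}\circ i_J$ is left $2$-adjoint to $\St(F)_*$ by exhibiting, for $\dcat$ in $\St^s(\cbicat,J)$ and $\ecat$ in $\St^s(\dbicat,K)$, a chain of equivalences of hom-categories natural in both arguments:
\begin{align*}
\St^s(\dbicat,K)(s_K\Lan_{F\op}i_J\dcat,\ecat)
&\simeq \Ind^s_\dbicat(\Lan_{F\op}i_J\dcat,i_K\ecat)\\
&\simeq \Ind^s_\cbicat(i_J\dcat,F^*i_K\ecat)\\
&= \Ind^s_\cbicat(i_J\dcat,i_J\St(F)_*\ecat)\\
&\simeq \St^s(\cbicat,J)(\dcat,\St(F)_*\ecat),
\end{align*}
where the first equivalence uses $s_K\dashv i_K$, the second uses $\Lan_{F\op}\dashv F^*$, the third is the factorisation supplied by continuity, and the last uses that $i_J$ is $2$-fully faithful (Definition~\ref{def:stack_ind}). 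This produces the required $2$-adjunction and simultaneously identifies $\St(F)^*$ with the composite displayed in the statement.

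It remains to translate the two functors into fibrational language and to treat the geometric-morphism case. For the direct image, Proposition~\ref{prop:directimage_Street_fibrations} identifies precomposition with $F\op$ on indexed categories with the strict pseudopullback along $F$, so $\St(F)_*$ sends a $K$-stack $q:\ebicat\to\dbicat$ to its strict pseudopullback over $\cbicat$; for the inverse image, $\St(F)^*=s_K\Lan_{F\op}i_J$ sends $p:\pbicat\to\cbicat$ to the stackification of $\Lan_{F\op}([p])$, computed via Proposition~\ref{prop:inverseimage_as_localization}. Finally, for a geometric morphism $f:\Ftopos\to\Etopos$ the inverse image $f^*:\Etopos\to\Ftopos$ is, by our standing conventions, a morphism of sites $(\Etopos,J\can_\Etopos)\to(\Ftopos,J\can_\Ftopos)$ and hence in particular $(J\can_\Etopos,J\can_\Ftopos)$-continuous; applying the general statement to $F=f^*$ and setting $\St(f):=\St(f^*)$ yields the stated adjunction between $\St^s(\Etopos)$ and $\St^s(\Ftopos)$.

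I would expect no serious obstacle here: the entire content has been front-loaded into Propositions~\ref{prop:pseudoKan} and~\ref{prop:ftcont_preservano_stack}, and what remains is the formal composition of adjunctions together with the full faithfulness of the stack inclusions. The one point demanding genuine care is the identity $F^*i_K=i_J\St(F)_*$, that is, that continuity makes $F^*$ corestrict to stacks; this is exactly where Proposition~\ref{prop:ftcont_preservano_stack} is invoked, and it is the hypothesis without which the right adjoint $\St(F)_*$ would not even be defined.
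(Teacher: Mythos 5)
Your proposal is correct and follows essentially the same route as the paper: the paper dismisses the adjunction itself as "a standard argument" (which you spell out via the composite of $s_K\dashv i_K$ with $\Lan_{F\op}\dashv F^*$ and the full faithfulness of $i_J$, exactly the content of Lemma \ref{lemma:adj_restriction_subcat}), and it invokes the same inputs — Proposition \ref{prop:ftcont_preservano_stack} for the corestriction of $F^*$ to stacks, Proposition \ref{prop:directimage_Street_fibrations} for the fibrational description, and the identification of $f^*$ as a morphism of sites for the final claim.
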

\begin{proof}
    The fact that $s_K\circ \Lan_{F\op}\circ i_J$ is adjoint to $\St(F)_*$ is a standard argument. Proposition \ref{prop:ftcont_preservano_stack} showed that the precomposition $-\circ F\op$ maps stacks to stacks when $F$ is $(J,K)$-continuous, and justifies the description of $\St(F)_*$ in $\cbicat$-indexed terms; its description in fibrational terms as a pseudopullback comes instead from  Proposition \ref{prop:directimage_Street_fibrations}. The last claim holds since any geometric morphism $f:\Ftopos\to \Etopos$ can be seen as a morphism of sites $f^{\ast}:({\Etopos}, J\can_\Etopos)\to (\Ftopos, J\can_\Ftopos)$.
\end{proof}

As already mentioned in Subsection \ref{sec:canonical_stack}, one important consequence of the fact that the direct image along continuous functors preserves stacks is the fact that all canonical fibrations are stacks, if those of toposes are:
\begin{cor}\label{cor:canst_via_immdiretta}
	Suppose that for every topos $\Etopos$ its canonical fibration $\canst_{(\Etopos, J\can_\Etopos)}$ is a stack: then for any site $(\cbicat,J)$ its canonical fibration $\canst_{(\cbicat,J)}$ is a stack.
\end{cor}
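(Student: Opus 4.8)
The plan is to exhibit $\canst_{(\cbicat,J)}$ as the direct image, along the canonical functor $\ell_J$, of the canonical stack on the base topos $\Etopos:=\Sh(\cbicat,J)$, and then to conclude by the fact that continuous functors preserve stacks. First I would set $\Etopos:=\Sh(\cbicat,J)$ and recall that, under the equivalence $\Etopos\simeq\Sh(\Etopos,J\can_\Etopos)$, the canonical fibration $\canst_{(\Etopos,J\can_\Etopos)}$ is precisely the canonical stack over the topos $\Etopos$ of Example \ref{ex:indexedcategories}(iv), namely the $\Etopos$-indexed category $\canst_\Etopos:\Etopos\op\to\CAT$ sending $E\mapsto\Etopos/E$, with transition morphisms the pullback functors. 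By hypothesis (instantiated at this particular $\Etopos$) this $\canst_\Etopos$ is a $J\can_\Etopos$-stack.

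The key observation is then that $\canst_{(\cbicat,J)}$ coincides with the direct image $\St(\ell_J)_*(\canst_\Etopos)=\canst_\Etopos\circ\ell_J\op$. Indeed, on an object $X$ of $\cbicat$ this precomposition gives $\canst_\Etopos(\ell_J(X))=\Etopos/\ell_J(X)=\Sh(\cbicat,J)/\ell_J(X)$, which is exactly the fibre $\canst_{(\cbicat,J)}(X)$ of Definition \ref{def:stack_canonica_su_sito}; and on an arrow $y:Y\to X$ both indexed categories act by the pullback functor $\ell_J(y)^*$. Since both use the canonical choice of pullbacks in $\Etopos$, they agree (together with their pseudofunctorial coherence data) as $\cbicat$-indexed categories. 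This identification is essentially definitional once it is set up correctly, and it is the step I expect to require the most care, precisely because one must match not merely the fibres but also the transition morphisms and the canonical isomorphisms $\phi$ of the two pseudofunctors.

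Next I would verify that $\ell_J:\cbicat\to\Etopos$ is $(J,J\can_\Etopos)$-continuous. This can be checked directly through the characterization of Proposition \ref{prop:ft_cont_caratterizzazione}(ii): a $J\can_\Etopos$-sheaf on $\Etopos$ is, up to the canonical equivalence, a representable $\Etopos(-,E)$, and its image $\ell_J^*(\Etopos(-,E))=\Etopos(\ell_J(-),E)$ is naturally isomorphic to the underlying presheaf $\iota_J(E)$ of $E$ via the adjunction $\sheafify_J\dashv\iota_J$, hence a $J$-sheaf; thus $\ell_J^*$ sends $J\can_\Etopos$-sheaves to $J$-sheaves. (Equivalently, $\ell_J$ is the morphism of sites inducing the identity geometric morphism of $\Sh(\cbicat,J)$, and every morphism of sites is continuous.)

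Finally, by Proposition \ref{prop:ftcont_preservano_stack} the continuity of $\ell_J$ ensures that $\ell_J^*=(-\circ\ell_J\op):\Ind_\Etopos\to\Ind_\cbicat$ restricts to a $2$-functor $\St(\Etopos,J\can_\Etopos)\to\St(\cbicat,J)$. Applying this to $\canst_\Etopos$, which is a $J\can_\Etopos$-stack by hypothesis, yields that its direct image $\canst_\Etopos\circ\ell_J\op\cong\canst_{(\cbicat,J)}$ is a $J$-stack, as desired. The remaining work — the continuity verification and the appeal to Proposition \ref{prop:ftcont_preservano_stack} — is routine, so the whole argument reduces to the bookkeeping of the second paragraph.
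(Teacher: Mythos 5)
Your proposal is correct and follows exactly the paper's own argument: instantiate the hypothesis at $\Etopos=\Sh(\cbicat,J)$, identify $\canst_{(\cbicat,J)}$ with the direct image $\canst_{(\Etopos,J\can_\Etopos)}\circ\ell_J\op$, and invoke the continuity of $\ell_J$ together with Proposition \ref{prop:ftcont_preservano_stack}. The extra detail you supply on the matching of transition morphisms and on checking continuity is just an elaboration of what the paper leaves as an immediate verification.
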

\begin{proof}
	Consider the topos $\Etopos=\Sh(\cbicat,J)$: then one can check immediately that the direct image of $\canst_{(\Etopos, J\can_\Etopos)}$ along the morphism of sites
	\[
	(\cbicat,J) \xrightarrow{\ell_J} (\Etopos, J\can_\Etopos),
	\]
	is the canonical fibration $\canst_{(\cbicat,J)}$. But $\ell_J$ is $(J, J\can_\Etopos)$-continuous and the canonical fibration $\canst_{(\Etopos, J\can_\Etopos)}$ is a $J\can_\Etopos$-stack, thus $\canst_{(\cbicat,J)}$ is a $J$-stack.
\end{proof}

As we know, when $F:(\cbicat,J)\rightarrow (\dbicat,K)$ is not only $(J,K)$-continuous but a morphism of sites, the adjunction $\Sh(F)$ is actually a geometric morphism, \ie the inverse image $\Sh(F)^*$ is a lex functor: the same holds for $\St(F)^*$, as we will show in a moment using internal categories. We have avoided exploiting the theory of internal categories as much as possible for the reasons already expressed in the introduction: even though internal categories in a topos are closely related to stacks over the site of definition (we will recall how in the proof of the next result), the passage from stacks to internal categories requires a process of `rigidification' which is not only cumbersome for many practical uses, but also unnatural in a theory that was born precisely to give more `elasticity' to the theory of sheaves. However, in this case the formalism of internal categories proves to be much more efficient than an explicit check on the limit preservation of $\Lan_{F\op}$, which is why we will exploit it.

Let us recall briefly that given any category $\ebicat$ with finite limits, an \emph{internal category} $\acat$ in $\ebicat$ is the given of an object of objects $A_0$ and of an object of arrows $A_1$, along with morphisms representing domain, codomain etc., pasted together into some commutative diagrams (for more details see \cite[Section B2.3]{elephant}). Internal categories of $\ebicat$ form a 2-category $\Cat(\ebicat)$: since internal categories can be described uniquely by commutative diagrams and pullbacks, they are preserved by any lex functor\footnote{From a logical standpoint, this holds as internal categories are models for a cartesian theory: for more details see \cite[Example D1.2.15(e)]{elephant}}. When $\ebicat$ is a topos of sheaves $\Sh(\cbicat,J)$, objects of $\Cat(\Sh(\cbicat,J))$ can be seen as split small discrete $J$-stacks of $\cbicat$ by a form of `externalization' which associates $\acat$ with a strict functor $\bar{\acat}:\cbicat\op\rightarrow\Cat$, by defining, for every $X$ in $\cbicat$, the category $\bar{\acat}(X)$ as the category whose set of objects is $A_0(X)$ and whose set of arrows is $A_1(X)$.

With this in mind, we are now ready to prove the following result:
\begin{prop}\label{prop:morphsites_stacks_intcategories}
	Consider a morphism of sites $F:(\cbicat,J)\rightarrow (\dbicat,K)$: then the adjoints $\St(F)^*\dashv \St(F)_*:\St^s(\cbicat,J)\rightarrow \St^s(\dbicat,K)$ act on internal categories as the adjoints $\Sh(F)^*$ and $\Sh(F)_*$.
	
	In particular, given two sites $(\cbicat,J) $ and $(\dbicat,K)$, a functor $F:\cbicat\rightarrow \dbicat$ is a morphism of sites if and only if $\St(F)^*$ preserves finite pseudolimits of small stacks.
\end{prop}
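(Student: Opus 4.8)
The plan is to route everything through the \emph{externalization} $2$-functor $\mathrm{ext}\colon\Cat(\Sh(\cbicat,J))\to\St^s(\cbicat,J)$, $\acat\mapsto\bar\acat$, described before the statement: it is $2$-fully faithful and, by the Bunge--Paré correspondence (\cite{bunge.stacksandinternalcat}, \cite{BungePare}), essentially surjective up to stack-equivalence onto the small stacks. Throughout I assume $F$ is $(J,K)$-continuous, so that the adjunction $\St(F)$ of Corollary~\ref{cor:morphsites_functor_between_stacks} exists; recall that $F$ is then a morphism of sites exactly when $\Sh(F)^{*}$ is lex. I would first dispose of the direct image. Since $\St(F)_{*}=(-\circ F\op)$ and, for a morphism of sites, $\Sh(F)_{*}=(-\circ F\op)$ as well, and since $(\Sh(F)_{*}P)(X)=P(F(X))$, evaluating $\bar\acat\circ F\op$ at an object $X$ of $\cbicat$ yields the category with objects $A_{0}(F(X))$ and arrows $A_{1}(F(X))$, that is, the externalization of the internal category $\Cat(\Sh(F)_{*})(\acat)$ obtained by applying the limit-preserving (hence internal-category-preserving) right adjoint $\Sh(F)_{*}$. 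This gives a pseudonatural equivalence $\St(F)_{*}\circ\mathrm{ext}\simeq\mathrm{ext}\circ\Cat(\Sh(F)_{*})$.

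For the inverse image I would argue by uniqueness of adjoints. When $F$ is a morphism of sites, $\Sh(F)^{*}$ is lex, so it lifts to a $2$-functor $\Cat(\Sh(F)^{*})$ which is left $2$-adjoint to $\Cat(\Sh(F)_{*})$. Combining the direct-image equivalence just obtained with the $2$-full faithfulness of $\mathrm{ext}$, for every internal $\acat$ over $(\cbicat,J)$ and $\bbicat$ over $(\dbicat,K)$ one gets natural equivalences
\[
\begin{aligned}
\St^s(\dbicat,K)(\St(F)^{*}\bar\acat,\bar\bbicat)
&\simeq\St^s(\cbicat,J)(\bar\acat,\St(F)_{*}\bar\bbicat)\\
&\simeq\Cat(\Sh(\cbicat,J))(\acat,\Cat(\Sh(F)_{*})\bbicat)\\
&\simeq\Cat(\Sh(\dbicat,K))(\Cat(\Sh(F)^{*})\acat,\bbicat),
\end{aligned}
\]
so $\St(F)^{*}\bar\acat$ and $\overline{\Cat(\Sh(F)^{*})\acat}$ represent the same pseudofunctor when tested against externalized internal categories. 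Because $\mathrm{ext}$ is essentially surjective up to equivalence onto the small stacks, testing against such objects is the same as testing against all small stacks; by the bicategorical Yoneda lemma the two small stacks are equivalent, giving $\St(F)^{*}\circ\mathrm{ext}\simeq\mathrm{ext}\circ\Cat(\Sh(F)^{*})$ and completing the first assertion.

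For the \ac in particular' I would treat the two implications separately. For the converse direction, suppose $\St(F)^{*}$ preserves finite pseudolimits of small stacks. Restricting to discrete stacks, the inclusion $j_{J}=(\mathrm{Disc}\circ-)$ satisfies $\St(F)^{*}\circ j_{J}\simeq j_{K}\circ\Sh(F)^{*}$: indeed $\Lan_{F\op}$ sends a discrete indexed category to the discrete one on the ordinary left Kan extension, since by Corollary~\ref{cor:colimiti_pesati_conificati} the pseudocolimit of a diagram of discrete categories over a $1$-category is discrete and computes the set-level colimit, matching the pointwise formula for $\lan_{F\op}$; and $s_{K}$ restricts on discrete indexed categories to $\mathrm{Disc}\circ\sheafify_{K}$ (a discrete indexed category is a stack iff it is a sheaf, by Proposition~\ref{prop:psh+stack=sheaf}). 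Hence $s_{K}\Lan_{F\op}i_{J}j_{J}\simeq j_{K}\sheafify_{K}\lan_{F\op}\iota_{J}=j_{K}\Sh(F)^{*}$. Since $\mathrm{Disc}$ is both a left and a right adjoint it preserves and reflects finite limits, and finite pseudolimits of discrete stacks coincide with finite limits of the underlying sheaves; therefore $\Sh(F)^{*}$ preserves finite limits, i.e. it is lex, so $F$ is a morphism of sites.

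For the forward direction, assume $F$ is a morphism of sites, so $\Sh(F)^{*}$ is lex and $\Cat(\Sh(F)^{*})$ preserves the finite pseudolimits of $\Cat(\Sh(\cbicat,J))$, which are computed from the finite limits of the topos. Given a finite diagram of small stacks, I would replace each by an equivalent externalized internal category (Bunge--Paré) and use that $\mathrm{ext}$ both preserves and reflects finite pseudolimits: internal finite limits are computed objectwise on the objects- and arrows-sheaves, evaluation preserves limits, and a finite pseudolimit in $\Cat$ is itself computed objectwise on objects and arrows. Combining this with the equivalence $\St(F)^{*}\circ\mathrm{ext}\simeq\mathrm{ext}\circ\Cat(\Sh(F)^{*})$ of the first part, the inverse image of the pseudolimit is the externalization of the internal pseudolimit, which $\Cat(\Sh(F)^{*})$ preserves, yielding back the pseudolimit of the inverse images. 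The main obstacle is exactly this reduction to internal categories: one must secure that $\mathrm{ext}$ is essentially surjective up to stack-equivalence onto $\St^s$ (which tacitly uses $\St^s(\cbicat,J)\simeq\St^s(\Sh(\cbicat,J))$) and that it is bi-exact for finite pseudolimits, so that a property checked on internal categories through the lex functor $\Sh(F)^{*}$ genuinely transfers to all small stacks. The direct verification that $\Lan_{F\op}$ (Proposition~\ref{prop:pseudoKan}) is lex, which this route circumvents, is precisely the computation the internal-category formalism is designed to avoid.
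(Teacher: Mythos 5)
Your first assertion and the forward implication of the ``in particular'' follow essentially the same route as the paper's proof: externalize internal categories, observe that both direct images act by precomposition with $F\op$ and hence agree on externalizations, and transport this agreement to the left adjoints by uniqueness of adjoints (your bicategorical Yoneda argument is an expanded version of the paper's one-line deduction). The forward implication, via lexness of $\Sh(F)^{*}$ and bi-exactness of externalization together with Bunge--Paré essential surjectivity, is also exactly the paper's argument, and carries the same implicit reliance on those facts.

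The converse implication, however, contains a genuine error. You justify $\St(F)^{*}\circ j_{J}\simeq j_{K}\circ\Sh(F)^{*}$ by asserting that ``by Corollary~\ref{cor:colimiti_pesati_conificati} the pseudocolimit of a diagram of discrete categories over a $1$-category is discrete''. That corollary only conifies a discretely \emph{weighted} pseudocolimit; it says nothing about the result being discrete, and the assertion is false. Take $\cbicat=BG$ for a nontrivial group $G$, $\dbicat=\onecat$, both with trivial topologies, and $F$ the unique functor. Then $(\ast\downarrow F)\simeq BG$, and $\Lan_{F\op}$ applied to the terminal (discrete) indexed category is the conical pseudocolimit of the constant diagram $\Delta\onecat$ over $BG\op$, i.e.\ the groupoid $BG$ itself, which is not equivalent to a discrete category; stackification for the trivial topology changes nothing. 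So for a merely $(J,K)$-continuous $F$ the functor $\St(F)^{*}$ need not preserve discreteness, and only the weaker identity $\trunc_{K}\circ\St(F)^{*}\circ j_{J}\simeq\Sh(F)^{*}$ of Proposition~\ref{prop:contft_stacks_sheaves} is available. You cannot push the limit-preservation through that truncation either, since $\pi_{0}$ does not preserve pseudopullbacks (e.g.\ $\pi_{0}$ of the pseudopullback $\onecat\times^{ps}_{BG}\onecat$, which is the discrete category on $G$, is $G$ rather than $1$).

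The repair is to extract discreteness-preservation \emph{from} the hypothesis rather than asserting it in general: a small stack is equivalent to a discrete one iff it is fibrewise a groupoid and $0$-truncated, and both conditions amount to certain canonical comparison maps between finite weighted pseudolimits (the cotensor comparison $\xcat^{\mathbb{I}}\to\xcat^{\twocat}$ and the pseudo-diagonal $\xcat\to\xcat\times^{ps}_{\xcat\times\xcat}\xcat$) being equivalences, hence are preserved by any $2$-functor preserving finite pseudolimits (provided these are understood to include the relevant cotensors). Granting this, $\St(F)^{*}\circ j_{J}\simeq j_{K}\circ\trunc_{K}\circ\St(F)^{*}\circ j_{J}\simeq j_{K}\circ\Sh(F)^{*}$, and since $j_{J}$ and $j_{K}$ are fully faithful right adjoints they preserve and reflect finite limits, so $\Sh(F)^{*}$ is lex. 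For what it is worth, the paper's own proof of this direction is equally terse --- it simply restricts $\St(F)^{*}$ to sheaves --- but it does not commit to the false general statement about $\Lan_{F\op}$ that your write-up does.
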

\begin{proof}
	Consider the diagram
	\[\begin{tikzcd}
		{\St^s(\dbicat,K)} & {\St^s(\cbicat,J)} \\
		{\Spl\St^s(\dbicat,K)} & {\Spl\St^s(\cbicat,J)} \\
		{\Cat(\Sh(\dbicat,K))} & {\Cat(\Sh(\cbicat,J))}
		\arrow["\sim"sloped, no head, from=3-1, to=2-1]
		\arrow[from=2-1, to=1-1]
		\arrow["\sim"{sloped, below}, no head, from=3-2, to=2-2]
		\arrow[from=2-2, to=1-2]
		\arrow["{\Sh(F)_*}"', from=3-1, to=3-2]
		\arrow["{\St(F)_*}", from=1-1, to=1-2]
		\arrow["{\overline{(-)}}", from=3-1, to=1-1, bend left=70]
		\ar["{\overline{(-)}}"', from=3-2, to=1-2, bend right=70]		
	\end{tikzcd},\]
	where the bent arrows represent the externalization of an internal category, and $\Spl\St$ denotes the 2-category of split stacks (in analogy with $\Fib$ and $\Spl\Fib$). By an abuse of notation we call $\St(F)_*$ the lower horizontal arrow, which is actually the action of the lex functor $\St(F)_*$ on internal categories. Given an internal category $\acat$ in $\Sh(\dbicat,K)$, we can now compute the two stacks $\St(F)_*(\bar{\acat})$ and $\overline{\Sh(F)_*(\acat)}:\cbicat\op\rightarrow \Cat$: for each object $X$ in $\cbicat$, one immediately checks that 
	both categories $\St(F)_*(\bar{\acat})(X)$ and $\overline{\Sh(F)_*(\acat)}(X)$ have the set of objects $A_0(F(X))$, because the action of $\St(F)_*$ and $\Sh(F)_*$ is just precomposition with $F\op$; the same holds for the set of morphisms, and all the relevant data (domains, codomains, etc.) are the same. This means that the diagram above is commutative, meaning that $\St(F)_*$ and $\Sh(F)_*$ act on internal categories essentially in the same way. In turn, this entails that the two left adjoints $\St(F)^*$ and $\Sh(F)^*$ act essentially in the same way on internal categories.
	
	The last claim is now obvious: if $\St(F)^*$ preserves finite limits, its restriction to sheaves $\Sh(F)^*$ preserves \emph{a fortiori} finite limits of sheaves, and thus $F$ is a morphism of sites. Conversely, if $F$ is a morphism of sites $\Sh(F)^*$ preserves finite limits: in particular so it does with finite limits of internal categories. But since the action of $\Sh(F)^*$ and $\St(F)^*$ is essentially the same on internal categories, it follows that $\St(F)^*$ must also preserve said limits.
\end{proof}

\begin{remarks}
	\begin{enumerate}[(i)]
		\item 	This is an extension of Corollaire 3.2.9 of \cite[Chapter II]{giraud.cohomologie}, where it is shown that if $F:(\cbicat,J)\rightarrow (\dbicat,K)$ is a morphism of sites then $\St(F)^*$ preserves finite products of stacks.
		\item Consider a functor $A$ from a site $\cbicat$ to a Grothendieck topos $\Etopos$: then $A$ is $(J, J\can_\Etopos)$-continuous if and only if it is $J$-continuous in the sense of Section VII.7 of \cite{maclanemoerdijk}: \ie, it maps $J$-covering sieves to colimit cocones, or analogously if for each $E$ in $\Etopos$ the presheaf
		\[
		\Etopos(A(-), E):\cbicat\op\rightarrow \Set
		\]
		is a $J$-sheaf. Now, in light of the equivalence
		\(
		{\Etopos}\simeq \Sh({\Etopos}, J\can_{\Etopos}),
		\)
		we can consider instead presheaves of the form
		\[
		{\Sh(\Etopos, J\can_{\Etopos})}((\ell\circ A)(-), W),
		\]
		where $W$ is a $J\can_{\Etopos}$-sheaf on $\Etopos$ and $\ell:{\Etopos}\to \Sh(\Etopos, J\can_\Etopos)$  is the canonical functor given by the Yoneda embedding. Now let us denote by $L:\Etopos\to \St(\Etopos)$ the pseudofunctor sending any $E$ in $\Etopos$ to the presheaf $\yo_\Etopos(E)$, which is a $J\can_\Etopos$-sheaf and hence a $J\can_\Etopos$-stack: We could formulate an analogous condition of `$J$-stack-continuity' for $A$, asking that for any \emph{stack} $\dcat$ on $\Etopos$, the pseudofunctor
		\[
		{\St({\Etopos})}((L\circ A)(-), \dcat):\cbicat\op\rightarrow \Cat
		\]
		is a $J$-stack. Unsurprisingly, the two conditions are equivalent. Of course, $J$-stack-continuity implies immediately $J$-continuity. On the other hand, suppose that $A$ is $J$-continuous: this means that $\St(A)_*=(-\circ A\op)$ maps $J\can_\Etopos$-stacks to $J$-stacks. But it follows immediately from the definitions and from fibred Yoneda's lemma that
		\[
		\St(\Etopos)((L\circ A)(-),\dcat)\simeq \dcat(A(-))= \St(A)_*(\dcat),
		\]
		and thus $J$-continuity implies $J$-stack-continuity holds.
	\end{enumerate}
\end{remarks}
Proposition \ref{prop:morphsites_stacks_intcategories} has also the following fundamental consequence:
\begin{cor}
Given a site $(\cbicat,J)$, the category of $J$-stacks over $\cbicat$ is equivalent to the category of stacks over $\Sh(\cbicat,J)$ for the canonical topology: 
\[
\St^s(\cbicat,J)\simeq \St^s(\Sh(\cbicat,J), J\can_{\Sh(\cbicat,J)}).
\]
\end{cor}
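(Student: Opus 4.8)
The plan is to realize the asserted equivalence as the $2$-adjunction induced by a canonical morphism of sites, and then to verify that this adjunction is in fact a $2$-equivalence by reducing everything to the level of internal categories, where Proposition \ref{prop:morphsites_stacks_intcategories} lets us transport the classical equivalence $\Sh(\cbicat,J)\simeq \Sh(\Sh(\cbicat,J), J\can_{\Sh(\cbicat,J)})$.

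First I would observe that the canonical functor $\ell_J:\cbicat\rightarrow \Sh(\cbicat,J)$, regarded as a functor $(\cbicat,J)\rightarrow (\Sh(\cbicat,J), J\can_{\Sh(\cbicat,J)})$, is a morphism of sites: it is flat, being the composite $\sheafify_J\yo_\cbicat$ of two finite-limit-preserving functors, and it is $(J, J\can_{\Sh(\cbicat,J)})$-continuous and cover-preserving, so that the geometric morphism it induces is precisely the canonical equivalence $\Sh(\ell_J):\Sh(\Sh(\cbicat,J), J\can_{\Sh(\cbicat,J)})\simeq \Sh(\cbicat,J)$ arising from the identification of a Grothendieck topos with the category of sheaves on itself for the canonical topology. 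By Corollary \ref{cor:morphsites_functor_between_stacks}, $\ell_J$ then induces a $2$-adjunction
\[
\St(\ell_J)^*\dashv \St(\ell_J)_*:\St^s(\cbicat,J)\rightleftarrows \St^s(\Sh(\cbicat,J), J\can_{\Sh(\cbicat,J)}),
\]
and the claim will be that this adjunction is a $2$-equivalence.

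To prove this I would invoke Proposition \ref{prop:morphsites_stacks_intcategories}, by which $\St(\ell_J)^*$ and $\St(\ell_J)_*$ act on internal categories exactly as the functors $\Sh(\ell_J)^*$ and $\Sh(\ell_J)_*$. Since $\Sh(\ell_J)$ is an equivalence of toposes, $\Sh(\ell_J)^*\dashv \Sh(\ell_J)_*$ is an adjoint equivalence; hence, through the externalization equivalence $\Cat(\Sh(\cbicat,J))\simeq \Spl\St^s(\cbicat,J)$ used in the proof of that proposition, the restriction of $\St(\ell_J)$ to split small stacks has invertible unit and counit. It then remains to propagate this from split stacks to arbitrary small stacks, and here I would use Corollary \ref{cor:eqv_fibr_e_splfibr}: every small stack is equivalent to a split one, i.e.\ lies in the essential image of the externalization of an internal category. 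As $\St(\ell_J)^*$, $\St(\ell_J)_*$ and their composites are $2$-functors, and the unit $\dcat\rightarrow \St(\ell_J)_*\St(\ell_J)^*\dcat$ and counit $\St(\ell_J)^*\St(\ell_J)_*\dcat\rightarrow \dcat$ are pseudonatural, their being equivalences at every split stack forces them to be equivalences at every stack equivalent to a split one, hence everywhere; this yields the desired $2$-equivalence.

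The main obstacle I anticipate is precisely this last propagation step. One must check carefully that the externalization comparison $\Cat(\Sh(\cbicat,J))\simeq \Spl\St^s(\cbicat,J)$ is a genuine $2$-equivalence onto the \emph{split} small stacks, that the inclusion $\Spl\St^s\hookrightarrow\St^s$ is itself a $2$-equivalence (which is Corollary \ref{cor:eqv_fibr_e_splfibr}), and that the adjunction $\St(\ell_J)$ is compatible with both of these equivalences, so that invertibility of unit and counit on the split part really does transfer to the whole $2$-category rather than merely on a dense sub-$2$-category. Everything else is formal manipulation of the adjunction $\St(\ell_J)$ together with the topos-level equivalence $\Sh(\ell_J)$.
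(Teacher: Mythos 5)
Your proposal is correct and follows essentially the same route as the paper's own proof: regard $\ell_J$ as a morphism of sites inducing the canonical equivalence of toposes, transport the equivalence to internal categories via Proposition \ref{prop:morphsites_stacks_intcategories}, and conclude using the fact that every small stack is equivalent to a split one (hence to an externalized internal category). The paper's argument is more terse — it does not spell out the unit/counit propagation step you rightly flag as the delicate point — but the underlying ideas are identical.
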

\begin{proof}
It is well known that the functor $\ell_J:\cbicat\rightarrow \Sh(\cbicat,J)$ is a $(J,J\can_{\Sh(\cbicat,J)})$-morphism of sites inducing an equivalence
\[
\Sh(\ell_J): \Sh(\Sh(\cbicat,J), J\can_{\Sh(\cbicat,J)})\isorightarrow \Sh(\cbicat,J).
\]
This implies that $\Sh(\ell_J)$ acts as an equivalence on the internal categories of the two toposes: but since every $J$-stack is equivalent to split $J$-stack, and hence to an internal category, by fibred Yoneda's lemma, the functor
\[
\St(\ell_J): \St^s(\Sh(\cbicat,J), J\can_{\Sh(\cbicat,J)})\rightarrow \St^s(\cbicat,J);
\]
is an equivalence.
\end{proof}

In a similar way to morphisms of sites, comorphisms of sites also induce an adjunction between categories of stacks, this time by restricting the action of the right Kan extension:
\begin{prop}\label{prop:comorphism_action_on_stacks}
Consider a comorphism of sites $F:(\cbicat,J)\rightarrow(\dbicat,K)$: it induces a 2-adjunction
\[
    \begin{tikzcd}
	{\St^s(\dbicat,K)} \arrow[r, "(C_F^\St)^*", bend left, start anchor={north east}, end anchor={north west}] &   {\St^s(\cbicat,J)} \arrow[l, "(C_F^\St)_*", bend left, start anchor={south west}, end anchor={south east}] \ar[l, "\dashv"{rotate=270}, phantom]
    \end{tikzcd},
    \]
whose pair we shall refer to by $C_F^\St$\index{$C_F^\St$}. The right adjoint $(C_F^\St)_*$ acts by restriction of the right Kan extension $\Ran_{F\op}$ (see Proposition \ref{prop:pseudoKan}) to stacks; on the other hand, the left adjoint $(C_F^\St)^*$ acts as the composite 2-functor
\[
\St^s(\dbicat,K)\xrightarrow{i_K} \Ind^s_\dbicat \xrightarrow{F^*} \Ind^s_\cbicat \xrightarrow{s_J} \St^s(\cbicat,J),
\]
where $F^*:=(-\circ F\op)$ and $s_J$ is the stackification functor. Moreover, there is an equivalence
\[(C_F^\St)^*\circ s_K\cong s_J\circ F^*.
\]
\end{prop}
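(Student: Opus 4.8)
The plan is to build the $2$-adjunction from the adjoint triple $\Lan_{F\op}\dashv F^*\dashv\Ran_{F\op}$ of Proposition \ref{prop:pseudoKan}, the heart of the matter being to show that, when $F$ is a comorphism, the right pseudo-Kan extension $\Ran_{F\op}$ sends $J$-stacks to $K$-stacks. This is the categorified analogue of Definition \ref{def:comorfismo}(ii), which (in our orientation) guarantees that $\ran_{F\op}$ sends $J$-sheaves to $K$-sheaves. Granting this, the right adjoint $(C_F^\St)_*$ is \emph{defined} as the restriction of $\Ran_{F\op}$, and both the identification of the left adjoint as $s_J\circ F^*\circ i_K$ and the final equivalence will follow by purely formal manipulations of adjoints.

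First I would fix a $J$-stack $\ecat:\cbicat\op\to\Cat$ and verify that $\Ran_{F\op}(\ecat)$ satisfies the stack condition of Definition \ref{def:stack_ind}: for every $K$-covering sieve $n:T\rightarrowtail\yo_\dbicat(D)$ the functor $-\circ n:\Ind_\dbicat(\yo_\dbicat(D),\Ran_{F\op}\ecat)\to\Ind_\dbicat(T,\Ran_{F\op}\ecat)$ must be an equivalence. Using the adjunction $F^*\dashv\Ran_{F\op}$ and then the stackification adjunction $s_J\dashv i_J$ (Theorem \ref{thm:stackification_esiste}), together with the fact that $\ecat$ is a stack, there is a chain of equivalences
\begin{align*}
\Ind_\dbicat(\yo_\dbicat(D),\Ran_{F\op}\ecat)
&\simeq \Ind_\cbicat(F^*\yo_\dbicat(D),i_J\ecat)\\
&\simeq \St^s(\cbicat,J)(s_J F^*\yo_\dbicat(D),\ecat),
\end{align*}
natural in the first variable, and similarly with $T$ in place of $\yo_\dbicat(D)$. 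Under these equivalences $-\circ n$ corresponds to precomposition with $s_J(F^* n)$. Now $F^* n=n\circ F\op$ is a monomorphism of the discrete indexed categories (presheaves) $T\circ F\op\rightarrowtail\dbicat(F(-),D)$, and it is $J$-dense because $F$, being a comorphism, satisfies condition (iii) of Definition \ref{def:comorfismo}: $F^*=(-\circ F\op)$ carries the $K$-dense monomorphism $n$ to a $J$-dense monomorphism. Hence $s_J(F^* n)$ is an equivalence, since stackification inverts $J$-dense monomorphisms of presheaves (on discrete indexed categories it is computed by the sheafification $\sheafify_J$, which inverts them by definition), and therefore $-\circ n$ is an equivalence. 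This is the step I expect to require the most care, precisely in pinning down that $s_J$ restricted to presheaves agrees with $\sheafify_J$ so that $J$-denseness is genuinely inverted; an alternative, more laborious route would be the explicit descent-datum transport used in the remark following Proposition \ref{prop:ftcont_preservano_stack}.

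With $\Ran_{F\op}$ restricting to a $2$-functor $(C_F^\St)_*:\St^s(\cbicat,J)\to\St^s(\dbicat,K)$, the remaining claims are formal. For the adjunction, given $\dcat\in\St^s(\dbicat,K)$ and $\ecat\in\St^s(\cbicat,J)$ one chains
\[
\St^s(\cbicat,J)(s_J F^* i_K\dcat,\ecat)\simeq\Ind_\cbicat(F^* i_K\dcat,i_J\ecat)\simeq\Ind_\dbicat(i_K\dcat,\Ran_{F\op}i_J\ecat)\simeq\St^s(\dbicat,K)(\dcat,(C_F^\St)_*\ecat),
\]
using $s_J\dashv i_J$, then $F^*\dashv\Ran_{F\op}$, then the identity $\Ran_{F\op}i_J\ecat=i_K(C_F^\St)_*\ecat$ just established together with the full faithfulness of $i_K$. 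This exhibits $(C_F^\St)^*:=s_J\circ F^*\circ i_K$ as left adjoint to $(C_F^\St)_*$, as claimed. Finally, the equivalence $(C_F^\St)^*\circ s_K\cong s_J\circ F^*$ follows by uniqueness of adjoints: both sides are left adjoints $\Ind^s_\dbicat\to\St^s(\cbicat,J)$, and their respective right adjoints are $i_K\circ(C_F^\St)_*$ and $\Ran_{F\op}\circ i_J$, which coincide since $i_K(C_F^\St)_*=\Ran_{F\op}i_J$ by construction of $(C_F^\St)_*$; hence the two composite left adjoints are naturally equivalent.
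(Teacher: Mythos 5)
Your proposal is correct and follows essentially the same route as the paper: both transfer the descent condition for $\Ran_{F\op}(\ecat)$ along a $K$-covering sieve $n$ through the adjunction $F^*\dashv\Ran_{F\op}$, reduce it to the $J$-denseness of the monomorphism $F^*(n)$ of presheaves (condition (iii) of Definition \ref{def:comorfismo}), and then obtain the left adjoint and the equivalence $(C_F^\St)^*\circ s_K\cong s_J\circ F^*$ by the same formal uniqueness-of-adjoints arguments. The only (harmless) divergence is that you justify the key locality step by passing through $s_J\dashv i_J$ and the identification of $s_J$ with $\sheafify_J$ on presheaves, whereas the paper directly asserts that a $J$-stack inverts $J$-dense monomorphisms — these are equivalent formulations of the same fact.
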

\begin{proof}
We only need to show $F$ is a comorphism of sites if and only if the 2-functor $\Ran_{F\op}:\Ind_\cbicat\rightarrow\Ind_\dbicat$ restricts to a 2-functor $(C_F^\St)_*:\St(\cbicat,J)\rightarrow\St(\dbicat,K)$: by standard considerations on adjoints (see Lemma \ref{lemma:adj_restriction_subcat} below) one verifies that $(C_F^\St)^*:=s_J\circ F^*\circ i_K$ provides a left adjoint to $(C_F^\St)_*$. 

Suppose first that $\Ran_{F\op}$ maps $J$-stacks to $K$-stacks: then in particular it maps $J$-sheaves to $K$-sheaves and hence $F$ is a comorphism by condition (ii) of Definition \ref{def:comorfismo}. Conversely, suppose that $F$ is a comorphism, consider a $J$-stack $\dcat:\cbicat\op\rightarrow\Cat$, a $K$-sieve $m_S:S\rightarrowtail \yo(D)$ and the diagram
\[
\begin{tikzcd}
{\Ind_\dbicat(\yo(D), \Ran_{F\op}(\dcat))} \ar[d, "{-\circ m_S}"'] \ar[r, "\sim"] &
{\Ind_\cbicat(\yo(C)\circ F\op, \dcat)}\ar[d, "{-\circ (m_S\circ F\op)}"]
\\
{\Ind_\dbicat(S, \Ran_{F\op}(\dcat))} \ar[r, "\sim"] &
{\Ind_\cbicat(S\circ F\op, \dcat)}
\end{tikzcd},
\]
where the horizontal equivalences come from the adjunction $F^*\dashv \Ran_{F\op}$. By condition (iv) of Definition \ref{def:comorfismo} the precomposition $F^*:=(-\circ F\op):[\dbicat\op,\Set]\rightarrow[\cbicat\op,\Set]$ maps any $K$-dense monomorphism to a $J$-dense monomorphism: in other words, if $m_S:S\rightarrowtail \yo(D)$ is a $K$-sieve, then $F^*(m_S)=(-\circ (m_S\circ F\op):S\circ F\op\rightarrowtail \yo(C)\circ F\op$ is $J$-dense. Since $F^*(m_S)$ is $J$-dense and $\dcat$ is a $J$-stack, the vertical arrow on the right is an equivalence, and thus the one on the left is, showing that $\Ran_{F\op}(\dcat)$ is a $K$-stack.

The last claim is justified by the fact that both $(C_F^\St)^*\circ s_K$ and $s_J\circ F^*$ are left adjoint to $i_K\circ (C_F^\St)_*\cong \Ran_{F\op}\circ i_J$.
\end{proof}
\begin{cor}\label{cor:continuouscomorphismsstacks}
Consider a $(J,K)$-continuous comorphism of sites \[F:(\cbicat,J)\rightarrow (\dbicat,K):\] 
it induces a triple of adjoints
\[\begin{tikzcd}[column sep=10ex]
{\St^s(\cbicat,J)} \arrow[r, "{(C_F^\St)_!}", bend left=45, start anchor={north east}, end anchor={north west}] 
\arrow[r, "{(C_F^\St)_*}"', bend right=45, start anchor={south east}, end anchor={south west}]
& {\St^s(\dbicat,K)} 
\arrow[l, "{(C_F^\St)^*}"description] \ar[l, "\dashv"{rotate=270, xshift=2.5ex}, phantom] \ar[l, "\dashv"{rotate=270, xshift=-2.5ex}, phantom] 
\end{tikzcd},
\]
where $(C_F^\St)_*$ acts by restriction of $\Ran_{F\op}$, $(C_F^\St)^*=\St(F)_*$ acts by restriction of $F^*$ and $(C_F^\St)_!=\St(F)^*$ is the composite 2-functor 
\[\St^s(\cbicat,J)\xrightarrow{i_J}\Ind^s_\cbicat\xrightarrow{\Lan_{F\op}}\Ind^s_\dbicat \xrightarrow{s_K} \St^s(\dbicat,K).\] 
\end{cor}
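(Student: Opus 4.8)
The statement to prove is Corollary \ref{cor:continuouscomorphismsstacks}: a $(J,K)$-continuous comorphism of sites $F:(\cbicat,J)\to(\dbicat,K)$ induces a triple of adjoints between the categories of small stacks, with $(C_F^\St)_!=\St(F)^*$, $(C_F^\St)^*=\St(F)_*$, and $(C_F^\St)_*$ acting by restriction of $\Ran_{F\op}$. The key observation is that a $(J,K)$-continuous comorphism is simultaneously a comorphism of sites \emph{and} a $(J,K)$-continuous functor, so we are in a position to invoke \emph{both} adjunctions already established: Proposition \ref{prop:comorphism_action_on_stacks} (the adjunction $C_F^\St$ arising from the comorphism structure) and Corollary \ref{cor:morphsites_functor_between_stacks} (the adjunction $\St(F)$ arising from $(J,K)$-continuity). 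The whole content of the corollary is therefore the observation that these two adjunctions \emph{share a common functor}, which then sits in the middle of the adjoint triple.

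**The main steps.**
First I would recall that, by Proposition \ref{prop:ftcont_preservano_stack}, since $F$ is $(J,K)$-continuous the direct-image precomposition $F^*=(-\circ F\op)$ restricts to a $2$-functor $\St^s(\dbicat,K)\to\St^s(\cbicat,J)$; by Corollary \ref{cor:morphsites_functor_between_stacks} this is exactly $\St(F)_*$, with left adjoint $\St(F)^*=s_J\circ\Lan_{F\op}\circ i_K$. Second, since $F$ is in particular a comorphism of sites, Proposition \ref{prop:comorphism_action_on_stacks} produces the adjunction $(C_F^\St)^*\dashv(C_F^\St)_*$, where $(C_F^\St)_*$ is the restriction of $\Ran_{F\op}$ and $(C_F^\St)^*=s_J\circ F^*\circ i_K$ acts by precomposition-then-stackification. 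The crucial point is that this latter $(C_F^\St)^*$ is nothing but $\St(F)_*$: both are the restriction to stacks of $F^*=(-\circ F\op)$ (the stackification $s_J$ is redundant precisely because, by Proposition \ref{prop:ftcont_preservano_stack}, $F^*$ already lands in $\St^s(\cbicat,J)$ when $F$ is continuous). Thus I would identify $(C_F^\St)^*=\St(F)_*$, which by the first step has the further left adjoint $\St(F)^*$. Renaming $(C_F^\St)_!:=\St(F)^*$ then yields the desired chain of adjoints $(C_F^\St)_!\dashv(C_F^\St)^*\dashv(C_F^\St)_*$, and the explicit descriptions in the statement are read off directly from the two source propositions.

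**The main obstacle.**
The one genuine verification, and what I expect to be the delicate step, is the identification $(C_F^\St)^*=\St(F)_*$, i.e. showing that $s_J\circ F^*\circ i_K\cong F^*$ as functors $\St^s(\dbicat,K)\to\St^s(\cbicat,J)$. This rests on the fact that when $F$ is \emph{continuous} the precomposition $F^*$ maps $K$-stacks to $J$-stacks, so that the stackification $s_J$ applied to an object already in $\St^s(\cbicat,J)$ acts (up to canonical equivalence) as the identity, via the stackification-inclusion adjunction of Theorem \ref{thm:stackification_esiste}; one checks that the unit $F^*\Rightarrow i_J\,s_J\,F^*$ is an equivalence on objects lying in the image of $i_J$. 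Once this coincidence is in hand, everything else is a formal consequence of the composition and uniqueness of adjoints. I would close by remarking that the equivalence $(C_F^\St)^*\circ s_K\cong s_J\circ F^*$ recorded in Proposition \ref{prop:comorphism_action_on_stacks} remains available, so no additional coherence check is required.
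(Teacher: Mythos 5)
Your proposal is correct and follows essentially the same route as the paper: the paper's proof is the one-line observation that $\Ran_{F\op}$ restricts to stacks because $F$ is a comorphism while $F^*$ restricts to stacks because $F$ is continuous, so the two adjunctions of Proposition \ref{prop:comorphism_action_on_stacks} and Corollary \ref{cor:morphsites_functor_between_stacks} glue along the common middle functor $(C_F^\St)^*=\St(F)_*$. Your extra verification that $s_J\circ F^*\circ i_K\cong F^*$ (the stackification being redundant on objects already in the reflective sub-2-category) is the same point, just spelled out.
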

\begin{proof}
This is an immediate consequence of the previous results: the action of $\Ran_{F\op}$ restricts to stacks since $F$ is a comorphism, while that of $F^*$ restricts to stacks since $F$ is continuous. 
\end{proof}

\section{Operations on sheaves \emph{versus} operations on stacks}

In Section \ref{sec:truncation_functor} we introduced the truncation-inclusion adjunction
\[
\begin{tikzcd}
 {\Sh(\cbicat,J)} 
   \ar[r, hook, "j_J"', yshift=-1ex]& 
 {\St^s(\cbicat,J)},
   \ar[l, "\trunc_J"', yshift=1ex] 
   \ar[l,phantom,"\vdash"{rotate=90}]
\end{tikzcd}
\]
which describes the connection between $J$-sheaves and $J$-stacks: the same functors relate the action of morphisms and comorphisms of sites on stacks to their action on sheaves. To study this relation, let us first recall a technical result about adjunctions:
\begin{lemma}\label{lemma:adj_restriction_subcat}
Consider a diagram of 2-functors
\[
\begin{tikzcd}[column sep=large]
\abicat \ar[r, "R"', yshift=-1ex] \ar[dr, "R'"', start anchor={south east}] & \cbicat \ar[l, "L"', yshift=1ex] \ar[l,phantom, "\vdash"{rotate=90}]\\
 & \dbicat \ar[u, "i"', hook]
\end{tikzcd}
\]
and suppose $R\cong i\circ R'$: then $L\circ i\dashv R'$.
\end{lemma}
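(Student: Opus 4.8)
The plan is to establish the adjunction $L\circ i\dashv R'$ by exhibiting a pseudonatural equivalence of hom-categories
\[
\abicat\big((L\circ i)(D), A\big)\;\simeq\;\dbicat\big(D, R'(A)\big)
\]
for every $D$ in $\dbicat$ and $A$ in $\abicat$, rather than by constructing a unit and a counit directly. The hom-category route is cleaner here because the second approach would force one to transport the triangle identities across the coherence isomorphism $R\cong i\circ R'$, whereas the hom-category equivalences can simply be composed.

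First I would instantiate the hom-equivalence of the given $2$-adjunction $L\dashv R$ at the object $i(D)$ of $\cbicat$, obtaining $\abicat(L(i(D)), A)\simeq \cbicat(i(D), R(A))$, pseudonatural in $A$ and in $i(D)$, hence in $D$. Next, applying the hom-$2$-functor $\cbicat(i(D),-)$ to the $2$-natural isomorphism $R\cong i\circ R'$ gives $\cbicat(i(D), R(A))\simeq \cbicat(i(D), i(R'(A)))$. Finally, since the inclusion $i$ is (as the hook notation indicates) $2$-full and faithful, its action on hom-categories furnishes an equivalence $\cbicat(i(D), i(R'(A)))\simeq \dbicat(D, R'(A))$. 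Composing these three equivalences yields the one displayed above.

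It then remains to observe that each of the three equivalences is pseudonatural in both variables — the first because $L\dashv R$ is a $2$-adjunction, the second because $R\cong i\circ R'$ is a $2$-natural isomorphism and hom-$2$-functors preserve pseudonaturality, the third because $i$ is $2$-fully faithful — so that their composite is pseudonatural as well, which is precisely the datum of the $2$-adjunction $L\circ i\dashv R'$. The only substantive ingredient beyond this formal bookkeeping is the $2$-full-and-faithfulness of $i$ encoded in the hook arrow: it is what allows one to recognise $\cbicat(i(D), i(R'(A)))$ as a hom-category of $\dbicat$, and I expect this to be the one place where the argument uses a genuine hypothesis rather than merely rearranging coherence data already supplied by the ambient $2$-adjunction.
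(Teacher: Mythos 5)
Your proof is correct. The paper actually states this lemma without proof, recalling it as a standard fact about adjunctions, so there is no argument of the paper's own to compare against; your chain of equivalences
\[
\abicat\big((L\circ i)(D),A\big)\simeq \cbicat\big(i(D),R(A)\big)\simeq \cbicat\big(i(D),i(R'(A))\big)\simeq \dbicat\big(D,R'(A)\big)
\]
is exactly the expected argument and matches the paper's Definition \ref{def:2-adjoint} of a $2$-adjunction as a pseudonatural equivalence of hom-categories. You are also right that the $2$-full-and-faithfulness of $i$ encoded by the hook is the one substantive hypothesis (it is satisfied in all of the paper's applications, where $i$ is a full inclusion of stacks, sheaves, or presheaves into a larger $2$-category); without it the last equivalence genuinely fails.
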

This can be applied to a $(J,K)$-continuous functor as follows:
\begin{prop}\label{prop:contft_stacks_sheaves}
Consider a continuous functor $F:(\cbicat,J)\rightarrow(\dbicat,K)$. The the inverse image
$\Sh(F)^*:\Sh(\cbicat,J)\rightarrow\Sh(\dbicat,K)$ is isomorphic to the composite functor
\[
\Sh(\cbicat,J) \xhookrightarrow{j_J} \St^s(\cbicat,J) \xrightarrow{\St(F)^*} \St^s(\dbicat,K)\xrightarrow{\trunc_K} \Sh(\dbicat,K).
\]
In particular, if we denote by $\trunc_\cbicat\dashv j_\cbicat:[\cbicat\op,\Set]\hookrightarrow\Ind_\cbicat$ the truncation-inclusion adjunction in the case of the trivial topology on $\cbicat$, then $\lan_{F\op}:[\cbicat\op,\Set]\rightarrow[\dbicat\op,\Set]$ is isomorphic to the composite 
\[
[\cbicat\op,\Set] \xhookrightarrow{j_\cbicat} \Ind^s_\cbicat \xrightarrow{\Lan_{F\op}} \Ind^s_\dbicat\xrightarrow{\trunc_\dbicat} [\dbicat\op,\Set].
\]
\end{prop}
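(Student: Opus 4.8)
The plan is to prove the general statement by uniqueness of adjoints, reducing everything to the level of presheaves, where the \ac in particular' part is proved directly. Throughout I recall four presentations. The inverse image $\Sh(F)^*$ is the left adjoint of $\Sh(F)_*=(-\circ F\op)|_{\Sh(\dbicat,K)}$, obtained by restricting the presheaf adjunction $\lan_{F\op}\dashv F^*$ along sheafification (using that $F$ is $(J,K)$-continuous, Proposition \ref{prop:ft_cont_caratterizzazione}), so $\Sh(F)^*\cong\sheafify_K\circ\lan_{F\op}\circ\iota_J$; by Corollary \ref{cor:morphsites_functor_between_stacks} we have $\St(F)^*=s_K\circ\Lan_{F\op}\circ i_J$; from the proof of the truncation–inclusion adjunction $\trunc_K\cong\sheafify_K\circ\trunc_\dbicat\circ i_K$; and the identities $i_J\circ j_J=j_\cbicat\circ\iota_J$ and $i_K\circ j_K=j_\dbicat\circ\iota_K$ hold.

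First I would isolate two lemmas. Lemma (A): $\trunc_K\circ s_K\cong\sheafify_K\circ\trunc_\dbicat$ as functors $\Ind^s_\dbicat\to\Sh(\dbicat,K)$. Both are composites of left adjoints (truncation and stackification are reflectors), hence left adjoints; their right adjoints are respectively $i_K\circ j_K$ and $j_\dbicat\circ\iota_K$, which coincide by the identity above, so Lemma \ref{lemma:adj_restriction_subcat} gives the isomorphism. Lemma (B) is the presheaf-level statement $\trunc_\dbicat\circ\Lan_{F\op}\circ j_\cbicat\cong\lan_{F\op}$. Since $\trunc_\dbicat$ acts pointwise as $\pi_0$, which is left adjoint to $Disc$ and therefore preserves bicolimits (by the lemma that left $2$-adjoints preserve bicolimits, transported fibrewise through the evaluation $2$-functors $\Ev_D$, which themselves commute with bicolimits), for every $D$ in $\dbicat$ and every presheaf $Q$ I compute, via the fibrewise formula of Proposition \ref{prop:pseudoKan},
\[
\trunc_\dbicat\!\big(\Lan_{F\op}(j_\cbicat Q)\big)(D)=\pi_0\Big(\colim_{ps}\big(\comma{D}{F}\op\to\cbicat\op\xrightarrow{j_\cbicat Q}\Cat\big)\Big)\cong\colim\big(\comma{D}{F}\op\to\cbicat\op\xrightarrow{\pi_0 j_\cbicat Q}\Set\big).
\]
Then $\pi_0\circ j_\cbicat=\pi_0\circ Disc\cong\id$ collapses the diagram to $Q$, the pseudocolimit of a discrete diagram is its ordinary conical colimit, and this is exactly the comma-category colimit computing $\lan_{F\op}(Q)(D)$ (Proposition \ref{prop:lan_ran}); naturality in $D$ and $Q$ yields (B), which is precisely the second assertion of the proposition.

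With (A) and (B) in hand, the general statement is assembled by substituting the recalled presentations and rewriting inside the composite:
\begin{align*}
\trunc_K\circ\St(F)^*\circ j_J
&=\sheafify_K\circ\trunc_\dbicat\circ i_K\circ s_K\circ\Lan_{F\op}\circ i_J\circ j_J\\
&=(\trunc_K\circ s_K)\circ\Lan_{F\op}\circ(j_\cbicat\circ\iota_J)\\
&\cong(\sheafify_K\circ\trunc_\dbicat)\circ\Lan_{F\op}\circ j_\cbicat\circ\iota_J\\
&\cong\sheafify_K\circ\lan_{F\op}\circ\iota_J\cong\Sh(F)^*,
\end{align*}
where the second line uses $i_J j_J=j_\cbicat\iota_J$ together with $\trunc_K=\sheafify_K\trunc_\dbicat i_K$, the third applies (A), and the fourth applies (B). All replacements are natural isomorphisms of functors, so substitution inside the composite is legitimate.

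The main obstacle is Lemma (B): I must be sure that applying $\pi_0$ genuinely trivializes the \emph{categorical} pseudocolimit defining $\Lan_{F\op}(j_\cbicat Q)$ down to the \emph{set-level} colimit computing $\lan_{F\op}Q$. This rests on two points deserving care — that $\pi_0$ commutes with the weighted pseudocolimit (which I get from its being a left $2$-adjoint, passed fibrewise through $\Ev_D$), and that the pseudocolimit of a $\Set$-valued discrete diagram agrees with its conical colimit in $\Set$. A secondary, purely bookkeeping, difficulty is keeping the adjoint directions straight — truncation and stackification are the left adjoints while the inclusions $j_J$, $i_J$, $\iota_J$ are the corresponding right adjoints — which is exactly what makes the adjoint-uniqueness argument for (A) and the restriction presentation of $\Sh(F)^*$ go through.
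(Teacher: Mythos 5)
Your argument is correct, but it takes a genuinely different route from the paper's. The paper's proof is a two-line adjoint-transfer argument entirely at the level of \emph{right} adjoints: since $\St(F)_*$ acts by precomposition with $F\op$, it restricts along the discrete inclusions to $\Sh(F)_*$, i.e.\ $\St(F)_*\circ j_K\cong j_J\circ \Sh(F)_*$, and applying Lemma \ref{lemma:adj_restriction_subcat} to the composite adjunction $\trunc_K\circ\St(F)^*\dashv \St(F)_*\circ j_K$ immediately identifies $\trunc_K\circ\St(F)^*\circ j_J$ as a left adjoint of $\Sh(F)_*$, whence the isomorphism with $\Sh(F)^*$; the ``in particular'' clause is then just the special case of trivial topologies. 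You instead make the presheaf-level statement (your Lemma (B), which \emph{is} the ``in particular'' clause) the engine, proving it by an explicit fibrewise computation, and then bootstrap to the general case via your Lemma (A) — which is exactly the identity $\sheafify_K\circ\trunc_\dbicat\cong\trunc_K\circ s_K$ that the paper records in the lemma on compatibility of truncations with stackification — together with $i_J\circ j_J=j_\cbicat\circ\iota_J$. What your route buys is an independent, direct proof of the second assertion rather than leaving it implicit as a specialization; what it costs is the $2$-categorical bookkeeping in Lemma (B). That bookkeeping is where the only real care is needed, and you handle it correctly: $\Lan_{F\op}(j_\cbicat Q)(D)$ is a pseudocolimit of discrete categories which need \emph{not} be (equivalent to) a discrete category — e.g.\ a constant diagram on $\onecat$ indexed by a one-object groupoid has a non-discrete pseudocolimit — so one genuinely needs that $\pi_0$, being a left $2$-adjoint to $Disc$, preserves bicolimits, and that conical pseudocolimits of $\Set$-valued diagrams in the locally discrete $2$-category $\Set$ reduce to ordinary colimits, recovering the comma-category formula for $\lan_{F\op}$. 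Both points hold, so the proof stands; it is simply longer than necessary for the main assertion.
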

\begin{proof}
Since the 2-functor $\St(F)_*:\St(\dbicat,K)\rightarrow \St(\cbicat,J)$ acts by precomposition with $F\op$, its action restricts to that of $\Sh(F)_*$ on sheaves: but then we can consider the diagram
\[
\begin{tikzcd}
    {\Sh(\dbicat,K)} \ar[drr, "\Sh(F)_*"', start anchor={south}, bend right=10] \ar[r, hook, "j_K"', yshift=-1ex] & {\St^s(\dbicat,K)} \ar[r, "{\St(F)_*}"', yshift=-1ex] \ar[l, "\trunc_K"', yshift=1ex] 
   \ar[l,phantom,"\vdash"{rotate=90}]& {\St^s(\cbicat,J)} \ar[l, "{\St(F)^*}"', yshift=1ex] 
   \ar[l,phantom,"\vdash"{rotate=90}] \\
    && {\Sh(\cbicat,J)} \ar[u, hook, "j_J"']
\end{tikzcd},
\]
and by applying Lemma \ref{lemma:adj_restriction_subcat} it follows that the left adjoint $\Sh(F)^*$ is isomorphic to the composite $\trunc_K\circ \St(F)^*\circ j_J$.
\end{proof}
Similarly, the geometric morphism induced by a comorphism of sites is obtained by truncating the adjunction between categories of stacks as follows:
\begin{prop}
Consider a comorphism of sites $F:(\cbicat,J)\rightarrow(\dbicat,K)$. The inverse image $C_F^*:\Sh(\dbicat,K)\rightarrow \Sh(\cbicat,J)$ of the geometric morphism induced by $F$ is isomorphic to the composite functor
\[
\Sh(\dbicat,K) \xhookrightarrow{j_K}\St^s(\dbicat,K)\xrightarrow{(C^\St_F)^*}\St^s(\cbicat,J)\xrightarrow{\trunc_J}\Sh(\cbicat,J).\]
\end{prop}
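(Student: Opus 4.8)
The plan is to mirror the proof of Proposition \ref{prop:contft_stacks_sheaves}, which treated the dual situation of a $(J,K)$-continuous functor, and to deduce the claim from Lemma \ref{lemma:adj_restriction_subcat}. I have three adjunctions at my disposal: the pair $(C_F^\St)^*\dashv (C_F^\St)_*$ between categories of stacks of Proposition \ref{prop:comorphism_action_on_stacks}, whose direct image $(C_F^\St)_*$ acts by restriction of the right pseudo-Kan extension $\Ran_{F\op}$; the geometric morphism $C_F\colon\Sh(\cbicat,J)\rightarrow\Sh(\dbicat,K)$ induced by the comorphism $F$, whose direct image $(C_F)_*$ is by Definition \ref{def:comorfismo}(ii) the restriction of the ordinary right Kan extension $\ran_{F\op}$ to sheaves and whose inverse image $C_F^*$ is the functor I wish to describe; and the two truncation-inclusion adjunctions $\trunc_J\dashv j_J$ and $\trunc_K\dashv j_K$.

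The key step is to show that the direct image of stacks restricts, along discrete stacks, to the direct image of sheaves, that is, that there is a natural isomorphism
\[
(C_F^\St)_*\circ j_J\cong j_K\circ (C_F)_*\colon \Sh(\cbicat,J)\rightarrow \St^s(\dbicat,K).
\]
Since $(C_F^\St)_*$ is the restriction of $\Ran_{F\op}$ and $(C_F)_*$ is the restriction of $\ran_{F\op}$, it suffices to verify that for a presheaf $P\colon\cbicat\op\rightarrow\Set$, regarded as a discrete $\cbicat$-indexed category, its pseudo-Kan extension $\Ran_{F\op}(P)$ is again discrete and coincides with the ordinary right Kan extension $\ran_{F\op}(P)$. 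This follows from the componentwise formula of Proposition \ref{prop:pseudoKan}: each fibre $\Ran_{F\op}(P)(D)$ is the pseudolimit of a diagram $\comma{F}{D}\op\rightarrow\Cat$ all of whose values are discrete categories, and a pseudolimit of discrete categories is discrete and agrees with the ordinary limit computing $\ran_{F\op}(P)(D)$, there being no non-identity isomorphisms to witness the coherence data of a pseudocone. Restricting to $J$-sheaves on the one side and $K$-sheaves on the other, and using that $\ran_{F\op}$ sends $J$-sheaves to $K$-sheaves precisely because $F$ is a comorphism, yields the displayed isomorphism.

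With this in hand, the conclusion is a formal consequence of Lemma \ref{lemma:adj_restriction_subcat}. Setting $R:=(C_F^\St)_*\circ j_J$, its left adjoint is $L:=\trunc_J\circ (C_F^\St)^*$ as a composite of left adjoints, and the isomorphism above exhibits $R\cong j_K\circ (C_F)_*$, i.e.\ $R\cong i\circ R'$ with $i:=j_K$ the fully faithful inclusion and $R':=(C_F)_*$. The lemma then gives $L\circ j_K\dashv (C_F)_*$, that is,
\[
\trunc_J\circ (C_F^\St)^*\circ j_K\dashv (C_F)_*.
\]
Since $C_F^*\dashv (C_F)_*$ as well, uniqueness of left adjoints forces $C_F^*\cong \trunc_J\circ (C_F^\St)^*\circ j_K$, which is the assertion. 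I expect the only genuinely substantive point to be the verification that $\Ran_{F\op}$ preserves discreteness and restricts to $\ran_{F\op}$ on presheaves; everything else is the same adjoint-theoretic bookkeeping already carried out in Proposition \ref{prop:contft_stacks_sheaves}, only with the roles of the left and right Kan extensions, and hence of the two sites, interchanged.
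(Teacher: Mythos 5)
Your argument is correct, but it is not the route the paper takes. The paper's proof works entirely on the left-adjoint side: it unwinds $(C_F^\St)^*$ as $s_J\circ F^*\circ i_K$, observes that the two factorizations of the inclusion $\Sh(\cbicat,J)\hookrightarrow\Ind_\cbicat$ (through presheaves and through stacks) coincide, so that $\sheafify_J\circ\trunc_\cbicat\cong\trunc_J\circ s_J$, and then uses $F^*\circ j_\dbicat\cong j_\cbicat\circ F^*$ together with $\trunc_\cbicat\circ j_\cbicat\cong\id$ to collapse the composite to $\sheafify_J\circ F^*\circ\iota_K\cong C_F^*$. You instead do the work on the right-adjoint side: you verify that $\Ran_{F\op}$ carries discrete indexed categories to discrete ones and there agrees with $\ran_{F\op}$ (this is correct --- a pseudocone over a diagram of discrete categories has no nontrivial coherence isomorphisms, so the pseudolimit is the discrete category on the ordinary limit), deduce $(C_F^\St)_*\circ j_J\cong j_K\circ(C_F)_*$, and conclude by Lemma \ref{lemma:adj_restriction_subcat} and uniqueness of left adjoints. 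Both arguments are sound. The paper's computation is shorter and needs no input beyond compatibilities of left adjoints already on record; yours requires the (easy but genuine) discreteness check for pseudo-Kan extensions, but in exchange it makes explicit the compatibility of direct images $(C_F^\St)_*\circ j_J\cong j_K\circ(C_F)_*$, the comorphism-side analogue of the restriction statement exploited in Proposition \ref{prop:contft_stacks_sheaves}, which the paper never records. Note only that Lemma \ref{lemma:adj_restriction_subcat} requires the inclusion $i$ to be fully faithful for the transferred adjunction to hold, which is satisfied here since $j_K$ is a full inclusion.
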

\begin{proof}
Notice that the two inclusions
\[
\Sh(\cbicat,J)\xhookrightarrow{\iota_J}[\cbicat\op,\Set]\xhookrightarrow{j_\cbicat}\Ind_\cbicat,\ \Sh(\cbicat,J)\xhookrightarrow{j_J}\St(\cbicat,J)\xhookrightarrow{i_J}\Ind_\cbicat
\]
are equal, and thus their left adjoints are isomorphic: $\sheafify_J\circ \trunc_\cbicat\cong \tau_J \circ s_J$. But then the following holds:
\begin{align*}
    \trunc_J\circ (C_F^\St)^*\circ j_K & = \trunc_J\circ s_J\circ F^*\circ i_K\circ j_K\\
    &\cong \sheafify_J\circ \trunc_\cbicat\circ F^*\circ j_\dbicat\circ \iota_K .
\end{align*}
As we know, the 2-functor $F^*:\Ind_\dbicat\rightarrow\Ind_\cbicat$ restricts to presheaves: if we adopt (with a slight abuse of notation) the same symbol for the restriction $F^*:[\dbicat\op,\Set]\rightarrow[\cbicat\op,\Set]$, we have thus the isomorphism $F^*\circ j_\dbicat\cong j_\cbicat\circ F^*$, and hence
\[
\sheafify_J\circ \trunc_\cbicat\circ F^*\circ j_\dbicat\circ \iota_K\cong \sheafify_J\circ \trunc_\cbicat\circ j_\cbicat\circ F^* \circ \iota_K
\]
Finally, since $j_\cbicat$ is fully faithful with left adjoint $\trunc_\cbicat$, the composite $\trunc_\cbicat\circ j_\cbicat\cong$ is isomorphic to the identity functor of $[\cbicat\op,\Set]$, and therefore we may conclude that
\[
\trunc_J\circ (C_F^\St)^*\circ j_K\cong \sheafify_J\circ F^* \circ \iota_K\cong C_F^*.
\]

\end{proof}

\subsection{A fibrational description of $\Sh(F)$}

Applying our knowledge about base change of stacks, we can describe in a fibrational fashion base change functors for sheaves.
\begin{lemma}\label{lemma:sheafify_lan_=_compr_fact}
Let $F:{\cal C}\to {\cal D}$ be a functor, $K$ a Grothendieck topology on $\cal D$ and $\ell_K:\dbicat\rightarrow \Sh(\dbicat,K)$ be the canonical functor. Then the functor $a_{K}\circ \lan_{F\op}:[{\cal C}\op, \Set]\to \Sh({\cal D}, K)$ admits the following fibrational description: for any presheaf $P$ on $\cal C$,
\[
\sheafify_{K}\circ \lan_{F\op}(P)=\colim(\ell_K\circ F\circ \pi_{P}).
\]
In other words, $\sheafify_{K}\circ \lan_{F\op}(P)$ coincides with the discrete part of the $K$-comprehensive factorization of the composite functor $F\circ \pi_{P}$. 
\end{lemma}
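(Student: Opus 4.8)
The plan is to exploit the fact that both $\lan_{F\op}$ and $\sheafify_K$ are left adjoints (the former by Proposition \ref{prop:lan_ran}, the latter by the sheafification adjunction $\sheafify_K\dashv \iota_K$), hence preserve all colimits, and to reduce the whole computation to representable presheaves via the canonical colimit decomposition of $P$. First I would recall the density (co-Yoneda) presentation of a presheaf as a colimit of representables indexed by its category of elements: for any $P:\cbicat\op\to\Set$ there is a canonical isomorphism
\[
P\cong \colim\bigl(\yo_\cbicat\circ \pi_P\bigr),
\]
the colimit being taken over $\fib P$ along the diagram $\yo_\cbicat\circ\pi_P:\fib P\to[\cbicat\op,\Set]$ which sends $(X,U)$ to $\yo_\cbicat(X)$.

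The next step is to evaluate the two left adjoints on representables. A direct Yoneda argument shows that $\lan_{F\op}$ carries representables to representables: for every $X$ in $\cbicat$ and every presheaf $Q$ on $\dbicat$,
\[
[\dbicat\op,\Set]\bigl(\lan_{F\op}\yo_\cbicat(X),Q\bigr)\cong [\cbicat\op,\Set]\bigl(\yo_\cbicat(X), Q\circ F\op\bigr)\cong Q(F(X))\cong [\dbicat\op,\Set]\bigl(\yo_\dbicat(F(X)),Q\bigr),
\]
using the adjunction $\lan_{F\op}\dashv F^{*}$ and the Yoneda lemma twice; hence $\lan_{F\op}\yo_\cbicat(X)\cong \yo_\dbicat(F(X))$, naturally in $X$. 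Applying $\sheafify_K$ and recalling from our conventions that $\ell_K=\sheafify_K\circ\yo_\dbicat$, this gives $\sheafify_K\lan_{F\op}\yo_\cbicat(X)\cong \ell_K(F(X))$.

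Finally I would assemble these pieces: since the composite $\sheafify_K\circ\lan_{F\op}$ preserves colimits, applying it to the density decomposition yields
\[
\sheafify_K\lan_{F\op}(P)\cong \colim_{(X,U)\in\fib P}\sheafify_K\lan_{F\op}\yo_\cbicat(X)\cong \colim_{(X,U)\in\fib P}\ell_K(F(X))=\colim\bigl(\ell_K\circ F\circ \pi_P\bigr),
\]
which is the claimed formula, the last colimit being computed in $\Sh(\dbicat,K)$. The identification with the comprehensive factorization is then immediate: by Definition \ref{def:comprehensive_factorization} the discrete part of the $K$-comprehensive factorization of any functor $q$ is exactly the $K$-sheaf $\colim(\ell_K\circ q)$, so taking $q=F\circ\pi_P$ recovers precisely $\colim(\ell_K\circ F\circ\pi_P)$. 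There is no deep obstacle in this argument; the only points requiring care are verifying the naturality of $\lan_{F\op}\yo_\cbicat\cong\yo_\dbicat\circ F$ so that the colimit comparison is well defined, and being explicit that the colimit appearing in the statement is formed in the topos $\Sh(\dbicat,K)$ rather than in presheaves — which is exactly what the preservation of colimits by the left adjoint $\sheafify_K$ guarantees.
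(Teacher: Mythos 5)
Your proof is correct and follows essentially the same route as the paper's: decompose $P$ as $\colim(\yo_\cbicat\circ\pi_P)$, use that $\sheafify_K\circ\lan_{F\op}$ preserves colimits and that $\lan_{F\op}\circ\yo_\cbicat\cong\yo_\dbicat\circ F$, then invoke the definition of the $K$-comprehensive factorization. The only difference is that you spell out the Yoneda argument for $\lan_{F\op}\yo_\cbicat(X)\cong\yo_\dbicat(F(X))$, which the paper uses without comment.
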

\begin{proof}
From $P\simeq \colim (\yo_\cbicat\circ \pi_P)$ it follows that
\begin{align*}
    \sheafify_K\circ \lan_{F\op}(P) &\simeq \sheafify_K\circ \lan_{F\op}(\colim(\yo_\cbicat\circ \pi_P) )\\
    &\simeq \colim(\sheafify_K\circ \lan_{F\op}\circ \yo_\cbicat\circ \pi_P) \\
    &\simeq \colim(\sheafify_K\circ \yo_\dbicat\circ F \circ \pi_P) \\
    &\simeq \colim(\ell_K\circ F \circ \pi_P).
    \end{align*}
The last claim is just the definition of $K$-comprehensive factorization (see Definition \ref{def:comprehensive_factorization}).
\end{proof}

\begin{prop}\label{prop:relativecomprehensivefactsheaves}
Let $F:({\cal C}, J)\to ({\cal D}, K)$ be a $(J, K)$-continuous functor.
Then the inverse image $\Sh(F)^*: \Sh({\cal C}, J)\to \Sh({\cal D}, K)$ of the weak morphism of toposes $\Sh({\cal D}, K)\to \Sh({\cal C}, J)$ induced by $F$ admits the following fibrational description: for any $J$-sheaf $P$ on $\cal C$, $\Sh(F)^*(P)$ coincides with the discrete part of the $K$-comprehensive factorization of the composite functor $F\circ \pi_{P}$. 

In particular, if $F$ is a continuous comorphism of sites $({\cal C}, J)\to ({\cal D}, K)$, the essential image $(C_{F})_{!}(P)$ of a $J$-sheaf $P$ coincides with the discrete part of the $K$-comprehensive factorization of the composite functor $F\circ \pi_{P}$. From a topos-theoretic point of view, the sheaf $(C_F)_!(P)$ corresponds to the second component of the (terminally connected, local homeomorphism)-factorization of the geometric morphism $C_F\circ \prod_P$:
\[
\begin{tikzcd}[column sep=70pt]
    {\Sh(\cbicat,J)/P} \ar[r, "\mbox{term. conn.}"] \ar[d, "\prod_P"']& {\Sh(\dbicat,K)/(C_F)_!(P)} \ar[d, "\prod_{(C_F)_!(P)}"]\\
    {\Sh(\cbicat,J)} \ar[r, "C_F"']& {\Sh(\dbicat,K)}
\end{tikzcd}.
\]
\end{prop}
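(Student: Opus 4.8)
The three claims are largely a repackaging of Lemma~\ref{lemma:sheafify_lan_=_compr_fact}, and I would deduce them in order. First I would recall the standard description of the inverse image of the weak morphism of toposes attached to $F$. Since $F$ is $(J,K)$-continuous, the functor $F^{*}=(-\circ F\op)$ sends $K$-sheaves to $J$-sheaves (Proposition~\ref{prop:ft_cont_caratterizzazione}(ii)), so $\Sh(F)_{*}$ is simply the restriction of $F^{*}$; consequently its left adjoint is the sheafification of the presheaf-level left adjoint, namely
\[
\Sh(F)^{*}\cong \sheafify_{K}\circ\lan_{F\op}\circ\iota_{J}.
\]
Evaluating at a $J$-sheaf $P$ and applying Lemma~\ref{lemma:sheafify_lan_=_compr_fact} gives $\Sh(F)^{*}(P)\cong \sheafify_{K}\circ\lan_{F\op}(P)\cong\colim(\ell_{K}\circ F\circ\pi_{P})$, which is by definition the discrete part of the $K$-comprehensive factorization of $F\circ\pi_{P}$. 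This proves the first assertion.

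For the second assertion the plan is to identify the essential image $(C_{F})_{!}$ with $\Sh(F)^{*}$ and then invoke the first part. When $F$ is at once a comorphism and $(J,K)$-continuous, the induced geometric morphism $C_{F}\colon\Sh(\cbicat,J)\to\Sh(\dbicat,K)$ is essential, yielding a string $(C_{F})_{!}\dashv(C_{F})^{*}\dashv(C_{F})_{*}$ (Corollary~\ref{cor:continuouscomorphismsstacks}). Here $(C_{F})_{*}$ is the restriction of $\ran_{F\op}$, and its left adjoint is $(C_{F})^{*}\cong\sheafify_{J}\circ F^{*}\circ\iota_{K}$; by $(J,K)$-continuity $F^{*}$ already preserves sheaves, so $(C_{F})^{*}$ coincides with the restriction of $F^{*}$, that is, with $\Sh(F)_{*}$. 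As $(C_{F})_{!}$ and $\Sh(F)^{*}$ are both left adjoint to this common functor, they are isomorphic, and the first part immediately gives that $(C_{F})_{!}(P)\cong\Sh(F)^{*}(P)$ is the discrete part of the $K$-comprehensive factorization of $F\circ\pi_{P}$.

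It remains to read this off as a (terminally connected, local homeomorphism)-factorization. I would use that $C_{F}$ is essential and that the dependent product $\prod_{P}\colon\Sh(\cbicat,J)/P\to\Sh(\cbicat,J)$ is a local homeomorphism (Proposition~\ref{prop:fib_discreta_slice_topos}), hence essential; thus $C_{F}\circ\prod_{P}$ is an essential geometric morphism, and by \cite[Proposition 4.62]{denseness} the local homeomorphism factor of its canonical factorization is the object $(C_{F}\circ\prod_{P})_{!}(1)$ of $\Sh(\dbicat,K)$. Since left adjoints compose, $(C_{F}\circ\prod_{P})_{!}\cong(C_{F})_{!}\circ(\prod_{P})_{!}$; and the terminal object of $\Sh(\cbicat,J)/P$ is $[\id_{P}]$, on which the left adjoint $(\prod_{P})_{!}$ (the dependent sum, sending $[a\colon A\to P]$ to $A$) returns $P$. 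Hence $(C_{F}\circ\prod_{P})_{!}(1)\cong(C_{F})_{!}(P)$, which exhibits the displayed square as the factorization of $C_{F}\circ\prod_{P}$: its top arrow is terminally connected and its right edge is the local homeomorphism $\prod_{(C_{F})_{!}(P)}$.

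The only delicate point is the adjoint bookkeeping in the second paragraph: the weak morphism $\Sh(F)$ coming from continuity and the geometric morphism $C_{F}$ coming from the comorphism structure point in opposite directions, and one must check that their adjoint strings interlock as $\Sh(F)_{*}\cong(C_{F})^{*}$ and $\Sh(F)^{*}\cong(C_{F})_{!}$. Everything else is a direct application of Lemma~\ref{lemma:sheafify_lan_=_compr_fact} together with the factorization theorem of \cite{denseness}.
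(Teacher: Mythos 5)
Your argument is correct. For the first two claims it follows essentially the paper's route: both proofs reduce everything to Lemma~\ref{lemma:sheafify_lan_=_compr_fact} applied to $\sheafify_{K}\circ\lan_{F\op}\circ\iota_{J}$, and your identification of $(C_{F})_{!}$ with $\Sh(F)^{*}$ via uniqueness of left adjoints to the common functor $\Sh(F)_{*}\cong (C_{F})^{*}$ is just a more explicit version of the paper's (terser) observation that the two functors are given by the very same composite; your extra adjoint bookkeeping is sound, since continuity makes $\sheafify_{J}\circ F^{*}\circ\iota_{K}$ collapse to the restriction of $F^{*}$. Where you genuinely diverge is the final, topos-theoretic claim. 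The paper works at the level of sites: it forms the $K$-comprehensive factorization of the continuous comorphism $F\circ\pi_{P}$ through $(\fib\,(C_{F})_{!}(P),\, J_{(C_{F})_{!}(P)})$, uses the fact that $C_{(-)}$ sends this to the (terminally connected, local homeomorphism)-factorization, and then translates via the equivalences $\Sh(\fib P, J_{P})\simeq \Sh(\cbicat,J)/P$ and $C_{\pi_{P}}\cong\prod_{P}$ of Proposition~\ref{prop:fib_discreta_slice_topos}. You instead stay entirely at the level of toposes, reading off the local homeomorphism factor of the essential morphism $C_{F}\circ\prod_{P}$ from the formula $f_{!}(1)$ of \cite[Proposition 4.62]{denseness} and evaluating $(C_{F}\circ\prod_{P})_{!}(1)\cong (C_{F})_{!}(P)$ by composing left adjoints. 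Your route is shorter and avoids the site-level detour; the paper's route buys, in addition, an explicit site-level presentation of the terminally connected factor as $C_{\bar F}$ for the cofinal functor $\bar F$ in the comprehensive factorization. Both arguments are complete proofs of the stated proposition.
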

\begin{proof}
The two functors $\Sh(F)^*$ and $(C_F)_!$ are defined as the composite
\[
\Sh(\cbicat,J)\xhookrightarrow{\iota_J}[\cbicat\op,\Set]\xrightarrow{\lan_{F\op}} [\dbicat\op,\Set]\xrightarrow{\sheafify_K}\Sh(\dbicat,K),
\]
and then we showed in the previous result that $\sheafify_K\circ \lan_{F\op}$ acts by mapping any presheaf $P$ to the discrete part of the $K$-comprehensive factorization of $F\circ \pi_P$. We have mentioned, after Definition \ref{def:comprehensive_factorization}, that the $K$-comprehensive factorization of a continuous comorphism of sites $p$ induces the (terminally connected, local homeomorphism)-factorization of the corresponding geometric morphism $C_p$: therefore, from the $K$-comprehensive factorization of the continuous comorphism of sites
\[
(\fib P, J_P)\xrightarrow{\pi_P} (\cbicat,J) \xrightarrow{F} (\dbicat,K)
\]
as
\[
(\fib P, J_P) \xrightarrow{\bar{F}} (\fib (C_F)_!(P), J_{(C_F)_!(P)}) \xrightarrow{\pi_{(C_F)_!(P)}} (\dbicat,K),
\]
we can deduce that the (terminally connected, étale)-factorization of the geometric morphism $C_{F\circ \pi_P}\cong C_F\circ C_{\pi_P}$ is given by
\[
\Sh(\fib P, J_P)\xrightarrow{C_{\bar{F}}} \Sh(\fib (C_F)_!(P), J_{(C_F)_!(P)}) \xrightarrow{C_{\pi_{(C_F)_!(P)}}} \Sh(\dbicat,K). 
\]
Finally, from Proposition \ref{prop:fib_discreta_slice_topos} we recall that $\Sh(\fib P, J_P)\simeq \Sh(\cbicat,J)/P$ and $C_{\pi_P}\cong \prod_P$, and similar identities hold for the $K$-sheaf $(C_F)_!(P)$. 
\end{proof}

In particular, when topologies are not involved the change of base can be describe in fibrational terms as follows:
\begin{prop}\label{propdirectinverseimagessheaves}
Consider a functor $F:\cbicat\rightarrow \dbicat$, and two presheaves $P:\cbicat\op\rightarrow\Set$ and $Q:\dbicat\op\rightarrow \Set$ with associated fibrations $\pi_P:\fib P\rightarrow \cbicat$ and $\pi_Q:\fib Q\rightarrow \dbicat$:
\begin{itemize}
    \item the direct image of $Q$ along $F$ is computed as the strict pullback of $\pi_Q$ along $F$:
    \[
    \begin{tikzcd}
        {\fib(F^*(Q))} \ar[d] \ar[r] & {\fib Q} \ar[d, "\pi_Q"] \\
        \cbicat \ar[r, "F"'] & \dbicat \ar[ul, phantom, "\lrcorner" near end]
    \end{tikzcd}
    \]
    \item consider the fibration $r:\comma{1_\dbicat}{F\circ \pi_P}\rightarrow\dbicat$, and denote its class of $r$-vertical arrows by $S$: the fibration associated to the presheaf $\Lan_{F\op}(P)$ is computed as the localization $\bar{r}:\comma{1_\dbicat}{F\circ \pi_P}[S\inv]\rightarrow \dbicat$, and the fibration associated to $\lan_{F\op}(P)$ by taking the second component of the comprehensive factorization of $\bar{r}$:
    \[
    \begin{tikzcd}
    {\comma{1_\dbicat}{F\circ \pi_P}} \ar[rr,"r"] \ar[dr] & &  {\dbicat}\\
    & {\comma{1_\dbicat}{F\circ \pi_P}[S\inv]} \ar[ru, "\bar{r}"] \ar[dr] & \\
    & & {\fib (\lan_{F\op}(P))}. \ar[uu, "\pi_{\lan_{F\op}(P)}"', "\scriptsize\mbox{compr. fact.}"{sloped, above}] 
    \end{tikzcd}
    \]
\end{itemize}
\end{prop}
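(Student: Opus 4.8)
Both statements are specializations of the general change-of-base machinery to the case of a \emph{discrete} fibration, so the plan is to feed the earlier results a presheaf and read off the simplifications. For the direct image, I would start from Proposition \ref{prop:directimage_Street_fibrations}, which says that on indexed categories $F_*$ acts as precomposition with $F\op$. When $Q$ is discrete its direct image is therefore the presheaf $F^*(Q)=Q\circ F\op$, and the point to verify is that the category of elements $\fib(Q\circ F\op)$ is \emph{isomorphic} to the strict pullback of $\pi_Q$ along $F$: an object of the strict pullback is a pair $(X,(D,q))$ with $F(X)=D$ and $q\in Q(D)$, which is the same datum as $(X,q)$ with $q\in Q(F(X))=F^*(Q)(X)$, and likewise on arrows. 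The reason the strict pullback (rather than the strict pseudopullback appearing in Proposition \ref{prop:directimage_Street_fibrations}) suffices here is that $\pi_Q$, being a discrete fibration, has no nontrivial vertical arrows, so the coherence isomorphism in the pseudopullback is forced to be an identity on the presheaf component. This first bullet is thus essentially a direct inspection of objects and morphisms.

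For the inverse image I would invoke Proposition \ref{prop:inverseimage_as_localization} with $p:=\pi_P$. That result presents $\Lan_{F\op}([\pi_P])$ as the localization of the fibration of generalized elements $r:\comma{1_\dbicat}{F\circ \pi_P}\rightarrow \dbicat$ at the class of arrows which are $r$-vertical and whose component $\alpha$ in the fibre is cartesian in $\pbicat$. The decisive simplification is that $\pi_P$ is a discrete fibration, and in a discrete fibration \emph{every} arrow is cartesian (cartesian lifts coincide with the unique lifts, as is immediate from the description of $\fib P$ in Example \ref{ex:indexedcategories}(i)). Hence the cartesianity requirement on $\alpha$ is vacuous and the class $S$ collapses to all of the $r$-vertical arrows, which is precisely the class named in the statement. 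This yields the identification of the fibration of $\Lan_{F\op}(P)$ with the localization $\bar r:\comma{1_\dbicat}{F\circ \pi_P}[S\inv]\rightarrow\dbicat$.

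It then remains to descend from the $\dbicat$-indexed category $\Lan_{F\op}(P)$ to the presheaf $\lan_{F\op}(P)$. Here I would use the trivial-topology case of Proposition \ref{prop:contft_stacks_sheaves}, which gives $\lan_{F\op}\cong \trunc_\dbicat\circ \Lan_{F\op}\circ j_\cbicat$, combined with Corollary \ref{cor:truncation_fibrations}: read fibrationally, the truncation functor sends a stack to the discrete part of its comprehensive factorization, and for the trivial topology every fibration is a stack, so this applies to $\bar r$. Composing the two, the fibration associated with $\lan_{F\op}(P)=\trunc_\dbicat(\Lan_{F\op}(P))$ is exactly the second component $\pi_{\lan_{F\op}(P)}$ of the comprehensive factorization of $\bar r$, which is what the diagram in the statement asserts.

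The one step that genuinely requires care — and where the real work lies — is the compatibility of the two successive factorizations: one must check that forming the comprehensive factorization of the \emph{already localized} functor $\bar r$ produces the same discrete part that Lemma \ref{lemma:sheafify_lan_=_compr_fact} extracts from $F\circ \pi_P$ directly, i.e.\ that inverting the $r$-vertical arrows before computing $\colim(\yo_\dbicat\circ(-))$ leaves the colimit unchanged. I expect this to follow from the fact that the localization at vertical arrows is cofinal with respect to the connected-components computation underlying the comprehensive factorization, but this is the verification I would isolate and treat explicitly rather than leave to the reader.
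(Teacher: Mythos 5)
Your proof is correct and follows essentially the same route as the paper's: the direct image by inspection of the (pseudo)pullback of a discrete fibration, the inverse image via Proposition \ref{prop:inverseimage_as_localization} together with the observation that every arrow of $\fib P$ is cartesian, and the passage to $\lan_{F\op}(P)$ via Proposition \ref{prop:contft_stacks_sheaves} and Corollary \ref{cor:truncation_fibrations}. The only remark is that your final ``step requiring care'' is superfluous: the chain $\lan_{F\op}(P)\cong \trunc_\dbicat(\Lan_{F\op}(P))$ plus the fibrational description of $\trunc_\dbicat$ as comprehensive factorization already yields the claim about $\bar r$ directly, so no separate cofinality argument reconciling it with Lemma \ref{lemma:sheafify_lan_=_compr_fact} is needed.
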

\begin{proof}
The description of the direct image follows directly from its fibrational definition as a pullback. Notice that the pseudopullback of $\pi_P$ along $F$ is in general a Street fibration, but one can show immediately that it is equivalent to the strict pullback of $\pi_P$ along $F$, which is instead a Grothendieck fibration.

The description of $\Lan_{F\op}(P)$ is the one appearing in Proposition \ref{prop:inverseimage_as_localization}: $\Lan_{F\op}(P)$ is computed by localizing $\comma{1_\dbicat}{F\circ \pi_P}$ with respect to the class of its vertical arrows whose component in $\int P$ is cartesian. But, since every arrow in $\int P$ is cartesian, for $P$ is discrete, we conclude that $\bar{r}$ is the fibration associated with $\lan_{F\op}(P)$. Finally, we know by Proposition \ref{prop:contft_stacks_sheaves} that the fibration of $\lan_{F\op}$ can be recovered as the truncation of $\bar{r}$, and by Corollary \ref{cor:truncation_fibrations} the truncation functor $t_\dbicat:\Ind_\dbicat\rightarrow [\dbicat\op,\Set]$ acts on fibrations by mapping to the second component of the comprehensive factorization.
\end{proof}

\subsection{Inverse images and pullbacks}\label{sec:invimage_pb}

At the very beginning of the chapter we have recalled the definition of inverse image for sheaves over topological spaces, whose construction appears for instance as Theorem 2 of \cite[Chapter II, Section 9]{maclanemoerdijk}: given a continuous function $f:X\to Y$ between topological spaces, the inverse image $\Sh(f)^{\ast}$ of the geometric morphism
\[
\Sh(f):\Sh(X)\to \Sh(Y)
\]
induced by $f$ corresponds under the equivalences 
\[
\Sh(X)\simeq \Etale\slash X
\]
and 
\[
\Sh(Y)\simeq \Etale\slash Y
\]
to the pullback operation along $f$ in the category $\Top$ of topological spaces and continuous maps between them. In this section we shall establish natural extensions of this result in the context of arbitrary sites and toposes, where the natural topos-theoretic analogue of the notion of continuous map between topological spaces is the notion of geometric morphism. 

Giraud's article \cite{giraud.classifying} shows in particular that classifying toposes of inverse images of cartesian stacks can be used to compute pullbacks of toposes; his result was later extended to the language of internal categories by Diaconescu as follows:
\begin{thm}[{\cite[Theorem 5.1]{diaconescu.pullback}}]
Consider a geometric morphism $f:\Ftopos\rightarrow \Etopos$ and an internal category $\ccat$ in $\Etopos$. Denote by $[\ccat\op,\Etopos]$ the category of internal presheaves for $\ccat$, and similarly by $[f^*\ccat\op,\Ftopos]$ the category of internal presheaves for $f^*\ccat$ (it is still an internal category in $\Ftopos$, since $f^*$ preserves finite limits). Then the square
\[
\begin{tikzcd}
    {[f^*\ccat\op,\Ftopos]} \ar[r, "\bar{f}"] \ar[d] & {[\ccat\op,\Etopos]} \ar[d]  \\
    {\Ftopos} \ar[r, "f"'] & {\Etopos} 
\end{tikzcd}
\]
is a pullback in $\Topos$, where $\bar{f}$ is the geometric morphism whose inverse image acts as $f^*$ on internal presheaves while the two vertical geometric morphisms are the usual global sections functors.
\end{thm}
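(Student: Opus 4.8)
The plan is to derive the pullback property from the universal property that characterises toposes of internal presheaves, namely the relative form of Diaconescu's theorem. Write $q\colon [\ccat\op,\Etopos]\to\Etopos$ and $q'\colon [f^*\ccat\op,\Ftopos]\to\Ftopos$ for the canonical (global-sections) geometric morphisms, which are the two vertical maps of the square. The key input I would invoke is that, for any bounded geometric morphism $p\colon\Gtopos\to\Etopos$, the category of geometric morphisms $\Gtopos\to[\ccat\op,\Etopos]$ lying over $\Etopos$ (equipped with an isomorphism $q\circ(-)\cong p$) is naturally equivalent to the category of internal flat functors $p^*\ccat\to\Gtopos$, that is, of $p^*\ccat$-torsors in $\Gtopos$; here $p^*\ccat$ is the internal category of $\Gtopos$ obtained by applying the lex functor $p^*$ to the defining diagrams of $\ccat$.

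First I would fix an arbitrary Grothendieck topos $\Gtopos$ and analyse $\Topos(\Gtopos,[f^*\ccat\op,\Ftopos])$. Composing with $q'$ produces a geometric morphism $u\colon\Gtopos\to\Ftopos$, and for each fixed $u$ the fibre over $u$ is, by the universal property applied to $q'$, the category of $u^*(f^*\ccat)$-torsors in $\Gtopos$. The crucial identification is the canonical isomorphism of internal categories $u^*(f^*\ccat)\cong(f\circ u)^*\ccat$, which holds because $u^*$ and $f^*$ are lex and the formation of internal categories is preserved by lex functors (as recalled before Proposition \ref{prop:morphsites_stacks_intcategories}). Hence the fibre over $u$ is equivalent to the category of $(f\circ u)^*\ccat$-torsors in $\Gtopos$.

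Next I would feed this back into the universal property of $[\ccat\op,\Etopos]$, now taken over $\Etopos$ with structure morphism $p:=f\circ u\colon\Gtopos\to\Etopos$: the $(f\circ u)^*\ccat$-torsors in $\Gtopos$ correspond naturally to geometric morphisms $v\colon\Gtopos\to[\ccat\op,\Etopos]$ together with an isomorphism $q\circ v\cong f\circ u$. Chaining the two equivalences yields a natural equivalence between $\Topos(\Gtopos,[f^*\ccat\op,\Ftopos])$ and the category of triples $(u,v,\theta)$ with $u\colon\Gtopos\to\Ftopos$, $v\colon\Gtopos\to[\ccat\op,\Etopos]$ and $\theta\colon f\circ u\isorightarrow q\circ v$. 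This is precisely the category of cones over the cospan $\Ftopos\xrightarrow{f}\Etopos\xleftarrow{q}[\ccat\op,\Etopos]$, i.e. the defining $2$-categorical data of the bipullback $\Ftopos\times_\Etopos[\ccat\op,\Etopos]$ in $\Topos$. Identifying the comparison morphism induced by the pair $(q',\bar f)$ with the canonical one, this exhibits the square as a pullback.

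The main obstacle I expect is not the object-level bijection but the $2$-categorical bookkeeping: one must check that the two applications of the universal property are pseudonatural in $\Gtopos$ and compatible with the mediating $2$-cells, so that the resulting equivalence of hom-categories genuinely realises the bicategorical pullback rather than a mere equivalence of underlying categories. Concretely this means tracking the coherence isomorphism $\theta$ through both correspondences and verifying that the torsor-theoretic description of $\bar f$ (whose inverse image acts as $f^*$ on internal presheaves) is compatible with composition, so that the canonical map $[f^*\ccat\op,\Ftopos]\to\Ftopos\times_\Etopos[\ccat\op,\Etopos]$ is the one realising the equivalence. A secondary point worth recording is boundedness: since $q$ and $q'$ are bounded, the relative Diaconescu correspondence applies and the pullback is again a topos.
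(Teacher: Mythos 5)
The paper does not actually prove this statement: it is quoted verbatim from Diaconescu's article and immediately closed with a \verb|\qed|, so there is no internal proof to compare against. Judged on its own merits, your argument is the standard and correct one. You reduce the bipullback property to two applications of the relative Diaconescu correspondence (geometric morphisms into $[\ccat\op,\Etopos]$ over a base $p:\Gtopos\to\Etopos$ $\simeq$ flat internal presheaves on $p^*\ccat$), glued along the canonical isomorphism $u^*(f^*\ccat)\cong(f\circ u)^*\ccat$ coming from lexness of inverse images; this yields exactly the cone data $(u,v,\theta)$ over the cospan, pseudonaturally in $\Gtopos$. Two small remarks. First, the boundedness caveat is harmless but essentially vacuous in this paper's setting: all geometric morphisms between Grothendieck toposes are bounded, and the only boundedness genuinely used is that of the global-sections morphisms $q$ and $q'$, which guarantees that the internal presheaf toposes are again Grothendieck toposes. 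Second, you are right that the only substantive work left is the $2$-categorical coherence — checking that the two correspondences are pseudonatural in $\Gtopos$ and that the comparison morphism induced by $(q',\bar f)$ is the canonical one; this is precisely where a fully detailed write-up would have to track the isomorphism $\theta$ through both equivalences, and it is the content that Diaconescu's original proof supplies. As a blind reconstruction of a cited classical result, the proposal is sound.
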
\qed

This result can be expressed externally using stacks. We have recalled in the proof of Proposition \ref{prop:morphsites_stacks_intcategories} that every stack over a site $(\cbicat,J)$ is equivalent to a split stack, which is nothing but an internal category of $\Sh(\cbicat,J)$; with this in mind, Proposition \ref{prop:morphsites_stacks_intcategories} showed that the action of $\St(F)$ on stacks, seen as internal categories, corresponds to the action of $\Sh(F)$. Finally, given an internal category $\dcat$ in a topos $\Sh(\cbicat,J)$ the equivalence $\Gir_J(\dcat)\simeq [\dcat\op,\Sh(\cbicat,J)]$ holds (\cfr Definition \ref{def:Giraud_topology_classifying_topos} and Proposition C2.5.4 of \cite{elephant}), and hence we can deduce the topos-theoretic analogue of the above-mentioned pullback characterization of the inverse image operation on topological sheaves:
\begin{prop}\label{prop:pb_Giraudtopos_is_inverseimage}
Let $F:(\cbicat,J)\rightarrow (\dbicat,K)$ be a morphism of sites and $\dcat$ a small $J$-stack: then the square
\[
	\begin{tikzcd}[column sep=30]
		\Gir_{K}(\St(F)^*(\dcat)) \arrow[r] \arrow[d, "C_{p_{(\St(f)^*(\dcat))}}"']   & \Gir_{J}({\dcat}) \arrow[d, "C_{p_{{\dcat}}}"]     \\
		{\Sh(\dbicat,K)} \arrow[r, "\Sh(F)"']  &  {\Sh(\cbicat,J)} 
	\end{tikzcd}
\]
is a pullback of toposes. In particular, for a geometric morphism $f:\Ftopos\to \Etopos$ and a stack $\dcat$ over $\Etopos$, the following square is a pullback of toposes:
\begin{equation*}
	\begin{tikzcd}[column sep=30]
		\Gir_{\Ftopos}(\St(f)^{\ast}({\mathbb D})) \arrow[r] \arrow[d, "C_{p_{(\St(f)^{\ast}({\mathbb D}))}}"']  &  \Gir_{\Etopos}({\mathbb D}) \arrow[d, "C_{p_{{\mathbb D}}}"]      \\
		{\Ftopos} \arrow[r, "f"'] &   {\Etopos} 
	\end{tikzcd}.
\end{equation*}  
\end{prop}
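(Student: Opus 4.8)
The plan is to reduce the statement to the Diaconescu pullback theorem recalled just above, by translating stacks into internal categories and Giraud toposes into categories of internal presheaves. First I would use the observation, recorded in the proof of Proposition \ref{prop:morphsites_stacks_intcategories}, that every small $J$-stack is equivalent to a split one and hence to an internal category in $\Sh(\cbicat,J)$; so, up to equivalence, I may assume that $\dcat$ is the externalization of an internal category $\ccat$ in $\Etopos:=\Sh(\cbicat,J)$. The morphism of sites $F$ induces the geometric morphism $\Sh(F)\colon\Sh(\dbicat,K)\to\Sh(\cbicat,J)$, which will play the role of $f\colon\Ftopos\to\Etopos$ in Diaconescu's theorem, with $\Ftopos:=\Sh(\dbicat,K)$ and $\Etopos=\Sh(\cbicat,J)$.

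The next step is to match all four corners and all four edges of the square with the corresponding data in Diaconescu's square. On corners: since $F$ is a morphism of sites, $\Sh(F)^*$ is lex and hence preserves internal categories, and Proposition \ref{prop:morphsites_stacks_intcategories} shows that $\St(F)^*(\dcat)$, regarded as an internal category, is exactly $\Sh(F)^*(\ccat)=f^*\ccat$. The equivalence $\Gir_J(\dcat)\simeq[\ccat\op,\Etopos]$ for internal $\ccat$ (from Definition \ref{def:Giraud_topology_classifying_topos} together with Proposition C2.5.4 of \cite{elephant}) identifies the right-hand corner $\Gir_J(\dcat)$ with $[\ccat\op,\Etopos]$ and, applied over $\Ftopos$, identifies the left-hand corner $\Gir_K(\St(F)^*(\dcat))$ with $[f^*\ccat\op,\Ftopos]$. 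On edges: under these equivalences the structural geometric morphisms $C_{p_\dcat}$ and $C_{p_{\St(F)^*(\dcat)}}$ correspond to the canonical global-sections morphisms appearing in Diaconescu's square, and the top edge corresponds to $\bar f$, whose inverse image acts as $f^*$ on internal presheaves.

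Once these identifications are secured, Diaconescu's theorem asserts precisely that the resulting square is a pullback in $\Topos$, which is the claim. The special case of a geometric morphism $f\colon\Ftopos\to\Etopos$ and a stack $\dcat$ over $\Etopos$ then follows by taking $F=f^*\colon(\Etopos,J\can_\Etopos)\to(\Ftopos,J\can_\Ftopos)$, which is a morphism of sites with $\Sh(F)\cong f$ and $\St(F)^*=\St(f)^*$ by Corollary \ref{cor:morphsites_functor_between_stacks}, and applying the general statement.

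The hard part will not be the invocation of Diaconescu's theorem, but the verification that the equivalences $\Gir_{(-)}(-)\simeq[(-)\op,-]$ are natural enough to transport the entire commutative square, rather than merely matching the four objects up to equivalence. Concretely, I would need to check that the equivalence of Proposition C2.5.4 intertwines $C_{p_\dcat}$ with the canonical projection $[\ccat\op,\Etopos]\to\Etopos$ naturally in $\ccat$, and that the compatibility of $\St(F)$ with $\Sh(F)$ from Proposition \ref{prop:morphsites_stacks_intcategories} is itself compatible with the formation of internal presheaf categories, so that the comparison morphism $\bar f$ determined by $f^*$ agrees with the canonically induced top edge of our square. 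This coherence check, ensuring that the two vertical edges really become the global-sections morphisms and the top edge really becomes $\bar f$, is where the genuine care is required.
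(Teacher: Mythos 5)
Your proposal is correct and follows essentially the same route as the paper, which likewise deduces the statement from Diaconescu's pullback theorem by passing from the small $J$-stack to an equivalent split stack (i.e.\ an internal category), invoking Proposition \ref{prop:morphsites_stacks_intcategories} to identify the action of $\St(F)^*$ with that of $\Sh(F)^*$ on internal categories, and using the equivalence $\Gir_J(\dcat)\simeq[\dcat\op,\Sh(\cbicat,J)]$. The coherence checks you flag at the end are indeed the only substantive content beyond the citation, and the paper leaves them implicit just as you propose to carry them out.
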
\qed

\begin{remark}
The generalization of Giraud's construction of pullbacks of toposes for arbitrary stacks can be deduced directly from a relative Diaconescu's equivalence theorem for arbitrary stacks, extending Corollary 2.5 of \cite{giraud.classifying}. Such a result, and its application to the construction of pullbacks of toposes, will be proved in a future version of this work. 
\end{remark}

In particular, for a geometric morphism $C_F$ induced by a continuous comorphism of sites then the above pullback of toposes can be seen as induced by a pseudopullback in $\Cat$:
\begin{prop}
Let $F:({\cal C}, J)\to ({\cal D}, K)$ be a continuous comorphism of sites and $\mathbb D$ a $K$-stack on $\cal D$. Then the diagram 
\begin{equation*}
	\begin{tikzcd}[column sep=30]
		{\cal G}((C_F^\St)^*({\mathbb D})) \arrow[r] \arrow[d, "p_{(C_F^\St)^*({\mathbb D})}"']  &    {\cal G}({\mathbb D}) \arrow[d, "p_{\mathbb D}"]      \\
		{\cal C} \arrow[r, "F"'] &  {\cal D} 
	\end{tikzcd}
\end{equation*}	  
(where the unnamed horizontal functor is the obvious one), is a pseudopullback in $\Cat$, which is sent by the $2$-functor $C$ to a pullback in $\Topos$:
\begin{equation*}
	\begin{tikzcd}[column sep=30]
		\Gir_{J}((C_F^\St)^*({\mathbb D})) \arrow[r] \arrow[d, "C_{p_{(C_F^\St)^*({\mathbb D})}}"']  &   \Gir_{K}({\mathbb D}) \arrow[d, "C_{p_{\mathbb D}}"]      \\
		\Sh({\cal C}, J) \arrow[r, "C_{F}"'] &  \Sh({\cal D}, K) 
	\end{tikzcd}.
\end{equation*}	  
\end{prop}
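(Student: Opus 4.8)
The plan is to separate the two assertions cleanly: first establish the pseudopullback in $\Cat$ purely from the theory of direct images of fibrations, and then transport it to $\Topos$ by equipping every category with its Giraud topology, recognising all four functors as continuous comorphisms of sites, and applying the $2$-functor $C_{(-)}$. For the first assertion, I would start by recalling from Corollary \ref{cor:continuouscomorphismsstacks} that, since $F$ is a continuous comorphism, $(C_F^\St)^*=\St(F)_*$ acts on indexed categories by precomposition with $F\op$, so $(C_F^\St)^*(\dcat)=\dcat\circ F\op$. By Proposition \ref{prop:directimage_Street_fibrations} the associated fibration $p_{(C_F^\St)^*(\dcat)}:\gbicat(\dcat\circ F\op)\to\cbicat$ is equivalent to the direct image $F_*(p_\dcat)$, i.e.\ to the strict pseudopullback $\cbicat\times_\dbicat\gbicat(\dcat)$, and the accompanying projection $\pi$ — which under this equivalence is exactly the obvious functor $\widetilde F$ sending $(X,U)$ to $(F(X),U)$ — carries $p_{(C_F^\St)^*(\dcat)}$-cartesian arrows to $p_\dcat$-cartesian ones. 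Since a category equivalent to a strict pseudopullback is itself a pseudopullback (the remark following the definition of strict pseudopullback), the left-hand square is a pseudopullback in $\Cat$.

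Next I would introduce the topologies, assuming throughout the essential smallness needed for the Giraud toposes to exist. Equip $\gbicat(\dcat)$ with Giraud's topology $K_\dcat$, $\gbicat((C_F^\St)^*(\dcat))$ with $J_{(C_F^\St)^*(\dcat)}$, and the bases with $J$ and $K$. By Proposition \ref{prop:fib_e_morfib_sono_can.comorf} both $p_\dcat$ and $p_{(C_F^\St)^*(\dcat)}$ are continuous comorphisms of sites, and $F$ is one by hypothesis; hence the composite $F\circ p_{(C_F^\St)^*(\dcat)}=p_\dcat\circ\widetilde F$ is a continuous comorphism to $(\dbicat,K)$. Because $K_\dcat=M^{p_\dcat}_K$ is Giraud's minimal topology, Lemma \ref{lemma:comorfismi_via_tplgir} shows that $\widetilde F$ is a comorphism of sites $(\gbicat((C_F^\St)^*(\dcat)),J_{(C_F^\St)^*(\dcat)})\to(\gbicat(\dcat),K_\dcat)$, and I would deduce its continuity from the explicit description of $J_{(C_F^\St)^*(\dcat)}$-covering sieves (Proposition \ref{prop:girtpl_descrizione_sieves}) together with the continuity criterion of Proposition \ref{prop:ft_cont_caratterizzazione}, using that $\pi$ preserves cartesian arrows. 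The resulting square of comorphisms commutes strictly, so applying $C_{(-)}:\Cosite\to\Topos\co$ produces the commutative square of (essential) geometric morphisms appearing in the statement.

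The remaining and decisive task is to show that this topos square is a pullback. The route I favour is to identify it with the pullback already obtained in Proposition \ref{prop:pb_Giraudtopos_is_inverseimage} for the geometric morphism $f:=C_F:\Sh(\cbicat,J)\to\Sh(\dbicat,K)$: under the Morita equivalence $\St^s(\dbicat,K)\simeq\St^s(\Sh(\dbicat,K),J\can_{\Sh(\dbicat,K)})$ the stack $\dcat$ transports to a stack $\ecat$ with $\Gir_K(\dcat)\simeq\Gir_{\Sh(\dbicat,K)}(\ecat)$ and with $C_{p_\dcat}$ as its classifying geometric morphism; it then remains to verify that $(C_F^\St)^*(\dcat)=\dcat\circ F\op$ transports to $\St(C_F)^*(\ecat)$, so that the left-hand vertices and arrows of the two squares agree. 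Alternatively, one can argue directly through the universal property: a topos $\mathcal T$ over $\Sh(\cbicat,J)$ together with a map to $\Gir_K(\dcat)$ lying over $\Sh(\dbicat,K)$ amounts, by the classifying property of Giraud toposes, to a generalized point of $\dcat$ along $C_F$, and the pseudopullback presentation of $\dcat\circ F\op$ should match this datum, up to equivalence, with a map $\mathcal T\to\Gir_J((C_F^\St)^*(\dcat))$.

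I expect this last step to be the main obstacle. The $\Cat$-pseudopullback and the comorphism structure are essentially bookkeeping on top of Propositions \ref{prop:directimage_Street_fibrations} and \ref{prop:fib_e_morfib_sono_can.comorf} and Lemma \ref{lemma:comorfismi_via_tplgir}, whereas verifying the pullback property in $\Topos$ requires the classifying (Diaconescu-type) property of Giraud toposes for general, not necessarily lex, stacks — or, equivalently, the compatibility of the direct image $(-\circ F\op)$ of stacks along the comorphism $F$ with the inverse image $\St(C_F)^*$ of the induced geometric morphism, together with the invariance of $\Gir$ under the Morita equivalence of the stack $2$-categories. Making this identification precise, so that the present square literally becomes the square of Proposition \ref{prop:pb_Giraudtopos_is_inverseimage} for $f=C_F$, is exactly where the real work lies.
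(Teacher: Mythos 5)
Your proposal is correct and follows essentially the same route as the paper: the $\Cat$-square is a pseudopullback because continuity of $F$ makes $(C_F^\St)^*$ act by precomposition with $F\op$ (Corollary \ref{cor:continuouscomorphismsstacks}) while the direct image of fibrations is computed by pseudopullback (Proposition \ref{prop:directimage_Street_fibrations}), and the topos square is recognised as an instance of Proposition \ref{prop:pb_Giraudtopos_is_inverseimage} for $f=C_F$. The identification you flag as ``the real work'' --- that $\dcat\circ F\op$ corresponds, under the Morita equivalence of the stack $2$-categories, to $\St(C_F)^*(\dcat)$ --- is exactly what the paper leaves implicit when citing that proposition, so you have located rather than missed the one substantive point; by contrast, your intermediate verification that the comparison functor is a continuous comorphism of sites is not needed, since the pullback property in $\Topos$ is deduced directly from the cited proposition rather than by applying $C_{(-)}$ to a site-level pullback.
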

\begin{proof}
As $F$ is continuous, by Corollary \ref{cor:continuouscomorphismsstacks} the functor $(C^\St_F)^*$ acts by precomposition with $F\op$, \ie like the direct image functor $F^*:\Ind_\dbicat\rightarrow \Ind_\cbicat$: therefore, since by Proposition \ref{prop:directimage_Street_fibrations} the direct image $F^*$ acts on fibrations by pulling back along $F$, the first square is a pseudopullback. On the other hand, the second square is a pullback of toposes as a consequence of Proposition \ref{prop:pb_Giraudtopos_is_inverseimage}.
\end{proof}

\begin{remark}\label{rem_noncontinuouscomorphism}
If $F$ is a non-necessarily continuous comorphism of sites, then $(C^\St_F)^*(\dcat)$ is not simply $\dcat\circ F\op$ but rather its $J$-stackification, by the definition of $C^\St_F$ provided in Proposition \ref{prop:comorphism_action_on_stacks}. Therefore, we may compute first of all the pullback of $p_\dcat$ along $F$, which provides the fibration associated to $\dcat\circ F\op$, and then $J$-stackify it.
\end{remark}

In light of Remark \ref{rem_noncontinuouscomorphism}, we can apply this expression for the inverse image stack in the context of an arbitrary geometric morphism, induced by a \emph{morphism} of sites, in order to compute pullbacks of toposes along it. The idea is to represent the morphism by means of the associated comorphism of sites as in \cite{denseness}. Still, this requires the replacement of the original site of definition for the domain topos of the morphism with a Morita-equivalent one, admitting a functor from it. It therefore remains the problem of turning a stack on this site into a stack on the original site. For this, a generalization to stacks of the relative comprehensive factorization of a functor, as described in the next section, is useful, in allowing a generalization to stacks of Proposition \ref{prop:relativecomprehensivefactsheaves}.

\begin{prop}\label{prop:inversedirect}
Let $F:({\cal C}, J)\to ({\cal D}, K)$ be a morphism of small-generated sites and $\mathbb D$ a $J$-stack on $\cal C$. Then, given the pseudopullback diagram 
\begin{equation*}
	\begin{tikzcd}
		{\cal G}(\pi_{\cal C}^*({\mathbb D})) \arrow[rr] \arrow[d, "p_{{\pi_{\cal C}}^*({\mathbb D})}"']  & &   {\cal G}({\mathbb D}) \arrow[d, "p_{\mathbb D}"]      \\
		(1_{\cal D}\downarrow F) \arrow[rr, "\pi_{\cal C}"] & &  {\cal C} 
	\end{tikzcd}
\end{equation*}	  
in $\Cat$, $\St(F)^*({\mathbb D})$ is the $K$-stack on $\cal D$ (equivalently, the stack on $\Sh({\cal D}, K)$ with respect to the canonical topology) corresponding under the equivalence
\[
C_{\pi_{\cal D}}:\Sh((1_{\cal D}\downarrow F), \widetilde{K})\simeq \Sh({\cal D}, K) 
\]
to the $\widetilde{K}$-stackification of the fibration $p_{{\pi_{\cal C}}^*({\mathbb D})}$.

If $\mathbb D$ is a $J$-sheaf on $\cal C$ then $\Sh(F)^{\ast}({\mathbb D})$ is the $K$-sheaf on $\cal D$ corresponding under the equivalence $C_{\pi_{\cal D}}$ to the $\widetilde{K}$-sheafification of the fibration $p_{{\pi_{\cal C}}^*({\mathbb D})}$; in other words, it coincides with the $K$-comprehensive factorization of the composite functor $\pi_{\cal D}\circ p_{{\pi_{\cal C}}^*({\mathbb D})}$. 
\end{prop}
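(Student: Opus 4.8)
The strategy is to trade the morphism of sites $F$ for the comorphism of sites afforded by the comma category, since for a comorphism the inverse image of stacks is computed by a plain precomposition rather than by the left pseudo-Kan extension $\Lan_{F\op}$ of Corollary \ref{cor:morphsites_functor_between_stacks}. First I would recall from \cite{denseness} the comorphism representation of $F$: the projection $\pi_{\cal C}:\comma{1_{\cal D}}{F}\rightarrow {\cal C}$ is a comorphism of sites $(\comma{1_{\cal D}}{F},\widetilde{K})\rightarrow ({\cal C},J)$, the projection $\pi_{\cal D}$ is a comorphism inducing the equivalence $C_{\pi_{\cal D}}:\Sh(\comma{1_{\cal D}}{F},\widetilde{K})\isorightarrow \Sh({\cal D},K)$ of the statement, and the two induced geometric morphisms assemble into an isomorphism $\Sh(F)\cong C_{\pi_{\cal C}}\circ C_{\pi_{\cal D}}^{-1}$ of geometric morphisms $\Sh({\cal D},K)\rightarrow \Sh({\cal C},J)$.

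Next I would compute the inverse image of ${\mathbb D}$ along the comorphism $\pi_{\cal C}$. By Proposition \ref{prop:comorphism_action_on_stacks}, the inverse image of stacks along $\pi_{\cal C}$ acts as $(C^{\St}_{\pi_{\cal C}})^{*}=s_{\widetilde{K}}\circ \pi_{\cal C}^{*}\circ i_{J}$, where $\pi_{\cal C}^{*}=(-\circ \pi_{\cal C}\op)$ is mere precomposition; hence $(C^{\St}_{\pi_{\cal C}})^{*}({\mathbb D})=s_{\widetilde{K}}({\mathbb D}\circ \pi_{\cal C}\op)$. By Proposition \ref{prop:directimage_Street_fibrations}, the fibration associated with the $\comma{1_{\cal D}}{F}$-indexed category ${\mathbb D}\circ \pi_{\cal C}\op=\pi_{\cal C}^{*}({\mathbb D})$ is exactly the strict pseudopullback $p_{\pi_{\cal C}^{*}({\mathbb D})}$ of $p_{\mathbb D}$ along $\pi_{\cal C}$ displayed in the statement, so that $(C^{\St}_{\pi_{\cal C}})^{*}({\mathbb D})$ is precisely the $\widetilde{K}$-stackification of $p_{\pi_{\cal C}^{*}({\mathbb D})}$.

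The crux is then to show that $\St(F)^{*}$ coincides with the transport of $(C^{\St}_{\pi_{\cal C}})^{*}$ across the equivalence $C_{\pi_{\cal D}}$; denoting by $E:\St^{s}(\comma{1_{\cal D}}{F},\widetilde{K})\simeq \St^{s}({\cal D},K)$ the equivalence induced by $C_{\pi_{\cal D}}$, this means $\St(F)^{*}\simeq E\circ (C^{\St}_{\pi_{\cal C}})^{*}$. I would argue that the inverse image operation on stacks depends only on the underlying geometric morphism: by the Morita equivalence $\St^{s}({\cal C},J)\simeq \St^{s}(\Sh({\cal C},J),J\can_{\Sh({\cal C},J)})$ established above (and its analogues for the other two sites), both sides realise the action on stacks of the single geometric morphism $\Sh(F)$. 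Concretely, all the functors in play act on split stacks precisely as the corresponding sheaf-theoretic functors act on the associated internal categories --- this is Proposition \ref{prop:morphsites_stacks_intcategories} for the morphism $F$, together with its analogue for the comorphisms $\pi_{\cal C}$ and $\pi_{\cal D}$, and with the fact, recalled in the proof of that proposition, that every small stack is equivalent to a split one by the fibred Yoneda lemma --- so the identity $\Sh(F)^{*}\cong (C_{\pi_{\cal D}})_{*}\circ C_{\pi_{\cal C}}^{*}$, which follows by taking inverse images in $\Sh(F)\cong C_{\pi_{\cal C}}\circ C_{\pi_{\cal D}}^{-1}$, lifts verbatim to stacks. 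Combined with the previous step this gives the first assertion. I expect this lifting to be the main obstacle, as it is the precise point where one must certify that the two site presentations compute the same topos-theoretic operation; an alternative, internal-category-free route is to compare right adjoints, verifying that $\St(F)_{*}=(-\circ F\op)$ corresponds under $C_{\pi_{\cal D}}$ to the restriction of $\Ran_{\pi_{\cal C}\op}$ --- the stack analogue of $\Sh(F)_{*}$ being $\ran_{\pi_{\cal C}\op}$ transported across $C_{\pi_{\cal D}}$ --- whence the equality of the left adjoints follows by their uniqueness.

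Finally, for the sheaf case I would specialise to a discrete ${\mathbb D}$, that is, to a $J$-sheaf $P$. The sheaf-level instance of the previous analysis gives $\Sh(F)^{*}(P)\cong (C_{\pi_{\cal D}})_{*}(\sheafify_{\widetilde{K}}(P\circ \pi_{\cal C}\op))$, where $P\circ \pi_{\cal C}\op=\pi_{\cal C}^{*}(P)$ has associated discrete fibration $p_{\pi_{\cal C}^{*}(P)}$. It then remains to identify the transport under $C_{\pi_{\cal D}}$ of this $\widetilde{K}$-sheafification with the discrete part of the $K$-comprehensive factorization of $\pi_{\cal D}\circ p_{\pi_{\cal C}^{*}(P)}$; this is exactly Lemma \ref{lemma:sheafify_lan_=_compr_fact} applied to the functor $\pi_{\cal D}$ and the presheaf $\pi_{\cal C}^{*}(P)$ (equivalently, Corollary \ref{cor:truncation_fibrations} together with Proposition \ref{prop:relativecomprehensivefactsheaves} for the continuous comorphism $\pi_{\cal D}$), which closes the argument.
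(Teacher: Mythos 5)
Your overall route coincides with the paper's: trade $F$ for its comma-category representation, compute the inverse image along the comorphism $\pi_{\cal C}$ as the $\widetilde{K}$-stackification of the pseudopullback fibration, and transport along the equivalence induced by $\pi_{\cal D}$. Your second step is correct, and so is your treatment of the sheaf case via Lemma \ref{lemma:sheafify_lan_=_compr_fact}.

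However, the step you yourself identify as the crux is not actually established by either of the arguments you offer. Route (a) invokes ``analogues'' of Proposition \ref{prop:morphsites_stacks_intcategories} for the comorphisms $\pi_{\cal C}$ and $\pi_{\cal D}$, but that proposition is proved only for morphisms of sites; $\pi_{\cal C}$ is in general neither continuous nor a morphism of sites, and moreover $(C^{\St}_{\pi_{\cal C}})^{*}({\mathbb D})=s_{\widetilde{K}}({\mathbb D}\circ \pi_{\cal C}\op)$ passes through the non-stack ${\mathbb D}\circ \pi_{\cal C}\op$, to which the internal-category dictionary does not apply, so one cannot read the agreement off split stacks alone. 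Route (b) merely restates the claim to be proved: that $\St(F)_{*}=(-\circ F\op)$ corresponds under $C_{\pi_{\cal D}}$ to the restriction of $\Ran_{\pi_{\cal C}\op}$ is precisely the missing verification. The one observation that closes the gap --- and is how the paper argues --- is that $\pi_{\cal C}$ admits a right adjoint $i:{\cal C}\to \comma{1_{\cal D}}{F}$, $X\mapsto [1_{F(X)}]$, satisfying $F=\pi_{\cal D}\circ i$. From $\pi_{\cal C}\dashv i$ one gets $\Lan_{i\op}\simeq \pi_{\cal C}^{*}$, hence $\Lan_{F\op}\simeq \Lan_{\pi_{\cal D}\op}\circ \pi_{\cal C}^{*}$ and $\St(F)^{*}\simeq s_{K}\circ \Lan_{\pi_{\cal D}\op}\circ \pi_{\cal C}^{*}\circ i_{J}\simeq \St(\pi_{\cal D})^{*}\circ s_{\widetilde{K}}\circ \pi_{\cal C}^{*}\circ i_{J}$, the last step because both $s_{K}\circ \Lan_{\pi_{\cal D}\op}$ and $\St(\pi_{\cal D})^{*}\circ s_{\widetilde{K}}$ are left adjoints to $\pi_{\cal D}^{*}\circ i_{K}$. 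Equivalently, the same adjunction gives $\Ran_{\pi_{\cal C}\op}\simeq (-\circ i\op)$, so that $\Ran_{\pi_{\cal C}\op}((-)\circ \pi_{\cal D}\op)\simeq (-)\circ (\pi_{\cal D}\circ i)\op=(-)\circ F\op$, turning your route (b) into a one-line computation. Add this adjunction and the factorization $F=\pi_{\cal D}\circ i$ and your argument is complete.
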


\begin{proof}
Notice that the functor $\pi_\cbicat: \comma{1_\dbicat}{F} \rightarrow \cbicat$ has a right adjoint $i:\cbicat\rightarrow \cbicat\rightarrow \comma{1_\dbicat}{F}$ defined by mapping an object $X$ to the arrow $1_FX:F(X)\rightarrow F(X)$ in $\comma{1_\dbicat}{F}$: this implies that $\Lan_{i\op}\simeq \pi_\cbicat\op$. Moreover, $F=\pi_\dbicat\circ i$. Thus we have that
\begin{align*}
    \St(F)^*&\simeq s_K\circ \Lan_{F\op} \circ i_J\\
    &\simeq s_K\circ \Lan_{\pi_\dbicat\op}\circ \Lan_{i\op} \circ i_J\\
    &\simeq s_K\circ \Lan_{\pi_\dbicat\op}\circ \pi_\cbicat^* \circ i_J\\
    &\simeq \St(\pi_\dbicat)^*\circ s_{\widetilde{K}}\circ \pi_\dbicat^*\circ i_J
\end{align*}
where the first line is justified by Corollary
\ref{cor:morphsites_functor_between_stacks} and the second and third line by the identities above; the fourth line holds because $\St(\pi_\dbicat)^*\circ s_{\widetilde{K}}$ and $s_K\circ \Lan_{\pi_\dbicat\op}$ are both left adjoints to $i_{\widetilde{K}}\circ \St(\pi_\dbicat)_*\simeq \pi_\dbicat^*\circ i_K$, and therefore they are equivalent. Finally, $\St(\pi_\dbicat)$ is an equivalence, since $\Sh(\pi_\dbicat)\cong C_{\pi_\dbicat}$ is an equivalence.

The statement for sheaves can be proved in a perfectly analogous way. The second assertion follows from Proposition \ref{prop:relativecomprehensivefactsheaves} by observing that, since $\pi_{\cal D}$ is a comorphism of sites, the following diagram commutes:
\begin{equation*}
	\begin{tikzcd}
		{[{\cal C}\op, \Set]} \arrow[rr] \arrow[d, "\sheafify_{J}"']  & &  {[{\cal D}\op, \Set]} \arrow[d, "\sheafify_{K}"]      \\
		\Sh({\cal C}, J) \arrow[rr, "C_{F}"] & &  \Sh({\cal D}, K) 
	\end{tikzcd}
\end{equation*}
\end{proof}

\begin{cor}
Let $f:{\cal F}\to {\cal E}$ be a geometric morphism and $\mathbb D$ a stack on $\cal E$. Then, given the pseudopullback diagram 
\begin{equation*}
	\begin{tikzcd}
		{\cal G}(\pi_{\cal C}^*({\mathbb D})) \arrow[rr] \arrow[d, "p_{{\pi_{\cal E}}^*({\mathbb D})}"']  & &   {\cal G}({\mathbb D}) \arrow[d, "p_{\mathbb D}"]      \\
		(1_{\cal F}\downarrow f^{\ast}) \arrow[rr, "\pi_{\cal E}"] & &  {\cal E} 
	\end{tikzcd}
\end{equation*}	  
in $\Cat$, $f^*({\mathbb D})$ is the stack on $\cal F$ corresponding under the equivalence
\[
C_{\pi_{\cal F}}:\Sh((1_{\cal F}\downarrow f^{\ast}), \widetilde{J^\textup{can}_{\cal F}})\simeq {\cal F} 
\]
to the stackification of the fibration $p_{{\pi_{\cal E}}^*({\mathbb D})}$.
\end{cor}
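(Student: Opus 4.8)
The plan is to obtain this corollary as the direct specialization of Proposition \ref{prop:inversedirect} to the morphism of sites canonically attached to the geometric morphism $f$. Recall from the proof of Corollary \ref{cor:morphsites_functor_between_stacks} that, the canonical topologies being subcanonical, we have equivalences $\Etopos\simeq \Sh(\Etopos, J\can_\Etopos)$ and $\Ftopos\simeq \Sh(\Ftopos, J\can_\Ftopos)$, and that $f$ is induced by the morphism of sites
\[
f^*:(\Etopos, J\can_\Etopos)\rightarrow (\Ftopos, J\can_\Ftopos);
\]
by definition the inverse image of stacks $\St(f)^*$ (written $f^*$ in the statement) is precisely $\St(f^*)^*$ for this morphism of sites. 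A stack $\dcat$ on $\Etopos$ is, by definition, a $J\can_\Etopos$-stack on $\Etopos$ regarded as a site, so the hypotheses of Proposition \ref{prop:inversedirect} are satisfied.

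First I would set $(\cbicat,J):=(\Etopos, J\can_\Etopos)$, $(\dbicat,K):=(\Ftopos, J\can_\Ftopos)$ and $F:=f^*$ in Proposition \ref{prop:inversedirect}. Under these substitutions the comma category $\comma{1_\dbicat}{F}$ becomes $\comma{1_\Ftopos}{f^*}$, while the projections $\pi_\cbicat$ and $\pi_\dbicat$ become $\pi_\Etopos$ and $\pi_\Ftopos$ respectively; the pseudopullback square of the proposition is then literally the square displayed in the statement, and the direct image $\pi_\Etopos^*(\dcat)$ together with its associated fibration $p_{\pi_\Etopos^*(\dcat)}$ coincide with the data appearing there.

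It then remains only to read off the conclusion of Proposition \ref{prop:inversedirect}: $\St(f^*)^*(\dcat)$ is the stack on $\Ftopos$ corresponding, under the equivalence $C_{\pi_\Ftopos}:\Sh(\comma{1_\Ftopos}{f^*}, \widetilde{J\can_\Ftopos})\simeq \Ftopos$ furnished by Proposition \ref{prop:fib_discreta_slice_topos} applied to the comorphism $\pi_\Ftopos$, to the $\widetilde{J\can_\Ftopos}$-stackification of the fibration $p_{\pi_\Etopos^*(\dcat)}$. This is exactly the assertion of the corollary, the unqualified word \emph{stackification} in the statement meaning stackification with respect to $\widetilde{J\can_\Ftopos}$.

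I expect essentially no obstacle beyond bookkeeping, since the entire content is already carried by Proposition \ref{prop:inversedirect}. The only points requiring a moment's care are the identification $\St(f)^*=\St(f^*)^*$, so that the left-hand inverse image of stacks is genuinely the functor computed by the proposition, and the observation that subcanonicity of the canonical topologies is precisely what legitimizes replacing $\Etopos$ and $\Ftopos$ by their canonical sites and viewing $\dcat$ as a $J\can_\Etopos$-stack; both are immediate from the earlier results.
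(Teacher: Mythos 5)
Your proof is correct and is precisely the derivation the paper intends: the corollary is the direct specialization of Proposition \ref{prop:inversedirect} to the morphism of sites $f^*:(\Etopos, J\can_\Etopos)\to(\Ftopos, J\can_\Ftopos)$, with $\St(f)^*=\St(f^*)^*$ by definition and a stack on $\Etopos$ meaning a $J\can_\Etopos$-stack. The only slip is attributing the equivalence $C_{\pi_{\Ftopos}}$ to Proposition \ref{prop:fib_discreta_slice_topos}, which concerns discrete fibrations $\fib P\to\cbicat$ and does not apply to $(1_{\Ftopos}\downarrow f^{*})\to\Ftopos$; the correct source is Theorem \ref{thm:morfgeom_presentato_commacategory}, exactly as used in Proposition \ref{prop:inversedirect} itself.
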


\chapter{Classification of geometric morphisms via comorphisms of sites}\label{chap:classification}

We devote the present chapter to the study of presentations of geometric morphisms in terms of generators. The first section will provide the functorialization of some results of Section \ref{section:comorphisms}, showing that the category of essential toposes is adjoint to that of sites and continuous comorphisms. The second section will be devoted instead to the study of slice categories of $\Topos$: specifically, we will show that essential geometric morphisms over a base topos, whose domain is presented by a continuous comorphism of sites, admit a site-theoretic description radically different from the usual presentation in terms of flat functors. 

The point of view developed here will become instrumental later in Chapter \ref{chap:discrete}. We will see that the fundamental adjunction allows to interpret the sheafification $\sheafify_J:[\cbicat\op,\Set]\rightarrow \Sh(\cbicat,J)$ precisely in terms of sets of essential geometric morphisms over a base topos whose domain is induced by a continuous comorphism of sites: therefore, combining that with the results of this chapter will allow for various possible descriptions of the sheafification functor (see Section \ref{sec:point_of_view_sheafify}).

\section{Essential geometric morphisms and continuous comorphisms of sites}
The content of this section is basically a functorialization of two results of \cite{denseness}. 

The first result is the equivalence
\[
\Com\cont((\dbicat,K),(\Etopos, J\can_\Etopos)) \simeq \EssTopos\co(\Sh(\dbicat,K),\Etopos)
\]
which we recalled in Proposition \ref{prop:eqv_essgeomorf_comorfcont}. We can show that this equivalence presents a 2-adjunction between the category of sites and continuous comorphisms and that of toposes and essential geometric morphisms. One of the two adjoints is the functor
\[C_{(-)}:\Cosite\cont\rightarrow \EssTopos\co\]
that we have already know. Its right adjoint maps a topos $\Etopos$ to the site $(\Etopos, J\can_\Etopos)$, an essential geometric morphism $H:\Etopos\rightarrow\Ftopos$ to the functors $H_!$ and a 2-cell $\omega:H\Rightarrow K$ of geometric morphisms to the 2-cell $\omega_!: K_!\Rightarrow H_!$ induced on the essential images. The only thing we need to prove is that $H_!$ is in fact a continuous $(J\can_\Etopos, J\can_\Ftopos)$-comorphism of sites: in fact, we will show that at the level of sheaf toposes it induces precisely the geometric morphism $H$, \ie that there is a commutative diagram
\[
\begin{tikzcd}
	{[\Etopos\op,\Set]} \ar[r, "\ran_{H_!\op}"] & {[\Ftopos\op,\Set]}\\
	\Etopos \ar[u, hook] \ar[r, "H"]& \Ftopos \ar[u, hook]
\end{tikzcd}
\]. First of all, notice that the adjunction $H_!\dashv H^*\dashv H_*$ implies the two equivalences
\[(-\circ (H^*)\op) \simeq \lan_{H_*\op},\ 
(-\circ H_!\op)\simeq \lan_{(H^*)\op}.\]
It is well known that for any functor $A:\abicat\rightarrow\bbicat$ the identity $\lan_{A\op}\yo_\abicat\simeq\yo_\bbicat A$ holds, so in particular we have that, for any $X$ in $\Etopos$ and $Y$ in $\Ftopos$, there are natural isomorphisms \[ 
\yo(Y)\circ H_!\op\simeq \yo(H^*(Y)),\ \ran_{H_!\op}(\yo(X))\simeq \yo(H_*(X))
\]
which express precisely the fact that the adjunction $\lan_{H_!\op}\dashv (-\circ H_!\op)\dashv \ran_{H_!\op}$ restricts to the essential geometric morphism $H:\Etopos\rightarrow\Ftopos$, \ie that $H_!$ is a $(J\can_\Etopos, J\can_\Ftopos)$-continuous comorphism of sites. One can then verify that the correspondence is pseudonatural in $(\dbicat,K)$ and $\Etopos$, and hence we obtain the following:
\begin{cor}\label{cor:aggiunzione_cositi_esstopos}
	There is a 2-adjunction
	\[
	\begin{tikzcd}
		\Cosite\cont \ar[r, bend left, "C_{(-)}", start anchor={north east}, end anchor={north west}] \ar[r,phantom, "\dashv"{rotate=270}]& \EssTopos\co \ar[l, bend left, "(-)_!", start anchor={south west}, end anchor={south east}]
	\end{tikzcd}\]
	acting as follows:
	\begin{itemize}
		\item $C_{(-)}$ maps a site $(\cbicat,J)$ to the sheaf topos $\Sh(\cbicat,J)$; a continuous comorphism of sites $A:(\cbicat,J)\rightarrow (\dbicat,K)$ is sent to the essential geometric morphism $C_A:\Sh(\cbicat,J)\rightarrow \Sh(\dbicat,K)$, and a natural transformation $\alpha:A\Rightarrow B$ of comorphisms of sites to the natural transformation $C_\alpha: C_B\Rightarrow C_A$.
		\item $(-)_!$ maps a Grothendieck topos $\Etopos$ to the site $(\Etopos, J\can_\Etopos)$, an essential geometric morphism $H:\Etopos\rightarrow\Ftopos$ to the continuous comorphism of sites $H_!:(\Etopos, J\can_\Etopos)\rightarrow(\Ftopos, J\can_\Ftopos)$ and a natural trasformation $\omega: H\Rightarrow K$ to the natural transformation $\omega_!:K_!\Rightarrow H_!$ between the essential images.
	\end{itemize}
	Moreover, the functor $(-)_!$ is 2-full and faithful.
\end{cor}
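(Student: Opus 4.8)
The plan is to assemble the $2$-adjunction out of the pieces already in place and then to read off the $2$-full-and-faithfulness of $(-)_!$ as a degenerate instance of the hom-equivalence. First I would record that $C_{(-)}\colon\Cosite\cont\to\EssTopos\co$ is already a well-defined $2$-functor: it is the restriction to continuous comorphisms of the $2$-functor $C_{(-)}\colon\Cosite\to\Topos\co$ of Section \ref{section:comorphisms}, and continuity guarantees that the induced geometric morphisms are essential. So the first genuine task is to verify that $(-)_!$ is a $2$-functor in the opposite direction. On objects it is $\Etopos\mapsto(\Etopos,J\can_\Etopos)$; on a $1$-cell $H\colon\Etopos\to\Ftopos$ it is $H_!$, and the paragraph preceding the statement already shows that $H_!$ is a $(J\can_\Etopos,J\can_\Ftopos)$-continuous comorphism of sites, using $H_!\dashv H^*\dashv H_*$ together with $\lan_{A\op}\yo\simeq\yo A$ to obtain $\yo(Y)\circ H_!\op\simeq\yo(H^*(Y))$ and $\ran_{H_!\op}(\yo(X))\simeq\yo(H_*(X))$, which show that $\lan_{H_!\op}\dashv(-\circ H_!\op)\dashv\ran_{H_!\op}$ restricts to $H$ at the level of sheaves. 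On $2$-cells I would send $\omega\colon H\Rightarrow K$ to the transformation $\omega_!$ induced on essential images, with the direction dictated by the $\co$ convention on $\EssTopos$. Pseudofunctoriality — $(KH)_!\cong K_!H_!$ and $(\id)_!\cong\id$ up to coherent isomorphism — follows from uniqueness of left adjoints; this is the first block of routine verification.

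Next I would invoke Proposition \ref{prop:eqv_essgeomorf_comorfcont} to supply, for each site $(\cbicat,J)$ and topos $\Etopos$, the equivalence of categories
\[
\EssTopos\co(\Sh(\cbicat,J),\Etopos)\simeq\Cosite\cont((\cbicat,J),(\Etopos,J\can_\Etopos)),
\]
whose left-hand term is $\EssTopos\co(C_{(\cbicat,J)},\Etopos)$ and whose right-hand term is $\Cosite\cont((\cbicat,J),(-)_!(\Etopos))$. I would then observe that the recipe of that proposition is exactly the pair of $2$-functors just described: $F\mapsto F_!\ell_J$ in one direction and $A\mapsto C_A$ (post-composed with the canonical equivalence $\Sh(\Etopos,J\can_\Etopos)\simeq\Etopos$) in the other. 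To promote this family to a $2$-adjunction in the sense of Definition \ref{def:2-adjoint}, it remains to check pseudonaturality of the equivalence in both arguments: in $\Etopos$ the naturality cells involve post-composition with essential geometric morphisms and the corresponding transformation of the functors $F_!\ell_J$, while in $(\cbicat,J)$ they involve pre-composition with continuous comorphisms and the behaviour of $C_{(-)}$ on $1$-cells.

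I expect this pseudonaturality verification to be the main obstacle, since one must transport the coherence isomorphisms of $C_{(-)}$ and of $(-)_!$ through the equivalence and match them against the naturality already implicit in Proposition \ref{prop:eqv_essgeomorf_comorfcont} and in the restriction identities for $\lan$ and $\ran$ displayed above. However, this is purely formal $2$-categorical bookkeeping and requires no new idea; no nontrivial coherence beyond that of the adjoint triples is at stake.

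Finally, for the last assertion I would specialise the hom-equivalence to a canonical site in its first argument. For any toposes $\Etopos,\Ftopos$, Proposition \ref{prop:eqv_essgeomorf_comorfcont} gives
\[
\EssTopos\co(\Sh(\Etopos,J\can_\Etopos),\Ftopos)\simeq\Cosite\cont((\Etopos,J\can_\Etopos),(\Ftopos,J\can_\Ftopos)),
\]
and under the canonical equivalence $\Sh(\Etopos,J\can_\Etopos)\simeq\Etopos$ — for which $\ell_{J\can_\Etopos}$ is precisely this equivalence, so that $F_!\ell_{J\can_\Etopos}$ corresponds to $F_!$ — the left-hand side becomes $\EssTopos\co(\Etopos,\Ftopos)$ and the equivalence becomes the hom-functor $(-)_!$. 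Hence $(-)_!$ induces an equivalence on every hom-category, i.e.\ it is $2$-full and faithful. Equivalently, this expresses the fact that the counit of the $2$-adjunction, whose component at $\Etopos$ is the canonical equivalence $C_{(-)_!(\Etopos)}=\Sh(\Etopos,J\can_\Etopos)\to\Etopos$, is a pointwise equivalence, which is the standard criterion for the right $2$-adjoint to be locally an equivalence.
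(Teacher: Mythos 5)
Your proposal is correct and follows essentially the same route as the paper: the argument that $H_!$ is a continuous comorphism via the restriction identities $\yo(Y)\circ H_!\op\simeq\yo(H^*(Y))$ and $\ran_{H_!\op}(\yo(X))\simeq\yo(H_*(X))$, the appeal to Proposition \ref{prop:eqv_essgeomorf_comorfcont} for the hom-equivalence, and the deferral of pseudonaturality to routine verification are exactly the paper's steps. Your explicit derivation of $2$-full-and-faithfulness by specialising the hom-equivalence to a canonical site (equivalently, noting the counit is an equivalence) is a welcome addition that the paper leaves implicit.
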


In Section 4.4 of \cite{denseness} a result similar to that of Proposition \ref{prop:eqv_essgeomorf_comorfcont} is sketched, where essential geometric morphisms $F:\Sh(\cbicat,J)\rightarrow \Sh(\dbicat,K)$ are presented using liftings of topologies to presheaf toposes. More precisely, it is shown that given a site $(\cbicat,J)$ there is a topology $\widehat{J}$\index{$\widehat{J}$} on $[\cbicat\op,\Set]$, called the \emph{presheaf lifting of $J$}\index{presheaf lifting of $J$}, which is the topology coinduced by $J$ along $\yo_\cbicat:\cbicat\hookrightarrow[\cbicat\op,\Set]$ (in the sense of \cite[Proposition 6.11]{denseness}): the topology $\widehat{J}$ makes $\yo_\cbicat$ into a comorphism of sites such that \[C_{\yo_\cbicat}:\Sh(\cbicat,J)\rightarrow \Sh([\cbicat\op,\Set], \widehat{J})\] is an equivalence of toposes. One can also check that the inclusion \[\Sh([\cbicat\op,\Set], \widehat{J})\hookrightarrow [[\cbicat\op,\Set]\op, \Set]\] can be seen as the inclusion 
\[\Sh(\cbicat,J) \hookrightarrow [\cbicat\op,\Set] \xhookrightarrow{\yo_{[\cbicat\op,\Set]}} [[\cbicat\op, \Set]\op, \Set].\]
Using this topology one can prove that essential geometric morphisms $\Sh(\dbicat,K)\rightarrow \Sh(\cbicat,J)$ correspond to $J$-equivalence classes of continuous comorphisms of sites $(\dbicat,K)\rightarrow ([\cbicat\op,\Set], \widehat{J})$, where two comorphisms of sites to $[\cbicat\op,\Set]$ are \emph{$J$-equivalent}\index{comorphism!$J$-equivalence of -} if and only if they induce the same geometric morphism (up to equivalence). The correspondence extends to natural transformations, and thus we end up with the following: 
\begin{prop}\label{prop:eqv_essgeomorf_comcont_pshlifting}
	There is an equivalence of categories
	\[ \EssTopos\co( \Sh(\dbicat,K), \Sh(\cbicat,J)) \simeq \Cosite\cont\Jeqv((\dbicat,K), ([\cbicat\op,\Set],\widehat{J} )),\]
	where the category on the right is the category of continuous comorphisms of sites $(\dbicat,K)\rightarrow([\cbicat\op,\Set], \widehat{J})$ up to $J$-equivalence\index{$\Cosite\cont\Jeqv$}, acting as follows:
	\begin{itemize}
		\item an essential geometric morphism $F:\Sh(\dbicat,K)\rightarrow \Sh(\cbicat,J)$ is sent to the functor $\iota_JF_!\ell_K:\dbicat\rightarrow [\cbicat\op,\Set]$, which proves to be a $(K, \widehat{J})$-continuous comorphism of sites; a natural trasformation $\omega: F\Rightarrow G$ induces $\omega_!:G_!\Rightarrow F_!$, and thus we map it to the composite $\iota_J \circ \omega_!\circ \ell_K$.
		\item a continuous comorphism of sites $A:(\dbicat,K)\rightarrow ([\cbicat\op,\Set], \widehat{J})$ will produce an essential geometric morphism $\Sh(\dbicat,K)\rightarrow \Sh([\cbicat\op,\Set], \widehat{J})\simeq \Sh(\cbicat,J)$, and the same holds for natural trasformations.
	\end{itemize}	
	Moreover, the equivalence is pseudonatural in $(\dbicat,K)$ and $(\cbicat,J)$.
\end{prop}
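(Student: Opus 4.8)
The plan is to deduce the equivalence by combining the single-target classification of Proposition \ref{prop:eqv_essgeomorf_comorfcont} with the Morita equivalence of sites encoded by the presheaf lifting. Applying Proposition \ref{prop:eqv_essgeomorf_comorfcont} to the topos $\Etopos=\Sh(\cbicat,J)$ yields an equivalence
\[\EssTopos\co(\Sh(\dbicat,K),\Sh(\cbicat,J))\simeq \Cosite\cont((\dbicat,K),(\Sh(\cbicat,J),J\can_{\Sh(\cbicat,J)})),\]
sending $F$ to the continuous comorphism $F_!\ell_K$. It then suffices to produce an equivalence
\[\Cosite\cont((\dbicat,K),(\Sh(\cbicat,J),J\can_{\Sh(\cbicat,J)}))\simeq \Cosite\cont\Jeqv((\dbicat,K),([\cbicat\op,\Set],\widehat{J}))\]
compatible with the stated formulas; post-composing $F_!\ell_K$ with $\iota_J$ then recovers exactly $\iota_J F_!\ell_K$, as required.

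For this second equivalence I would post-compose with the inclusion, i.e. send a continuous comorphism $A\colon(\dbicat,K)\to(\Sh(\cbicat,J),J\can_{\Sh(\cbicat,J)})$ to $\iota_J\circ A$. The two things to check are that $\iota_J$ is itself a $(J\can_{\Sh(\cbicat,J)},\widehat{J})$-continuous comorphism of sites and that the essential geometric morphism it induces is, through the equivalence $C_{\yo_\cbicat}\colon\Sh(\cbicat,J)\simeq\Sh([\cbicat\op,\Set],\widehat{J})$, the identity of $\Sh(\cbicat,J)$; both follow from the defining property of the presheaf lifting $\widehat{J}$ together with the covering-lifting and continuity criteria of Proposition \ref{prop:ft_cont_caratterizzazione}. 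Consequently $C_{\iota_J\circ A}\simeq C_A$ up to $C_{\yo_\cbicat}$, so post-composition with $\iota_J$ does not alter the induced morphism. A quasi-inverse is given by post-composing with $\sheafify_J$ and passing to $J$-equivalence classes: for any continuous comorphism $B\colon(\dbicat,K)\to([\cbicat\op,\Set],\widehat{J})$, the comorphism $\iota_J\sheafify_J B$ is $J$-equivalent to $B$ (the unit $B\Rightarrow \iota_J\sheafify_J B$ becomes invertible after $\widehat{J}$-sheafification), which yields essential surjectivity and shows that isomorphism classes correspond precisely to $J$-equivalence classes — this is the object-level statement already sketched in \cite{denseness}.

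The genuinely new content is the promotion of this object-level correspondence to an equivalence of categories, i.e. the treatment of $2$-cells. Here I would invoke the $2$-cell half of Proposition \ref{prop:eqv_essgeomorf_comorfcont}: a natural transformation $\omega\colon F\Rightarrow G$ corresponds to the induced $2$-cell $\omega_!\colon G_!\Rightarrow F_!$ on essential images, and hence to $\iota_J\circ\omega_!\circ\ell_K$ after applying $\iota_J$ and restricting along $\ell_K$. The main obstacle is the bookkeeping forced by the $J$-equivalence quotient: one must verify that this assignment is full and faithful \emph{after} quotienting, namely that two modifications of comorphisms into $[\cbicat\op,\Set]$ inducing the same $2$-cell of geometric morphisms are identified by $J$-equivalence, and conversely that every such $2$-cell lifts. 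This reduces, via $C_{\yo_\cbicat}$ and the adjunction $\sheafify_J\dashv\iota_J$, to the faithfulness and fullness already available in Proposition \ref{prop:eqv_essgeomorf_comorfcont}, but it requires checking that $\sheafify_J$ acts bijectively on the relevant hom-sets of natural transformations.

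Finally, for pseudonaturality I would argue separately in the two variables. Naturality in $(\dbicat,K)$ is precomposition and is immediate from the $2$-functoriality of $C_{(-)}$ and $(-)_!$ recorded in Corollary \ref{cor:aggiunzione_cositi_esstopos}, together with the naturality of the canonical functors $\ell_{(-)}$. Naturality in $(\cbicat,J)$ is the more delicate direction, since here the site enters through the presheaf topos and its lifted topology $\widehat{J}$; the point to establish is that the assignment $(\cbicat,J)\mapsto([\cbicat\op,\Set],\widehat{J})$ and the comparison equivalence $C_{\yo_\cbicat}$ are themselves pseudonatural, so that the whole composite transforms coherently. Granting these compatibilities, tracing the explicit formulas through both displayed equivalences gives the stated actions and concludes the proof.
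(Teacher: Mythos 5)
Your derivation is sound but organized differently from the paper's. The paper gives no self-contained proof of this proposition: it defers to Section 4.4 of \cite{denseness}, where the correspondence is obtained directly from the fact that $\yo_\cbicat$ is a comorphism of sites $(\cbicat,J)\to([\cbicat\op,\Set],\widehat{J})$ inducing the equivalence $C_{\yo_\cbicat}$, and it records the equivalence
\[
\Cosite\cont((\dbicat,K),(\Sh(\cbicat,J),J\can_{\Sh(\cbicat,J)}))\simeq\Cosite\cont\Jeqv((\dbicat,K),([\cbicat\op,\Set],\widehat{J}))
\]
given by $\iota_J\circ-$ and $\sheafify_J\circ-$ only \emph{afterwards}, as the Remark obtained by combining the proposition with Proposition \ref{prop:eqv_essgeomorf_comorfcont}. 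You invert this order: you take Proposition \ref{prop:eqv_essgeomorf_comorfcont} plus a direct proof of the post-composition equivalence as input and deduce the proposition as output. This is not circular, but it shifts the entire burden onto the intermediate step, and there the one claim you should not wave at is that $\iota_J$ is a $(J\can_{\Sh(\cbicat,J)},\widehat{J})$-continuous comorphism of sites: continuity is easy from the identification of $\widehat{J}$-sheaves with $J$-sheaves, but the covering-lifting property is not immediate (a $\widehat{J}$-covering sieve on a sheaf $P$ contains arrows that only factor through their sheafifications via the unit, rather than containing the sheafified arrows themselves), so it is cleaner to verify condition (ii) of Definition \ref{def:comorfismo}, namely that $\ran_{\iota_J\op}$ sends $J\can_{\Sh(\cbicat,J)}$-sheaves to $\widehat{J}$-sheaves. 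Beyond that, both your argument and the paper's leave the same genuinely delicate point at the level of a sketch — the precise definition of the hom-sets of $\Cosite\cont\Jeqv$ and the fullness and faithfulness of the correspondence on $2$-cells after the quotient; your reduction of this to the $2$-cell part of Proposition \ref{prop:eqv_essgeomorf_comorfcont} together with the bijectivity of $\sheafify_J$ on the relevant hom-sets identifies the right things to check and is no less complete than what the paper offers. What your route buys is that the only site-theoretic input beyond Proposition \ref{prop:eqv_essgeomorf_comorfcont} is the behaviour of the single adjunction $\sheafify_J\dashv\iota_J$, rather than the general theory of coinduced topologies from \cite{denseness}.
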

\begin{remark}
	Combining this result with Proposition \ref{prop:eqv_essgeomorf_comorfcont}, one gets in particular an equivalence of categories
	\[
	\begin{tikzcd}
		{\Cosite\cont((\dbicat,K), (\Sh(\cbicat,J), J\can_{\Sh(\cbicat,J)}))} \ar[d, bend right, "{\iota_J\circ-}"']\\
		{\Cosite\cont\Jeqv((\dbicat,K), ([\cbicat\op,\Set], \widehat{J}) )} \ar[u, bend right, "{\sheafify_J\circ -}"'].
	\end{tikzcd}.
	\]
\end{remark}

\section{Site-theoretic classification of geometric morphisms over a base}

Let us now tackle the following problem: is there a way to classify, in a fashion similar to that of the previous section, geometric morphisms \emph{over a base topos $\Sh(\cbicat,J)$}, if we know that their domain or codomain is induced by a comorphism of sites $p:(\dbicat,K)\rightarrow(\cbicat,J)$? A first answer to this question is Proposition 2.4 of \cite{giraud.classifying}; we will provide a more general answer using the results in the previous section. 

We start by classifying essential geometric morphisms whose domain is induced by a continuous comorphism of sites:
\begin{prop}\label{prop:classthm_essenziali_comorfismo_dominio}
	Consider $p:(\dbicat,K)\rightarrow (\cbicat,J)$, a continuous comorphism of sites, and an essential geometric morphism $E:\Etopos\rightarrow \Sh(\cbicat,J)$: then there a pseudonatural equivalence of categories 
	\[
	\EssTopos\co\hspace{-0.5ex}\sslash\hspace{-0.5ex}\Sh(\cbicat,J) (\Sh(\dbicat,K), \Etopos)\simeq\Cosite\cont((\dbicat,K), (\Etopos, J\can_\Etopos))/E^*\ell_Jp
	\] 
	Moveover, this restricts to an equivalence
	between $$\EssTopos\co/\Sh(\cbicat,J) (\Sh(\dbicat,K), \Etopos)$$ and the full subcategory of $\Cosite\cont((\dbicat,K), (\Etopos, J\can_\Etopos))/E^*\ell_Jp$ whose 1-cells are the natural transformations $\xi:A\Rightarrow E^*\ell_Jp$ such that the composite
	\[ 
	E_!A\xRightarrow{E_!\circ\xi}E_!E^*\ell_Jp\xRightarrow{\epsilon\circ \ell_Jp} \ell_Jp
	\]
	is an isomorphism, where $\epsilon$ is the counit of $E_!\dashv E^*$.
\end{prop}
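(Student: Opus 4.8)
The plan is to deduce the statement from the classification equivalence of Proposition~\ref{prop:eqv_essgeomorf_comorfcont}, applied both with codomain $\Etopos$ and with codomain $\Sh(\cbicat,J)$, combined with the adjunction $E_!\dashv E^*$. First I would recall the pseudonatural equivalence
\[
\Phi\colon \EssTopos\co(\Sh(\dbicat,K),\Ftopos)\simeq \Cosite\cont((\dbicat,K),(\Ftopos,J\can_\Ftopos)),
\]
natural in $\Ftopos$, which sends an essential geometric morphism $G$ to the continuous comorphism $G_!\ell_K$ and, thanks to the $\co$ orientation, sends a $2$-cell $G\Rightarrow G'$ to the induced natural transformation $G_!\ell_K\Rightarrow G'_!\ell_K$. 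Taking $\Ftopos=\Etopos$ and $\Ftopos=\Sh(\cbicat,J)$ and using $(EG)_!=E_!G_!$, I would observe that post-composition $E\circ(-)$ is carried by $\Phi$ to post-composition with the continuous comorphism $E_!$ (which is a continuous comorphism $(\Etopos,J\can_\Etopos)\to(\Sh(\cbicat,J),J\can_{\Sh(\cbicat,J)})$ by Corollary~\ref{cor:aggiunzione_cositi_esstopos}), and that $C_p$ is carried to $(C_p)_!\ell_K\cong\ell_J p$, since both comorphisms classify $C_p$.

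By Definition~\ref{def:slice} the hom-category $\EssTopos\co\sslash\Sh(\cbicat,J)(\Sh(\dbicat,K),\Etopos)$ is the comma category whose objects are pairs $(F,\gamma)$ with $F\colon\Sh(\dbicat,K)\to\Etopos$ essential and $\gamma\colon EF\Rightarrow C_p$ a $2$-cell, a morphism $(F,\gamma)\to(F',\gamma')$ being a $2$-cell $\omega\colon F\Rightarrow F'$ with $\gamma'(E\circ\omega)=\gamma$. Transporting along $\Phi$ turns this into the comma category of pairs $(A,\bar\gamma)$, where $A:=F_!\ell_K$ is a continuous comorphism and $\bar\gamma\colon E_!A\Rightarrow\ell_J p$, with morphisms the natural transformations $A\Rightarrow A'$ commuting with the $\bar\gamma$'s. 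Next I would transpose across $E_!\dashv E^*$: the assignment $\bar\gamma\mapsto\xi$, with $\xi\colon A\Rightarrow E^*\ell_J p$ the adjoint transpose, is a natural bijection and identifies this comma category with $\Cosite\cont((\dbicat,K),(\Etopos,J\can_\Etopos))/E^*\ell_J p$, understood as the comma category on continuous comorphisms $A$ equipped with a natural transformation $A\Rightarrow E^*\ell_J p$ (so that no comorphism condition on the target $E^*\ell_J p$ is needed). Composing the two identifications yields the asserted equivalence, and its pseudonaturality follows from that of $\Phi$ together with the naturality of adjoint transposition.

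For the restriction I would use that, again by Definition~\ref{def:slice}, the strict slice $\EssTopos\co/\Sh(\cbicat,J)$ is the full sub-$2$-category of the lax slice on those $1$-cells $(F,\gamma)$ whose structure $2$-cell $\gamma\colon EF\Rightarrow C_p$ is invertible. Since $\Phi$ is an equivalence of categories it reflects isomorphisms, so $\gamma$ is invertible exactly when $\bar\gamma=\Phi(\gamma)\colon E_!A\Rightarrow\ell_J p$ is. Finally, the triangle identity for $E_!\dashv E^*$ gives that $\bar\gamma$ equals the composite
\[
E_!A\xRightarrow{\,E_!\circ\xi\,}E_!E^*\ell_J p\xRightarrow{\,\epsilon\circ\ell_J p\,}\ell_J p,
\]
with $\epsilon$ the counit; hence $\gamma$ is invertible if and only if this composite is an isomorphism. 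This is precisely the condition cutting out the claimed full subcategory, so the restriction follows.

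The main obstacle I anticipate is the bookkeeping of $2$-cell directions: one must verify that the $\co$ orientation makes $\Phi$ covariant on $2$-cells, that $E\circ(-)$ genuinely corresponds to $E_!\circ(-)$ under $\Phi$ (via $(EF)_!=E_!F_!$ and $(C_p)_!\ell_K\cong\ell_J p$), and that the compatibility equation $\gamma'(E\circ\omega)=\gamma$ for morphisms transports correctly through both $\Phi$ and the transposition. The one computation that must be carried out with care is the identification of the transpose of $\bar\gamma$ with the displayed counit composite, but this is just the standard triangle identity and presents no real difficulty.
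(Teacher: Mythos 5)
Your proposal is correct and follows essentially the same route as the paper's own proof: both unwind the lax-slice hom-category using the classification of essential geometric morphisms by continuous comorphisms of sites (restriction to generators via $F\mapsto F_!\ell_K$), transpose across $E_!\dashv E^*$, and characterize the pseudo-slice by invertibility of the counit composite $(\epsilon\circ\ell_Jp)(E_!\circ\xi)$. The only difference is the order of the two steps (you restrict to generators before transposing, the paper transposes $\phi\colon E_!F_!\Rightarrow (C_p)_!$ to $\bar\phi\colon F_!\Rightarrow E^*(C_p)_!$ first), which is immaterial.
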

\begin{proof}
	First of all, let us specify how $\EssTopos\co\sslash\Sh(\cbicat,J) (\Sh(\dbicat,K), \Etopos)$ is made. Notice that the presence of ${}\co$ forces all the 2-cells appearing in the definition of the lax slice to change direction: so for instance an object would be a pair $(F,\phi):[C_p]\rightarrow [E]$, where $F$ is an essential geometric morphism $\Sh(\dbicat,K)\rightarrow\Etopos$ and $\phi$ a natural transformation $ C_p^*\Rightarrow F^*E^*$. However, we can exploit the presence of the essential images to write the 2-cells in the `usual' direction: objects of $\EssTopos\co\sslash\Sh(\cbicat,J) (\Sh(\dbicat,K), \Etopos)$ are pairs $(F,\phi)$ with $F:\Sh(\dbicat,K)\rightarrow\Etopos$ essential and $\phi: E_!F_!\Rightarrow (C_p)_!$. Similarly, given two such 1-cells $(F,\phi)$ and $(G,\gamma)$, an arrow $\omega:(F,\phi)\Rightarrow (G,\gamma)$ is given by a natural transformation $\omega: F_!\Rightarrow G_!$ satisfying the identity $\gamma(E_!\circ\omega)=\phi$. 
	Notice that we can exploit the adjunction $E_!\dashv E^*$ to obtain from $\phi$ a natural transformation $\bar{\phi}: F_!\Rightarrow E^*(C_p)_!$: then the identity $\gamma(E_!\circ\omega)=\phi$ translates into $\bar{\phi}=\bar{\gamma}\omega$. Finally, notice that $F_!$, $\bar{\phi}$ and $\omega$ are defined (up to isomorphism) by their values on the generators of $\Sh(\dbicat,K)$: \ie, they are uniquely defined by the composites $F_!\ell_K$, $\bar{\phi}\circ \ell_K$ and $\omega\circ \ell_K$. But $F_!\ell_K$ is a continuous comorphism of sites presenting $F$, while $E^*(C_p)_!\ell_K\simeq E^*\ell_Jp$: thence we end up with the equivalence
	\[
	\EssTopos\co\sslash\Sh(\cbicat,J) (\Sh(\dbicat,K), \Etopos)\simeq\Cosite\cont((\dbicat,K), (\Etopos, J\can_\Etopos))/E^*\ell_Jp
	\] 
	Now, notice that $\phi: E_!F_!\Rightarrow (C_p)_!$ can be recovered from $\bar{\phi}$ as $\phi=(\epsilon\circ (C_p)_!)(E_!\circ \bar{\phi})$. Restricting again to the generators of $\Sh(\dbicat,K)$ and setting $\xi:=\bar{\phi}\circ \ell_K$, then $\phi$ is an isomorphism if and only if $(\epsilon\circ \ell_Jp)(E_!\circ \xi)$ is.
\end{proof}
\begin{remark}
	There is a slight, but innocuous, abuse of notation in the previous result: we wrote the right-hand term as a slice category of $\Cosite\cont$, but the functor $E^*\ell_Jp$ in general is \textit{not} a continuous comorphism of sites. 
\end{remark}
If instead we work with the equivalence of Proposition \ref{prop:eqv_essgeomorf_comcont_pshlifting}, we can obtain a similar result for relative geometric morphisms whose domain and codomain are both induced by continuous comorphisms of sites: 
\begin{prop}\label{prop:classthm_essenziali_pshlifting}
	Consider two continuous comorphisms $p:(\dbicat,K)\rightarrow (\cbicat,J)$ and $q:(\ebicat,T)\rightarrow (\cbicat,J)$: then there a pseudonatural equivalence of categories between
	\[
	\EssTopos\co\sslash\Sh(\cbicat,J) (\Sh(\dbicat,K), \Sh(\ebicat,T))\]
	and\[\Cosite\cont\Teqv((\dbicat,K), ([\ebicat\op,\Set], \widehat{T}))/q^*\yo_\cbicat p.
	\] 
	Moveover, this restricts to an equivalence
	between $$\EssTopos\co/\Sh(\cbicat,J) (\Sh(\dbicat,K), \Etopos)$$ and the full subcategory of $\Cosite\cont\Teqv((\dbicat,K), ([\ebicat\op,\Set], \widehat{T})/q^*\yo_\cbicat p$ whose 1-cells are the natural transformations $\tau:B\Rightarrow q^*\yo_\cbicat p$ such that the composite
	\[ \lan_{q\op}B \xRightarrow{\lan_{q\op}\circ\tau} \lan_{q\op}q^*\yo_\cbicat p \xRightarrow{\epsilon'\circ \yo_\cbicat p} \yo_\cbicat p.
	\]
	is sent by $\sheafify_J$ to an isomorphism, where $\epsilon'$ is the counit of $\lan_{q\op}\dashv q^*$.
\end{prop}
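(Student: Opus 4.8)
The plan is to follow the proof of Proposition~\ref{prop:classthm_essenziali_comorfismo_dominio} verbatim in its opening moves, with the abstract structural morphism $E$ now taken to be the essential geometric morphism $C_q\colon\Sh(\ebicat,T)\to\Sh(\cbicat,J)$, and then to switch, at the point where the domain is presented, from the classification of Proposition~\ref{prop:eqv_essgeomorf_comorfcont} to the presheaf-lifting classification of Proposition~\ref{prop:eqv_essgeomorf_comcont_pshlifting}. First I would unwind $\EssTopos\co\sslash\Sh(\cbicat,J)(\Sh(\dbicat,K),\Sh(\ebicat,T))$ exactly as before: using the essential images to reverse the structural $2$-cells, an object is a pair $(F,\phi)$ with $F$ essential and $\phi\colon (C_q)_!F_!\Rightarrow (C_p)_!$, and a $2$-cell $(F,\phi)\Rightarrow (G,\gamma)$ is a natural transformation $\omega\colon F_!\Rightarrow G_!$ subject to $\gamma((C_q)_!\circ\omega)=\phi$.

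Next I would transpose $\phi$ across the adjunction $(C_q)_!\dashv C_q^*$ to obtain $\bar{\phi}\colon F_!\Rightarrow C_q^*(C_p)_!$, whereupon the compatibility condition reads $\bar{\phi}=\bar{\gamma}\,\omega$. As $F_!$, $\bar{\phi}$ and $\omega$ are determined up to isomorphism by their restrictions along $\ell_K$ to the generators of $\Sh(\dbicat,K)$, the entire datum is encoded by $F_!\ell_K$, $\bar{\phi}\circ\ell_K$ and $\omega\circ\ell_K$. Applying $\iota_T$ turns $F_!\ell_K$ into $B:=\iota_T F_!\ell_K$, which by Proposition~\ref{prop:eqv_essgeomorf_comcont_pshlifting} is precisely the $(K,\widehat{T})$-continuous comorphism of sites, up to $T$-equivalence, presenting $F$. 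The target is pinned down by $(C_p)_!\ell_K\simeq \ell_Jp$ together with the comorphism relation $(C_q)_!\iota_T\simeq \sheafify_J\lan_{q\op}$ (obtained by passing to left adjoints in $\iota_J(C_q)_*\simeq \ran_{q\op}\iota_T$), which exhibits $\iota_T C_q^*\ell_Jp$ as the $T$-equivalence class of $q^*\yo_\cbicat p$; thus $\bar{\phi}\circ\ell_K$ becomes a natural transformation $\tau\colon B\Rightarrow q^*\yo_\cbicat p$. Verifying that these assignments are mutually quasi-inverse on objects and $2$-cells, and pseudonatural in $(\dbicat,K)$ and $(\ebicat,T)$, is then entirely parallel to the previous proposition.

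For the restriction to the pseudonatural slice I would recover $\phi$ from $\bar{\phi}$ through the triangle identity as $\phi=(\epsilon\circ (C_p)_!)((C_q)_!\circ\bar{\phi})$, where $\epsilon$ is the counit of $(C_q)_!\dashv C_q^*$, and observe that $(F,\phi)$ lies in $\EssTopos\co/\Sh(\cbicat,J)$ exactly when $\phi$ is invertible. Restricting to the generators and transporting along $(C_q)_!\iota_T\simeq \sheafify_J\lan_{q\op}$, the counit $\epsilon$ is matched with $\sheafify_J$ applied to the counit $\epsilon'$ of the presheaf-level adjunction $\lan_{q\op}\dashv q^*$; hence $\phi$ is an isomorphism if and only if $\sheafify_J$ carries the composite $\lan_{q\op}B\xRightarrow{\lan_{q\op}\circ\tau}\lan_{q\op}q^*\yo_\cbicat p\xRightarrow{\epsilon'\circ\yo_\cbicat p}\yo_\cbicat p$ to an isomorphism, which is exactly the condition in the statement.

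The step I expect to be the main obstacle is the identification carried out in the second paragraph: verifying cleanly that the topos-level counit $\epsilon$ corresponds, after restriction to generators and $J$-sheafification, to $\sheafify_J\epsilon'$, and that the honest presheaf $q^*\yo_\cbicat p$, rather than one of its sheafifications, is the correct representative of the comma object. This is delicate precisely because the transpose of $\phi$ naturally lands in a $T$-sheafified object whereas $q^*\yo_\cbicat p$ is not a sheaf; the reconciliation depends on working throughout with comorphisms only up to $T$-equivalence, so that the intervening sheafification units become invisible. Once this bookkeeping is in place, the remainder is a direct transcription of the argument for Proposition~\ref{prop:classthm_essenziali_comorfismo_dominio}.
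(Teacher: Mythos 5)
Your skeleton is right and you have correctly located the hard point, but you have not actually crossed it, and the device you propose for crossing it does not work. After transposing and restricting to generators you obtain a natural transformation $\iota_T\circ\bar{\phi}$ from $A:=\iota_T F_!\ell_K$ into $\iota_T C_q^*\ell_J p\simeq \iota_T\sheafify_T q^*\yo_\cbicat p$, i.e.\ into the $T$-\emph{sheafification} of $q^*\yo_\cbicat p$. The slice category in the statement, however, is taken over the honest presheaf-valued functor $q^*\yo_\cbicat p$, and its $1$-cells are genuine natural transformations $\tau:B\Rightarrow q^*\yo_\cbicat p$ --- not equivalence classes of transformations into something merely $T$-equivalent to it. Saying that ``the intervening sheafification units become invisible'' because comorphisms are considered up to $T$-equivalence conflates an equivalence relation on \emph{objects} (comorphisms inducing the same geometric morphism) with the structure of the \emph{arrows} of the slice: the unit $\eta: q^*\yo_\cbicat p\Rightarrow \iota_T\sheafify_T q^*\yo_\cbicat p$ is not invertible, so a map into its codomain does not factor through its domain, and your $\bar{\phi}\circ\ell_K$ simply is not a morphism to $q^*\yo_\cbicat p$.

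The missing construction is the one the paper actually performs: pull back $\iota_T\circ\bar{\phi}$ componentwise along $\eta\circ q^*\yo_\cbicat p$ in $[\ebicat\op,\Set]$, obtaining a \emph{new} functor $B$ (whose value at $D$ is the pullback of $\bar{\phi}(D)$ and $\eta_{q^*\yo_\cbicat(p(D))}$) together with an honest $\tau:B\Rightarrow q^*\yo_\cbicat p$. Since this changes the domain, one must then check that the comparison $\omega:B\Rightarrow A$ is inverted by $\sheafify_T$ (it is, being a pullback of $\eta$), and that this forces $B$ to be again a $(K,\widehat{T})$-continuous comorphism of sites, $T$-equivalent to $A$ and hence presenting the same $F$; the paper verifies this by computing $(-\circ B\op)$ on $T$-sheaves and showing it agrees there with $(-\circ A\op)$. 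None of this is in your proposal, and without it the object-level correspondence is not even defined. Your final paragraph on the invertibility criterion is fine in outline, but it presupposes the corrected $\tau$, and the identification of the topos-level counit with $\sheafify_J$ applied to $\epsilon'$ is the ``routine computation'' that must be carried out against that corrected datum.
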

\begin{proof}
	Let us again use the shorthands $\Sh(\cbicat,J):=\widetilde{\cbicat}$ and $[\cbicat\op,\Set]:=\widehat{\cbicat}$. The previous proposition tells us that 
	\[
	\EssTopos\co\sslash\widetilde{\cbicat} (\widetilde{\dbicat}, \widetilde{\ebicat})\simeq\Cosite\cont((\dbicat,K), (\widetilde{\ebicat}, J\can_{\widetilde{\ebicat}}))/C_q^*\ell_Jp
	\]
	which starting from $(F,\phi)$ provides a natural transformation $\bar{\phi}: F_!\ell_K\Rightarrow C_q^*\ell_Jp$. First of all, notice that $C_q^*\ell_J p= C_q^* \sheafify_J \yo_\cbicat p\simeq \sheafify_T q^*\yo_\cbicat p$. Second of all, set $A:=\iota_T F_!\ell_K$ and consider the composite $\iota_T\circ \bar{\phi}: A\Rightarrow \iota_T\sheafify_Tq^*\yo_\cbicat p$: we are now in $\widehat{\ebicat}$, and we can perform the pullback of $\iota_T\circ \bar{\phi}$ componentwise along $\eta\circ q^*\yo_\cbicat p: q^*\yo_\cbicat p\Rightarrow \iota_T\sheafify_T q^*\yo_\cbicat p$, where $\eta$ is the unit of $\sheafify_T\dashv \iota_T$. We obtain a natural trasformation $\tau:B\Rightarrow q^*\yo_\cbicat p$, where $B$ is the functor $B:\dbicat\rightarrow \widehat{\ebicat}$ mapping avery object $D$ to the pullback of $\bar{\phi}(D)$ and $\eta_{q^*\yo_\cbicat(p(D))}$ in $[\ebicat\op,\Set]$. Notice that the natural trasformation $\omega:B\Rightarrow A$ satisfies the condition that $\sheafify_{T}\circ \omega$ is an isomorphism, since it is the pullback of $\eta$: this forces $B$ to be a $(K, \widehat{T})$-continuous comorphism of sites which induces the same geometric morphism as $A$, namely $F$. To see this, we recall that $\Sh(\widehat{\ebicat}, \widehat{T})\simeq \widetilde{\ebicat}$ with the inclusion $\iota_{\widehat{T}}:\Sh(\widehat{\ebicat}, \widehat{T})\hookrightarrow[\widehat{\ebicat}\op,\Set]$ being the composite functor $\yo_{\widehat{\ebicat}}\iota_T$. Let us first see that $(-\circ B\op): [\widehat{\ebicat}\op,\Set]\rightarrow \widehat{\dbicat}$ restricts to sheaves. For any $T$-sheaf $W$
	\begin{align*}
		(-\circ B\op)\yo_{\widehat{\ebicat}}\iota_T(W):=\widehat{\ebicat}(B(-), \iota_T(W))&\simeq \widetilde{\ebicat}(\sheafify_TB(-), W)\\&\simeq \widetilde{\ebicat}(\sheafify_TA(-), W)\\&\simeq \widehat{\ebicat}(A(-), \iota_T(W)),	
	\end{align*}
	meaning that $(-\circ B\op)\circ\iota_{\widehat{T}}\simeq (-\circ A\op)\circ\iota_{\widehat{T}}$. But the latter functor factors through $\widetilde{\dbicat}$, since $A$ is $(K, \widehat{T})$-continuous, and hence so does the first: therefore $B$ is $(K, \widehat{T})$-continuous. This immediately implies that $\sheafify_K (-\circ B\op)\iota_{\widehat{T}}\simeq C_A^*$, which has a right adjoint, and thus $B$ is a comorphism of sites. Therefore, for any $(F,\phi)$ in 
	\[
	\EssTopos\co\sslash\Sh(\cbicat,J) (\Sh(\dbicat,K), \Sh(\ebicat,T)),\]
	we may map it to $\tau:B\Rightarrow p^*\yo_\cbicat p$ in 
	\[\Cosite\cont\Teqv((\dbicat,K), ([\ebicat\op,\Set], \widehat{T}))/q^*\yo_\cbicat p,
	\] 
	and this provides our equivalence.
	
	Finally, we know that $\phi$ is an isomorphism if and only if
	\[ 
	(C_q)_!F_!\ell_K\xRightarrow{(C_q)_!\circ\bar{\phi}}(C_q)_!C_q^*\ell_Jp\xRightarrow{\epsilon\circ \ell_Jp} \ell_Jp
	\]
	is an isomorphism, where $\epsilon$ is the counit of $(C_q)_!\dashv C_q^*$, but a routine computation shows that $ (\epsilon\circ \ell_Jp)((C_q)_!\circ\bar{\phi})$ is the image through $\sheafify_J$ of 
	\[ \lan_{q\op}B \xRightarrow{\lan_{q\op}\circ\tau} \lan_{q\op}q^*\yo_\cbicat p \xRightarrow{\epsilon'\circ \yo_\cbicat p} \yo_\cbicat p.
	\]
	
\end{proof}
The next result will instead classify \emph{all} geometric morphisms whose codomain is of the form $[C_p]$ is the codomain we are able to classify in a similar fashion \emph{all} geometric morphisms. We will need for this a couple of technical lemmas: 
\begin{lemma}[{ \cite[Proposition 4.4.6]{riehlcontext}}]\label{lemma:aggiunzione_tra_cat_funtori}
	Consider a pair of adjoint functors $F\dashv G:\abicat\rightarrow\bbicat$: then for any category $\cbicat$ there is an adjunction
	\[
	\begin{tikzcd}
		{[\bbicat,\cbicat]}\ar[r, "{(-\circ G)}"', bend right, start anchor={south east}, end anchor={south west}] \ar[r, phantom, "\dashv"{rotate=90}] & {[\abicat,\cbicat]} \ar[l,"{(-\circ F)}"', bend right, start anchor={north west}, end anchor={north east}]
	\end{tikzcd}.\]
\end{lemma}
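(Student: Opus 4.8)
The plan is to prove this by directly whiskering the unit and counit of the given adjunction, exploiting the fact that precomposition is a $2$-functor and therefore automatically preserves adjunctions. Writing $F:\bbicat\to\abicat$ for the left adjoint and $G:\abicat\to\bbicat$ for the right adjoint (the directions being dictated by the requirement that the displayed functors $(-\circ G):[\bbicat,\cbicat]\to[\abicat,\cbicat]$ and $(-\circ F):[\abicat,\cbicat]\to[\bbicat,\cbicat]$ be well-typed), let $\eta:1_\bbicat\Rightarrow GF$ and $\epsilon:FG\Rightarrow 1_\abicat$ be the unit and counit of $F\dashv G$, satisfying the triangle identities $(\epsilon\circ F)(F\circ\eta)=\id_F$ and $(G\circ\epsilon)(\eta\circ G)=\id_G$. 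The key observation is that precomposition reverses the direction of $1$-cells but preserves that of $2$-cells: a functor $T$ induces $(-\circ T)$, while a natural transformation $\sigma:T\Rightarrow T'$ induces $(-\circ\sigma)$ pointing the same way. Thus $[-,\cbicat]$ is a $2$-functor $\Cat\op\to\CAT$ that is contravariant on $1$-cells only, and since an adjunction is an internal notion in any $2$-category, applying $[-,\cbicat]$ to $F\dashv G$ produces an adjunction in $\CAT$; the contravariance on $1$-cells swaps the two roles, yielding $(-\circ G)\dashv(-\circ F)$.

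Made explicit, I would define the candidate unit and counit by whiskering. For $H:\bbicat\to\cbicat$ put $\bar\eta_H:=H\circ\eta:H\Rightarrow H\circ GF=(-\circ F)(-\circ G)(H)$, and for $K:\abicat\to\cbicat$ put $\bar\epsilon_K:=K\circ\epsilon:(-\circ G)(-\circ F)(K)=K\circ FG\Rightarrow K$. These components assemble into natural transformations $\bar\eta:1_{[\bbicat,\cbicat]}\Rightarrow(-\circ F)(-\circ G)$ and $\bar\epsilon:(-\circ G)(-\circ F)\Rightarrow 1_{[\abicat,\cbicat]}$, naturality in $H$ (resp. $K$) being immediate from the functoriality of horizontal composition. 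It then remains to verify the two triangle identities for the quadruple $\big((-\circ G),(-\circ F),\bar\eta,\bar\epsilon\big)$, and each of these follows at once by whiskering the respective triangle identity of $F\dashv G$, so no real computation is needed.

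Alternatively, and perhaps more transparently, I would exhibit the hom-set bijection directly: for $H\in[\bbicat,\cbicat]$ and $K\in[\abicat,\cbicat]$ I would construct mutually inverse, natural bijections between $[\abicat,\cbicat](H\circ G,K)$ and $[\bbicat,\cbicat](H,K\circ F)$, sending $\alpha:H\circ G\Rightarrow K$ to the transformation with components $\alpha_{F(b)}\circ H(\eta_b):H(b)\to K(F(b))$, and sending $\beta:H\Rightarrow K\circ F$ to the transformation with components $K(\epsilon_a)\circ\beta_{G(a)}:H(G(a))\to K(a)$. Checking that these two assignments are inverse to one another, and natural in both $H$ and $K$, is a routine diagram chase that uses only the naturality of $\eta$ and $\epsilon$ together with the triangle identities.

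The computations are entirely mechanical; the only genuine point of care — and the single place where an error is easy to introduce — is the bookkeeping of variances, i.e. confirming that precomposition reverses $1$-cells while preserving $2$-cells, and hence that it is $(-\circ G)$, rather than $(-\circ F)$, that ends up being the left adjoint. Once the directions are fixed in accordance with the displayed diagram, the remaining verifications present no obstacle.
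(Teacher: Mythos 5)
Your proof is correct: the variance bookkeeping is handled properly (the well-typedness of $(-\circ G)$ and $(-\circ F)$ fixes $F:\bbicat\to\abicat$ and $G:\abicat\to\bbicat$, and the contravariance of $[-,\cbicat]$ on $1$-cells only is exactly why $(-\circ G)$ comes out as the left adjoint), and both your whiskered unit/counit and your explicit hom-set bijection are the standard mate correspondence. The paper does not prove this lemma but defers to the cited source, whose argument is the same whiskering of $\eta$ and $\epsilon$ that you give, so there is nothing to compare beyond noting agreement.
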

\begin{cor}\label{cor:ft_da_subcat_riflessiva}
	Consider a reflective subcategory $\sheafify\dashv i:\abicat\hookrightarrow \cbicat$ ad another category $\Ftopos$: then for any pair of functors $F,G:\abicat\rightrightarrows \Ftopos$, there is a natural bijection 
	$$[\abicat,\Ftopos](F,G)\cong [\cbicat,\Ftopos](F\sheafify, G\sheafify)$$
\end{cor}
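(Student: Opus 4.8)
The plan is to deduce Corollary~\ref{cor:ft_da_subcat_riflessiva} directly from Lemma~\ref{lemma:aggiunzione_tra_cat_funtori} by taking hom-sets of an adjunction between functor categories. First I would set up the situation: the reflective subcategory is witnessed by the adjoint pair $\sheafify \dashv i : \abicat \hookrightarrow \cbicat$, where $\sheafify : \cbicat \rightarrow \abicat$ is the reflector and $i : \abicat \rightarrow \cbicat$ the inclusion. Applying Lemma~\ref{lemma:aggiunzione_tra_cat_funtori} to this pair, with the fixed target category taken to be $\Ftopos$, yields an adjunction
\[
\begin{tikzcd}
	{[\cbicat,\Ftopos]}\ar[r, "{(-\circ i)}"', bend right, start anchor={south east}, end anchor={south west}] \ar[r, phantom, "\dashv"{rotate=90}] & {[\abicat,\Ftopos]} \ar[l,"{(-\circ \sheafify)}"', bend right, start anchor={north west}, end anchor={north east}]
\end{tikzcd},
\]
so that precomposition with $\sheafify$ is left adjoint to precomposition with $i$, as functors between the two functor categories.

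Next I would exploit the fact that $\abicat$ is \emph{reflective}, meaning $i$ is fully faithful, equivalently the counit $\sheafify\, i \Rightarrow \id_\abicat$ is an isomorphism. This gives, for any functor $H : \abicat \rightarrow \Ftopos$, a natural isomorphism $(H \sheafify) i = H (\sheafify i) \cong H$. In particular $(G\sheafify)\circ i \cong G$. Then the unit-counit machinery of the adjunction above produces, for the two functors $F$ and $G$, the chain of natural bijections
\[
[\cbicat,\Ftopos](F\sheafify, G\sheafify) \cong [\abicat,\Ftopos](F\sheafify \circ i,\, G) \cong [\abicat,\Ftopos](F, G),
\]
where the first bijection is the hom-set adjunction $(-\circ\sheafify)\dashv(-\circ i)$ applied to the objects $F\sheafify$ and $G$ of the respective functor categories, and the second uses the isomorphism $F\sheafify\circ i \cong F$ coming from fullness and faithfulness of $i$. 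Reading this chain backwards gives exactly the claimed bijection $[\abicat,\Ftopos](F,G)\cong[\cbicat,\Ftopos](F\sheafify,G\sheafify)$.

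I would close by noting naturality: the bijection is natural in $F$ and $G$ because both displayed isomorphisms are, the adjunction hom-bijection being natural by definition and the isomorphism $F\sheafify\, i\cong F$ being induced by the (natural) counit of $\sheafify\dashv i$. I do not expect a genuine obstacle here; the only point requiring a little care is to keep straight which of the two precomposition functors is the left adjoint—one must apply Lemma~\ref{lemma:aggiunzione_tra_cat_funtori} to $\sheafify\dashv i$ (not to some other pairing) so that $(-\circ\sheafify)$ lands on the correct side—and to invoke reflectivity precisely at the step where $F\sheafify\, i$ is identified with $F$. Everything else is a formal manipulation of adjoint hom-sets, so I would present it compactly without expanding the unit and counit explicitly.
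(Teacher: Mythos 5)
Your overall strategy is exactly the paper's: combine the precomposition adjunction of Lemma \ref{lemma:aggiunzione_tra_cat_funtori} with the invertibility of the counit of $\sheafify\dashv i$ (which holds because $i$ is fully faithful), and chain the two resulting bijections. Your displayed chain
$[\cbicat,\Ftopos](F\sheafify, G\sheafify)\cong[\abicat,\Ftopos](F\sheafify\, i, G)\cong[\abicat,\Ftopos](F,G)$
is correct and is precisely the paper's argument read in reverse.

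However, the justification you give for the first bijection is wrong at exactly the point you flagged as delicate: precomposition \emph{reverses} the handedness of an adjunction, so applying Lemma \ref{lemma:aggiunzione_tra_cat_funtori} to $\sheafify\dashv i$ yields $(-\circ i)\dashv(-\circ\sheafify)$, not $(-\circ\sheafify)\dashv(-\circ i)$ as you assert (the latter would require $\sheafify$ to also be a right adjoint of $i$, i.e.\ the subcategory to be coreflective). This is not merely cosmetic: with the handedness you state, the hom-set bijection instantiates to $[\cbicat,\Ftopos](F\sheafify, G\sheafify)\cong[\abicat,\Ftopos](F, G\sheafify\, i)$, which is not the middle term of your chain. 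With the correct handedness, the bijection $[\abicat,\Ftopos](K\circ i, H)\cong[\cbicat,\Ftopos](K, H\sheafify)$ taken at $K=F\sheafify$ and $H=G$ gives exactly $[\abicat,\Ftopos](F\sheafify\, i, G)\cong[\cbicat,\Ftopos](F\sheafify, G\sheafify)$, and then $F\sheafify\, i\cong F$ (whiskering the counit with $F$) finishes the argument --- note that you announce the isomorphism $(G\sheafify)\circ i\cong G$ but what your chain actually uses is $F\sheafify\, i\cong F$. Flipping that one sentence repairs the proof, after which it coincides with the one in the paper.
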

\begin{proof}
	We recall that a right adjoint is full and faithful if and only if the counit of the adjunction is a natural isomorphism (see for instance \cite[Lemma 4.5.13]{riehlcontext}): thus the counit $\epsilon$ of $\sheafify\dashv i$ is a natural isomorphism. Therefore we have the chain of natural bijections
	$$[\abicat,\Ftopos](U,V)\cong [\abicat,\Ftopos](U\sheafify i,V)\cong [\cbicat,\Ftopos](U\sheafify, V\sheafify)$$
	where the bijection on the left is just composition with $U\epsilon$, while the bijection on the right comes from the previous lemma.
\end{proof}

\begin{thm}\label{thm:classthm_morfismi_siti}
	Consider a comorphism of sites $p:(\dbicat,K)\rightarrow(\cbicat,J)$ and a geometric morphism $E:\Etopos\rightarrow \Sh(\cbicat,J)$: then there is a pseudonatural equivalence of categories
	$$\Topos\sslash \Sh(\cbicat,J)([E], [C_p])\simeq \Site((\dbicat,K),(\Etopos, J\can_\Etopos))/E^*\ell_Jp$$
	which is pseudonatural in both $[E]$ and $p$.
\end{thm}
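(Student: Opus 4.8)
The plan is to reduce the statement to the classical Diaconescu classification of geometric morphisms by means of a single change of variables, namely the adjunction between left Kan extension along $p$ and precomposition with $p$. First I would spell out the lax-slice hom-category on the left: a $0$-cell of $\Topos\sslash\Sh(\cbicat,J)([E],[C_p])$ is a pair $(F,\gamma)$ with $F:\Etopos\to\Sh(\dbicat,K)$ a geometric morphism and $\gamma:C_pF\Rightarrow E$ a $2$-cell (equivalently $\gamma:F^*C_p^*\Rightarrow E^*$, since $(C_pF)^*=F^*C_p^*$), and a $1$-cell $(F,\gamma)\to(F',\gamma')$ is a $2$-cell $\omega:F\Rightarrow F'$ with $\gamma=\gamma'(C_p\circ\omega)$. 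By Diaconescu's theorem (in its formulation via morphisms of sites) the assignment $F\mapsto A:=F^*\ell_K$ is an equivalence $\Topos(\Etopos,\Sh(\dbicat,K))\simeq\Site((\dbicat,K),(\Etopos,J\can_\Etopos))$ which is moreover fully faithful on $2$-cells, a $2$-cell $\omega$ corresponding to $\omega\ell_K:F^*\ell_K\Rightarrow {F'}^*\ell_K$; the same holds with $(\cbicat,J)$ in place of $(\dbicat,K)$. So the whole problem is to understand, on the site side, the effect of postcomposing $F$ with the fixed geometric morphism $C_p$ and of the structural $2$-cell $\gamma$.

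The key computation is the identification of postcomposition with $C_p$ as left Kan extension along $p$. Taking left adjoints of the defining isomorphism $\iota_J(C_p)_*\cong\ran_{p\op}\iota_K$ yields $C_p^*\sheafify_J\cong\sheafify_K(-\circ p\op)$, whence $C_p^*\ell_J(c)\cong\sheafify_K\bigl(\cbicat(p(-),c)\bigr)$. Since $F^*$ preserves colimits and $\cbicat(p(-),c)$ is the colimit of the representables indexed by its category of elements, I would compute $(C_pF)^*\ell_J(c)=F^*C_p^*\ell_J(c)\cong\colim_{(d,\,p(d)\to c)}F^*\ell_K(d)=(\Lan_pA)(c)$, i.e. $(C_pF)^*\ell_J\cong\Lan_pA$ naturally, where $\Lan_p:[\dbicat,\Etopos]\to[\cbicat,\Etopos]$ is left Kan extension along $p$. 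Note that $\Lan_pA$ is automatically a morphism of sites, being the functor attached to the geometric morphism $C_pF$; this is exactly where the comorphism property of $p$ is used. Under this identification the restriction $\gamma\ell_J$ becomes a natural transformation $\delta:\Lan_pA\Rightarrow E^*\ell_J$, and Diaconescu's full faithfulness on $2$-cells shows $\gamma\leftrightarrow\delta$ is a bijection compatible with composition. Consequently $\Topos\sslash\Sh(\cbicat,J)([E],[C_p])$ is equivalent to the comma category whose objects are pairs $(A,\delta:\Lan_pA\Rightarrow E^*\ell_J)$ and whose morphisms $(A,\delta)\to(A',\delta')$ are natural transformations $\beta:A\Rightarrow A'$ with $\delta'(\Lan_p\beta)=\delta$.

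To finish I would apply the adjunction $\Lan_p\dashv(-\circ p)$ between $[\dbicat,\Etopos]$ and $[\cbicat,\Etopos]$, which gives a natural bijection $[\cbicat,\Etopos](\Lan_pA,E^*\ell_J)\cong[\dbicat,\Etopos](A,E^*\ell_J\circ p)=[\dbicat,\Etopos](A,E^*\ell_Jp)$. Transposing $\delta$ to $a:A\Rightarrow E^*\ell_Jp$ converts the comma category above into the slice category $\Site((\dbicat,K),(\Etopos,J\can_\Etopos))/E^*\ell_Jp$: by naturality of the adjunction transpose the morphism condition $\delta'(\Lan_p\beta)=\delta$ becomes $a'\beta=a$, which is precisely the condition defining a morphism in the slice. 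Here I would invoke the same harmless abuse of notation already flagged after Proposition \ref{prop:classthm_essenziali_comorfismo_dominio}, since $E^*\ell_Jp$ is an object of $[\dbicat,\Etopos]$ rather than a genuine morphism of sites. Composing the two equivalences gives the desired equivalence, and its pseudonaturality in $[E]$ and in $p$ follows from the $2$-naturality of Diaconescu's equivalence in the domain topos and of the Kan-extension adjunction in $p$, which I would verify by inspection.

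The main obstacle is the second paragraph: proving the identification $(C_pF)^*\ell_J\cong\Lan_pA$ and, above all, checking that under it the structural $2$-cell $\gamma$ transposes correctly, i.e. that the purely formal adjunction isomorphism $\Lan_p\dashv(-\circ p)$ is compatible both with Diaconescu's $2$-cell correspondence and with the whiskering $C_p\circ\omega$ that governs composition in the lax slice. Everything else is bookkeeping about comma and slice categories.
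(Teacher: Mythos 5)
Your proposal is correct and follows essentially the same route as the paper's proof: both arguments reduce the lax-slice $2$-cell $\gamma:F^*C_p^*\Rightarrow E^*$ to a natural transformation $F^*\ell_K\Rightarrow E^*\ell_Jp$ by transposing along the adjunction that $p$ induces (the paper uses $-\circ p^*\dashv -\circ\lan_{p\op}$ on presheaf categories together with $C_p^*\sheafify_J\cong\sheafify_Kp^*$ and then restricts along $\yo_\dbicat$, while you restrict to generators first, identify $(C_pF)^*\ell_J$ with $\Lan_pA$ via the pointwise colimit formula, and then transpose along $\Lan_p\dashv(-\circ p)$ in $[\dbicat,\Etopos]$ -- the same computation in a different order, and your colimit formula is exactly the one the paper records in the remark following the theorem). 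The treatment of $1$-cells via full faithfulness of the Diaconescu correspondence on $2$-cells and naturality of the transpose likewise matches the paper's verification that $\gamma(\omega\circ C_p^*)=\phi$ holds iff $\bar\gamma\bar\omega=\bar\phi$.
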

\begin{proof}
	We start by defining the equivalence at the level of objects. An object of $\Topos\sslash \Sh(\cbicat,J)([E], [C_p])$ is a pair $(F,\phi)$ where $F:\Etopos\rightarrow \Sh(\dbicat,K)$ is a geometric morphism and $\phi: F^*C_p^*\Rightarrow E^*$. By Corollary \ref{cor:ft_da_subcat_riflessiva} this is the same as a natural transformation $\phi\sheafify_J: F^*C_p^*\sheafify_J\Rightarrow E^*\sheafify_J$; but since $C_p^*\sheafify_J\simeq \sheafify_K p^*$, we have for now a natural transformation $\phi\circ\sheafify_J:F^*\sheafify_Kp^*\Rightarrow E^*\sheafify_J$. Now, since $\lan_{p\op}\dashv p^*$ we have that $-\circ p^*\dashv -\circ \lan_{p\op}$ by Lemma \ref{lemma:aggiunzione_tra_cat_funtori}, and thus $\phi\circ\sheafify_J$ corresponds to a natural transformation $\phi': F^*\sheafify_K\Rightarrow E^*\sheafify_J\lan_{p\op}$. Finally, such a natural transformation is uniquely determined (up to isomorphism) by its values on the generators of $\Sh(\dbicat,K)$, \ie by the composite $\bar{\phi}:=\phi'\circ \yo_\dbicat:F^*\ell_K\Rightarrow E^*\sheafify_J\lan_{p\op}\yo_\dbicat\simeq E^*\ell_Jp$. Notice that $F^*\ell_K$ is the flat $K$-continuous functor $\dbicat\rightarrow \Etopos$ that generates the geometric morphism $F$, and thus it is indeed a morphism of sites $(\dbicat,K)\rightarrow(\Etopos, J\can_\Etopos)$.
	
	To extend the equivalence to arrows, consider two 1-cells $(F,\phi)$ and $(G,\gamma):[E]\rightarrow[C_p]$ and 2-cell $\omega:(F,\phi)\Rightarrow (G,\gamma)$, \ie a natural transformation $\omega:F^*\Rightarrow G^*$ such that $\gamma (\omega\circ C_p^*)=\phi$. Notice that $\omega$ is uniquely defined by its restriction $\bar{\omega}=\omega\circ \ell_K$ to the generators of $\Sh(\dbicat,K)$: then a rapid computation shows that $\gamma (\omega\circ C_p^*)=\phi$ holds if and only if $\bar{\gamma}\bar{\omega}=\bar{\phi}$. Thus the association $\omega\mapsto \bar{\omega}$ defines the equivalence
	\[
	\Topos\sslash \Sh(\cbicat,J)([F],[C_p])\simeq \Site((\dbicat,K),(\Etopos, J\can_\Etopos))/E^*\ell_Jp
	\]
	on arrows. The pseudonaturality is lengty but straightforward to check.
\end{proof}
\begin{remark}\label{rmk:classthm_morfismi_relazione_2celle}
	Let us explicit the relationship between the two natural transformations $\phi: F^*C_p^*\Rightarrow E^*$ and $\bar{\phi}: F^*\ell_K\Rightarrow E^*\ell_L p$, for it will come in handy later. Denote by $\eta$ and $\epsilon$ the unit and counit of $\lan_{p\op}\dashv p^*$. Starting from $\phi$, we obtain $\bar{\phi}$ as the composite
	\[
	\begin{tikzcd}
		F^*\ell_K  \ar[r, equal] \ar[ddd, "\bar{\phi}"', Rightarrow]   &  F^*\sheafify_K\yo_\dbicat \ar[d, Rightarrow, "F^*\sheafify_K\circ \eta\circ \yo_\dbicat"]\\
		& F^*\sheafify_K p^*\lan_{p\op}\yo_\dbicat \ar[d, Rightarrow, "\sim"sloped]\\
		& F^*C_p^* \sheafify_J\lan_{p\op}\yo_\dbicat \ar[d, Rightarrow, "\phi\circ\sheafify_J\lan_{p\op}\yo_\dbicat"]\\
		E^*\ell_J p& E^*\sheafify_J\lan_{p\op}\yo_\dbicat \ar[l, "\sim"', Rightarrow]
	\end{tikzcd}\]
	Conversely, start from $\bar{\phi}: F^*\ell_K\Rightarrow E^*\ell_Jp$, which we can see as a 2-cell $F^*\sheafify_K\yo_\dbicat\Rightarrow E^*\sheafify_J\lan_{p\op}\yo_\dbicat$: then $\bar{\phi}$ induces a natural trasformation $\tilde{\phi}: F^*\sheafify_K\Rightarrow  E^*\sheafify_J\lan_{p\op}$. We can then consider the composite
	\[
	F^*C_p^*\sheafify_J \cong F^*\sheafify_K p^* \xRightarrow{\tilde{\phi}\circ p^*} E^*\sheafify_J\lan_{p\op}p^*\xRightarrow{E^*\sheafify_J\circ \epsilon} E^*\sheafify_J:	
	\]
	its restriction to sheaves (which coincides with its composition with the functor $\iota_K:\Sh(\dbicat,J)\rightarrow [\dbicat\op,\Set]$) provides the natural trasformation $\phi: F^*C_p^*\Rightarrow E^*$.
	
	The components of $(E^*\sheafify_J\circ \epsilon)(\tilde{\phi}\circ p^*)$, and thus those of $\phi$, can be stated directly in terms of the components of $\bar{\phi}$ using colimits. Let us start by considering a representable presheaf $\yo(X):\cbicat\op\rightarrow\Set$: the composite 
	\[
	F^*\sheafify_K p^*\yo(X) \xrightarrow{\tilde{\phi}(p^*(\yo(X)))} E^*\sheafify_J\lan_{p\op}p^*(\yo(X))\xrightarrow{E^*\sheafify_J(\epsilon_{\yo(X)})} E^*\sheafify_J\yo(X)	
	\]	
	can be described using the fact in $[\dbicat\op,\Set]$ the presheaf $p^*\yo(X)$ can be presented as the colimit of representables $p^*(\yo(X))\simeq \colim_{y:p(D)\rightarrow X}\yo(D)$. First of all, we recall that $\epsilon_{\yo(X)}$ 
	is the map
	\[\epsilon_{\yo(X)}:\colim_{y:P(D)\rightarrow X}\yo(p(D))\rightarrow \yo(X)
	\]
	induced by the cocone whose $y$-indexed leg is the map
	\[
	\yo(y):\yo(p(D))\rightarrow \yo(X). \]
	The composite $E^*\sheafify_J(\epsilon_{\yo(X)})$ is computed thus as the arrow
	\[\colim_{y:P(D)\rightarrow X}E^*\ell_J(p(D))\rightarrow  E^*\ell_J(X),
	\]
	induced by the cocone whose $y$-indexed leg is the arrow
	\[
	E^*\ell_J(y): E^*\ell_J(p(D))\rightarrow E^*\ell_J(X).
	\]
	On the other hand, the arrow
	\[
	\tilde{\phi}(p^*\yo(X)): F^*\sheafify_Kp^*\yo(X)\rightarrow E^*\sheafify_J\lan_{p\op}p^*(\yo(X))\]
	is the morphism
	\[
	\colim_{y:p(D)\rightarrow X} F^*\ell_K(D) \rightarrow \colim_{y:p(D)\rightarrow X} E^* \ell_J(p(D)) \]
	induced by colimit property by the maps 
	\[\bar{\phi}(D):F^*\ell_K(D) \rightarrow E^*\ell_J(p(D)):\]
	therefore, globally we have that
	\[ 
	(E^*\sheafify_J\circ \epsilon)(\tilde{\phi}\circ p^*)(\yo(X)): F^*\sheafify_Kp^*\yo(X)\rightarrow E^*\sheafify_J\yo(X)
	\]
	is an arrow
	\[
	\colim_{y:p(D)\rightarrow X} F^*\ell_K(D)\rightarrow E^*\ell_J(X) \]
	induced by the cocone whose $y$-indexed leg is the arrow
	\[
	F^*\ell_K(D)\xrightarrow{\bar{\phi}(D)} E^*\ell_J(p(D))\xrightarrow{E^*\ell_J(y)} E^*\ell_J(X).
	\]
	If now we take any presheaf $H:\cbicat\op\rightarrow\Set$, we can exploit the colimit $H\simeq\colim_{x\in H(X)} \yo(X)$: then the arrow \[
	(E^*\sheafify_J\circ \epsilon)(\tilde{\phi}\circ p^*)(H):F^*C_p^*\sheafify_J(H)\rightarrow E^*\sheafify_J(H)
	\]
	is a morphism 
	\[
	\colim_{x\in H(X)} F^*\sheafify_Kp^*\yo(X) \rightarrow \colim_{x\in H(X)} E^*\ell_J(X)
	\]
	induced componentwise by the arrows $(E^*\sheafify_J\circ \epsilon)(\tilde{\phi}\circ p^*)(\yo(X))$ we described above: thus we can conclude that $(E^*\sheafify_J\circ \epsilon)(\tilde{\phi}\circ p^*)(H)$ is
	induced by colimit property by the arrows 
	\[
	\alpha_{x,y}:F^*\ell_K(D) \xrightarrow{\bar{\phi}(D)} E^*\ell_J(p(D)) \xrightarrow{E^*\ell_J(y)}  E^* \ell_J(X)
	\]
	indexed by $x\in H(X)$ and $y:p(D)\rightarrow X$.
\end{remark}
The previous results admit an alternative formulation using comma categories:
\begin{defn}
	Consider two functors $A:\abicat\rightarrow \cbicat$ and $B:\bbicat\rightarrow\cbicat$: the \emph{comma category}\index{category!comma} $\comma{A}{B}$\index{$\comma{-}{-}$} is the category whose objects are triples $(X\in \abicat,\ Y\in \bbicat,\ f:A(X)\rightarrow B(Y))$ and whose morphism $(\alpha,\beta):(X,Y,f)\rightarrow (X',Y', f')$ are pairs of arrows $\alpha:X\rightarrow X'$ and $\beta: Y\rightarrow Y'$ such that $B(\beta)\circ f= f'\circ A(\alpha)$.
	
	The comma category has two obvious canonical projections to $\abicat$ and $\bbicat$ and a natural transformation $\phi:A\circ p_{{\cal A}}\to B\circ p_{\cal B}$ such that $\phi(X,Y,f)$ is the arrow $A(X)\xrightarrow{f} B(Y)$ (for any object $(X,Y,f)$ of $\comma{A}{B}$):
	\[
	\begin{tikzcd}
		{\comma{A}{B}} \ar[d, "p_A"'] \ar[r, "p_B"] & \bbicat \ar[d, "B"]\\
		\abicat \ar[r, "A"'] \ar[ur, Rightarrow,  "\phi"]&\cbicat
	\end{tikzcd}.\]
	The comma category $\comma{A}{B}$ satisfies a strict $2$-limit universal property in $\CAT$: for every other pair of functors $F_A:\dbicat\rightarrow \abicat$, $F_B:\dbicat\rightarrow \bbicat$ and natural transformation $\psi: AF_A\Rightarrow BF_B$ there is a unique functor $F:\dbicat\rightarrow \comma{A}{B}$ such that $F_A=p_AF$, $F_B=p_BF$ and $\psi= \phi\circ F$. This universal property extends immediately to 2-cells.
\end{defn}
Using comma categories we can provide an alternative description of the category $\Site((\dbicat,K), (\Etopos, J\can_\Etopos))/E^*\ell_Jp$. Let us set $E^*\ell_J:=A$: then a 1-cell $[\xi:f\Rightarrow Ap]$ corresponds to a unique $\bar{\xi}:\dbicat\rightarrow \comma{1_\Etopos}{A}$ as in the following diagram:
\[
\begin{tikzcd}
	\dbicat \ar[ddr, "p"', bend right] \ar[dr, "\bar{\xi}"description]\ar[drr, "f", bend left]&&\\
	&{\comma{1_\Etopos}{A}} \ar[d, "\pi_\cbicat"] \ar[r, "\pi_\Etopos"] & \Etopos \ar[d, "1_\Etopos"]\ar[dl, Rightarrow, "\kappa"]\\
	&\cbicat \ar[r, "A"']& \Etopos 
\end{tikzcd}\]
and a similar correspondence holds for the 2-cells. We only need to take into account that we want the composite $\pi_\Etopos \bar{\xi}$ to be a morphism of sites $(\dbicat,K)\rightarrow (\Etopos, J\can_\Etopos)$. To do so, we shall exploit the following result, which appears as Theorem 3.16 of \cite{denseness}:
\begin{thm}\label{thm:morfgeom_presentato_commacategory}
	Consider a morphism of sites $A:(\cbicat,J)\rightarrow (\ebicat, K)$. Consider the topology $\bar{K}$\index{$\bar{J}$} over the comma category $\comma{1_\ebicat}{A}$, whose covering sieves are exactly those whose image in $\ebicat$ is $K$-covering: then \begin{itemize}
		\item the projection $\pi_\cbicat:\comma{1_\ebicat}{A}\rightarrow \cbicat$ is a comorphism of sites,
		\item the projection $\pi_\ebicat:\comma{1_\ebicat}{A}\rightarrow \ebicat$ is a morphism and a comorphism of sites inducing an equivalence of toposes, 
		\item the diagram of geometric morphisms
		\[
		\begin{tikzcd}
			{\Sh(\ebicat,K)} \ar[dr, "\Sh(A)"']\ar[rr, no head, "\sim"]&& {\Sh(\comma{1_\ebicat}{A}, \bar{K})}\ar[dl, "C_{\pi_\cbicat}"]\\
			&{\Sh(\cbicat,J)}&
		\end{tikzcd}
		\] 
		is commutative.
	\end{itemize}
\end{thm}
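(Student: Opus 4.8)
The plan is to verify the three assertions in turn, using throughout that $A$, being a morphism of sites, is both flat and cover-preserving (hence $(J,K)$-continuous, so Proposition \ref{prop:ft_cont_caratterizzazione}(iii) applies), and that $\bar{K}$ is by construction the topology on $\comma{1_\ebicat}{A}$ coinduced from $K$ along $\pi_\ebicat$. The first structural observation I would record is that $\pi_\ebicat:\comma{1_\ebicat}{A}\to\ebicat$ is a Grothendieck fibration: for an object $(Y,X,f\colon Y\to A(X))$ and an arrow $g\colon Y'\to Y$ of $\ebicat$, the pair $(g,1_X)\colon(Y',X,fg)\to(Y,X,f)$ is a cartesian lift, so the fibre over $Y$ is the comma category $\comma{Y}{A}$. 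With this in hand the comorphism property of $\pi_\ebicat$ is essentially tautological: given a $K$-covering sieve $S$ on $Y$, the full preimage sieve $R=\{(g,h)\mid g\in S\}$ has image exactly $S$ under $\pi_\ebicat$ (each $g\in S$ is hit by its cartesian lift $(g,1_X)$), so $R$ is $\bar{K}$-covering by the very definition of $\bar{K}$ and satisfies $\pi_\ebicat(R)\subseteq S$; the same remark shows $\pi_\ebicat$ is cover-preserving.

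Next, for $\pi_\cbicat$ I would check the covering-lifting property directly, leaning on cover-preservation of $A$. Given $(Y,X,f)$ and a $J$-covering sieve $S$ on $X=\pi_\cbicat(Y,X,f)$, the family $\{A(s)\mid s\in S\}$ generates a $K$-covering sieve $T$ on $A(X)$, whose pullback $f^{*}T$ is a $K$-covering sieve on $Y$. For each $g\in f^{*}T$ a factorisation $fg=A(s)\circ e$ with $s\in S$ produces a morphism $(g,s)\colon(Y',X_{s},e)\to(Y,X,f)$ of the comma category whose $\cbicat$-component lies in $S$. The sieve $R:=\{(g,h)\mid h\in S,\ g\in f^{*}T\}$ is then genuinely a sieve (closed under precomposition since both $S$ and $f^{*}T$ are), its image under $\pi_\ebicat$ equals $f^{*}T$ and is therefore $K$-covering, and $\pi_\cbicat(R)\subseteq S$. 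Hence $\pi_\cbicat$ is a comorphism of sites $(\comma{1_\ebicat}{A},\bar{K})\to(\cbicat,J)$.

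The heart of the proof, and the step I expect to be the main obstacle, is showing that $\pi_\ebicat$ is \emph{also} a morphism of sites and that $\Sh(\pi_\ebicat)$ and $C_{\pi_\ebicat}$ are mutually inverse equivalences $\Sh(\comma{1_\ebicat}{A},\bar{K})\simeq\Sh(\ebicat,K)$. Here I would exploit flatness of $A$, which makes each fibre $\comma{Y}{A}$ of the fibration $\pi_\ebicat$ $K$-cofiltered. Matching $\bar{K}$ against Giraud's minimal topology $M^{\pi_\ebicat}_{K}$ via Proposition \ref{prop:girtpl_descrizione_sieves}, one identifies $\Sh(\comma{1_\ebicat}{A},\bar{K})$ with the classifying topos $\Gir_{K}(\pi_\ebicat)$ of the fibration; the cofiltered (hence $K$-locally contractible) fibres then force this classifying topos to be the base itself, by the same mechanism that yields the slice description in Proposition \ref{prop:fib_discreta_slice_topos} for discrete fibres. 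Concretely I would verify that precomposition $(-\circ\pi_\ebicat^{\textup{op}})$ carries $K$-sheaves to $\bar{K}$-sheaves — the cofilteredness of the fibres ensuring that descent data along them are uniquely amalgamated — which exhibits $\pi_\ebicat$ as a morphism of sites, and that the resulting $\Sh(\pi_\ebicat)^{*}$ is fully faithful and essentially surjective with quasi-inverse $C_{\pi_\ebicat}$. This descent computation, reconciling the coinduced topology with flatness, is the genuinely technical part.

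Finally, for the commutative triangle I would use the functor $j\colon\cbicat\to\comma{1_\ebicat}{A}$, $X\mapsto(A(X),X,1_{A(X)})$, which satisfies $\pi_\cbicat\circ j=1_\cbicat$ and $\pi_\ebicat\circ j=A$. A direct inspection of hom-sets shows that an arrow $(Y,X',f)\to j(X)$ in $\comma{1_\ebicat}{A}$ is the same as an arrow $X'\to X$ in $\cbicat$, so $\pi_\cbicat\dashv j$; by the standard Kan-extension computation for an adjoint pair (cf.\ Lemma \ref{lemma:aggiunzione_tra_cat_funtori} and the discussion of Proposition \ref{prop:lan_ran}) this gives $\ran_{\pi_\cbicat^{\textup{op}}}\cong(-\circ j^{\textup{op}})$. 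Computing direct images along the composite $C_{\pi_\cbicat}\circ\Sh(\pi_\ebicat)$ then yields, for a $K$-sheaf $Q$, the sheaf $\ran_{\pi_\cbicat^{\textup{op}}}(Q\circ\pi_\ebicat^{\textup{op}})\cong Q\circ(\pi_\ebicat\circ j)^{\textup{op}}=Q\circ A^{\textup{op}}$, which is exactly $\Sh(A)_{*}(Q)$. Hence $C_{\pi_\cbicat}\circ\Sh(\pi_\ebicat)\cong\Sh(A)$, establishing the commutativity of the diagram.
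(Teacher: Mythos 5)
First, a remark on the comparison itself: the paper does not prove this theorem — it imports it verbatim as Theorem 3.16 of \cite{denseness} — so there is no internal proof to measure you against and your argument has to stand on its own.

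The parts of your proposal that work: the observation that $\pi_\ebicat$ is a fibration with fibres $\comma{Y}{A}$, the covering-lifting verifications for both projections (your sieve $R=\{(g,h)\mid h\in S,\ g\in f^{*}T\}$ is indeed a sieve with the right image), and the commutativity of the triangle. The last of these is done cleanly: the adjunction $\pi_\cbicat\dashv j$ gives $\ran_{\pi_\cbicat\op}\cong(-\circ j\op)$, whence $(C_{\pi_\cbicat})_*\circ\Sh(\pi_\ebicat)_*\cong(-\circ(\pi_\ebicat j)\op)=(-\circ A\op)=\Sh(A)_*$.

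The gap is in the central step, the equivalence $\Sh(\comma{1_\ebicat}{A},\bar K)\simeq\Sh(\ebicat,K)$. You assert that $\Sh(\comma{1_\ebicat}{A},\bar K)$ can be identified with the classifying topos $\Gir_K(\pi_\ebicat)$, i.e.\ with sheaves for the Giraud topology $M^{\pi_\ebicat}_K$, and that cofiltered fibres then collapse this classifying topos to the base. Both claims fail. By Proposition \ref{prop:girtpl_descrizione_sieves} a Giraud-covering sieve must contain cartesian arrows projecting to a $K$-cover, whereas a $\bar K$-cover may consist entirely of non-cartesian (e.g.\ vertical) arrows; so $\bar K$ is in general \emph{strictly} larger than $M^{\pi_\ebicat}_K$. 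Concretely, take $\ebicat=\onecat$ with its trivial topology, $\cbicat$ the arrow category $\{X'\to X\}$ with the trivial topology, and $A$ the unique functor, which is flat and hence a morphism of sites. Then $\comma{1_\onecat}{A}\cong\cbicat$, the topology $\bar K$ consists of the non-empty sieves and $\Sh(\comma{1_\onecat}{A},\bar K)\simeq\Set$ as the theorem predicts; but $M^{\pi_\ebicat}_K$ is the trivial topology, so $\Gir_K(\pi_\ebicat)\simeq[\cbicat\op,\Set]\not\simeq\Set$ — even though the fibre is cofiltered. Cofiltered fibres therefore do \emph{not} force the classifying topos of the fibration to be the base (compare Proposition \ref{prop:fib_discreta_slice_topos}, where even discrete fibres give a slice, not the base); it is exactly the covers in $\bar K\setminus M^{\pi_\ebicat}_K$, generated by single vertical arrows, that kill the fibre directions, and your argument never invokes them. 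Your fallback — checking that $-\circ\pi_\ebicat\op$ carries $K$-sheaves to $\bar K$-sheaves — only establishes that $\pi_\ebicat$ is $(\bar K,K)$-continuous, i.e.\ that $C_{\pi_\ebicat}^{*}\cong\Sh(\pi_\ebicat)_{*}$; it does not show that this functor is an equivalence. To close the argument you still need a denseness/fullness/faithfulness criterion for a functor which is simultaneously a morphism and a comorphism of sites (the route the paper itself takes elsewhere, via \cite[Proposition 7.18]{denseness}, in the proof of Proposition \ref{prop:fasci__etalebundle_eqv_fasci_grfibration}), and that is precisely where the covering-flatness of $A$ has to be spent.
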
 
Seeing $A$ as a morphism of sites $(\cbicat,J)\rightarrow (\Etopos, J\can_\Etopos)$, we can apply the theorem above to obtain an equivalence $\Sh( \comma{1_\Etopos}{A}, \overline{J\can_\Etopos})\simeq \Etopos$ that identifies the geometric morphisms $E$ and $C_{\pi_\cbicat}$. One verifies immediately that $\pi_\Etopos\bar{\xi}$ is a morphism of sites if and only if $\bar{\xi}$ is, since at the level of toposes the two functors induce essentially the same geometric morphism, and so we end up with the following result:
\begin{prop}
	Consider a geometric morphism $E:\Etopos\rightarrow \Sh(\cbicat,J)$ with corresponding flat $J$-continuous functor $A:\cbicat\rightarrow \Etopos$ and a comorphism of sites $p:(\dbicat,K)\rightarrow (\cbicat,J)$: then there is an equivalence of categories between \[\Topos\sslash \Sh(\cbicat,J)([E], [C_p])\] and the full subcategory of $\Site((\dbicat,K), (\comma{1_\Etopos}{A}, \overline{J\can_\Etopos}) )$ whose objects are the morphisms of sites $\xi:(\dbicat,K)\rightarrow (\comma{1_\Etopos}{A}, \overline{J\can_\Etopos})$ such that $\pi_\cbicat {\xi}=p$. 
\end{prop}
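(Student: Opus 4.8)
The plan is to chain together the classification Theorem~\ref{thm:classthm_morfismi_siti}, the universal property of the comma category, and the comma-site presentation of Theorem~\ref{thm:morfgeom_presentato_commacategory}. Set $A:=E^*\ell_J:\cbicat\to\Etopos$; since $E$ is a geometric morphism, $A$ is a flat $J$-continuous functor, hence a morphism of sites $(\cbicat,J)\to(\Etopos,J\can_\Etopos)$, which is exactly what is needed to apply Theorem~\ref{thm:morfgeom_presentato_commacategory}. Theorem~\ref{thm:classthm_morfismi_siti} already supplies a pseudonatural equivalence
\[
\Topos\sslash\Sh(\cbicat,J)([E],[C_p])\simeq\Site((\dbicat,K),(\Etopos,J\can_\Etopos))/A p,
\]
so it remains only to identify this slice with the subcategory of $\Site((\dbicat,K),(\comma{1_\Etopos}{A},\overline{J\can_\Etopos}))$ on the morphisms of sites $\bar\xi$ with $\pi_\cbicat\bar\xi=p$.

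First I would recast the slice through the strict $2$-limit universal property of $\comma{1_\Etopos}{A}$. An object of $\Site((\dbicat,K),(\Etopos,J\can_\Etopos))/Ap$ is a morphism of sites $f:\dbicat\to\Etopos$ together with a natural transformation $\bar\phi:f\Rightarrow Ap$, and this is precisely the data determining a unique functor $\bar\xi:\dbicat\to\comma{1_\Etopos}{A}$ with $\pi_\Etopos\bar\xi=f$, $\pi_\cbicat\bar\xi=p$ and structural $2$-cell $\bar\phi$. Applying the universal property to $2$-cells, a slice morphism $\bar\omega:(f,\bar\phi)\Rightarrow(g,\bar\gamma)$ (a natural transformation $f\Rightarrow g$ with $\bar\gamma\cdot\bar\omega=\bar\phi$) corresponds to a natural transformation $\bar\xi\Rightarrow\bar\zeta$ whose $\pi_\cbicat$-component is the identity of $p$; here the comma commutativity condition collapses to exactly the slice compatibility. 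This step is essentially bookkeeping and matches the correspondence already announced in the discussion preceding the statement, so the slice is identified with the functors $\dbicat\to\comma{1_\Etopos}{A}$ lying strictly over $p$, and the only substantive question left is whether such a functor is a morphism of sites $(\dbicat,K)\to(\comma{1_\Etopos}{A},\overline{J\can_\Etopos})$ precisely when $\pi_\Etopos\bar\xi=f$ is one.

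That equivalence of conditions is the crux and the main obstacle. Applying Theorem~\ref{thm:morfgeom_presentato_commacategory} to $A$, the projection $\pi_\Etopos:(\comma{1_\Etopos}{A},\overline{J\can_\Etopos})\to(\Etopos,J\can_\Etopos)$ is both a morphism and a continuous comorphism of sites and induces an equivalence of toposes, whose quasi-inverse is the essential image $(C_{\pi_\Etopos})_!$. The identity $(C_{\pi_\Etopos})_!\circ\ell_{\overline{J\can_\Etopos}}\cong\ell_{J\can_\Etopos}\circ\pi_\Etopos$ valid for continuous comorphisms then yields $(C_{\pi_\Etopos})_!\circ\ell_{\overline{J\can_\Etopos}}\bar\xi\cong\ell_{J\can_\Etopos}\circ\pi_\Etopos\bar\xi$. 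Now I would use the standard characterization that a functor into a site is a morphism of sites iff its composite with the canonical functor to the sheaf topos is flat and continuous: since $(C_{\pi_\Etopos})_!$ is an equivalence it preserves and reflects flatness and continuity, so $\ell_{\overline{J\can_\Etopos}}\bar\xi$ is flat and $K$-continuous iff $\ell_{J\can_\Etopos}\pi_\Etopos\bar\xi$ is, i.e. $\bar\xi$ is a morphism of sites iff $f$ is. (The forward implication is also transparent directly, as $\pi_\Etopos\bar\xi$ is then a composite of morphisms of sites.) Assembling the three steps gives the asserted equivalence, and its pseudonaturality in $[E]$ and $p$ is inherited from Theorem~\ref{thm:classthm_morfismi_siti} together with the evident naturality of the comma construction.
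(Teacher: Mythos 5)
Your proof is correct and follows essentially the same route as the paper: the classification equivalence of Theorem \ref{thm:classthm_morfismi_siti}, the strict $2$-limit universal property of $\comma{1_\Etopos}{A}$ to repackage the slice over $Ap$ as functors lying over $p$, and Theorem \ref{thm:morfgeom_presentato_commacategory} to see that $\bar{\xi}$ is a morphism of sites precisely when $\pi_\Etopos\bar{\xi}$ is. The only difference is that the paper dismisses this last equivalence as immediate (``the two functors induce essentially the same geometric morphism''), whereas you justify it explicitly via $(C_{\pi_\Etopos})_!\circ\ell_{\overline{J\can_\Etopos}}\cong\ell_{J\can_\Etopos}\circ\pi_\Etopos$ and the flat-continuous characterization of morphisms of sites, which is a welcome clarification rather than a deviation.
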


With the same argument we can also derive the following corollary of Proposition \ref{prop:eqv_essgeomorf_comorfcont}:
\begin{prop}\label{prop:essgeom_equivalgono_comcont_verso_eltigeneralizzati}
	Consider a geometric morphism $E:\Etopos\rightarrow \Sh(\cbicat,J)$ with corresponding flat $J$-continuous functor $A:\cbicat\rightarrow \Etopos$ and a comorphism of sites $p:(\dbicat,K)\rightarrow (\cbicat,J)$: then there is an equivalence of categories between \[\EssTopos\co\sslash \Sh(\cbicat,J)([C_p],[E])\] and the full subcategory of $\Com\cont((\dbicat,K),(\comma{1_\Etopos}{A}, \overline{J\can_\Etopos}))$ whose objects are the continuous comorphisms of sites $\xi:(\dbicat,K)\rightarrow (\comma{1_\Etopos}{A}, \overline{J\can_\Etopos})$ such that $\pi_\cbicat\xi=p$.
\end{prop}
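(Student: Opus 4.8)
The plan is to mirror the proof of Proposition \ref{prop:classthm_morfismi_siti} (the morphism-of-sites classification) but in the essential, comorphism-theoretic setting, combining it with the comma-category presentation of Theorem \ref{thm:morfgeom_presentato_commacategory}. The two inputs are: on the one hand Proposition \ref{prop:classthm_essenziali_comorfismo_dominio}, which already classifies $\EssTopos\co\sslash\Sh(\cbicat,J)(\Sh(\dbicat,K),\Etopos)$ as a slice $\Cosite\cont((\dbicat,K),(\Etopos,J\can_\Etopos))/E^*\ell_Jp$ of continuous comorphisms into $\Etopos$; on the other hand Theorem \ref{thm:morfgeom_presentato_commacategory} applied to the flat $J$-continuous functor $A=E^*\ell_J:\cbicat\rightarrow\Etopos$, viewed as a morphism of sites $(\cbicat,J)\rightarrow(\Etopos,J\can_\Etopos)$, which yields the equivalence $\Sh(\comma{1_\Etopos}{A},\overline{J\can_\Etopos})\simeq\Etopos$ identifying $E$ with $C_{\pi_\cbicat}$.

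First I would unwind the slice $\Cosite\cont((\dbicat,K),(\Etopos,J\can_\Etopos))/E^*\ell_Jp$ from Proposition \ref{prop:classthm_essenziali_comorfismo_dominio}: an object is a continuous comorphism $\xi:(\dbicat,K)\rightarrow(\Etopos,J\can_\Etopos)$ together with a natural transformation to $E^*\ell_Jp=Ap$. By the strict $2$-limit universal property of the comma category $\comma{1_\Etopos}{A}$ recorded in Definition (comma category), such a datum $[\,\eta:\xi\Rightarrow Ap\,]$ corresponds to a unique functor $\bar{\xi}:\dbicat\rightarrow\comma{1_\Etopos}{A}$ satisfying $\pi_\Etopos\bar{\xi}=\xi$ and $\pi_\cbicat\bar{\xi}=p$, and this correspondence is functorial in the $2$-cells. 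This gives a bijective-on-objects, full and faithful correspondence between the slice category and the subcategory of functors $\bar{\xi}:\dbicat\rightarrow\comma{1_\Etopos}{A}$ with $\pi_\cbicat\bar{\xi}=p$.

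The remaining point is to check that under this correspondence the condition \emph{``$\xi$ is a continuous comorphism of sites''} matches exactly the condition \emph{``$\bar{\xi}$ is a continuous comorphism of sites for $\overline{J\can_\Etopos}$''}. Here I would invoke the second bullet of Theorem \ref{thm:morfgeom_presentato_commacategory}: the projection $\pi_\Etopos:\comma{1_\Etopos}{A}\rightarrow\Etopos$ is both a morphism and a comorphism of sites inducing an \emph{equivalence} of toposes $\Sh(\comma{1_\Etopos}{A},\overline{J\can_\Etopos})\simeq\Etopos$, and $\xi=\pi_\Etopos\bar{\xi}$. Since $\pi_\Etopos$ induces an equivalence, composing with it neither creates nor destroys (continuous) comorphism status: $\bar{\xi}$ is a $(K,\overline{J\can_\Etopos})$-continuous comorphism if and only if $\pi_\Etopos\bar{\xi}$ is a $(K,J\can_\Etopos)$-continuous comorphism. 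I would also use Lemma \ref{lemma:comorfismi_via_tplgir} and Proposition \ref{prop:ft_cont_caratterizzazione} to phrase this equivalence of conditions cleanly, checking covering-lifting and cover-preservation are transported along $\pi_\Etopos$.

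The main obstacle I anticipate is precisely this last transport-of-continuity step: one must verify that the equivalence $\Sh(\pi_\Etopos)$ lets one pull back continuity and comorphism conditions along $\pi_\Etopos$ without loss, and in particular that the identification of $E$ with $C_{\pi_\cbicat}$ makes $\pi_\cbicat\bar{\xi}=p$ the correct fibering condition over $(\cbicat,J)$. This is essentially the same verification carried out in the proof of the preceding proposition (for the non-essential case), so I would write ``with the same argument'' and supply only the modifications needed to stay inside $\Com\cont$ rather than $\Site$; the pseudonaturality in both $[E]$ and $p$ is then inherited from that of Proposition \ref{prop:classthm_essenziali_comorfismo_dominio} and is a lengthy but routine check that I would leave to the reader.
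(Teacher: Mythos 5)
Your proposal is correct and follows essentially the same route as the paper, which derives this proposition by exactly the combination you describe: the slice classification of Proposition \ref{prop:classthm_essenziali_comorfismo_dominio}, the strict $2$-limit universal property of $\comma{1_\Etopos}{A}$ to convert a comorphism-with-$2$-cell into a single functor $\bar{\xi}$ over $p$, and the equivalence $\Sh(\comma{1_\Etopos}{A},\overline{J\can_\Etopos})\simeq\Etopos$ from Theorem \ref{thm:morfgeom_presentato_commacategory} to transport the continuous-comorphism condition along $\pi_\Etopos$. The transport step you flag as the main obstacle is handled in the paper by the same observation you make, namely that $\pi_\Etopos\bar{\xi}$ and $\bar{\xi}$ induce essentially the same geometric morphism.
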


\chapter{The fundamental adjunction}\label{chap:fundadj}

In the past chapters we have collected all the necessary ingredients to build a 2-adjunction 
\[
\begin{tikzcd}
	\phantom{/}\Ind_\cbicat \arrow[r,bend left, ""{below, name=A}, start anchor={north east}, end anchor={north west}] &   \Topos\co/\Sh(\cbicat,J) \arrow[l, bend left, ""{above, name=B}, start anchor={south west}, end anchor={south east}] \ar[from=A, to=B, "\top"{rotate=180}, phantom]
\end{tikzcd},
\]
which we shall call the \emph{fundamental adjunction}. This comes as a broad generalization of many adjunctions between categories of `$\cbicat$-indexed entities' on one side, and `entities over $\cbicat$' on the other: one classical instance is the adjunction between presheaves over a topological spaces and bundles over the same space, which we will recall later in Section \ref{sec:adj_topologica}. The fundamental adjunction stems from the usual duality between indexed categories and fibrations, but the passage to toposes takes into account the topological information given by the site: in the authors' opinion, a restriction of this adjunction to suitable sub-2-categories of relative toposes may allow for a geometric formulation of the stackification process, in a similar way to how the topological adjunction allows us to recover the sheafification functor.

The chapter is split into four sections. After a preliminary result about colimits of toposes, which acts as a motivation for the subsequent results, we investigate the many adjunctions that exist between $\Ind_\cbicat$, $\CAT/\cbicat$ and $\Com/(\cbicat,J)$: in particular, they provide expressions for weighted colimits of sites and comorphisms. The third section shows that the passage to toposes, performed by the 2-functor
\[
C_{(-)}:\Com\cont/(\cbicat,J)\rightarrow \Topos\co/\Sh(\cbicat,J),
\]
preserves said colimits and allows us to extend the site-theoretic adjunction to our fundamental topos-theoretic adjunction. The final section analyses the role of the canonical stack over $(\cbicat,J)$ as a dualizing object for this adjunction.

\section{Colimits of toposes}\label{sec:colimits_of_toposes}
Let us begin with a preliminary result about colimits of toposes, namely the fact that a topos of sheaves is in a canonical way a colimit of étale toposes. We will provide here an abstract proof of the result and a sketch of the explicit proof, which we will later generalize to weighted pseudocolimits of toposes in Section \ref{sec:adj_for_toposes}.
\begin{prop}\label{prop:colimit_topos}
	Let $(\cbicat, J)$ be a small-generated site. Then $\Sh(\cbicat, J)$ is the conical pseudocolimit in the category $\Topos$ of the diagram $\cbicat\to \Topos$ sending any object $X$ of $\cbicat$ to $\Sh(\cbicat, J)\slash \ell_J(X)$ and any arrow $y:Y\rightarrow X$ in $\cbicat$ to the geometric morphism $\prod_{\ell(y)}:\Sh(\cbicat, J)\slash \ell_J(Y)\to \Sh(\cbicat, J)\slash \ell_J(X)$ induced by the arrow $\ell_J(y)$.
\end{prop}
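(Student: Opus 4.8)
The plan is to prove the statement via the standard principle that a colimit in $\Topos$ is computed as the pseudolimit, in $\CAT$, of the attached diagram of inverse-image functors. Concretely, I would first record that the inverse image of the local homeomorphism $\prod_{\ell_J(X)}:\Sh(\cbicat,J)/\ell_J(X)\to \Sh(\cbicat,J)$ is the pullback functor $\ell_J(X)^*=(-)\times\ell_J(X)$, and that the inverse image of $\prod_{\ell_J(y)}$ is $\ell_J(y)^*$. Hence the diagram of inverse-image categories and functors attached to the given diagram $\cbicat\to\Topos$ is \emph{exactly} the canonical stack $\canst_{(\cbicat,J)}:\cbicat\op\to\CAT$ of Definition \ref{def:stack_canonica_su_sito}. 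Since a geometric morphism is determined by its inverse image, a pseudococone under the diagram with vertex $\mathcal K$ is the same datum as a pseudocone over $\canst_{(\cbicat,J)}$ compatible with the lex, cocontinuous structure of $\mathcal K$; the proposition therefore reduces to the key claim that the functors $\ell_J(X)^*:\Sh(\cbicat,J)\to \Sh(\cbicat,J)/\ell_J(X)$ exhibit $\Sh(\cbicat,J)$ as the pseudolimit $\lim_{ps}\canst_{(\cbicat,J)}$, compatibly with the topos structure.

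For this key claim I would exhibit the comparison functor $\Phi:\Sh(\cbicat,J)\to \lim_{ps}\canst_{(\cbicat,J)}$ sending $Q$ to the cartesian section $(Q\times\ell_J(X)\to\ell_J(X))_X$, and build a quasi-inverse by gluing. Given a cartesian section $(P_X,\sigma_y)$ — objects $P_X\in\Sh(\cbicat,J)/\ell_J(X)$ with coherent isomorphisms $\sigma_y:\ell_J(y)^*P_X\isorightarrow P_Y$ — the maps $P_Y\cong\ell_J(y)^*P_X\to P_X$ turn $X\mapsto P_X$ into a covariant diagram $\cbicat\to\Sh(\cbicat,J)$, and I would set $\Psi(P_X,\sigma_y):=\colim_{X\in\cbicat}P_X$. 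That $\Phi$ and $\Psi$ are mutually quasi-inverse rests on the identity $\colim_{X\in\cbicat}\ell_J(X)\cong 1$ in $\Sh(\cbicat,J)$, which follows from $\colim_{X\in\cbicat}\yo(X)\cong 1$ in the presheaf topos (each comma category $(Z\downarrow\cbicat)$ is connected, having an initial object) together with the cocontinuity of $\sheafify_J$, and on the facts that inverse images preserve colimits and that finite limits commute with these colimits.

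The main obstacle is the essential surjectivity of $\Phi$, i.e. showing that for an arbitrary cartesian section one has $\ell_J(X_0)^*\,\colim_{X}P_X\cong P_{X_0}$ naturally over $\ell_J(X_0)$. After transporting the computation into the slice topos $\Sh(\cbicat,J)/\ell_J(X_0)\simeq\Sh(\cbicat/X_0,J_{X_0})$ via Proposition \ref{prop:fib_discreta_slice_topos}, this becomes a canonical-colimit (density) statement whose effectivity is precisely the gluing furnished by Theorem \ref{thm:canst_stack}, that $\canst_{(\cbicat,J)}$ is a $J$-stack; this is where the genuine stack property is used, rather than merely the sheaf condition. I expect this gluing step to be the technical heart of the argument.

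Finally, I would address the size issue: since $(\cbicat,J)$ is only small-generated, $\cbicat$ need not be small and the colimit is a priori large. Fixing a small $J$-dense subcategory $\mathcal A\hookrightarrow\cbicat$, I would argue by cofinality that both $\colim_X\ell_J(X)$ and $\lim_{ps}\canst_{(\cbicat,J)}$ may be computed over $\mathcal A$, so that the pseudocolimit exists and is the claimed one. The explicit proof is then the same construction read as a direct verification of the universal property: a pseudococone $(g_X)$ with vertex $\mathcal K$ induces, for each $K\in\mathcal K$, a cartesian section $(g_X^*(K))_X$ of $\canst_{(\cbicat,J)}$, and gluing these yields the inverse image $g^*:=\colim_X g_X^*(-)$ of an essentially unique geometric morphism $g:\Sh(\cbicat,J)\to\mathcal K$ with $g\circ\prod_{\ell_J(X)}\cong g_X$ — the form that is later generalized to weighted pseudocolimits of toposes.
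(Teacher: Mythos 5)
Your proof is correct in substance but follows a genuinely different route from the paper's. The paper gives two arguments: an abstract one, which observes that $1\cong\colim_{X}\ell_J(X)$ and then invokes the fact (citing \cite[Proposition 6.3.5.14]{lurie}) that the functor $A\mapsto\Sh(\cbicat,J)/A$ preserves colimits; and an explicit one, which starts from a pseudococone $(F_X,F_y)$ with vertex $\Etopos$ and defines the inverse image of the induced morphism pointwise by $H^*(E)(X):=F_X^*(E)([1_X])$, using the presentations $\Sh(\cbicat,J)/\ell_J(X)\simeq\Sh(\cbicat/X,J_X)$. You instead reduce to the pseudolimit of inverse images, identify that diagram with $\canst_{(\cbicat,J)}$, and prove $\Sh(\cbicat,J)\simeq\lim_{ps}\canst_{(\cbicat,J)}$ by an explicit gluing $\Psi(P_X,\sigma_y)=\colim_X P_X$; your induced inverse image is then the ``colimit of total spaces'' $g^*(K)=\colim_X g_X^*(K)$, dual in flavour to the paper's ``evaluation of sections at $[1_X]$''. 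What your route buys is self-containedness: you essentially re-prove, in the special case needed, the descent statement that the paper outsources to Lurie. The cost is that you must carry out the universality-of-colimits and cofinality computations (the presentation $\ell_J(X)\times\ell_J(X_0)\cong\colim\,\ell_J(Z)$ over spans $X\leftarrow Z\to X_0$, and a Fubini argument for the resulting double colimit) that the citation hides; your treatment of size via a small $J$-dense subcategory is also something the paper leaves implicit.

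One step deserves correction. You attribute the essential surjectivity of $\Phi$ --- i.e. the isomorphism $\ell_J(X_0)^*\colim_X P_X\cong P_{X_0}$ --- to Theorem \ref{thm:canst_stack}. That theorem asserts descent for $\canst_{(\cbicat,J)}$ along $J$-covering sieves of objects of $\cbicat$; what you actually need is descent along the global cover $\{\ell_J(X)\to 1\}_{X}$ of the terminal object, which is not a $J$-cover of an object of $\cbicat$ and does not follow formally from the $J$-stack condition. The correct ingredients are the universality of colimits in a Grothendieck topos (pullback functors are left adjoints, hence cocontinuous) together with the co-Yoneda presentations above and a finality argument collapsing $\colim_X\colim_{X\leftarrow Z\to X_0}P_Z$ to $\colim_{Z\to X_0}P_Z\cong P_{X_0}$; equivalently, the fact that the self-indexing of $\Sh(\cbicat,J)$ is a stack for its canonical topology, applied to the covering sieve of $1$ generated by the $\ell_J(X)$, plus a density argument cutting the descent data down to the objects $\ell_J(X)$. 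With that substitution your argument goes through.
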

\begin{proof}
	Clearly, the terminal object of the topos $\Sh(\cbicat, J)$ is the colimit of the diagram $\ell_J:\cbicat\to \Sh(\cbicat, J)$. On the other hand, $D$ is the composite of $\ell_J$ with the functor $\Sh(\cbicat, J)\to \Topos$ sending any object $A$ of $\Sh(\cbicat, J)$ to the slice topos $\Sh(\cbicat, J)\slash A$. This functor, by \cite[Proposition 6.3.5.14]{lurie}, preserves colimits (cf. also section 6.3.2 of \cite{lurie}), whence our thesis follows.
	
	For an explicit proof, we resort to the equivalences $\Sh(\cbicat,J)/\ell_J(X)\simeq \Sh(\cbicat/X,J_X)$ introduced in Proposition \ref{prop:fib_discreta_slice_topos} and Example \ref{ex:topologia_su_slice}. A pseudococone under the diagram $D$ of vertex $\Etopos$ would be the given of geometric morphisms $F_X$ for every $X$ in $\cbicat$ and natural isomorphisms $F_y:F_X\circ C_{\fib y}\Isorightarrow F_Y$ for every $y:Y\rightarrow X$ as in the following diagram:
	\[
	\begin{tikzcd}
		{\Sh(\cbicat/X,J_X)} \arrow[rd, "F_X"'] &         & {\Sh(\cbicat/Y, J_Y)} \arrow[ll, "C_{\fib y}"'] \arrow[ld, "F_Y", ""{name=A, xshift=-1ex, yshift=1ex}] \ar[from=A, to=1-1, Leftarrow, "F_y", "\sim"sloped]\\
		& \Etopos &                                                                  
	\end{tikzcd}
	\]
	such that the following two conditions are satisfies: $F_{1_X}$ must be the canonical isomorphism $F_X\cong F_X\circ C_{\fib 1_X}$; for any pair of arrows $z:Z\rightarrow Y$, $y:Y\rightarrow X$, $F_{yz}$ must be equal to $F_z (F_y\circ C_{\fib z})$ up to the canonical isomorphism $C_{\fib y}C_{\fib z}\cong C_{\fib yz}$. We want to build from these data a geometric morphism $H:\Sh(\cbicat,J)\rightarrow \Etopos$. It is rather easy working with inverse images, since $C_{\fib y}^*:= -\circ (\fib y)\op$ because $\fib y$ is a continuous comorphism of sites (see Proposition \ref{prop:ft_cont_caratterizzazione}). So for any $E$ in $\Etopos$ we can define $H^*(E):\cbicat\op\rightarrow \Set$ on an object $X$ in $\cbicat$ as
	\[H^*(E)(X):=F^*_X(E)([1_X]),\]
	and on an arrow $y:Y\rightarrow X$ by setting $H^*(E)(y)$ equal to
	
	\[
	\begin{tikzcd}[column sep=12ex, row sep=4.5ex]
		{F^*_X(E)([1_X])} \ar[r, "{F^*_X(E)(y)}"] & {F^*_X(E)([y])} \arrow[ 
		rounded corners, 
		to path={ 
			-- ([xshift=2ex]\tikztostart.east) 
			-- ([yshift=-4.3ex, xshift=2ex]\tikztostart.east) 
			-| ([xshift=-2ex]\tikztotarget.west) 
			-- (\tikztotarget)
		}, anchor=center]{dl}\\
		{C_{\fib y}^*F_X^*(E)([1_Y])} \ar[r, "{F_y(E)([1_Y])}"] \ar[r, phantom, "\sim"{yshift=5ex}] & {F_Y^*(E)([1_Y])}
	\end{tikzcd}\]
	while for an arrow $g:E\rightarrow E'$ in $\Etopos$ we set $H(g):H(E)\Rightarrow H(E')$ componentwise as
	\[H(g)(X):=\left[F^*_X(E)([1_X])\xrightarrow{F_X^*(g)([1_X])} F_X^*(E')([1_X])\right]. \]
	We leave all verifications to the reader: well-definedness of $H^*$, showing that it is the inverse image of a geometric morphism and finally that the correspondence between cocones and geometric morphisms $\Sh(\cbicat,J)\rightarrow\Etopos$ is an equivalence.
\end{proof}
The legs of the colimit cocone are the geometric morphisms $C_{p_X}$, where $p_X:\cbicat/X\rightarrow \cbicat$ is the usual fibration:
\[
\begin{tikzcd}
	{\Sh(\cbicat/X,J_X)} \arrow[rd, "C_{p_X}"'] &         & {\Sh(\cbicat/Y, J_Y)} \arrow[ll, "C_{\fib y}"'] \arrow[ld, "C_{p_Y}", ""{name=A, xshift=-1ex, yshift=1ex}] \ar[from=A, to=1-1, equal, "\sim"sloped]\\
	& \Sh(\cbicat,J) &                                                                  
\end{tikzcd}
\]
This means that objects and arrows of $\Sh(\cbicat,J)$ can be determined up to isomorphism by the given of local data:
\begin{itemize}
	\item an object $H$ of $\Sh(\cbicat,J)$ is determined by the given for every $X$ in $\cbicat$ of a $J_X$-sheaf $H_X:(\cbicat/X)\op\rightarrow \Set$, and for every $y:Y\rightarrow X$ in $\cbicat$ of an isomorphism $H_y:H_Y\isorightarrow H_X\circ (\fib y)\op$ of $J_Y$-sheaves satisfying the following conditions:
	\begin{itemize}
		\item for every $X$ the arrow $H_{1_X}:H_X\isorightarrow H_X \circ (\fib 1_X)\op$ is the identity of $H_X$;
		\item for every pair of arrows $z:Z\rightarrow Y$, $y:Y\rightarrow X$ the arrow $H_{yz}:F_Z\isorightarrow H_X\circ (\fib yz)\op$ is equal to the arrow $F_Z\xrightarrow{F_z} F_Y\circ (\fib z)\op \xrightarrow{F_y\circ (\fib z)\op}F_X\circ (\fib y)\op\circ (\fib z)\op$.		
	\end{itemize}
	\item an arrow $\alpha:H\rightarrow K$ is determined by the given for every $X$ in $\cbicat$ of an arrow $\alpha_X:H_X\rightarrow K_X$ satisfing the identity
	\( G_y \alpha_Y= (\alpha_X\circ (\fib y)\op)F_y \).
\end{itemize}
Thus, a $J$-sheaf on $\cbicat$ is the gluing of from $J_X$-sheaves on the slice categories $\cbicat/X$ which are naturally compatible with each other: this is yet another instance of the moral that in a topos `things' exist - or, from a logical viewpoint, things are true - as long as they exist, or they are true, \emph{locally}.

Finally, let us show briefly that colimits of slice toposes are stable under pullback, for this will come in handy later. Consider a diagram of slices of $\Sh(\cbicat,J)$: we can interpret it as a composite 
\[I\xrightarrow{D} \Sh(\cbicat,J)\xrightarrow{\Sh(\cbicat,J)/-}\Topos.
\]
As we mentioned in the proof of the previous result, the functor ${\Sh(\cbicat,J)/-}$ preserves colimits: this means that if $L\simeq \colim D$ then $\Sh(\cbicat,J)/L\simeq \colim( \Sh(\cbicat,J)/D(-))$, \ie the colimit cocone on the left is mapped to the colimit cocone on the right.
\[
\begin{tikzcd}
	D(i) \ar[r, "D(s)"] \ar[rd, "\lambda_i"'] & D(j) \ar[d, "\lambda_j"]\\
	& L
\end{tikzcd}\xmapsto{\Sh(\cbicat,J)/-} 
\begin{tikzcd}
	\Sh(\cbicat,J)/D(i) \ar[r, "\prod_{D(s)}"] \ar[dr, "\prod_{\lambda_i}"'] & \Sh(\cbicat,J)/D(j) \ar[d, "\prod_{\lambda_j}"]\\
	& \Sh(\cbicat,J)/L
\end{tikzcd}
\]
This tells us in particular that a conical colimit of slices of $\Sh(\cbicat,J)$ is again a slice of $\Sh(\cbicat,J)$. 
Now, consider a geometric morphism from $\Sh(\cbicat,J)/Q$ to $\Sh(\cbicat,J)/L$ over $\Sh(\cbicat,J)$: we know that up to equivalence it is the dependent product geometric morphism $\prod_\alpha$ induced by a morphism $\alpha:Q\rightarrow L$ of $\Sh(\cbicat,J)$ (see Proposition \ref{lemma:morfgeom_tra_topos_slice_su_base}). The inverse image of $\prod_\alpha$ acts as the pullback along $\alpha$: hence it preserves colimits and thus $Q\simeq \colim (\alpha^* D)$. Now we can move to colimits of toposes applying $\Sh(\cbicat,J)/-:\Sh(\cbicat,J)\rightarrow \Topos$ to conclude that $\Sh(\cbicat,J)/Q\simeq \colim( \Sh(\cbicat,J)/H^*(D(-)))$: \ie, the colimit cocone over $\Sh(\cbicat,J)/P$ is preserved by pullback along $\prod_\alpha$.
\[\begin{tikzcd}
	\Sh(\cbicat,J)/H^*(D(i)) \ar[r] \ar[d, "\prod_{H^*(\lambda_i)}"] & \Sh(\cbicat,J)/D(i) \ar[d, "\prod_{\lambda_i}"]\\
	\Sh(\cbicat,J)/Q \ar[r, "\prod_\alpha"] & \Sh(\cbicat,J)/P \ar[ul, phantom, "\lrcorner"near end]
\end{tikzcd}\]
\begin{remark}
	We have just shown that a conical pseudocolimit of slice toposes is a slice topos, but we converse also holds: for any presheaf $P:\cbicat\op\rightarrow\Set$, the canonical colimit
	\[
	P\simeq \colim(\fib P\xrightarrow{p_P}\cbicat\hookrightarrow{\yo_\cbicat}[\cbicat\op,\Set])= \colim_{x\in P(X)} \yo(X)
	\]
	tells us that $\Sh(\cbicat,J)/\sheafify_J(P) \simeq \colim_{x\in P(X)} \Sh(\cbicat,J)/\ell_J(X)$. Restricting to slices over representables and using the equivalence in Proposition \ref{prop:fib_discreta_slice_topos}, we can alternatively say a topos is a conical pseudocolimit of a diagram
	\[
	I\xrightarrow{} \cbicat\xrightarrow{\Sh(\cbicat/-,J_{(-)})}\Topos
	\]
	if and only if it is the topos of sheaves $\Sh(\fib P, J_P)$ for some (pre)sheaf $P:\cbicat\op\rightarrow\Set$. The fundamental adjunction of Section \ref{sec:adj_for_toposes} will generalize this by showing that a topos over $\Sh(\cbicat,J)$ is the pseudocolimit of toposes of the kind $\Sh(\cbicat/-,J_{(-)})$ if and only if it is the classifying topos associated to a pseudofunctor $\dcat:\cbicat\op\rightarrow\CAT$ (see Definition \ref{def:Giraud_topology_classifying_topos}).
\end{remark}

\section{The adjoints to the Grothendieck construction}\label{sec:adjoints_to_Grothendieck}
We already know that the 2-functor $\gbicat:\Ind_\cbicat\rightarrow\Cl\Fib_\cbicat$ in an equivalence between 2-categories, and in a moment we will see that by embedding the codomain $\Cl\Fib_\cbicat$ into $\CAT/\cbicat$ the equivalence extends to an adjunction. If we also consider a topology $J$ on $\cbicat$, by seeing fibrations over $\cbicat$ as comorphisms of sites to $(\cbicat,J)$ we also obtain other adjoints to the Grothendieck construction; this will imply in particular that the Giraud site associated to a $\cbicat$-indexed category is a colimit in the category of sites and (continuous) comorphisms.

We start by recalling the definition of 2-adjunction:
\begin{defn}\label{def:2-adjoint}
	Consider two 2-categories $\abicat$ and $\bbicat$ and two 2-functors $L:\abicat\rightarrow\bbicat$ and $R:\bbicat\rightarrow\abicat$: then there is a 2-adjunction\index{2-adjunction} $L\dashv R$ if and only if there is an equivalence
	\[ \bbicat(L(X),Y)\simeq \abicat(X, R(Y)) \]
	pseudonatural in $X$ and $Y$.
\end{defn}
From now on we shall consider the diagram $\cbicat/-:\cbicat\rightarrow\Cl\Fib_\cbicat$ mapping $X$ to $p_X:\cbicat/X\rightarrow\cbicat$ and $y:Y\rightarrow X$ to $\fib y: \cbicat/Y\rightarrow\cbicat/X$; the corresponding diagram $\cbicat\rightarrow\Ind_\cbicat$ operates by mapping $X$ to the presheaf $\yo(X)$, and $y$ to the arrow of presheaves $\yo(y)$. The first thing to remark is that the fibred Yoneda lemma (Proposition \ref{prop:fibered_yoneda}) presents a cloven fibration $p:\dbicat\rightarrow \cbicat$ as a $\dcat$-weighted colimit in $\Cl\Fib_\cbicat$ of the diagram $\cbicat/-:\cbicat\rightarrow \Cl\Fib_\cbicat$: 
\[\Cl\Fib_\cbicat(\dbicat, \xbicat)\simeq \Ind_\cbicat(\dcat, \xcat)\simeq \Ind_\cbicat(\dcat, \Cl\Fib_\cbicat( \cbicat/-, \xbicat)) \]
considering the image of the identity of $\dbicat$ via this equivalence, we obtain the following colimit cocone:
\[
\begin{tikzcd}[row sep=10ex]
	\cbicat/X \ar[dr, "F_{(B,\beta)}"', bend right, ""{name=B}] \ar[dr, "F_{(A,\alpha)}"{yshift=2ex, xshift=-2.5ex}, bend left, ""{name=A, below}, ""{name=D}] && \cbicat/Y \ar[ll, "\fib y"'] \ar[dl, "F_{\dcat(y)(A,\alpha)}", ""{name=C, below}]  \\
	&\dbicat& \arrow[from=C, to=D, Rightarrow, "F^y_{(A,\alpha)}"{yshift=3ex, xshift=4.5ex}, "\sim"{sloped, yshift=-2ex},  shorten <=1ex] \ar[from=A, to=B, Rightarrow, "F_\gamma"']
\end{tikzcd}.\]
We recall that the morphism of fibrations  $(F_{(A,\alpha)},\phi_{(A,\alpha)})\in \Cl\Fib_\cbicat(\cbicat/X,\dbicat)$\index{$F_{(A,\alpha)}$} is defined thus: $F_{(A,\alpha)}[y]:=\dom(\widehat{\alpha y}_A)$, and for $z:[yz]\rightarrow[y]$ we have set $F_{(A,\alpha)}(z):=\lambda_{\alpha y,z,A}$, and $\phi_{(A,\alpha)}([y]):=\theta_{\alpha y,A}\inv$. To define $F_\gamma:F_{(A,\alpha)}\Rightarrow F_{(B,\beta)}$\index{$F_{\gamma}$}, notice that the image of $\gamma \widehat{\alpha y}_A$ via $p$ factors through the image of $\widehat{\beta y}_B$: thus there is a unique $F_\gamma([y]):\dom(\widehat{\alpha y}_A)\rightarrow \dom(\widehat{\beta y}_B)$ induced by the property of cartesian arrows. Finally, to define the components of $F^y_{(A,\alpha)}$, notice that for $[z]$ in $\cbicat/Y$ it holds that $F_{(A,\alpha)}\fib y([z])= \dom (\widehat{\alpha yz}_A)$ and $F_{\dcat(y)(A,\alpha)}([z])= \dom (\widehat{\theta_{\alpha y,A}z}_{\dom(\widehat{\alpha y})_A})$: thus we define $F^y_{(A,\alpha)}([z]):=\chi_{\alpha y,z,A}\inv$\index{$F^y_{(A,\alpha)}$}.

\begin{remark}
	The fact that any cloven fibration is a colimit of the fibrations $\cbicat/X$ is an evident generalization of the fundamental result that any presheaf is a colimit of representable presheaves (once we recall that $\fib \yo(X)\simeq \cbicat/X$).
\end{remark}
Let us now start considering adjoints to $\gbicat$. We begin by providing the following result, which is known but somehow hard to find the in the literature, showing that the Grothendieck construction is part of an adjoint triple: 
\begin{prop}\label{prop:aggiunti_di_G}
	Denote by $\Lambda_{\CAT/\cbicat}$\index{$\Lambda_{\CAT/\cbicat}\dashv \Gamma_{\CAT/\cbicat}$} the 2-functor 
	\[
	\Ind_\cbicat\xrightarrow{\gbicat} \Cl\Fib_\cbicat\xrightarrow{\For}\CAT/\cbicat:
	\] 
	it admits a left adjoint $\Lfrak$\index{$\Lfrak$}, defined on objects as
	\[
	\Lfrak:[F:\dbicat\rightarrow \cbicat]\mapsto\left[
	\comma{-}{F}:\cbicat\op\rightarrow\CAT\right].
	\]
	Moreover, if $\cbicat$ is small, then the 2-functor 
	\[
	\Lambda_{\CAT/\cbicat}:\Ind_\cbicat\xrightarrow{\gbicat}\CAT/\cbicat
	\] 
    also admits a right adjoint $\Gamma_{\CAT/\cbicat}$, defined on objects as
	\[ \Gamma_{\CAT/\cbicat}:[F:\dbicat\rightarrow\cbicat]\mapsto \left[ \CAT/\cbicat(\cbicat/-,[F]):\cbicat\op\rightarrow \CAT \right].
	\]
\end{prop}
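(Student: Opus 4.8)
The plan is to establish the two adjunctions by exhibiting the required pseudonatural equivalences of hom-categories, exploiting the equivalence $\gbicat:\Ind_\cbicat\simeq\Cl\Fib_\cbicat$ of Corollary \ref{cor:equivalenza_strfib_indcat} together with the fibred Yoneda lemma (Proposition \ref{prop:fibered_yoneda}). For the \emph{left} adjoint $\Lfrak$, I first observe that $\Lambda_{\CAT/\cbicat}=\For\circ\gbicat$, so since $\gbicat$ is an equivalence it suffices to produce a left adjoint to the forgetful inclusion $\For:\Cl\Fib_\cbicat\hookrightarrow\CAT/\cbicat$. The natural candidate is to send a functor $F:\dbicat\to\cbicat$ to the $\cbicat$-indexed category $\comma{-}{F}:\cbicat\op\to\CAT$, whose fibre over $X$ is the comma category $\comma{X}{F}$ and whose transition along $y:Y\to X$ is precomposition. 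I would verify the adjunction by constructing the equivalence
\[
\CAT/\cbicat([F],\Lambda_{\CAT/\cbicat}(\ecat))\simeq \Ind_\cbicat(\Lfrak([F]),\ecat)
\]
directly: a map over $\cbicat$ from $F$ to $\gbicat(\ecat)$ amounts to a functor assigning to each object $D$ of $\dbicat$ an object of $\ecat(F(D))$ naturally, which is exactly the data of an indexed functor out of $\comma{-}{F}$. The unit is the canonical functor $\dbicat\to\gbicat(\comma{-}{F})$, and the counit uses the universal property of the comma construction.

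For the \emph{right} adjoint $\Gamma_{\CAT/\cbicat}$, the key point is that $\Lambda_{\CAT/\cbicat}$ behaves like a colimit functor with weight varying over the diagram $\cbicat/-:\cbicat\to\CAT/\cbicat$, and a right adjoint to such a functor must be a hom-functor. Concretely I would define $\Gamma_{\CAT/\cbicat}([F])(X):=\CAT/\cbicat(\cbicat/X,[F])$ with transition morphisms given by precomposition with $\fib y:\cbicat/Y\to\cbicat/X$. The desired equivalence
\[
\CAT/\cbicat(\Lambda_{\CAT/\cbicat}(\dcat),[F])\simeq \Ind_\cbicat(\dcat,\Gamma_{\CAT/\cbicat}([F]))
\]
would then be assembled from the density presentation recalled just above the statement, namely that every cloven fibration is the $\dcat$-weighted colimit of $\cbicat/-$. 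Using $\gbicat(\dcat)\simeq\colim^{\dcat}(\cbicat/-)$ in $\CAT/\cbicat$ and the fact that hom-functors send colimits in the first variable to limits, one gets
\[
\CAT/\cbicat(\gbicat(\dcat),[F])\simeq \lim{}^{\dcat}\CAT/\cbicat(\cbicat/-,[F])\simeq \Ind_\cbicat(\dcat,\Gamma_{\CAT/\cbicat}([F])),
\]
where the last step is the defining universal property of weighted limits in $\Ind_\cbicat$ together with the fibred Yoneda identification $\Ind_\cbicat(\yo(X),\Gamma_{\CAT/\cbicat}([F]))\simeq\Gamma_{\CAT/\cbicat}([F])(X)=\CAT/\cbicat(\cbicat/X,[F])$.

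The smallness hypothesis on $\cbicat$ enters precisely in the right-adjoint half: it guarantees that $\CAT/\cbicat(\cbicat/X,[F])$ is a genuine (locally small) category, so that $\Gamma_{\CAT/\cbicat}([F])$ lands in $\CAT$ rather than in some illegitimately large $2$-category, and it ensures the relevant weighted limits exist. The main obstacle I anticipate is the careful bookkeeping of the pseudonaturality of these equivalences and the coherence of the structural isomorphisms $\phi^\dcat_{y,z}$ — in particular checking that the identifications respect composition and that the comparison $2$-cells $F^y_{(A,\alpha)}=\chi_{\alpha y,z,A}\inv$ displayed above the statement match the canonical isomorphisms produced by the comma and slice constructions. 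These verifications are purely formal but lengthy; I would organize them by first treating the object-level bijection, then extending to $1$-cells and $2$-cells, and finally invoking the general principle that an adjoint equivalence on underlying data together with compatibility with the $2$-categorical structure suffices to promote everything to a genuine $2$-adjunction in the sense of Definition \ref{def:2-adjoint}.
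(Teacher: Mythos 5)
Your left-adjoint half is sound: reducing to a left adjoint of $\For:\Cl\Fib_\cbicat\to\CAT/\cbicat$ via the equivalence $\gbicat$, and identifying $\CAT/\cbicat([F],[\gbicat(\ecat)])$ with pseudonatural transformations $\comma{-}{F}\Rightarrow\ecat$ by restriction to the generic elements $(D,1_{F(D)})$, is exactly the universal property that the paper later records as Corollary \ref{cor:genelts_aggiunto_G}. Note, for comparison, that the paper does not actually prove this proposition: it defers to the literature (a coend-calculus treatment), so you are supplying an argument where the text gives none; your explanation of where smallness of $\cbicat$ enters (local smallness of $\CAT/\cbicat(\cbicat/X,[F])$) coincides with the paper's only substantive remark.

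The right-adjoint half, however, has a genuine gap: you invoke ``$\gbicat(\dcat)\simeq\colim^{\dcat}(\cbicat/-)$ \emph{in} $\CAT/\cbicat$'', but the density presentation recalled before the statement lives in $\Cl\Fib_\cbicat$ only --- it is the chain $\Cl\Fib_\cbicat(\dbicat,\xbicat)\simeq\Ind_\cbicat(\dcat,\Cl\Fib_\cbicat(\cbicat/-,\xbicat))$ coming from the fibred Yoneda lemma, and its first link uses that every object of $\Cl\Fib_\cbicat$ is of the form $\gbicat(\xcat)$. That argument does not transfer to $\CAT/\cbicat$: the test object $[F]$ there is an arbitrary functor, not a fibration, and the forgetful $2$-functor $\Cl\Fib_\cbicat\to\CAT/\cbicat$ is neither full (morphisms of fibrations must preserve cartesian arrows) nor a left adjoint, so it has no reason to preserve the weighted colimit. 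Indeed, the $\CAT/\cbicat$-version of the colimit presentation is literally equivalent to the adjunction you are trying to prove, and the paper derives it \emph{from} this proposition as Corollary \ref{cor:colimite_in_cat_su_C} (which even phrases it as ``the forgetful functor \emph{creates} the colimit''). So as written your chain $\CAT/\cbicat(\gbicat(\dcat),[F])\simeq\lim^{\dcat}\CAT/\cbicat(\cbicat/-,[F])$ begs the question. The repair is to prove that first equivalence directly: send $(G,\gamma):\gbicat(\dcat)\to[F]$ to the cocone $H_{(X,U)}([y]):=G(Y,\dcat(y)(U))$ and, conversely, a pseudonatural family $H$ to $G(X,U):=H_{(X,U)}([1_X])$, with the action on arrows forced as in Remark \ref{remark:morfismo_indotto_colimite_fibrazione}; this is the same bookkeeping you defer to your final paragraph, but it must replace, not follow from, the colimit invocation.
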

\begin{proof}
	These results are mentioned in \cite{nlab:grothendieck_construction}; one reference where both are proved in a much wider context using coend calculus is \cite[Proposition 2.1, Definition 2.5 and Proposition 3.2]{hollander}. Notice that the restriction on the size of $\cbicat$ is needed so that $\Gamma_{\CAT/\cbicat}([F])$ is a locally small $\cbicat$-indexed category.
\end{proof}
We remark in particular the composite 2-functor $\gbicat\Lfrak$ maps any functor $F:\dbicat\rightarrow\cbicat$ to its fibration of generalized elements $\comma{1_\cbicat}{F}\rightarrow\cbicat$ (see \cite[Section 3.4.4]{denseness}). In the context of cloven fibrations the adjunction $\Lfrak\dashv \gbicat$ entails the following universal property of the fibration of generalized elements:
\begin{cor}\label{cor:genelts_aggiunto_G}
	Let $F:\dbicat\rightarrow\cbicat$ be a functor. Then	
	\begin{enumerate}[(i)]
		\item The fibration $ {\pi_{\cbicat}^{F}}:\comma{1_{\cbicat}}{F} \to \cbicat$ satisfies the following universal property: given a factorization of $F$ through a fibration $q$, \ie a functor $G:\dbicat\rightarrow\ebicat$ such that $qG\cong F$, there is a unique morphism of fibrations $\chi:{\pi_{\cbicat}^{F}} \to q$ such that $G=\chi\circ i^F$: 
		\begin{equation*}
			\begin{tikzcd}
				{\cal D} \arrow[rr, "F"]   \arrow[dr, "i^F"] \arrow[ddr, "G"{below, xshift=-2ex}]  & &  {\cal C}       \\
				& {\comma{1_{\cal C}}{F}} \ar[d, dashed, "\chi"] \ar[ur, "{\pi_{\cal C}^{F}}"{xshift=1ex}] & \\
				& {\cal E}  \ar[uur, "q"{below, xshift=1ex}]  &  
			\end{tikzcd}
		\end{equation*}	
		\item The functor $i^{F}$ is an equivalence if and only if for any $d\in {\cal D}$ and any $c\in {\cal C}$, every arrow $\alpha:c\to F(d)$ in $\cal D$ is an isomorphism. 
	\end{enumerate} 
\end{cor}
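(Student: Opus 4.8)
The plan is to deduce both parts from the adjunction $\Lfrak\dashv\gbicat$ of Proposition~\ref{prop:aggiunti_di_G}, whose unit at $[F]$ is precisely the functor $i^F:\dbicat\rightarrow\comma{1_\cbicat}{F}$ sending $d$ to $(F(d),d,1_{F(d)})$ and satisfying $\pi_\cbicat^F i^F=F$ over $\cbicat$. The first preliminary step is to record the fibration structure of $\pi_\cbicat^F$: a short check shows it is a cloven Grothendieck fibration, the cartesian lift of $\alpha:c\rightarrow c'$ at $(c',d',f')$ being $(\alpha,1_{d'}):(c,d',f'\alpha)\rightarrow(c',d',f')$. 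In particular, for every object $(c,d,f)$ the arrow $(f,1_d):(c,d,f)\rightarrow i^F(d)$ is cartesian and projects to $f$; this single observation will drive the uniqueness argument.

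For part~(i) I would realise the universal property as the universal property of the unit $i^F$. Given a cloven fibration $q:\ebicat\rightarrow\cbicat$ and a factorization $G:\dbicat\rightarrow\ebicat$ with an isomorphism $\phi:qG\Isorightarrow F$, I would first use the equivalence $\Cl\Fib_\cbicat\simeq\Ind_\cbicat$ of Corollary~\ref{cor:equivalenza_strfib_indcat} to present $q$ as $\gbicat(\ecat)$ for $\ecat:=\Ifrak(q)$, so that the factorization becomes a $1$-cell $[F]\rightarrow\gbicat(\ecat)$ in $\CAT/\cbicat$; the adjunction then returns a unique indexed functor $\comma{-}{F}\Rightarrow\ecat$ whose Grothendieck construction, transported back along the equivalence, is the desired morphism of fibrations $\chi$. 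To make $\chi$ concrete I would describe it via the cleavage of $q$: send $(c,d,f)$ to the domain of the cartesian lift of $c\xrightarrow{\phi_d^{-1}f}qG(d)$ at $G(d)$. With this description $\chi$ manifestly preserves cartesian arrows, and $\chi i^F\cong G$ because cartesian lifts of isomorphisms are isomorphisms (Remark~\ref{remark:proprietà_frecce_cartesiane}(iii)).

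The uniqueness is the conceptual core. Any morphism of fibrations $\chi'$ with $\chi' i^F\cong G$ must send the cartesian arrow $(f,1_d):(c,d,f)\rightarrow i^F(d)$ to a $q$-cartesian arrow lying over $f$ with codomain $\chi' i^F(d)\cong G(d)$; since cartesian lifts of a fixed arrow are canonically isomorphic (Remark~\ref{remark:proprietà_frecce_cartesiane}(iii)), this pins down $\chi'(c,d,f)$ up to canonical isomorphism and forces $\chi'\cong\chi$. The main obstacle I anticipate is purely bookkeeping: reconciling the strict commutativity demanded by $1$-cells of $\CAT/\cbicat$ (through which $\gbicat$ factors) with the commutativity-up-to-isomorphism built into the hypothesis $qG\cong F$ and into the very notion of morphism of fibrations. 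Threading $\phi$ correctly through the adjunction, and interpreting ``$G=\chi\circ i^F$'' as the appropriate canonical isomorphism (which becomes a strict equality when $q$ is split and the factorization is strict), is where care is needed.

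For part~(ii) I would argue directly and expect no real difficulty. First, $i^F$ is \emph{always} fully faithful: a morphism $i^F(d)\rightarrow i^F(d')$ is a pair $(\alpha,\beta)$ with $\alpha=F(\beta)$, hence determined by $\beta:d\rightarrow d'$ alone, giving a natural bijection $\dbicat(d,d')\cong\comma{1_\cbicat}{F}(i^F d,i^F d')$. Thus $i^F$ is an equivalence if and only if it is essentially surjective. I would then characterise its essential image: an object $(c,d,f)$ is isomorphic to one in the image of $i^F$ exactly when $f$ is invertible — if $f$ is an isomorphism then $(f,1_d):(c,d,f)\isorightarrow i^F(d)$, and conversely any iso $(\alpha,\beta):(c,d,f)\isorightarrow i^F(d'')$ forces $f=F(\beta)^{-1}\alpha$, a composite of isomorphisms. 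Essential surjectivity of $i^F$ is therefore equivalent to the stated condition that every arrow $c\rightarrow F(d)$, with $c$ in $\cbicat$ and $d$ in $\dbicat$, be an isomorphism, completing the proof.
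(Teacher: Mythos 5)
Your proposal is correct and follows essentially the same route as the paper, which states the corollary as a direct consequence of the adjunction $\Lfrak\dashv\gbicat$ of Proposition \ref{prop:aggiunti_di_G} with $i^F$ as its unit at $[F]$ and omits further details. Your explicit construction of $\chi$ via the cleavage of $q$, the uniqueness argument from the cartesian arrows $(f,1_d):(c,d,f)\rightarrow i^F(d)$, and the direct verification for part (ii) (full faithfulness of $i^F$ plus the characterisation of its essential image) correctly supply exactly the verifications the paper leaves to the reader, including the sensible handling of the strict-versus-up-to-isomorphism commutativity in the statement.
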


Let us apply Proposition \ref{prop:aggiunti_di_G} to a $\cbicat$-indexed category $\dcat$ over a small category $\cbicat$: we obtain the equivalence
\[
\CAT/\cbicat (\gbicat(\dcat), \kbicat)\simeq \Ind_\cbicat(\dcat, {\CAT}/\cbicat(\cbicat/-, \kbicat)),
\]
which presents $\gbicat(\dcat)$ as the $\dcat$-weighted colimit of the diagram $\cbicat/-:\cbicat\rightarrow \CAT/\cbicat$. The legs of the colimit cocone are given by the functors $F_{(A,\alpha)}:\cbicat/X\rightarrow \gbicat(\dcat)$ introduced earlier in this section:
\begin{cor}\label{cor:colimite_in_cat_su_C}
	Consider $\cbicat$ small and a cloven fibration $\dbicat\rightarrow \cbicat$ with corresponding pseudofunctor $\dcat:\cbicat\op\rightarrow\CAT$: then $\gbicat(\dcat)$ is the $\dcat$-weighted colimit of the diagram $\cbicat/-:\cbicat\rightarrow \CAT/\cbicat$. Moreover, the forgetful functor $\Cl\Fib_\cbicat\rightarrow \CAT/\cbicat$ creates the $\dcat$-weighted colimit of the diagram $\cbicat/-:\cbicat\rightarrow \Cl\Fib_\cbicat$.
\end{cor}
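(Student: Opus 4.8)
The plan is to read both assertions off adjunctions already in hand — Proposition~\ref{prop:aggiunti_di_G} for the ambient $2$-category $\CAT/\cbicat$, and the fibred Yoneda lemma (Proposition~\ref{prop:fibered_yoneda}) for $\Cl\Fib_\cbicat$ — and to reconcile the two through the forgetful functor $\For$. First I would deduce the first claim directly from the right $2$-adjoint of Proposition~\ref{prop:aggiunti_di_G}, which exists precisely because $\cbicat$ is small. The adjunction $\Lambda_{\CAT/\cbicat}\dashv \Gamma_{\CAT/\cbicat}$ gives, for every $\kbicat$ in $\CAT/\cbicat$, a pseudonatural equivalence
\[
\CAT/\cbicat(\gbicat(\dcat),\kbicat)\simeq \Ind_\cbicat\bigl(\dcat, \CAT/\cbicat(\cbicat/-,\kbicat)\bigr),
\]
using $\Lambda_{\CAT/\cbicat}(\dcat)=\gbicat(\dcat)$ and $\Gamma_{\CAT/\cbicat}(\kbicat)=\CAT/\cbicat(\cbicat/-,\kbicat)$. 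Since $\Ind_\cbicat(\dcat,-)=[\cbicat\op,\CAT]_{ps}(\dcat,-)$ and the right-hand pseudofunctor is exactly $\CAT/\cbicat(R(-),\kbicat)$ for the diagram $R=\cbicat/-$, this is the defining universal property of the $\dcat$-weighted pseudocolimit of $R$ in $\CAT/\cbicat$; evaluating the equivalence at $\id_{\gbicat(\dcat)}$ identifies the colimit cocone with the family of legs $F_{(A,\alpha)}$ recalled at the start of the section.

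For the creation statement I would run the parallel argument inside $\Cl\Fib_\cbicat$. The chain of equivalences displayed before the corollary — obtained from the fibred Yoneda lemma together with the fact that $\Ifrak$ is an equivalence (Corollary~\ref{cor:equivalenza_strfib_indcat}) — reads
\[
\Cl\Fib_\cbicat(\gbicat(\dcat),\xbicat)\simeq \Ind_\cbicat\bigl(\dcat, \Cl\Fib_\cbicat(\cbicat/-,\xbicat)\bigr),
\]
which is exactly the universal property of $\colim_{ps}^\dcat(\cbicat/-)$ computed in $\Cl\Fib_\cbicat$; hence the weighted colimit already exists there and equals $\gbicat(\dcat)$. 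The point that glues the two computations is that the \emph{same} legs $F_{(A,\alpha)}$ serve both: in the proof of Proposition~\ref{prop:fibered_yoneda} each $F_{(A,\alpha)}$ sends $z\colon[yz]\to[y]$ to $\lambda_{\alpha y,z,A}$, which is cartesian by Remark~\ref{remark:frecce_chi_lambda}, so $F_{(A,\alpha)}$ is a morphism of fibrations and lives in $\Cl\Fib_\cbicat$. Consequently $\For$ carries the $\Cl\Fib_\cbicat$-colimit cocone onto the $\CAT/\cbicat$-colimit cocone of the first part, so the colimit is preserved; and since $\gbicat$ is an equivalence $\Ind_\cbicat\simeq\Cl\Fib_\cbicat$, the colimit and its lift are essentially unique. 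Preservation together with essentially unique lifting is precisely the assertion that $\For$ creates the colimit.

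The verifications I would leave to the reader are the pseudonaturality of the two equivalences and the bookkeeping matching the cocone legs with the $F_{(A,\alpha)}$; these are already contained in Propositions~\ref{prop:fibered_yoneda} and~\ref{prop:aggiunti_di_G}. I expect the main obstacle to be conceptual rather than computational: the hom-categories $\Cl\Fib_\cbicat(\cbicat/-,\xbicat)$ and $\CAT/\cbicat(\cbicat/-,\kbicat)$ genuinely differ, since a functor over $\cbicat$ between fibrations need not preserve cartesian arrows, so one must check that restricting the test objects from the full slice to the sub-$2$-category of fibrations leaves the universal property intact. The single fact on which creation turns out to rest is that the colimit computed in the larger $2$-category $\CAT/\cbicat$ happens already to be a fibration whose cocone legs are morphisms of fibrations — \ie the cartesianity of the $\lambda$-arrows — so that no structure is lost or gained in the passage.
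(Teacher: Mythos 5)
Your proposal is correct and follows essentially the same route as the paper: the first claim is read off the adjunction $\Lambda_{\CAT/\cbicat}\dashv\Gamma_{\CAT/\cbicat}$ of Proposition~\ref{prop:aggiunti_di_G}, and the creation statement comes from the fibred Yoneda lemma exhibiting the same universal property in $\Cl\Fib_\cbicat$ with the same legs $F_{(A,\alpha)}$ (cartesian because the $\lambda$-arrows are), exactly as in the discussion preceding the corollary in the text. No gaps.
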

\begin{remark}\label{remark:morfismo_indotto_colimite_fibrazione}
	Consider a cocone under the diagram $\cbicat/-$ with vertex $\abicat$:
	\[
	\begin{tikzcd}[row sep=10ex]
		\cbicat/X \ar[dr, "H_{(B,\beta)}"', bend right, ""{name=B}] \ar[dr, "H_{(A,\alpha)}"{yshift=2ex, xshift=-2.5ex}, bend left, ""{name=A, below}, ""{name=D}] && \cbicat/Y \ar[ll, "\fib y"'] \ar[dl, "H_{\dcat(y)(A,\alpha)}", ""{name=C, below}]  \\
		&\abicat& \ar[from=C, to=D, Rightarrow, "H^y_{(A,\alpha)}"{yshift=3ex, xshift=4.5ex}, shorten <=1ex]\ar[from=A, to=B, Rightarrow, "H_\gamma"']
	\end{tikzcd}\]
	Let us write explicitly the (essentially unique) functor $h:\dbicat\rightarrow\abicat$ induced by the universal property of colimits. Knowing that the colimit cocone is given by the functors $F_{(A,\alpha)}$, and that for every $D$ in $\dbicat$ there is a canonical isomorphism $D\simeq F_{(D,1_{p(D)})}([1_{p(D)}])$, the request that $H_{(A,\alpha)}\simeq hF_{(A,\alpha)}$ forces the definition of $h$: indeed, $h(D)\simeq hF_{(D,1_{p(D)})}([1_{p(D)}])\simeq H_{(D,1_{p(D)})}([1_{p(D)}])$, and hence we can set 
	\[
	h(D):=H_{(D,1_{p(D)})}([1_{p(D)}]).
	\]
	The definition of the action of $h$ on arrows is trickier, but in fact there is only one possible canonical way of defining, given $g:D\rightarrow E$ in $\dbicat$, an arrow $h(g):h(D)\rightarrow h(E)$. If we denote by $v_g:D\rightarrow \dom(\widehat{p(g)}_E)$ the unique arrow such that $\widehat{p(g)}_Ev_g=g$ and $p(v_g)=\theta_{p(g), E}$, we have an arrow $v_g:(D, 1_{p(D)})\rightarrow (\dom(\widehat{p(g)}_E), \theta_{p(g), E})=\dcat(p(g))(E, 1_{p(E)})$ in $\dcat(p(D))$; then $h(g)$ is the composite arrow
	\[
	\begin{tikzcd}[column sep=12ex, row sep=5ex]
		{H_{(D, 1_{p(D)})}([1_{p(D)}])} \ar[r, "{H_{v_g}([1_{p(D)}])}"] & {H_{\dcat(p(g))(E, 1_{p(E)})}} \arrow[
		rounded corners, 
		to path={ 
			-- ([xshift=2ex]\tikztostart.east) 
			--node[right]{\scriptsize $H^{p(g)}_{(E, 1_{p(E)})}([1_{p(D)}])$} ([yshift=-5ex, xshift=2ex]\tikztostart.east) 
			-| ([xshift=-2ex]\tikztotarget.west) 
			-- (\tikztotarget)
		}, anchor=center]{dl}\\
		{H_{(E, 1_{p(E)})}([p(g)])} \ar[r, "{H_{(E,1_{p(E)})}(p(g))}"] & {H_{(E,1_{p(E)})}([1_{p(E)}])}
	\end{tikzcd}.\]
\end{remark}
So far we have only dealt with raw categories: let us now study the interaction of the adjoints to $\gbicat$ with Giraud's topologies, when $\cbicat$ is endowed with a topology $J$. For any cloven fibration $p:\dbicat\rightarrow\cbicat$, every leg $F_{(A,\alpha)}$ of the colimit cocone
\[
\begin{tikzcd}[row sep=10ex]
	\cbicat/X \ar[dr, "F_{(B,\beta)}"', bend right, ""{name=B}] \ar[dr, "F_{(A,\alpha)}"{yshift=2ex, xshift=-2.5ex}, bend left, ""{name=A, below}, ""{name=D}] && \cbicat/Y \ar[ll, "\fib y"'] \ar[dl, "F_{\dcat(y)(A,\alpha)}", ""{name=C, below}]  \\
	&\dbicat& \ar[from=C, to=D, Rightarrow, "F^y_{(A,\alpha)}"{yshift=3ex, xshift=4.5ex}, shorten <=1ex, "\sim"{sloped, yshift=-2ex}] \ar[from=A, to=B, Rightarrow, "F_\gamma"']
\end{tikzcd}.\]
is $(J_X, J_\dbicat)$-continuous by Proposition \ref{prop:fib_e_morfib_sono_can.comorf}, therefore the cocone 
lives in the slice category  $\Cosite\cont/(\cbicat,J)$. What is relevant is that it is still a colimit cocone, as an immediate consequence of the following lemma:
\begin{lemma} With the notations of Remark \ref{remark:morfismo_indotto_colimite_fibrazione}, and considering topologies $J$ on $\cbicat$ and $T$ on $\abicat$:
	\begin{enumerate}[(i)]
		\item Suppose that the legs $H_{(A,\alpha)}$ and $q:\abicat\rightarrow\cbicat$ are comorphisms of sites: then $h$ is a comorphism of sites $(\dbicat, J_\dbicat)\rightarrow (\abicat,T)$; 
		\item Suppose that all the legs $H_{(A,\alpha)}$ 
		are $(J_{p(D)},T)$-continuous functors: then $h$ is a $(J_\dbicat,T)$-continuous functor. 
	\end{enumerate}
\end{lemma}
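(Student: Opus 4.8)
The common thread is the colimit presentation from the fibred Yoneda lemma (Proposition \ref{prop:fibered_yoneda}): the induced functor $h$ satisfies $H_{(A,\alpha)}\cong h\circ F_{(A,\alpha)}$ for every $(A,\alpha)$ in a fibre, and on objects $h(D)=H_{(D,1_{p(D)})}([1_{p(D)}])$ by Remark \ref{remark:morfismo_indotto_colimite_fibrazione}, while $q\circ h\cong p$. The computational facts I would record first are that $F_{(D,1_{p(D)})}([w])=\dom(\widehat{w}_D)$, and that $F_{(D,1_{p(D)})}$ sends the slice arrow $[w]\to[1_{p(D)}]$ to $\lambda_{1_{p(D)},w,D}$, which satisfies $\widehat{1_{p(D)}}_D\,\lambda_{1_{p(D)},w,D}=\widehat{w}_D$ with $\widehat{1_{p(D)}}_D$ invertible. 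I would also recall that $p$ is always a $(J_\dbicat,J)$-continuous comorphism and each leg $F_{(A,\alpha)}$ a $(J_{p(A)},J_\dbicat)$-continuous comorphism, by Proposition \ref{prop:fib_e_morfib_sono_can.comorf}.

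For (i), I would verify the covering-lifting property of Definition \ref{def:comorfismo}(i) directly. Given $D$ and a $T$-covering sieve $S$ over $h(D)=H_{(D,1_{p(D)})}([1_{p(D)}])$, the hypothesis that the leg $H_{(D,1_{p(D)})}$ is a comorphism $(\cbicat/p(D),J_{p(D)})\to(\abicat,T)$ yields a $J_{p(D)}$-covering sieve $R'$ over $[1_{p(D)}]$ with $H_{(D,1_{p(D)})}(R')\subseteq S$. By the description of $J_{p(D)}$ in Example \ref{ex:topologia_su_slice}, $R'$ is generated by arrows $[w_i]\to[1_{p(D)}]$ whose projections $\{w_i\}$ form a $J$-covering family of $p(D)$; taking cartesian lifts $\widehat{w_i}_D$, Proposition \ref{prop:girtpl_descrizione_sieves} guarantees that the sieve $R$ they generate over $D$ is $J_\dbicat$-covering. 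Finally, using the cocone isomorphism $H_{(D,1_{p(D)})}\cong h\circ F_{(D,1_{p(D)})}$ together with $\widehat{1_{p(D)}}_D\,\lambda_{1_{p(D)},w_i,D}=\widehat{w_i}_D$, each $h(\widehat{w_i}_D)$ agrees, up to precomposition with an isomorphism, with the image under $H_{(D,1_{p(D)})}$ of the corresponding generator of $R'$, hence lies in $S$; as $S$ is a sieve this gives $h(R)\subseteq S$. The hypothesis that $q$ is a comorphism is what makes the entire cocone live in $\Cosite/(\cbicat,J)$; alternatively, Lemma \ref{lemma:comorfismi_via_tplgir} applied to $q\circ h\cong p$ already yields that $h$ is a comorphism into $(\abicat,M^q_J)$, and the argument above is precisely what upgrades the domain topology from $M^q_J$ to the possibly larger $T$.

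For (ii), I would use the characterisation of continuity in Proposition \ref{prop:ft_cont_caratterizzazione}(ii): it suffices to show that $h^*=(-\circ h\op)$ sends $T$-sheaves to $J_\dbicat$-sheaves. Fixing a $T$-sheaf $P$ and putting $Q:=P\circ h\op$, restriction along a leg and the relation $H_{(A,\alpha)}\cong h\circ F_{(A,\alpha)}$ give $Q\circ F_{(D,1_{p(D)})}\op\cong H_{(D,1_{p(D)})}^*(P)$, which is a $J_{p(D)}$-sheaf since the leg is assumed $(J_{p(D)},T)$-continuous. To pass from this to the $J_\dbicat$-sheaf condition for $Q$, I would reduce to the generating coverage of $J_\dbicat$: by Proposition \ref{prop:top_minima_comorfismo} and Proposition \ref{prop:girtpl_descrizione_sieves}, $J_\dbicat=M^p_J$ is generated by the sieves $S_D=\{f:p(f)\in S\}$ for $S\in J(p(D))$, and such a sieve is exactly the one generated by the cartesian lifts $\widehat{w}_D$ of the members of $S$, i.e. the sieve generated by the image under $F_{(D,1_{p(D)})}$ of the corresponding $J_{p(D)}$-covering sieve. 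The sheaf condition for $Q$ at $S_D$ then transfers along the leg to the sheaf condition for the $J_{p(D)}$-sheaf $Q\circ F_{(D,1_{p(D)})}\op$, yielding the required unique amalgamations.

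The conceptual skeleton is thus short, and the main obstacle is entirely one of coherence bookkeeping: one must track the canonical isomorphisms $\theta,\lambda,\chi$ entering the definition of the legs $F_{(A,\alpha)}$ together with the cocone isomorphisms witnessing $H_{(A,\alpha)}\cong h\circ F_{(A,\alpha)}$, in order to see that $h$ carries the cartesian lifts $\widehat{w_i}_D$ exactly onto the leg-images of the slice generators; and in (ii) one must check that restriction along $F_{(D,1_{p(D)})}$ induces a bijection between matching families for $Q$ on $S_D$ and matching families for $Q\circ F_{(D,1_{p(D)})}\op$ on the associated $J_{p(D)}$-cover. Both verifications are routine but lengthy, and in keeping with the style of the surrounding results I would relegate these calculations to the reader.
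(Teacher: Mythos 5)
Your proof is correct and follows essentially the same route as the paper: for (i) the paper likewise lifts the $T$-cover through the comorphism $H_{(D,1_{p(D)})}$ to a $J_{p(D)}$-cover of $[1_{p(D)}]$ and pushes it forward along the cover-preserving leg $F_{(D,1_{p(D)})}$ to a $J_\dbicat$-cover (you unpack this via cartesian lifts and Proposition \ref{prop:girtpl_descrizione_sieves}, which amounts to the same thing), and for (ii) it likewise checks that $W\circ h\op\circ F_{(A,\alpha)}\op\cong H_{(A,\alpha)}^*(W)$ is a $J_{p(A)}$-sheaf. The only structural difference is that for (ii) the paper simply invokes Lemma \ref{lemma:fascio_su_fib_sse_fascio_su_slice} (the criterion that a presheaf on $\dbicat$ is a $J_\dbicat$-sheaf iff its restrictions along all the legs are sheaves), whereas you re-derive the needed direction by reducing to the generating coverage and transferring matching families — which is exactly how that lemma is proved in the paper.
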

\begin{proof}
	\begin{enumerate}[(i)]
		\item Take $D$ in $\dbicat$ and $R\in T(h(D))$: since $D=F_{(D,1_{p(D)})}([1_{p(D)}])$ and $hF_{(D,1_{p(D)})}\simeq H_{(D,1_{p(D)})}$, which is a comorphism of sites, there is a sieve $S\in J_{p(D)}([1_{p(D)}])$ such that $H_{(D,1_{p(D)})}(S)\subseteq R$. But now we recall that a continuous functor between sites is always cover-preserving \cite[Proposition 4.13]{denseness}: in particular $F_{(D,1_{p(D)})}:(\cbicat/p(D), J_{p(D)})\rightarrow (\dbicat, J_\dbicat)$ is so, and thus the sieve $S'=\langle F_{(D,1_{p(D)})}(S)\rangle$ is $J_\dbicat$-covering. But then $h(S')\subseteq R$, up to isomorphism, and hence $h$ is a comorphism of sites.
		\item Consider a $T$-sheaf $W:\abicat\op\rightarrow\Set$: we wish to show that $W\circ h\op:\dbicat\op\rightarrow \Set$ is a $J_\dbicat$-sheaf. To do so we exploit Lemma \ref{lemma:fascio_su_fib_sse_fascio_su_slice} stating that $W\circ h\op$ is a $J_\dbicat$-sheaf if and only if every composite $W\circ h\op\circ F_{(A,\alpha)}\op$ is a $J_{X}$-sheaf: but this is true because $hF_{(A,\alpha)}\simeq H_{(A,\alpha)}$, which is a continuous with respect to the relevant topologies. 
	\end{enumerate}
\end{proof}
This entails immediately the following results:
\begin{cor}\index{$\Lambda_{\Cosite/(\cbicat,J)}\dashv \Gamma_{\Cosite/(\cbicat,J)}$} \index{$\Lambda_{\Cosite\cont/(\cbicat,J)}\dashv \Gamma_{\Cosite\cont/(\cbicat,J)}$}
	Consider a small category $\cbicat$ and the four 2-functors 
	\[
	\Lambda_{\Cosite/(\cbicat,J)}: \Ind_\cbicat \xrightarrow{\gbicat}\Cl\Fib_\cbicat\xrightarrow{\Giraud
	}\Cosite/(\cbicat,J),
	\]
	\[
	\Lambda_{\Cosite\cont/(\cbicat,J)}: \Ind_\cbicat \xrightarrow{\gbicat}\Cl\Fib_\cbicat\xrightarrow{\Giraud}\Cosite\cont/(\cbicat,J),
	\]
	\[
	\Gamma_{\Cosite/(\cbicat,J)}: \Cosite/(\cbicat,J)\xrightarrow{[p]\mapsto \Cosite/(\cbicat,J)(\cbicat/-,[p])  }\Ind_\cbicat,
	\]
	\[
	\Gamma_{\Cosite\cont/(\cbicat,J)}: \Cosite\cont/(\cbicat,J)\xrightarrow{[p]\mapsto \Cosite\cont/(\cbicat,J)(\cbicat/-,[p])  }\Ind_\cbicat:
	\]
	then there are 2-adjunctions
	\[
	\begin{tikzcd}
		\phantom{\slash}\Ind_\cbicat \ar[r, bend left, "\Lambda_{\Cosite/(\cbicat,J)}", start anchor={north east}, end anchor={north west}] \ar[r, phantom, "\dashv"{rotate=270}]& \Com/(\cbicat,J) \ar[l, bend left, "{\Gamma_{\Cosite/(\cbicat,J)}}", start anchor={south west}, end anchor={south east}]
	\end{tikzcd},\ \begin{tikzcd}
		\phantom{\slash}\Ind_\cbicat \ar[r, bend left, "\Lambda_{\Cosite\cont/(\cbicat,J)}", start anchor={north east}, end anchor={north west}] \ar[r, phantom, "\dashv"{rotate=270}]& \Com\cont/(\cbicat,J) \ar[l, bend left, "{\Gamma_{\Cosite\cont/(\cbicat,J)}}", start anchor={south west}, end anchor={south east}]
	\end{tikzcd}.
	\]
	In particular, given a cloven fibration $p:\dbicat\rightarrow\cbicat$, the site $(\dbicat, J_\dbicat)$ is the $\dcat$-weighted diagram of the functor $\Giraud\circ (\cbicat/-):\cbicat\rightarrow\Fib_\cbicat\rightarrow \Cosite\cont/(\cbicat,J)$. Moreover, the forgetful functors
	\[\Cosite\cont/(\cbicat,J)\rightarrow\Cosite/(\cbicat,J)\rightarrow \CAT/\cbicat. \]
	reflect and preserve the $\dcat$-weighted colimit of $\Giraud\circ (\cbicat/-)$.
\end{cor}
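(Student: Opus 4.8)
The plan is to obtain the four 2-adjunctions by combining the adjoint triple of Proposition \ref{prop:aggiunti_di_G} with the 2-adjunction $\For\dashv\Giraud$ of Corollary \ref{cor:girtpl_aggiunto_sx}, using the fact that the composite of two left adjoints is a left adjoint. First I would treat the bottom of the three tower of forgetful/Giraud functors. We have the composite 2-functor
\[
\Lambda_{\Cosite/(\cbicat,J)}:\Ind_\cbicat\xrightarrow{\gbicat}\Cl\Fib_\cbicat\xrightarrow{\For}\CAT/\cbicat\xrightarrow{\Giraud}\Cosite/(\cbicat,J),
\]
where the middle $\For$ is the inclusion of cloven fibrations into $\CAT/\cbicat$ (recall $\gbicat$ lands in $\Cl\Fib_\cbicat$). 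By Proposition \ref{prop:aggiunti_di_G} the composite $\For\circ\gbicat=\Lambda_{\CAT/\cbicat}$ has right adjoint $\Gamma_{\CAT/\cbicat}=[p]\mapsto\CAT/\cbicat(\cbicat/-,[p])$, while Corollary \ref{cor:girtpl_aggiunto_sx} gives $\Giraud$ as left 2-adjoint to $\For:\Cosite^s/(\cbicat,J)\rightarrow\Cat/\cbicat$. Hence $\Lambda_{\Cosite/(\cbicat,J)}=\Giraud\circ\Lambda_{\CAT/\cbicat}$ is a composite of left adjoints, so it is left adjoint to the composite of the right adjoints $\Gamma_{\CAT/\cbicat}\circ\For$; spelling this composite out on objects yields exactly
\[
[p]\mapsto\CAT/\cbicat(\cbicat/-,\For[p])\cong\Cosite/(\cbicat,J)(\Giraud(\cbicat/-),[p]),
\]
the last isomorphism being the defining adjunction equivalence of $\Giraud\dashv\For$, which identifies this functor with $\Gamma_{\Cosite/(\cbicat,J)}$ as stated. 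The same argument, run in the continuous setting and using that $\gbicat$ and the morphisms of fibrations it produces are continuous comorphisms (Proposition \ref{prop:fib_e_morfib_sono_can.comorf}) so that $\Giraud\circ(\cbicat/-)$ lands in $\Cosite\cont/(\cbicat,J)$, gives $\Lambda_{\Cosite\cont/(\cbicat,J)}\dashv\Gamma_{\Cosite\cont/(\cbicat,J)}$.

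Next I would derive the colimit description. For a cloven fibration $p:\dbicat\to\cbicat$ with associated $\dcat=\Ifrak(p)$, the counit/hom-equivalence of the adjunction reads
\[
\Cosite\cont/(\cbicat,J)\big((\dbicat,J_\dbicat),[q]\big)\simeq\Ind_\cbicat\big(\dcat,\,\Cosite\cont/(\cbicat,J)(\cbicat/-,[q])\big),
\]
which is by definition the statement that $(\dbicat,J_\dbicat)$ is the $\dcat$-weighted colimit in $\Cosite\cont/(\cbicat,J)$ of the diagram $\Giraud\circ(\cbicat/-)$. The explicit colimit cocone is the one already written out in this section via the fibred Yoneda lemma, namely the legs $F_{(A,\alpha)}$ together with the 2-cells $F_\gamma$ and $F^{y}_{(A,\alpha)}$; what the preceding lemma guarantees is precisely that these legs, being continuous comorphisms into $(\dbicat,J_\dbicat)$ by Proposition \ref{prop:fib_e_morfib_sono_can.comorf}, assemble into a cocone in the slice $\Cosite\cont/(\cbicat,J)$, and that the induced comparison $h$ built as in Remark \ref{remark:morfismo_indotto_colimite_fibrazione} is again a continuous comorphism of sites.

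Finally, for the creation/preservation claim along the two forgetful functors
\[
\Cosite\cont/(\cbicat,J)\to\Cosite/(\cbicat,J)\to\CAT/\cbicat,
\]
I would invoke directly the lemma immediately preceding the corollary: its two items say exactly that if the legs $H_{(A,\alpha)}$ and the structure map $q$ are comorphisms (resp. the legs are $(J_{p(D)},T)$-continuous), then the induced functor $h$ is a comorphism (resp. is $(J_\dbicat,T)$-continuous). Translated into colimit language, this means that a cocone in $\CAT/\cbicat$ whose legs happen to be (continuous) comorphisms is colimiting there if and only if it is colimiting in the finer slice category, so the forgetful functors both reflect and preserve the $\dcat$-weighted colimit of $\Giraud\circ(\cbicat/-)$; combined with Corollary \ref{cor:colimite_in_cat_su_C}, which already creates this colimit along $\Cl\Fib_\cbicat\to\CAT/\cbicat$, this yields the claim. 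The main obstacle I anticipate is purely bookkeeping rather than conceptual: one must check that the hom-category equivalences supplied by the three separate adjunctions are genuinely \emph{pseudonatural} in both variables and that they compose coherently, i.e. that the whiskering of the $\Giraud\dashv\For$ equivalence with the $\Lambda_{\CAT/\cbicat}\dashv\Gamma_{\CAT/\cbicat}$ equivalence respects 2-cells; this coherence is routine given that each constituent is a genuine 2-adjunction, but it is where the verification is most delicate.
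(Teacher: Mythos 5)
There is a genuine gap in your first paragraph: you have the adjunction of Corollary \ref{cor:girtpl_aggiunto_sx} backwards. That corollary establishes $\For\dashv\Giraud$, i.e.\ $\Giraud$ (which equips a functor with the \emph{minimal} topology $M^q_J$) is the \emph{right} adjoint of the forgetful functor: this is exactly the content of Lemma \ref{lemma:comorfismi_via_tplgir}, which yields the bijection $\Cat/\cbicat(\For[p],[q])\cong\Cosite^s/(\cbicat,J)([p],\Giraud[q])$, and it is also the direction needed to deduce $\Lfrak\circ\For\dashv\Giraud\circ\gbicat$ later on. Consequently $\Lambda_{\Cosite/(\cbicat,J)}=\Giraud\circ\Lambda_{\CAT/\cbicat}$ is a right adjoint precomposed with a left adjoint, not a composite of left adjoints, and your derivation collapses. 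The error propagates to your identification of the right adjoint: the claimed isomorphism $\CAT/\cbicat(\cbicat/-,\For[q])\cong\Cosite/(\cbicat,J)(\Giraud(\cbicat/-),[q])$ is false in general. What $\For\dashv\Giraud$ actually gives is $\CAT/\cbicat(\cbicat/-,\For[q])\cong\Cosite/(\cbicat,J)(\Giraud(\cbicat/-),\Giraud\For[q])$, i.e.\ comorphisms into $[q]$ \emph{retopologized} with the minimal topology $M^q_J$; but $\Gamma_{\Cosite/(\cbicat,J)}([q])$ as defined in the statement consists of comorphisms $(\cbicat/X,J_X)\to(\ebicat,T)$ into $[q]$ with its \emph{given} topology $T\supseteq M^q_J$, which is in general a proper subcategory of $\CAT/\cbicat(\cbicat/X,\ebicat)$, since lifting covers of a finer topology on the codomain is a strictly stronger condition (equality holds only when $T=M^q_J$).

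The statement is nevertheless true, and you already hold the correct ingredients — you simply deploy them in the wrong place. The paper's proof composes no adjunctions at all: it takes the hom-equivalence $\Ind_\cbicat(\dcat,\CAT/\cbicat(\cbicat/-,[q]))\simeq\CAT/\cbicat(\gbicat(\dcat),[q])$ of Corollary \ref{cor:colimite_in_cat_su_C} and checks directly that it restricts to the sub-hom-categories of (continuous) comorphisms. One direction is precisely the lemma immediately preceding the corollary: a cocone whose legs are (continuous) comorphisms into $(\ebicat,T)$, with $q$ a comorphism, induces an $h$ which is a (continuous) comorphism $(\gbicat(\dcat),J_{\dcat})\to(\ebicat,T)$. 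The quasi-inverse — precomposition with the colimit cocone — restricts because the legs $F_{(A,\alpha)}$ are continuous comorphisms of sites by Proposition \ref{prop:fib_e_morfib_sono_can.comorf}, and composites of (continuous) comorphisms are again such. These are exactly the facts you invoke in your second and third paragraphs for the colimit description and the reflection/preservation claim; they should instead be promoted to the proof of the adjunctions themselves, after which the colimit statement follows as you indicate and the rest of your argument goes through.
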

\begin{proof}
	Consider a comorphism of sites $q:(\ebicat, T)\rightarrow (\cbicat,J)$: by the previous lemma the functor
	\[ \Ind_\cbicat(\dcat, \CAT/\cbicat(\cbicat/-,[q]))
	\rightarrow \CAT/\cbicat( \gbicat(\dcat), [q]) \]
	restricts to a functor
	\[\Ind_\cbicat( \dcat, \Cosite/(\cbicat,J)(\cbicat/-,[q]))\rightarrow\Cosite/(\cbicat,J)( \gbicat(\dcat), [q]), \]
	and its quasi-inverse (precomposition with the colimit cocone) obviously restricts to $\Cosite$ too. The same considerations hold for $\Cosite\cont$.
\end{proof}

The adjunction $\Lfrak\dashv\Lambda_{\CAT/\cbicat}$, on the other hand, does not share the same interesting behaviours. We can still say something when working with small sites and small fibrations though:
\begin{prop}
Denote by $\Ind_\cbicat^s$ the small $\cbicat$-indexed categories, \ie those with values in $\Cat$: then the functor 
\[
\Lambda_{\Cosite^s/(\cbicat,J)}:\Ind^s_\cbicat\xrightarrow{\gbicat}\Cl\Fib_\cbicat\xrightarrow{\Giraud}\Com^s/(\cbicat,J)
\] 
admits as left adjoint the composite 
\[
\Cosite^s/(\cbicat,J)\xrightarrow{\For} \Cat/\cbicat\xrightarrow{\Lfrak} \Ind^s_\cbicat.
\]
\end{prop}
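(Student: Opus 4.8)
The plan is to derive this adjunction formally, by composing two adjunctions already available in the excerpt and then identifying the composite right adjoint with $\Lambda_{\Cosite^s/(\cbicat,J)}$. The two ingredients are the adjunction $\Lfrak\dashv \Lambda_{\CAT/\cbicat}$ of Proposition \ref{prop:aggiunti_di_G} (whose left half requires no smallness hypothesis) and the $2$-adjunction $\For\dashv \Giraud$ of Corollary \ref{cor:girtpl_aggiunto_sx} between $\Cosite^s/(\cbicat,J)$ and $\Cat/\cbicat$. Composing left adjoints, $\Lfrak\circ\For$ will be left adjoint to the composite of the right adjoints, and the whole content of the statement is that this composite right adjoint is exactly $\Lambda_{\Cosite^s/(\cbicat,J)}=\Giraud\circ\gbicat$.

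First I would check that both adjunctions restrict correctly to the small setting, which is the only place the hypotheses ``small sites and small fibrations'' are used. The left adjoint of Proposition \ref{prop:aggiunti_di_G} sends $[F:\dbicat\to\cbicat]$ to the pseudofunctor $\comma{-}{F}$; when $\dbicat$ is small (and $\cbicat$ locally small) each comma category $\comma{X}{F}$ is small, so $\Lfrak$ restricts to a $2$-functor $\Cat/\cbicat\to\Ind^s_\cbicat$. Dually, $\gbicat$ carries a small $\cbicat$-indexed category to a cloven fibration with small total category, so $\Giraud\circ\gbicat$ genuinely takes values in $\Cosite^s/(\cbicat,J)$. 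With this in place, writing $s$ for an object of $\Cosite^s/(\cbicat,J)$ and $\dcat$ for an object of $\Ind^s_\cbicat$, the two pseudonatural equivalences give
\[
\Ind^s_\cbicat(\Lfrak(\For(s)),\dcat)\simeq \Cat/\cbicat(\For(s), \Lambda_{\CAT/\cbicat}(\dcat))\simeq \Cosite^s/(\cbicat,J)(s, \Giraud(\Lambda_{\CAT/\cbicat}(\dcat))),
\]
and since a composite of pseudonatural equivalences is again pseudonatural, this already exhibits $\Lfrak\circ\For$ as left adjoint to $\Giraud\circ\Lambda_{\CAT/\cbicat}$.

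It then remains to identify $\Giraud\circ\Lambda_{\CAT/\cbicat}$ with $\Lambda_{\Cosite^s/(\cbicat,J)}$. Since $\Lambda_{\CAT/\cbicat}$ is $\gbicat$ followed by the forgetful functor $\Cl\Fib_\cbicat\to\CAT/\cbicat$, this reduces to the observation that $\Giraud$, defined on $\Cat/\cbicat$ in Corollary \ref{cor:girtpl_aggiunto_sx}, agrees on the image of $\gbicat$ with the Giraud-site functor on cloven fibrations. For a fibration $p_\dcat:\gbicat(\dcat)\to\cbicat$ this is purely definitional: $\Giraud$ equips $\gbicat(\dcat)$ with the minimal topology $M^{p_\dcat}_J$ of Proposition \ref{prop:top_minima_comorfismo}, which is precisely Giraud's topology $J_\dcat$ of Definition \ref{def:Giraud_topology_classifying_topos}. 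Hence the two composites coincide as $2$-functors $\Ind^s_\cbicat\to\Cosite^s/(\cbicat,J)$, and the displayed equivalence becomes the asserted adjunction. The argument is entirely formal apart from this definitional coincidence; the one point deserving genuine care is the size bookkeeping of the first step, ensuring that $\Lfrak\circ\For$ really lands in $\Ind^s_\cbicat$ rather than in the locally small $\Ind_\cbicat$.
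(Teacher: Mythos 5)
Your proof is correct and follows essentially the same route as the paper, which likewise obtains the adjunction by composing $\Lfrak\dashv\Lambda_{\CAT/\cbicat}$ with the adjunction $\For\dashv\Giraud$ of Corollary \ref{cor:girtpl_aggiunto_sx}. The two points you single out for care --- the size bookkeeping ensuring $\Lfrak\circ\For$ lands in $\Ind^s_\cbicat$, and the definitional identification of $\Giraud\circ\Lambda_{\CAT/\cbicat}$ with $\Lambda_{\Cosite^s/(\cbicat,J)}$ via the coincidence of $M^{p_\dcat}_J$ with $J_\dcat$ --- are exactly the details the paper leaves implicit, so your write-up is a faithful (and slightly more complete) version of the same argument.
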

\begin{proof}
    In Corollary \ref{cor:girtpl_aggiunto_sx} we proved that $\For:\Cosite^s/(\cbicat,J)\rightarrow \Cat/\cbicat$ has $\Giraud:\Cat/\cbicat\rightarrow \Cosite^s/(\cbicat,J)$ as its left adjoint: thus by composing the two adjunction $\For\dashv \Giraud$ and $\Lfrak\dashv \Lambda_{\Cat/\cbicat}$ we obtain $\Lfrak\For\dashv \Lambda_{\Cosite^s/(\cbicat,J)}$. 
\end{proof}

\section{Giraud toposes as weighted colimits}\label{sec:adj_for_toposes}

The purpose of this section is to extend the adjunctions of the previous section to toposes and show that, given a small-generated site $(\cbicat,J)$ and a cloven Street fibration $p:\dbicat\rightarrow \cbicat$, the topos $\Gir_J(p)$ is a $\dcat$-weighted colimit in the category of Grothendieck toposes over $\Sh(\cbicat,J)$. In particular this will provide an adjunction between $\Fib_\cbicat$ and $\Topos\co/\Sh(\cbicat, J)$, as we will see at the end of the section.

In the previous sections, the canonical functors 
\[
F_{(A,\alpha)}:\cbicat/X\rightarrow \dbicat,
\]
indexed by objects $(A,\alpha)$ in the fibres $\dcat(X)$, provided the legs of our colimit cocone: since they are all morphisms of fibrations, they are continuous comorphisms of sites (\cfr Proposition \ref{prop:fib_e_morfib_sono_can.comorf}) and thus induce geometric morphisms $C_{F_{(A,\alpha)}}:\Sh(\cbicat/X,J_X)\rightarrow \Gir_J(\dcat)$.  
To ease the notation, we will denote each $C_{F_{(A,\alpha)}}$ with $C_{(A,\alpha)}$\index{$C_{(A,\alpha)}$}, and for every arrow $\gamma:(A,\alpha)\rightarrow(B,\beta)$ of $\dcat(X)$ we will denote by $C_\gamma$\index{$C_\gamma$} the induced natural transformation $C_{\gamma}:C_{(B,\beta)}\Rightarrow C_{(A,\alpha)}$.

Let us now provide some technical results that will come in handy later. The first result is about the family of functors $C_{(A,\alpha)}^*$:
\begin{prop}
	The functors $$-\circ (F_{(A,\alpha)})\op: [\dbicat\op,\Set]\rightarrow[(\cbicat/X)\op,\Set]$$ are jointly conservative, and the same holds for the functors $C_{(A,\alpha)}^*$.
\end{prop}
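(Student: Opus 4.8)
The plan is to reduce both assertions to one elementary observation: the functors $F_{(A,\alpha)}$ jointly hit every object of $\dbicat$ up to isomorphism. Concretely, given any object $D$ of $\dbicat$, I would set $X:=p(D)$ and consider the object $(D,1_X)$ of the fibre $\dcat(X)$ (here $1_X=1_{p(D)}:X\isorightarrow p(D)$ is a legitimate isomorphism since $X=p(D)$). By the definition of the legs recalled in this section, $F_{(D,1_X)}([1_X])=\dom(\widehat{1_X}_D)$, and since $1_X$ is invertible its cartesian lift $\widehat{1_X}_D$ is an isomorphism by Remark \ref{remark:proprietà_frecce_cartesiane}(iii); hence $F_{(D,1_X)}([1_X])\cong D$. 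Thus every object of $\dbicat$ is isomorphic to $F_{(A,\alpha)}([y])$ for a suitable choice of $(A,\alpha)$ and $[y]$.

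For the first family I would use that a natural transformation $\eta:P\Rightarrow Q$ of presheaves on $\dbicat$ is an isomorphism if and only if every component $\eta_D$ is a bijection of sets, together with the fact that the restriction $\eta\circ(F_{(A,\alpha)})\op$ has component at $[y]$ equal to $\eta_{F_{(A,\alpha)}([y])}$. So if all the restrictions $\eta\circ(F_{(A,\alpha)})\op$ are isomorphisms, then $\eta_{F_{(A,\alpha)}([y])}$ is a bijection for all $(A,\alpha)$ and $[y]$; by the object-level surjectivity above and naturality of $\eta$ (which transports bijectivity along the isomorphism $F_{(A,\alpha)}([y])\cong D$), each $\eta_D$ is a bijection, so $\eta$ is invertible. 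Since functors preserve isomorphisms, this is exactly joint conservativity.

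For the second family I would pass to underlying presheaves. Each $F_{(A,\alpha)}:\cbicat/X\rightarrow\dbicat$ is a $(J_X,J_\dbicat)$-continuous comorphism of sites by Proposition \ref{prop:fib_e_morfib_sono_can.comorf}, so by Proposition \ref{prop:ft_cont_caratterizzazione} the inverse image $C_{(A,\alpha)}^*$ acts on underlying presheaves precisely as the restriction $-\circ(F_{(A,\alpha)})\op$ (this is the same identification used for the functors $C_{\fib y}^*$ in Subsection \ref{sec:canonical_stack}). Moreover the sheaf inclusions $\iota_{J_\dbicat}:\Gir_J(\dcat)\hookrightarrow[\dbicat\op,\Set]$ and $\iota_{J_X}$ are fully faithful, hence conservative, so a morphism of sheaves is an isomorphism exactly when its underlying presheaf morphism is. Combining these with the first part: if $C_{(A,\alpha)}^*(\eta)$ is an isomorphism of sheaves for every $(A,\alpha)$, then each underlying $\eta\circ(F_{(A,\alpha)})\op$ is an isomorphism of presheaves, whence $\eta$ is an isomorphism of presheaves and therefore of sheaves.

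The whole argument is essentially bookkeeping, so I expect no serious difficulty; the one point requiring care — and which I would treat as the only real obstacle — is the clean identification of $C_{(A,\alpha)}^*$ with precomposition $-\circ(F_{(A,\alpha)})\op$ at the presheaf level, and the verification that the components of this restriction are literally the $\eta_{F_{(A,\alpha)}([y])}$, so that the object-level surjectivity applies verbatim. Everything else reduces to the conservativity of the sheaf inclusions and the componentwise characterisation of isomorphisms of (pre)sheaves.
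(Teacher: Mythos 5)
Your proposal is correct and follows essentially the same route as the paper: both arguments hinge on the observation that every object $D$ of $\dbicat$ is canonically isomorphic to $F_{(D,1_{p(D)})}([1_{p(D)}])=\dom(\widehat{1_{p(D)}}_D)$, from which joint conservativity of the precomposition functors follows componentwise, and the claim for the $C_{(A,\alpha)}^*$ reduces to this since they act by restricting $-\circ(F_{(A,\alpha)})\op$ to sheaves. Your write-up is merely more explicit than the paper's about the naturality transport and the identification of $C_{(A,\alpha)}^*$ with precomposition.
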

\begin{proof}
	Since the functors $C_{(A,\alpha)}^*$ act by restricting the functors $-\circ (F_{(A,\alpha)})\op$, the second claim follows from the first one. So consider $r:H\Rightarrow K$ in $[\dbicat\op,\Set]$ and suppose that for every $X$ in $\cbicat$ and every $(A,\alpha)$ in $\dbicat(X)$ the arrow $r\circ (F_{(A,\alpha)})\op$ is invertible. In particular, notice that $$(r\circ F^{(D,1_p(D))})([1_{p(D)}]):=r(\dom(\widehat{1_{p(D)}}_D)):$$
	but $\dom(\widehat{1_{p(D)}}_D)$ is canonically isomorphic to $D$, since both $\widehat{1_{p(D)}}_D$ and $1_D$ are cartesian lifts for $1_{p(D)}$, and hence in particular the arrows $(r\circ F^{(D,1_p(D))})([1_{p(D)}])$ and $r(D)$ are canonically isomorphic. This implies that all components of the natural transformation $r$ are bijective, and hence $r$ is invertible.
\end{proof}
Let us also state a property of jointly conservative functors that we will apply later to the family of the $C_{(A,\alpha)}^*$:
\begin{lemma}\label{lemma:jcons_e_colimiti}
	Consider a functor $F:\abicat\rightarrow \bbicat$ and a family of jointly conservative functors $C_i:\bbicat\rightarrow \cbicat_i$. Suppose that $\abicat$, $\bbicat$ and all the $\cbicat_i$ have (co)limits of shape $\ibicat$, and that all the functors $C_i$ and $C_iF$ preserve said (co)limits: then $F$ also preserves said (co)limits.
\end{lemma}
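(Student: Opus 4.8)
The plan is to treat the limit case explicitly, the colimit case following by duality. Let $D\colon\ibicat\to\abicat$ be a diagram and let $(\lim D,(\lambda_j)_{j})$ be a limit cone for it in $\abicat$. Since $\bbicat$ has limits of shape $\ibicat$, the composite $F\circ D$ admits a limit cone $(L,(\mu_j)_j)$ in $\bbicat$; the image cone $(F(\lim D),(F\lambda_j)_j)$ over $F\circ D$ then induces, by the universal property of $L$, a unique comparison morphism $c\colon F(\lim D)\to L$ satisfying $\mu_j\circ c=F\lambda_j$ for every object $j$ of $\ibicat$. Showing that $F$ preserves the limit of $D$ amounts exactly to proving that this canonical $c$ is an isomorphism.

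First I would invoke joint conservativity of the family $\{C_i\}$: it suffices to check that $C_i(c)$ is an isomorphism in $\cbicat_i$ for every index $i$. So I fix $i$ and apply $C_i$ to the defining equations of $c$, obtaining $C_i(\mu_j)\circ C_i(c)=C_i(F\lambda_j)$ for all $j$.

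The key observation is that $C_i(c)$ is then forced to be the comparison isomorphism between two limit cones over one and the same diagram $C_i\circ F\circ D$ in $\cbicat_i$. Indeed, on the one hand $C_i$ preserves limits of shape $\ibicat$, so $(C_i(L),(C_i\mu_j)_j)$ is a limit cone for $C_i\circ F\circ D$; on the other hand $C_iF$ preserves limits of shape $\ibicat$, so $(C_iF(\lim D),(C_iF\lambda_j)_j)$ is also a limit cone for the same diagram $C_i\circ F\circ D$. Since $C_i(c)$ is a morphism of cones between these two limit cones (this is precisely the content of the equations $C_i(\mu_j)\circ C_i(c)=C_iF\lambda_j$), uniqueness of limits up to canonical isomorphism forces $C_i(c)$ to be invertible. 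As this holds for every $i$, joint conservativity of $\{C_i\}$ yields that $c$ itself is an isomorphism, completing the limit case; the colimit case is obtained by passing to opposite categories, replacing each hypothesis on limits by its dual.

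There is no serious obstacle here: the argument is a uniqueness-of-limits diagram chase rather than a genuinely hard computation. The only point requiring a little care is the bookkeeping that identifies $C_i(c)$ with the canonical comparison map of the two limit cones — one must verify that the limit legs really do agree through the two preservation hypotheses "$C_i$ preserves" and "$C_iF$ preserves" before uniqueness of limits and joint conservativity can be applied.
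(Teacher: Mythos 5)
Your proof is correct and follows essentially the same route as the paper's: form the canonical comparison morphism, apply each $C_i$, use the two preservation hypotheses to identify its image with the comparison between two (co)limit cones over the same diagram, and conclude by joint conservativity. The only cosmetic difference is that you write out the limit case and dualize, whereas the paper writes out the colimit case.
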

\begin{proof}
	We prove this for colimits, but the argument is the same for limits. Consider a diagram $D:\ibicat\rightarrow \abicat$ and consider in $\bbicat$ the unique arrow $m:\colim(FD)\rightarrow F(\colim D)$. If we apply $C_i$ we obtain $C_i(m):C_i(\colim(FD))\rightarrow C_iF(\colim D)$: but both $C_i$ and $C_iF$ commute with limits of shape $\ibicat$, thus $C_i(m)$ is equivalent up to canonical isomorphisms to the identity of $\colim(C_iFD)$. Since the functors $C_i$ are jointly conservative this immediately implies that $m$ is an isomorphism, and hence $F$ preserves colimits of shape $\ibicat$.
\end{proof}
Finally, the following lemma shows that the property of being a $J_\dbicat$-sheaf over $\dbicat$ can be checked `locally', by moving to the slice categories $\cbicat/X$:
\begin{lemma}\label{lemma:fascio_su_fib_sse_fascio_su_slice}
	A presheaf $W:\dbicat\op\rightarrow\Set$ is a $J_\dbicat$-sheaf if and only if for every $X$ in $\cbicat$ and every $(A,\alpha)$ in $\dcat(X)$ the presheaf $W\circ (F_{(A,\alpha)})\op$ is a $J_X$-sheaf.
\end{lemma}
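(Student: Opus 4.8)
The forward implication is immediate: by Proposition \ref{prop:fib_e_morfib_sono_can.comorf} each canonical functor $F_{(A,\alpha)}\colon\cbicat/X\to\dbicat$ is a $(J_X,J_\dbicat)$-continuous comorphism of sites, so by the characterization in Proposition \ref{prop:ft_cont_caratterizzazione}(ii) the precomposition $(-\circ (F_{(A,\alpha)})\op)$ sends $J_\dbicat$-sheaves to $J_X$-sheaves; thus if $W$ is a $J_\dbicat$-sheaf then each $W\circ (F_{(A,\alpha)})\op$ is a $J_X$-sheaf. The content of the lemma is the converse, which is where I would spend the effort.

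For the converse, I would first reduce the $J_\dbicat$-sheaf condition to the generating coverage exhibited in Proposition \ref{prop:top_minima_comorfismo}: since $J_\dbicat=M^p_J$ is generated by the pullback-closed family of sieves $S_D:=\{f\mid p(f)\in S\}$ for $D$ in $\dbicat$ and $S\in J(p(D))$, a presheaf is a $J_\dbicat$-sheaf precisely when every matching family for some such $S_D$ admits a unique amalgamation. So I fix $D$, set $X:=p(D)$, fix $S\in J(X)$, and fix a matching family $(x_f)_{f\in S_D}$ for $W$. The plan is then to transport this datum to the slice $\cbicat/X$ through the functor $F_{(D,1_X)}$, which sends $[1_X]$ to an object canonically isomorphic to $D$ via the cartesian lift $\widehat{1_X}_D$. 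To each $y\in S$ I associate the cartesian lift $\widehat{y}_D\in S_D$ and set $\bar{x}_{[y]}:=x_{\widehat{y}_D}\in W(\dom(\widehat{y}_D))=(W\circ (F_{(D,1_X)})\op)([y])$; using the identities $\widehat{y}_D=\widehat{1_X}_D\,\lambda_{1_X,y,D}$ and $\widehat{yz}_D=\widehat{y}_D\,\lambda_{y,z,D}$ together with the compatibility of the original family, one checks that $(\bar{x}_{[y]})_{y\in S}$ is a matching family for $W\circ (F_{(D,1_X)})\op$ and the $J_X$-covering sieve $S_{[1_X]}$ over $[1_X]$ (see Example \ref{ex:topologia_su_slice}). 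Invoking the hypothesis in the single case $(A,\alpha)=(D,1_X)$ yields a unique amalgamation $\bar{x}_0$, which I transport along the isomorphism $\widehat{1_X}_D$ to an element $w\in W(D)$.

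It then remains to verify that $w$ amalgamates the original family, that is $W(f)(w)=x_f$ for every $f\in S_D$, and that it is the only such element. For a cartesian lift $\widehat{t}_D$ with $t\in S$ this follows directly from the amalgamation property of $\bar{x}_0$ in the slice. The main obstacle — and the one genuinely new point over the classical Grothendieck-fibration argument — is the passage to a \emph{general} arrow $f\in S_D$, which need not be cartesian: here I would factor $f=\widehat{p(f)}_D\circ v_f$ through the cartesian lift of $p(f)$, with $v_f$ the unique vertical arrow determined by cartesianness (so $p(v_f)=\theta_{p(f),D}$), and then compute $W(f)(w)=W(v_f)\,W(\widehat{p(f)}_D)(w)=W(v_f)(x_{\widehat{p(f)}_D})=x_f$, the last equality being the matching-family compatibility applied to $\widehat{p(f)}_D\,v_f=f$. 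Uniqueness reduces in the same manner to the uniqueness of $\bar{x}_0$ in $\cbicat/X$.

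The only delicate aspect of the actual write-up is the careful bookkeeping of the Street-fibration lifting isomorphisms $\theta$ and of the canonical comparison arrows $\lambda$ and $\chi$, needed to make every equality above hold on the nose rather than merely up to isomorphism; these computations are routine given Remark \ref{remark:frecce_chi_lambda}, and I would leave them to the reader.
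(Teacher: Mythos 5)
Your proof is correct and follows essentially the same route as the paper's: both transport the matching family to the slice $\cbicat/p(D)$ via the canonical functor $F_{(D,1_{p(D)})}$, exploiting the canonical isomorphisms between cartesian lifts of the same arrow, apply the hypothesis in the single case $(A,\alpha)=(D,1_{p(D)})$, and transport the resulting amalgamation back along the invertible lift $\widehat{1_{p(D)}}_D$. The only difference is one of bookkeeping: the paper reduces to covering families of \emph{cartesian} arrows (via Proposition \ref{prop:girtpl_descrizione_sieves}), whereas you reduce to the generating sieves $S_D$ of Proposition \ref{prop:top_minima_comorfismo} and must therefore additionally handle the non-cartesian members by the factorization $f=\widehat{p(f)}_D\circ v_f$ — an extra but harmless step.
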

\begin{proof}
	Of course if $W$ is a $J_\dbicat$-sheaf then $W\circ (F_{(A,\alpha)})\op=:C_{{(A,\alpha)}}^*(W)$ is a $J_X$-sheaf. 
	
	Conversely, suppose that all composites as above are $J_X$-sheaves: what we will do is building, from a matching family for $W$ and a $J_\dbicat$-covering family $R$ over $D$, a matching family for $W\circ (F^{(D,1_{p(D)})})\op$, and shows that it the latter one admits an amalgamation so does the first one. Consider a $J_\dbicat$-covering family $R=\{f_i:\dom(f_i)\rightarrow D\ |\ i\in I \}$, \ie the datum of $\alpha_i\in W(\dom(f_i))$ satisfying the usual compatibility condition. By definition of $J_\dbicat$-covering family, all the arrows $f_i$ are cartesian and the projection $\{p(f_i) \}$ is a $J$-covering family for $p(D)$: we can then lift it to a $J_{p(D)}$-covering family $S=\{p(f_i):[p(f_i)]\rightarrow [1_{p(D)}]\ |\ i\in I \}$ in $\cbicat/p(D)$. Now, notice that $f_i$ and $\widehat{p(f_i)}_D$ are both cartesian lifts of $p(f_i)$, and therefore there are canonical isomorphisms $\gamma_i:\dom(\widehat{p(f_i)}_D)\rightarrow \dom(f_i)$ between their domains. We can define $\beta_i\in W(\dom(\widehat{p(f_i)}_D))=(W\circ F^{(D,1_{p(D)})})\op([p(f_i)])$ as $\beta_i=W(\gamma_i)(\alpha_i)$: it is now immediate to check that it is a matching family for $S$ and the composite $W\circ (F^{(D,1_{p(D)})})\op$. Since $W\circ (F^{(D,1_{p(D)})})\op$ is a $J_{p(D)}$-sheaf, that matching family admits a unique amalgamation $\beta\in W(\dom(\widehat{1_{p(D)}}_D))$. Finally, since $\widehat{1_{p(D)}}_D:\dom(\widehat{1_{p(D)}}_D)\rightarrow D$ is an isomorphism, the element $\alpha:=W(\widehat{1_{p(D)}}_D\inv)(\beta)\in W(D)$ provides an amalgamation for the matching family $\{\alpha_i\}$.
\end{proof}
\begin{remark}
	Notice that in both previous results we could restrict to considering only those functors of the kind $F^{(D,1_{p(D)})}$: this is an indication that these results, duly reformulated, would hold also in the setting of Grothendieck fibrations.
\end{remark}
What we will show in a moment is that the colimit $(\dbicat, J_\dbicat)\simeq \colim_{ps}^\dcat \cbicat/-$ in $\Cosite/(\cbicat,J)$, which we have introduced in the previous section, is preserved by the pseudofunctor
$ \Cosite/(\cbicat,J)\xrightarrow{C_{(-)}} \Topos/\Sh(\cbicat, J)\co$: this will have as a consequence the existence of a general adjunction between indexed categories and toposes over $\Sh(\cbicat,J)$.

Let us recall the adjoint functor theorem for toposes:
\begin{thm}\index{adjoint functor theorem for toposes}
    A functor $F:\Etopos\rightarrow \Ftopos$ between toposes is a left adjoint if and only if it preserves arbitrary colimits.
\end{thm}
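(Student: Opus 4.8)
The plan is to deduce this from the Special Adjoint Functor Theorem, once the relevant categorical properties of Grothendieck toposes have been made explicit. The forward implication is immediate and purely formal: any functor admitting a right adjoint preserves all colimits that exist in its domain, so a left adjoint automatically preserves arbitrary colimits. The whole content of the statement therefore lies in the converse, and the strategy is to exhibit $\Etopos$ as a category to which the (dual form of the) adjoint functor theorem applies.

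For the converse, I would verify that a Grothendieck topos $\Etopos$ satisfies the hypotheses of the dual Special Adjoint Functor Theorem, namely that it is locally small, cocomplete, co-well-powered, and admits a small generating set. Cocompleteness is part of the definition of a Grothendieck topos. Writing $\Etopos\simeq \Sh(\cbicat,J)$ for a small-generated site, the images $\ell_J(c)=\sheafify_J(\yo(c))$ of the objects $c$ ranging over a small $J$-dense subcategory of $\cbicat$ form a small generating family: this is exactly what the small-generatedness hypothesis provides, and it is the reason our blanket convention of working with small-generated sites is essential at this point. Co-well-poweredness follows from the fact that every Grothendieck topos is well-powered, via its subobject classifier, together with the fact that epimorphisms are controlled well enough that each object has only a set of quotients up to isomorphism.

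With these properties in hand, the dual Special Adjoint Functor Theorem asserts that a functor $F:\Etopos\to \Ftopos$ admits a right adjoint precisely when it preserves small colimits; since a functor admitting a right adjoint is by definition a left adjoint, this yields the converse. Alternatively, I would note that one may bypass the explicit verification of co-well-poweredness by invoking the theory of locally presentable categories: every Grothendieck topos is locally presentable, being a reflective localization of a presheaf category whose reflector $\sheafify_J$ preserves finite limits and is in particular accessible, and for a locally presentable domain a functor is a left adjoint if and only if it preserves small colimits.

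The main subtlety I expect to encounter is not categorical but set-theoretic: ensuring that the generating family and the collection of quotient objects are genuinely small. In the small-generated setting the existence of a small $J$-dense subcategory is precisely the hypothesis that makes the generating set available, so the argument ultimately hinges on unwinding the conventions fixed at the outset rather than on any delicate construction. I would keep the exposition brief here, since the result is classical and the only genuinely new point is the observation that our standing small-generatedness assumption supplies the required generating set.
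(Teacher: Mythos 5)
Your proof is correct and is the standard classical argument; the paper itself states this theorem without proof, simply recalling it as a known fact before applying it, so there is no in-paper argument to compare against. Both of your routes (dual Special Adjoint Functor Theorem, or local presentability of $\Sh(\cbicat,J)$ as a left-exact accessible localization of $[\cbicat\op,\Set]$) are valid, and you are right that the small-generatedness convention is exactly what supplies the small generating family $\{\ell_J(c)\}$. The one step you leave slightly vague is co-well-poweredness: the precise argument is that in a Grothendieck topos every epimorphism is effective, so a quotient of $X$ is determined by its kernel pair, which is a subobject of $X\times X$; well-poweredness (via the subobject classifier) then yields only a set of such kernel pairs, hence only a set of quotients.
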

We will use it to build the geometric morphism from the colimit to an arbitrary topos by building its inverse image and showing that it preserves arbitrary colimits. There is however a size issue that needs to be resolves: even if the base site $(\cbicat,J)$ is small-generated, or even small, nothing grants that $\Gir_J(\dcat)$ for an arbitrary fibration $\dcat:\cbicat\op\rightarrow\CAT$ is a Grothendieck topos. To circumvent this, we propose the following definition:

\begin{defn}\label{def:esssmall_fibration}
Given a small-generated site $(\cbicat,J)$, a $\cbicat$-indexed category $\dcat:\cbicat\op\rightarrow \CAT$ is essentially $J$-small\index{$\cbicat$-indexed category! essentially $J$-small} if the Giraud site $(\gbicat(\dcat), J_\dcat)$ is small-generated. We will denote by $\Ind_\cbicat^J$\index{$\Ind_\cbicat^J$} and $\St^J(\cbicat,J)$\index{$\St^J(\cbicat,J)$} the categories of essentially $J$-small fibrations and $J$-stacks.
\end{defn}
\begin{remarks}
\begin{enumerate}[(i)]
    \item From the point of view of fibrations, we can define a fibration $p:\dbicat\rightarrow\cbicat$ to be essentially $J$-small\index{fibration! essentially $J$-small} if and only if the site $(\dbicat, J_\dbicat)$ is small-generated.
    \item If a fibration over $\cbicat$ (or a $\cbicat$-indexed category) is essentially $J$-small, then its Giraud topos is in fact a topos: this answers to the size issue posed in the remark following Definition \ref{def:Giraud_topology_classifying_topos}.
    \item In particular, a small fibration is essentially $J$-small for any choice of $J$. Denote by $\abicat$ the small $J$-dense subcategory of $\cbicat$: then every object $X$ in $\cbicat$ admits a $J$-covering family $y_i:A_i\rightarrow X$ whose domains all lie in $\abicat$. Then consider the full subcategory $\bbicat\hookrightarrow \gbicat(\dcat)$ whose objects are of the form $(A,U)$, where $A$ is an object of $\abicat$ and $U\in \dcat(A)$: it is a small category, and every object $(X,U)$ in $\gbicat(\dcat)$ admits a $J_\dcat$-covering family $(A_i, \dcat(y_i)(U))\rightarrow (X,U)$ of objects in $\bbicat$ (where we used the explicit description of $J_\dcat$ given in Proposition \ref{prop:girtpl_descrizione_sieves}). Therefore, $\bbicat$ is a small $J_\dcat$-dense subcategory of $\gbicat(\dcat)$.
    \item In particular, Lemma \ref{lemma:discfib_sono_esssmall} proved that if $(\cbicat,J)$ is small-generated then every discrete fibration $\fib P\rightarrow \cbicat$ is essentially $J$-small.
    \item Corollary \ref{cor:canonicalstackrelativesite} implies that for every small-generated site $(\cbicat,J)$ the canonical stack $\canst_{(\cbicat,J)}$ is essentially $J$-small.
\end{enumerate}
\end{remarks}

We can now prove that the Giraud topos of an essentially $J$-small fibration is a weighted colimit of étale toposes:
\begin{thm}\label{thm:topos_Giraud_colimite_pesato}
	Given an essentially $J$-small cloven fibration $p:\dbicat\rightarrow \cbicat$ with corresponding $\cbicat$-indexed category $\dcat$, then the topos of sheaves $\Gir_J(p):=\Sh(\dbicat, J_\dbicat)$ is the $\dcat$-weighted colimit of the diagram
	\[
	L:\cbicat\xrightarrow{\cbicat/-}\Cl\Fib_\cbicat\xrightarrow{\Giraud}\Cosite/(\cbicat,J)\xrightarrow{C_{(-)}} \Topos\co/\Sh(\cbicat,J):
	\]
	more explicitly, for any $\Sh(\cbicat,J)$-topos $\Etopos$ there is an equivalence between
	\[\Topos\co/{\Sh(\cbicat,J)}\left( {\Gir_J(p)}, \Etopos\right)\]
	
	and \[ \Ind_\cbicat\left(\dcat, \Topos\co/\Sh(\cbicat,J)\left({\Sh(\cbicat/(-), J_{(-)})},\Etopos \right)\right),\]
	which moreover is pseudonatural in $\Etopos$.
\end{thm}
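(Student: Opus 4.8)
The plan is to exploit the fact that the site-theoretic weighted colimit $(\dbicat,J_\dbicat)\simeq \colim_{ps}^\dcat(\cbicat/-)$ in $\Cosite\cont/(\cbicat,J)$ is already established (this is the content of the adjunctions $\Lambda_{\Cosite\cont/(\cbicat,J)}\dashv\Gamma_{\Cosite\cont/(\cbicat,J)}$ from the previous section), and then to show that the 2-functor $C_{(-)}:\Cosite\cont/(\cbicat,J)\rightarrow\Topos\co/\Sh(\cbicat,J)$ carries this $\dcat$-weighted colimit to the corresponding $\dcat$-weighted colimit of toposes. Since the essential $J$-smallness hypothesis guarantees $\Gir_J(p)=\Sh(\dbicat,J_\dbicat)$ is an honest Grothendieck topos, the statement makes sense; the work is in verifying the universal property, and the natural route is to construct, for each $\Sh(\cbicat,J)$-topos $\Etopos$, a functor between the two hom-categories in the claim and to show it is an equivalence.

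First I would pin down the colimit cocone at the level of toposes: the legs are the geometric morphisms $C_{(A,\alpha)}:\Sh(\cbicat/X,J_X)\rightarrow\Gir_J(\dcat)$ over $\Sh(\cbicat,J)$ induced by the continuous comorphisms $F_{(A,\alpha)}$ (continuous by Proposition \ref{prop:fib_e_morfib_sono_can.comorf}), with connecting $2$-cells $C_\gamma$, all of which I already named before the statement. Given a pseudonatural cocone, i.e. an element of $\Ind_\cbicat(\dcat,\Topos\co/\Sh(\cbicat,J)(\Sh(\cbicat/-,J_{(-)}),\Etopos))$, this is exactly the data of geometric morphisms $G_{(A,\alpha)}:\Sh(\cbicat/X,J_X)\rightarrow\Etopos$ over $\Sh(\cbicat,J)$ together with compatible $2$-cells, which by Proposition \ref{prop:fib_discreta_slice_topos} I may rephrase as data on the slices $\Sh(\cbicat,J)/\ell_J(X)$. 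The key move is to build the induced geometric morphism $H:\Gir_J(p)\rightarrow\Etopos$ by constructing its inverse image $H^*:\Etopos\rightarrow\Sh(\dbicat,J_\dbicat)$ explicitly, exactly mirroring the explicit construction in the proof of Proposition \ref{prop:colimit_topos}: for $E$ in $\Etopos$, define $H^*(E)$ as a presheaf on $\dbicat$ whose value on an object is read off from the local data $G_{(A,\alpha)}^*(E)$ evaluated at the appropriate terminal objects, with transition maps assembled from the cocone $2$-cells. I would then verify that $H^*$ lands in $J_\dbicat$-sheaves using Lemma \ref{lemma:fascio_su_fib_sse_fascio_su_slice}, which reduces sheaf-hood on $(\dbicat,J_\dbicat)$ to sheaf-hood on each slice $(\cbicat/X,J_X)$ where it holds by hypothesis.

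The crucial step is showing $H^*$ is a genuine inverse image, i.e. that it preserves finite limits and arbitrary colimits (so that it admits a right adjoint, giving a geometric morphism by the adjoint functor theorem for toposes quoted above). For colimit preservation I would invoke the joint conservativity of the family $\{C_{(A,\alpha)}^*\}$ together with Lemma \ref{lemma:jcons_e_colimiti}: since each composite $C_{(A,\alpha)}^*\circ H^*\simeq G_{(A,\alpha)}^*$ preserves colimits (being an inverse image) and the $C_{(A,\alpha)}^*$ are jointly conservative and colimit-preserving, $H^*$ preserves them too; finite-limit preservation is handled analogously. Finally I would check that the assignment cocone $\mapsto H$ is quasi-inverse to precomposition with the colimit cocone, and that the whole correspondence is pseudonatural in $\Etopos$ — this last naturality, together with the bookkeeping of $2$-cells $C_\gamma$ and $F^y_{(A,\alpha)}$ through all the identifications, is the main obstacle, being lengthy but essentially formal. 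I expect the hardest genuinely non-routine point to be confirming that the factorization $H$ really lives over $\Sh(\cbicat,J)$ (i.e. is a morphism in the slice $\co$-category, compatibly with the structural morphisms $C_{p_X}$ and $C_{p_\dcat}$), which follows from the commutativity of the fibration triangles but requires care in tracking the $\co$-direction of the slice $2$-cells.
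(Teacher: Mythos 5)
Your proposal follows essentially the same route as the paper's proof: fixing the cocone with legs $C_{(A,\alpha)}$, defining the inverse image $H^{*}(E)$ on an object $D$ of $\dbicat$ from the local data $G^{*}_{(D,1_{p(D)})}(E)([1_{p(D)}])$, checking sheaf-hood via Lemma \ref{lemma:fascio_su_fib_sse_fascio_su_slice}, and deducing limit and colimit preservation from joint conservativity of the $C_{(A,\alpha)}^{*}$ via Lemma \ref{lemma:jcons_e_colimiti} together with the adjoint functor theorem for toposes. The only cosmetic difference is one of emphasis: the paper treats the over-$\Sh(\cbicat,J)$ compatibility as immediate and leaves the pseudonaturality bookkeeping to the reader, exactly as you anticipate.
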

\begin{proof}
	Since in this proof we will not consider different topologies over the same category, there is no risk of confusion in writing $\widetilde{\abicat}$ for any sheaf topos $\Sh(\abicat,K)$.
	
	Let us start by considering the pseudonatural equivalence in Yoneda's lemma and compose it with $C_{(-)}\Giraud$:
	\[ \dcat(X)\isorightarrow \Fib_\cbicat(\cbicat/X,\dbicat)\rightarrow\Topos\co/\widetilde{\cbicat}(\widetilde{\cbicat/X},\widetilde{\dbicat}) \]
	We obtain a transformation, which is pseudonatural in $X$, acting by mapping $(A,\alpha)$ in $\dcat(X)$ to the geometric morphism $C_{{(A,\alpha)}}$, and $\gamma:(A,\alpha)\rightarrow(B,\beta)$ to the natural transformation $C_{\gamma}:C_{(B,\beta)}\Rightarrow C_{(A,\alpha)}$. The pseudonaturality condition implies the existence for any $y:Y\rightarrow X$ in $\cbicat$ of natural isomorphisms $C_y^{(A,\alpha)}:C_{(A,\alpha)}C_{\fib y}\cong C_{\dcat(y)(A,\alpha)}$ satisfying the usual compatibility conditions. This provides us with a $\dcat$-weighted cocone under $L$ with vertex $\widetilde{\dbicat}$, which we will denote by $\underline{C}$. Let us draw a sketch of it, for the sake of clarity:
	\[
	\begin{tikzcd}
		\widetilde{\cbicat/X} \arrow[rrdd, "{C_{(A,\alpha)}}"{xshift=-1ex}, bend left, ""{name=D, xshift=2ex, yshift=-1ex}, ""{name=A, below}] \arrow[rrdd, "{C_{(B,\beta)}}"', ""{name=B}, bend right] \ar[from=B, to=A, Rightarrow, "C_\gamma"]&  &                     &  & \widetilde{\cbicat/Y} \arrow[llll, "C_{\fib y}"'] \arrow[lldd, "{C_{\dcat(y)(A,\alpha)}}", ""{name=C, yshift=3ex}] \ar[Leftarrow, from=C, to=D, "\sim", "C^{(A,\alpha)}_y"{above,xshift=1ex}] \\
		&  &                     &  &                                                                                            \\
		&  & \widetilde{\dbicat} &  &                                                                                           
	\end{tikzcd}
	\]
	where we are omitting all structural geometric morphisms to $\widetilde{\cbicat}$.
	
	We want to show that $\underline{C}$ is indeed the colimit cocone. More explicitly, we can consider the functor 
	\[(-\circ\underline{C}):\Topos\co/\widetilde{\cbicat}\left( \widetilde{\dbicat}, \Etopos\right)\rightarrow \Ind_\cbicat\left(\dcat, \Topos\co/\widetilde{\cbicat}\left(\widetilde{\cbicat/-},\Etopos \right)\right)  \]
	which starting from $(F,\phi):\widetilde{\dbicat}\rightarrow\Etopos$ maps it to the cocone 
	$$\dcat\xRightarrow{\underline{C}}\Topos\co/\widetilde{\cbicat}(\widetilde{\cbicat/-}, \widetilde{\dbicat})\xRightarrow{(F,\phi)\circ-}\Topos\co/\widetilde{\cbicat}(\widetilde{\cbicat/-}, \Etopos),$$
	\ie composes the geometric morphism $(F,\phi)$ with the legs of the cocone $\underline{C}$: then $\underline{C}$ is a colimit cocone if the functor $(-\circ \underline{C})$ is an equivalence (pseudonatural in $\Etopos$).
	
	To build a quasi-inverse for $(-\circ \underline{C})$, we start by considering a $\dcat$-weighted cone $\underline{G}$ under $L$ with vertex $\Etopos$: 
	
	\[
	\begin{tikzcd}
		\widetilde{\cbicat/X} \arrow[rrdd, "{G_{(A,\alpha)}}"{xshift=-1ex}, bend left, ""{name=D, xshift=2ex, yshift=-1ex}, ""{name=A, below}] \arrow[rrdd, "{G_{(B,\beta)}}"', ""{name=B}, bend right] \ar[from=B, to=A, Rightarrow, "G_\gamma"]&  &                     &  & \widetilde{\cbicat/Y} \arrow[llll, "C_{\fib y}"'] \arrow[lldd, "{G_{\dcat(y)(A,\alpha)}}", ""{name=C, yshift=3ex}] \ar[Leftarrow, from=C, to=D, "\sim", "G^{(A,\alpha)}_y"{above,xshift=1ex}] \\
		&  &                     &  &                                                                                            \\
		&  & \Etopos &  &                                                                                           
	\end{tikzcd}
	\]
	In the following, we will work with inverse images of geometric morphisms: this makes things a bit easier, since we have already mentioned that the behaviour of $C_{(A,\alpha)}^*$ is simply precomposition with the functor $(F_{(A,\alpha)})\op$. We want to build from the data of the cocone $\underline{G}$ an inverse image functor $H:\Etopos\rightarrow \widetilde{\dbicat}$ so that its composition with the cocone $\underline{C}$ results again in $\underline{G}$.
	
	Consider an object $E$ of $\Etopos$. We recall once again that for every $D$ in $\dbicat$ it holds that $D\simeq F^{(D,1_{p(D)})}([1_{p(D)}])$, from which we infer that $H(E)(D)\simeq H(E)F^{(D,1_{p(D)})}([1_{p(D)}])$. Since we want $H(E)F^{(D,1_{p(D)})}=C_{(A,\alpha)}^*(H(E))$ to be isomorphic to $G^*_{(D,1_{p(D)})}\hspace{-0.65pt}(E)$, we can set $H(E)(D)\hspace{-2.5pt}:=\hspace{-1.5pt}G^*_{(D,1_{p(D)})}\hspace{-0.65pt}(E)([1_{p(D)}])$. The definition of $H(E)$ is a bit more intricate on arrows. Consider $r:D'\rightarrow D$ in $\dbicat$: if we consider the cartesian lift $\widehat{p(r)}_D$, then by cartesianity $r$ must factor through it with a unique $v_r:D'\rightarrow \dom(\widehat{p(r)}_D)$ such that moreover $p(v_r)=\theta_{p(r),D}$, and this provides an arrow $v_r:(D', 1_{p(D')})\rightarrow (\dom(\widehat{p(r)}_D), \theta_{p(r),D})=\dcat(p(r))(D,1_{p(D)})$. Then we can define $H(E)(r)$ as the following composite arrow:
	\[\begin{tikzcd}[column sep=12ex, row sep=5ex]
		{G^*_{(D,1_{p(D)})}(E)[1_{p(D)}]} \ar[dd, "H(E)(r)"] \ar[r, "{G^*_{(D,1_{p(D)})}(E)(p(r))}"] & {G^*_{(D,1_{p(D)})}(E)([p(r)])} \ar[d, equal]
		\\
		& {C_{\fib p(r)}^*G^*_{(D,1_{p(D)})}(E)([1_{p(D')}])} \ar[d, "{G^{(D,1_{p(D)})}_{p(r)}([1_{p(D')}])}"]\\
		G^*_{(D',1_{p(D')})}(E)([1_{p(D')}])  & {G^*_{(\dom(\widehat{p(r)}_D),\theta_{p(r),D})}(E)([1_{p(D')}])} \arrow[l, "{G_{v_r}(E)([1_{p(D')}])}"]
	\end{tikzcd}.\]
	The functoriality of $H(E)$ can be checked through a (rather lenghty) computation. It is easier to define $H$ on arrows: indeed, given $g:E\rightarrow E'$ in $\Etopos$, $H(g):H(E)\rightarrow H(E')$ is defined componentwise, where for $D$ in $\dbicat$ we set $H(g)(D)$ as
	$$G^*_{(D,1_{p(D)})}(g)([1_{p(D)}]):G^*_{(D,1_{p(D)})}(E)([1_{p(D)}])\rightarrow G_{(D,1_{p(D)})}(E')([1_{p(D)}]).$$
	It is very easy to see that this defined a natural transformation $H(E)\Rightarrow H(E')$ and that the association $g\mapsto H(g)$ is functorial.
	
	Now, by the very definition of $H(E)$ it holds that for every $X$ in $\cbicat$ and every $(B,\beta)$ we have $C_{(B,\beta)}^*H(E)\simeq G_{(B,\beta)}(E)$: all these are sheaves, and by applying Lemma \ref{lemma:fascio_su_fib_sse_fascio_su_slice} we have that $H(E)$ is a $J_\dbicat$-sheaf.
	
	For now we have a functor $H:\Etopos\rightarrow\widetilde{\dbicat}$ such that the cone $\underline{G}$ is essentially equivalent to the composite of $H$ with the cone $\underline{C}$: we want it to be the inverse image of a geometric morphism. But this follows from Lemma \ref{lemma:jcons_e_colimiti}: since all the functors $C_{(A,\alpha)}$ and $C_{(A,\alpha)}H\simeq G_{(A,\alpha)}$ preserve finite limits and arbitrary colimits, and since the $C_{(A,\alpha)}$ are jointly conservative, it follows that $H$ must also preserve finite limits and arbitrary colimits. By the adjoint functor theorem for Grothendieck toposes we conclude that $H$ is the inverse image of a geometric morphism $\widetilde{\dbicat}\rightarrow\Etopos$. It is also immediate to check that it is a morphism in the slice $\Topos\co/\widetilde{\cbicat}$: thus we have defined the behaviour of the functor
	$$\Ind_\cbicat(\dcat, \Topos\co/\widetilde{\cbicat}(\widetilde{\cbicat/-},\Etopos))\rightarrow \Topos\co/\widetilde{\cbicat}(\widetilde{\dbicat},\Etopos)$$
	on objects. The definition on arrows is rather easy: start with two cocones $\underline{G}$ and $\underline{G}'$, corresponding to inverse images functors $H:\Etopos\rightarrow\widetilde{\dbicat}$ and $H':\Etopos\rightarrow\widetilde{\dbicat}$. An arrow $\xi:\underline{G}\Rrightarrow \underline{G}'$ is a modification, and we want to build from it a natural trasformation $\eta:H'\Rightarrow H$ of geometric morphisms. To define $\eta$, notice that $\xi$ corresponds to the given for every $(A,\alpha)$ in $\dcat(X)$ of a natural transformation $\xi_{(A,\alpha)}:G'_{(A,\alpha)}\Rightarrow G_{(A,\alpha)}$ of geometric morphisms (the direction is reversed by the $\co$), satisfying some naturality conditions: then we can set $\eta(E):H'(E)\Rightarrow H(E):\dbicat\op\rightarrow\Set$ componentwise as
	\[\eta(E)(D):=G'^*_{(D,1_{p(D)})}(E)([1_{p(D)}])\xrightarrow{\xi_{(D,1_{p(D)})}(E)([1_{p(D)}])} G^*_{(D,1_{p(D)})}(E)([1_{p(D)}])	\]
	It is easy to check using the definition of modification that $\eta(E)(D)$ is natural in both components, and thus provides a 2-cell of geometric morphisms $\eta:H'\Rightarrow H$.
	
	The verification that we have defined a pseudonatural equivalence is lenghty but straightforward.
\end{proof}
\begin{remark}
	Notice in particular that if $\Etopos=\Sh(\abicat, T)$ and all the geometric morphisms $G_{(A,\alpha)}$ of the cocone are induced by continuous comorphisms of sites, then $H:\Gir_J(p)\rightarrow\Etopos$ is induced, up to isomorphism, by the continuous comorphism of sites $h:\dbicat\rightarrow \abicat$ we defined in Corollary \ref{cor:colimite_in_cat_su_C}.
\end{remark}

A further result shows that Giraud toposes for presheaves are canonically seen as conical colimits, in a way that generalizes Proposition \ref{prop:colimit_topos}:

\begin{cor}\label{cor:topos_discfib_colimite_conico}
	Consider a small-generated site $(\cbicat,J)$: for any presheaf $P:\cbicat\op\rightarrow\Set$, the topos $\Gir_J(P)$ is the conical pseudocolimit of the diagram
	\[
	\textstyle	D:\fib P \xrightarrow{p_P}\cbicat\xrightarrow{\cbicat/-}\Cl\Fib_\cbicat\xrightarrow{\Giraud}\Cosite/(\cbicat,J)\xrightarrow{C_{(-)}}\Topos\co/\Sh(\cbicat,J);\]
	in particular, if $P$ is the terminal presheaf we have that  $\Sh(\cbicat,J)$ is the conical pseudocolimit of the diagram $C_{(-)}\Giraud(\cbicat/-)$.
	
	If we consider the 1-category $\Topos/_1\Sh(\cbicat,J)$, where geometric morphisms are identified up to equivalence, then $\Gir_J(P)$ is also the 1-colimit of $D$ in $\Topos/_1\Sh(\cbicat,J)$. 
\end{cor}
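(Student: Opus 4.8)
The plan is to derive this corollary directly from Theorem~\ref{thm:topos_Giraud_colimite_pesato} by specialising the weight to a discrete one and then conifying. First I would observe that a presheaf $P:\cbicat\op\rightarrow\Set$ is in particular a $\cbicat$-indexed category, namely the discrete one with associated Grothendieck fibration $p_P:\fib P\rightarrow\cbicat$, and that by Lemma~\ref{lemma:discfib_sono_esssmall} (see also Remark~(iv) following Definition~\ref{def:esssmall_fibration}) this fibration is essentially $J$-small whenever $(\cbicat,J)$ is small-generated. Hence Theorem~\ref{thm:topos_Giraud_colimite_pesato} applies with $\dcat:=P$ and yields a pseudonatural equivalence presenting $\Gir_J(P)$ as the $P$-weighted pseudocolimit $\colim_{ps}^P L$ of the diagram $L:=C_{(-)}\Giraud(\cbicat/-):\cbicat\rightarrow\Topos\co/\Sh(\cbicat,J)$.

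Next I would invoke the conification machinery of Chapter~\ref{chap:preliminaries}. The last claim of Corollary~\ref{cor:colimiti_pesati_conificati}, stated for an arbitrary target $2$-category and a presheaf weight, gives an equivalence $\colim_{ps}^P R\simeq\colim_{ps}(R\circ p)$. Taking the target to be $\Topos\co/\Sh(\cbicat,J)$, $R:=L$ and $p:=p_P$, the right-hand side is precisely the conical pseudocolimit of $L\circ p_P$, which is the diagram $D$ appearing in the statement. Composing the two equivalences yields $\Gir_J(P)\simeq\colim_{ps}D$, which proves the first assertion. For the special case $P=1$ I would note that the category of elements $\fib 1$ is (equivalent to) $\cbicat$ with $p_1$ an equivalence, so that $J_1=J$, $\Gir_J(1)\simeq\Sh(\cbicat,J)$, and $D$ reduces to $L$ itself; this recovers the conical presentation already obtained by the direct computation in Proposition~\ref{prop:colimit_topos}.

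Finally, for the $1$-categorical statement I would descend the pseudocolimit universal property to the truncation $\Topos/_1\Sh(\cbicat,J)$. Since the universal property of $\colim_{ps}D$ is an equivalence of hom-categories
\[
\Topos\co/\Sh(\cbicat,J)(\Gir_J(P),\Etopos)\simeq[\text{pseudococones under }D\text{ with vertex }\Etopos],
\]
and an equivalence of categories induces a bijection on isomorphism classes of objects, it suffices to identify isomorphism classes of pseudococones with ordinary cocones in $\Topos/_1\Sh(\cbicat,J)$. A pseudococone supplies legs $[\lambda_i]$ together with invertible coherence $2$-cells forcing $[\lambda_j]\circ[D(s)]=[\lambda_i]$, which is exactly the cocone condition once $2$-cells are quotiented out; conversely any $1$-cocone lifts to a pseudococone by choosing witnessing isomorphisms, and distinct choices are identified in the truncation.

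The hard part will be precisely this last step: verifying that isomorphism classes of pseudococones match ordinary cocones requires checking that the coherence axioms of a pseudococone impose no constraint beyond the naive cocone equalities once $2$-cells are collapsed, and that every lift of a $1$-cocone can be made coherent, using the invertibility of the structural isomorphisms $C_y^{(A,\alpha)}$ produced in the proof of Theorem~\ref{thm:topos_Giraud_colimite_pesato}. Everything else is a mechanical transport of the universal property of Theorem~\ref{thm:topos_Giraud_colimite_pesato} along the equivalence of Corollary~\ref{cor:colimiti_pesati_conificati}.
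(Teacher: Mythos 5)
Your proposal is correct and follows essentially the same route as the paper: it obtains $\Gir_J(P)\simeq\colim_{ps}^P(C_{(-)}\Giraud(\cbicat/-))$ from Theorem \ref{thm:topos_Giraud_colimite_pesato} (with the essential $J$-smallness of $\fib P$ supplied by Lemma \ref{lemma:discfib_sono_esssmall}), conifies via the discrete-weight case of Corollary \ref{cor:colimiti_pesati_conificati}, and then descends the bicategorical universal property to the $1$-truncation $\Topos/_1\Sh(\cbicat,J)$ exactly as the paper does by strengthening the conditions $G_{(X,s)}\simeq HC_{(X,s)}$ to equalities up to equivalence of geometric morphisms.
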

\begin{proof}
	The first claim follows from Corollary \ref{cor:colimiti_pesati_conificati}, which stated that a colimit weighted by a discrete indexed category can be described as a conical colimit, so that $$\Gir_J(P)\simeq \colim_{ps}^P(C_{(-)}\Giraud (\cbicat/-))\simeq \colim_{ps}(C_{(-)}\Giraud (\cbicat/-)p_P).$$
	Its colimit cocone is that of the geometric morphisms
	\[
	\begin{tikzcd}
		{\Sh(\cbicat/X, J_X)} \arrow[rd, "{C_{(X,s)}}"{below}, ""{name=A}] &                     & \Sh({\cbicat/Y, J_Y}) \arrow[ll, "C_{\fib y}"'] \arrow[ld, "{C_{(Y, P(y)(s))}}", ""{name=C, above}] \ar[from=C, to=A, Rightarrow, "\sim"'] \\
		&  \Gir_J(P) &                     
	\end{tikzcd}
	\]
	indexed by the objects $(X,s)$ in $\fib P$. If in particular we consider the terminal presheaf $1:\cbicat\op\rightarrow \Set$, it is immediate to see that the corresponding Grothendieck fibration is the identity functor of $\cbicat$, and thus $\Sh(\cbicat,J)\simeq \colim_{ps}(C_{(-)}\Giraud (\cbicat/-))$.
	
	For the last part, consider a 1-cocone $G_{(X,s)}:\Sh(\cbicat/X, J_X)\rightarrow \Ftopos$ under the diagram $D$. In the proof of Theorem \ref{thm:topos_Giraud_colimite_pesato} the definition of the induced geometric morphism $H:\Gir_J(P)\rightarrow \Ftopos$ was forced by the conditions $G_{(X,s)}\simeq H C_{(X,s)}$; if we want to build a 1-colimit we must strenghten them to the equalities $G_{(X,s)}=H C_{(X,s)}$, and this determines $H$ uniquely up to isomorphism: thus $\Gir_J(P)$ is a 1-colimit of toposes.
\end{proof}

\begin{remark}\label{rmk:esempio_colimite_pesato_non_conificabile}
	For an arbitrary weight $\dcat$ the two colimits $\colim_{ps}^\dcat D$ and $\colim_{ps}(Dp_\dcat)$ are in general different. Take $\cbicat=\onecat$, the terminal category, and endow it with the trivial topology: then $\Sh(\onecat, J^{tr}_\onecat)\simeq \Set$ and $D:\onecat\rightarrow \Topos\co/\Set$ maps the unique object of $\onecat$ to $\Set$. Consider now the category $\twocat$ that has two objects $0$ and $1$ and an arrow $t:0\rightarrow 1$ between them: the unique functor $p:\twocat\rightarrow \onecat$, is a fibration and the Giraud topology over $\twocat$ is the trivial topology, hence $\colim_{ps}^\twocat(D)\simeq \Sh(\twocat, J_\twocat)\simeq [\twocat, \Set]$. On the other hand, $Dp:\twocat\rightarrow\Topos\co/\Set$ maps the two objects of $\twocat$ to the topos $\Set$ and the unique arrow between them to the identity geometric morphism of $\Set$: thus $\colim_{ps}(Dp)\simeq \Set$.
	
\end{remark}

Finally, from Theorem \ref{thm:topos_Giraud_colimite_pesato} we can deduce the \emph{fundamental adjunction} between essentially $J$-small cloven fibrations over $\cbicat$ and toposes over $\Sh(\cbicat,J)$:

\begin{cor}\label{cor:fundamental_adjunction}\index{fundamental adjunction}
	For any small-generated site $(\cbicat,J)$, the two pseudofunctors
	\[\Lambda_{\Topos\co/\Sh(\cbicat,J)}:\Cl\Fib^J_\cbicat\xrightarrow{\Giraud}\Cosite/(\cbicat,J) \xrightarrow{C_{(-)}} \Topos\co/\Sh(\cbicat,J),\]
	\[\left[[p:\dbicat\rightarrow \cbicat]\xrightarrow{(F,\phi)}[q:\ebicat\rightarrow \cbicat]\right]\mapsto \left[ [\Gir_J(p)] \xrightarrow{(C_F,C_\phi)}[\Gir_J(q)]\right], \]
	and
	\[\Gamma_{\Topos\co/\Sh(\cbicat,J)}:\Topos\co/\Sh(\cbicat,J)\rightarrow \Ind^J_\cbicat\simeq\Cl\Fib^J_\cbicat,\]\index{$\Lambda_{\Topos\co/\Sh(\cbicat,J)}\dashv\Gamma_{\Topos\co/\Sh(\cbicat,J)}$}
 	which acts by mapping a geometric morphism $E:\Etopos\rightarrow \Sh(\cbicat,J)$ to
	\[  \Topos\co/\Sh(\cbicat,J)(\Sh(\cbicat/-, J_{(-)}), [E]):\cbicat\op\rightarrow\CAT ,\]
	are the two components of a 2-adjunction 
	\[
	\begin{tikzcd}
		\phantom{\slash}\Cl\Fib^J_\cbicat \arrow[r, "\Lambda_{\Topos\co/\Sh(\cbicat,J)}", bend left, start anchor={north east}, end anchor={north west}] \ar[r,phantom, "\vdash"{rotate=90}]&   \Topos\co/\Sh(\cbicat,J)\phantom{{}^J_\cbicat} \arrow[l, "\Gamma_{\Topos\co/\Sh(\cbicat,J)}", bend left,  start anchor={south west}, end anchor={south east}]
	\end{tikzcd}
	\]
\end{cor}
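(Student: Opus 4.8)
The plan is to read the adjunction directly off the weighted-colimit description of Giraud toposes furnished by Theorem \ref{thm:topos_Giraud_colimite_pesato}, translate between the indexed and fibrational formalisms via the equivalence of Corollary \ref{cor:equivalenza_strfib_indcat}, and then upgrade the one-variable pseudonaturality delivered by that theorem to the pseudonaturality in both variables demanded by Definition \ref{def:2-adjoint}.

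First I would fix an essentially $J$-small cloven fibration $p:\dbicat\to\cbicat$ with associated $\cbicat$-indexed category $\dcat=\Ifrak(p)$, so that by construction $\Lambda_{\Topos\co/\Sh(\cbicat,J)}(p)=\Gir_J(p)$. For any $\Sh(\cbicat,J)$-topos $\Etopos$, Theorem \ref{thm:topos_Giraud_colimite_pesato} supplies an equivalence, pseudonatural in $\Etopos$,
\[
\Topos\co/\Sh(\cbicat,J)(\Gir_J(p),\Etopos)\ \simeq\ \Ind_\cbicat\bigl(\dcat,\ \Topos\co/\Sh(\cbicat,J)(\Sh(\cbicat/-,J_{(-)}),\Etopos)\bigr).
\]
By the very definition of $\Gamma_{\Topos\co/\Sh(\cbicat,J)}$, the weight on the right is exactly $\Gamma(\Etopos)$, so the right-hand side is $\Ind_\cbicat(\dcat,\Gamma(\Etopos))$. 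Applying the $2$-equivalence $\Ifrak:\Cl\Fib_\cbicat\simeq\Ind_\cbicat:\mathcal{U}\gbicat$ of Corollary \ref{cor:equivalenza_strfib_indcat} — which, restricted to essentially $J$-small objects, identifies $\dcat$ with $p$ and $\Gamma(\Etopos)$ (viewed as a fibration through $\Ind^J_\cbicat\simeq\Cl\Fib^J_\cbicat$) — this hom-category becomes $\Cl\Fib^J_\cbicat(p,\Gamma(\Etopos))$. Thus I obtain
\[
\Topos\co/\Sh(\cbicat,J)(\Lambda(p),\Etopos)\ \simeq\ \Cl\Fib^J_\cbicat(p,\Gamma(\Etopos)),
\]
pseudonatural in $\Etopos$. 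Here the essential $J$-smallness hypothesis is what guarantees that $\Gir_J(p)$ is genuinely a topos and that $\Gamma$ is correctly typed into $\Cl\Fib^J_\cbicat$, so one should also keep in mind the (already recorded) fact that $\Gamma$ takes values in essentially $J$-small stacks.

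The remaining point, which I expect to be the main obstacle, is pseudonaturality in the fibration variable $p$, since Theorem \ref{thm:topos_Giraud_colimite_pesato} yields naturality only in $\Etopos$. Given a morphism of fibrations $(G,\psi):[p]\to[p']$, Proposition \ref{prop:fib_e_morfib_sono_can.comorf} shows that $G$ is a continuous comorphism of sites, so $\Lambda(G,\psi)=(C_G,C_\psi)$ is a well-defined geometric morphism, and I must verify that precomposition with $\Lambda(G,\psi)$ on the left corresponds, under the two equivalences above, to precomposition with $(G,\psi)$ on the right, coherently on $2$-cells as well. I would carry this out by tracing the explicit colimit cocone $\underline{C}$ and its quasi-inverse $H$ from the proof of Theorem \ref{thm:topos_Giraud_colimite_pesato}: the legs $C_{(A,\alpha)}$ of the cocone for $p$ are sent by $-\circ(C_G,C_\psi)$ to the corresponding legs for $p'$ precisely because $\Lambda=C_{(-)}\Giraud\gbicat$ is $2$-functorial and $G$ intertwines the canonical functors $F_{(A,\alpha)}$ up to the coherence data packaged in $(G,\psi)$. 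A cleaner alternative is to invoke bicategorical uniqueness of representing objects: since $\Lambda$ is already an honest $2$-functor and $\Gamma$ is a $2$-functor, a family of equivalences $\Topos\co/\Sh(\cbicat,J)(\Lambda(p),-)\simeq\Cl\Fib^J_\cbicat(p,\Gamma(-))$ that is pseudonatural in the second argument assembles automatically into a pseudonatural-in-$p$ family, the coherence being forced by the $2$-categorical Yoneda lemma. Either route produces the pseudonatural equivalence of hom-categories required by Definition \ref{def:2-adjoint}, establishing $\Lambda_{\Topos\co/\Sh(\cbicat,J)}\dashv\Gamma_{\Topos\co/\Sh(\cbicat,J)}$.
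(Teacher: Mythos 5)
Your proposal is correct and follows essentially the same route as the paper, which states this corollary immediately after Theorem \ref{thm:topos_Giraud_colimite_pesato} with no separate argument: the hom-equivalence of that theorem, with the weight recognised as $\Gamma(\Etopos)$ and transported through the equivalence $\Cl\Fib_\cbicat\simeq\Ind_\cbicat$, is precisely the adjunction isomorphism. Your explicit treatment of pseudonaturality in the fibration variable (via the $2$-functoriality of $C_{(-)}\circ\Giraud\circ\gbicat$ on the colimit cocones, or the bicategorical Yoneda argument) fills in a step the paper leaves implicit, and is sound.
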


Finally, as the 2-adjunction $\Lambda_{\CAT/\cbicat}\dashv \Gamma_{\CAT/\cbicat}$ restricts to sites and continuous comorphisms, so the 2-adjunction $\Lambda_{\Topos\co/\Sh(\cbicat,J)}\dashv\Gamma_{\Topos\co/\Sh(\cbicat,J)}$ can also be formulated on a smaller class of toposes. Indeed, since $\Giraud$ maps all fibrations in $\Cl\Fib_\cbicat$ and their morphisms to continuous comorphisms of sites, it follows that the geometric morphisms in the image of $\Lambda_{\Topos\co/\Sh(\cbicat,J)}$ are all essential geometric morphisms, and thus the colimit cocone lives in $\EssTopos$. Moreover, if we go back to the proof of Theorem \ref{thm:topos_Giraud_colimite_pesato} and supposed that all the legs $G^*_{(A,\alpha)}$ of the cone of the inverse images preserve arbitrary limits, $H$ does too by Lemma \ref{lemma:jcons_e_colimiti}, which is to say that if all the legs $G_{(A,\alpha)}$ are essential geometric morphisms then the induced functor $H$ is an essential geometric morphism. This proves the following result:
\begin{cor}\label{cor:fundadj_cfib_esstopos}\index{$\Lambda_{\EssTopos\co/\Sh(\cbicat,J)}\dashv\Gamma_{\EssTopos\co/\Sh(\cbicat,J)}$}
	Given a cloven fibration $p:\dbicat\rightarrow \cbicat$ with corresponding $\cbicat$-indexed category $\dcat$, then there is a 2-adjunction
	\[
	\begin{tikzcd}
		\phantom{\slash}\Cl\Fib^J_\cbicat \arrow[r, "\Lambda_{\EssTopos\co/\Sh(\cbicat,J)}", bend left, start anchor={north east}, end anchor={north west}] \ar[r,phantom, "\vdash"{rotate=90}]&   \EssTopos\co/\Sh(\cbicat,J)\phantom{{}^J_\cbicat} \arrow[l, "\Gamma_{\EssTopos\co/\Sh(\cbicat,J)}", bend left,  start anchor={south west}, end anchor={south east}]
	\end{tikzcd}
	\]
	where $$\Gamma_{\EssTopos\co/\Sh(\cbicat,J)}(\Etopos):=\EssTopos\co/\Sh(\cbicat,J)(\widetilde{\cbicat/-},\Etopos):\cbicat\op\rightarrow\CAT,$$
	$$\Lambda_{\EssTopos\co/\Sh(\cbicat,J)}(p):=[C_{p_\dbicat}:\Gir_J(P) \rightarrow \Sh(\cbicat,J)].$$
\end{cor}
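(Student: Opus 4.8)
The plan is to obtain this statement by restricting the fundamental adjunction of Corollary \ref{cor:fundamental_adjunction} to the sub-$2$-category $\EssTopos\co/\Sh(\cbicat,J)$, which embeds into $\Topos\co/\Sh(\cbicat,J)$ as the locally full (but not full) sub-$2$-category whose objects and $1$-cells are the essential geometric morphisms. Crucially, between two essential $1$-cells the $2$-cells of $\EssTopos\co/\Sh(\cbicat,J)$ coincide with those of $\Topos\co/\Sh(\cbicat,J)$, so each hom-category of $\EssTopos\co/\Sh(\cbicat,J)$ is exactly the full subcategory of the corresponding hom-category of $\Topos\co/\Sh(\cbicat,J)$ spanned by the essential geometric morphisms. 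It therefore suffices to check two things: that $\Lambda_{\Topos\co/\Sh(\cbicat,J)}$ factors through this sub-$2$-category, and that the defining pseudonatural equivalence restricts accordingly.

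First I would verify that $\Lambda$ lands in $\EssTopos\co/\Sh(\cbicat,J)$. By construction $\Lambda(p)=[C_{p_\dbicat}]$, where $p_\dbicat\colon(\dbicat,J_\dbicat)\to(\cbicat,J)$ is a continuous comorphism of sites by Proposition \ref{prop:fib_e_morfib_sono_can.comorf}; since the inverse image of the geometric morphism induced by a continuous comorphism admits a further left adjoint $(C_{p_\dbicat})_!$, the morphism $C_{p_\dbicat}$ is essential, so $\Lambda(p)$ is an object of $\EssTopos\co/\Sh(\cbicat,J)$. The same proposition shows that every morphism of fibrations $(F,\phi)$ is a continuous comorphism, whence $C_F$ is essential and $\Lambda(F,\phi)$ is a $1$-cell of $\EssTopos\co/\Sh(\cbicat,J)$; thus $\Lambda$ factors as claimed.

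The core of the argument is to show that, for a fixed essential $\Sh(\cbicat,J)$-topos $\Etopos$, the equivalence of Theorem \ref{thm:topos_Giraud_colimite_pesato} restricts to the essential $1$-cells on both sides. Under that equivalence a geometric morphism $H\colon\Gir_J(p)\to\Etopos$ over $\Sh(\cbicat,J)$ corresponds to the $\dcat$-weighted cocone with legs $G_{(A,\alpha)}=H\circ C_{(A,\alpha)}$, and an indexed functor $\dcat\to\Topos\co/\Sh(\cbicat,J)(\widetilde{\cbicat/-},\Etopos)$ factors through the sub-pseudofunctor $\EssTopos\co/\Sh(\cbicat,J)(\widetilde{\cbicat/-},\Etopos)$ precisely when every leg $G_{(A,\alpha)}$ is essential. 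So the claim reduces to: $H$ is essential if and only if all the $G_{(A,\alpha)}$ are. The forward implication is immediate, since each $C_{(A,\alpha)}$ is essential (it is induced by a continuous comorphism of sites) and a composite of essential geometric morphisms is essential. The converse is the one genuine point, and I expect it to be the main obstacle: $H$ is essential exactly when $H^*$ preserves arbitrary limits (by the adjoint functor theorem for toposes, $H^*$ has a left adjoint iff it preserves all limits); if each $G_{(A,\alpha)}$ is essential then each $G_{(A,\alpha)}^*=C_{(A,\alpha)}^*H^*$ preserves arbitrary limits, and since the functors $C_{(A,\alpha)}^*$ are jointly conservative and limit-preserving, Lemma \ref{lemma:jcons_e_colimiti} forces $H^*$ to preserve arbitrary limits as well.

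Because the $2$-cells require no further checking (they are literally the same data between essential $1$-cells), the hom-equivalence restricts to one between the full subcategory of essential morphisms $\Gir_J(p)\to\Etopos$ and $\Ind_\cbicat(\dcat,\Gamma_{\EssTopos\co/\Sh(\cbicat,J)}(\Etopos))$, where $\Gamma_{\EssTopos\co/\Sh(\cbicat,J)}(\Etopos)=\EssTopos\co/\Sh(\cbicat,J)(\widetilde{\cbicat/-},\Etopos)$; and this restriction inherits pseudonaturality in $\Etopos$ from the unrestricted equivalence. This yields the asserted $2$-adjunction $\Lambda_{\EssTopos\co/\Sh(\cbicat,J)}\dashv\Gamma_{\EssTopos\co/\Sh(\cbicat,J)}$.
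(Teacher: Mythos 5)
Your proposal is correct and follows essentially the same route as the paper: it observes that $\Giraud$ sends fibrations and their morphisms to continuous comorphisms of sites (so $\Lambda$ lands in essential geometric morphisms), and then restricts the hom-equivalence of Theorem \ref{thm:topos_Giraud_colimite_pesato} by noting that $H$ is essential exactly when all the legs $G_{(A,\alpha)}$ are, the nontrivial direction being handled via the joint conservativity of the $C_{(A,\alpha)}^*$ and Lemma \ref{lemma:jcons_e_colimiti}. Your explicit treatment of the easy converse (composites of essential morphisms are essential) and of the $2$-cells is a minor elaboration of what the paper leaves implicit.
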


\begin{remark}
	This could also be derived from the adjunction $C_{(-)}\dashv (-)_!$ between $\EssTopos\co$ and $\Cosite\cont$ which we introduced in Corollary \ref{cor:aggiunzione_cositi_esstopos}. Indeed, since $C_{(-)}$ is a left adjoint it preserves weighted pseudocolimits: thus from $(\gbicat(\dcat), J_\dcat)\simeq \colim_{ps}^\dcat (\cbicat/-)$  it follows that $$\Gir_J(\dcat):=\Sh(\gbicat(\dcat), J_\dcat)\simeq\colim_{ps}^\dcat \Sh(\cbicat/-, J{(-)})$$ in $\EssTopos\co$. Moreover, the functor $C_{(-)}$ restricts to a functor of slice categories \[C_{(-)}:\Cosite\cont/(\cbicat,J)\rightarrow\EssTopos\co/\Sh(\cbicat,J),\] so the diagram $\Sh(\cbicat/-, J_{(-)})$ has image in the slice $\EssTopos\co/\Sh(\cbicat,J)$. Finally, since the forgetful functor $\EssTopos\co/\Sh(\cbicat,J)\rightarrow \EssTopos\co$ reflects colimits, the topos $\Gir_J(\dcat)$ is also the colimit of $\Sh(\cbicat/-, J_{(-)})$ in $\EssTopos\co/\Sh(\cbicat,J)$. 
\end{remark}

In fact, the codomain of $\Gamma$ can also be restricted:
\begin{lemma}\label{lemma:immagine_gamma_in_Stack}
	Given a site $(\cbicat,J)$, the functor 
	\[\Gamma_{\Topos\co/\Sh(\cbicat,J)}:\Topos\co/\Sh(\cbicat,J) \rightarrow \Cl\Fib_\cbicat \]
	factors through $\St(\cbicat,J)$.
\end{lemma}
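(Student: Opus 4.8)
The plan is to show that for every object $[E:\Etopos\rightarrow \Sh(\cbicat,J)]$ of $\Topos\co/\Sh(\cbicat,J)$ the $\cbicat$-indexed category $\dcat:=\Gamma_{\Topos\co/\Sh(\cbicat,J)}(\Etopos)$ is a $J$-stack. Since $\St(\cbicat,J)$ is a $2$-full and faithful sub-$2$-category of $\Ind_\cbicat$ (Definition \ref{def:stack_ind}), this objectwise statement already implies that $\Gamma_{\Topos\co/\Sh(\cbicat,J)}$ factors through it. By Definition \ref{def:stack_ind} I must check that for every $J$-sieve $m_S:S\rightarrowtail \yo(X)$ the comparison functor
\[
\Ind_\cbicat(\yo(X), \dcat)\xrightarrow{-\circ m_S} \Ind_\cbicat(S, \dcat)
\]
is an equivalence, where $\yo(X)$ and $S$ are regarded as discrete $\cbicat$-indexed categories.

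The key idea is to transport this comparison across the fundamental adjunction of Corollary \ref{cor:fundamental_adjunction}. The discrete fibrations $\fib\yo(X)\simeq \cbicat/X$ and $\fib S$ are essentially $J$-small (Lemma \ref{lemma:discfib_sono_esssmall}), so the hom-equivalence of $\Lambda\dashv\Gamma$,
\[
\Ind_\cbicat(\dcat', \Gamma(\Etopos))\simeq \Topos\co/\Sh(\cbicat,J)(\Lambda(\dcat'), \Etopos),
\]
applies with $\dcat'=\yo(X)$ and with $\dcat'=S$. Here $\Lambda(\yo(X))=\Gir_J(\yo(X))\simeq \Sh(\cbicat/X, J_X)$ by Proposition \ref{prop:fib_discreta_slice_topos} and Example \ref{ex:topologia_su_slice}, while $\Lambda(S)=\Gir_J(S)=\Sh(\fib S, J_S)$. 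By naturality of the adjunction equivalence in its first variable together with the functoriality of $\Lambda=C_{(-)}\circ\Giraud$, precomposition with $m_S$ is identified with precomposition with the geometric morphism $\Lambda(m_S)=C_{\fib m_S}:\Sh(\fib S, J_S)\rightarrow \Sh(\cbicat/X, J_X)$ over $\Sh(\cbicat,J)$. Thus $-\circ m_S$ is an equivalence if and only if
\[
\Topos\co/\Sh(\cbicat,J)(\Sh(\cbicat/X, J_X), \Etopos)\xrightarrow{-\circ C_{\fib m_S}}\Topos\co/\Sh(\cbicat,J)(\Sh(\fib S, J_S), \Etopos)
\]
is.

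To finish I would invoke Lemma \ref{lemma:topos_su_sieve_eqv_topos_su_slice}: since $m_S$ is a $J$-covering sieve, $C_{\fib m_S}$ is an equivalence of toposes, indeed an equivalence over $\Sh(\cbicat,J)$. Precomposition with an equivalence is an equivalence of hom-categories, so the displayed functor above is an equivalence, and hence so is $-\circ m_S$; this establishes that $\dcat$ is a $J$-stack. The one point that requires care, and which I regard as the main obstacle, is the identification made in the second paragraph: one must verify that under the two instances of the adjunction isomorphism the functor $-\circ m_S$ really corresponds to $-\circ C_{\fib m_S}$. This is precisely naturality of the hom-equivalence of $\Lambda\dashv\Gamma$ in the fibration variable, applied to the morphism of discrete fibrations $\fib m_S:\fib S\rightarrow \cbicat/X$, and it is here that one must track the $\co$-conventions carefully so that all variances match. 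Everything else reduces to citing the already established equivalences $\Gir_J(\yo(X))\simeq \Sh(\cbicat/X, J_X)$ and $\Sh(\fib S, J_S)\simeq \Sh(\cbicat/X, J_X)$.
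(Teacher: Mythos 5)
Your proposal is correct and follows essentially the same route as the paper: both transport the comparison functor for a $J$-covering sieve across the fundamental adjunction, identify it (via the equivalences $\Gir_J(\yo(X))\simeq \Sh(\cbicat/X,J_X)$ and naturality of the hom-equivalence) with precomposition by $C_{\fib m_S}$, and conclude by Lemma \ref{lemma:topos_su_sieve_eqv_topos_su_slice} that this is an equivalence. The only cosmetic difference is that the paper phrases the stack condition fibrationally via $\Fib_\cbicat(\cbicat/X,\Gamma(\Etopos))\rightarrow\Fib_\cbicat(\fib R,\Gamma(\Etopos))$ (Definition \ref{def:stack_fib}) while you use the equivalent indexed formulation of Definition \ref{def:stack_ind}; your explicit attention to essential $J$-smallness and to the naturality step is a welcome addition rather than a deviation.
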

\begin{proof}
	Indeed, consider $X$ in $\cbicat$, $m_R:R\rightarrowtail \yo(X)$ in $J(X)$ and any $\Sh(\cbicat,J)$-topos $\Etopos$: then the diagram
	\[
	\begin{tikzcd}[column sep=huge]
		\Fib_\cbicat(\cbicat/X, \Gamma(\Etopos)) \ar[d,no head, "{-\circ \fib m_R}"'] \ar[r, no head, "\sim"]& \Topos\co/\Sh(\cbicat,J)(\Sh(\cbicat/X, J_{X}), \Etopos) \ar[d, no head, "\sim"sloped, "{-\circ C_{\fib m_R}}"']\\
		\Fib_\cbicat(\fib R, \Gamma(\Etopos)) \ar[r, no head, "\sim"]& \Topos\co/\Sh(\cbicat,J)(\Sh(\fib R, J_{R}), \Etopos)
	\end{tikzcd}\]
	shows that the functor $-\circ \fib m_R$ is an equivalence, and hence $\Gamma(\Etopos)$ is a $J$-stack.
\end{proof}
The previous lemma entails that we may restrict the fundamental adjunction to stacks, by applying \ref{lemma:adj_restriction_subcat}:
\begin{prop}\label{prop:fundadj_stack_topos}
    There are 2-adjunctions
\[\begin{tikzcd}
	{\St^J(\cbicat,J)}\ar[r, bend left, start anchor={north east}, end anchor={north west}] \ar[r, phantom, "\dashv"{rotate=270}] & {\Topos\co/\Sh(\cbicat,J)\phantom{^J}} \ar[l,"\Gamma", bend left, start anchor={south west}, end anchor={south east}]
\end{tikzcd},\]    
\[\begin{tikzcd}
	{\St^J(\cbicat,J)}\ar[r, bend left, start anchor={north east}, end anchor={north west}] \ar[r, phantom, "\dashv"{rotate=270}] & {\EssTopos\co/\Sh(\cbicat,J)\phantom{^J}} \ar[l,"\Gamma'", bend left, start anchor={south west}, end anchor={south east}]
\end{tikzcd},\]
where 
\begin{itemize}
    \item in both cases the left adjoint acts by mapping a $J$-stack $\dcat:\cbicat\op\rightarrow\CAT$ to its classifying topos $C_{p_\dcat}:\Sh(\gbicat(\dcat), J_\dcat)\rightarrow \Sh(\cbicat,J)$;
    \item the right adjoint $\Gamma$ maps a $\Sh(\cbicat,J)$-topos $\Etopos$ to the $J$-stack
    \[
    \Topos\co/\Sh(\cbicat,J)(\Sh(\cbicat/-,J_{(-)}),\Etopos):\cbicat\op\rightarrow\CAT
    \]
    \item the right adjoint $\Gamma'$ maps an essential $\Sh(\cbicat,J)$-topos $\Etopos$ to the $J$-stack
    \[
    \EssTopos\co/\Sh(\cbicat,J)(\Sh(\cbicat/-,J_{(-)}),\Etopos):\cbicat\op\rightarrow\CAT.
    \]
\end{itemize}
\end{prop}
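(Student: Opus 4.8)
The plan is to derive both 2-adjunctions by restricting the already-established fundamental adjunctions (Corollary \ref{cor:fundamental_adjunction} for the general case, Corollary \ref{cor:fundadj_cfib_esstopos} for the essential case) along the inclusion of stacks into fibrations, using the abstract restriction principle of Lemma \ref{lemma:adj_restriction_subcat}. The point is that stacks form a $2$-full subcategory of fibrations and that the right adjoint of the fundamental adjunction already lands in this subcategory, so no new computation with colimits or classifying toposes is needed.

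First I would assemble the data for the topos adjunction. By Corollary \ref{cor:fundamental_adjunction} there is a $2$-adjunction $\Lambda_{\Topos\co/\Sh(\cbicat,J)}\dashv \Gamma_{\Topos\co/\Sh(\cbicat,J)}$ between $\Cl\Fib^J_\cbicat$ and $\Topos\co/\Sh(\cbicat,J)$, whose right adjoint $\Gamma$ by construction takes values in the essentially $J$-small fibrations $\Cl\Fib^J_\cbicat\simeq\Ind^J_\cbicat$. By Lemma \ref{lemma:immagine_gamma_in_Stack}, $\Gamma$ factors moreover through $\St(\cbicat,J)$; combining the two facts, $\Gamma$ factors through the category $\St^J(\cbicat,J)$ of essentially $J$-small $J$-stacks. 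Writing $i:\St^J(\cbicat,J)\hookrightarrow\Cl\Fib^J_\cbicat$ for the inclusion (which is $2$-full by Definition \ref{def:stack_ind}, transported across the equivalence $\Cl\Fib_\cbicat\simeq\Ind_\cbicat$) and $\Gamma'$ for the corestriction of $\Gamma$, we obtain a canonical factorization $\Gamma\cong i\circ \Gamma'$.

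Next I would invoke Lemma \ref{lemma:adj_restriction_subcat}, applied to $\Lambda\dashv\Gamma$ with the $2$-full inclusion $i$ into the domain of the left adjoint. The factorization $\Gamma\cong i\circ\Gamma'$ is exactly the hypothesis of the lemma, whose conclusion yields $\Lambda\circ i\dashv \Gamma'$, i.e. a $2$-adjunction between $\St^J(\cbicat,J)$ and $\Topos\co/\Sh(\cbicat,J)$. It then remains only to read off the two functors: the left adjoint $\Lambda\circ i$ is the restriction of $\Lambda$ to stacks, which by the description in Corollary \ref{cor:fundamental_adjunction} sends a $J$-stack $\dcat$ to its classifying topos $C_{p_\dcat}:\Sh(\gbicat(\dcat),J_\dcat)\rightarrow\Sh(\cbicat,J)$, while $\Gamma'$ carries a $\Sh(\cbicat,J)$-topos $\Etopos$ to the $J$-stack $\Topos\co/\Sh(\cbicat,J)(\Sh(\cbicat/-,J_{(-)}),\Etopos)$, as claimed.

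For the essential version I would repeat the argument starting from the adjunction $\Lambda_{\EssTopos\co/\Sh(\cbicat,J)}\dashv \Gamma_{\EssTopos\co/\Sh(\cbicat,J)}$ of Corollary \ref{cor:fundadj_cfib_esstopos}. The only additional point is to verify that this right adjoint also corestricts to $\St^J(\cbicat,J)$: the local-gluing computation in the proof of Lemma \ref{lemma:immagine_gamma_in_Stack}, which only uses that $C_{\fib m_R}$ is an equivalence (Lemma \ref{lemma:topos_su_sieve_eqv_topos_su_slice}) and hence that $-\circ C_{\fib m_R}$ is an equivalence of hom-categories, goes through verbatim when $\Topos\co/\Sh(\cbicat,J)$ is replaced throughout by its full subcategory $\EssTopos\co/\Sh(\cbicat,J)$, since passing to essential geometric morphisms on both sides of the square preserves the equivalence. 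A second application of Lemma \ref{lemma:adj_restriction_subcat} then produces the adjunction with right adjoint $\Gamma'$ valued in $\EssTopos\co/\Sh(\cbicat,J)(\Sh(\cbicat/-,J_{(-)}),\Etopos)$. I do not anticipate a genuine obstacle; the only care required is the bookkeeping that both right adjoints simultaneously factor through \emph{essential $J$-smallness} and through \emph{stacks}, so that each really corestricts to $\St^J(\cbicat,J)$, together with confirming $2$-fullness of the inclusion so that the restriction lemma operates at the $2$-categorical level.
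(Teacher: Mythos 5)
Your proposal is correct and follows essentially the same route as the paper, which justifies the proposition in one line by combining Lemma \ref{lemma:immagine_gamma_in_Stack} (that $\Gamma$ factors through stacks) with the restriction Lemma \ref{lemma:adj_restriction_subcat} applied to the fundamental adjunctions of Corollaries \ref{cor:fundamental_adjunction} and \ref{cor:fundadj_cfib_esstopos}. Your additional care in checking that the gluing argument of Lemma \ref{lemma:immagine_gamma_in_Stack} also applies verbatim to the essential variant $\Gamma'$, and that both factorizations land in essentially $J$-small stacks, is sound and in fact slightly more explicit than the paper's own treatment.
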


\section{The canonical fibration as a dualizing object}

An immediate consequence of Theorem \ref{thm:topos_Giraud_colimite_pesato} is the fact that Giraud toposes are equivalent to morphisms of fibrations to the canonical stack of the base site:
\begin{cor}\label{correlativepresheaves}
	Consider a $\cbicat$-indexed category $\dcat$, and denote by $\dcat\Vop$ its composite with $(-)\op$ (see Definition \ref{def:Vop}): then
	\[\Gir_J(\dcat) \simeq \Ind_\cbicat(\dcat, \canst_{(\cbicat,J)}\Vop)\op \simeq \Ind_\cbicat(\dcat\Vop, \canst_{(\cbicat,J)}).\]
\end{cor}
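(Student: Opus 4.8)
The plan is to read the statement off Theorem \ref{thm:topos_Giraud_colimite_pesato} by feeding it the \emph{object classifier}. First note that the second equivalence follows from the first by a purely formal duality: for any pseudofunctors $\dcat,\ecat:\cbicat\op\rightarrow\CAT$ one has $\Ind_\cbicat(\dcat,\ecat\Vop)\op\simeq\Ind_\cbicat(\dcat\Vop,\ecat)$, the $ps$-version of the lax/oplax isomorphism used in the proof of Proposition \ref{prop:eqv_categorie_laxoplaxps} (a pseudonatural transformation $\dcat\Rightarrow\ecat\Vop$ is exactly the data of one $\dcat\Vop\Rightarrow\ecat$, while modifications reverse direction, whence the outer $\op$). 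Taking $\ecat:=\canst_{(\cbicat,J)}$ turns the first equivalence into the second, so I would concentrate on proving $\Gir_J(\dcat)\simeq\Ind_\cbicat(\dcat,\canst_{(\cbicat,J)}\Vop)\op$.

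For this I would set $\mathcal{S}:=\Sh(\cbicat,J)$ and take $\Etopos$ to be the relative object classifier $\mathcal{S}[\mathbb{O}]$, which satisfies $\Topos/\mathcal{S}(\mathcal{E},\mathcal{S}[\mathbb{O}])\simeq\mathcal{E}$ pseudonaturally in the $\mathcal{S}$-topos $\mathcal{E}$. Since passing to $(-)\co$ reverses $2$-cells and hence opposes each hom-category, this reads $\Topos\co/\mathcal{S}(\mathcal{E},\mathcal{S}[\mathbb{O}])\simeq\mathcal{E}\op$. Applying this twice: the left-hand side of Theorem \ref{thm:topos_Giraud_colimite_pesato} becomes $\Topos\co/\mathcal{S}(\Gir_J(\dcat),\mathcal{S}[\mathbb{O}])\simeq\Gir_J(\dcat)\op$; and componentwise, using $\Sh(\cbicat/X,J_X)\simeq\mathcal{S}/\ell_J(X)=\canst_{(\cbicat,J)}(X)$ from Proposition \ref{prop:fib_discreta_slice_topos} and the definition of the canonical stack, the weight $\Gamma(\mathcal{S}[\mathbb{O}])(X)=\Topos\co/\mathcal{S}(\Sh(\cbicat/X,J_X),\mathcal{S}[\mathbb{O}])$ becomes $\canst_{(\cbicat,J)}(X)\op=\canst_{(\cbicat,J)}\Vop(X)$, pseudonaturally in $X$, so $\Gamma(\mathcal{S}[\mathbb{O}])\simeq\canst_{(\cbicat,J)}\Vop$. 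Substituting both identifications into the equivalence of Theorem \ref{thm:topos_Giraud_colimite_pesato} gives $\Gir_J(\dcat)\op\simeq\Ind_\cbicat(\dcat,\canst_{(\cbicat,J)}\Vop)$, and transposing yields the desired first equivalence.

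The main obstacle I anticipate is bookkeeping of the variances: one must check that the two uses of the object classifier produce the $\op$ on the \emph{category} $\Gir_J(\dcat)$ and the $\Vop$ on the \emph{weight} with matching conventions (the paper's rule that a $2$-cell of geometric morphisms is a transformation of inverse images, together with the $\co$ on $\Topos$), and that the identification $\Gamma(\mathcal{S}[\mathbb{O}])\simeq\canst\Vop$ is genuinely pseudonatural in $X$ so that it can be inserted inside $\Ind_\cbicat(\dcat,-)$. A secondary, more structural point is the size hypothesis: Theorem \ref{thm:topos_Giraud_colimite_pesato} and the object classifier argument presuppose that $\Gir_J(\dcat)$ is an honest topos, i.e.\ that $\dcat$ is essentially $J$-small (Definition \ref{def:esssmall_fibration}); for a fully general $\dcat$ I would instead give the direct descent argument, describing an object of $\Sh(\gbicat(\dcat),J_\dcat)$ through the jointly conservative family $C_{(A,\alpha)}^*$ and the local sheaf criterion of Lemma \ref{lemma:fascio_su_fib_sse_fascio_su_slice}, which assembles exactly into an indexed functor $\dcat\Vop\Rightarrow\canst_{(\cbicat,J)}$ (the contravariance in each fibre $\dcat(X)$ coming from the fact that $\gamma$ induces $C_\gamma:C_{(B,\beta)}\Rightarrow C_{(A,\alpha)}$). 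This is simply Theorem \ref{thm:topos_Giraud_colimite_pesato} run at the level of objects rather than geometric morphisms, and it is robust to the size issue.
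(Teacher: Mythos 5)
Your argument is essentially the paper's own proof: both feed the relative object classifier $\Sh(\cbicat,J)[\mathbb{O}]$ into Theorem \ref{thm:topos_Giraud_colimite_pesato}, identify the resulting weight $\Gamma(\Sh(\cbicat,J)[\mathbb{O}])$ with $\canst_{(\cbicat,J)}\Vop$ via Proposition \ref{prop:fib_discreta_slice_topos}, and then pass between the two stated forms using the duality $\Ind_\cbicat(\dcat,\ecat\Vop)\op\simeq\Ind_\cbicat(\dcat\Vop,\ecat)$. Your closing remark about the essential $J$-smallness hypothesis is a fair observation (the paper's proof tacitly assumes it too), but it does not change the fact that the route is the same.
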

\begin{proof}
	If we denote by $\widetilde{\cbicat}[\mathbb{O}]$ the object classifier over $\Sh(\cbicat,J)$ (which exists by \cite[Example B3.2.9]{elephant}), then the following chain of natural equivalences holds:
	\begin{align*}
		\Gir_J(\dcat):&= \Sh(\gbicat(\dcat), J_\dcat)\\
		&\simeq\Topos/\widetilde{\cbicat}(\widetilde{\gbicat(\dcat)}, \widetilde{\cbicat}[\mathbb{O}])\\
		&\simeq \Topos\co/\widetilde{\cbicat}(\widetilde{\gbicat(\dcat)}, \widetilde{\cbicat}[\mathbb{O}])\op\\
		&\simeq \Ind_\cbicat(\dcat, \Topos\co/\widetilde{\cbicat}(\widetilde{\cbicat/-},\widetilde{\cbicat}[\mathbb{O}]))\op\\
		&\simeq \Ind_\cbicat(\dcat, \widetilde{\cbicat/-}\Vop)\op\\
		&\simeq \Ind_\cbicat(\dcat\Vop, \canst_{(\cbicat,J)})
	\end{align*}
	Where we exploited the fact that $\Ind_\cbicat(\dcat\Vop, \ecat)\simeq \Ind_\cbicat(\dcat, \ecat\Vop)\op$ (which is immediate to check) and that by definition $\canst_{(\cbicat,J)}:=\widetilde{\cbicat/-}$.
\end{proof}

\begin{remarks}\label{remrelativepresheaves}
\begin{enumerate}[(i)]
    \item 	This result generalizes Proposition 2.3 of \cite{giraud.classifying}, where it is formulated for lex stacks over lex site.
    \item We have already mentioned in Section \ref{sec:invimage_pb} that when $\dcat$ is an internal category of $\Sh(\cbicat,J)$ then the Giraud topos $\Gir_J(\dcat):=\Sh(\gbicat(\dcat),J_\dcat)$ is the topos of internal presheaves for $\dcat$:
    \begin{equation}\label{eq:prefasci_interni_girtopos}
    \Gir_J(\dcat)\simeq [\dcat\op,\Sh(\cbicat,J)].\end{equation} 
    The Corollary above provides the external intuition for that: if we think of $\canst_{(\cbicat,J)}$ as the embodiment of the topos $\Sh(\cbicat,J)$ in terms of stacks over $\cbicat$, then we have an immediate parallelism between \ref{eq:prefasci_interni_girtopos} and
    \[
    \Gir_J(\dcat)\simeq \Ind_\cbicat\left(\dcat\Vop, \canst_{(\cbicat,J)}\right)
    .\]
    Thinking of Giraud's topos as the topos of presheaves over a fibration will be the starting point to develop relative topos theory, by generalizing the usual notions at the level of sites (sheaves, flat functors, morphisms, etc.) to stacks.
\end{enumerate}	
\end{remarks}
As a corollary we obtain an alternative proof of the fact that the canonical fibration $\canst_{(\cbicat,J)}$ is a stack:
\begin{cor}\label{cor:canst_via_aggfond}
	Consider a site $(\cbicat,J)$: then its canonical fibration $\canst_{(\cbicat,J)}:\cbicat\op\rightarrow\CAT$ is a $J$-stack.
\end{cor}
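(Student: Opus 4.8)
The plan is to recognise $\canst_{(\cbicat,J)}\Vop$ as the value of the right adjoint $\Gamma$ of the fundamental adjunction on the object classifier over the base, and then to quote the fact that $\Gamma$ already takes values in stacks.

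First I would recall the computation carried out in the proof of Corollary \ref{correlativepresheaves}. Writing $\widetilde{\cbicat}[\mathbb{O}]$ for the object classifier over $\Sh(\cbicat,J)$, its universal property together with the identification $\Sh(\cbicat/X,J_X)\simeq \Sh(\cbicat,J)/\ell_J(X)$ of Proposition \ref{prop:fib_discreta_slice_topos} gives, pseudonaturally in $X$,
\[
\Gamma_{\Topos\co/\Sh(\cbicat,J)}(\widetilde{\cbicat}[\mathbb{O}])(X)=\Topos\co/\Sh(\cbicat,J)(\Sh(\cbicat/X,J_X),\widetilde{\cbicat}[\mathbb{O}])\simeq (\Sh(\cbicat,J)/\ell_J(X))\op=\canst_{(\cbicat,J)}\Vop(X),
\]
where the fibrewise opposite appears precisely because the $(-)\co$ on the codomain reverses the $2$-cells, which under the universal property of $\widetilde{\cbicat}[\mathbb{O}]$ correspond to morphisms between the classified objects. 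Hence $\canst_{(\cbicat,J)}\Vop\simeq \Gamma_{\Topos\co/\Sh(\cbicat,J)}(\widetilde{\cbicat}[\mathbb{O}])$, and by Lemma \ref{lemma:immagine_gamma_in_Stack} the functor $\Gamma_{\Topos\co/\Sh(\cbicat,J)}$ factors through $\St(\cbicat,J)$, so $\canst_{(\cbicat,J)}\Vop$ is a $J$-stack.

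It then remains to transfer the stack property back across $(-)\Vop$. For any $\cbicat$-indexed category $\dcat$ and any $J$-sieve $S$, I would observe that reversing the (necessarily invertible) comparison isomorphisms of a descent datum identifies the category of descent data as $\dcat\Vop(S)\simeq \dcat(S)\op$, and that under this identification the comparison functor $L_S^{\dcat\Vop}$ becomes $(L_S^{\dcat})\op$. Since a functor is an equivalence (respectively fully faithful) if and only if its opposite is, $\dcat$ is a $J$-stack if and only if $\dcat\Vop$ is; applying this to $\dcat=\canst_{(\cbicat,J)}$ yields the claim.

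The only genuinely computational point, and hence the main obstacle, is this last step: verifying carefully that $\dcat\Vop(S)\simeq \dcat(S)\op$ compatibly with $L_S$, using the explicit description of $\dcat\Vop$ from Definition \ref{def:Vop} and of descent data from Section \ref{sec:stack}. This is routine bookkeeping with the structural isomorphisms $\phi^\dcat$ of the pseudofunctor rather than a conceptual difficulty, and it is the reason this proof is labelled an \emph{alternative} to the one via direct images (Corollary \ref{cor:canst_via_immdiretta}).
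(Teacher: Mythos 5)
Your proof is correct, but it is not the route the paper takes, even though both arguments live inside the fundamental adjunction. The paper's proof applies the dualizing-object formula $\Gir_J(\dcat)\simeq\Ind_\cbicat(\dcat\Vop,\canst_{(\cbicat,J)})$ of Corollary \ref{correlativepresheaves} directly to the two \emph{discrete} indexed categories $R$ and $\yo(X)$ attached to a $J$-covering sieve $m_R:R\rightarrowtail\yo(X)$: since $R\Vop\simeq R$ and $\yo(X)\Vop\simeq\yo(X)$, the restriction functor $-\circ m_R:\Ind_\cbicat(\yo(X),\canst_{(\cbicat,J)})\to\Ind_\cbicat(R,\canst_{(\cbicat,J)})$ gets identified with the equivalence $\Sh(\cbicat/X,J_X)\simeq\Sh(\fib R,J_R)$ of Lemma \ref{lemma:topos_su_sieve_eqv_topos_su_slice}, which is exactly the descent condition. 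You instead identify $\canst_{(\cbicat,J)}\Vop$ globally as $\Gamma(\widetilde{\cbicat}[\mathbb{O}])$ and quote Lemma \ref{lemma:immagine_gamma_in_Stack}; this is legitimate (the identification is literally one of the intermediate steps in the paper's own chain of equivalences proving Corollary \ref{correlativepresheaves}, and the object classifier is an admissible input for $\Gamma$), but it forces you to pay for an extra step the paper never needs: the transfer of the stack property across $(-)\Vop$. That transfer is true, and your sketch of it ($\dcat\Vop(S)\simeq\dcat(S)\op$ compatibly with $L_S$) is the right way to prove it, but it appears nowhere in the paper and must be supplied in full; the paper sidesteps it entirely by only ever applying $(-)\Vop$ to discrete indexed categories, where it acts as the identity. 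Note also that Lemma \ref{lemma:immagine_gamma_in_Stack} is itself proved by precisely the mechanism of the paper's direct argument (the square comparing $-\circ\fib m_R$ with precomposition by the equivalence $C_{\fib m_R}$), so your route does not avoid that computation — it just packages it once and for all. What your version buys is a cleaner conceptual statement, namely that the canonical stack is, up to fibrewise opposite, the $\Gamma$-image of the object classifier; what the paper's version buys is brevity and the avoidance of the $\Vop$-duality lemma.
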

\begin{proof}
	Consider a $J$-covering sieve $m_R:R\rightarrowtail \yo(X)$: Lemma \ref{lemma:topos_su_sieve_eqv_topos_su_slice} showed that
	\[ 
	\Sh(\fib R, J_R)\simeq \Sh(\cbicat/X, J_X)
	\]
	via the geometric morphism $C_{m_R}$. Therefore we have a commutative square
	\[ 
	\begin{tikzcd}
		{\Sh(\fib R, J_R)} \arrow[r, "\sim", no head]                      & {\Ind_\cbicat( R\Vop, \canst_{(\cbicat,J)})}                               \\
		{\Sh(\cbicat/X, J_X)} \arrow[r, "\sim", no head] \arrow[u, "\sim"{sloped}] & {\Ind_\cbicat( \yo(X)\Vop, \canst_{(\cbicat,J)})} \arrow[u, "-\circ m_R"']
	\end{tikzcd}\]
	Finally, it is enough to notice that since both $R$ and $\yo(X)$ are discrete they are left unchanged by ${(-)}\Vop$, \ie $R\simeq R\Vop$ and $\yo(X)\simeq \yo(X)\Vop$, to conclude the proof.
\end{proof}

The equivalence $\Gir_J(\dcat)\simeq\Ind_\cbicat(\dcat\Vop, \canst_{(\cbicat,J)})$ gives us a way of seeing $J_\dcat$-sheaves as gluing of local data, since a $J_\dcat$-sheaf $W:\gbicat(\dcat)\op\rightarrow\Set$ corresponds to a morphism of Grothendieck fibrations $\gbicat(\dcat\Vop)\rightarrow \gbicat(\canst_{(\cbicat,J)})$, \ie to the following data:
\begin{itemize}
	\item for every $X$ in $\cbicat$ and $U$ in $\dcat(X)$, a $J_X$-sheaf $H_U:(\cbicat/X)\op\rightarrow\Set$;
	\item for every $U$ in $\dcat(X)$ and every pair of arrows $y:Y\rightarrow X$ in $\cbicat$ and $a:\dcat(y)(U)\rightarrow V$ in $\dcat(Y)$, a morphism of presheaves $h_{(y,a)}:H_V\rightarrow H_U\circ (\fib y)\op$ such that the association $(y,a)\mapsto h_{(y,a)}$ is functorial and moreover whenever $a$ is invertible then $h_{(y,a)}$ is invertible.  
\end{itemize}
A similar description of arrows of $\Gir_J(\dcat)$ in terms of local data can be given.

The last result also implies that the canonical stack $\canst_{(\cbicat,J)}$ has somewhat the role of a \emph{dualizing object} between the two categories $\Ind_\cbicat$ and $\Topos\co/\Sh(\cbicat,J)$, because it allows us to express both $\Gamma$ and $\Lambda$ as hom-functors in $\canst_{(\cbicat,J)}$ as the two following equivalences show:
\begin{align*}
	\Gamma_{\Topos\co/\Sh(\cbicat,J)} (\Etopos):&=\Topos\co/\Sh(\cbicat,J) (\Sh(\cbicat/-, J_{(-)}), \Etopos)\\
	&\simeq \Topos\co/\Sh(\cbicat,J)(\canst_{(\cbicat,J)}(-), \Etopos),\\
	\Lambda_{\Topos\co/\Sh(\cbicat,J)}(\dcat):&=\Gir_J(\dcat)\\
	&\simeq \Ind_\cbicat (\dcat\Vop, \canst_{(\cbicat,J)}).
\end{align*}

\chapter{The discrete setting}\label{chap:discrete}

In this chapter, we specialize the two adjunctions
\[
\begin{tikzcd}
	\Cl\Fib_\cbicat \arrow[r, "\Lambda_{\CAT/\cbicat}", bend left, start anchor={north east}, end anchor={north west}] &   \CAT/\cbicat \arrow[l, "\Gamma_{\CAT/\cbicat}", bend left, start anchor={south west}, end anchor={south east}] \ar[l, "\dashv"{rotate=270}, phantom]
\end{tikzcd},\ 
\begin{tikzcd}
	\Cl\Fib^J_\cbicat \arrow[r, "\Lambda_{\Topos/\Sh(\cbicat,J)\co}", bend left, start anchor={north east}, end anchor={north west}] &     \Topos\co/\Sh(\cbicat,J)\phantom{{}^J_\cbicat} \arrow[l, "\Gamma_{\Topos/\Sh(\cbicat,J)\co}", bend left, ""{above, name=B}, end anchor={south east}, start anchor={south west}] \ar[l, "\dashv"{rotate=270}, phantom]
\end{tikzcd}
\]
of Chapter \ref{chap:fundadj} to the discrete setting, that is, we study their behaviour on presheaves. The restriction of the first adjunction is not so interesting \textit{per se}, but it will be put to good use in the context of preorder categories; on the other hand, the fundamental adjunction provides a broad generalization of the topological presheaf-bundle adjunction which we will review in Section \ref{sec:adj_topologica}: toposes over $\Sh(\cbicat,J)$ play the role of topological spaces over $X$, and we can again recover the sheafification functor as the composite of the two adjoints. The example of topological spaces will be motivating for the study of the fundamental adjunction in the context of preordered categories, which is carried out in Section \ref{sec:preorder}; in this setting, the adjunction can be notably formulated in purely site-theoretic language, without any reference to geometric morphisms. Further, we investigate the possibility of building structure sheaves for algebraic structures through the fundamental adjunction, establishing a number of results useful in this regard. More specifically, we undertake a systematic study of the ways for equipping the domain of a map towards a given topological space with a topology making it a local homeomorphism; such a classification involves the consideration of sections of the given map defined on open subsets of the base topological space, and can be profitably applied in cases where one disposes of an \ac algebraic' presentation of the topos of sheaves on the given space providing a basis for it (we discuss two well-known examples of structure sheaves for algebraic structures, namely the Zariski structure sheaf for a commutative ring with unit and the structure sheaf for a MV-algebra introduced in \cite{dubuc_poveda}, to illustrate this point). We also make a refined analysis of the interplay between the point-free and point-set perspectives in the setting of the classical presheaf-bundle adjunction for topological spaces.

Section \ref{sec:point_of_view_sheafify} exploits the discrete fundamental adjunction to list various possible descriptions of the sheafification functor. 

\section{The specialization to presheaves}\label{sec:discrete_adj}
Suppose that $\cbicat$ is small: then, given any presheaf $P:\cbicat\op\rightarrow\Set$, its discrete fibration $\int P$ is again a small category. When defining the Grothendieck construction $\gbicat$ we mentioned that its image in $\CAT/\cbicat$ is contained in the sub-2-category of strictly commutative triangles: this means in particular that presheaves are sent through $\gbicat$ to the 1-categorical slice $\Cat/_1\cbicat$. With this in mind, we can easily show the following:
\begin{prop}\label{prop:aggiunzione_prefasci_e_cat_su_C}\index{$\Lambda_{\Cat/_1\cbicat}\dashv \Gamma_{\Cat/_1\cbicat}$}
	Consider a small category $\cbicat$: the adjunction of Proposition \ref{prop:aggiunti_di_G}restricts to an adjunction
	\[\begin{tikzcd}
		{[\cbicat\op,\Set]}  \ar[r, bend left, "\Lambda_{\Cat/_1\cbicat}", start anchor={north east}, end anchor={north west}] \ar[r, phantom, "\vdash"{rotate=90}]& {\Cat/_1\cbicat} \ar[l, bend left, "{\Gamma_{\Cat/_1\cbicat}}", start anchor={south west}, end anchor={south east}]	
	\end{tikzcd}\]
	where:\begin{itemize}
		\item $\Cat/_1\cbicat$ denotes the 1-categorical slice of $\Cat$ over $\cbicat$ introduced in Definition \ref{def:slice};
		\item $\Lambda_{\Cat/_1\cbicat}$ is the restriction of $\Lambda_{\CAT/\cbicat}$: it maps a presheaf $P$ to the functor $\fib P\rightarrow\cbicat$, and an arrow $g:P\rightarrow Q$ of presheaves to the functor $\fib g:\fib P\rightarrow\fib Q$;
		\item $\Gamma_{\Cat/_1\cbicat}$ maps a functor $p:\dbicat\rightarrow\cbicat$ to the presheaf $\Cat/_1\cbicat(\cbicat/-,\dbicat):\cbicat\op\rightarrow\Set$.
	\end{itemize}
	All objects of $[\cbicat\op,\Set]$ are fixed points of the adjunction, while the fixed points of $\Cat/_1\cbicat$ are exactly the discrete Grothendieck fibrations over $\cbicat$.
\end{prop}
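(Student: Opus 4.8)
The plan is to obtain the stated adjunction as the restriction of the adjunction $\Lambda_{\CAT/\cbicat}\dashv \Gamma_{\CAT/\cbicat}$ of Proposition \ref{prop:aggiunti_di_G} along the inclusions $[\cbicat\op,\Set]\hookrightarrow \Ind_\cbicat$ and $\Cat/_1\cbicat\hookrightarrow \CAT/\cbicat$. First I would check that the two restricted functors land in the prescribed target categories. For $\Lambda$: a presheaf $P$ is a discrete \emph{strict} $\cbicat$-indexed category, so its Grothendieck construction $\fib P$ is a discrete fibration and, as already recorded when defining $\gbicat$, for $g:P\to Q$ the triangle $p_Q\circ\fib g=p_P$ commutes \emph{strictly}; hence $\fib(-)$ factors through the $1$-categorical slice $\Cat/_1\cbicat$ rather than merely through $\CAT/\cbicat$. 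For $\Gamma$: for any $p:\dbicat\to\cbicat$ the assignment $X\mapsto \Cat/_1\cbicat(\cbicat/X,\dbicat)$ is a genuine presheaf of \emph{sets}, the transition map along $y:Y\to X$ being precomposition with $\fib y$ (which is strict over $\cbicat$, so strict triangles are sent to strict triangles); smallness of $\cbicat$ guarantees these are sets.

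Next I would establish the hom-bijection
\[
\Cat/_1\cbicat(\fib P,\dbicat)\;\cong\;[\cbicat\op,\Set]\bigl(P,\Gamma_{\Cat/_1\cbicat}(\dbicat)\bigr),
\]
natural in $P$ and $\dbicat$. The decisive point, and the one I expect to be the main obstacle, is the passage from the $2$-categorical hom of $\CAT/\cbicat$, whose $1$-cells are pairs $(F,\gamma)$ with $\gamma\colon pF\Isorightarrow p_P$ invertible, to the strict hom of $\Cat/_1\cbicat$: one must verify that the bijection of Proposition \ref{prop:aggiunti_di_G} survives this change of morphisms. Because $\fib P$ is a discrete fibration, every functor $F\colon\fib P\to\dbicat$ over $\cbicat$ automatically preserves cartesian arrows, so the comparison can be carried out directly at the strict level. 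Concretely, I would send a strict $F$ to the natural family $x\in P(X)\mapsto F\circ\bar x$, where $\bar x\colon\cbicat/X\simeq\fib\yo(X)\to\fib P$ is the morphism of discrete fibrations classified by $x$, and conversely reconstruct $F$ from $\alpha$ by $F(X,x):=\alpha_X(x)([1_X])$ on objects and by applying $\alpha_X(x)$ to the canonical arrow $[y]\to[1_X]$ of $\cbicat/X$ on morphisms, using the naturality identity $\alpha_Y(P(y)(x))=\alpha_X(x)\circ\fib y$. That these assignments are mutually inverse and natural is a routine verification (it is the discrete shadow of the computation behind Corollary \ref{cor:colimite_in_cat_su_C}), which I would not spell out in full.

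Finally I would treat the fixed points. For the left-hand category, the unit $\eta_P\colon P\to\Gamma_{\Cat/_1\cbicat}(\fib P)$ has components $P(X)\to\Cat/_1\cbicat(\cbicat/X,\fib P)$; since a strict functor over $\cbicat$ between discrete fibrations is precisely a morphism of discrete fibrations, this set equals $[\cbicat\op,\Set](\yo(X),P)\cong P(X)$ by the Yoneda lemma, so $\eta_P$ is invertible for \emph{every} presheaf $P$. For the right-hand category, $p\colon\dbicat\to\cbicat$ is a fixed point iff the counit $\epsilon_\dbicat\colon\fib(\Gamma_{\Cat/_1\cbicat}\dbicat)\to\dbicat$ is invertible in $\Cat/_1\cbicat$: if it is, then $\dbicat$ is isomorphic over $\cbicat$ to a category of the form $\fib(\text{presheaf})$, hence is a discrete fibration; conversely, by the classical equivalence between discrete fibrations over $\cbicat$ and presheaves on $\cbicat$ (the discrete case of Proposition \ref{prop:fibered_yoneda}), a discrete fibration is isomorphic to some $\fib P$, and then the triangle identities together with the already-established invertibility of $\eta$ force $\epsilon_\dbicat$ to be invertible. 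Thus the adjunction cuts down to the well-known equivalence between $[\cbicat\op,\Set]$ and the discrete Grothendieck fibrations over $\cbicat$, giving both fixed-point descriptions at once.
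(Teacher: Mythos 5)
Your proof is correct and follows essentially the same route as the paper's: restrict the adjunction of Proposition \ref{prop:aggiunti_di_G}, write down the hom-bijection explicitly (your formula $F(X,x):=\alpha_X(x)([1_X])$ matches the paper's counit/unit descriptions), identify the unit with the fibred Yoneda isomorphism to conclude that every presheaf is a fixed point, and characterize the fixed points on the other side as the discrete Grothendieck fibrations. The only notable difference is that you make explicit the passage from the pseudo-slice $\CAT/\cbicat$ to the strict slice $\Cat/_1\cbicat$, which the paper's proof leaves implicit; this is a clarification rather than a different argument.
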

\begin{proof}
	The hypothesis that both $\cbicat$ and $\dbicat$ are small implies that $\Gamma_{\Cat/_1\cbicat}$ has image in $[\cbicat\op,\Set]$. Thus for $P$ in $ [\cbicat\op,\Set]$ and $p:\dbicat\rightarrow \cbicat$ we can consider the natural isomorphism
	\[
	[\cbicat\op,\Set](P, \Cat/_1\cbicat(\cbicat/-, [p]))\simeq \Cat/_1 \cbicat( \fib P, [p])
	\]
	given by the 2-adjunction $\Lambda_{\CAT/\cbicat}\dashv \Gamma_{\CAT/\cbicat}$. It maps an arrow $g:P\rightarrow \Cat/_1/\cbicat(\cbicat/X, [p])$ to $\bar{g}:\fib P\rightarrow \dbicat$ defined on objects by $\bar{g}(X,s):=g_X(s)$; conversely, $h:\fib P\rightarrow\dbicat$ is mapped to $\tilde{h}:P\rightarrow \Cat/_1\cbicat(\cbicat/-,[p])$, $h_X(s):\cbicat/X\rightarrow [p]$ being defined on objects by $h_X(s)([y]):=h(Y, P(y)(s))$.
	
	The unit of the adjunction for a presheaf $P$ is the arrow $\eta_P:P\rightarrow \Gamma\Lambda(P)$ such that for every $X$ in $\cbicat$ the function $\eta_P(X):P(X)\rightarrow \Cat/_1\cbicat(\cbicat/X, \fib P)$ maps any $s\in P(X)$ to the functor $\cbicat/X\rightarrow \fib P$ defined on objects as $[y:Y\rightarrow X]\mapsto (Y, P(y)(s))$: one immediately verifies that $\eta_P$ is in fact the isomorphism $P\isorightarrow\Cat/_1\cbicat(\cbicat/-, \fib P)$ given by fibered Yoneda lemma, and hence every presheaf $P$ is a fixed point. Conversely, for any $p:\dbicat\rightarrow \cbicat$ the counit $\epsilon_{[p]}:\fib (\Cat/_1\cbicat(\cbicat/-,[p]))\rightarrow [p]$ is defined by mapping a pair $(X, F:\cbicat/X\rightarrow \dbicat)$ to $F([1_X])$: then $\epsilon_{[p]}$ is invertible if and only if $\dbicat$ is isomorphic to the discrete fibration $\fib (\Cat/_1\cbicat(\cbicat/-, [p]))$.
\end{proof} 

Let us now consider the topos-theoretic adjunction. The first thing we need to take care of is a size problem: in general, the hom category between two toposes is \emph{not} a set, and hence we have to refine the definition of the right adjoint $\Gamma_{\Topos/\Sh(\cbicat,J)\co}$. This is rather immediate:
\begin{defn}\label{def:relativelysmallmorphism}
	We call a geometric morphism $F:\Ftopos\to \Sh(\cbicat, J)$ \emph{small relative to $\Sh(\cbicat,J)$}\index{topos!small relatively to a base} if for any $J$-sheaf $P:\cbicat\op\rightarrow\Set$ the geometric morphisms $\Sh(\cbicat,J)/P \to \Ftopos$ over $\Sh(\cbicat,J)$ form a set (up to equivalence of geometric morphisms): more compactly, if the category $$\Topos/_1\Sh(\cbicat,J)(\Sh(\cbicat,J)/P, \Ftopos)$$ is small. 
	We denote by $\Topos^s/_1\Sh(\cbicat,J)$\index{$\Topos^s/_1\Sh(\cbicat,J)$} the full subcategory of the 1-category $\Topos/_1\Sh(\cbicat,J)$ whose objects are the small geometric morphisms relative to $\Sh(\cbicat,J)$.
\end{defn}
\begin{remark}
	In fact, one can reduce to checking the smallness of all categories $\Topos/_1\Sh(\cbicat,J)(\Sh(\cbicat,J)/\ell_J(X), \Ftopos)$ for $X$ an object of $\cbicat$: this because the topos $\Sh(\cbicat,J)/P\simeq \Sh(\fib P, J_P)$ is a conical colimit of toposes of the form $\Sh(\cbicat/X, J_X)\simeq\Sh(\cbicat,J)/\ell_J(X)$, by Corollary \ref{cor:topos_discfib_colimite_conico}.
\end{remark}
The adjunction between presheaves and bundles over a topological space (see Section \ref{sec:adj_topologica}), which inspired these results, restricts to an equivalence between sheaves and étale bundles over the space. Also the fundamental adjunction, when restricted to the discrete case, induces an equivalence between the category $\Sh(\cbicat,J)$ and a special class of toposes over $\Sh(\cbicat,J)$, which are unsurprisingly the étale toposes:
\begin{defn}\label{def:topos_etale}
	A geometric morphism $F:\Ftopos\rightarrow\Etopos$ is said to be \emph{étale}, or a \emph{local homeomorphism}\index{geometric morphism!étale}\index{topos!étale} \index{geometric morphism!local homeomorphism}, if there is some $E$ in $\Etopos$ such that $F$ is isomorphic to the canonical geometric morphism $\prod_E:\Etopos/E\rightarrow\Etopos$.
	
	We denote by $\Topos\etale/_1\Sh(\cbicat,J)$\index{$\Topos\etale/_1\Sh(\cbicat,J)$} the 1-category of étale $\Sh(\cbicat,J)$-toposes.
\end{defn}
\begin{remark}
    We recall that the category $\Topos\etale/_1\Sh(\cbicat,J)$ embeds fully inside $\Topos_1\Sh(\cbicat,J)$, for it is well-known that a geometric morphism between étale toposes is itself étale.
\end{remark}
Étale toposes are the way in which one can externalize objects of a topos $\Etopos$ as toposes over $\Etopos$, and significantly this process is full and faithful: 
\begin{lemma}\label{lemma:morfgeom_tra_topos_slice_su_base}
	Consider a topos $\Etopos$: then the functor $\Etopos\rightarrow \Topos/_1\Etopos$ of 1-categories mapping each $E$ in $\Etopos$ to $\prod_E:\Etopos/E\rightarrow \Etopos$ and each $g:X\rightarrow Y$ to $\prod_g:\Etopos/X\rightarrow \Etopos/Y$ presents $\Etopos$ as the full subcategory of étale geometric morphisms over $\Etopos$. More explicitly, there is an isomorphism
	\[\Etopos(X,Y)\simeq \Topos/_1\Etopos(\Etopos/X, \Etopos/Y). \]
\end{lemma}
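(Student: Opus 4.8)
The plan is to reduce the computation of the hom-set $\Topos/_1\Etopos(\Etopos/X,\Etopos/Y)$ to a computation inside $\Etopos$ itself, by exploiting the universal property of a slice topos as a classifier of generalized points. Concretely, I would invoke the standard representability fact (cf. \cite[B3.2]{elephant}): for any geometric morphism $f:\Ftopos\to\Etopos$ and any object $Y$ of $\Etopos$, the geometric morphisms $\Ftopos\to\Etopos/Y$ lying over $\Etopos$ (i.e. whose composite with $\prod_Y$ is isomorphic to $f$) correspond, pseudonaturally, to global sections $1_\Ftopos\to f^*(Y)$ in $\Ftopos$. Since the right-hand side is a mere set, the hom-category on the left is equivalent to a discrete category, and passing to the $1$-category $\Topos/_1\Etopos$, where geometric morphisms are identified up to isomorphism, yields a genuine bijection
\[
\Topos/_1\Etopos(\Ftopos,\Etopos/Y)\cong \Ftopos(1_\Ftopos, f^*(Y)).
\]

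First I would specialize this to $\Ftopos=\Etopos/X$ with structure morphism $f=\prod_X$, whose inverse image is the pullback functor $X^*=(-)\times X:\Etopos\to\Etopos/X$. Then $f^*(Y)=X^*(Y)$ is the projection $\pi_1:X\times Y\to X$ regarded as an object of $\Etopos/X$, while the terminal object $1_{\Etopos/X}$ is $\id_X$. A global section $1_{\Etopos/X}\to f^*(Y)$ is thus an arrow $s:X\to X\times Y$ of $\Etopos$ with $\pi_1 s=\id_X$, and such an $s$ is exactly $s=\langle \id_X, g\rangle$ for a unique $g:X\to Y$. This gives a natural bijection $(\Etopos/X)(1_{\Etopos/X}, X^*Y)\cong\Etopos(X,Y)$, and composing it with the representability bijection produces the desired isomorphism $\Topos/_1\Etopos(\Etopos/X,\Etopos/Y)\cong\Etopos(X,Y)$.

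The remaining, and most delicate, step is to check that this bijection is the one induced by the assignment $g\mapsto\prod_g$, so that the functor $E\mapsto\prod_E$ is precisely full and faithful. The hard part will be matching the abstract correspondence with the concrete functor. I would do this by tracing $\prod_g$ through the representability equivalence: its associated generalized point is the image $(\prod_g)^*(\delta)$ of the universal point $\delta:1_{\Etopos/Y}\to Y^*(Y)$ (the diagonal section of $\pi_1:Y\times Y\to Y$), computed using $(\prod_g)^*=g^*$ and the isomorphism $g^*\circ Y^*\cong X^*$. A direct pullback calculation shows that $g^*(\delta)$ is the section $\langle \id_X, g\rangle:X\to X\times Y$, i.e. exactly the point corresponding to $g$. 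Together with functoriality of the representability equivalence in $\Ftopos$, this identifies $g\mapsto\prod_g$ as the realization of the isomorphism, establishing full faithfulness.

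Finally, the first assertion follows formally: by Definition \ref{def:topos_etale} the objects in the image of $E\mapsto\prod_E$ are precisely the étale geometric morphisms over $\Etopos$, so the assignment is essentially surjective onto $\Topos\etale/_1\Etopos$; combined with the full faithfulness just established, it exhibits $\Etopos$ as (equivalent to) the full subcategory of $\Topos/_1\Etopos$ on the étale morphisms. The main obstacle throughout is the careful bookkeeping needed to pass from the $2$-categorical representability statement to a $1$-categorical bijection and, above all, to verify that the abstract classifying correspondence is the concrete functor $\prod_{(-)}$; the representability fact itself is standard and can be cited rather than reproved.
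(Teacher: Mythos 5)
Your proposal is correct and follows essentially the same route as the paper's own sketch: both extract the arrow $g:X\to Y$ from a geometric morphism $F$ over $\Etopos$ by applying $F^*$ to the diagonal section $\Delta_Y:1_{\Etopos/Y}\to Y^*(Y)$ and reading off $\langle 1_X,g\rangle:X\to X\times Y$. The only difference is presentational — you package the correspondence via the general representability of slice toposes as classifiers of generalized points, whereas the paper constructs the bijection directly — but the underlying mechanism is identical.
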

\begin{proof}
	This is a well known result, but let us sketch the correspondence for later use: starting from $g:X\rightarrow Y$, we consider the functor $\prod_g$. Viceversa, consider the arrow $\Delta_{Y}:Y\rightarrow Y\times Y$ in $\Etopos$: it is also an arrow $\Delta_Y:1_{\Etopos/Y}\rightarrow Y^*(Y)$ of $\Etopos/Y$, where $Y^*:\Etopos\rightarrow \Etopos/Y$ is the usual functor mapping any $Z$ to $[\pi:Y\times Z\rightarrow Y]$. Now take any $F:\Etopos/X\rightarrow \Etopos/Y$: if we consider the arrow $F^*(\Delta_Y):1_{\Etopos/X}\rightarrow X^*(Y)$, \ie $F^*(\Delta_Y):[1_X]\rightarrow [X\times Y\rightarrow X]$, it is an arrow $\langle 1,g\rangle:X\rightarrow X\times Y$ of $\Etopos$ and this provides our arrow $g:X\rightarrow Y$.
\end{proof}
\begin{remarks}\begin{enumerate}[(i)]
		\item 	Every local homeomorphism to $\Sh(\cbicat,J)$ is small relative to $\Sh(\cbicat,J)$: this holds because the geometric morphisms over $\Sh(\cbicat,J)$ from a local homeomorphisms $\Sh(\cbicat,J)\slash P $ to a local homeomorphism $\Sh(\cbicat,J)\slash Q$ correspond precisely to the arrows $P\to Q$ in $\Sh(\cbicat,J)$. 
		\item Suppose that $\Etopos\simeq \Sh(\cbicat,J)$: since $\Sh(\int P, J_P)\simeq \Sh(\cbicat,J)/\sheafify_J(P)$ by Proposition \ref{prop:fib_discreta_slice_topos}, the previous lemma is telling us that a geometric morphism $\Sh(\fib P, J_P)\rightarrow \Sh(\fib Q, J_Q)$ is presented by an arrow $\alpha:\sheafify_J(P)\rightarrow \sheafify_J(Q)$. The fact that we can replace geometric morphisms with arrows in the topos is very relevant, since it will imply that we will be able to work at a lower level of complexity by `hiding' the topos-theoretic content of our adjunction. This is not an uncommon feature: the same happens for topological spaces (see Section \ref{sec:adj_topologica}), where the right adjoint $\Gamma$ can be described as a hom functor at the level of topological spaces, and it happens more generally for preordered categories (see Section \ref{sec:preorder}). In general, this is a feature of classes of geometric morphisms that can be presented at the level of sites.
	\end{enumerate}
\end{remarks}
We have now all the ingredients to restrict the fundamental adjunction to presheaves, generalizing at the same time the topological presheaf-bundle adjunction of Section \ref{sec:adj_topologica} to all sites:
\begin{prop}\label{prop:fundadj_discrete}\index{presheaf-bundle adjunction!for sites}\index{$\Lambda_{\Topos^s/_1\Sh(\cbicat,J)}\dashv \Gamma_{\Topos^s/_1\Sh(\cbicat,J)}$} Consider a small-generated site $(\cbicat,J)$:
	\begin{enumerate}[(i)]
		\item There is an adjunction of 1-categories
		\[
		\begin{tikzcd}
			{[\cbicat\op,\Set]} \arrow[r, "\Lambda_{\Topos^s/_1\Sh(\cbicat,J)}", bend left, start anchor={north east}, end anchor={north west}] &     \Topos^s/_1\Sh(\cbicat,J) \arrow[l, "\Gamma_{\Topos^s/_1\Sh(\cbicat,J)}", bend left, start anchor={south west}, end anchor={south east}] \ar[l, "\dashv"{rotate=270}, phantom]
		\end{tikzcd}.\]
		The functors $\Lambda_{\Topos^s/_1\Sh(\cbicat,J)}$ is the restriction of $\Lambda_{\Topos/\Sh(\cbicat,J)\co}$: \ie it maps a presheaf $P$ to $[\prod_{\sheafify_J(P)}:\Sh(\cbicat,J)/\sheafify_J(P)\rightarrow \Sh(\cbicat,J)]$ and an arrow $g:P\rightarrow Q$ to  $\prod_{\sheafify_J(g)}:\Sh(\cbicat,J)/\sheafify_J(P)\rightarrow \Sh(\cbicat,J)/\sheafify_J(Q)$; exploiting Proposition \ref{prop:fib_discreta_slice_topos} we can rephrase this in terms of comorphisms of sites, , as
		$\Lambda(P):=[C_{p_P}:\Sh(\fib P, J_P)\rightarrow \Sh(\cbicat,J)]$ and $\Lambda(g):=C_{\fib g}: \Sh(\fib P, J_P)\rightarrow \Sh(\fib Q, J_Q)$.
		
		The functor $\Gamma_{\Topos^s/_1\Sh(\cbicat,J)}$ acts like a Hom-functor by mapping an object $[F:\Ftopos\rightarrow \Sh(\cbicat,J)]$ of $\Topos^s/_1\Sh(\cbicat,J)$ to the presheaf $$\Topos^s/_1\Sh(\cbicat,J)(\Sh(\cbicat,J)/\ell_J(-), \Ftopos):\cbicat\op\rightarrow \Set.$$
		\item The image of $\Lambda_{\Topos^s/_1\Sh(\cbicat,J)}$ factors through $\Topos\etale/\Sh(\cbicat,J)$, and the image of $\Gamma_{\Topos^s/_1\Sh(\cbicat,J)}$ factors through $\Sh(\cbicat,J)$;
		\item for any $J$-sheaf $Q$ it holds that $\Gamma_{\Topos^s/_1\Sh(\cbicat,J)}([\prod_Q])\simeq Q$, implying that the fixed points of $\Topos^s/_1\Sh(\cbicat,J)$ are precisely the étale geometric morphisms, while those of $[\cbicat\op,\Set]$ are $J$-sheaves; in particular, the composite functor $\Gamma_{\Topos^s/_1\Sh(\cbicat,J)}\Lambda_{\Topos^s/_1\Sh(\cbicat,J)}$ is naturally isomorphic to $$i_J\sheafify_J:[\cbicat\op,\Set]\rightarrow \Sh(\cbicat,J)\rightarrow[\cbicat\op,\Set];$$
		\item the adjunction $\Lambda_{\Topos^s/_1\Sh(\cbicat,J)}\dashv \Gamma_{\Topos^s/_1\Sh(\cbicat,J)}$ restricts to an equivalence 
		\[\Sh(\cbicat,J)\simeq \Topos\etale/_1\Sh(\cbicat,J).\]
	\end{enumerate}
\end{prop}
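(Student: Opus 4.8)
The plan is to establish the four numbered assertions in sequence, leaning heavily on the already-proven fundamental adjunction and on Proposition \ref{prop:fib_discreta_slice_topos}, which identifies $\Sh(\fib P, J_P)$ with the slice $\Sh(\cbicat,J)/\sheafify_J(P)$. Item (i) is essentially an instance of Lemma \ref{lemma:adj_restriction_subcat}: I would start from the fundamental adjunction $\Lambda_{\Topos\co/\Sh(\cbicat,J)}\dashv\Gamma_{\Topos\co/\Sh(\cbicat,J)}$ restricted along the inclusion of discrete fibrations (presheaves) via Proposition \ref{prop:aggiunzione_prefasci_e_cat_su_C}, and check that $\Gamma$ lands in $\Topos^s/_1\Sh(\cbicat,J)$. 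The only genuine content here is a size check: I must verify that for $[F:\Ftopos\to\Sh(\cbicat,J)]$ small relative to the base, the presheaf $X\mapsto \Topos^s/_1\Sh(\cbicat,J)(\Sh(\cbicat,J)/\ell_J(X),\Ftopos)$ is genuinely $\Set$-valued, which is exactly the definition of relative smallness (and, by the remark after Definition \ref{def:relativelysmallmorphism}, it suffices to test on representables $\ell_J(X)$). The explicit formulas for $\Lambda$ and $\Gamma$ follow by tracing Proposition \ref{prop:fib_discreta_slice_topos} through the definitions.

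For item (ii), the factorization of $\Lambda$ through $\Topos\etale/\Sh(\cbicat,J)$ is immediate: by Proposition \ref{prop:fib_discreta_slice_topos} the geometric morphism $C_{p_P}$ is isomorphic to $\prod_{\sheafify_J(P)}$, which is a local homeomorphism by Definition \ref{def:topos_etale}. That $\Gamma$ factors through $\Sh(\cbicat,J)$ is a special case of Lemma \ref{lemma:immagine_gamma_in_Stack}, since a discrete stack is precisely a $J$-sheaf by Proposition \ref{prop:psh+stack=sheaf}; alternatively I can verify the sheaf condition directly using Lemma \ref{lemma:topos_su_sieve_eqv_topos_su_slice} to replace $\fib R$ with $\cbicat/X$ for a covering sieve $R$, exactly as in the proof of Lemma \ref{lemma:immagine_gamma_in_Stack}.

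Item (iii) is the crucial computation, and I expect the verification of $\Gamma([\prod_Q])\simeq Q$ to be the main point. The plan is the chain of natural isomorphisms
\begin{align*}
\Gamma([\prod_Q])(X)&:=\Topos^s/_1\Sh(\cbicat,J)(\Sh(\cbicat,J)/\ell_J(X), \Sh(\cbicat,J)/Q)\\
&\simeq \Sh(\cbicat,J)(\ell_J(X), Q)\\
&\simeq Q(X),
\end{align*}
where the first isomorphism is Lemma \ref{lemma:morfgeom_tra_topos_slice_su_base} (geometric morphisms between étale toposes over the base correspond to arrows of the base topos) and the second is the Yoneda lemma together with $Q$ being a $J$-sheaf. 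Naturality in $X$ must be checked, but this is routine once the correspondence of Lemma \ref{lemma:morfgeom_tra_topos_slice_su_base} is spelled out. Combining this with item (ii), the composite $\Gamma\Lambda(P)\simeq\Gamma([\prod_{\sheafify_J(P)}])\simeq \sheafify_J(P)$ computes the sheafification, viewed inside $[\cbicat\op,\Set]$ via $i_J$, which gives the stated description of the fixed points on both sides.

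Finally, item (iv) follows formally: an adjunction restricts to an equivalence between its full subcategories of fixed points, and items (ii) and (iii) identify these as $\Sh(\cbicat,J)$ and $\Topos\etale/_1\Sh(\cbicat,J)$ respectively. The remaining step is to confirm that every object of $\Topos\etale/_1\Sh(\cbicat,J)$ is indeed a fixed point, i.e. that the counit $\Lambda\Gamma([\prod_Q])\to[\prod_Q]$ is an isomorphism; this amounts to $\sheafify_J\Gamma([\prod_Q])\simeq Q$, which is item (iii) again since $Q$ is already a sheaf. I anticipate the only real friction will be bookkeeping the $2$-categorical versus $1$-categorical slices and ensuring that the identifications are strictly natural rather than merely pointwise; the smallness hypothesis in Definition \ref{def:relativelysmallmorphism} is precisely what keeps everything within $\Set$ and makes the hom-functor description of $\Gamma$ legitimate.
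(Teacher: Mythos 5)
Your proposal is correct and follows essentially the same route as the paper's proof: restrict the fundamental adjunction to presheaves, compute $\Gamma$ on étale toposes via Lemma \ref{lemma:morfgeom_tra_topos_slice_su_base} together with Yoneda, identify the fixed points on both sides, and restrict to obtain the equivalence. The only place where the paper is more explicit is in item (i), where the passage from the equivalence of hom-\emph{categories} to a bijection of hom-\emph{sets} is justified by the fact that $\Lambda(P)$ is a $1$-colimit in $\Topos/_1\Sh(\cbicat,J)$ (Corollary \ref{cor:topos_discfib_colimite_conico}) rather than by the size check you emphasize --- though you do correctly flag the $2$-versus-$1$-categorical bookkeeping as the residual point to settle.
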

\begin{proof} Let us once again adopt the notation $\widetilde{\cbicat}:=\Sh(\cbicat,J)$ for the sake of brevity, and let us drop the subscripts for $\Lambda$ and $\Gamma$:	
	\begin{enumerate}[(i)]
		\item 	This is just a restriction of the adjunction appearing in Theorem \ref{thm:topos_Giraud_colimite_pesato}: indeed, the equivalence of Hom-categories
		\[
		\Topos\co/\widetilde{\cbicat}(\Lambda(P), \Etopos)\simeq \Ind_\cbicat(P, \Gamma(\Etopos)) 
		\]
		appearing there restricts to a bijection of Hom-sets (up to equivalence of geometric morphisms)
		\[
		\Topos^s/_1\widetilde{\cbicat}(\Lambda(P), \Etopos)\simeq [\cbicat\op,\Set](P, \Gamma(\Etopos)) 
		\]
		because $\Lambda(P)$ is a 1-colimit of toposes by Corollary \ref{cor:topos_discfib_colimite_conico}.
		\item The fact that the image of $\Lambda$ factors through $\Topos\etale/\widetilde{\cbicat}$ is true by definition. The image of $\Gamma$ is contained in $\widetilde{\cbicat}$ as a consequence of Lemma \ref{lemma:immagine_gamma_in_Stack}, once recalled that a presheaf $P$ is a $J$-stack if and only if it is a $J$-sheaf (see Proposition \ref{prop:psh+stack=sheaf}).
		\item  Consider an étale geometric morphism $[\prod_Q:\widetilde{\cbicat}/Q\rightarrow \widetilde{\cbicat}]$: then
		\begin{align*}
			\Gamma([\textstyle\prod_Q]):= \Topos^s/_1\widetilde{\cbicat}(\widetilde{\cbicat}/\ell_J(-), \widetilde{\cbicat}/Q)\simeq \widetilde{\cbicat}(\ell_J(-),Q)\simeq Q
		\end{align*}
		This implies that $\Lambda\Gamma([\prod_Q])\simeq \Lambda(Q)\simeq [\prod_Q]$, and hence $[\prod_Q]$ is a fixed point for $\Lambda\dashv \Gamma$; conversely, if $[F:\Ftopos\rightarrow\widetilde{\cbicat}]$ is a fixed point then it is isomorphic to $[\prod_{\sheafify_J(\Gamma([F]))}]$ and hence it is étale. 
		The identity above also implies that $\Gamma\Lambda(P)\simeq \Gamma([\prod_{\sheafify_J(P)}])\simeq \sheafify_J(P)$, and in particular that a presheaf $P$ is a fixed point for $\Lambda\dashv \Gamma$ if and only if it is a $J$-sheaf.
		\item It follows from restricting the adjunction $\Lambda\dashv \Gamma$ to its fixed points.
	\end{enumerate}
\end{proof}

\section{The adjunction between presheaves and bundles over a topological space}\label{sec:adj_topologica}
In the present section we will recall the motivating example for the previous work, namely the adjunction $\Lambda\dashv \Gamma$ between the topos of presheaves over a topological space $X$ and the category of bundles over $X$. We also take a chance to analyse some aspects of the adjunction that will provide the motivation for the later developments in the case of preorder sites. Since this is the `original' fundamental adjunction, we will not decorate the two adjoints $\Lambda$ and $\Gamma$ with any subscripts.

Given a topological space $X$, denote by $\Ocal(X)$ its poset of open subsets: the topos $\Psh(X):=[\Ocal(X)\op,\Set]$\index{$\Psh(X)$} is called the category of presheaves (of sets) over $X$: the topos of sheaves for the canonical open cover topology $J\can_{\Ocal(X)}$ on $\Ocal(X)$ is denoted by $\Sh(X)$\index{$\Sh(X)$}. On the other hand, if $\Top$ is the 1-category of topological spaces, the 1-categorial slice $\Top/X$ is called the category of \emph{bundles} over $X$. 

Since $\Ocal(X)$ is posetal, there is at most one arrow $i:V\hookrightarrow U$ for any two $V$ and $U$ in $\Ocal(X)$: therefore, for any presheaf $P$ over $X$ and any $s\in P(U)$ its image $P(j)(s)$ is usually denoted just with $s_{|V}$. We recall that for a presheaf $P$ over $X$ the \emph{stalk of $P$ at $x\in X$}\index{stalk}\index{$P_x$} is defined as the colimit of sets $P_x:=\colim_{x\in U\in \Ocal(X)} P(U)$. For any $s\in P(U)$ its equivalence class in $P_x$ is called \emph{germ of $s$ at $x$}\index{germ}\index{$s_x$} and it is indicated by $s_x$: then $s\in P(U)$ and $t\in P(V)$ have the same germ at $x\in U\cap V$ if and only if there exists $W\subseteq U\cap V$ such that $x\in W$ and $s_{|W}=t_{|W}$. Of course, any arrow $h:P\rightarrow Q$ of presheaves induces an arrow $h_x:P_x\rightarrow Q_x$ by the universal property of colimits.

There is an adjunction\index{presheaf-bundle adjunction!for topological spaces} (cfr. \cite[Sections II.4, II.5, II.6]{maclanemoerdijk})
\[
\begin{tikzcd}
	{\Psh(X)} \ar[r, bend left,  start anchor={north east}, end anchor={north west}, "\Lambda"] \ar[r,phantom, "\dashv"{rotate=-90}] & \Top/X \ar[l, bend left, start anchor={south west}, end anchor={south east}, "\Gamma"]
\end{tikzcd}.\]

The functor $\Lambda$ maps a presheaf $P$ to its \emph{bundle of germs}, \ie the canonical projection $\pi_{P}:E_P=\coprod_{x\in X}P_{x}\to X$; on arrows $\Lambda$ acts by mapping $h:P\rightarrow Q$ to $\Lambda_h:=\coprod_{x\in X} h_x:\coprod_{x\in X}P_x\rightarrow \coprod_{x\in X} Q_x$. For any $s\in P(U)$ we can define a map $\dot{s}:U \to E_{P}$ sending a point $x\in U$ to the germ $s_{x}$: since
\[
\dot{s}(U) \cap \dot{t}(V) = \bigcup_{W\subseteq U \cap V, r=s|_{W}=t|_{W}} \dot{r}(W),
\]
for any $s\in P(U)$ and $t\in P(V)$, the sets of the form $\dot{s}(U)$ are the basis for a topology over $E_P$. With this topology one can show that $\pi_P$ and all the functions of kind $\Lambda_h$ or $\dot{s}$ are continuous. 

The functor $\Gamma$ is instead the \emph{local sections} functor, assigning to a bundle $p:E\to X$ the presheaf $\Gamma_{p}$ which sends each open set $U$ of $X$ to the set $\Gamma_{p}(U)$ of continuous maps $s:U\to E$ such that $p\circ s=i_{U}:U\hookrightarrow X$: these are called the \emph{sections} of $p$ defined on $U$. More compactly,  $\Gamma_p:=\Top/X([i_{(-)}],[p]):\Ocal(X)\op\rightarrow\Set$.
The functor $\Lambda$ takes values in the full subcategory $\Etale(X)$\index{$\Etale(X)$} of étale bundles on $X$ (that is, the local homeomorphisms to $X$), while the functor $\Gamma$ takes values in the full subcategory $\Sh(X)$ of $\Psh(X)$; in fact, the adjunction $\Lambda\dashv\Gamma$ restricts to an equivalence between $\Etale(X)$ and $\Sh(X)$. 
\begin{remark}
	One often unmentioned consequence of the topological adjunction $\Lambda\dashv \Gamma$ is that the space $E_P$ is a colimit of topological spaces over $X$. Indeed, by the very definition of adjunction we have that, for any bundle $q:Y\rightarrow X$, 
	\begin{align*}
		\Top/X([\pi_P], [q])&\simeq \Psh(X)(P,\Top/X([i_{(-)}],[q])\\
		&\simeq[(\fib P)\op,\Set](1, \Top/X([i_{\pi_P(-)}], [q])),
	\end{align*}
	where the last equivalence is a consequence of Proposition \ref{prop:eqv_categorie_laxoplaxps}. In other words, $E_P$ can be seen as the colimit of the diagram $D:\fib P\rightarrow \Top/X$ mapping every $(U,s)$ to the inclusion $U\hookrightarrow X$. 
\end{remark}

We will now provide some pointfree results related to the adjunction $\Lambda\dashv \Gamma$. The first result  states that set-theoretic sections for a bundle of germs are topological sections if and only if they are locally so: it will become useful later.
\begin{prop}\label{prop:sezione_sse_sezionilocali}
	Let $X$ be a topological space, $U$ an open set of $X$, $P$ a presheaf on $X$ and $s:U\to \coprod_{x\in X}P_{x}$ a map such that $p_{P}\circ s=i_{U}$. Then the following conditions are equivalent:
	\begin{enumerate}[(i)]
		\item $s$ is continuous;
		\item there is an open covering $\{U_{i}\hookrightarrow U \mid i\in I\}$ of $U$ such that for each $i\in I$ there is $t_{i}\in P(U_{i})$, $s|_{U_{i}}=\dot{t_{i}}$.
	\end{enumerate}
\end{prop}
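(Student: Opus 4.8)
The statement concerns a set-theoretic section $s:U\to E_P=\coprod_{x\in X}P_x$ of the bundle of germs, and asks us to show that continuity of $s$ is equivalent to being locally of the form $\dot{t_i}$ for local data $t_i\in P(U_i)$. The key to both implications is the explicit description of the basis for the topology on $E_P$ recalled above: the sets $\dot{s}(U)$ for $s\in P(U)$ form a basis of open sets, and each map $\dot{t}:V\to E_P$ is continuous. My plan is to treat the two implications separately, using the fact that being a local homeomorphism (which $\pi_P$ is) gives a very concrete local picture of $E_P$.

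\textbf{From (ii) to (i).}
First I would assume we are given an open covering $\{U_i\hookrightarrow U\mid i\in I\}$ together with $t_i\in P(U_i)$ such that $s|_{U_i}=\dot{t_i}$. Since continuity is a local property, it suffices to check that $s$ is continuous when restricted to each $U_i$. But on $U_i$ the restriction $s|_{U_i}$ \emph{equals} the map $\dot{t_i}:U_i\to E_P$, which is continuous by the construction of the topology on $E_P$ (the maps $\dot{t}$ were shown above to be continuous). A covering family $\{U_i\hookrightarrow U\}$ is a $J^{\textup{can}}_{\Ocal(X)}$-covering sieve, so continuity glues: a map out of $U$ is continuous iff its restrictions to the members of an open cover are. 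Hence $s$ is continuous. This direction is essentially immediate.

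\textbf{From (i) to (ii).}
This is the direction carrying the real content, though it too is fairly direct. Suppose $s:U\to E_P$ is continuous with $\pi_P\circ s=i_U$. Fix $x\in U$; then $s(x)$ is a germ in $P_x$, so by the definition of the stalk as a filtered colimit there is some open neighbourhood $V_x\subseteq U$ of $x$ and some $t_x\in P(V_x)$ with $(t_x)_x=s(x)$, i.e.\ $\dot{t_x}(x)=s(x)$. Now $\dot{t_x}(V_x)$ is a basic open set of $E_P$ containing $s(x)$, so by continuity $s^{-1}(\dot{t_x}(V_x))$ is an open neighbourhood of $x$; intersecting with $V_x$, I obtain an open $U_x\subseteq V_x$ with $x\in U_x$ on which $s$ takes values in $\dot{t_x}(V_x)$. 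On $U_x$, both $s$ and $\dot{t_x}$ are sections of $\pi_P$ landing in the same sheet $\dot{t_x}(V_x)$, over which $\pi_P$ restricts to a homeomorphism; since $\pi_P\circ s=i_{U}=\pi_P\circ\dot{t_x}$ on $U_x$, injectivity of $\pi_P$ on that sheet forces $s|_{U_x}=\dot{t_x}|_{U_x}=\dot{(t_x|_{U_x})}$. Setting $t_{U_x}:=t_x|_{U_x}\in P(U_x)$, the family $\{U_x\hookrightarrow U\mid x\in U\}$ is an open cover of $U$ with $s|_{U_x}=\dot{t_{U_x}}$, which is exactly condition (ii).

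\textbf{Main obstacle.}
The only genuinely delicate point is the step identifying $s$ with $\dot{t_x}$ on a neighbourhood: one must argue that once $s$ maps a small enough neighbourhood into a single basic sheet $\dot{t_x}(V_x)$, it must coincide there with $\dot{t_x}$. The clean way to see this is via the étale property of $\pi_P$ (its restriction to each basic open $\dot{t}(V)$ is a homeomorphism onto the open set $V$), combined with the defining condition $\pi_P\circ s=i_U$; I would make sure to phrase this using the local homeomorphism structure rather than a pointwise germ-chasing argument, since the former is both shorter and more robust.
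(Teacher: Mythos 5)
Your proof is correct and follows essentially the same route as the paper's: both directions rest on the basis of sheets $\dot{t}(V)$, the observation that disjointness of the stalks forces $s=\dot{t}$ on $s^{-1}(\dot{t}(V))$, and the local-equality property of germs. The only difference is presentational: for (ii)$\Rightarrow$(i) the paper verifies directly that each $s^{-1}(\dot{t}(V))$ is open by a germ-chasing argument, whereas you outsource this to the previously stated continuity of the maps $\dot{t}$ together with the locality of continuity, which is a legitimate shortcut.
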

\begin{proof}	
	We will use the fact that the collection of subsets of the form $\dot{t}(V)$ for $V\in\Ocal(X)$ and $t\in P(V)$ are an open covering and a basis of $\coprod_{x\in X}P_{x}$. We remark preliminarily that if we consider $z\in s\inv(\dot{t}(V))\subseteq U$, then there must be some $v\in V$ such that $s(z)=t_v$: this implies that $z=\pi_P(s(z))=\pi_P(t_v)=v$, so that $s\inv(\dot{t}(V))\subseteq V$, and $s(z)=t_z$.
	
	First suppose that $s$ is continuous: then the collection of subsets of the form $Z:=s\inv(\dot{t}(V))$ is an open covering of $U$, and we have shown above that $s|_{Z}=\dot{t|_{Z}}$: thus condition (ii) is satisfied.
	
	Conversely, assume that $(ii)$ holds. Since the subsets $\dot{t}(V)$ form a basis, showing the continuity of $s$ is equivalent to verifying that the inverse image under $s$ of any of these open sets is open. We will prove this by showing that $s\inv(\dot{t}(V))$ contains an open neighbourhood of each of its points. Consider thence $y\in s\inv(\dot{t}(V))$: we already know that $s(y)=t_y$, but by our hypothesis there is also $i\in I$ such that $y\in U_{i}$ and $s(y)=(t_i)_y$. Since $(t_{i})_{y}=t_{y}$ there is an open neighbourhood $W_{i}$ of $y$ such that $W_{i}\subseteq U_{i}\cap V$ and $t_{i}|_{W_{i}}=t|_{W_{i}}$. So $W_{i}$ is an open neighbourhood of $y$ contained in $s^{-1}(\dot{t}(V))$, and thus $s\inv(\dot{t}(V))$ is open.    
\end{proof}
\begin{remark}
	Notice that condition (ii) of Proposition \ref{prop:sezione_sse_sezionilocali} tells us that the topological condition of continuity of $s$ is equivalent to a purely set-theoretic condition. This is a first hint of the fact that the adjunction $\Lambda\dashv \Gamma$ can de bescribed without explicitly recurring to the category of topological spaces: we will provide stronger evidence of this in the following results. 
\end{remark}

The following technical lemma will allow us to show that sections or local homeomorphisms are always open maps: 
\begin{lemma}
	Let $s:U\to E$ be a section of a local homeomorphism $p:E\to X$ on an open set $U$ of $X$. Then, any open set $V$ of $U$ can be covered by a family $\{V_{i} \mid i\in I\}$ of open subsets $V_{i}\subseteq V$ such that for each $i\in I$, $s(V_{i})$ is contained in an open subset $A_{i}$ of $E$ such that $p|_{A_{i}}:A_{i}\to p(A_{i})$ is an homeomorphism onto an open set $p(A_{i})$ of $X$.  	
\end{lemma}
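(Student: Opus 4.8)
The plan is to reduce everything to the local triviality of $p$ together with the continuity of the section $s$. First I would unwind the definition of a local homeomorphism of topological spaces: each point $e$ of $E$ has an open neighbourhood on which $p$ restricts to a homeomorphism onto an open subset of $X$. Collecting one such neighbourhood around every point yields an open cover $\{A_e \mid e \in E\}$ of $E$ by sets with $p|_{A_e}:A_e \to p(A_e)$ a homeomorphism onto an open subset $p(A_e)\subseteq X$. This is the only structural input from the hypothesis on $p$.

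Next, fix an open set $V\subseteq U$ and a point $x\in V$. Since $s(x)\in E$, let $A:=A_{s(x)}$ be the trivialising neighbourhood chosen around $s(x)$, so that $s(x)\in A$ and $p|_A:A\to p(A)$ is a homeomorphism onto the open set $p(A)$. Because $s:U\to E$ is continuous and $A$ is open in $E$, the preimage $s^{-1}(A)$ is open in $U$, hence open in $X$ (as $U$ is open in $X$); I would then set $V_x:=V\cap s^{-1}(A)$. This $V_x$ is an open subset of $V$; it contains $x$ (since $s(x)\in A$ forces $x\in s^{-1}(A)$, and $x\in V$ by assumption); and it satisfies $s(V_x)\subseteq s(s^{-1}(A))\subseteq A$. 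Taking $A_i:=A$ for this point, the pair $(V_x,A)$ has exactly the property required by the statement.

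Finally, letting $x$ range over all of $V$, the family $\{V_x \mid x\in V\}$ covers $V$ (each $x$ lies in its own $V_x$), each member is open and contained in $V$, and for each one $s(V_x)$ is contained in an open subset $A\subseteq E$ on which $p$ restricts to a homeomorphism onto an open subset of $X$. This is precisely the asserted covering. I do not expect a genuine obstacle here: the only things to verify are the openness of $V_x$ (immediate from the continuity of $s$ and the openness of $U$) and the inclusion $s(V_x)\subseteq A$ (immediate from $V_x\subseteq s^{-1}(A)$). The entire substance of the lemma is carried by the local triviality of $p$, which is extracted simply by pulling trivialising neighbourhoods back along the continuous section.
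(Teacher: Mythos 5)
Your proof is correct and follows essentially the same route as the paper: both arguments take a trivialising open cover $\{A_i\}$ of $E$ coming from the definition of local homeomorphism and pull it back along the continuous section, setting $V_i = s^{-1}(A_i)\cap V$. Your version merely indexes the cover by the points of $V$ and spells out the verifications in more detail.
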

\begin{proof}
	Since $p$ is a local homeomorphism, there is an open covering $\{A_{i}\subseteq E \mid i\in I\}$ of $E$ such that for each $i$, $p|_{A_{i}}\to p(A_{i})$ is an homeomorphism onto an open set $p(A_{i})$ of $X$. By taking $V_{i}=s^{-1}(A_{i})\cap V$, we thus obtain an open covering of $V$ satisfying the required condition.
\end{proof}
\begin{cor}\label{cor:sezioni_bundleetale_aperte}
	Consider a topological space $X$ and $P$ in $\Psh(X)$: then any section of the étale bundle $E_P\rightarrow X$ is an open map.
\end{cor}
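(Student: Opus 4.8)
The plan is to deduce the statement directly from the immediately preceding lemma, applied to the local homeomorphism $\pi_P:E_P\to X$ (which is a local homeomorphism by the very construction of the bundle of germs, i.e. by the fact that $\Lambda$ takes values in $\Etale(X)$). To prove that a section $s:U\to E_P$ is an open map, it suffices to show that $s(V)$ is open in $E_P$ for every open $V\subseteq U$. First I would invoke the lemma to obtain an open covering $\{V_i\mid i\in I\}$ of $V$ such that each $s(V_i)$ is contained in an open subset $A_i\subseteq E_P$ for which $\pi_P|_{A_i}:A_i\to \pi_P(A_i)$ is a homeomorphism onto an open subset of $X$. Since $s$ is a function, $s(V)=\bigcup_{i\in I}s(V_i)$, so it is enough to prove that each $s(V_i)$ is open in $E_P$.

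The key step is the local identification of $s$ with the inverse of the local homeomorphism. From the section identity $\pi_P\circ s=i_U$ we get $\pi_P(s(x))=x$ for every $x\in V_i$; as $s(x)\in A_i$ and $\pi_P|_{A_i}$ is injective, $s(x)$ is the unique point of $A_i$ lying over $x$, whence $s|_{V_i}=(\pi_P|_{A_i})^{-1}|_{V_i}$ and therefore $s(V_i)=(\pi_P|_{A_i})^{-1}(V_i)$. Since $V_i\subseteq \pi_P(A_i)$ is open in $X$ and $(\pi_P|_{A_i})^{-1}:\pi_P(A_i)\to A_i$ is a homeomorphism, $s(V_i)$ is open in $A_i$; and as $A_i$ is open in $E_P$, it is open in $E_P$. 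Taking the union over $i\in I$ shows that $s(V)$ is open, so $s$ is an open map.

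There is essentially no obstacle here, since the real content is packaged in the covering lemma: the only point requiring care is the local equality $s|_{V_i}=(\pi_P|_{A_i})^{-1}|_{V_i}$, which follows immediately from the injectivity of $\pi_P|_{A_i}$ together with $\pi_P\circ s=i_U$. I would therefore keep the argument at the level sketched above rather than spelling out the elementary verifications.
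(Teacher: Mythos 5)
Your proof is correct and follows essentially the same route as the paper: both invoke the preceding covering lemma, write $s(V)=\bigcup_i s(V_i)$, and observe that each $s(V_i)$ is the preimage of the open set $V_i\subseteq \pi_P(A_i)$ under the homeomorphism $\pi_P|_{A_i}$. Your explicit justification of the local identity $s|_{V_i}=(\pi_P|_{A_i})^{-1}|_{V_i}$ via injectivity of $\pi_P|_{A_i}$ is just a slightly more detailed spelling-out of the step the paper states in one clause.
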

\begin{proof}
	Consider an open $V$ of $X$ and a section $s:V\rightarrow E_P$: given a covering $\{V_{i} \mid i\in I\}$ of $V$ as in the previous lemma, $s(V)=\bigcup_{i\in I}s(V_{i})$, and each $s(V_{i})$ is open as it is the inverse image of the open set $V_{i}=p(s(V_{i}))\subseteq p(A_{i})$ along the local homeomorphism $p|_{A_{i}}:A_{i}\to p(A_{i})$. 
\end{proof}

Finally, we recall that open maps are precisely those that they induce and adjunction at the level of topologies:
\begin{lemma}\label{lemma:open_map_induce_adjunction_topologies}
	Consider a continuous map $e:E\rightarrow X$: it is open if and only if $e\inv: \Ocal(X)\rightarrow \Ocal(E)$ admits a left adjoint $e_!$, which acts by mapping $U\subseteq E$ to $e(U)\subseteq X$.
\end{lemma}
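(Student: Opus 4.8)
The plan is to reduce both implications to a single elementary observation, so that the only genuinely topological input is the question of when the direct image lands among open subsets. First I would recall that, since $\Ocal(X)$ and $\Ocal(E)$ are posetal categories, a monotone map $e_!:\Ocal(E)\rightarrow\Ocal(X)$ is left adjoint to $e\inv:\Ocal(X)\rightarrow\Ocal(E)$ exactly when
\[
e_!(U)\subseteq W \iff U\subseteq e\inv(W)
\]
for all $U\in\Ocal(E)$ and $W\in\Ocal(X)$. The pivotal remark is that for \emph{any} function $e:E\rightarrow X$ and arbitrary subsets $U\subseteq E$, $W\subseteq X$, one has the purely set-theoretic equivalence $e(U)\subseteq W \iff U\subseteq e\inv(W)$, since both sides assert precisely that $e(x)\in W$ for every $x\in U$. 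No topology is used here.

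For the forward direction I would assume $e$ open. Then $e(U)\in\Ocal(X)$ whenever $U\in\Ocal(E)$, so the assignment $U\mapsto e(U)$ yields a well-defined monotone map $e_!:\Ocal(E)\rightarrow\Ocal(X)$ (monotonicity of direct image being immediate). The adjunction criterion above is then satisfied verbatim by the set-theoretic equivalence, so $e_!$ is left adjoint to $e\inv$ and acts as stated. For the converse I would simply observe that if $e\inv$ admits a left adjoint given by $U\mapsto e(U)$, then each $e(U)=e_!(U)$ is by hypothesis an object of $\Ocal(X)$, \ie an open subset of $X$, which is exactly the assertion that $e$ is an open map.

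The only point requiring care — rather than a genuine obstacle — is that the right-hand condition bundles together two requirements: the \emph{existence} of a left adjoint and its \emph{coincidence} with the direct-image assignment. Existence alone is strictly weaker, as it merely guarantees that each $e(U)$ possesses a least open superset, which need not equal $e(U)$ when $X$ fails to be $T_1$ (for a $T_1$ space one can remove a point of $W_0\setminus e(U)$ from the purported least open superset $W_0$ and still contain $e(U)$, forcing $e(U)=W_0$; without separation this argument breaks down). Hence I would note explicitly that the identification $e_!(-)=e(-)$ is an essential part of the biconditional and cannot be dropped.
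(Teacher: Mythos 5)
Your proof is correct and is precisely the standard Galois-connection argument that the paper implicitly relies on: the lemma is stated there as a recollection without proof, and the reduction to the universally valid set-theoretic equivalence $e(U)\subseteq W \iff U\subseteq e\inv(W)$ is exactly the intended route. Your closing remark — that mere existence of a left adjoint is strictly weaker than openness (e.g.\ it always holds when $X$ is indiscrete), so the identification $e_!(-)=e(-)$ is an essential part of the statement — is accurate and a worthwhile clarification.
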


We will now provide two results which pave the way for our future considerations: we defer all remarks about them to the end of the present section.
The first result shows that the bundle of germs for a presheaf $P$ is a topological site of presentation for the classifying topos of the fibration $\fib P$ (see Definition \ref{def:Giraud_topology_classifying_topos}). This gives a topos-theoretic motivation to the relevance of the étale bundle of a presheaf, and it also justifies why in the discrete adjunction of Section \ref{sec:discrete_adj} the 2-category $\Cl\Fib_\cbicat$ has become the rightful substitute for $\Top/X$.
\begin{prop}\label{prop:fasci__etalebundle_eqv_fasci_grfibration}
	Let $X$ be a topological space and $P$ a presheaf $P$ on it. Consider the category $\fib P$ and endow it with the Grothendieck topology $J_P$ defined in Proposition \ref{prop:fib_discreta_slice_topos}: in particular, $\{(U_i, s_{|U_i})\hookrightarrow (U, s)\ |\ i\in I \}$ is $J_P$-covering if and only if $\{U_i\ |\ i\in I \}$ is an open covering of $U$. Consider the bundles of germs $E_P$: then the functor $f_P:{\fib P}\to {\cal O}(E_{P})$ defined by $f_P(U, s):=\dot{s}(U)$ is a morphism and comorphism of sites which induces an equivalence of toposes
	\[
	C_{f_P}:\Sh(\fib P, J_{P})\simeq \Sh(E_{P}).
	\]  
	Moreover, said equivalence is compatible with the canonical geometric morphisms to $\Sh(X)$, and it is natural in $P$.
\end{prop}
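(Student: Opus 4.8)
The plan is to verify the two site-theoretic properties of $f_P$ directly, to deduce the equivalence from a comparison argument exploiting that the image $\{\dot s(U)\}$ of $f_P$ is a basis of $E_P$, and to obtain compatibility and naturality by evaluating inverse images on this basis. First I would record that $f_P$ is a well-defined monotone map into the basis $\{\dot s(U)\}$: a morphism $(V,t)\to(U,s)$ of $\fib P$ is an inclusion $V\subseteq U$ with $t=s_{|V}$, and since $(s_{|V})_x=s_x$ one has $\dot t(V)=\dot s(V)\subseteq\dot s(U)$. For the comorphism property (covering-lifting) I would take a $J\can$-covering sieve $S$ of $\dot s(U)$ and, for each $x\in U$, pick a basic open $\dot t(V)\in S$ with $s_x\in\dot t(V)$; applying $\pi_P$ forces $x\in V$ and $s_x=t_x$, so there is $W_x\ni x$ with $W_x\subseteq U\cap V$ and $s_{|W_x}=t_{|W_x}$, whence $f_P$ carries $(W_x,s_{|W_x})\to(U,s)$ into $S$. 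As $\{W_x\}$ covers $U$, these morphisms generate a $J_P$-covering sieve $R$ with $f_P(R)\subseteq S$, producing $C_{f_P}$. For the morphism property I would check flatness and cover-preservation: since $\fib P$ is posetal, flatness reduces to $\bigcup\dot s(U)=E_P$ together with the identity $\dot s(U)\cap\dot t(V)=\bigcup_{W,\,r=s_{|W}=t_{|W}}\dot r(W)$ recorded before the statement (each $(W,r)$ mapping to both $(U,s)$ and $(V,t)$), while cover-preservation holds because a $J_P$-cover $\{(U_i,s_{|U_i})\to(U,s)\}$ goes to $\{\dot s(U_i)\}$, of union $\dot s(U)$; hence $f_P$ is a morphism of sites.

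The equivalence is the crux. Since $\{\dot s(U)\}$ is a basis it is $J\can$-dense in $\Ocal(E_P)$, so the comparison/denseness results of \cite{denseness} will apply once I check that inclusions of basic opens are reflected up to cover by $f_P$: indeed $\dot s(U)\subseteq\dot t(V)$ forces $U\subseteq V$ and $s_x=t_x$ for all $x\in U$, i.e.\ $s_{|U}$ and $t_{|U}$ are \emph{locally} equal, and the resulting $J_P$-covering family of morphisms witnesses the inclusion. I expect this to be the main obstacle, precisely because $f_P$ is neither full nor faithful: a non-separated presheaf collapses distinct sections with equal germs, so the naive sub-site comparison lemma does not apply and this local-to-global passage is exactly what forces sheaves rather than presheaves on both sides. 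A cleaner alternative, which also yields the equivalence over a common base, is to identify $\Sh(\fib P,J_P)\simeq\Sh(X)/\sheafify_J(P)$ by Proposition \ref{prop:fib_discreta_slice_topos} and $\Sh(E_P)\simeq\Sh(X)/\sheafify_J(P)$ through the classical equivalence $\Sh(X)\simeq\Etale(X)$ (under which $\Lambda(P)=\pi_P$ corresponds to $\sheafify_J(P)$, stalks being unchanged by sheafification), and then recognise $C_{f_P}$ as the induced comparison.

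For compatibility and naturality I would argue with inverse images on the basic opens, recalling the formula $C_{f_P}^*(W)(U,s)\cong\sheafify_{J_P}\bigl(W(\dot s(U))\bigr)$ for the comorphism-induced inverse image. The section $\dot s\colon U\to\dot s(U)$ is an open embedding (Corollary \ref{cor:sezioni_bundleetale_aperte}), hence a homeomorphism onto $\dot s(U)$ over $X$, so for a $J$-sheaf $F$ on $X$ one has $\Sh(\pi_P)^*(F)(\dot s(U))\cong F(U)$; combined with the formula above this gives $C_{f_P}^*\circ\Sh(\pi_P)^*\cong C_{\pi_P}^*$, that is $\Sh(\pi_P)\circ C_{f_P}\cong C_{\pi_P}$, the asserted compatibility with the canonical geometric morphisms to $\Sh(X)$. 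Naturality in $P$ is obtained by the same evaluation: for $g\colon P\to Q$ the induced map $E_g$ restricts to an isomorphism $\dot s(U)\xrightarrow{\sim}\dot{g_U(s)}(U)$ over $U$, so comparing $C_{f_P}^*$ and $C_{f_Q}^*$ on basic opens yields the coherence isomorphism between $\Sh(E_g)\circ C_{f_P}$ and $C_{f_Q}\circ C_{\fib g}$, exhibiting $\{C_{f_P}\}$ as pseudonatural.
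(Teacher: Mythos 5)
Your proposal is correct and follows essentially the same route as the paper: the comorphism property via the local-equality-of-germs refinement, the morphism property via joint covering plus the intersection formula (flatness being easy over a posetal domain), and the equivalence via the denseness/fullness criteria of \cite{denseness}, with $J_P$-fullness being exactly your ``inclusions of basic opens are reflected up to cover'' check. The only genuine variations are cosmetic: the paper establishes compatibility and naturality by noting that $\pi_P$ and $\Lambda_h$ are open maps and writing commuting squares of comorphisms, whereas you evaluate inverse images on the basic opens $\dot{s}(U)$ (an equivalent computation), and your alternative identification of both toposes with $\Sh(X)/\sheafify_J(P)$ is a valid shortcut for the equivalence as an abstract statement, though it leaves to check that the composite of those equivalences really is $C_{f_P}$ --- which is precisely what the direct argument supplies.
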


\begin{proof}
	To prove that $f_P$ is a morphism of sites we use the characterization of \cite[Definition 3.2]{denseness}, which was divided in four conditions:
	\begin{enumerate}[(i)]
		\item $f_P$ sends covering families to covering families: indeed, if $\{(U_i, s_{|U_i})\}$ is an open cover of $(U,s)$ then $U=\bigcup_{i\in I} U_i$ and thus $\dot{s}(U)=\bigcup_{i\in I}\dot{s_{|U_i}}(U_i)$.
		\item for any $W$ in $\Ocal(E_P)$ there is a covering family $W_i\subseteq W$ such that for each $i$ there exists a $(U_i,s_i)$ in $\fib P$ such that $W_i\subseteq \dot{s_i}(U_i)$: this is trivial, because since the sets $\dot{s}(U)$ form a basis for $E_P$ there is a family of $(U_i, s_i)$ such that $W=\bigcup_{i \in I} \dot{s_i}(U_i)$.
		\item for any pair $(U,s)$, $(V,t)$ in $\fib P$ and every $W\subseteq \dot{s}(U)\cap\dot{t}(V)$ there exist an open covering $\{W_i\ |\ i\in I\}$ of $W$ and open subsets $U_i\subseteq U\cap V$ such that $s_{| U_i}=t_{| U_i}=r_i$ and $W_i\subseteq \dot{r_i}(U_i)$. Since $\dot{s}(U)\cap \dot{t}(V)=\bigcup_{Z\in I} \dot{s_{|Z}}(Z)$, where $I=\{Z\in \Ocal(X)\ |\ Z\subseteq U\cap V,\ s_{|Z}=t_{|Z} \}$, we can take the $U_i$'s above as the opens $Z$, set $r_Z:=s_{|Z}$ and $W_Z:= W\cap \dot{r_Z}(Z)$.
		\item The fourth condition concerns parallel pairs of arrows and is trivial in this case.
	\end{enumerate}
	
	To check that $f_P$ is a comorphism of sites, consider $(U,s)$ in $\fib P$ and an open covering of $\dot{s}(U)$: without loss of generality we may assume it to be of the form $\{\dot{t_i}(V_i)\ |\ i\in I \}$ for $(V_i, t_i)$ in $\fib P$. We want to show that there is a $J_P$-covering family $\{(U_j, s_j) \}$ of $(U,s)$ such that its image through $f_P$ is contained in the sieve generated by $\{\dot{t_i}(V_i)\ |\ i\in I \}$: that is, that is, every $\dot{s_j}(U_j)$ is contained in some $\dot{t_i}(V_i)$. First of all, notice that the equality $\bigcup_{i\in I}\dot{t_i}(V_i)=\dot{s}(U)$ implies immediately that $\bigcup_{i\in I}V_i=U$. Now consider any $v\in V_i$: since $(t_i)_v=(s_{|V_i})_{v}$ there must be an open neighbourhood $W_i(v)\subseteq V_i$ of $v$ such that $(t_i)_{|W_i(v)}=s_{|W_i(v)}$. Since each $V_i$ is covered by the $W_i(v)$, it follows that they also cover $U$: but then the objects $(W_i(v), (t_i)_{|W_i(v)})$ are a $J_P$-covering of $(U,s)$ satisfying the requirement above.

	To prove that the induced geometric morphism $C_{f_P}$ is an equivalence, we will exploit \cite[Proposition 7.18]{denseness}, that states that if $f_P$ is a morphism and comorphism of sites $f_P:(\fib P, J_{P})\to ({\cal O}(E_{P}), J\can_{{\cal O}(E_{P})})$ then it induces an equivalence of toposes 
	$\Sh(E_{P})\simeq \Sh(\fib P, J_{P})$ if and only if it is $J_P$-full and $J\can_{\Ocal(E_P)}$-dense. The definition of both appears in \cite[Definition 3.2]{denseness}: the $J_P$-fullness is trivial, while $J\can_{\Ocal(E_P)}$-density boils down to the $\dot{s}(U)$ being a basis for $E_P$.

	Finally, we recall that the canonical geometric morphism $\Sh(E_P)\rightarrow \Sh(X)$ is induced by the morphism of sites

	$\pi_P\inv:(\Ocal(X), J\can_{\Ocal(X)})\rightarrow (\Ocal(E_P), J\can_{\Ocal(E_P)})$.

	But notice that, since $\pi_P:E_P\rightarrow X$ is a local homeomorphism, it is an open map: this implies that $\pi_P\inv$ admits a left adjoint $\pi_P(-)$, which by \cite[Proposition 3.14]{denseness} is a comorphism  of sites such that $\Sh(\pi_P\inv)\cong C_{\pi_P(-)}$.

	If instead we consider an arrow of presheaves $h:P\rightarrow Q$ then $\Lambda_h$ is a local homeomorphism, since $\pi_Q$ and $\pi_P=\pi_Q\Lambda_h$ are, and hence it is also open: this implies again that the canonical geometric morphism $\Sh(\Lambda_h\inv):\Sh(E_P)\rightarrow \Sh(E_Q)$ is induced by the comorphism of sites $\Lambda_h(-):\Ocal(E_P)\rightarrow \Ocal(E_Q)$. Now, it is immediate to see that the diagrams of comorphisms on the left commute, and thus they induce the commutative diagrams of geometric morphisms on the right:

	\[
	\begin{tikzcd}
		\fib P \ar[dr] \ar[r, "f_P"] & \Ocal(E_P) \ar[d, "\pi_P(-)"]\\
		& \Ocal(X)
	\end{tikzcd}\hspace{1cm}
	\begin{tikzcd}
		\Sh(\fib P, J_P) \ar[dr] \ar[r, "\simeq"] & \Sh(E_P) \ar[d, "Sh(\pi_P)"] \\
		& \Sh(X)
	\end{tikzcd}
	\]
	\[
	\begin{tikzcd}
		\fib P \ar[d, "\fib h"'] \ar[r, "f_P"] & E_P \ar[d, "\Lambda_h"]\\
		\fib Q \ar[r, "f_Q"] & E_Q
	\end{tikzcd}\hspace{1cm}
	\begin{tikzcd}
		\Sh(\fib P, J_P) \ar[d, "{\fib h}"'] \ar[r, "\simeq"] & \Sh(E_P) \ar[d, "C_{\Lambda_h}"] \\
		\Sh( \fib Q, J_Q) \ar[r, "\simeq"] & \Sh(E_Q)
	\end{tikzcd}
	\]
\end{proof}

\begin{cor}
	The two functors
	\[
	[\Ocal(X)\op,\Set]\xrightarrow{\Lambda}\Top/X \xrightarrow{\Sh(-)}\Topos/\Sh(X) \]
	and
	\[
	[\Ocal(X)\op,\Set] \xrightarrow{\fib(-)}\Cl\Fib_{\Ocal(X)} \xrightarrow{F} \Topos/\Sh(X)
	\]
	are naturally isomorphic, where $F:=C_{(-)}\Giraud$ is the functor mapping any fibration $p:\dbicat\rightarrow\Ocal(X)$ to the geometric morphism $C_p:\Sh(\dbicat, M^p_{J\can_{\Ocal(X)}})\rightarrow \Sh(X)$ (see also Definition \ref{def:Giraud_topology_classifying_topos}).
\end{cor}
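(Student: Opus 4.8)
The plan is to recognise the statement as a repackaging of Proposition \ref{prop:fasci__etalebundle_eqv_fasci_grfibration}, so that the bulk of the work has already been done. First I would unwind both composites on an object $P$. The upper composite sends $P$ to its bundle of germs $\pi_P\colon E_P\rightarrow X$ via $\Lambda$, and then $\Sh(-)$ sends this bundle to the geometric morphism $\Sh(\pi_P)\colon \Sh(E_P)\rightarrow \Sh(X)$, regarded as an object of $\Topos/\Sh(X)$. The lower composite sends $P$ to the discrete fibration $\pi_P\colon \fib P\rightarrow \Ocal(X)$ via $\fib(-)$; then $\Giraud$ equips $\fib P$ with the Giraud topology $J_P=M^{\pi_P}_{J\can_{\Ocal(X)}}$, and $C_{(-)}$ produces $C_{\pi_P}\colon \Sh(\fib P,J_P)\rightarrow \Sh(X)$. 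Recalling that $\Sh(X)=\Sh(\Ocal(X),J\can_{\Ocal(X)})$, the two objects of $\Topos/\Sh(X)$ to be compared are $[\Sh(\pi_P)]$ and $[C_{\pi_P}]$.

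Next I would invoke Proposition \ref{prop:fasci__etalebundle_eqv_fasci_grfibration} directly. It provides, for each $P$, the morphism--comorphism of sites $f_P\colon(\fib P,J_P)\rightarrow(\Ocal(E_P),J\can_{\Ocal(E_P)})$ inducing an equivalence of toposes $C_{f_P}\colon \Sh(\fib P,J_P)\simeq \Sh(E_P)$, and, crucially, states that this equivalence is compatible with the canonical geometric morphisms to $\Sh(X)$. Concretely, the commuting triangle of comorphisms of sites displayed in the proof of that proposition induces a commuting triangle
\[
\begin{tikzcd}
\Sh(\fib P, J_P) \ar[dr, "C_{\pi_P}"'] \ar[r, "C_{f_P}", "\sim"'] & \Sh(E_P) \ar[d, "\Sh(\pi_P)"]\\
& \Sh(X)
\end{tikzcd}
\]
in $\Topos$, so that $C_{f_P}$ is an isomorphism $[C_{\pi_P}]\isorightarrow[\Sh(\pi_P)]$ in the slice $\Topos/\Sh(X)$. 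This settles the pointwise part of the claim.

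Finally, to promote these pointwise isomorphisms to a natural isomorphism of functors, I would use the naturality in $P$ already recorded in Proposition \ref{prop:fasci__etalebundle_eqv_fasci_grfibration}. For an arrow $h\colon P\rightarrow Q$ of presheaves, the upper composite acts by $\Sh(\Lambda_h)$ and the lower one by $C_{\fib h}$; the proposition's last pair of commuting squares expresses exactly that $C_{f_Q}\circ C_{\fib h}\cong \Sh(\Lambda_h)\circ C_{f_P}$, which is the required naturality square for the family $\{C_{f_P}\}_P$. Assembling these gives the natural isomorphism of the two composite functors.

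I do not expect a genuine obstacle here: once the two composites are correctly identified on objects and morphisms, the entire content is supplied by Proposition \ref{prop:fasci__etalebundle_eqv_fasci_grfibration}. The only point demanding a little care is bookkeeping: checking that the structural morphisms to $\Sh(X)$ chosen in the definitions of $\Sh(-)\circ\Lambda$ and of $F\circ\fib(-)$ are literally the two legs $\Sh(\pi_P)$ and $C_{\pi_P}$ appearing in the proposition, and that the comorphism presenting $\Sh(\pi_P)$ (arising from the openness of the local homeomorphism $\pi_P$ via Lemma \ref{lemma:open_map_induce_adjunction_topologies}) is the one used there. With that identification in place, the natural isomorphism follows immediately.
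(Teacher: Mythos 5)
Your proposal is correct and is exactly the argument the paper intends: the corollary is stated without proof precisely because it is an immediate repackaging of Proposition \ref{prop:fasci__etalebundle_eqv_fasci_grfibration}, whose final displayed squares give both the compatibility of $C_{f_P}$ with the two structural morphisms to $\Sh(X)$ and the naturality in $P$. Your bookkeeping of the two composites on objects and arrows matches the definitions in the text, so nothing further is needed.
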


Our next result shows on the other hand that the sheafification functor $\Psh(X)\rightarrow \Sh(X)$, which we know to be equivalent to the composite $\Gamma\Lambda$, is essentially localic in spirit. For a quick recap on the basics of the theory of locales see Section \ref{sec:preorder}.
\begin{prop}\label{prop:fascificazione_topologica_con_framehom}
	Let $X$ be a topological space, $U$ an open set of $X$, $P$ a presheaf on $X$. Then the continuous sections $s$ on $U$ for the bundle $\Lambda(P)$ are in natural bijective correspondence with the frame homomorphisms $f:{\cal O}(E_P)\to \Ocal(U)$ such that $f\circ \pi_P\inv=i_U\inv=(-)\cap U$. In other words, there is a natural isomorphism of sheaves
	\[ \Top/_1X (-,E_P)\simeq \Locale/_1\Ocal(X)(\Ocal(-), \Ocal(E_P)):\Ocal(X)\op\rightarrow\Set,
	\]
	Moreover, the natural isomorphism is also natural on $P$.
\end{prop}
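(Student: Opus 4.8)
The plan is to construct the claimed bijection explicitly in both directions and check that the two constructions are mutually inverse, relying on the étale structure of $E_P$ (the basis of opens $\dot{s}(U)$ and the local-homeomorphism property) rather than on any sobriety hypothesis. In the easy direction, to a continuous section $s:U\to E_P$ of $\pi_P$ I would associate the inverse-image frame homomorphism $s\inv:\Ocal(E_P)\to \Ocal(U)$: continuity of $s$ guarantees that $s\inv$ preserves arbitrary joins and finite meets, and the equation $s\inv\circ \pi_P\inv=(\pi_P s)\inv=i_U\inv=(-)\cap U$ is immediate from $\pi_P s=i_U$, so $s\inv$ lands in the required hom-set. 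This assignment is visibly natural in $U$ (restriction of sections becomes precomposition with the inclusions $U'\hookrightarrow U$) and in $P$ (the induced map $\Lambda_h:E_P\to E_Q$ is continuous and $(-)\inv$ is contravariantly functorial).

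The substance of the argument is the reverse direction: recovering a section from a frame homomorphism $f:\Ocal(E_P)\to\Ocal(U)$ with $f\circ\pi_P\inv=(-)\cap U$. First I would record two consequences of the compatibility condition together with the inclusion $\dot{t}(W)\subseteq \pi_P\inv(W)$: taking $W=X$ gives $f(E_P)=f(\pi_P\inv(X))=U$, and in general $f(\dot{t}(W))\subseteq f(\pi_P\inv(W))=W\cap U$. I would then define $s(x):=t_x$ whenever $x\in f(\dot{t}(W))$ for some $t\in P(W)$. Existence of such a witness for every $x\in U$ follows because the basic opens $\dot{t}(W)$ cover $E_P$, so $U=f(E_P)=\bigcup f(\dot{t}(W))$ as $f$ preserves joins. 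Well-definedness uses the intersection formula $\dot{t}(W)\cap\dot{t'}(W')=\bigcup_{Z}\dot{t_{|Z}}(Z)$ (over opens $Z\subseteq W\cap W'$ on which $t$ and $t'$ agree): if $x\in f(\dot{t}(W))\cap f(\dot{t'}(W'))=f(\dot{t}(W)\cap\dot{t'}(W'))$, then $x$ lies in some $f(\dot{t_{|Z}}(Z))\subseteq Z$, whence $x\in Z$ and $t_x=(t_{|Z})_x=(t'_{|Z})_x=t'_x$. This yields a set-theoretic section $s:U\to E_P$, and since $s$ restricts to $\dot{t}$ on each open set $f(\dot{t}(W))$ and these cover $U$, Proposition \ref{prop:sezione_sse_sezionilocali} makes $s$ continuous.

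It then remains to verify that the two assignments are mutually inverse. Starting from a continuous $s$, forming $f=s\inv$ and recovering a section returns $t_x$ exactly when $s(x)\in\dot{t}(W)$, i.e.\ when $s(x)=t_x$, so one recovers $s$. Starting from $f$, forming $s$ and then $s\inv$, it suffices (both operations preserving joins, and the $\dot{t}(W)$ forming a basis) to check $s\inv(\dot{t}(W))=f(\dot{t}(W))$ on basic opens. The inclusion $\supseteq$ is immediate from the definition of $s$ and from $f(\dot{t}(W))\subseteq W$, while $\subseteq$ needs a short germ-comparison: if $s(x)=t_x$ with $x\in W$, choose $Z\ni x$ with $t_{|Z}=t'_{|Z}$ for the witness $(W',t')$ of $s(x)$, and use $f(\dot{t'}(W'))\cap f(\pi_P\inv(Z))=f(\dot{t_{|Z}}(Z))\subseteq f(\dot{t}(W))$ to conclude $x\in f(\dot{t}(W))$. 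The main obstacle is precisely this reverse construction---extracting point-set germs from a purely localic datum and proving well-definedness and continuity; it is exactly here that the explicit basis $\dot{t}(W)$ and the local-homeomorphism structure of $E_P$ do the work that sobriety would otherwise be required for. Naturality of the resulting isomorphism of presheaves on $\Ocal(X)$, both in $U$ and in $P$, is then routine, being inherited from the naturality already established for the inverse-image assignment.
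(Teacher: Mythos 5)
Your proof is correct and follows essentially the same route as the paper's: the forward direction is $s\mapsto s\inv$, and the reverse direction defines $s_f(x):=t_x$ for any witness $x\in f(\dot{t}(W))$, with existence of witnesses from the basic opens covering $E_P$, well-definedness from the intersection formula for $\dot{t}(W)\cap\dot{t'}(W')$ together with $f$ preserving meets and joins, and continuity from Proposition \ref{prop:sezione_sse_sezionilocali}. Your explicit verification that $s_f\inv=f$ on basic opens is in fact slightly more complete than the paper's brief appeal to the values of a section being determined by the basic opens containing them, but it does not change the substance of the argument.
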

\begin{proof}
	Given a continuous map $s:U\to E_P$ such that $\pi_{P}\circ s=i_{U}$, then the frame homomorphism $s\inv: {\cal O}(E_P)\to \Ocal(U)$ satisfies $s\inv\circ \pi^{-1}=i_U\inv=(-)\cap U$. Let us show conversely that any frame homomorphism $f:{\cal O}(E_P)\to \Ocal(U)$ such that $f\circ \pi^{-1}=(-)\cap U$ is actually of the form $s^{-1}$ for a unique continuous section $s$. For this, we notice that the value of a continuous section $s$ at a point $x$ is uniquely determined by the basic open sets $\dot{t}(V)$ which contain $s(x)$; indeed, if $s(x)\in \dot{t}(V)$ then $s(x)=t_{x}$; in other words, the value of $s$ at $x$ is uniquely determined by the basic open sets $\dot{t}(V)$ such that $x\in s\inv(\dot{t}(V))$. Replacing $s\inv$ by $f$ we get the recipe for defining the continuous section $s_{f}$ associated with a frame homomorphism $f$ as above: we set $s_{f}(x)$ equal to $t_{x}$ for any $t\in P(V)$ such that $x\in f(\dot{t}(V))$. Let us show that this definition is well-posed. First, we observe that, given $x\in U$, the collection of pairs $(t, V)$ such that $x\in f(\dot{t}(V))$ is non-empty since, as ${\cal O}(E_{P})$ is covered by the subsets of the form $\dot{t}(V)$ and $f$ is a frame homomorphism to $\Ocal(U) $, $U$ is covered by the subsets of the form $f(\dot{t}(V))$. Next, let us notice that for any $(V, t)$, $f(\dot{t}(V))\subseteq V$, whence $t_{x}$ is well-defined; indeed, $f(\dot{t}(V))\subseteq f(\pi_P^{-1}(V))=V\cap U$. Lastly, we have to check that for any $(V, t)$ and $(V', t')$ such that $x\in f(\dot{t}(V))\cap f(\dot{t'}(V'))$, $t_{x}=t'_{x}$. We have that $\dot{t}(V) \cap \dot{t'}(V') = \bigcup_{W\subseteq V \cap V', r=t|_{W}=t'|_{W}} \dot{r}(W)$, so, since $f$ is a frame homomorphism, 
	\[
	f(\dot{t}(V)) \cap f(\dot{t'}(V')) = \bigcup_{W\subseteq V \cap V', r=t|_{W}=t'|_{W}} f(\dot{r}(W)).
	\]
	Therefore, there is an open set $W\subseteq V\cap V'$ and $r\in P(W)$ such that $x\in f(\dot{r}(W))\subseteq W$ and $r=t|_{W}=t'|_{W}$; so $t_{x}=r_{x}=t'_{x}$, as required. The continuity of the map $s$ thus defined follows from Proposition \ref{prop:sezione_sse_sezionilocali} since, by definition, it is locally represented by section of $P$, while its uniqueness follows from the above remark about its values being determined by the open sets containing them. 
	Finally, it is immediate to check that for $V\subseteq U$ and any $h:P\rightarrow Q$ the squares
	\[
	\begin{tikzcd}
		\Gamma(\Lambda_P)(U) \ar[r, "\sim"] \ar[d] & \Locale/_1\Ocal(X)(\Ocal(U), \Ocal(E_P))\ar[d, "-\circ(-\cap V)"]\\
		\Gamma(\Lambda_P)(V) \ar[r, "\sim"] & \Locale/_1\Ocal(X)(\Ocal(V), \Ocal(E_P))
	\end{tikzcd},
	\]\[\begin{tikzcd}
		\Gamma \Lambda_P \ar[d, "\Gamma \Lambda_h"'] \ar[r, "\sim"]& \Locale/_1\Ocal(X)(\Ocal(-), \Ocal(E_P))\ar[d, "-\circ \Lambda_h\inv"]\\
		\Gamma \Lambda_Q \ar[r, "\sim"]& \Locale/_1\Ocal(X)(\Ocal(-), \Ocal(E_Q))
	\end{tikzcd}
	\]
	are commutative: the first expresses the naturality of the isomorphism which was built above, and the second the naturality in $P$.  
\end{proof}
\begin{cor}
	The two functors
	\[
	\Psh(X)\xrightarrow{\Lambda} \Top/X \xrightarrow{\Gamma} \Psh(X)
	\]
	and
	\[
	\Psh(X) \hspace{-0.3ex}\xrightarrow{\Lambda} \hspace{-0.3ex}\Top/X\hspace{-0.3ex} \xrightarrow{\Ocal(-)}\hspace{-0.3ex} \Locale/_1\Ocal(X) \hspace{-0.3ex}\xrightarrow{[f]\mapsto \Locale/_1\Ocal(X)(\Ocal(-), [f])}\hspace{-0.3ex} \Psh(X)
	\]
	are naturally isomorphic.
\end{cor}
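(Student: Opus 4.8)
The plan is to observe that this corollary is just a repackaging of Proposition \ref{prop:fascificazione_topologica_con_framehom}, so the work consists entirely in matching the two composite functors with the two presheaves appearing there, and then checking naturality in the presheaf variable.

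First I would spell out what the first composite $\Gamma\Lambda$ does on a presheaf $P$. By definition $\Lambda(P)$ is the bundle of germs $[\pi_P:E_P\rightarrow X]$, and $\Gamma$ sends a bundle $[p]$ to the local-sections presheaf $\Top/X([i_{(-)}],[p])$. Hence $\Gamma\Lambda(P)$ is the presheaf $U\mapsto \Top/X([i_U],[\pi_P])$, i.e. the presheaf of continuous sections of $\pi_P$; in the notation of Proposition \ref{prop:fascificazione_topologica_con_framehom} this is exactly $\Top/_1 X(-,E_P)$. Next I would unwind the second composite. Applying the functor $\Ocal(-):\Top/X\rightarrow \Locale/_1\Ocal(X)$ to the bundle $\pi_P$ yields the object $[\pi_P\inv:\Ocal(X)\rightarrow \Ocal(E_P)]$ of $\Locale/_1\Ocal(X)$ (a continuous map being sent to the associated locale morphism over $\Ocal(X)$), and then the hom-functor $[f]\mapsto \Locale/_1\Ocal(X)(\Ocal(-),[f])$ produces the presheaf $U\mapsto \Locale/_1\Ocal(X)(\Ocal(U),\Ocal(E_P))$.

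With these identifications in place, Proposition \ref{prop:fascificazione_topologica_con_framehom} provides precisely a natural isomorphism between the two presheaves, and the first commutative square displayed in its proof records its naturality in the open-set argument $U$. The corollary is therefore immediate, modulo the verification that the isomorphism is also natural in $P$, which is what upgrades the pointwise statement into an isomorphism of functors $\Psh(X)\rightrightarrows \Psh(X)$.

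The only point that still requires attention—and the one I would treat as the "main", though still routine, step—is exactly this naturality in $P$. For an arrow $h:P\rightarrow Q$ the induced map on bundles is $\Lambda_h$, which is a local homeomorphism (since $\pi_P=\pi_Q\Lambda_h$ and both $\pi_P,\pi_Q$ are) and hence an open map, so that $\Ocal(\Lambda_h)$ is again a morphism in $\Locale/_1\Ocal(X)$ and the hom-functor can be precomposed with $\Lambda_h\inv$. The compatibility of the established bijection with this precomposition is precisely the content of the second commutative square in the proof of Proposition \ref{prop:fascificazione_topologica_con_framehom}. Assembling the two naturality squares then yields the claimed natural isomorphism of the two composite functors, completing the argument. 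Since all the genuine geometric content (the recipe turning a frame homomorphism into a continuous section, and the uniqueness argument) is already discharged in that proposition, I expect no real obstacle here beyond bookkeeping of the two slice categories $\Top/X$ and $\Locale/_1\Ocal(X)$.
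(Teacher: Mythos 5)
Your proposal is correct and matches the paper's intent exactly: the corollary is stated without a separate proof precisely because it is the functorial repackaging of Proposition \ref{prop:fascificazione_topologica_con_framehom}, whose two concluding commutative squares already supply naturality in the open set $U$ and in the presheaf $P$. Your identification of the two composites with the presheaves $\Top/_1X(-,E_P)$ and $\Locale/_1\Ocal(X)(\Ocal(-),\Ocal(E_P))$, together with the observation that $\Lambda_h$ is open so that $\Ocal(\Lambda_h)$ lives in $\Locale/_1\Ocal(X)$, is exactly the bookkeeping the paper leaves implicit.
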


\begin{remarks}
	\begin{enumerate}[(i)]
		\item The condition $f\circ \pi^{-1}=(-)\cap U$, which is the dual of  $\pi_{P}\circ s=i_{U}$, means, concretely, that for any open set $Z$ of $X$,
		\[
		\bigvee_{V\subseteq Z, t\in P(V)}f(\dot{t}(V))=Z\cap U.
		\]
		
		\item It is true more generally that, for any topological space $X$, the open sets functor ${\cal O}:\Top\to \Locale$ induces an equivalence of categories between $\textbf{Etale}\slash X$ and the category $\textbf{LH}\slash X$, where $\textbf{LH}$ is the category of local homeomorphisms of locales to $X$ and local homeomorphisms between them, which is a full subcategory of $\textbf{Loc}\slash X$ (cf. \cite[Lemma C1.3.2(iii)-(v)]{elephant}). 
		
		\item Proposition \ref{prop:fascificazione_topologica_con_framehom} hints to the independence of the sheafification of a presheaf $P$ in $\Psh(X)$ from the points of $X$, since it describes sheafification as a hom functor of locales. The same localic nature of the sheafification process is shown by Proposition \ref{prop:fasci__etalebundle_eqv_fasci_grfibration}: indeed, the functor $f_P:\fib P\rightarrow E_P$ extends to an isomorphism of locales $\Id_{J_P}(\fib P) \isorightarrow \Ocal(E_P)$	 (see Remark \ref{rmk:connessione_localeadj_tpladj}), which is exactly the isomorphism induced by the equivalence of localic toposes $\Sh(E_P)\simeq \Sh(\fib P, J_P)$. These considerations reveal that the sheafification process is really point-less in spirit, even when performed for sheaves on a topological space.
		
		\item Consider $P$ in $\Psh(X)$ and the diagram 
		\[D:\fib P\rightarrow \Cosite/(\Ocal(X), J\can_{\Ocal(X)})\] which maps every object $(U,s)$ in $\fib P$ to the comorphism of sites $i_U(-):(\Ocal(U), J\can_{\Ocal(U)})=(\Ocal(X)/U, (J\can_{\Ocal(X)})_U)\rightarrow (\Ocal(X), J\can_{\Ocal(X)})$; the arrows $\dot{s}(-):(\Ocal(U), J\can_{\Ocal(U)})\rightarrow (\Ocal(E_P), J\can_{\Ocal(E_P)})$ provide a cocone under $D$. We have proven in Section \ref{sec:adjoints_to_Grothendieck} that $(\fib P, J_P)$ is the colimit of $D$: therefore we have a unique comorphism of sites $(\fib P, J_P)\rightarrow F(E_P)$ induced by the universal property of colimits, which is exactly the functor $f_P$ of Proposition \ref{prop:fasci__etalebundle_eqv_fasci_grfibration}.
		Finally, we have also shown in Section \ref{sec:adj_for_toposes} that when moving to toposes of sheaves over $\Sh(X)$, the colimit cocone for $(\fib P, J_P)$ is preserved: but since $\Sh(\fib P, J_P)\simeq \Sh(E_P)$, we can conclude that $\Sh(E_P)$ is the colimit in the category of toposes over $\Sh(X)$ for the diagram $\fib P\rightarrow \Topos/\Sh(X)$, $(U,s)\mapsto [\Sh(i_U):\Sh(U)\rightarrow \Sh(X)]$.
		
	\end{enumerate}
\end{remarks}

\subsection{Conditions for a generic map to be étale}
In the present section we will make a short digression: considering a topological space $X$ and a function
\[
\pi:E\rightarrow X,
\] 
we will analyse some set-theoretic conditions under which $\pi$ is an étale map. We have already mentioned in Section \ref{sec:adj_topologica} that étale spaces come as colimits of their local sections: and indeed, it is the choice of those sections that we want to be continuous that forces all the relevant structure onto $\pi$. This is made very explicit from the following result:
\begin{prop}{\cite[Lemma 2.4.7]{borceux3}} \label{prop:tpl_etale_lemma_borceux}
	Consider a local homeomorphism $\pi:E\rightarrow X$: then the topology on $E$ is the final topology making all its local sections continuous.
\end{prop}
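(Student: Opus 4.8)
The plan is to prove the equality of two topologies on the set $E$: the given topology $\tau$ (with respect to which $\pi$ is a local homeomorphism) and the final topology $\tau_{\mathrm{fin}}$ determined by the family $\mathcal{S}$ of all local sections $s\colon U\to E$ (where $U$ ranges over the open sets of $X$, each carrying its subspace topology, and $\pi\circ s=i_U$), regarded merely as set-theoretic maps. I would begin by recalling the relevant characterization: a subset $W\subseteq E$ belongs to $\tau_{\mathrm{fin}}$ precisely when $s^{-1}(W)$ is open in $U$ for every $s\colon U\to E$ in $\mathcal{S}$, equivalently, $\tau_{\mathrm{fin}}$ is the finest topology on $E$ rendering every member of $\mathcal{S}$ continuous. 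The whole statement then amounts to the two inclusions $\tau\subseteq\tau_{\mathrm{fin}}$ and $\tau_{\mathrm{fin}}\subseteq\tau$.

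The first inclusion is immediate: since every $s\in\mathcal{S}$ is by definition continuous for $\tau$, the topology $\tau$ makes all local sections continuous, and hence $\tau\subseteq\tau_{\mathrm{fin}}$ by the universal property of the final topology.

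For the reverse inclusion $\tau_{\mathrm{fin}}\subseteq\tau$, I would argue pointwise using the charts supplied by the local homeomorphism. Fix $W\in\tau_{\mathrm{fin}}$ and a point $e\in W$. Because $\pi$ is a local homeomorphism, there is an open neighbourhood $A$ of $e$ in $(E,\tau)$ such that $\pi|_A\colon A\to\pi(A)$ is a homeomorphism onto an open subset $\pi(A)$ of $X$. Its inverse $s_A:=(\pi|_A)^{-1}\colon\pi(A)\to E$ is then a continuous section, so $s_A\in\mathcal{S}$, and consequently $s_A^{-1}(W)$ is open in $\pi(A)$. Since $s_A$ is a homeomorphism onto $A$ it is in particular an open map, whence $s_A\bigl(s_A^{-1}(W)\bigr)=W\cap A$ is open in $A$, and therefore open in $(E,\tau)$ as $A\in\tau$. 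Writing $W=\bigcup_{e\in W}(W\cap A_e)$ as a union of such open sets exhibits $W$ as $\tau$-open. Combining the two inclusions yields $\tau=\tau_{\mathrm{fin}}$.

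I do not expect a serious obstacle here; the proof is essentially bookkeeping once the right family of sections is identified. The only point demanding care is the interpretation of the statement: the local sections are \emph{fixed} set maps, continuous for the ambient topology $\tau$, and one must check that the final topology they induce recovers $\tau$ exactly rather than merely a comparable topology. The crucial structural input is that the chart-inverses $(\pi|_A)^{-1}$ are themselves local sections and that their domains cover $E$; this is precisely what makes $\tau_{\mathrm{fin}}$ fine enough to retrieve every $\tau$-open set. Should one prefer to avoid invoking openness of $s_A$ directly, the same conclusion follows from the unnamed lemma preceding Corollary~\ref{cor:sezioni_bundleetale_aperte}, which shows that sections of a local homeomorphism are open maps.
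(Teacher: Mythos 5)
Your proof is correct: the inclusion $\tau\subseteq\tau_{\mathrm{fin}}$ is immediate from the universal property of the final topology, and the reverse inclusion via the chart-inverse sections $s_A=(\pi|_A)^{-1}$, using $s_A\bigl(s_A^{-1}(W)\bigr)=W\cap A$ and the fact that these sections' images cover $E$, is exactly the standard argument. The paper itself gives no proof of this statement (it is quoted from Borceux, Lemma 2.4.7), so there is nothing to compare against; your write-up would serve as a complete justification.
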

Explicitly put, if we consider the family of continuous local sections of $\pi$, the topology on $E$ is the \emph{finest} topology making them continuous. As we can see, the topology on $E$ is canonically defined once we know which sections are the continuous ones. Moreover, for a local homeomorphism $\pi:E\rightarrow X$ the two following properties always hold:
\begin{enumerate}[(i)]
	\item $E$ is covered by its local sections, \ie the family of continuous sections of $\pi$ is jointly epic;
	\item the continuous sections of $\pi$ are open.
\end{enumerate}
We will see in a moment that assuming these two properties for a certain class of sections of $\pi$ is enough to build a topology $\tau^\sbicat_\pi$ on $E$ such that $\pi$ is a local homeomorphism with respect to it. After that, we will confront $\tau^\sbicat_\pi$ with another topology $\sigma^\sbicat_\pi$, which provides us with a subbase for the étale topology; finally, we will consider necessary and sufficient conditions for a map to be étale.

In general, for an open subset $U$ of $X$ we shall call \emph{local section of $\pi$ at $U$} any map $s:U\rightarrow E$ making the diagram 
\[
\begin{tikzcd}
	U \ar[dr, hook, "i_U"'] \ar[r, "s"] & E\ar[d, "\pi"]\\
	&X
\end{tikzcd}\]
commutative. We will denote by $Sec(\pi)$ the set of pairs $(U,s)$, where $U\in \Ocal(X)$ and $s:U\rightarrow E$ is a local section of $\pi$.
Notice in particular that every section $s$ of $\pi$ is injective, since $\pi s=i_U$ is injective.

First of all, let us consider the topology for $E$ which appeared in Proposition \ref{prop:tpl_etale_lemma_borceux}:
\begin{prop}\label{lemma:tpl_per_etale}\index{$\tau^\sbicat_\pi$}
	Consider a map $\pi:E\rightarrow X$ and suppose $X$ is a topological space. Consider a collection $\sbicat\subseteq Sec(\pi)$ of local sections of $\pi$: then the collection 
	\[
	\tau^\sbicat_\pi:=\{W\subseteq E\ |\ \forall (U,s)\in \sbicat,\ s\inv(W)\in\Ocal(U)  \}.
	\]
	is a topology on $E$, and it is the finest topology making all the local sections in $\sbicat$ continuous. Moreover, $\tau^\sbicat_\pi$ makes $\pi$ continuous.
\end{prop}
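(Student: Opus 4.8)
The statement to prove is that, given a map $\pi:E\rightarrow X$ with $X$ topological and a chosen collection $\sbicat\subseteq Sec(\pi)$ of local sections, the family
\[
\tau^\sbicat_\pi:=\{W\subseteq E\ |\ \forall (U,s)\in \sbicat,\ s\inv(W)\in\Ocal(U)  \}
\]
is a topology on $E$, is the finest topology making all sections in $\sbicat$ continuous, and renders $\pi$ continuous. This is essentially a verification, and the plan is to proceed in three clearly separated parts.

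First I would check that $\tau^\sbicat_\pi$ is a topology. This is a routine consequence of the fact that, for each fixed section $(U,s)\in\sbicat$, the preimage operation $s\inv(-)$ commutes with arbitrary unions and finite intersections, and that $\Ocal(U)$ is itself a topology. Explicitly: $s\inv(\emptyset)=\emptyset\in\Ocal(U)$ and $s\inv(E)=U\in\Ocal(U)$ (using $\pi s=i_U$, so $s$ lands in $\pi\inv(U)\subseteq E$ and $s\inv(E)=U$), giving $\emptyset,E\in\tau^\sbicat_\pi$; for a family $\{W_k\}$ in $\tau^\sbicat_\pi$ one has $s\inv(\bigcup_k W_k)=\bigcup_k s\inv(W_k)\in\Ocal(U)$; and for $W_1,W_2\in\tau^\sbicat_\pi$ one has $s\inv(W_1\cap W_2)=s\inv(W_1)\cap s\inv(W_2)\in\Ocal(U)$. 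Since this holds for every $(U,s)\in\sbicat$, the three closure axioms are verified.

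Second, I would establish the maximality (finest) property. By the very definition of $\tau^\sbicat_\pi$, each section $s:U\rightarrow E$ in $\sbicat$ is continuous as a map into $(E,\tau^\sbicat_\pi)$, since $s\inv(W)\in\Ocal(U)$ for every $W\in\tau^\sbicat_\pi$. Conversely, if $\tau$ is any topology on $E$ making all the sections in $\sbicat$ continuous, then for every $W\in\tau$ and every $(U,s)\in\sbicat$ we have $s\inv(W)\in\Ocal(U)$ by continuity of $s$; hence $W\in\tau^\sbicat_\pi$, so $\tau\subseteq\tau^\sbicat_\pi$. This shows $\tau^\sbicat_\pi$ is the largest such topology, i.e. the finest one making every section in $\sbicat$ continuous.

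Third, continuity of $\pi$ amounts to showing $\pi\inv(Z)\in\tau^\sbicat_\pi$ for every open $Z\in\Ocal(X)$; that is, $s\inv(\pi\inv(Z))\in\Ocal(U)$ for each $(U,s)\in\sbicat$. But $s\inv(\pi\inv(Z))=(\pi s)\inv(Z)=i_U\inv(Z)=Z\cap U$, which is open in $U$ since $Z$ is open in $X$. Hence $\pi\inv(Z)\in\tau^\sbicat_\pi$, and $\pi$ is continuous. No step here presents a genuine obstacle; the only point requiring a little care is the bookkeeping in the first part (making sure the quantifier ``for all $(U,s)\in\sbicat$'' is threaded correctly through each axiom), and recalling the defining relation $\pi s=i_U$, which is exactly what is used for both $s\inv(E)=U$ and for the continuity of $\pi$.
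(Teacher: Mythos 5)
Your proof is correct and follows essentially the same route as the paper's: the paper packages the first two parts as ``each $\tau_{(U,s)}$ is the finest topology making $s$ continuous, and $\tau^\sbicat_\pi$ is the intersection of these'', while you verify the axioms and the maximality directly, but the content is identical, and the continuity of $\pi$ via $s\inv\pi\inv(V)=V\cap U$ is word-for-word the paper's argument.
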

\begin{proof}
	Consider one local section $s:U\rightarrow E$ of $\pi$: the set
	\[
	\tau_{(U,s)}:=\{W\subseteq E\ |\ s\inv(W)\in \Ocal(U) \}
	\]
	provides a topology over $E$, which is the finest topology making $s$ continuous. It follows that the finest topology making all the sections $s\in \sbicat$ continuous will be the intersection of the topologies $\tau_{(U,s)}$ over all possible choices of $(U,s)\in \sbicat$, which is
	\[
	\bigcap_{(U,s)\in \sbicat}\tau_{(U,s)}=\{W\subseteq E\ |\ \forall\ (U,s)\in \sbicat,\ s\inv(W)\in\Ocal(U)) \},\]
	\ie the topology $\tau^\sbicat_\pi$ above. Finally, to prove that $\pi$ is continuous, consider an open $V\in\Ocal(X)$: then for every $(U,s)\in \sbicat$, we have that $s\inv \pi\inv(V)=i_U\inv(V):=V\cap U$, which is open, and thus $\pi\inv(V)\in \tau^\sbicat_\pi$.
\end{proof}
Let us now consider the relevant properties for the sections of an étale map which we listed at the beginnin of the section. First of all, joint surjectivity of $\sbicat$ is connected to the openness of $\pi$:
\begin{lemma}\label{lemma:locsec_jepic_pi_aperta}
	Consider $\pi:E\rightarrow X$ and $\sbicat\subseteq Sec(\pi)$ as in Lemma \ref{lemma:tpl_per_etale}: if the local sections in $\sbicat$ are jointly surjective over $E$ then $\pi$ is open.
\end{lemma}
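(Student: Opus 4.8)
We have a map $\pi:E\rightarrow X$ with $X$ a topological space, a collection $\sbicat\subseteq Sec(\pi)$ of local sections, and $E$ is endowed with the topology $\tau^\sbicat_\pi$ of Lemma \ref{lemma:tpl_per_etale}. The hypothesis is that the sections in $\sbicat$ are jointly surjective; the goal is that $\pi$ is an open map. So I need to take an arbitrary open set $W\in\tau^\sbicat_\pi$ and show that $\pi(W)$ is open in $X$.

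**The plan.**
The plan is to reduce $W$ to pieces covered by the images of single sections, using joint surjectivity, and then to exploit the fact that on the image of a section $\pi$ restricts to an inverse of that section. First I would record the key identity: for any $(U,s)\in\sbicat$, since $\pi s=i_U$ and $s$ is injective, the map $\pi$ restricted to $s(U)$ is a bijection onto $U$ whose inverse is $s$; concretely, $\pi(s(U)\cap W)=s^{-1}(W)$ for any $W\subseteq E$. Indeed $\pi$ sends a point $s(u)\in s(U)\cap W$ to $u=\pi(s(u))$, and $u\in s^{-1}(W)$ precisely when $s(u)\in W$; this gives the set equality $\pi(s(U)\cap W)=s^{-1}(W)$. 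Since $W\in\tau^\sbicat_\pi$, the very definition of $\tau^\sbicat_\pi$ gives $s^{-1}(W)\in\Ocal(U)$, hence $\pi(s(U)\cap W)$ is open in $U$, and since $U$ is open in $X$ it is open in $X$.

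**Assembling the covering.**
Next I would use joint surjectivity to write $E=\bigcup_{(U,s)\in\sbicat}s(U)$. Intersecting with $W$ gives
\[
W=\bigcup_{(U,s)\in\sbicat}\bigl(s(U)\cap W\bigr),
\]
and applying $\pi$ (which commutes with unions, being a set-function) yields
\[
\pi(W)=\bigcup_{(U,s)\in\sbicat}\pi\bigl(s(U)\cap W\bigr)=\bigcup_{(U,s)\in\sbicat}s^{-1}(W).
\]
By the previous paragraph each term $s^{-1}(W)$ is open in $X$, so $\pi(W)$ is a union of open sets and hence open. Since $W$ was an arbitrary element of $\tau^\sbicat_\pi$, this shows $\pi$ is open.

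**Where the difficulty lies.**
The argument is essentially bookkeeping with images and preimages of sections, so there is no serious obstacle; the only point requiring care is the set-theoretic identity $\pi(s(U)\cap W)=s^{-1}(W)$, which hinges on injectivity of $s$ (guaranteed since $\pi s=i_U$ is injective) and on the fact that $s(U)\subseteq\pi^{-1}(U)$ so that $\pi$ genuinely inverts $s$ there. I would double-check that joint surjectivity of $\sbicat$ is used in exactly one place — to cover all of $E$ — and that openness of each $U$ in $X$ is what lets me promote ``open in $U$'' to ``open in $X$''. No appeal to continuity of the sections or to the finer properties of $\tau^\sbicat_\pi$ beyond its defining condition $s^{-1}(W)\in\Ocal(U)$ is needed, which is why the statement holds with $\tau^\sbicat_\pi$ in place of any stronger étale topology.
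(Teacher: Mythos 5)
Your proof is correct and follows essentially the same route as the paper's: both establish the identity $\pi(W)=\bigcup_{(U,s)\in\sbicat}s^{-1}(W)$ (the paper by a direct double inclusion, you via the intermediate identity $\pi(s(U)\cap W)=s^{-1}(W)$ together with the covering $E=\bigcup s(U)$ from joint surjectivity) and then conclude from the definition of $\tau^\sbicat_\pi$ and openness of each $U$ in $X$. The organizational difference is cosmetic, and your remark that neither continuity of the sections nor condition $[\dagger]$ is needed is accurate.
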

\begin{proof}
	Consider any $W\in \tau^\sbicat_\pi$ and any $(U,s)\in \sbicat$. Notice that if $x\in s\inv(W)$, then $s(x)\in W$ and thus $x=\pi s(x)\in \pi(W)$: this implies that
	\[
	\bigcup_{(U,s)\in \sbicat}s\inv (W)\subseteq \pi(W)
	\] 
	for every $W\in \tau^\sbicat_\pi$. Conversely, take $x\in \pi(W)$, \ie $x=\pi(w)$ for some $w\in W$: since the local sections in $\sbicat$ are jointly surjective, there is some $(U,s)\in \sbicat$ and $u\in U$ such that $s(u)=w$, which implies $x=\pi s(u)=u\in s\inv(W)$. This proves the opposite inclusion, and since all the $s\inv (W)$ are open also $\pi(W)$ is open.
\end{proof}
Let us now turn our attention to openness. Since all the sections in $\sbicat$ should be open with respect to $\tau^\sbicat_\pi$, we should in particular require for every $(U,s)\in\sbicat$ that $s(U)$ be open in $E$. Spelling out this request explicitly we obtain the following:
\begin{itemize}
	\item[{$[\dagger]$}] for every $(U,s)$, $(V,t)\in \sbicat$ the set $t\inv(s(U))=\{x\in V\cap U\ |\ s(x)=t(x) \}$ is open in $X$.
\end{itemize}
Notice that $[\dagger]$ is in fact the generalization of a well known property of local homeomorphisms (see for instance \cite[Lemma 2.4.6]{borceux3}), the \emph{local equality condition} for sections: given two different local sections $(U,s)$ and $(U,t)$ of $\pi$ and an element $x\in U$ such that $s(x)=t(x)$, there exists an open neighbourhood $W\subseteq U$ of $x$ such that $s_{|W}=t_{|W}$.   
\begin{lemma}
	Consider $\pi:E\rightarrow X$ and $\sbicat\subseteq Sec(\pi)$ as in Lemma \ref{lemma:tpl_per_etale}: if $\sbicat$ satisfies $[\dagger]$ then its sections satisfy the local equality condition; the converse holds if $\sbicat$ is closed under subsections, \ie if $(U,s)\in \sbicat$ and $W\subseteq U$ is open then $(W,s_{|W})\in\sbicat$.
\end{lemma}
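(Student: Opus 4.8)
The plan is to treat the two implications separately, using in each direction the explicit description, recalled in the statement, of the equalizer set $t\inv(s(U))=\{x\in V\cap U \mid s(x)=t(x)\}$.

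First I would prove the forward implication. Assuming $[\dagger]$, take two sections $(U,s),(U,t)\in\sbicat$ defined on the same open set $U$ and a point $x\in U$ with $s(x)=t(x)$. The set $\{y\in U \mid s(y)=t(y)\}$ is exactly $t\inv(s(U))$, namely the case $V=U$ of the identity above, hence it is open in $X$ by $[\dagger]$; moreover it is contained in $U$, it contains $x$, and $s$ and $t$ agree on it by construction. It therefore serves directly as the required open neighbourhood $W$, so the sections of $\sbicat$ satisfy the local equality condition.

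For the converse, suppose the sections of $\sbicat$ satisfy the local equality condition and that $\sbicat$ is closed under subsections. Given $(U,s),(V,t)\in\sbicat$, I want to show that $A:=\{x\in U\cap V \mid s(x)=t(x)\}$ is open. The key move is to reduce to the same-domain situation: since $U\cap V$ is open and contained in both $U$ and $V$, the restrictions $(U\cap V, s_{|U\cap V})$ and $(U\cap V, t_{|U\cap V})$ lie in $\sbicat$ by the closure hypothesis. For each $x\in A$ these two sections agree at $x$, so the local equality condition supplies an open $W\subseteq U\cap V$ with $x\in W$ and $s_{|W}=t_{|W}$, that is, $W\subseteq A$. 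Thus $A$ is a neighbourhood of each of its points, hence open, which is precisely $[\dagger]$.

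Both arguments are essentially routine; the only point requiring a little care is the verification of the identity $t\inv(s(U))=\{x\in U\cap V \mid s(x)=t(x)\}$, which follows from $\pi s=i_U$ and $\pi t=i_V$ forcing $\pi(s(u))=u$ and $\pi(t(x))=x$, so that $t(x)\in s(U)$ can occur only when the matching preimage equals $x$. This identification is the sole place where the section property intervenes, and it is what links the openness formulation $[\dagger]$ to the pointwise local equality condition; I expect it to be the main (minor) obstacle, everything else being a direct unwinding of definitions.
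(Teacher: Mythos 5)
Your proof is correct and follows essentially the same route as the paper's: the forward direction reads off the open neighbourhood directly from the set $t^{-1}(s(U))$ with $V=U$, and the converse restricts both sections to $U\cap V$ (using closure under subsections) and applies the local equality condition at each point of the equalizer set to show it is open. The only difference is that you spell out the identification $t^{-1}(s(U))=\{x\in U\cap V \mid s(x)=t(x)\}$, which the paper takes as given from the formulation of $[\dagger]$; this is a harmless (and welcome) extra check.
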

\begin{proof}
	Suppose that all sections in $\sbicat$ are open and consider two sections $(U,s)$ and $(U,t)$ and $x\in U$ such that $s(x)=t(x)$: then $x\in t\inv (s(U))$, which is an open subset of $U$ such that $s_{|t\inv(s(U))}=t_{|t\inv(s(U))}$, and hence the local equality condition is satisfied.
	
	Conversely, consider two sections $(U,s)$ and $(V,t)$ and an element $x\in t\inv (s(U))$: by the local equality condition, applied to the sections $(V\cap U, s_{|V\cap U})$ and $(V\cap U, t_{| V\cap U})$ of $\sbicat$, there exists some open neighbourhood $Z\subseteq V\cap U$ of $x$ such that $t_{|Z}=s_{|Z}$. This implies that $Z\subseteq t\inv (s(U))$ and hence $t\inv (s(W))$ is open in $X$. Since this holds for all choices of $(U,s)$ and $(V,t)$ we conclude that the sections of $\sbicat$ are open.  
\end{proof}
We are now ready to show that when $\sbicat$ satisfies $[\dagger]$ and is jointly surjective $\pi$ can be made into a local homeomorphism:
\begin{prop}\label{prop:tpl_etale_date_le_sezioni}
	Consider a map $\pi:E\rightarrow X$, where $X$ is a topological space, and a family of local sections $\sbicat\subseteq Sec(\pi)$. Suppose that the local sections of $\sbicat$ 
	\begin{enumerate}[(i)]
		\item are jointly surjective: for every $w\in E$ there exist $(U,s)\in\sbicat$ and $x\in U$ such that $w=s(x)$;
		\item $\sbicat$ satisfies $[\dagger]$: for every $(U,s)$ and $(V,t)$ in $\sbicat$ the set $t\inv(s(U))=s\inv(t(V))=\{x\in U\cap V\ |\ s(x)=t(x) \}$ is open in $X$.
	\end{enumerate} 
	If we endow $E$ with the topology $\tau^\sbicat_\pi$ of Lemma \ref{lemma:tpl_per_etale}, $\pi$ is a local homeomorphism, and the continuous sections of $\pi$ are precisely those that are gluings of subsections of sections in $\sbicat$.
\end{prop}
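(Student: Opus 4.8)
The plan is to split the argument into the two assertions: first that $\pi$ is a local homeomorphism for the topology $\tau^\sbicat_\pi$, and then the characterization of its continuous sections.

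For the first assertion, the key observation is that condition $[\dagger]$ is precisely the statement that $s(U)$ is $\tau^\sbicat_\pi$-open for every $(U,s)\in\sbicat$: unravelling the definition of $\tau^\sbicat_\pi$ from Lemma \ref{lemma:tpl_per_etale}, the set $s(U)$ is open if and only if $t\inv(s(U))$ is open in $V$ for every $(V,t)\in\sbicat$, and applying $\pi$ to the membership condition shows that $t\inv(s(U))=\{x\in U\cap V : s(x)=t(x)\}$, which is exactly the set assumed open in $[\dagger]$. Then for each $(U,s)\in\sbicat$ I would show that $\pi$ restricts to a homeomorphism $s(U)\to U$: the section $s$ is injective because $\pi s=i_U$, so $s:U\to s(U)$ is a bijection with inverse $\pi|_{s(U)}$; the map $s$ is $\tau^\sbicat_\pi$-continuous by the very construction of the topology, hence continuous as a map into $s(U)$, while $\pi|_{s(U)}$ is continuous as a restriction of the continuous $\pi$ (Lemma \ref{lemma:tpl_per_etale}); therefore $\pi|_{s(U)}$ is a continuous bijection with continuous inverse, and $\pi(s(U))=U$ is open in $X$. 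Finally, joint surjectivity (i) guarantees that every $w\in E$ lies in some such open set $s(U)$, so these open sets cover $E$ and exhibit $\pi$ as a local homeomorphism.

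For the characterization of continuous sections, I would prove both implications. If $s':U'\to E$ is locally a subsection of members of $\sbicat$, that is, if there is an open cover $\{W_i\}$ of $U'$ together with sections $(U_i,s_i)\in\sbicat$ satisfying $W_i\subseteq U_i$ and $s'|_{W_i}=s_i|_{W_i}$, then $s'$ is continuous because each restriction $s'|_{W_i}$ is the restriction to an open set of the continuous section $s_i$, and continuity is a local property on the open cover $\{W_i\}$. Conversely, suppose $s'$ is $\tau^\sbicat_\pi$-continuous and fix $x\in U'$: by joint surjectivity there are $(U,s)\in\sbicat$ and $y\in U$ with $s'(x)=s(y)$, and applying $\pi$ forces $y=x\in U$ and $s'(x)=s(x)$. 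I would then set $W:=s'\inv(s(U))$, which is open in $U'$ since $s'$ is continuous and $s(U)$ is open by the first part, and which contains $x$; a repetition of the ``apply $\pi$'' argument shows $W\subseteq U$ and $s'|_W=s|_W$, so that $(W,s'|_W)$ is a subsection of $(U,s)$. Letting $x$ range over $U'$ produces the required cover.

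The argument is essentially routine; the only steps demanding care are the repeated use of the identity $\pi s=i_U$ to match up fibre elements (which is what makes the various preimages collapse onto the locus where two sections agree) and the verification that $[\dagger]$ is genuinely equivalent to the $\tau^\sbicat_\pi$-openness of the images $s(U)$. There is no serious obstacle: once openness of the sets $s(U)$ and continuity of the sections in $\sbicat$ are in hand, both the local homeomorphism property and the characterization of continuous sections follow formally, in close parallel to Proposition \ref{prop:sezione_sse_sezionilocali}.
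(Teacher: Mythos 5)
Your proof is correct and follows essentially the same route as the paper's: openness of the images $s(U)$ is read off from $[\dagger]$, joint surjectivity provides the covering of $E$, and the continuous sections are identified with gluings of subsections by pulling a continuous section back along the $s_i(U_i)$ and using $\pi s = i_U$ to collapse the preimages onto the agreement loci. The only (harmless) differences are that you deduce that $\pi_{|s(U)}:s(U)\to U$ is a homeomorphism from the continuity of its inverse $s$, where the paper instead invokes the openness of $\pi$ (Lemma \ref{lemma:locsec_jepic_pi_aperta}), and that you explicitly check the easy converse direction of the characterization of continuous sections, which the paper leaves implicit.
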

\begin{proof}
	Consider an element $w\in E$: we want an open neighbourhood $W\in \tau^\sbicat_\pi$ of $w$ such that $\pi(W)$ is open and $\pi_{|W}$ is a homeomorphism.
	
	Since the local sections of $\sbicat$ are jointly surjective, there exists $(U,s)\in \sbicat$ such that $w\in \Im(s)$. Since the sections of $\sbicat$ are open, $s(U)$ is an open neighbourhood of $w$. Finally, consider the arrow \[
	\pi_{|s(U)}:s(U)\rightarrow \pi s(U)=U:\] it is continuous and open since it restricts $\pi$, which is continuous and open (by Lemma \ref{lemma:locsec_jepic_pi_aperta}); it is also bijective, since it is the inverse of $s:U\isorightarrow s(U)$. Thus $\pi_{|s(U)}$ is a homeomorphism. 
	
	Finally, consider any continuous section $(V,t)\in Sec(\pi)$ of $\pi$, \ie such that $t$ is continuous; since $\pi$ is a local homeomorphism, $t$ is also open by Corollary \ref{cor:sezioni_bundleetale_aperte}. If we consider the open subset $t(V)\subseteq E$, by the joint surjectivity of the sections in $\sbicat$ we have that there exist $(U_i, s_i)\in \sbicat$ for $i\in I$ such that $t(V)\subseteq \bigcup_i s_i(U_i)$. Consider the opens $Z_i:=s_i\inv (t(V))=\{ x\in U_i\cap V\ |\ t(x)=s_i(x)\}\subseteq U_i\cap V$: then we have that $(Z_i, s_{i|Z_i})=(Z_i, t_{|Z_i})$ is a subsection both of $(U_i, s_i)$ and $(V,t)$. Now, for every $v\in V$ there exists $i\in I$ and $x\in U_i$ such that $t(v)=s_i(x)$: applying $\pi$ this implies that $v=x$, thus $v\in U_i$ and $t(v)=s_i(v)$, so that $v\in Z_i$. This tells us that $V$ is covered by the opens $Z_i$, and thus the section $t:V\rightarrow E$ is indeed the gluing of sections $t_{|Z_i}:Z_i\rightarrow E$, which are subsections of the sections $(U_i, s_i)$ of $\sbicat$.
\end{proof}
\begin{remark}
	The family $\sbicat$ may be completely arbitrary, but there is no loss in generality if we consider it closed under subsections and gluings, since such a closure does not affect the induced topology $\tau^\sbicat_\pi$. Indeed, suppose that $(U,s)\in \sbicat$ and call $\sbicat':=\sbicat\cup\{(W,s_{|W}) \}$ for a subsection $(W,s_{|W})$ of $(U,s)$: then evidently $\tau_\pi^{\sbicat'}\subseteq \tau^\sbicat_\pi$. Conversely if we take $V\in \tau^\sbicat_\pi$, we have that $(s_{|W})\inv(V)=s\inv(V)\cap W$, which is open in $W$, and hence $V\in \tau^{\sbicat'}_\pi$, so that $\tau^\sbicat_\pi=\tau^{\sbicat'}_\pi$. Now consider $(U_i,s_i)\in\sbicat$ such that for every $i$, $j$ we have $s_{i|U_i\cap U_j}=s_{j|U_i\cap U_j}$, call $U=\cup_{i}U_i$ and $s:U\rightarrow E$ the map obtained by gluing the maps $s_i$. Call $\sbicat'=\sbicat\cup\{(U,s)\}$. Of course, $\tau^{\sbicat'}_\pi\subseteq \tau^\sbicat_\pi$. Conversely, consider $W\in\tau^\sbicat_\pi$: in particular, for every $i$ we have $s_i\inv(W)$ open in $X$, and hence $s\inv(W)=\cup_is_i\inv(W)$ is open in $X$: thus $W\in \tau^\sbicat_\pi$, and again $\tau^{\sbicat'}_\pi=\tau^\sbicat_\pi$.
\end{remark}
Consider a topology $\Ocal(E)$ on $E$ making $\pi$ continuous, and denote by $Cont(\pi, \Ocal(E))\subseteq Sec(\pi)$ the class of local sections of $\pi$ that are continuous when $E$ is endowed with $\Ocal(E)$: then the last line of the theorem tells us that $Cont(\pi, \tau^\sbicat_\pi)$ is the closure of $\sbicat$ under subsections and gluings. By the previous remark, we have that
\[
\tau^\sbicat_\pi=\tau^{Cont(\pi, \tau^\sbicat_\pi)}_\pi.
\]
Now, suppose that the topology $\Ocal(E)$ over $E$ already makes $\pi$ into a local homeomorphism: then Proposition \ref{prop:tpl_etale_lemma_borceux} states that  
\[
\Ocal(E)=\tau^{Cont(\pi,\Ocal(E))}_\pi.\]
If we are given a smaller family of sections $\sbicat\subseteq Cont(\pi, \Ocal(E))$, we have for sure that 
\[
\Ocal(E)=\tau^{Cont(\pi,\Ocal(E))}_\pi\subseteq \tau^\sbicat_\pi:
\]
therefore, $\tau^\sbicat_\pi$ is the \emph{biggest} topology on $E$ making $\pi$ into a local homeomorphism with all the sections in $\sbicat$ continuous.

Since $\tau^\sbicat_\pi$ is an upper bound for the topologies making $\pi$ a local homeomorphism, we can wonder which topology could be the the lower bound. Such a topology $\sigma$ must in particular contain as opens all sets $s(U)$ for $(U,s)$. This leads us to consider on $E$ the topology $\sigma^\sbicat_\pi$\index{$\sigma^\sbicat_\pi$}, generated by the subbase 
\[
\bbicat_\sbicat=\{s(U)\subseteq E\ |\ (U,s)\in\sbicat\}.
\]\index{$\bbicat_\sbicat$}
An open set $W$ of $\sigma^\sbicat_\pi$ is either $E$ or a union of finite intersections of elements of $\bbicat_\sbicat$, \ie something of the form
\[
W=\bigcup_{i\in I} \left(\bigcap_{j=1}^{n_i}s_{i,j}(U_{i,j})\right)
\]
for $(U_{i,j}, s_{i,j})\in \sbicat$. This topology is closer in definition with those usually considered when studying structural sheaves and spectra for algebraic theories (we shall recall some examples in the following). 

The first thing that we have to remark is that the continuity of $\pi$ is not granted, when $E$ is endowed with $\sigma^\sbicat_\pi$: for instance, consider the set $X=\{0,1\}$ with the discrete topology, $E=\{0,1\}$ and take $\pi:E\rightarrow X$ to be the identity map; then take $\sbicat$ to consist only of the section $\{0\}\hookrightarrow\{0,1\}$. It is immediate to see that $\sigma^\sbicat_\pi=\{\emptyset,\{0\},\{0,1\}\}$, and hence $\pi$ is not continuous. On the other hand, continuity of the sections in $\sbicat$ is directly related with the openness condition $[\dagger]$ and the inclusion of $\sigma^\sbicat_\pi$ into $\tau^\sbicat_\pi$:
\begin{lemma}
	The following are equivalent:
	\begin{enumerate}[(i)]
		\item the sections in $\sbicat$ are continuous when $E$ is endowed with $\sigma^\sbicat_\pi$;
		\item $\sbicat$ satisfies $[\dagger]$, \ie the sections in $\sbicat$ are open with respect to $\tau^\sbicat_\pi$;
		\item $\sigma^\sbicat_\pi\subseteq \tau^\sbicat_\pi$.
	\end{enumerate}
\end{lemma}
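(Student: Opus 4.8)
The plan is to observe that all three conditions, once unwound, assert precisely the single requirement $[\dagger]$, so that the proof reduces to one elementary set-theoretic identification together with the standard fact that an inclusion of topologies can be tested on a subbase. Accordingly I would establish the equivalences by proving (ii) $\Leftrightarrow$ (iii) and (i) $\Leftrightarrow$ (ii) separately, each being a short reformulation rather than a genuine argument.

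First I would record the key identity: for any two sections $(U,s)$ and $(V,t)$ in $\sbicat$,
\[
t^{-1}(s(U)) = s^{-1}(t(V)) = \{x \in U \cap V \mid s(x) = t(x)\}.
\]
This is the only place where the defining relations $\pi s = i_U$ and $\pi t = i_V$, and hence the injectivity of the sections, enter: if $s(x) = t(y)$ for some $y \in V$, then applying $\pi$ forces $x = \pi s(x) = \pi t(y) = y$, so $x \in U \cap V$ with $s(x) = t(x)$; the reverse inclusions and the symmetric statement for $t^{-1}(s(U))$ are identical. Consequently the condition $[\dagger]$ — that each such set be open in $X$, equivalently open in $U \cap V$ — is symmetric in $s$ and $t$ and coincides exactly with the openness of the displayed subset.

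Then (ii) $\Leftrightarrow$ (iii) follows because $\sigma^\sbicat_\pi$ is by definition generated by the subbase $\bbicat_\sbicat = \{s(U) \mid (U,s) \in \sbicat\}$ while $\tau^\sbicat_\pi$ is a topology, so the inclusion $\sigma^\sbicat_\pi \subseteq \tau^\sbicat_\pi$ holds \ie iff $s(U) \in \tau^\sbicat_\pi$ for every $(U,s) \in \sbicat$; by the definition of $\tau^\sbicat_\pi$ (Proposition \ref{lemma:tpl_per_etale}) this membership means $t^{-1}(s(U)) \in \Ocal(V)$ for all $(V,t) \in \sbicat$, which by the identity above is precisely $[\dagger]$, that is, condition (ii). Likewise (i) $\Leftrightarrow$ (ii): continuity of each section $s$ as a map into $(E,\sigma^\sbicat_\pi)$ need only be checked on the subbasic opens $t(V) \in \bbicat_\sbicat$, and $s^{-1}(t(V)) = \{x \in U \cap V \mid s(x) = t(x)\}$ is open in $U$ for all $(V,t)$ exactly when $[\dagger]$ holds. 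The only genuine content is the set-theoretic identity of the second paragraph; I expect no real obstacle, the sole care needed being to match each image or preimage to the correct direction, which collapses as soon as the injectivity of the sections is invoked.
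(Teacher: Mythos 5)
Your proof is correct and follows essentially the same route as the paper's: both arguments rest on the identification $t^{-1}(s(U))=\{x\in U\cap V\mid s(x)=t(x)\}$, on checking continuity against the subbase $\bbicat_\sbicat$, and on the fact that $\sigma^\sbicat_\pi\subseteq\tau^\sbicat_\pi$ amounts to $\bbicat_\sbicat\subseteq\tau^\sbicat_\pi$. The only difference is organizational — you prove the two biconditionals (i)$\Leftrightarrow$(ii) and (ii)$\Leftrightarrow$(iii) where the paper runs the cycle (i)$\Rightarrow$(ii)$\Rightarrow$(iii)$\Rightarrow$(i) — which does not change the substance.
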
 
\begin{proof}
	(i)$\Rightarrow$(ii). Suppose that every section in $\sbicat$ is continuous, and consider two sections $(U,s)$ and $(V,t)$: since $s(U)\in \bbicat_\sbicat$, we have that $t\inv(s(U))$ is open in $X$. As this holds for all choices of $(U,s)$ and $(V,t)$ in $\sbicat$, we conclude that $\sbicat$ satisfies $[\dagger]$. 
	
	(ii)$\Rightarrow$(iii). Consider a section $(U,s)\in \sbicat$ and the corresponding basic open $s(U)\in \bbicat_\sbicat$. Since $[\dagger]$ holds, for every section $(V,t)\in\sbicat$ the subset $t\inv(s(U))$ is open in $X$, and thus $s(U)\in \tau^\sbicat_\pi$ by definition of $\tau^\sbicat_\pi$. We have $\bbicat_\sbicat\subseteq \tau^\sbicat_\pi$, which implies $\sigma^\sbicat_\pi\subseteq \tau^\sbicat_\pi$.
	
	(iii)$\Rightarrow$ (i). Consider a section $(U,s)\in\sbicat$ and any open $W\in \sigma^\sbicat_\pi$: since $\sigma^\sbicat_\pi\subseteq \tau^\sbicat_\pi$, the subset $s\inv(W)$ is open in $X$ by definition of $\tau^\sbicat_\pi$. Since this holds for every choice of $W$, the section $(U,s)$ is continuous.
\end{proof}

The opposite inclusion of topologies, on the other hand, does not seem to enjoy a similar nice characterization, but it is necessary for $\sbicat$ to be jointly epic:
\begin{lemma}
	The sections in $\sbicat$ are jointly epic if and only if the opens in $\bbicat_\sbicat$ cover $E$. If this holds, $\tau^\sbicat_\pi\subseteq \sigma^\sbicat_\pi$.
\end{lemma}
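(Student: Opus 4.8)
The plan is to establish the two assertions separately: the equivalence is purely set-theoretic and follows by unwinding the definitions, while the inclusion $\tau^\sbicat_\pi\subseteq\sigma^\sbicat_\pi$ rests on decomposing each $\tau^\sbicat_\pi$-open set as a union of images of local sections.

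For the equivalence, I would regard each $(U,s)\in\sbicat$ simply as a map of sets $s\colon U\to E$ (the topology on $E$ being irrelevant here). A family of maps into $E$ is jointly epic exactly when it is jointly surjective, \ie when every $w\in E$ lies in $s(U)=\Im(s)$ for some $(U,s)\in\sbicat$; equivalently, when $E=\bigcup_{(U,s)\in\sbicat}s(U)$. This last equality is precisely the statement that the subbasic opens of $\bbicat_\sbicat$ cover $E$, so the two conditions coincide and nothing more is required.

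For the inclusion, assume the covering condition and fix $W\in\tau^\sbicat_\pi$. The crucial observation is the identity
\[
W=\bigcup_{(U,s)\in\sbicat}s\bigl(s\inv(W)\bigr),
\]
whose inclusion $\supseteq$ holds for any subset $W$, since always $s(s\inv(W))\subseteq W$, and whose inclusion $\subseteq$ uses the covering hypothesis: given $w\in W$ there are $(U,s)\in\sbicat$ and $x\in U$ with $w=s(x)$, and then $x=\pi(s(x))\in s\inv(W)$, so $w\in s(s\inv(W))$. It therefore suffices to check that each $s(s\inv(W))$ is $\sigma^\sbicat_\pi$-open. By definition of $\tau^\sbicat_\pi$ the set $s\inv(W)$ is open in $U$, hence in $X$; and, working under the standing convention (adopted after the previous remark) that $\sbicat$ is closed under subsections, the pair $(s\inv(W),s|_{s\inv(W)})$ again belongs to $\sbicat$. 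Consequently $s(s\inv(W))\in\bbicat_\sbicat\subseteq\sigma^\sbicat_\pi$, and $W$ is exhibited as a union of subbasic opens, so $W\in\sigma^\sbicat_\pi$.

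The delicate point, and the only real obstacle, is the appeal to closure under subsections, which is genuinely needed on the $\sigma^\sbicat_\pi$-side and, unlike in the case of $\tau^\sbicat_\pi$, cannot be arranged after the fact (enlarging $\sbicat$ enlarges $\sigma^\sbicat_\pi$). Indeed, in general one only has $s(s\inv(W))=s(U)\cap\pi\inv(s\inv(W))$, so presenting this set as $\sigma^\sbicat_\pi$-open without subsection-closure would force $\pi\inv(s\inv(W))\in\sigma^\sbicat_\pi$, \ie the $\sigma^\sbicat_\pi$-continuity of $\pi$, which does not follow from the covering condition alone (as the single-point example above already shows for the covering-free case). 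I would therefore make the closure hypothesis explicit at the outset of the second part, so that the subbase $\bbicat_\sbicat$ is rich enough to contain every set of the form $s(s\inv(W))$.
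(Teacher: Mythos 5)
Your argument is correct and follows essentially the same route as the paper's proof: the equivalence is unwound tautologically, and the inclusion comes from the decomposition $W=\bigcup_{(U,s)\in\sbicat}s(s\inv(W))$ together with the observation that each piece is a subbasic open. Where you diverge is in making closure of $\sbicat$ under subsections an explicit hypothesis, and this is a genuine catch rather than excess caution. The paper's proof asserts that the sets $s(s\inv(W))$ ``are basic opens in $\bbicat_\sbicat$'', which requires $(s\inv(W),s|_{s\inv(W)})$ to belong to $\sbicat$; the remark preceding the lemma, which closes $\sbicat$ under subsections ``without loss of generality'', only checks invariance of $\tau^\sbicat_\pi$, whereas $\sigma^\sbicat_\pi$ genuinely grows under this closure, exactly as you note. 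Your concern is not hypothetical: taking $X=E=\{0,1\}$ discrete, $\pi=\id$, and $\sbicat$ consisting of the single global section $\id$, the family is jointly epic (and even satisfies $[\dagger]$), $\tau^\sbicat_\pi$ is the discrete topology, but $\bbicat_\sbicat=\{E\}$ generates only the indiscrete topology, so $\tau^\sbicat_\pi\not\subseteq\sigma^\sbicat_\pi$. Hence the subsection-closure hypothesis you state up front is actually needed for the second assertion, and the same hypothesis should be understood in Corollary \ref{cor:tpl_etale_tau=sigma}, where the paper only makes it explicit later, in Proposition \ref{prop:basi_tpl_etale}(iii).
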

\begin{proof}
	The first consideration is tautological. 
	Consider $W\in \tau^\sbicat_\pi$: by definition, for every section $(U,s)\in\sbicat$ we have $s\inv(W)$ open in $X$. Now, suppose that the sections in $\sbicat$ are jointly epic: then for every $w\in W$ there exists $(U,s)\in \sbicat$ and $x\in U$ such that $s(x)=w$. Since $x\in s(s\inv(W))\subseteq W$ we have that $W$ is covered by opens of the form $s(s\inv (W))$, which are basic opens in $\bbicat_\sbicat$, we can conclude that $W\in\sigma^\sbicat_\pi$.
\end{proof}
The previous considerations imply the following:
\begin{cor}\label{cor:tpl_etale_tau=sigma}
	Consider a map $\pi:E\rightarrow X$, with $X$ a topological space, and $\sbicat\subseteq Sec(\pi)$ jointly epic and satisfying $[\dagger]$: then $\bbicat_\sbicat$ is a subbase for the topology $\tau^\sbicat_\pi$.
\end{cor}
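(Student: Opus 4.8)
The plan is to deduce the statement directly from the two lemmas immediately preceding it, each of which supplies one of the two inclusions between $\sigma^\sbicat_\pi$ and $\tau^\sbicat_\pi$. Recall first that asserting $\bbicat_\sbicat$ to be a subbase for $\tau^\sbicat_\pi$ is precisely the statement that the topology $\sigma^\sbicat_\pi$ generated by $\bbicat_\sbicat$ coincides with $\tau^\sbicat_\pi$; so the whole corollary reduces to establishing the double inclusion $\sigma^\sbicat_\pi=\tau^\sbicat_\pi$.

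First I would invoke the first of the two preceding lemmas: since by hypothesis $\sbicat$ satisfies $[\dagger]$, the equivalence between its conditions (ii) and (iii) yields the inclusion $\sigma^\sbicat_\pi\subseteq \tau^\sbicat_\pi$. Next I would invoke the second preceding lemma: since $\sbicat$ is jointly epic, that lemma gives the reverse inclusion $\tau^\sbicat_\pi\subseteq \sigma^\sbicat_\pi$. Combining the two gives $\sigma^\sbicat_\pi=\tau^\sbicat_\pi$, and as $\sigma^\sbicat_\pi$ is by definition the topology generated by $\bbicat_\sbicat$, this is exactly the assertion that $\bbicat_\sbicat$ is a subbase for $\tau^\sbicat_\pi$.

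There is no genuine obstacle here: the content of the corollary has already been extracted in the two preceding lemmas, and the only thing to observe is that the two hypotheses on $\sbicat$ — joint epicness and condition $[\dagger]$ — trigger the two inclusions simultaneously. If one preferred a self-contained argument avoiding the equivalences packaged in the first lemma, one could verify the inclusions by hand: for $\sigma^\sbicat_\pi\subseteq\tau^\sbicat_\pi$ it suffices to check that each subbasic open $s(U)\in\bbicat_\sbicat$ lies in $\tau^\sbicat_\pi$, i.e. that $t\inv(s(U))$ is open in $X$ for every $(V,t)\in\sbicat$, which is verbatim condition $[\dagger]$; and for $\tau^\sbicat_\pi\subseteq\sigma^\sbicat_\pi$ one writes an arbitrary $W\in\tau^\sbicat_\pi$ as the union of the subbasic opens $s(s\inv(W))$ ranging over $(U,s)\in\sbicat$, using joint epicness to ensure these cover every point of $W$.
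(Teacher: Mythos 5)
Your proof is correct and follows exactly the same route as the paper: the inclusion $\sigma^\sbicat_\pi\subseteq\tau^\sbicat_\pi$ comes from the lemma on condition $[\dagger]$, the reverse inclusion from the lemma on joint epicness, and the conclusion is immediate since $\sigma^\sbicat_\pi$ is by definition the topology generated by $\bbicat_\sbicat$. The optional hands-on verification you sketch is a faithful unpacking of those two lemmas, so nothing further is needed.
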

\begin{proof}
	If $\sbicat$ satisfies $[\dagger]$ then $\sigma^\sbicat_\pi\subseteq \tau^\sbicat_\pi$, while if the sections in $\sbicat$ are jointly epic the opposite inclusion holds.
\end{proof}

Notice that if $\sbicat$ is jointly surjective and satisfies $[\dagger]$ there is exactly one possible topology on $E$ making $\pi$ into a local homeomorphism such that the sections in $\sbicat$ are continuous, since any other such topology must be squeezed between $\tau^\sbicat_\pi$ and $\sigma^\sbicat_\pi$ (which coincide). The equality of the two topologies $\tau^\sbicat_\pi$ and $\sigma^\sbicat_\pi$ alone is in general not enough to deduce that $\pi$ is a local homeomorphism; it does however imply that its restriction to the joint image of $\sbicat$ is:
\begin{prop}
	If $\sigma^\sbicat_\pi=\tau^\sbicat_\pi$, then the composite
	\[\bar{\pi}:\bar{E}=\bigcup_{(U,s)\in\sbicat}s(U)\hookrightarrow E\xrightarrow{\pi} X\]
	is a local homeomorphism when $\bar{E}$ is endowed with the topology $\sigma^\sbicat_{\bar{\pi}}$.
\end{prop}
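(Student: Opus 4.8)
The plan is to reduce everything to the two earlier results about when a map with a prescribed family of sections becomes a local homeomorphism, namely Proposition \ref{prop:tpl_etale_date_le_sezioni} and Corollary \ref{cor:tpl_etale_tau=sigma}, applied not to $\pi$ itself but to the restriction $\bar{\pi}$. The key conceptual point is that passing from $\pi$ to $\bar{\pi}:\bar{E}\to X$ is precisely what forces the hypothesis of joint surjectivity to hold, so the earlier results apply to $\bar{\pi}$ even when they fail for $\pi$.

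First I would extract from the hypothesis $\sigma^\sbicat_\pi=\tau^\sbicat_\pi$ the fact that $\sbicat$ satisfies $[\dagger]$. Indeed this equality gives in particular the inclusion $\sigma^\sbicat_\pi\subseteq\tau^\sbicat_\pi$, and by the lemma characterizing $[\dagger]$ (the one asserting the equivalence of continuity of the sections with respect to $\sigma^\sbicat_\pi$, of condition $[\dagger]$, and of the inclusion $\sigma^\sbicat_\pi\subseteq\tau^\sbicat_\pi$) this is exactly condition $[\dagger]$. So $\sbicat$ is a family of sections of $\pi$ satisfying $[\dagger]$, but possibly not jointly surjective over $E$.

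Next I would reinterpret $\sbicat$ as a family of sections of $\bar{\pi}$. For each $(U,s)\in\sbicat$ we have $s(U)\subseteq\bar{E}$ by the very definition of $\bar{E}=\bigcup_{(U,s)\in\sbicat}s(U)$, so $s$ corestricts to a map $U\to\bar{E}$, and since $\bar{\pi}=\pi|_{\bar{E}}$ the identity $\bar{\pi}\circ s=i_U$ still holds; thus $(U,s)\in Sec(\bar{\pi})$. The condition $[\dagger]$ is unchanged under this corestriction, because $t^{-1}(s(U))=\{x\in U\cap V\mid s(x)=t(x)\}$ is a subset of $X$ whose description does not refer to the ambient space $E$; hence $\sbicat$ still satisfies $[\dagger]$ as a family of sections of $\bar{\pi}$. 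Crucially, $\sbicat$ is now \emph{jointly surjective} onto $\bar{E}$, since $\bar{E}$ was defined to be the union of the images $s(U)$.

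Finally I would apply the two earlier results to $\bar{\pi}$. Since the family $\sbicat\subseteq Sec(\bar{\pi})$ is jointly surjective and satisfies $[\dagger]$, Proposition \ref{prop:tpl_etale_date_le_sezioni} applies and shows that $\bar{\pi}$ is a local homeomorphism once $\bar{E}$ is endowed with $\tau^\sbicat_{\bar{\pi}}$. On the other hand, the same two hypotheses allow me to invoke Corollary \ref{cor:tpl_etale_tau=sigma} for $\bar{\pi}$, giving that $\bbicat_\sbicat$ is a subbase for $\tau^\sbicat_{\bar{\pi}}$, i.e. $\sigma^\sbicat_{\bar{\pi}}=\tau^\sbicat_{\bar{\pi}}$. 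Combining the two, $\bar{\pi}$ is a local homeomorphism when $\bar{E}$ carries the topology $\sigma^\sbicat_{\bar{\pi}}$, which is the claim. There is no serious obstacle here: the argument is essentially bookkeeping, and the only point that requires a moment of care is verifying that corestricting the sections to $\bar{E}$ leaves condition $[\dagger]$ intact while upgrading the family to a jointly surjective one, so that the hypotheses of the previously established results are genuinely met for $\bar{\pi}$.
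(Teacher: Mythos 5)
Your proof is correct and follows essentially the same route as the paper's: reinterpret $\sbicat$ as a family of sections of $\bar{\pi}$, observe that joint surjectivity now holds by construction while $[\dagger]$ is inherited from the inclusion $\sigma^\sbicat_\pi\subseteq\tau^\sbicat_\pi$, and apply the earlier results. Your version is slightly more explicit than the paper's (which leaves the identification $\sigma^\sbicat_{\bar{\pi}}=\tau^\sbicat_{\bar{\pi}}$ via Corollary \ref{cor:tpl_etale_tau=sigma} implicit), but the argument is the same.
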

\begin{proof}
	We can consider $\sbicat$ as a family of sections whose domain is $\bar{E}$ instead of $E$: of course then the sections in $\sbicat$ are jointly surjective over $\bar{E}$. On the other hand, the inclusion $\sigma^\sbicat_\pi\subseteq \tau^\sbicat_\pi$ implies that $\sbicat$ satisfies $[\dagger]$, and thus $\bar{\pi}$ is a local homeomorphism.	
\end{proof}
Let us conclude this section with a couple of further considerations. The first is that there are some canonical bases for the topology $\tau^\sbicat_\pi$, which include $\bbicat_\sbicat$ whenever $\sbicat$ is closed under subsections:
\begin{prop}\label{prop:basi_tpl_etale}
	Consider $\pi:E\rightarrow X$ and $\sbicat\subseteq Sec(\pi)$ as above, and suppose that $\sbicat$ is jointly surjective and satisfies $[\dagger]$:\begin{enumerate}[(i)]
		\item If $\abicat$ is a base for $X$, then 
		\[
		\bbicat^\sbicat_\abicat:=\{
		s(V)\ |\ (U,s)\in \sbicat,\ V\in\abicat,\ V\subseteq U
		\}\]\index{$\bbicat^\sbicat_\abicat$}
		is a basis for $\tau^\sbicat_\pi$;
		\item The collection
		\[
		\{
		s(V)\ |\ (U,s)\in \sbicat, V\subseteq U,\ V\mbox{ open}\}
		\]
		is a basis for $\tau^\sbicat_\pi$;
		\item If $\sbicat$ is closed under subsections, $\bbicat_\sbicat$ is a basis for $\tau^\sbicat_\pi$.
	\end{enumerate}
\end{prop}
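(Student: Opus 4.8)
The plan is to exploit two facts that hold under the standing hypotheses that $\sbicat$ is jointly surjective and satisfies $[\dagger]$. First, by the lemma identifying $[\dagger]$ with the openness of the sections, every $(U,s)\in\sbicat$ is an open map for $\tau^\sbicat_\pi$; concretely, for each open $V\subseteq U$ and each $(V',t)\in\sbicat$ one has $t^{-1}(s(V))=V\cap t^{-1}(s(U))$, which is open in $X$ by $[\dagger]$, so $s(V)\in\tau^\sbicat_\pi$. Second, by the very definition of $\tau^\sbicat_\pi$ as the finest topology making the members of $\sbicat$ continuous (Proposition \ref{lemma:tpl_per_etale}), each such $s$ is continuous. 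These two observations immediately show that in all three items the proposed family consists of genuine open sets, so it remains only to verify that every open set is a union of members; I will do this via the pointwise criterion: given $W\in\tau^\sbicat_\pi$ and $w\in W$, produce a generator $B$ of the family with $w\in B\subseteq W$.

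I would treat item (ii) first, as it is the cleanest and the other two reduce to it. Let $W\in\tau^\sbicat_\pi$ and $w\in W$. By joint surjectivity of $\sbicat$ there are $(U,s)\in\sbicat$ and $x\in U$ with $s(x)=w$. Since $s$ is continuous, $V:=s^{-1}(W)$ is open in $U$ and contains $x$; the key point is the elementary image identity $s(V)=s(s^{-1}(W))=s(U)\cap W$, valid for any map. Hence $s(V)$ is a member of the family of item (ii), and $w=s(x)\in s(V)\subseteq W$, which proves that this family is a basis for $\tau^\sbicat_\pi$.

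Item (i) follows by the same argument with one refinement: since $U$ is open in $X$, the set $V=s^{-1}(W)$ is open not merely in $U$ but in $X$, so, $\abicat$ being a base of $X$, there is $V'\in\abicat$ with $x\in V'\subseteq V\subseteq U$; then $s(V')\subseteq s(V)=s(U)\cap W\subseteq W$ and $w\in s(V')$, exhibiting a member of $\bbicat^\sbicat_\abicat$ squeezed between $w$ and $W$. Finally, item (iii) is an immediate consequence of (ii): if $\sbicat$ is closed under subsections then for every $(U,s)\in\sbicat$ and open $V\subseteq U$ we have $(V,s|_V)\in\sbicat$ and $s(V)=(s|_V)(V)\in\bbicat_\sbicat$, so the family of item (ii) coincides with $\bbicat_\sbicat$, which is therefore a basis. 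No genuinely hard step arises here; the only points requiring care are the continuity and openness of the sections for $\tau^\sbicat_\pi$ (both supplied by the hypotheses through Propositions \ref{lemma:tpl_per_etale} and \ref{prop:tpl_etale_date_le_sezioni}) and the image identity $s(s^{-1}(W))=s(U)\cap W$, the whole proposition being essentially a sharpening of Corollary \ref{cor:tpl_etale_tau=sigma}, where $\bbicat_\sbicat$ was already shown to be a subbase.
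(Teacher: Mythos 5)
Your proof is correct and follows essentially the same route as the paper's: both use $[\dagger]$ to show the sets $s(V)$ lie in $\tau^\sbicat_\pi$ (via $t^{-1}(s(V))=V\cap t^{-1}(s(U))$), and both use joint surjectivity plus the openness of $s^{-1}(W)$ to squeeze a generator between a point $w$ and an open $W$. The only cosmetic difference is that you prove (ii) first and refine to get (i), whereas the paper proves (i) directly and obtains (ii) by taking $\abicat=\Ocal(X)$; your use of the identity $s(s^{-1}(W))=s(U)\cap W$ also lets you skip the paper's separate verification that the family is an abstract basis, which is a slight streamlining but not a different argument.
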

\begin{proof}
	\begin{enumerate}[(i)]
		\item First of all, let us show that $\bbicat^\sbicat_\abicat$ is indeed a basis. If $(U,s)\in \sbicat$ and $w\in s(U)$ then $w=s(x)$ for some $x\in U$: since $U$ is open in $X$, there must exist some open $A\in \abicat$ such that $x\in A\subseteq U$, and thus $w\in s(A)$. This implies immediately that the opens in $\bbicat^\sbicat_\abicat$ cover $E$ whenever $\sbicat$ is jointly surjective. Consider now two sections $(U,s)$ and $(V,t)$, two basic opens $U'$, $V'\in \abicat$ such that $U'\subseteq U$ and $V'\subseteq V$ and an element $w\in s(U')\cap t(V')$: there must exist some $x\in t\inv(s(U))=\{x\in U\cap V\ |\ s(x)=t(x)\}$, which is open in $X$ by the condition $[\dagger]$, such that $w=s(x)=t(x)$. Moreover $x\in U'\cap V'$, and hence we can take a small enough open $A\in \abicat$ such that $x\in A\subseteq U'\cap V'\cap t\inv(s(U))$. This immediately implies that $s_{|A}=t_{|A}$ and $w\in s(A)\subseteq s(U')\cap t(V')$, and thus $\bbicat^\sbicat_\abicat$ is a basis.
		
		Every open $\bbicat^\sbicat_\abicat$ is open in $\tau^\sbicat_\pi$: indeed, if we take $(U,s)\in \sbicat$ and $U'\in\abicat$ such that $U'\subseteq U$, then for every other section $(V,t)\in\sbicat$ it holds that $t\inv(s(U'))=t\inv(s(U))\cap U'$, which is open in $X$. Therefore, $\bbicat^\sbicat_\abicat\subseteq \tau^\sbicat_\pi$. Finally, let us show that every open in $\tau^\sbicat_\pi$ can be covered by opens in the basis $\bbicat^\sbicat_\abicat$. 
		Consider $w\in W\in \tau^\sbicat_\pi$: since $\sbicat$ is jointly surjective there must exist $(U,s)\in\sbicat$ such that $w\in s(U)$, \ie $w=s(x)$ for some $x\in U$. By definition of $\tau^\sbicat_\pi$, the set $s\inv(W)\subseteq U$ is open in $X$, and thus there must exist a basic open $A\in \abicat$ such that $x\in A\subseteq s\inv(W)$, which implies $w\in s(A)\subseteq s(U)$.
		\item It follows from the previous item, when we take $\abicat=\Ocal(X)$.
		\item If $\sbicat$ is closed under subsections it is immediate to see that $\bbicat_\sbicat=\bbicat^\sbicat_{\Ocal(X)}$.
	\end{enumerate}
\end{proof}
Finally, we show that the request for the existence of a family of sections $\sbicat$ can be formulated in terms of a `local bijection' between the two sets $E$ and $X$:
\begin{prop}
	Consider a map $p:E\rightarrow X$, with $X$ a topological space. The following are equivalent:
	\begin{enumerate}[(i)]
		\item There exists a topology on $E$ such that $\pi$ is a local homeomorphism;
		\item There exist a family of sets $\{W_i\subseteq E\ |\ i\in I\}$ such that 
		\begin{enumerate}[(a)]
			\item $\bigcup_i W_i=E$;
			\item for every $i$, $\pi_{|W_i}:W_i\rightarrow \pi(W_i)$ is bijective and $\pi(W_i)\subseteq X$ is open;
			\item for every $i,j\in I$, $\pi(W_i\cap W_j)\subseteq X$ is open.
		\end{enumerate}
	\end{enumerate}
\end{prop}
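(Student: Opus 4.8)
The plan is to prove the two implications separately, leaning throughout on the section-based construction of étale topologies developed in Proposition \ref{prop:tpl_etale_date_le_sezioni}. In effect, the family $\{W_i\}$ of (ii) is nothing but the data of a jointly surjective family of local sections satisfying $[\dagger]$, repackaged so as to hide all explicit mention of a topology on $E$.

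For the implication (i) $\Rightarrow$ (ii) I would start from a topology on $E$ making $\pi$ a local homeomorphism. By the very definition of local homeomorphism, $E$ admits an open covering $\{W_i\mid i\in I\}$ such that each restriction $\pi|_{W_i}\colon W_i\to \pi(W_i)$ is a homeomorphism onto an open subset $\pi(W_i)$ of $X$; this immediately yields (a) and (b). For (c), I would invoke the fact that every local homeomorphism is an open map (the same openness used in Corollary \ref{cor:sezioni_bundleetale_aperte}), so that $\pi(W_i\cap W_j)$, being the image of the open set $W_i\cap W_j$, is open in $X$.

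The substance lies in (ii) $\Rightarrow$ (i). Given the family $\{W_i\}$ satisfying (a)--(c), for each $i$ the bijection $\pi|_{W_i}$ of (b) has an inverse $s_i\colon U_i\to E$ with $U_i:=\pi(W_i)$ open and $\im(s_i)=W_i$; each $s_i$ is a local section of $\pi$ since $\pi\circ s_i=i_{U_i}$ by construction. Setting $\sbicat:=\{(U_i,s_i)\mid i\in I\}$, I would verify the two hypotheses of Proposition \ref{prop:tpl_etale_date_le_sezioni}. Joint surjectivity is exactly condition (a): any $w\in E$ lies in some $W_i$, whence $w=s_i(\pi(w))$. The openness condition $[\dagger]$ is where (c) enters: for any pair $(U_i,s_i)$, $(U_j,s_j)$ one computes $s_j^{-1}(s_i(U_i))=s_j^{-1}(W_i\cap W_j)=\pi(W_i\cap W_j)$, using that $s_j$ is the inverse of $\pi|_{W_j}$ and $\im(s_i)=W_i$; this set is open by (c). Proposition \ref{prop:tpl_etale_date_le_sezioni} then gives that $\pi$ becomes a local homeomorphism once $E$ is endowed with $\tau^\sbicat_\pi$, establishing (i).

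The only delicate point I foresee is the chain of identifications $s_j^{-1}(s_i(U_i))=\{x\in U_i\cap U_j\mid s_i(x)=s_j(x)\}=\pi(W_i\cap W_j)$, i.e. recognizing that the abstract agreement locus of the two sections coincides with the $\pi$-image of the overlap $W_i\cap W_j$. This is a short set-theoretic verification resting on the injectivity of $\pi|_{W_i}$ together with the relation $s_i\circ\pi|_{W_i}=\mathrm{id}_{W_i}$, and once it is in place the whole equivalence reduces to the already-proved Proposition \ref{prop:tpl_etale_date_le_sezioni}. I do not expect any genuine obstacle beyond this bookkeeping.
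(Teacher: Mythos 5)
Your proof is correct and follows essentially the same route as the paper: both directions reduce to Proposition \ref{prop:tpl_etale_date_le_sezioni} via the correspondence between the sets $W_i$ and the jointly surjective family of sections $s_i=(\pi|_{W_i})^{-1}$ satisfying $[\dagger]$, with the key identity $s_j^{-1}(s_i(U_i))=\pi(W_i\cap W_j)$ appearing in both arguments. The only cosmetic difference is in (i)$\Rightarrow$(ii), where you take the $W_i$ directly from the definition of local homeomorphism and use openness of $\pi$, while the paper takes them to be images of continuous sections and computes $\pi(s(U)\cap t(V))=t^{-1}(s(U))$; these are equivalent.
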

\begin{proof}
	(i)$\Rightarrow$(ii) is obvious: if $\pi$ is a local homeomorphism then $E$ is endowed with $\Ocal(E)=\tau^{Cont(\pi, \Ocal(E))}_\pi$. Every $w\in E$ belongs to $s(U)$ for some continuous section $(U,s)\in Cont(\pi, \Ocal(E))$, and $\pi_{|s(U)}:s(U)\rightarrow U$ is a homeomorphism. Moreover, one easily verifies that
	\[
	\pi(s(U)\cap t(V))=t\inv(s(U)),
	\]
	which is open in $X$.
	
	(ii)$\Rightarrow$(i). We can set $U_i:=\pi(W_i)\subseteq X$ and 
	\[
	s_i:U_i\xrightarrow{\pi_{|W_i}\inv}W_i\hookrightarrow E:\]
	then one immediately sees that $\sbicat=\{(U_i,s_i)\ |\ i\in I \}$ is a family of sections for $\pi$ which is jointly surjective. Finally, a quick calculation shows that
	\[
	\pi(W_i\cap W_j)=s_i\inv(s_j(U_j)),\]
	which is open by hypothesis: this means that $\sbicat$ also satisfies $[\dagger]$, and thus the topology $\tau^\sbicat_\pi$ over $E$ makes $\pi$ into a local homeomorphism.
\end{proof}

\subsection{Algebraic spectra and their structural sheaves}
As an application of the previous results, we will consider two famous cases of spectra of algebraic theories,and show that the usual topologies on their structural sheaves can be recovered using canonical classes of local sections.

Consider a commutative ring $R$: we recall that the spectrum $\Spec(R)$ is defined as the set of its prime ideals. It can be endowed with the Zariski topology, whose base is that of opens of the kind 
\[
D(a):=\{\pfrak\in \Spec(R)\ |\ a\notin \pfrak \}
\] 
for every $a\in R$. Now, consider the set
\[
E:=\coprod_{\pfrak\in \Spec(R)} R_\pfrak\xrightarrow{\pi} \Spec(R),
\]
where $\pi$ is the canonical map sending each component $R_\pfrak$ to $\pfrak$. It is well known (see for instance \cite[pag. 70]{hartshorne}) that the set can be topologized so that the projection $\pi$ is a local homeomorphism: the corresponding sheaf $\Spec(R)\op\rightarrow\Set$ is the \emph{structural sheaf} of the ring $R$. The topology $\Ocal(E)$ on $E$ is induced by the base
\[
\bbicat=\big\{ B(a,g):=\{ [g]_\pfrak\in R_\pfrak\ |\ \pfrak\in D(a) \}\ \big|\ a\in R,\ g\in R[a\inv] \big\},
\]
where $[g]_\pfrak$ denotes the class of $g$ seen as an element of the localization $R_\pfrak$. What we can remark now is that $\Ocal(E)$ can now be understood as induced by the canonical class of sections
\[
\sbicat=\{s_g:D(a)\rightarrow E\ |\ a\in R,\ g\in R[a\inv],\ s_g(\pfrak):=[g]_\pfrak\}\subseteq Sec(\pi):
\]
we notice immediately that $B(a,g):=s_g(D(a))$, and therefore $\bbicat=\bbicat_\sbicat$.
The sections in $\sbicat$ are obviously jointly epic: every $h\in R_\pfrak$ is a fraction of the kind $x/y$ where $y\notin \pfrak$, and thus $h=[x/y]_\pfrak=s_{x/y}(\pfrak)$ for $s_{x/y}:D(y)\rightarrow E$. We can also show that $\sbicat$ satisfies $[\dagger]$. To do this consider any two sections $s_{x/a^n}:D(a)\rightarrow E$ and $s_{z/b^m}:D(b)\rightarrow E$, and take 
\[
\pfrak\in s_{z/b^m}\inv(s_{x/a^n}(D(a)))=\{\pfrak\in D(a)\cap D(b)\ |\ [x/a^n]_\pfrak=[z/b^m]_\pfrak \}: 
\]
since $[x/a^n]_\pfrak=[z/b^m]_\pfrak$ in $R_\pfrak$, there exists $w\notin \pfrak$ such that $wxb^m=wza^n$. Now, notice that $\pfrak$ also belongs to the open $D(wab)=D(w)\cap D(a)\cap D(b)$; conversely, if $\qfrak\in D(wab)$ then 
\[
[x/a^n]_\qfrak=[wxb^m/wa^nb^m]_\qfrak=[wza^n/wa^nb^m]_\qfrak=[z/b^m]_\qfrak,
\]
which means that $D(wab)\subseteq s_{z/b^m}\inv(s_{x/a^n}(D(a)))$: therefore, we have that the subset $s_{z/b^m}\inv(s_{x/a^n}(D(a)))$ is open in $\Spec(R)$. Applying Proposition \ref{prop:tpl_etale_date_le_sezioni}, we conclude that $\tau^\sbicat_\pi$ makes $\pi$ into an étale space over $\Spec(R)$; on the other hand, since $\sbicat$ is obviously closed under subsections we can apply Proposition \ref{prop:basi_tpl_etale} to conclude that $\bbicat$, as defined above, is a basis for $E$. In this way, taking the sheaf of cross-sections of this bundle, we recover the structure sheaf of $A$, as described in \cite{hartshorne}. 

We now turn our attention to spectra of MV-algebras: we recall that an \emph{MV-algebra} (see for instance \cite{dubuc_poveda}) is an abelian monoid $(A, \oplus, 0)$ endowed with a further 1-ary operation $\neg$ satisfying the axioms $\neg\neg x=x$, $x\oplus(\neg 0)=\neg 0$ and $\neg(\neg x\oplus y)\oplus y=\neg(\neg y\oplus x)\oplus x$. In particular we set $1:=\neg 0$. Any MV-algebras admits an order relation $\leq$ defined by $x\leq y$ if and only if there exists $z\in A$ such that $x\oplus z=y$. We will also make use of the derived operation $x\ominus y:=\neg(\neg x\oplus y)$: in particular, $x\leq y$ if and only if $x\ominus y=0$. Finally, there is a binary \emph{distance operation} $d$, defined by $d(x,y):=(x\ominus y)\oplus(y\ominus x)$.

An \emph{ideal} of $A$ is a submonoid $I\subseteq A$ such that if $y\in I$ and $x\leq y$ then $x\in I$; $I$ is said to be \emph{prime} if it is strictly contained in $A$, and moreover for every $x$ and $y$ in $A$ either $x\ominus y\in I$ or $y\ominus x\in I$. Every ideal $I$ of $A$ provides a congruence $\sim_I$ on the elements of $A$, defined by $x\sim_Iy$ if and only if $d(x,y)\in I$: the quotient by said congruence is denoted by $A/I$.

We define the \emph{spectrum of $A$} to be the set $\Spec(A)$ of prime ideals of $A$. For each $a\in A$, we define $W(a):=\{I\in \Spec(A)\ |\ a\in I \}$: one immediately checks that $W(0)=\Spec(A)$, $W(1)=\emptyset$ and $W(a)\cap W(b)=W(a\oplus b)$, and thus the sets $W(a)$ are a basis for a topology over $\Spec(A)$. We now introduce the set
\[E:=\coprod_{I\in Spec(A)} A/I\xrightarrow{\pi}\Spec(A):\]
it can be endowed with a topology making it into an étale space over $\Spec(A)$, induced by the class of global sections
\[
\sbicat=\{
\bar{a}:\Spec(A)\rightarrow E\ |\ a\in A,\ \bar{a}(I):=[a]_I
\}.
\]
Said sections are of course jointly epic over $E$. As per the condition $[\dagger]$, one can check that for a pair of sections $\bar{a}$ and $\bar{b}$ one has
\[
\bar{b}\inv(\bar{a}(\Spec(A)))=W(d(a,b)),
\]
which is open in $\Spec(A)$: thus the topology $\tau^\sbicat_\pi$ makes $\pi$ into a local homeomorphism. Applying Proposition \ref{prop:basi_tpl_etale}, we have that the topology $\tau^\sbicat_\pi$ is generated by the basis
\[
\bbicat^\sbicat_{\{W(a)\ |\ a\in A\} }=\{ \bar{b}(W(a)) | a\in A,\ b\in A  \}.
\]

We thus recover, by taking the sheaf of cross-sections of this bundle, the structure sheaf of $A$, as constructed in \cite{dubuc_poveda}.

\section{The restriction of the fundamental adjunction to preorders and locales}\label{sec:preorder}
We dedicate this section to further specializing the adjunctions
\[
\begin{tikzcd}
	{[\cbicat\op,\Set]} \ar[r, bend left, "\Lambda_{\Cat/_1\cbicat}", start anchor={north east}, end anchor={north west}] \ar[r, phantom, "\dashv"{rotate=270}] & \Cat/_1\cbicat \ar[l,bend left, "{\Gamma_{\Cat/_1\cbicat}}", end anchor={south east}, start anchor={south west}]
\end{tikzcd},\ 
\begin{tikzcd}
	{[\cbicat\op,\Set]} \arrow[r, "\Lambda_{\Topos^s/_1\Sh(\cbicat,J)}", bend left, end anchor={north west}, start anchor={north east}] &     \Topos^s/_1\Sh(\cbicat,J) \arrow[l, "\Gamma_{\Topos^s/_1\Sh(\cbicat,J)}", bend left, start anchor={south west}, end anchor={south east}] \ar[l, "\dashv"{rotate=270}, phantom]
\end{tikzcd}
\]
of Section \ref{sec:discrete_adj} to the case when the category $\cbicat$ is a preorder, and in particular when $\cbicat$ is a locale. The possibility of restricting our fundamental adjunctions to this context was already hinted by the multiple point-free results that we encountered in Section \ref{sec:adj_topologica}. We recall that a preorder can always be interpreted as a \emph{preorder category}\index{category!preorder} by setting, for every pair of objects $X$ and $Y$ of $\cbicat$, that $\Hom(Y,X)=\{*\}$ if and only if $Y\leq X$; moreover, functors between preorders are exactly the order preserving maps. This implies that the category of small preorders is a full sub-2-category $\Preord$\index{$\Preord$} of $\Cat$, whose 0-cells are small preorder categories.

The first remark is that the functor $\Lambda_{\Cat/_1\cbicat}:[\cbicat\op,\Set]\rightarrow \Cat/_1\cbicat$, which acts as the Grothendieck construction, factors through $\Preord/\cbicat$:
\begin{lemma}
	Consider a preorder $\cbicat$ and a presheaf $P:\cbicat\op\rightarrow \Set$: then $\fib P$ is a preorder.
\end{lemma}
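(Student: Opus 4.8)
The plan is to unwind the definition of a preorder and the explicit description of $\fib P$ recalled in Example \ref{ex:indexedcategories}(i), and observe that the claim reduces to a one-line check on hom-sets. Recall that a category is a preorder precisely when there is at most one arrow between any ordered pair of objects. So I would fix two objects $(Y,V)$ and $(X,U)$ of $\fib P$ and argue that $\Hom_{\fib P}\bigl((Y,V),(X,U)\bigr)$ has at most one element.

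To carry this out, I would recall that an arrow $(Y,V)\rightarrow (X,U)$ in $\fib P$ is by definition an arrow $y:Y\rightarrow X$ of $\cbicat$ satisfying the single equation $P(y)(U)=V$, and that two such arrows coincide exactly when their underlying arrows in $\cbicat$ coincide. Hence the assignment $y\mapsto \bigl[y:(Y,V)\rightarrow(X,U)\bigr]$ exhibits $\Hom_{\fib P}\bigl((Y,V),(X,U)\bigr)$ as a subset of $\Hom_\cbicat(Y,X)$, namely the subset cut out by the condition $P(y)(U)=V$. Since $\cbicat$ is a preorder, $\Hom_\cbicat(Y,X)$ is either empty or a singleton; therefore so is its subset $\Hom_{\fib P}\bigl((Y,V),(X,U)\bigr)$, and $\fib P$ is a preorder.

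The cleanest way to phrase the key step is structural: the projection $p_P:\fib P\rightarrow\cbicat$ forgetting the second component is \emph{faithful} (indeed it is a discrete fibration, as noted in the general discussion of the Grothendieck construction), so each map $\Hom_{\fib P}\bigl((Y,V),(X,U)\bigr)\rightarrow\Hom_\cbicat(Y,X)$ induced by $p_P$ is injective. A category admitting a faithful functor to a preorder is automatically a preorder, since injectivity of the hom-maps forces every fibre hom-set to have at most one element whenever the target hom-set does. This observation is exactly the content we need, and it makes the proof independent of any special feature of $P$ beyond $\cbicat$ being a preorder.

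There is essentially no obstacle here: the statement is a direct consequence of the definitions, and the only thing to be careful about is recording that an arrow of $\fib P$ is uniquely determined by its image in $\cbicat$ (equivalently, that $p_P$ is faithful). I would therefore keep the argument to a couple of sentences rather than belabouring it.
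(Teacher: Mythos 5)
Your proof is correct and follows essentially the same route as the paper's: an arrow $(Y,V)\rightarrow(X,U)$ in $\fib P$ is just an arrow $y:Y\rightarrow X$ of $\cbicat$ with $P(y)(U)=V$, so the hom-set embeds into $\Hom_\cbicat(Y,X)$, which has at most one element. The structural remark about faithfulness of $p_P$ is a pleasant gloss but adds nothing beyond the paper's one-line check.
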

\begin{proof}
	Consider $(X,s)$ and $(Y,t)$ in $\fib P$: a morphism $(Y,t)\rightarrow (X,s)$ is indexed by an arrow $y:Y\rightarrow X$ such that $P(y)(s)=t$. Hence if $\cbicat$ is a preorder there is at most one such arrow, when $Y\leq X$ and $t=s_{|Y}$, and thus $\fib P$ is a preorder.
\end{proof}
In particular, any slice $\cbicat/X$ is a preorder, since it is equivalent to $\fib \yo(X)$: as a matter of fact, $\cbicat/X$ is precisely the down-set $X\downarrow=\{Y\in \cbicat\ |\ Y\leq X\}$\index{down-set}\index{$(-)\downarrow$}, with the induced ordering.
The following is now an immediate corollary:
\begin{prop}\label{prop:adj_preord}\index{$\Lambda_{\Preord/_1\cbicat}\dashv\Gamma_{\Preord/_1\cbicat}$}
	Consider a small preorder category $\cbicat$: then there is a an adjunction
	\[
	\begin{tikzcd}
		{[\cbicat\op,\Set]} \ar[r, bend left, "\Lambda_{\Preord/_1\cbicat}", start anchor={north east}, end anchor={north west}] \ar[r, phantom, "\dashv"{rotate=270}] & \Preord/_1\cbicat \ar[l,bend left, "\Gamma_{\Preord/_1\cbicat}", end anchor={south east}, start anchor={south west}]
	\end{tikzcd}
	\]
	where:
	\begin{itemize}
		\item the notation $\Preord/_1\cbicat$ is that of Definition \ref{def:slice};
		\item both functors $\Lambda_{\Preord/_1\cbicat}$ map a presheaf over $\cbicat$ to the order-preserving map $\fib P\rightarrow \cbicat$;
		\item $\Gamma_{\Preord/_1\cbicat}$ is a contravariant hom-functor, defined for $p:\dbicat\rightarrow \cbicat$ as $\Gamma([p]):=\Preord/_1\cbicat(\cbicat/-, [p])$.
	\end{itemize}
\end{prop}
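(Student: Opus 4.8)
The plan is to deduce this directly from the discrete adjunction $\Lambda_{\Cat/_1\cbicat} \dashv \Gamma_{\Cat/_1\cbicat}$ of Proposition \ref{prop:aggiunzione_prefasci_e_cat_su_C} by restricting both adjoints along the inclusion $j:\Preord/_1\cbicat \hookrightarrow \Cat/_1\cbicat$. The first thing I would record is that this inclusion is full and faithful: since $\Preord\hookrightarrow\Cat$ is full and faithful, a morphism of $\Cat/_1\cbicat$ between two objects $[p],[q]$ whose domains are preorders is just a functor between preorders over $\cbicat$, which is automatically order-preserving, so the hom-sets of the two slices coincide on objects of $\Preord/_1\cbicat$.

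Next I would isolate the two facts that make the factorization work. On the left-adjoint side, the preceding lemma shows that when $\cbicat$ is a preorder every $\fib P$ is again a preorder; hence $\Lambda_{\Cat/_1\cbicat}$, which sends $P$ to $[\fib P\to\cbicat]$, takes values in $\Preord/_1\cbicat$ and factors as $\Lambda_{\Cat/_1\cbicat}\cong j\circ\Lambda_{\Preord/_1\cbicat}$. On the right-adjoint side, each slice $\cbicat/X$ coincides with the down-set $X\downarrow$ and is therefore a preorder, so the diagram $\cbicat/-:\cbicat\to\Cat/_1\cbicat$ defining $\Gamma_{\Cat/_1\cbicat}$ already lands in $\Preord/_1\cbicat$. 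Combined with fullness of $j$, this gives, for any $[p]$ with $\dbicat$ a preorder, the identification $\Cat/_1\cbicat(\cbicat/X,[p])=\Preord/_1\cbicat(\cbicat/X,[p])$ for every $X$, so that the restriction $\Gamma_{\Cat/_1\cbicat}\circ j$ is exactly the contravariant hom-functor $\Gamma_{\Preord/_1\cbicat}([p]):=\Preord/_1\cbicat(\cbicat/-,[p])$ of the statement.

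The adjunction itself then follows by the same reasoning as Lemma \ref{lemma:adj_restriction_subcat}, applied this time to the full inclusion of the \emph{codomain} of the left adjoint: for any presheaf $P$ and any $[p]$ in $\Preord/_1\cbicat$ one has the chain of natural bijections
\begin{align*}
\Preord/_1\cbicat(\Lambda_{\Preord/_1\cbicat}(P),[p]) &\cong \Cat/_1\cbicat(j\Lambda_{\Preord/_1\cbicat}(P),[p])\\
&\cong \Cat/_1\cbicat(\Lambda_{\Cat/_1\cbicat}(P),[p])\\
&\cong [\cbicat\op,\Set](P,\Gamma_{\Cat/_1\cbicat}([p])),
\end{align*}
where the first isomorphism is full faithfulness of $j$, the second is the factorization $\Lambda_{\Cat/_1\cbicat}\cong j\Lambda_{\Preord/_1\cbicat}$, and the third is the adjunction of Proposition \ref{prop:aggiunzione_prefasci_e_cat_su_C}. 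Rewriting the last term as $[\cbicat\op,\Set](P,\Gamma_{\Preord/_1\cbicat}([p]))$ via the previous paragraph establishes $\Lambda_{\Preord/_1\cbicat}\dashv\Gamma_{\Preord/_1\cbicat}$, and naturality is inherited from that of the ambient adjunction.

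I do not expect a genuine obstacle here, consistently with the surrounding text describing the result as an immediate corollary: the only points requiring care are the bookkeeping of which inclusion is full and faithful and the verification that the factorization of $\Lambda$ is natural, both of which are routine once the preceding lemma and Proposition \ref{prop:aggiunzione_prefasci_e_cat_su_C} are in hand.
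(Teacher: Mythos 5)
Your proposal is correct and follows essentially the same route as the paper: both arguments start from the natural bijection of Proposition \ref{prop:aggiunzione_prefasci_e_cat_su_C} and observe that, since $\fib P$ and every slice $\cbicat/X$ are preorders, the relevant hom-sets in $\Cat/_1\cbicat$ coincide with those in $\Preord/_1\cbicat$, so the adjunction restricts. Your extra bookkeeping about the full and faithful inclusion $j$ and the factorization $\Lambda_{\Cat/_1\cbicat}\cong j\Lambda_{\Preord/_1\cbicat}$ only makes explicit what the paper leaves implicit.
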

\begin{proof}
	We can consider the natural equivalence
	\[
	[\cbicat\op,\Set](P, \Cat/_1\cbicat(\cbicat/-, [p]))\simeq \Cat/_1 \cbicat( \fib P, [p]):
	\]
	where $P:\cbicat\op\rightarrow \Set$ and we choose $p:\dbicat\rightarrow \cbicat$ to be a functor between preorders: since $\fib P$ and every category $\cbicat/X$ are preorders,  
	$$\Cat/_1\cbicat(\cbicat/-,[p])=\Preord/_1\cbicat(\cbicat/-,[p])$$ and $$\Cat/_1\cbicat(\fib P, [p])=\Preord/_1\cbicat(\fib P, [p]),$$ the natural isomorphism above becomes
	\[
	[\cbicat\op,\Set](P, \Preord/_1\cbicat(\cbicat/-, [p]))\simeq \Preord/_1 \cbicat( \fib P, [p]).
	\]
\end{proof}
If we want to study the fixed points of $\Lambda_{\Preord/_1 \cbicat}\dashv \Gamma_{\Preord/_1 \cbicat}$ we need to isolate among all morphisms of preorders $Q\rightarrow \cbicat$ those that correspond to discrete fibrations, and those that correspond to discrete stacks. To do so we exploit the following technical result:
\begin{lemma}
	A functor $F:\abicat\rightarrow\bbicat$ is a cloven Grothendieck fibration if and only if for every $A$ in $\abicat$ the functor
	\[
	F_A:\abicat/A\rightarrow \bbicat/F(A)
	\]
	induced by restriction of $F$ has a right adjoint $F^A$ satisfying the identity $F_AF^A=\id_{\bbicat/F(A)}$. Moreover, $F$ is a discrete fibration if and only if the identity $F^AF_A=\id_{\abicat/A}$ is also verified.
\end{lemma}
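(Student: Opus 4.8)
The plan is to treat the two biconditionals separately, deriving the right adjoint $F^A$ from a cleavage in one direction and recovering the cleavage from the counit of $F_A\dashv F^A$ in the other. Throughout, recall that $F_A$ sends $[g:B\rightarrow A]$ to $[F(g):F(B)\rightarrow F(A)]$ and a slice morphism $k:[g]\rightarrow[g']$ to $F(k)$.

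For the direction ($\Rightarrow$), I would assume $F$ is a cloven Grothendieck fibration and define $F^A$ on an object $[x:Y\rightarrow F(A)]$ of $\bbicat/F(A)$ as $[\widehat{x}_A]$, the chosen cartesian lift, noting that in the Grothendieck case $F(\widehat{x}_A)=x$ and $F(\dom(\widehat{x}_A))=Y$ on the nose; on a slice morphism $h:[x]\rightarrow[x']$ the value $F^A(h)$ is the unique arrow over $A$ supplied by the cartesianity of $\widehat{x'}_A$. The equality $F_AF^A=\id$ is then immediate from $F(\widehat{x}_A)=x$. The adjunction $F_A\dashv F^A$ is established by exhibiting the bijection $\abicat/A([g],F^A[x])\cong\bbicat/F(A)(F_A[g],[x])$, $k\mapsto F(k)$: a morphism on the left is a $k:B\rightarrow\dom(\widehat{x}_A)$ with $\widehat{x}_Ak=g$, a morphism on the right is an $h:F(B)\rightarrow Y$ with $xh=F(g)$, and the bijectivity of $k\mapsto F(k)$ between these two sets is exactly the defining universal property of the cartesian arrow $\widehat{x}_A$. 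Naturality in both variables is a routine check using the uniqueness clause of cartesianity (Remark \ref{remark:proprietà_frecce_cartesiane}(i)).

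For ($\Leftarrow$), I would assume each $F_A$ has a right adjoint $F^A$ with $F_AF^A=\id$. Given $x:Y\rightarrow F(A)$, write $F^A[x]=[\widehat{x}:W\rightarrow A]$; since $F_AF^A=\id$, the object $F_A[\widehat{x}]=[F(\widehat{x}):F(W)\rightarrow F(A)]$ coincides with $[x]$, forcing $F(W)=Y$ and $F(\widehat{x})=x$. It remains to show $\widehat{x}$ is cartesian. The key observation is that $F_AF^A=\id$ makes the counit $\epsilon_{[x]}\colon F_AF^A[x]\rightarrow[x]$ the identity, so the couniversal property of $\epsilon_{[x]}$ reads: for every $[g:B\rightarrow A]$ and every $h:F(B)\rightarrow Y$ with $xh=F(g)$ there is a unique $k:B\rightarrow W$ with $\widehat{x}k=g$ and $F(k)=h$. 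This is verbatim the definition of $\widehat{x}$ being a cartesian lift of $x$, so $F$ is a cloven Grothendieck fibration with cleavage given by the $F^A$.

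Finally, for the discrete case, I would argue that $F^AF_A=\id$ together with $F_AF^A=\id$ says precisely that each $F_A$ is an isomorphism of categories with inverse $F^A$. Spelling this out on objects, the isomorphism $F_A\colon\abicat/A\isorightarrow\bbicat/F(A)$ means that every $x:Y\rightarrow F(A)$ has a \emph{unique} lift with codomain $A$, which by the description of discrete fibrations recalled in Example \ref{ex:indexedcategories}(i) is exactly the statement that $F$ is a discrete fibration; conversely a discrete fibration has $F_A$ bijective on objects and arrows, whence $F^AF_A=\id$. I expect the main obstacle to be the careful identification, in the ($\Leftarrow$) direction, of the couniversal property of the identity counit with the universal property defining a cartesian lift, together with checking that domains and projections match on the nose rather than merely up to isomorphism — which is precisely what pins down the \emph{cloven Grothendieck}, as opposed to merely Street, structure.
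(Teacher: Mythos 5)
Your argument is correct in outline and is essentially the standard one: it fills in the details that the paper delegates to Gray's Theorem 2.10 (the paper only proves the discrete clause explicitly, by the same ``$F_A$ is bijective on lifts'' reasoning you use). The construction of $F^A$ from a Grothendieck cleavage, the identification of the adjunction bijection $k\mapsto F(k)$ with the universal property of the cartesian lift, and the treatment of the discrete case via ``$F_A$ is an isomorphism of categories with inverse $F^A$'' all check out.

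The one step that does not hold as written is the ``key observation'' in the ($\Leftarrow$) direction: the equation of \emph{functors} $F_AF^A=\id_{\bbicat/F(A)}$ does not force the counit to be the identity. The counit is then a natural endotransformation of $\id_{\bbicat/F(A)}$, and any invertible such transformation $z$ can be used to twist a given adjunction into another one with the same underlying functors, still satisfying $F_AF^A=\id$, but with counit $z\circ\epsilon$; since the hypothesis only hands you \emph{some} adjunction with $F_AF^A=\id$, its counit may be a nonidentity automorphism $e_{[x]}$ of $[x]$. What is true, and what rescues the argument, is that the counit is necessarily a natural \emph{isomorphism} (the counit of $L\dashv R$ is invertible as soon as $LR\cong\id$ by any natural isomorphism whatsoever — a standard lemma). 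The couniversal property then reads: for each $h\colon F(B)\to Y$ with $xh=F(g)$ there is a unique $k\colon B\to W$ with $\widehat{x}k=g$ and $e_{[x]}\circ F(k)=h$. Applying this to $e_{[x]}\circ h$ in place of $h$ (still a morphism $[F(g)]\to[x]$, since $x\circ e_{[x]}=x$) and cancelling the monomorphism $e_{[x]}$ yields exactly the existence and uniqueness of $k$ with $\widehat{x}k=g$ and $F(k)=h$, i.e.\ cartesianity of $\widehat{x}$. So replace ``identity'' by ``isomorphism'' and add one line, and the proof is complete; if instead one reads the hypothesis in Gray's original sense of a right adjoint \emph{right inverse} (identity counit), your argument stands as written.
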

\begin{proof}
	The first consequence is well known and can be found for instance in \cite[Theorem 2.10]{gray.fibredcofibredcategories}. In particular, $F^A$ acts by mapping any object $[h:X\rightarrow F(A)]$ of $\bbicat/F(A)$ to its cartesian lift $[\bar{h}:\bar{X}\rightarrow A]$ in $\abicat/A$. In particular, $F$ is discrete if and only if for every $h:X\rightarrow F(A)$ there is exactly one arrow $\bar{h}:\bar{X}\rightarrow A$ such that $F(\bar{h})=h$. This implies that for any arrow $y:Y\rightarrow A$ it must hold that $F^AF_A([y])=\overline{F(y)}=y$, since their images via $F_A$ are both $[F(y)]$. This means that $F^A$ is also a left inverse for $F_A$. The converse is obvious. 
\end{proof}
\begin{cor}\label{cor:etale_preordini}
	A monotone map $f:P\rightarrow \cbicat$ of preorders is a discrete fibration if and only if for every $a$ in $P$ the monotone map
	\[
	f_a:a\downarrow\rightarrow f(a)\downarrow\]
	defined by restriction of $f$ admits a pseudoinverse $f^a:f(a)\downarrow \rightarrow a\downarrow$.
\end{cor}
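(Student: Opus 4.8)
The plan is to obtain the corollary as a direct specialisation of the preceding lemma, exploiting the fact that in a preorder category all higher structure collapses, so that the lemma's condition (a right adjoint whose two round-trip composites are identities) becomes precisely the existence of a pseudoinverse.

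First I would record how the ingredients of the lemma translate into the preorder language. If $P$ is a preorder and $a$ an object, the slice $P/a$ is canonically identified with the down-set $a\downarrow$ ordered by the restriction of $\leq$; under this identification the restriction functor $F_a$ appearing in the lemma is exactly the monotone map $f_a:a\downarrow\rightarrow f(a)\downarrow$, since $f_a$ sends $b\leq a$ to $f(b)\leq f(a)$. The lemma then reads: $f$ is a discrete fibration if and only if for every $a$ the map $f_a$ admits a right adjoint $f^a:f(a)\downarrow\rightarrow a\downarrow$ satisfying $f_a f^a=\id$ and $f^a f_a=\id$.

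The heart of the argument is to show that, between preorders, this condition on $f_a$ is equivalent to the mere existence of a pseudoinverse. The forward implication is immediate: a right adjoint whose two composites with $f_a$ are the identity is by definition a two-sided inverse up to isomorphism, that is, a pseudoinverse. For the converse I would invoke the standard fact that every equivalence of preorders is automatically an adjoint equivalence: if $f^a$ is monotone with $f_a f^a\cong\id$ and $f^a f_a\cong\id$, then for $b\in a\downarrow$ and $c\in f(a)\downarrow$ one checks $f_a(b)\leq c \iff b\leq f^a(c)$ — applying the monotone $f^a$ to $f_a(b)\leq c$ and using $b\leq f^a f_a(b)$ gives one direction, applying $f_a$ to $b\leq f^a(c)$ and using $f_a f^a(c)\leq c$ gives the other. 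This Galois connection is exactly the adjunction $f_a\dashv f^a$, and the two isomorphisms supply the required round-trip identities. Hence ``$f_a$ admits a pseudoinverse'' and the lemma's condition coincide, and the corollary follows by quantifying over all $a\in P$.

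The only point requiring care — and the sole real obstacle — is the interplay between the strict identities $f_a f^a=\id$, $f^a f_a=\id$ written in the lemma and the up-to-isomorphism notion built into the word ``pseudoinverse''. I would make explicit that for preorder categories equality of objects is to be read up to the unique isomorphism $x\cong y$ witnessed by $x\leq y\leq x$, so that the lemma's identities and the equivalences $f_a f^a\cong\id$, $f^a f_a\cong\id$ express the same condition; the coherence $2$-cells are automatic because there is at most one arrow between any two objects of a preorder, so no further verification is needed.
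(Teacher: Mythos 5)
Your derivation is the one the paper intends: the corollary is stated there without proof, as an immediate specialisation of the preceding lemma, and your identification of the slice $P/a$ with the down-set $a\downarrow$, of the restricted functor $F_a$ with the monotone map $f_a$, and your Galois-connection verification that a pseudoinverse between preorders is automatically a right adjoint are all correct and match that implicit argument.

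The one place your proof is not watertight is exactly the point you flag and then dismiss. The lemma characterises \emph{discrete} fibrations by the strict identities $f_a f^a=\id$ and $f^a f_a=\id$, whereas a pseudoinverse only yields these composites up to isomorphism, and in a preorder that is not a poset, $x\leq y\leq x$ does not force $x=y$. Concretely, let $P$ have objects $a,b,b'$ with $b\leq b'\leq b$ and $b,b'\leq a$, let $\cbicat=\{y\leq x\}$, and set $f(a)=x$, $f(b)=f(b')=y$: every $f_a$ admits a pseudoinverse, yet the arrow $y\leq x$ has two distinct lifts $b\leq a$ and $b'\leq a$ with codomain $a$, so $f$ is not a discrete fibration on the nose (its fibre over $y$ is equivalent to, but not equal to, a discrete category). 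Hence ``admits a pseudoinverse'' is genuinely weaker than the lemma's condition unless $P$ is a poset or ``discrete fibration'' is read up to equivalence. This imprecision is arguably inherited from the paper itself — its subsequent definition of étale maps uses $\simeq$, confirming that the up-to-isomorphism reading is the intended one — but your write-up should say explicitly that you are proving the equivalence-invariant version (or restrict to posets), rather than asserting that the strict identities and the pseudo conditions ``express the same condition''; for general preorders they do not.
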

Finally, let us suppose that $\cbicat$ is endowed with a Grothendieck topology $J$, and consider a monotone map of preorders $f:P\rightarrow \cbicat$ satisfying the equivalent conditions of Corollary \ref{cor:etale_preordini}: it corresponds to the presheaf
$\bar{P}:\cbicat\op\rightarrow\Set$ defined by mapping each $X$ in $\cbicat$ to the set 
\[
\bar{P}(X):=\{ a\in P\ |\ f(a)=X\};
\]
moreover, for $Y\leq X$ the induced map $\bar{P}(X)\rightarrow \bar{P}(Y)$ maps every $a\in \bar{P}(X)$ to the element $f^a(Y)$ of $\bar{P}(Y)$: this because $f^a(Y)$ is (the domain of) the cartesian lift of $Y\leq X$ with codomain $a$. We can express what it means for $f$ to be a discrete stack by expliciting the condition for $\bar{P}$ to be a $J$-sheaf, in terms of matching families and amalgamations. This leads to the following definition:
\begin{defn}
	Consider a preorder site $(\cbicat,J)$. We shall call a monotone map of preorders $f:P\rightarrow\cbicat$ \emph{étale}\index{map of preorders!étale} if it satisfies the equivalent conditions of Corollary \ref{cor:etale_preordini}: more explicitly, if for every $a\in P$ there is a monotone map $f^a:f(a)\downarrow \rightarrow a\downarrow$ such that for every $b\leq a$ and $x\leq f(a)$ one has $f^a(f(b))\simeq b$ and $f(f^a(x))\simeq x$.
	
	We shall call $f$ \emph{$J$-étale}\index{map of preorders!$J$-étale} if it a discrete $J$-stack: explicitly, if for every sieve $S\in J(X)$ and every set of elements
	\[
	\{a_Y\in P\ |\ Y\in S \}
	\]
	satisfying $f(a_Y)=Y$ and such that whenever $Z\leq Y$ then $a_Z=f^{a_Y}(Z)$, there is a unique $a\in P$ such that $f(a)=X$ and $a_Y=f^a(Y)$.
	
	We shall denote by $\Etale(\cbicat)$\index{$\Etale(\cbicat)$} the full sub-2-category of $\Preord/\cbicat$ of étale posets over $\cbicat$, and by $\Etale(\cbicat,J)$\index{$\Etale(\cbicat,J)$} the full subcategory of $\Etale(\cbicat)$ of $J$-étale preorders over $\cbicat$.
\end{defn}
\begin{remark}
	Our definition of étale map $f:P\rightarrow\cbicat$ of preorders is a mild generalization of Definition 5 in \cite{jens.preorder}, where étale maps are defined for posets. Notice that in \textit{op. cit.} $f$ is seen as a map $P\op\rightarrow\cbicat\op$, and thus isomorphisms of the upper segments $x\uparrow$ and $f(x)\uparrow$ are required. 
\end{remark}
The following result is now completely tautological:
\begin{prop}
	The adjunction of Proposition \ref{prop:adj_preord} restricts to the equivalences
	\[
	[\cbicat\op,\Set]\simeq \Etale(\cbicat),\ \Sh(\cbicat,J)\simeq \Etale(\cbicat,J).\]
\end{prop}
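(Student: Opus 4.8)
The plan is to observe that both claimed equivalences are nothing more than the restriction of the adjunction $\Lambda_{\Preord/_1\cbicat}\dashv\Gamma_{\Preord/_1\cbicat}$ to its fixed points, once these fixed points have been identified with the étale and $J$-étale preorders. First I would recall from Proposition \ref{prop:aggiunzione_prefasci_e_cat_su_C} that the adjunction $\Lambda_{\Cat/_1\cbicat}\dashv\Gamma_{\Cat/_1\cbicat}$ restricts to an equivalence between its fixed points: every presheaf $P:\cbicat\op\rightarrow\Set$ is a fixed point, while the fixed points of $\Cat/_1\cbicat$ are exactly the discrete Grothendieck fibrations over $\cbicat$. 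Since by Proposition \ref{prop:adj_preord} the preorder adjunction is the literal restriction of this one along the inclusion $\Preord/_1\cbicat\hookrightarrow\Cat/_1\cbicat$ (recall that $\fib P$ is automatically a preorder whenever $\cbicat$ is), the fixed points are unchanged: all presheaves on the left, and those discrete fibrations lying in $\Preord/_1\cbicat$ on the right.

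Next I would invoke Corollary \ref{cor:etale_preordini}, which is precisely the assertion that a monotone map of preorders $f:P\rightarrow\cbicat$ is a discrete fibration if and only if it is étale in the sense of the definition above. Hence $\Etale(\cbicat)$ is exactly the full subcategory of $\Preord/_1\cbicat$ on the fixed points of the adjunction, and restricting $\Lambda_{\Preord/_1\cbicat}\dashv\Gamma_{\Preord/_1\cbicat}$ to fixed points yields the first equivalence $[\cbicat\op,\Set]\simeq\Etale(\cbicat)$.

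For the second equivalence I would simply cut the first one down to the relevant subcategories. By Proposition \ref{prop:psh+stack=sheaf}, a presheaf $P$ is a $J$-sheaf precisely when the discrete fibration $\fib P\rightarrow\cbicat$ is a $J$-stack; and by definition an étale map of preorders is $J$-étale exactly when it is a discrete $J$-stack. Therefore the equivalence $[\cbicat\op,\Set]\simeq\Etale(\cbicat)$ carries the full subcategory $\Sh(\cbicat,J)$ of $J$-sheaves onto the full subcategory $\Etale(\cbicat,J)$ of $J$-étale preorders, and conversely; restricting to these gives $\Sh(\cbicat,J)\simeq\Etale(\cbicat,J)$.

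I expect no serious obstacle here, since the statement is tautological once the dictionary between fixed points and (J-)étale maps has been assembled. The only point deserving a line of care is verifying that the definition of $J$-étale map, phrased through the matching families $\{a_Y\mid Y\in S\}$ and their unique amalgamations, coincides termwise with the $J$-sheaf condition on the associated presheaf $\bar{P}$; but this is exactly the unwinding already performed in the discussion immediately preceding the definition, so it requires no further argument.
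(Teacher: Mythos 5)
Your argument is correct and is exactly the unpacking the paper intends: the paper gives no explicit proof, declaring the result "completely tautological" after having assembled precisely the ingredients you cite (the fixed-point description of the adjunction, Corollary \ref{cor:etale_preordini} identifying discrete fibrations of preorders with étale maps, and Proposition \ref{prop:psh+stack=sheaf} together with the definition of $J$-étale as discrete $J$-stack). Nothing to add.
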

We now move to specializing the fundamental adjunction 
\[
\Lambda_{\Topos^s/_1\Sh(\cbicat,J)} \dashv \Gamma_{\Topos^s/_1\Sh(\cbicat,J)}
\]
to the case when $(\cbicat,J)$ is a preorder site: we will observe that all the relevant data of this adjunction live at the level of sites, and this simplifies in particular the description of $\sheafify_J$. To see this we will need to exploit locales, so let us first recap some basic locale theory.

A \emph{frame}\index{frame} is a distributive lattice with all joins, and such that meets distribute over infinite joins, \ie
\[
a\wedge \left(\bigvee_{i\in I} b_i\right)= \bigvee_{i\in I} (a\wedge b_i)
\]
A morphism of frames is a map between them preserving the operations: frames form a category $\Frame$\index{$\Frame$}, which can be easily be made into a 2-category. The category $\Locale$\index{$\Locale$} of \emph{locales}\index{locale} is defined as $\Frame\op$: in particular, for a locale $L$ it is custom to denote its corresponding frame by $\Ocal(L)$, while if $g:L\rightarrow M$ is an arrow of locales its corresponding morphism of frames is denoted by $g\inv:\Ocal(M)\rightarrow \Ocal(L)$.

If we see a frame $F$ as a preorder site, its canonical topology corresponds with the join-cover topology\index{topology!join-cover}: $\{a_i\ |\ i\in I\}$ is a $J\can_F$-covering family for $a\in F$ if $\bigvee_{i\in I} a_i=a$. Given a locale $L$, the topos of sheaves of $L$ is the topos $\Sh(L):=\Sh(\Ocal(L), J\can_{\Ocal(L)})$. A \emph{localic topos}\index{topos!localic} is any topos of sheaves for a locale: their 2-category is denoted by $\LocTopos$\index{$\LocTopos$}. With canonical topologies, all morphisms of frames become automatically morphisms of sites: so for an arrow of locales $g:L\rightarrow M$ we denote by $\Sh(g):\Sh(L)\rightarrow \Sh(M)$ the induced geometric morphism $\Sh(g\inv):\Sh(\Ocal(L), J\can_{\Ocal(L)})\rightarrow \Sh(\Ocal(M), J\can_{\Ocal(M)})$. The following result will be of fundamental importance:
\begin{prop}[{\cite[Chapter IX, Proposition 6.2]{maclanemoerdijk}}]\label{locales_ff_nei_topos}
	The 2-functor 
	\[
	\Sh(-):\Locale\rightarrow\Topos
	\]
	is 2-full and faithful: \ie, there is a pseudonatural equivalence
	\[
	\Locale(L,M)\simeq \Topos(\Sh(L), \Sh(M)).
	\]
	In particular, 
	\[
	\Locale\simeq\LocTopos.
	\]
\end{prop}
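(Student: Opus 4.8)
The plan is to realise $\Sh(-)$ as one half of a 2-adjunction between $\Topos$ and $\Locale$ and then to show that the relevant counit is an equivalence, which is exactly what is needed for a right 2-adjoint to be 2-fully faithful. Concretely, I would introduce the 2-functor $\Sub_{(-)}(1)\colon \Topos \to \Locale$ sending a topos $\Etopos$ to the locale whose frame is the poset $\Sub_{\Etopos}(1)$ of subobjects of the terminal object (this is a frame, being a complete Heyting algebra), and a geometric morphism $f$ to the frame homomorphism induced by $f^{*}$ on subterminals. The key claim, from which everything follows, is the pseudonatural equivalence
\[
\Topos(\Ftopos, \Sh(M)) \simeq \Frame(\Ocal(M), \Sub_{\Ftopos}(1)) = \Locale(\Sub_{\Ftopos}(1), M),
\]
natural in $\Ftopos$ and $M$; this exhibits $\Sh(-)$ as right 2-adjoint to $\Sub_{(-)}(1)$ (read contravariantly, since $\Locale=\Frame\op$).

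To prove this equivalence I would invoke the classical Diaconescu theorem: geometric morphisms $\Ftopos\to \Sh(\Ocal(M), J\can_{\Ocal(M)})$ correspond to flat, $J\can_{\Ocal(M)}$-continuous functors $\Ocal(M)\to \Ftopos$ (equivalently, morphisms of sites). Since $\Ocal(M)$ is a frame — a poset with finite meets and a top element, equipped with the join-cover topology — I would check that such a flat continuous functor $F$ necessarily takes values, up to isomorphism, among subterminal objects: filtering on a meet-semilattice with top forces $F$ to preserve the top (so $F(\top)\cong 1$) and binary meets (so each $F(u)\rightarrowtail 1$ is mono and $F$ preserves $\wedge$), while $J\can$-continuity forces $F$ to send join-covers to colimit covers, i.e.\ to preserve arbitrary joins. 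Hence $F$ is precisely a frame homomorphism $\Ocal(M)\to \Sub_{\Ftopos}(1)$, and conversely every such homomorphism is flat and continuous; the correspondence is natural and, on $2$-cells, reflects the pointwise order, so that the hom-category on the left is equivalent to the poset on the right.

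The second ingredient is the computation of the counit, namely the frame isomorphism $\Sub_{\Sh(L)}(1) \cong \Ocal(L)$. A subterminal object of $\Sh(L)$ is a subsheaf of the terminal sheaf; since the representables $\ell_J(v)$ for $v\in\Ocal(L)$ are already subterminal and $1\cong\ell_J(\top)$, one checks that $v\mapsto \ell_J(v)$ is an order isomorphism $\Ocal(L)\isorightarrow \Sub_{\Sh(L)}(1)$ preserving finite meets and arbitrary joins (a subsheaf of $1$ is determined by the $J\can$-closed down-set on which it is "true", and such down-sets are exactly the principal ones, i.e.\ the elements of $\Ocal(L)$). Feeding this into the displayed equivalence with $\Ftopos=\Sh(L)$ yields
\[
\Topos(\Sh(L), \Sh(M)) \simeq \Frame(\Ocal(M), \Sub_{\Sh(L)}(1)) \simeq \Frame(\Ocal(M), \Ocal(L)) = \Locale(L, M),
\]
which is the asserted 2-full faithfulness. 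The final clause $\Locale\simeq\LocTopos$ then follows formally: $\LocTopos$ is by definition the full sub-2-category of $\Topos$ on toposes of the form $\Sh(L)$, i.e.\ the essential image of $\Sh(-)$, and a 2-fully-faithful 2-functor corestricts to an equivalence onto its essential image.

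I expect the main obstacle to be the first step: the careful verification that flatness together with canonical-continuity of a functor out of a \emph{frame} is equivalent to being a frame homomorphism into $\Sub_{\Ftopos}(1)$, and that this matches geometric morphisms on the nose (including the direction of the $2$-cells and the pseudonaturality in both variables). The meet-preservation extracted from filtering and the join-preservation extracted from continuity are each routine, but assembling them into a genuinely natural equivalence of hom-categories — and exploiting the fact that the hom-categories here are preorders, so that 2-fullness and 2-faithfulness reduce to matching the two orders — is where the bookkeeping lives. By contrast, the identification $\Sub_{\Sh(L)}(1)\cong\Ocal(L)$ is short and standard once one unwinds what a subsheaf of $1$ is.
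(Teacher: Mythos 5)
The paper does not prove this proposition itself — it cites it as Proposition 6.2 of Chapter IX of Mac Lane–Moerdijk — and your argument is precisely the proof given there: exhibit $\Sh(-)$ as right 2-adjoint to the functor $\Etopos\mapsto \Sub_{\Etopos}(1)$ via Diaconescu's theorem applied to the posetal site $(\Ocal(M), J^{\textup{can}}_{\Ocal(M)})$, and then check that the counit $\Sub_{\Sh(L)}(1)\cong \Ocal(L)$ is an isomorphism. Your proposal is correct and coincides with the cited proof, so there is nothing to add.
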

Another useful tool that we will exploit is that of the $J$-ideals of a site $(\cbicat,J)$. A \emph{$J$-ideal}\index{$J$-ideal} of $\cbicat$ is a collection $\ibicat$ of objects of $\cbicat$ such that the following hold: firstly, if $X$ belongs to $\ibicat$ and there is an arrow $Y\rightarrow X$ then $Y$ also belongs to $\ibicat$; secondly, for any $X$ in $\cbicat$, if there exists $S\in J(X)$ such that for every arrow in $f$ the object $\dom(f)$ belongs to $\ibicat$, then $X$ belongs to $\ibicat$. In particular, every object $X$ of $\cbicat$ generates a principal $J$-ideal $\langle X\rangle_J$\index{$\langle X\rangle_J$}, which is the smallest $J$-ideal containing $X$: an element $Y$ of $\cbicat$ belongs to $\langle X\rangle_J$ if and only if it admits a $J$-covering $\{W_i\rightarrow Y \}$ such that every $W_i$ has an arrow to $X$.

The set of $J$-ideals of a small site is denoted by $\Id_J(\cbicat)$\index{$\Id_J(\cbicat)$}, and it is an alternative presentation site for $\Sh(\cbicat,J)$, when $\cbicat$ is a preorder:
\begin{lemma}[{\cite[Theorem 3.1]{caramello2011topostheoretic}}]
	Consider a preorder site $(\cbicat,J)$: then $\Id_J(\cbicat)$ is a locale when ordered by inclusion, and
	\[
	\Sh(\cbicat,J)\simeq \Sh(\Id_J(\cbicat)).\]
	In particular, every topos of sheaves for a preorder site is localic.
\end{lemma}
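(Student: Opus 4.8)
The plan is to realise $\Id_J(\cbicat)$ as the frame of subterminal objects of $\Sh(\cbicat,J)$ and to extract both the frame structure and the equivalence from this single identification, rather than building the frame by hand and then comparing sites. First I would observe that, since $\cbicat$ is a preorder, each representable presheaf $\yo(X)$ is a subobject of the terminal presheaf $1$: for every $Y$ the set $\yo(X)(Y)=\cbicat(Y,X)$ is a subsingleton of $1(Y)=\{*\}$. As the sheafification $\sheafify_J$ is left exact it preserves monomorphisms, so each $\ell_J(X)=\sheafify_J\yo(X)$ is a subterminal object of $\Sh(\cbicat,J)$. Because the objects $\ell_J(X)$ form a separating family for $\Sh(\cbicat,J)$, the sheaf topos is generated by subterminal objects and is therefore localic; by the equivalence $\Locale\simeq\LocTopos$ of Proposition \ref{locales_ff_nei_topos} it is recovered as the topos of sheaves on its own frame of subterminals, $\Sh(\cbicat,J)\simeq \Sh(\Sub_{\Sh(\cbicat,J)}(1))$.

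The heart of the argument is then to identify $\Sub_{\Sh(\cbicat,J)}(1)$ with $\Id_J(\cbicat)$. A subobject of $1$ in the presheaf topos $[\cbicat\op,\Set]$ is a subfunctor of the terminal presheaf, that is, a downward-closed collection of objects of $\cbicat$; this is precisely the first defining clause of a $J$-ideal. Such a subterminal presheaf $\ibicat$ is moreover a $J$-sheaf exactly when it satisfies the second clause: matching families for a subterminal presheaf are trivial, so the sheaf condition at a covering sieve $S\in J(X)$ reduces to the implication that if every domain of an arrow of $S$ lies in $\ibicat$ then $X\in\ibicat$, which is the $J$-saturation condition. Hence, as a poset ordered by inclusion of subobjects, $\Sub_{\Sh(\cbicat,J)}(1)$ is exactly $\Id_J(\cbicat)$ ordered by inclusion.

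Since $\Sub(1)$ is a frame in any Grothendieck topos, with meet given by intersection and join by the (closure of the) union of subobjects, this identification simultaneously shows that $\Id_J(\cbicat)$ is a locale — its meets being intersections of $J$-ideals and its joins being the smallest $J$-ideal containing a union — and completes the chain $\Sh(\cbicat,J)\simeq \Sh(\Id_J(\cbicat))$. The closing ``in particular'' is then immediate, as localicity of $\Sh(\cbicat,J)$ was established along the way.

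I expect the main obstacle to be the careful bookkeeping in the subterminal/sheaf correspondence and its compatibility with the frame operations: one must verify that the intersection of two $J$-ideals is again a $J$-ideal (so that meets in $\Sub(1)$ genuinely restrict to intersections) and that the join computed in $\Sub(1)$ coincides with the least $J$-ideal containing the union, and small-generatedness of $(\cbicat,J)$ should be invoked to ensure that $\Id_J(\cbicat)$ is a set. A cleaner-looking but more laborious alternative, which I would keep in reserve, is to avoid the subterminal description altogether: one verifies directly that the $J$-closure operator on the frame of downsets of $\cbicat$ is a nucleus, so that its fixed points $\Id_J(\cbicat)$ form a frame, and then applies the Comparison Lemma to the dense morphism $X\mapsto\langle X\rangle_J$ from $(\cbicat,J)$ into $\Id_J(\cbicat)$.
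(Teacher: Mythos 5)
Your proof is correct, and it is essentially the standard argument for this result (the paper itself does not prove the lemma but defers to the cited reference, whose proof likewise proceeds by identifying $\Id_J(\cbicat)$ with the frame $\Sub_{\Sh(\cbicat,J)}(1)$ of subterminal objects): the representables are subterminal because $\cbicat$ is a preorder, their sheafifications generate $\Sh(\cbicat,J)$, so the topos is localic, and the subterminal sheaves are exactly the down-closed, $J$-saturated collections of objects, i.e.\ the $J$-ideals. The verifications you flag at the end do go through as expected — meets in $\Sub(1)$ are intersections of $J$-ideals, joins are the $J$-ideal generated by the union, and small-generation of $(\cbicat,J)$ guarantees $\Id_J(\cbicat)$ is a set — so no gap remains.
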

The passage from preorders to their locales of ideals also transforms continuous comorphisms into morphisms of locales: more explicitly, if $A:(\dbicat,K)\rightarrow (\cbicat,J)$ is a continuous comorphism of preorder sites, the inverse image map $A\inv(-)$ restricts to a morphism of frames $g_A\inv:\Id_J(\cbicat)\rightarrow \Id_K(\dbicat)$ such that the geometric morphisms $C_A$ and $Sh(g_A)$ are equivalent.

One can easily see that for a locale $L$ endowed with the canonical topology, a principal $J\can_J$-ideal $\langle a\rangle_J$ is just the {down-set} $a\downarrow$; moreover, all ideals are principal, since one can check easily that $\ibicat= (\bigvee \ibicat)\downarrow$: this has the consequence that there is an isomorphism of locales 
\[
\Id_{J\can_L}(L)\simeq L.
\]
Now consider the homomorphism of frames $-\wedge a: L\rightarrow a\downarrow$. In this case one speaks about the \emph{open sublocale}\index{open sublocale} $i_a:a\downarrow\hookrightarrow L$: more generally, an open sublocale of $L$ is any 
arrow to $L$ isomorphic to one of the kind. In the following lemma we shall show that when $(\cbicat,J)$ is a preorder, for every $X$ in $\cbicat$ the morphism of locales $\Id_{J_X}(\cbicat/X)\rightarrow \Id_J(\cbicat)$ is an open sublocale of $\Id_J(\cbicat)$:
\begin{lemma}\label{lemma:ideali_slice_preorder}
	Consider a preorder site $(\cbicat,J)$ and an object $X$ in $\cbicat$: then the morphism of locales  $\Id_{J_X}(\cbicat/X)\rightarrow \Id_{J}(\cbicat)$ is equivalent to the open sublocale $\Sub(\langle X\rangle_J):=(\langle X\rangle_J)\downarrow\hookrightarrow\Id_J(\cbicat)$ of subideals of the principal $J$-ideal $\langle X\rangle_J$ generated by $X$. 
\end{lemma}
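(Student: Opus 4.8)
The plan is to argue entirely at the level of locales, exploiting the presentation $\Sh(\cbicat,J)\simeq \Sh(\Id_J(\cbicat))$ of \cite{caramello2011topostheoretic} together with the fact, recalled just before the statement, that a continuous comorphism of preorder sites induces a morphism of locales between the associated locales of ideals. First I would observe that $p_X:=\fib\yo(X):\cbicat/X\rightarrow\cbicat$ — which by Example \ref{ex:indexedcategories}(i) is the canonical projection $X{\downarrow}\rightarrow\cbicat$, and which by Proposition \ref{prop:fib_discreta_slice_topos} and the remark following it is a $(J_X,J)$-continuous comorphism of preorder sites — induces the morphism of locales $g_{p_X}:\Id_{J_X}(\cbicat/X)\rightarrow\Id_J(\cbicat)$ appearing in the statement, with $C_{p_X}\simeq\Sh(g_{p_X})$. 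Its associated frame homomorphism is the restriction of the inverse-image map $p_X\inv(-)$; concretely, a $J$-ideal $\kbicat$ of $\cbicat$ is sent to $p_X\inv(\kbicat)=\{[Y\leq X]\mid Y\in\kbicat\}=\kbicat\cap X{\downarrow}$, regarded as a $J_X$-ideal of $\cbicat/X$.

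Next I would describe the target as an open sublocale: $\Sub(\langle X\rangle_J)=(\langle X\rangle_J){\downarrow}\hookrightarrow\Id_J(\cbicat)$ is, by definition, the morphism of locales whose inverse-image frame map is $\omega:\Id_J(\cbicat)\rightarrow(\langle X\rangle_J){\downarrow}$, $\kbicat\mapsto\kbicat\wedge\langle X\rangle_J=\kbicat\cap\langle X\rangle_J$. The heart of the proof is then a frame isomorphism $\Phi:\Id_{J_X}(\cbicat/X)\isorightarrow(\langle X\rangle_J){\downarrow}$ identifying $J_X$-ideals of $X{\downarrow}$ with $J$-ideals of $\cbicat$ contained in $\langle X\rangle_J$. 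I would set $\Phi(\ibicat):=\langle\ibicat\rangle_J$, the $J$-ideal of $\cbicat$ generated by $\ibicat\subseteq X{\downarrow}$, which lands in $(\langle X\rangle_J){\downarrow}$ since $\ibicat\subseteq X{\downarrow}\subseteq\langle X\rangle_J$ and $\langle X\rangle_J$ is a $J$-ideal; its candidate inverse is $\Psi_0(\kbicat):=\kbicat\cap X{\downarrow}$.

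Verifying that $\Phi$ and $\Psi_0$ are mutually inverse order isomorphisms is the main computational step, and rests on the explicit description of the generated ideal. Since a downward-closed collection is already closed under the first two ideal axioms and the transitivity (local character) axiom of $J$ guarantees that one $J$-saturation step already produces a $J$-ideal, one has $\langle\ibicat\rangle_J=\{Y\in\cbicat\mid Y \text{ admits a } J\text{-covering family with all domains in }\ibicat\}$. With this, $\Psi_0\Phi=\id$ reduces to: if $Y\leq X$ is $J$-covered by elements of $\ibicat$, then $Y\in\ibicat$ — which is exactly $J_X$-locality of $\ibicat$, via the correspondence between $J$-covers of $Y$ and $J_X$-covers of $[Y]$ of Example \ref{ex:topologia_su_slice}. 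Dually, $\Phi\Psi_0=\id$ reduces to: every $Y\in\kbicat\subseteq\langle X\rangle_J$ admits a $J$-cover by objects of $X{\downarrow}$, and these lie in $\kbicat$ by downward closure, whence $Y\in\langle\kbicat\cap X{\downarrow}\rangle_J$. As both $\Phi$ and $\Psi_0$ are inclusion-preserving and the frame order on ideals is inclusion, $\Phi$ is automatically a frame isomorphism.

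Finally I would check compatibility of the two maps into $\Id_J(\cbicat)$, namely that $g_{p_X}$ coincides with the open sublocale inclusion post-composed with $\Phi$. On frame homomorphisms this is the identity $g_{p_X}\inv=\Psi_0\circ\omega$, which is immediate: $\Psi_0(\omega(\kbicat))=(\kbicat\cap\langle X\rangle_J)\cap X{\downarrow}=\kbicat\cap X{\downarrow}=g_{p_X}\inv(\kbicat)$, using $X{\downarrow}\subseteq\langle X\rangle_J$. This exhibits $g_{p_X}$ as the open sublocale $\Sub(\langle X\rangle_J)$ up to the frame isomorphism $\Phi$, completing the argument. I expect the genuine obstacle to lie in the frame-isomorphism step — pinning down $\langle\ibicat\rangle_J$ precisely and discharging both round-trip identities, while keeping straight the opposite directions of frame maps and locale maps; the continuity/comorphism input and the open-sublocale bookkeeping should be routine once the preceding results are invoked.
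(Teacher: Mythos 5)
Your argument is correct, but it takes a genuinely different route from the paper. The paper deduces the lemma from topos-theoretic equivalences already on record: since $\ell_J(X)$ is subterminal, $\Sh(\cbicat/X,J_X)\simeq \Sh(\cbicat,J)/\ell_J(X)\simeq \Sh(\Id_J(\cbicat))/\ell(\langle X\rangle_J)$, and the latter localic topos is the topos of sheaves on the locale of subterminals below $\ell(\langle X\rangle_J)$, which is exactly $(\langle X\rangle_J){\downarrow}$; the identification of locales then follows by passing to subterminal objects. You instead work entirely at the level of frames, exhibiting the explicit mutually inverse order isomorphisms $\Phi(\ibicat)=\langle\ibicat\rangle_J$ and $\Psi_0(\kbicat)=\kbicat\cap X{\downarrow}$ and checking compatibility with the structure maps to $\Id_J(\cbicat)$. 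All the key steps hold up: the one-step description of $\langle\ibicat\rangle_J$ for a downward-closed $\ibicat$ is justified by stability and local character of $J$ (in a preorder the sieve generated by composites of covers has all its domains below some $V_{ij}$, hence in $\ibicat$ by downward closure); the round-trip identities are exactly $J_X$-locality of $\ibicat$ and the definition of $\langle X\rangle_J$ plus downward closure of $\kbicat$; and the frame-map compatibility $g_{p_X}\inv=\Psi_0\circ\omega$ uses only $X{\downarrow}\subseteq\langle X\rangle_J$. What your approach buys is self-containedness and explicitness — in fact your $\Phi$ and $\Psi_0$ are precisely the maps $R\inv$ and $R$ that the paper records separately (and less explicitly, as a union over ideals) in the remark following the lemma, so your proof simultaneously establishes that remark. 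What the paper's approach buys is brevity and the conceptual point that the lemma is just the localic shadow of the slice-topos equivalence of Proposition \ref{prop:fib_discreta_slice_topos}.
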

\begin{proof}
	The functor $\ell_J:\cbicat\rightarrow \Sh(\cbicat,J)$ maps each $X$ to a subterminal $\ell_J(X)$: under the equivalence $\Sh(\cbicat,J)\simeq \Sh(\Id_J(\cbicat))$, it corresponds to the subterminal $\ell(\langle X\rangle _J)$. Therefore
	\[
	\Sh(\Id_{J_X}(\cbicat/X))\simeq \Sh(\cbicat/X, J_X)\simeq \Sh(\cbicat,J)/\ell_J(X)\simeq \Sh(\Id_J(\cbicat))/\ell(\langle X\rangle_J).
	\]
	Notice then that the latter topos, which is still localic, is the topos of sheaves for the locale of its subterminal objects: but subterminal objects of $\Sh(\Id_J(\cbicat))/\ell(\langle X\rangle_J)$ correspond to the subobjects of $\ell(\langle X\rangle_J)$, \ie to the sublocales of $\langle X\rangle_J$: thus we have
	\[
	\Sh(\Id_{J_X}(\cbicat/X))\simeq \Sh( \Sub(\langle X\rangle_J)),\]
	implying $\Id_{J_X}(\cbicat/X)\simeq  \Sub(\langle X\rangle_J)$.
\end{proof}
\begin{remark}
	Let us denote by $p:\cbicat/X\rightarrow\cbicat$ the usual fibration: then the isomorphism of the previous lemma can be explicited as follows:
	\[
	R:\Sub(\langle X\rangle_J)\rightarrow \Id_{J_X}(\cbicat/X),\ 
	R(\kbicat):=p\inv (\kbicat)=\{[Y\leq X]\ |\ Y\in\kbicat\};
	\]
	\[
	R\inv: \Id_{J_X}(\cbicat/X)\rightarrow \Sub(\langle X\rangle_J),\ R\inv(\ibicat):=
	\bigcup_{\substack{\jbicat\in \Id_J(\cbicat),\\\jbicat\subseteq \langle X\rangle_J\\p\inv(\jbicat)\subseteq \ibicat}}\jbicat.
	\]
\end{remark}

Finally, let us recall that we have a notion of \emph{étale morphism of locales}\index{locale!étale morphism of -}: a morphism $f:L\rightarrow M$ of locales is said to be étale if there exists elements $x_i$ in $L$ such that $\bigvee_i x_i=\top_L$, and elements $y_i\in M$ such that $f$ restricts to isomorphisms of open sublocales $x_i\downarrow\isorightarrow y_i\downarrow$ (see \cite[Definition 1.7.1]{borceux3}). When denoting by $\Locale\etale$\index{$\Locale\etale$} the category of locales and their étale morphisms, the equivalence of Proposition \ref{locales_ff_nei_topos} induces an equivalence
\[
\Locale\etale/F(L,M)\simeq \Topos\etale/_1\Sh(F)(\Sh(L),\Sh(M))
\]
for any three locales $F$, $L$ and $M$. 

With all these ingredients, we are now ready to specialize the fundamental adjunction (in its discrete form) to preorder sites:
\begin{prop}\label{prop:fundadj_preordini}\index{$\Lambda_{\Locale/_1\Id_J(\cbicat)}\dashv \Gamma_{\Locale/_1\Id_J(\cbicat)}$}
	Consider a small preorder site $(\cbicat,J)$: there is an adjunction
	\[
	\begin{tikzcd}
		{[\cbicat\op,\Set]} \arrow[r, "\Lambda_{\Locale/_1\Id_J(\cbicat)}", bend left, end anchor={north west}, start anchor={north east}] &     \Locale/_1\Id_J(\cbicat) \arrow[l, "\Gamma_{\Locale/_1\Id_J(\cbicat)}", bend left, start anchor={south west}, end anchor={south east}] \ar[l, "\dashv"{rotate=270}, phantom]
	\end{tikzcd}
	\]
	where:\begin{itemize}
		\item $\Lambda_{\Locale/_1\Id_J(\cbicat)}$ maps a presheaf $P:\cbicat\op\rightarrow \Set$ to the morphism of locales $g_{\pi_P}:\Id_{J_P}(\fib P)\rightarrow \Id_{J}(\cbicat)$ whose corresponding morphism of frames is $\pi_P\inv$ (induced by the $(J_P, J)$-continuous comorphism of sites $\pi_P:\fib P\rightarrow \cbicat$);
		\item $\Gamma_{\Locale/_1\Id_J(\cbicat)}$ 
		acts as a contravariant hom-functor, mapping a morphism of locales $g:L\rightarrow \Id_J(\cbicat)$ to $$\Locale/_1\Id_J(\cbicat)(\Sub(\langle -\rangle_J), L):\cbicat\op\rightarrow\Set:$$
		explicitly, the value of $\Gamma(L)$ at a certain $X\in \cbicat$ is the set of sections of $g$ over the open sublocale $\Sub(\langle X\rangle_J)\hookrightarrow\Id_J(\cbicat)$, \ie the locale morphisms $\Sub(\langle X\rangle_J)\rightarrow L$ making the following diagram commutative:
		\[\begin{tikzcd}[column sep=0.5]
			\Sub(\langle X\rangle_J) \ar[rr] \ar[dr] & &L \ar[dl, "g"]\\
			& \Id_J(\cbicat)&
		\end{tikzcd}.\]
	\end{itemize}
	Moreover, the composite $\Gamma_{\Locale/_1\Id_J(\cbicat)}\Lambda_{\Locale/_1\Id_J(\cbicat)}(P)$ corresponds to the sheafification $\sheafify_J(P)$: more explicitly, for every $X$ in $\cbicat$ we have
	\[\sheafify_J(P)(X):=\Locale/_1\Id_J(\cbicat)(\Sub(\langle X\rangle_J), \Id_{J_P}(\fib P)).\]
	The adjunction restricts to an equivalence
	\[
	\Sh(\cbicat,J)\simeq \Locale\etale/\Id_J(\cbicat).
	\]
\end{prop}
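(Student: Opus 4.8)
The plan is to obtain this adjunction by transporting the discrete fundamental adjunction of Proposition~\ref{prop:fundadj_discrete} across the equivalence $\Sh(-)\colon\Locale\simeq\LocTopos$ of Proposition~\ref{locales_ff_nei_topos}, using that every topos of sheaves on a preorder site is localic. Concretely, since $\cbicat$ is a preorder we have $\Sh(\cbicat,J)\simeq\Sh(\Id_J(\cbicat))$; moreover $\fib P$ is again a preorder for every presheaf $P$, so $\Sh(\fib P,J_P)\simeq\Sh(\Id_{J_P}(\fib P))$ is localic as well. Thus the left adjoint $\Lambda_{\Topos^s/_1\Sh(\cbicat,J)}$, which sends $P$ to the étale (hence localic) geometric morphism $C_{p_P}\colon\Sh(\fib P,J_P)\to\Sh(\cbicat,J)$, already factors through the full subcategory $\LocTopos/_1\Sh(\cbicat,J)$ of localic $\Sh(\cbicat,J)$-toposes; these are all small relative to $\Sh(\cbicat,J)$, since the geometric morphisms out of $\Sh(\cbicat,J)/\ell_J(X)$ into a localic topos correspond to locale morphisms and hence form a set.

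First I would restrict the adjunction $\Lambda_{\Topos^s/_1\Sh(\cbicat,J)}\dashv\Gamma_{\Topos^s/_1\Sh(\cbicat,J)}$ to this full subcategory. Because the inclusion $j\colon\LocTopos/_1\Sh(\cbicat,J)\hookrightarrow\Topos^s/_1\Sh(\cbicat,J)$ is full and faithful and the left adjoint factors as $\Lambda=j\circ\Lambda'$, the hom-set bijection descends: for a localic $\Sh(\cbicat,J)$-topos $L$ and a presheaf $P$,
\[
\LocTopos/_1\Sh(\cbicat,J)(\Lambda'(P),L)\cong\Topos^s/_1\Sh(\cbicat,J)(\Lambda(P),jL)\cong[\cbicat\op,\Set](P,\Gamma(jL)),
\]
so $\Lambda'\dashv\Gamma\circ j$ (the dual of Lemma~\ref{lemma:adj_restriction_subcat}, obtained directly from the full faithfulness of $j$). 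Next I would apply $\Sh(-)$: the equivalence $\Locale\simeq\LocTopos$ of Proposition~\ref{locales_ff_nei_topos} induces an equivalence of slice $1$-categories $\Locale/_1\Id_J(\cbicat)\simeq\LocTopos/_1\Sh(\cbicat,J)$, along which the restricted adjunction yields $\Lambda_{\Locale/_1\Id_J(\cbicat)}\dashv\Gamma_{\Locale/_1\Id_J(\cbicat)}$.

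It then remains to identify the two functors and the composite explicitly. On the left, $C_{p_P}$ corresponds under $\Sh(-)$ to the morphism of locales $g_{\pi_P}\colon\Id_{J_P}(\fib P)\to\Id_J(\cbicat)$ induced by the $(J_P,J)$-continuous comorphism $\pi_P$, as recalled before the statement. On the right, the generators $\Sh(\cbicat,J)/\ell_J(X)\simeq\Sh(\cbicat/X,J_X)$ (Proposition~\ref{prop:fib_discreta_slice_topos}) translate, by Lemma~\ref{lemma:ideali_slice_preorder}, into the open sublocales $\Sub(\langle X\rangle_J)\hookrightarrow\Id_J(\cbicat)$; hence the hom-functor $\Topos^s/_1\Sh(\cbicat,J)(\Sh(\cbicat,J)/\ell_J(-),-)$ becomes the functor of sections of a locale over the open sublocale $\Sub(\langle-\rangle_J)$, which is precisely the stated $\Gamma_{\Locale/_1\Id_J(\cbicat)}$. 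The sheafification formula is read off from Proposition~\ref{prop:fundadj_discrete}(iii), $\Gamma\Lambda\cong i_J\sheafify_J$, evaluated at $X$, and the final equivalence $\Sh(\cbicat,J)\simeq\Locale\etale/\Id_J(\cbicat)$ follows from Proposition~\ref{prop:fundadj_discrete}(iv) together with the equivalence $\Locale\etale/F\simeq\Topos\etale/_1\Sh(F)$ recalled above.

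The main obstacle I anticipate is not the existence of the adjunction, which is a formal transport, but the verification that the identification $\Id_{J_X}(\cbicat/X)\simeq\Sub(\langle X\rangle_J)$ of Lemma~\ref{lemma:ideali_slice_preorder} is natural in $X$, so that the diagram $\Sub(\langle-\rangle_J)\colon\cbicat\to\Locale/_1\Id_J(\cbicat)$ assembles correctly and the translated right adjoint really is the ``sections over $\Sub(\langle-\rangle_J)$'' functor. This is where the bookkeeping with principal $J$-ideals and the explicit sublocale isomorphism $R$ of the remark following Lemma~\ref{lemma:ideali_slice_preorder} must be carried out, checking compatibility with the transition maps $\fib y$ and with the structural morphisms to $\Id_J(\cbicat)$.
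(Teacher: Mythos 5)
Your proposal is correct and follows essentially the same route as the paper's proof: both observe that $\Lambda$ lands in localic $\Sh(\cbicat,J)$-toposes because $\fib P$ is a preorder, restrict the discrete fundamental adjunction to $\LocTopos/_1\Sh(\cbicat,J)$, transport across $\Locale\simeq\LocTopos$, and invoke Lemma \ref{lemma:ideali_slice_preorder} to identify the generators with the open sublocales $\Sub(\langle X\rangle_J)$. The only cosmetic difference is that you justify the restriction via the full faithfulness of the inclusion of localic toposes, while the paper argues directly that the legs of the relevant cocones and the induced colimit comparison morphisms are themselves localic; both are valid.
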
 
\begin{proof}
	We can start by remarking that the functor
	\[
	\Lambda_{\Topos^s/_1\Sh(\cbicat,J)}:[\cbicat\op,\Set]\rightarrow \Topos^s/_1\Sh(\cbicat,J)
	\]
	takes values inside the smaller slice category $\LocTopos/_1\Sh(\cbicat,J)$: this is true since every site $(\fib P, J_P)$ is a preorder site, and hence the topos $\Sh(\fib P, J_P)$ is localic. On the other hand, given a localic topos $\Etopos\hspace{-0.2ex}\rightarrow\hspace{-0.2ex}\Sh(\cbicat,J)$, the legs of every cocone in $\Topos^s/_1\Sh(\cbicat,J) (\Sh(\cbicat/-, J_{(-)}), \Etopos)$, and the geometric morphism $\Sh(\fib P, J_P)\rightarrow\Etopos$ induced by the universal property of colimits, will be geometric morphisms of localic toposes too. This means that the pseudonatural equivalence
	\begin{align*}
		[\cbicat\op,\Set](P, \Topos^s/_1\Sh(\cbicat,J) (\Sh(\cbicat/-, J_{(-)}), \Etopos) )\simeq\\
		\simeq \Topos^s/_1\Sh(\cbicat,J)(\Sh(\fib P, J_P), \Etopos)
	\end{align*}
	restricts to an equivalence
	\begin{align*}
		[\cbicat\op,\Set](P, \LocTopos/_1\Sh(\cbicat,J) (\Sh(\cbicat/-, J_{(-)}), \Etopos) )\simeq\\
		\simeq \LocTopos/_1\Sh(\cbicat,J)(\Sh(\fib P, J_P), \Etopos)	
	\end{align*}
	and thus we have an adjunction
	\[
	\begin{tikzcd}
		{[\cbicat\op,\Set]} \arrow[r, "\Lambda_{\LocTopos/_1\Sh(\cbicat,J)}", bend left, end anchor={north west}, start anchor={north east}] &     \LocTopos/_1\Sh(\cbicat,J) \arrow[l, "\Gamma_{\LocTopos/_1\Sh(\cbicat,J)}", bend left, start anchor={south west}, end anchor={south east}] \ar[l, "\dashv"{rotate=270}, phantom]
	\end{tikzcd}.
	\]
	Since $(\cbicat,J)$ is a preorder site we have
	\[
	\LocTopos/_1\Sh(\cbicat,J)\simeq \LocTopos/_1\Sh(\Id_J(\cbicat))\simeq \Locale/_1\Id_J(\cbicat)
	\] 
	and we can conclude that there is an equivalence
	\begin{align*}
		[\cbicat\op,\Set](P, \Locale/_1\Id_J(\cbicat) (\Id_{J_{(-)}}(\cbicat/-), L) )\simeq \\\simeq \Locale/_1\Id_J(\cbicat)(\Id_{J_{P}}(\fib P), L)
	\end{align*}
	proving that $\Lambda_{\Locale/_1\Id_J(\cbicat)}\dashv \Gamma_{\Locale/_1\Id_J(\cbicat)}$. 
	Lemma \ref{lemma:ideali_slice_preorder} provides us the isomorphism $\Id_{J_{(-)}}(\cbicat/-)\simeq \Sub(\langle -\rangle_J)$, justifying the description of $\Gamma_{\Locale/_1\Id_J(\cbicat)}$ in the claim of the theorem. 
	
	Finally, the chain of known equivalences
	\[
	\Sh(\cbicat,J)\simeq \Topos\etale/_1\Sh(\cbicat,J)\simeq \Locale\etale/\Id_J(\cbicat)
	\]
	provides the restriction of the adjunction to sheaves.
\end{proof}
To specialize to the localic case, \ie when $(\cbicat,J)=(L, J\can_L)$ we can exploit the isomorphism $\Id_{J\can_L}(L)\cong L$:
\begin{cor}\label{cor:adj_locales}\index{$\Lambda_{\Locale/_1L}\dashv \Gamma_{\Locale/_1L}$} 
	Consider a locale $L$: there is an adjunction
	\[
	\begin{tikzcd}
		{[\Ocal(L)\op,\Set]} \arrow[r, "\Lambda_{\Locale/_1L}", bend left, end anchor={north west}, start anchor={north east}] &     \Locale/_1L \arrow[l, "\Gamma_{\Locale/_1L}", bend left, start anchor={south west}, end anchor={south east}] \ar[l, "\dashv"{rotate=270}, phantom]
	\end{tikzcd},
	\]
	where:\begin{itemize}
		\item $\Lambda_{\Locale/_1L}$ maps a presheaf $P:\Ocal(L)\op\rightarrow \Set$ to the morphism of locales $g_{\pi_P}:\Id_{J_P}(\fib P)\rightarrow L$, where $\pi_P:\fib P\rightarrow \cbicat$ and $J_P$ is Giraud's topology induced by the canonical topology $J\can_L$: $g_{\pi_P}$ corresponds to the homomorphism of frames $\pi_P\inv:L\rightarrow \Id_{J_P}(\fib P)$;
		\item $\Gamma_{\Locale/_1L}$ acts as a contravariant hom-functor, mapping a morphism of locales $g:M\rightarrow L$ to 
		\[
		\Locale/_1L((-)\downarrow, M):\Ocal(L)\op\rightarrow\Set:
		\]
		explicitly, the value of $\Gamma(M)$ at a certain $a\in L$ is the set of sections of $g:M\rightarrow L$ over the open sublocale $a\downarrow \hookrightarrow L$.
	\end{itemize}
	Moreover, the composite $\Gamma_{\Locale/_1L}\Lambda_{\Locale/_1L}(P)$ corresponds to the sheafification $\sheafify_{J\can_L}(P)$, \ie for any $a$ in $L$ we have
	\[
	\sheafify_{J\can_L}(P)(a):=\Locale/_1 L( a\downarrow, \Id_{J_P}(\fib P)).\]
\end{cor}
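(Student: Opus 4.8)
The plan is to obtain this corollary as the direct specialization of Proposition \ref{prop:fundadj_preordini} to the preorder site $(\cbicat, J) := (\Ocal(L), J\can_L)$, exploiting the two simplifications peculiar to the canonical topology on a locale. First I would invoke the isomorphism of locales $\Id_{J\can_L}(L) \cong L$ recalled above, which rests on the fact that every $J\can_L$-ideal $\ibicat$ is principal, being equal to $(\bigvee \ibicat)\downarrow$. Under this identification the base locale $\Id_J(\cbicat)$ appearing in Proposition \ref{prop:fundadj_preordini} becomes $L$ itself, the slice $\Locale/_1\Id_J(\cbicat)$ becomes $\Locale/_1 L$, and the morphism $g_{\pi_P}\colon \Id_{J_P}(\fib P) \to \Id_J(\cbicat)$ produced by $\Lambda$ becomes the stated morphism $g_{\pi_P}\colon \Id_{J_P}(\fib P) \to L$ induced by the $(J_P, J\can_L)$-continuous comorphism $\pi_P\colon \fib P \to \Ocal(L)$, whose corresponding frame homomorphism is $\pi_P\inv$.

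Next I would compute the weight $\Sub(\langle a\rangle_J)$ of the hom-functor $\Gamma$ in this setting. For the canonical topology the principal ideal $\langle a\rangle_{J\can_L}$ coincides with the down-set $a\downarrow$, and since all ideals are principal the subideals of $a\downarrow$ are exactly the ideals $b\downarrow$ with $b \leq a$; hence the open sublocale $\Sub(\langle a\rangle_J) \hookrightarrow L$ of subideals of $\langle a\rangle_J$ is canonically isomorphic to $a\downarrow$ equipped with its induced frame structure. Substituting $\Sub(\langle a\rangle_J) \cong a\downarrow$ and $\Id_J(\cbicat) \cong L$ into the description of $\Gamma_{\Locale/_1\Id_J(\cbicat)}$ from Proposition \ref{prop:fundadj_preordini} yields precisely the assignment sending $g\colon M \to L$ to $\Locale/_1 L(a\downarrow, M)$, i.e. to the set of sections of $g$ over the open sublocale $a\downarrow \hookrightarrow L$. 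The adjunction $\Lambda_{\Locale/_1 L} \dashv \Gamma_{\Locale/_1 L}$, together with the equivalence of hom-sets, then transfers verbatim from Proposition \ref{prop:fundadj_preordini} along these two isomorphisms; and the sheafification clause follows from the corresponding clause there, which identifies $\sheafify_J(P)(X)$ with $\Locale/_1\Id_J(\cbicat)(\Sub(\langle X\rangle_J), \Id_{J_P}(\fib P))$, feeding in the same identifications to obtain $\sheafify_{J\can_L}(P)(a) = \Locale/_1 L(a\downarrow, \Id_{J_P}(\fib P))$.

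The only point demanding genuine care, and thus the main obstacle, is verifying that the isomorphism $\Sub(\langle a\rangle_J) \cong a\downarrow$ is natural in $a$, so that it assembles into an isomorphism of presheaves $\Sub(\langle -\rangle_J) \cong (-)\downarrow$ on $\Ocal(L)\op$ compatible with the restriction maps. This naturality is what guarantees that $\Gamma$ transports to an honest hom-functor in the variable $a$ rather than merely an object-wise reindexing, and it is what makes the displayed formula for $\sheafify_{J\can_L}(P)$ functorial. I expect this to be straightforward using the explicit descriptions of $R$ and $R\inv$ given after Lemma \ref{lemma:ideali_slice_preorder}, specialized to the case $\langle a\rangle_J = a\downarrow$, where $R$ reduces to $\pi_P\inv$ restricted to $a\downarrow$; everything else in the argument is a mechanical substitution of the two locale isomorphisms into the already-established preorder adjunction.
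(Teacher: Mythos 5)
Your proposal is correct and follows exactly the route the paper intends: the corollary is stated there as an immediate specialization of Proposition \ref{prop:fundadj_preordini} via the isomorphism $\Id_{J\can_L}(L)\cong L$, which is all the justification the paper gives. Your additional verification that $\Sub(\langle a\rangle_{J\can_L})\cong a\downarrow$ naturally in $a$ (since every ideal for the canonical topology is principal) is precisely the detail the paper leaves implicit, and it is carried out correctly.
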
 

\begin{remarks}\label{rmk:connessione_localeadj_tpladj}\noindent
	\begin{enumerate}[(i)]
		\item We will condense the previous considerations in Subsection \ref{sec:restrictibility}, showing a general context in which the fundamental adjunction restricts from the topos-theoretic environment to sites and their morphisms or comorphisms.
		\item The existence of the adjunction of Corollary \ref{cor:adj_locales} is implicit in the results of \cite[Section 2.6]{borceux3} and in the exercises from 9 to 12 of \cite[Chapter IX]{maclanemoerdijk}, though in both cases the focus is on the equivalence between sheaves over the locale and étale mappings to it. We also remark that, relating the equivalence $\Sh(L)\simeq \Etale/L$ to $\Sh(\cbicat,J)\simeq \Etale(\cbicat,J)$, which holds for any preorder site $(\cbicat,J)$, we can conclude that a monotone map of preorders $f:P\rightarrow \cbicat$ is $J$-étale if and only if the corresponding homomorphism of frames $f\inv:\Id_J(\cbicat)\rightarrow \Id_{J_P}(P)$ is an étale arrow of locales.
		\item We can now see how exactly the topological framework connects to the localic adjunction of the previous corollary. In Proposition \ref{prop:fascificazione_topologica_con_framehom} we have seen that for a topological space $X$ and a presheaf $P\in \Psh(X)$ it holds that, for any open $U$ of $X$,
		\[
		\sheafify_J(P)(U)\simeq \Locale/\Ocal(X) (\Ocal(U), \Ocal(E_P)),\]
		where $E_P$ is the étale bundle associated to $P$. By the last result however we have that
		\[
		\sheafify_J(P)(U)\simeq \Locale/\Ocal(X) (\Ocal(U), \Id_{J_P}(\fib P)),\]
		where $(\fib P, J_P)$ is the Giraud site associated to $P$. The apparent difference is immediately explained by noticing that the map
		\[\Id_{J_P}(\fib P)\rightarrow \Ocal(E_P),\ \ibicat\mapsto \bigcup_{(U,s)\in \ibicat}\dot{s}(U)
		\]
		provides an isomorphism between the two locales $\Ocal(E_P)$ and $\Id_{J_P}(\fib P)$. This is in fact an extension of the map $f_P$ we defined in Proposition \ref{prop:fasci__etalebundle_eqv_fasci_grfibration}, and indeed that result entails the isomorphism $\Id_{J_P}(\fib P)\simeq \Ocal(E_P)$, since the equivalence
		\[
		\Sh(E_P)\simeq\Sh(\fib P, J_P)\]
		of localic toposes restrict to an isomorphism of their locales of subterminal objects. Thus, even though classically one defines the sheafification of $P\in \Psh(X)$ as the local sections of the étale bundle $E_P\rightarrow X$, these considerations show explicitly that there is no need to work in the topological context, since all the relevant information lives at the localic level. Finally, we remark that this last consideration is essentially the content of \cite[Proposition 2.5.4]{borceux3}, where nonetheless there is no explicit reference to the locale $\Id_{J_P}(\fib P)$ but instead to the locale of closed subpresheaves of $P$.
	\end{enumerate}
\end{remarks}
We conclude this section by remarking that our description of the sheafification in the localic case is naturally site-theoretic, and that in the case of topological spaces sheafification can also be described using comorphisms of sites. First of all, the following holds:
\begin{lemma}\label{lemma:framehom_sse_morfsiti}
	Consider two frames $L$ and $M$: frame homomorphisms $L\rightarrow M$ correspond to morphisms of sites $(L, J\can_L)\rightarrow (M, J\can_M)$. In other words, the 2-functor
	\[
	\Frame \xhookrightarrow{L\mapsto (L, J\can_L) } \Site
	\]
	is 2-fully faithful.
\end{lemma}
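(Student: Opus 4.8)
The plan is to show that, for every pair of frames $L,M$, the hom-functor
\[
\Frame(L,M)\longrightarrow \Site((L,J\can_L),(M,J\can_M))
\]
induced by $L\mapsto(L,J\can_L)$ is an equivalence of (posetal) hom-categories. Throughout I regard a frame as a posetal category which is finitely complete (finite meets, with the top as terminal object) and cocomplete, and I use that the canonical topology $J\can$ is the join-cover topology, so that $\{a_i\}$ covers $a$ precisely when $\bigvee_i a_i=a$, and that $J\can$ is subcanonical, whence $\ell_{J\can}$ is fully faithful. The crux is an \emph{if and only if}: a monotone map $F\colon L\to M$ is a morphism of sites $(L,J\can_L)\to(M,J\can_M)$ exactly when it is a frame homomorphism.

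First I would dispatch the easy inclusion. If $h\colon L\to M$ is a frame homomorphism then, viewed as a functor, it preserves finite limits (finite meets and the top); since $L$ is finitely complete and $\ell_{J\can_M}$ is left exact, the composite $\ell_{J\can_M}\circ h$ preserves finite limits and is therefore flat. Moreover $h$ preserves arbitrary joins, so it is cover-preserving for the join-cover topologies. By the characterization of morphisms of sites as flat, cover-preserving functors (and as already recorded in the locale recap preceding the statement, every morphism of frames is automatically a morphism of sites for the canonical topologies), $h$ is a morphism of sites, and the induced geometric morphism is the one attached by $\Sh(-)$ to the locale morphism whose inverse image is $h$, with direct image $(-\circ h\op)$.

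The substantial direction is the converse, which I expect to be the main obstacle: forcing a morphism of sites $F\colon(L,J\can_L)\to(M,J\can_M)$ to be a frame homomorphism. Cover-preservation is immediate from continuity (Proposition \ref{prop:ft_cont_caratterizzazione}): applying it to the covering family $\{a_i\}$ of $a=\bigvee_i a_i$ yields $\bigvee_i F(a_i)=F(a)$, so $F$ preserves all joins (the empty case giving $F(\bot_L)=\bot_M$). For meets I would invoke that $\Sh(F)^{*}$ is left exact and satisfies $\Sh(F)^{*}\circ\ell_{J\can_L}\cong\ell_{J\can_M}\circ F$; since $\ell_{J\can_L}$ is left exact, the composite $\ell_{J\can_M}\circ F$ preserves finite meets and the top, and since $\ell_{J\can_M}$ itself preserves finite meets and is fully faithful (hence reflects these finite limits) while $M$ is a poset, one concludes $F(a\wedge b)=F(a)\wedge F(b)$ and $F(\top_L)=\top_M$. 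Thus $F$ is a frame homomorphism, making the hom-functor essentially surjective and full on $1$-cells.

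Finally I would handle the $2$-cells. In both $\Frame(L,M)$ and the category of morphisms of sites between posetal sites there is at most one $2$-cell between parallel $1$-cells, and a natural transformation $F\Rightarrow G$ exists exactly when $F(a)\le G(a)$ for all $a$; this matches the pointwise order on frame homomorphisms, so the hom-functor is an isomorphism of posets, hence an equivalence. Alternatively, and more conceptually, one can note that the entire correspondence is the restriction of the $2$-fully faithful functor $\Sh(-)\colon\Locale\to\Topos$ of Proposition \ref{locales_ff_nei_topos}: a frame homomorphism $h\colon L\to M$ and the morphism of sites it determines induce, via $\Sh(-)$, the same geometric morphism $\Sh(M)\to\Sh(L)$, and Proposition \ref{locales_ff_nei_topos} then transports full faithfulness on both $1$-cells and $2$-cells, yielding the asserted $2$-full faithfulness in one stroke.
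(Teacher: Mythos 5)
Your proof is correct and rests on the same key observation as the paper's (one-sentence) argument: since a frame is a finitely complete category, a morphism of sites out of it is exactly a finite-limit-preserving, cover-preserving functor, which for the join-cover topologies means preserving finite meets and arbitrary joins, i.e.\ being a frame homomorphism. Your converse direction routes through $\Sh(F)^{*}$ and full faithfulness of $\ell$ where the paper simply cites the lex characterization directly, and your treatment of the $2$-cells is a harmless addition, but the substance is the same.
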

\begin{proof}
	Since a frame is a category with finite limits, a functor $A:L\rightarrow M$ is a morphism of sites if and only if it preserves finite limits and is cover-preserving: this means that it preserves finite meets and arbitrary joins, \ie it is a homomorphism of frames.
\end{proof}
This implies that  for a locale $L$, an element $a$ of $L$ and a presheaf $P\in \Psh(L)$ we have
\begin{align*}
	\sheafify(P)(a)&=\Locale/L(a\downarrow, \Id_{J_P}(\fib P))\\
	&=(L, J\can_L)/\Site( (\Id_{J_P}(\fib P), J\can_{\Id_{J_P}(\fib P)}), (a\downarrow, J\can_{a\downarrow}) ).
\end{align*}
Suppose now that $L=\Ocal(X)$ for a topological space $X$. Proposition \ref{cor:sezioni_bundleetale_aperte} showed that that sections of the étale bundle $\pi:E_P\rightarrow X$ are open maps, and using Lemma \ref{lemma:open_map_induce_adjunction_topologies} we have that a section $s:U\rightarrow E_P$ (triangle on the left) induces \emph{two} commutative triangles:
\[
\begin{tikzcd}
	U \ar[dr, "i_U"'] \ar[r,"s"] & E_P\ar[d, "\pi"]\\
	& X
\end{tikzcd}\rightsquigarrow
\begin{tikzcd}
	\Ocal(U) \ar[dr, leftarrow, "i_U\inv"'] \ar[r,leftarrow,"s\inv"] & \Ocal(E_P)\ar[d, "\pi\inv", leftarrow]\\
	& \Ocal(X)
\end{tikzcd},\ 
\begin{tikzcd}
	\Ocal(U) \ar[dr, "(i_U)_!"'] \ar[r,"s_!"] & \Ocal(E_P)\ar[d, "\pi_!"]\\
	& \Ocal(X)
\end{tikzcd}
\] 
We already know that the middle triangle is a triangle of morphisms of sites, and by Applying \cite[Proposition 3.14]{denseness}, we have that the triangle on the right is a diagram of comorphisms of sites. In fact, more can be said:
\begin{prop}
	For any section $s:U\rightarrow E_P$, the comorphism \[s_!\hspace{-0.3ex}:\hspace{-0.3ex}(\Ocal(U),J\can_{\Ocal(U)})\hspace{-0.3ex}\rightarrow\hspace{-0.3ex} (\Ocal(E_P),J\can_{\Ocal(E_P)})\] is $(J\can_{\Ocal(U)}, J\can_{\Ocal(E_P)})$-continuous.
\end{prop}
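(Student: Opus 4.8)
The plan is to prove continuity by means of the sheaf-theoretic criterion of Proposition \ref{prop:ft_cont_caratterizzazione}, specifically condition (ii): a functor $p$ is $(K,J)$-continuous precisely when $p^*=(-\circ p\op)$ sends $J$-sheaves to $K$-sheaves. Here $p=s_!:\Ocal(U)\rightarrow \Ocal(E_P)$, with domain topology $J\can_{\Ocal(U)}$ and codomain topology $J\can_{\Ocal(E_P)}$, so I must show that $s_!^*=(-\circ s_!\op)$ carries $J\can_{\Ocal(E_P)}$-sheaves on $E_P$ to $J\can_{\Ocal(U)}$-sheaves on $U$. The preliminary observation, which does the real geometric work, is that $s$ is an \emph{open embedding}: it is continuous by hypothesis, injective because $\pi_P\circ s=i_U$ is injective, and open by Corollary \ref{cor:sezioni_bundleetale_aperte}. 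Hence $s$ restricts to a homeomorphism $U\isorightarrow s(U)$ onto the open subset $s(U)\subseteq E_P$, and $s_!(W)=s(W)$ is the left adjoint of $s\inv$ guaranteed by Lemma \ref{lemma:open_map_induce_adjunction_topologies}.

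With this in hand the verification is immediate. Let $F:\Ocal(E_P)\op\rightarrow\Set$ be a $J\can_{\Ocal(E_P)}$-sheaf; I must check that the presheaf $F\circ s_!\op:\Ocal(U)\op\rightarrow\Set$, sending $W\mapsto F(s(W))$, is a $J\can_{\Ocal(U)}$-sheaf. Fix $V\in\Ocal(U)$ and a canonical cover $V=\bigcup_{i}V_i$. Since $s$ is a function its image commutes with unions, and each $s(V_i)$ is open, so $s(V)=\bigcup_i s(V_i)$ is an open cover of $s(V)$ in $\Ocal(E_P)$. Because $s$ is injective, finite intersections correspond under $s$, i.e. $s(V_i)\cap s(V_j)=s(V_i\cap V_j)$. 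Therefore matching families for $F\circ s_!\op$ relative to $\{V_i\}$ correspond bijectively, and compatibly with restriction, to matching families for $F$ relative to $\{s(V_i)\}$, and the unique amalgamation provided by the sheaf property of $F$ transports back to a unique amalgamation for $F\circ s_!\op$. This gives exactly the sheaf condition for $F\circ s_!\op$, completing the proof that $s_!$ is $(J\can_{\Ocal(U)},J\can_{\Ocal(E_P)})$-continuous (that $s_!$ is already a comorphism of sites having been recorded in the discussion preceding the statement).

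A cleaner, essentially equivalent packaging would be to factor $s_!$ as the isomorphism of sites $\Ocal(U)\isorightarrow\Ocal(s(U))$ induced by the homeomorphism $s$, followed by the canonical continuous comorphism attached to the open inclusion $s(U)\hookrightarrow E_P$ (continuity of the latter being the statement that restriction of a sheaf to an open subspace is again a sheaf), and to invoke closure of continuous comorphisms under composition. I expect no genuine obstacle: the only point requiring care is that the two families $\{V_i\}$ and $\{s(V_i)\}$ have identical descent data, which rests entirely on $s$ being a homeomorphism onto an \emph{open} subset so that both unions and finite intersections are preserved and reflected — and this is precisely what the openness and injectivity of sections supply.
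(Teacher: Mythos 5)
Your proof is correct and follows essentially the same route as the paper's: both verify condition (ii) of Proposition \ref{prop:ft_cont_caratterizzazione} by transporting a matching family for $F\circ s_!\op$ over $\{V_i\}$ to one for $F$ over $\{s(V_i)\}$ and pulling the amalgamation back. The only difference is cosmetic: where the paper checks the matching condition for $F$ by covering $s(V_i)\cap s(V_j)$ with basic opens $\dot{r}(Z)$ and showing each equals $s(Z)$, you obtain the same identification $s(V_i)\cap s(V_j)=s(V_i\cap V_j)$ directly from injectivity and openness of $s$, which is a mild streamlining rather than a new argument.
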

\begin{proof}
	Consider a sheaf $W\in \Sh(E_P)$: we want to show that $W\circ s_!\op$ is a sheaf over $U$. To do so, consider a family of opens $\{V_i\subseteq U\ |\ i\in I \}$ and take for every $i$ an element $x_i\in W\circ s_!\op(V_i)=W(s(V_i))$ so that for every open $Z\subseteq V_i\cap V_j$ one has $x_{i|s(Z)}=x_{j|s(Z)}$. We want to prove the esistence of an amalgamation of the $x_i$, \ie of an element $x\in W(s(\cup V_i))$ such that for every $i$ one has $x_i=x_{|s(V_i)}$. To do so, it is sufficient to check that the elements $x_i$ are also a matching family for $W$ and the family of opens $\{s(V_i)\ |\ i\in I \}$, for this will provide an amalgamation $x\in W(\cup_i s(V_i))=W(s(\cup_i V_i))$ satisfying precisely the condition above. T see that the $x_i$'s are a matching family for $W$ we can restrict to considering the basic opens $\dot{r}(Z)$ of $E_P$. First of all, if $\dot{r}(Z)\subseteq s(V_i)$ and $z\in Z$, there exists $y\in V_i$ such that $r_z=s(y)$, and thus $z=\pi (r_z)=\pi s(y)=y$: therefore not only $Z\subseteq V_i$, but actually $\dot{r}=s_{|Z}$. If $\dot{r}(Z)\subseteq s(V_i)\cap s(V_j)$, we have that $x_{i|\dot{r}(Z)}=x_{i|s(Z)}=x_{j|s(Z)}=x_{j|\dot{r}(Z)}$, where the second equality holds by the matching condition for the $x_i$'s. This proves that the family of the $x_i$ is a matching family for the sheaf $W$ and the covering $\{s(V_i)\ |\ i\in I \}$, and thus it admits the amalgamation $x\in W(\cup s(V_i))$ we needed.
\end{proof}
Lemma \ref{prop:fascificazione_topologica_con_framehom} showed that \emph{any} frame homomorphism $f:\Ocal(E_P)\rightarrow\Ocal(U)$ satisfying $i_U\inv=f\pi\inv$ is of the form $s\inv$ for some section $s:U\rightarrow E_P$: this implies that it admits a left adjoint $s_!$ which by the last result is a $(J\can_{\Ocal(U)},J\can_{\Ocal(E_P)})$-continuous comorphism of sites. Conversely, consider a $(J\can_{\Ocal(U)},J\can_{\Ocal(E_P)})$-continuous comorphism of sites $B:\Ocal(U)\rightarrow \Ocal(E_P)$: then by \cite[Proposition 4.11(iii)]{denseness} it is cover-preserving, \ie preserves arbitrary colimits, and thus it admits a right adjoint which is a morphism of sites $f:\Ocal(E_P)\rightarrow \Ocal(U)$. We can thus conclude the following:
\begin{prop}
	Consider a topological space $X$, an open subset $U\subseteq X$ and a presheaf $P\in \Psh(X)$: then there are natural isomorphisms
	\begin{align*}
		\sheafify_J(P)(U)&:=\Top/_1X(U, E_P)\\
		&\simeq\Locale/\Ocal(X)(\Ocal(U), \Ocal(E_P))\\
		&\simeq (\Ocal(X), J\can_{\Ocal(X)})/_1\Site((\Ocal(E_P),J\can_{\Ocal(E_P)}), (\Ocal(U), J\can_{\Ocal(U)}))\\
		&\simeq \Com\cont/_1(\Ocal(X), J\can_{\Ocal(X)})((\Ocal(U),J\can_{\Ocal(U)}), (\Ocal(E_P), J\can_{\Ocal(E_P)})).
	\end{align*}
\end{prop}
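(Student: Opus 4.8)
The plan is to read the displayed chain as four hom-sets linked by three isomorphisms, each of which has essentially already been established earlier in this section; the work consists in identifying the correct result for each link and in checking that the various ``over the base'' conditions and arrow directions match up. The first line is a matter of definition: by the topological presheaf-bundle adjunction recalled in Section~\ref{sec:adj_topologica}, the functor $\Gamma$ sends the bundle $\Lambda(P)=[\pi_P:E_P\to X]$ to the sheaf of its local sections, so that $\Gamma_{\Lambda(P)}(U)=\Top/_1X(U,E_P)$ is the set of continuous maps $s:U\to E_P$ with $\pi_P\circ s=i_U$, and $\Gamma\Lambda(P)\cong \sheafify_J(P)$. Thus the leftmost identification is the definition of $\sheafify_J(P)(U)$ that I take as starting point.

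For the passage from the first to the second line I would invoke Proposition~\ref{prop:fascificazione_topologica_con_framehom} verbatim: it exhibits a natural bijection between the continuous sections $s:U\to E_P$ of $\pi_P$ over $U$ and the frame homomorphisms $f:\Ocal(E_P)\to \Ocal(U)$ satisfying $f\circ \pi_P^{-1}=i_U^{-1}$, that is, the locale morphisms $U\to E_P$ over $X$; these are precisely the elements of $\Locale/\Ocal(X)(\Ocal(U),\Ocal(E_P))$. The second isomorphism is then a direct application of Lemma~\ref{lemma:framehom_sse_morfsiti}, which identifies frame homomorphisms $\Ocal(E_P)\to \Ocal(U)$ with morphisms of sites $(\Ocal(E_P),J\can_{\Ocal(E_P)})\to (\Ocal(U),J\can_{\Ocal(U)})$ for the canonical topologies. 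Under this identification the structural frame homomorphisms $\pi_P^{-1}$ and $i_U^{-1}$ become morphisms of sites out of $(\Ocal(X),J\can_{\Ocal(X)})$, so the condition $f\circ\pi_P^{-1}=i_U^{-1}$ says exactly that $f$ is a morphism in the coslice of $\Site$ under $(\Ocal(X),J\can_{\Ocal(X)})$; note that the passage reverses the order of the two arguments, since morphisms of sites run opposite to the associated locale morphisms.

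The final isomorphism is where most care is required, and I would single it out as the main (though still mild) obstacle. It rests on the two results immediately preceding the statement: every section $s:U\to E_P$ is an open map (Corollary~\ref{cor:sezioni_bundleetale_aperte}), so $s^{-1}=f$ admits a left adjoint $s_!:\Ocal(U)\to\Ocal(E_P)$ which is a $(J\can_{\Ocal(U)},J\can_{\Ocal(E_P)})$-continuous comorphism of sites; conversely any such continuous comorphism is cover-preserving by \cite[Proposition~4.11(iii)]{denseness}, hence admits a right adjoint that is a morphism of sites. The correspondence $f\mapsto s_!$ is thus a bijection, and the point to verify is that it respects the base structure: passing to left adjoints in $f\circ\pi_P^{-1}=i_U^{-1}$ and using uniqueness of adjoints turns it into $\pi_{P!}\circ s_!=(i_U)_!$, which is exactly the condition for $s_!$ to be a morphism in the slice of $\Com\cont$ over $(\Ocal(X),J\can_{\Ocal(X)})$ (again with the arguments in the order displayed, since continuous comorphisms run parallel to the associated geometric morphisms). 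Finally, naturality of the whole chain in $U$ and in $P$ is inherited: the first two isomorphisms are natural in both variables by Proposition~\ref{prop:fascificazione_topologica_con_framehom} and by the $2$-fully-faithfulness of $\Frame\hookrightarrow\Site$ in Lemma~\ref{lemma:framehom_sse_morfsiti}, while the last one is natural because the formation of the adjoint $s_!$ of $s^{-1}$ is functorial and adjoints are unique up to canonical isomorphism.
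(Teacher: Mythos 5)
Your proposal is correct and follows essentially the same route as the paper: the first identification is definitional via the presheaf--bundle adjunction, the second is Proposition \ref{prop:fascificazione_topologica_con_framehom}, the third is Lemma \ref{lemma:framehom_sse_morfsiti}, and the fourth is exactly the paper's argument combining the openness of sections of étale bundles (hence the existence of the continuous comorphism $s_!$) with the fact that a continuous comorphism between frames is cover-preserving and so admits a right adjoint which is a morphism of sites. The only point worth flagging is that, as the paper itself remarks, the slice notation $\Com\cont/_1(\Ocal(X),J\can_{\Ocal(X)})$ in the last line is a slight abuse since $\pi_!$ need not itself be continuous, but your adjoint-transposition of the compatibility condition is the intended reading.
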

\begin{remark}
	The last set is a slight abuse of notation: even though we are considering the continuous comorphisms of sites $\Ocal(U)\rightarrow \Ocal(E_P)$ over $\Ocal(X)$, the functor $\pi_!:\Ocal(E_P)\rightarrow \Ocal(X)$ does not belong to $\Com\cont$ since it is not continuous in general.
	On the other hand, the comorphism $(i_U)_!$ is continuous, since it is precisely the discrete fibration $p_U:\Ocal(X)/U\rightarrow \Ocal(X)$. 
\end{remark}

\subsection{The fundamental adjunction in the language of internal locales}

A further formulation of the fundamental adjunction in the localic setting can be provided when we recur to internal locales in a topos. We will see that in this context the base locale is `absorbed' by the topos we are working in, and thus the right adjoint $\Gamma$ presents itself simply as a contravariant hom-functor.

First of all, we remark that the definition of a frame can be interpreted in any topos. The part of the theory regarding the finitary structure is easily interpreted. an internal bounded meet-semilattice in a topos $\Etopos$ will be an object $L$ of $\Etopos$ provided with three arrows 
\[
1\xrightarrow{\top}L,\ 1\xrightarrow{\bot}L,\ L\times L\xrightarrow{\wedge} L,
\]
the interpretations of respectively the top element, the bottom element and the meet operation, which make the obvious diagrams commutative. As per the infinitary operation of arbitrary joins, it can be interpreted as an arrow 
\[\textstyle\bigvee:\powerset(L)\rightarrow L,
\]
where $\powerset(L)=\Omega^L$ is the power object of $L$ (see \cite[pag. 69]{elephant}): in the internal logic of $\Etopos$, we think of the arrow $\bigvee$ as mapping any $S\subseteq L$ to its join $\bigvee S\in L$. By writing suitable commutative diagrams expressing the usual axioms, we obtain the notion of an \emph{internal frame}\index{frame!internal} $L$ in $\Etopos$. A \emph{homomorphism of internal frames} $f:L\rightarrow M$ is an arrow commuting with all the operations. We shall denote by $\Frame(\Etopos)$ the category of internal frames of a topos $\Etopos$, and by $\Loc(\Etopos)=\Frame(\Etopos)\op$\index{$\Loc(\Etopos)$} its \emph{category of internal locales}\index{locale!internal}.

We now recall Proposition 2 from \cite[Chapter VI, §3]{joyal_tierney_galois} (see also \cite[Section C1.6]{elephant}), which shows that when $\Etopos$ is localic, there is a correspondence between internal and external locales:
\begin{prop}
	Consider a locale $L$: there is an equivalence of categories
	\[
	H:\Locale/_1L\isorightarrow \Loc(\Sh(L)).
	\]
	In particular, to a morphism of frames $f:L\rightarrow M$ is associated the internal locale
	\[
	H(f):L\op\rightarrow \Set,\ H(f)(a)=\{m\in M\ |\ m\leq f(a)\}.\]
\end{prop}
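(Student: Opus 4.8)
The plan is to construct $H$ explicitly on objects and $1$-cells, exhibit a quasi-inverse given by taking global sections, and check that the two composites are naturally isomorphic to the respective identities; the essential surjectivity of $H$ — equivalently, the fact that every internal frame of $\Sh(L)$ is the downset sheaf of its frame of global sections — will be the crux.

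First I would verify that $H$ is well defined on objects. An object of $\Locale/_1 L$ is a locale morphism $g:M\to L$, that is, a frame homomorphism $f:=g\inv:L\to M$. The assignment $a\mapsto \{m\in M\mid m\leq f(a)\}$, with restriction along $b\leq a$ given by $m\mapsto m\wedge f(b)$, is a presheaf $L\op\to\Set$; it is a sheaf for the join-cover topology because $f$ preserves finite meets and arbitrary joins: given a covering $a=\bigvee_i a_i$ and a compatible family $m_i\leq f(a_i)$, the element $\bigvee_i m_i$ lies below $f(a)=\bigvee_i f(a_i)$ and its restrictions recover the $m_i$, so it is the unique amalgamation. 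The fibrewise order and the operations inherited from $M$ endow $H(f)$ with the structure of an internal frame in $\Sh(L)$: the interpretations of $\top,\bot,\wedge$ restrict to each $\{m\leq f(a)\}$, while the internal join $\powerset(H(f))\to H(f)$ is induced by the supremum of $M$. On $1$-cells, a morphism of $\Locale/_1 L$ from $g$ to $g'$ is a frame homomorphism $k:M'\to M$ with $kf'=f$; restricting $k$ to downsets yields an internal frame homomorphism $H(f)\to H(f')$, and functoriality is immediate.

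Next I would produce the quasi-inverse. Given an internal frame $A$ in $\Sh(L)$, its underlying set is recovered as the global sections $M:=\Gamma(A)=A(\top_L)$ (indeed $\{m\leq f(\top_L)\}=M$ in the model, since $f(\top_L)=\top_M$), with external frame operations obtained by evaluating the internal operations of $A$ at global sections and extending to arbitrary external subsets via the subobjects they generate. The structure map $f:L\to M$ would be recovered intrinsically as $f(a)=$ the least global section whose restriction over $a$ is the internal top $\top_{A(a)}$; one checks in the downset model that such an $m$ is characterised by $m\wedge f(a)=f(a)$, i.e.\ $m\geq f(a)$, so that this construction is inverse to $H$ on the nose, and naturality in both directions is a routine verification.

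The hard part will be essential surjectivity: proving that $M=\Gamma(A)$ is genuinely a frame and that the canonical comparison $H(f)\to A$ is an isomorphism of internal frames. The obstruction is that internal joins in $\Sh(L)$ are not computed pointwise — $\powerset$ is the power object rather than the naive powerset presheaf — so one must argue that arbitrary external joins of global sections reproduce the internal suprema. I would handle this along the lines of Joyal--Tierney \cite{joyal_tierney_galois}, or, more conceptually, by transporting the whole statement through the correspondence between internal locales of $\Sh(L)$ and localic geometric morphisms over $\Sh(L)$ (see \cite{elephant}, Section C1.6): composing such a localic morphism with the localic morphism $\Sh(L)\to\Set$ and invoking the full faithfulness of $\Sh(-):\Locale\to\Topos$ (Proposition \ref{locales_ff_nei_topos}) identifies the internal-sheaf topos as $\Sh(M)$ for an external locale $M$ equipped with a map $M\to L$, yielding essential surjectivity and full faithfulness simultaneously; a final computation would then match this $M\to L$ with the downset description of $H$.
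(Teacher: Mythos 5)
The paper does not actually prove this statement: it is recalled verbatim as Proposition 2 of Chapter VI, \S 3 of Joyal--Tierney (with a pointer to Section C1.6 of the Elephant), so there is no in-text argument to compare yours against. Judged on its own terms, your sketch is sound. The verification that $H(f)$ is a sheaf is correct: for a cover $a=\bigvee_i a_i$ and a compatible family $m_i\leq f(a_i)$ one computes $(\bigvee_i m_i)\wedge f(a_j)=\bigvee_i \bigl(m_i\wedge f(a_i\wedge a_j)\bigr)=\bigvee_i \bigl(m_j\wedge f(a_i\wedge a_j)\bigr)=m_j\wedge f(a)\wedge f(a_j)=m_j$, and uniqueness follows by the same manipulation. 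Your recovery of $M$ as $\Gamma(A)=A(\top_L)$ and of $f(a)$ as the least global section restricting to $\top_{A(a)}$ does invert $H$ on its image (in the downset model that least section is exactly $f(a)$). You also correctly isolate the crux --- internal joins in $\Sh(L)$ are not computed pointwise, so one must still show that $\Gamma(A)$ is a frame and that the comparison $H(f)\to A$ is an isomorphism for an arbitrary internal frame $A$ --- and your proposed resolution via the equivalence between internal locales of $\Sh(L)$ and localic geometric morphisms over $\Sh(L)$, composed with the localic morphism $\Sh(L)\to\Set$ and the full faithfulness of $\Sh(-)$ (Proposition \ref{locales_ff_nei_topos}), is a standard and valid route; note that this route yields full faithfulness of $H$ simultaneously, so the explicit quasi-inverse is then needed only to match the resulting external locale against the downset formula. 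The one caveat is that both escape hatches you offer for the hard step (Joyal--Tierney, Elephant C1.6) are precisely the sources the paper itself cites for the whole proposition, so your proposal is best read as a correct and useful unpacking of the cited result rather than an independent proof of its essential content.
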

Using the equivalence $H$ we can reformulate the adjunction of Proposition \ref{prop:adj_preord}. To do so, take a preorder site $(\cbicat,J)$ and consider the composite
\[
\bar{\Lambda}:[\cbicat\op,\Set]\xrightarrow{\Lambda_{\Locale/1\Id_J(\cbicat)}}\Locale/_1\Id_J(\cbicat)\xrightarrow{H}\Loc\left(\Sh(\cbicat,J)\right):
\]
the first functor maps a presheaf $P:\cbicat\op\rightarrow\Set$ to the morphism of locales $g_{p_P}:\Id_{J_P}(\fib P)\rightarrow \Id_J(\cbicat)$, \ie to the homomorphism of frames $p_P\inv:\Id_J(\cbicat)\rightarrow \Id_{J_P}(\fib P)$; the second functor maps $g_{p_P}$ to a presheaf $\bar{\Lambda}(P):\cbicat\op\rightarrow \Set$ which is an internal locale of $\Sh(\cbicat,J)$. By the definition of the functor $H$, we have that for $Z$ in $\cbicat$
\[
\bar{\Lambda}(P)(Z):=\{ \ibicat\in \Id_{J_P}(\fib P)\ |\ p_P(\ibicat)\subseteq \langle Z\rangle_J \},
\]
where we use the fact that under the equivalence $\Sh(\cbicat,J)\simeq \Sh(\Id_J(\cbicat))$ the element $Z$ corresponds to the principal $J$-ideal $\langle Z\rangle_J$. In particular, when $P=\yo(X)$ we can exploit Lemma \ref{lemma:ideali_slice_preorder} to obtain the following chain of natural isomorphisms:
\begin{align*}
	\bar{\Lambda}(P)(Z)&:=\{\ibicat\in\Id_{J_X}(\cbicat/X)\ |\ p_X(\ibicat)\subseteq \langle Z\rangle_J \}\\
	&\simeq \{\ibicat \in \Id_J(\cbicat)\ |\ \ibicat\subseteq \langle X\rangle_J\cap \langle Z\rangle_J  \}\\
	&= \Sub(\langle X\rangle_J\cap \langle Z\rangle_J).
\end{align*}
We end up with the following result:
\begin{prop}
	Consider a preorder site $(\cbicat,J)$: there is an adjunction
	\[
	\begin{tikzcd}
		{[\cbicat\op,\Set]} \arrow[r, "\Lambda_{\Loc(\Sh(\cbicat,J))}", bend left, end anchor={north west}, start anchor={north east}] &     {\Loc(\Sh(\cbicat,J))} \arrow[l, "\Gamma_{\Loc(\Sh(\cbicat,J))}", bend left, start anchor={south west}, end anchor={south east}] \ar[l, "\dashv"{rotate=270}, phantom]
	\end{tikzcd}
	\]
	which acts as follows:
	\begin{itemize}
		\item The functor $\Lambda_{\Loc(\Sh(\cbicat,J))}$ maps a presheaf $P:\cbicat\op\rightarrow \Set$ to the internal locale acting for any $X$ in $\cbicat$ as
		\[
		\Lambda_{\Loc(\Sh(\cbicat,J))}(P)(X)=\bar{\Lambda}(P)(X)=\{\ibicat\in \Id_{J_P}(\fib P)\ |\ \pi_P(\ibicat)\subseteq \langle X\rangle_J \}
		\]
		\item The functor $\Gamma_{\Loc(\Sh(\cbicat,J))}$ acts by mapping $L\in \Loc(\Sh(\cbicat,J))$ to the presheaf
		\[
		\Gamma_{\Loc(\Sh(\cbicat,J))}(L):\cbicat\op\rightarrow\Set\]
		which acts as a contravariant hom-functor of internal locales:
		\[ \Gamma_{\Loc(\Sh(\cbicat,J))}:X\mapsto \Loc(\Sh(\cbicat,J))(\Sub(\langle -\rangle_J\cap\langle X\rangle_J),L).
		\]
	\end{itemize}
\end{prop}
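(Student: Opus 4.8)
The plan is to transport the adjunction of Proposition \ref{prop:fundadj_preordini} along the Joyal--Tierney equivalence $H:\Locale/_1\Id_J(\cbicat)\isorightarrow \Loc(\Sh(\cbicat,J))$ recalled just above the statement. Since $H$ is an equivalence of categories, precomposing the left adjoint with $H$ and postcomposing the right adjoint with a quasi-inverse $H^{-1}$ automatically produces an adjunction
\[
\Lambda_{\Loc(\Sh(\cbicat,J))}:=H\circ \Lambda_{\Locale/_1\Id_J(\cbicat)}\dashv \Gamma_{\Locale/_1\Id_J(\cbicat)}\circ H^{-1}=:\Gamma_{\Loc(\Sh(\cbicat,J))}.
\]
All that then remains is to unwind these two composites into the explicit formulas in the statement, and most of this bookkeeping has already been carried out in the discussion preceding the proposition.

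First I would compute the left adjoint. By Proposition \ref{prop:fundadj_preordini}, $\Lambda_{\Locale/_1\Id_J(\cbicat)}(P)$ is the locale morphism $g_{\pi_P}:\Id_{J_P}(\fib P)\to \Id_J(\cbicat)$ corresponding to the frame homomorphism $\pi_P\inv$. Applying $H$ and using its explicit description $H(f)(a)=\{m\mid m\le f(a)\}$, together with the identification of the object $Z$ of $\cbicat$ with the principal $J$-ideal $\langle Z\rangle_J$ under $\Sh(\cbicat,J)\simeq \Sh(\Id_J(\cbicat))$, recovers precisely the functor $\bar{\Lambda}$ already computed, namely
\[
\Lambda_{\Loc(\Sh(\cbicat,J))}(P)(Z)=\{\ibicat\in\Id_{J_P}(\fib P)\mid \pi_P(\ibicat)\subseteq \langle Z\rangle_J\},
\]
where the condition is read through the Galois connection $\pi_P(-)\dashv \pi_P\inv(-)$ on ideals: indeed $\ibicat\le \pi_P\inv(\langle Z\rangle_J)$ if and only if $\pi_P(\ibicat)\subseteq \langle Z\rangle_J$, which matches $H$'s formula with $f=\pi_P\inv$ and $a=\langle Z\rangle_J$.

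For the right adjoint I would avoid recomputing $\Gamma_{\Locale/_1\Id_J(\cbicat)}\circ H^{-1}$ directly and instead invoke the Yoneda lemma together with the adjunction just obtained: for any $X$ in $\cbicat$ and any internal locale $L$,
\[
\Gamma_{\Loc(\Sh(\cbicat,J))}(L)(X)\cong [\cbicat\op,\Set](\yo(X),\Gamma_{\Loc(\Sh(\cbicat,J))}(L))\cong \Loc(\Sh(\cbicat,J))(\Lambda_{\Loc(\Sh(\cbicat,J))}(\yo(X)),L).
\]
It then suffices to specialise the computation of the left adjoint to the representable $P=\yo(X)$: here $\fib\yo(X)\simeq \cbicat/X$, $\pi_P=p_X$ and $J_P=J_X$, so using Lemma \ref{lemma:ideali_slice_preorder} to identify $\Id_{J_X}(\cbicat/X)$ with $\Sub(\langle X\rangle_J)$ one obtains $\Lambda_{\Loc(\Sh(\cbicat,J))}(\yo(X))(-)\simeq \Sub(\langle -\rangle_J\cap\langle X\rangle_J)$, exactly as in the chain of isomorphisms preceding the statement. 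Substituting this into the display above yields the stated hom-functor description of $\Gamma_{\Loc(\Sh(\cbicat,J))}$.

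The only genuinely delicate point is the identification of $H(\pi_P\inv)$ with the set-valued functor above: one must match the internal-locale formula $H(f)(a)=\{m\le f(a)\}$ against the external image/preimage adjunction $\pi_P(-)\dashv \pi_P\inv(-)$, and then carry this through the equivalence $\Id_{J_X}(\cbicat/X)\simeq \Sub(\langle X\rangle_J)$ of Lemma \ref{lemma:ideali_slice_preorder} in the representable case. Once the direction of the Galois connection is set up correctly this is routine, and no geometric input beyond the equivalence $H$ and Lemma \ref{lemma:ideali_slice_preorder} is required.
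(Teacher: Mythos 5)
Your proposal is correct and follows essentially the same route as the paper: the paper also obtains the adjunction by transporting the locale-level adjunction of Proposition \ref{prop:fundadj_preordini} along the Joyal--Tierney equivalence $H$, computes $\bar{\Lambda}(P)(Z)$ from the explicit formula for $H$ applied to $\pi_P\inv$, and identifies the representable case via Lemma \ref{lemma:ideali_slice_preorder} to read off $\Gamma$ as the hom-functor out of $\Sub(\langle -\rangle_J\cap\langle X\rangle_J)$. Your explicit appeal to the Galois connection $\pi_P(-)\dashv\pi_P\inv(-)$ and to Yoneda for the right adjoint just makes precise steps the paper leaves implicit.
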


\subsection{A general restrictibility condition for the fundamental adjunction}\label{sec:restrictibility}
Let us go back to the fundamental adjunction that we built for locales, this time stated in terms of a frame $F$:
\[
\begin{tikzcd}[row sep=40]
	{[F\op,\Set]} \arrow[r, "\Lambda_{\Topos^s/_1\Sh(F, J\can_F)}", bend left, end anchor={north west}, start anchor={north east}] \arrow[d, "\Lambda_{(F/\Frame)\op}"{yshift=-2ex}, bend left] &     \Topos^s/_1\Sh(F, J\can_F) \arrow[l, "\Gamma_{\Topos^s/_1\Sh(F, J\can_F)}"{xshift=3ex}, bend left, start anchor={south west}, end anchor={south east}] \ar[l, "\dashv"{rotate=270}, phantom]\\
	(F/_1\Frame)\op \ar[ur, hook, bend right, "\Sh"'] \arrow[u, "\Gamma_{(F/_1\Frame)\op}", bend left] \ar[u, "\dashv"{rotate=180}, phantom]&
\end{tikzcd}.
\]
Basically, what made it possible to forget the topos-theoretic data of the fundamental adjunction was the fact that the left adjoint $\Lambda$ factors through the functor $\Sh:(F/_1\Frame)\op\rightarrow \Topos^s/_1\Sh(F)$, which moreover is full and faithful. In fact, the same property of site-describability of the sheafification can be formulated more in general for special classes of sites and their co-/morphisms. To do this, we will exploit the following general result:
\begin{lemma}
	Consider the diagram of 2-categories
	\[
	\begin{tikzcd}
		\abicat \ar[r,bend left, "\Lambda"] \ar[rd, "\bar{\Lambda}"', bend right] \ar[r, phantom, "\dashv"{rotate=270}] & \bbicat \ar[l, bend left, "\Gamma"]\\
		& \cbicat \ar[u, hook, "i"']
	\end{tikzcd}
	,\]
	where $i$ is 2-fully faithful. If $i\bar{\Lambda}\simeq \Lambda$, then $\bar{\Gamma}:=\Gamma i$ is a right adjoint to $\bar{\Lambda}$, and $\Gamma\Lambda\simeq \bar{\Gamma}\bar{\Lambda}$. The same holds for 1-categories and $i$ fully faithful.
\end{lemma}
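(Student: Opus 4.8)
The plan is to establish the adjunction $\bar{\Lambda} \dashv \bar{\Gamma}$ directly by exhibiting the required pseudonatural equivalence of hom-categories as a concatenation of three elementary ones. Given a 0-cell $A$ of $\abicat$ and a 0-cell $C$ of $\cbicat$, I would chain together the following: first the 2-full faithfulness of $i$, which gives an equivalence $\cbicat(\bar{\Lambda} A, C) \simeq \bbicat(i\bar{\Lambda} A, iC)$; then the hypothesis $i\bar{\Lambda} \simeq \Lambda$, yielding $\bbicat(i\bar{\Lambda} A, iC) \simeq \bbicat(\Lambda A, iC)$; and finally the given 2-adjunction $\Lambda \dashv \Gamma$, which produces $\bbicat(\Lambda A, iC) \simeq \abicat(A, \Gamma i C) = \abicat(A, \bar{\Gamma} C)$. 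Composing these gives $\cbicat(\bar{\Lambda} A, C) \simeq \abicat(A, \bar{\Gamma} C)$, which is exactly the defining equivalence of a 2-adjunction $\bar{\Lambda} \dashv \bar{\Gamma}$ in the sense of Definition \ref{def:2-adjoint}.

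The composite identity is then immediate: since $\bar{\Gamma} := \Gamma i$ and $i\bar{\Lambda} \simeq \Lambda$, we obtain $\bar{\Gamma}\bar{\Lambda} = \Gamma i \bar{\Lambda} \simeq \Gamma\Lambda$, the equivalence being given by whiskering the assumed equivalence $i\bar{\Lambda} \simeq \Lambda$ with $\Gamma$. No further work is needed here.

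The only point requiring care, and hence the main (though minor) obstacle, is pseudonaturality in the two variables $A$ and $C$. Each of the three equivalences above is pseudonatural by construction: the first because $i$ is a 2-functor whose local functors are equivalences, the second because it is whiskering with a fixed pseudonatural equivalence, and the third because $\Lambda \dashv \Gamma$ is assumed to be a 2-adjunction; one must then check that their coherence data assemble compatibly under composition, but this is routine 2-categorical bookkeeping. For the 1-categorical statement the argument is identical, with natural bijections of hom-sets replacing equivalences of hom-categories and ordinary full faithfulness replacing 2-full faithfulness. I would also note that this lemma is precisely the formal dual of Lemma \ref{lemma:adj_restriction_subcat}: there a \emph{right} adjoint is shown to restrict along a fully faithful inclusion of the source, whereas here a \emph{left} adjoint restricts along a fully faithful inclusion of the target.
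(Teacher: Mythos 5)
Your proof is correct and takes essentially the same route as the paper, which establishes the identical three-step chain of pseudonatural equivalences $\abicat(X, \Gamma i(Y))\simeq \bbicat(\Lambda(X), i(Y))\simeq \bbicat(i\bar{\Lambda}(X), i(Y))\simeq \cbicat(\bar{\Lambda}(X), Y)$, merely written in the opposite direction. Your extra remarks on the composite $\bar{\Gamma}\bar{\Lambda}\simeq \Gamma\Lambda$ and on pseudonaturality only make explicit what the paper's one-line proof leaves implicit.
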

\begin{proof}
	It is immediate by the following chain of pseudonatural equivalences (resp. natural isomorphisms):
	\[
	\abicat(X, \Gamma i(Y))\simeq \bbicat( \Lambda(X), i(Y))\simeq \bbicat (i\bar{\Lambda}(X), i(Y)) \simeq \cbicat(\bar{\Lambda}(X), Y)
	.\]
\end{proof}
We obtain the following two corollaries, stating a site-describability condition for the fundamental adjunction:
\begin{cor}
	Consider a site $(\cbicat,J)$ and suppose that there is a functor $\abicat\rightarrow(\cbicat,J)/\Site$ such that the following hold: 
	\begin{itemize}
		\item The composite functor $\abicat \op \rightarrow ((\cbicat,J)/\Site)\op \rightarrow \Topos/\Sh(\cbicat,J)$ factors through $\Topos^s/_1\Sh(\cbicat,J)$ so that $i:\abicat\op\rightarrow \Topos^s/_1\Sh(\cbicat,J)$ is full and faithful;
		\item there is a functor $\bar{\Lambda}: [\cbicat\op,\Set]\rightarrow \abicat$ such that $\Lambda_{\Topos^s/_1\Sh(\cbicat,J)}\simeq i \bar{\Lambda}$:
	\end{itemize}
	then $\bar{\Lambda}$ admits a right adjoint $\bar{\Gamma}$, and $\sheafify_J(P)\simeq \bar{\Gamma}\bar{\Lambda}(P)$.
	\[
	\begin{tikzcd}
		((\cbicat,J)/\Site)\op \ar[r, "\Sh(-)"] & \Topos/_1\Sh(\cbicat,J)\\
		\abicat\op \ar[u] \ar[r, hook, "i"', dashed] & \Topos^s/_1\Sh(\cbicat,J) \ar[u, hook]	
	\end{tikzcd}\]\[
	\begin{tikzcd}
		{[\cbicat\op,\Set]} \ar[r, "\Lambda_{\Topos^s/_1\Sh(\cbicat,J)}"{yshift=1ex}] \ar[dr, "\bar{\Lambda}"', dashed] & {\Topos^s/_1\Sh(\cbicat,J)}
		\\
		& \abicat\op \ar[u, hook, "i"']	
	\end{tikzcd}
	\]
\end{cor}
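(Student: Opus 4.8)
The plan is to recognise this corollary as a direct instance of the preceding lemma, applied to the discrete fundamental adjunction of Proposition \ref{prop:fundadj_discrete}. Concretely, I would instantiate that lemma with $\abicat:=[\cbicat\op,\Set]$ and $\bbicat:=\Topos^s/_1\Sh(\cbicat,J)$, letting the role of the intermediate category played by $\cbicat$ in the lemma be taken here by $\abicat\op$ (the full subcategory appearing in the corollary). The adjoint pair is $\Lambda:=\Lambda_{\Topos^s/_1\Sh(\cbicat,J)}\dashv \Gamma_{\Topos^s/_1\Sh(\cbicat,J)}=:\Gamma$, which is a genuine adjunction by Proposition \ref{prop:fundadj_discrete}(i). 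Since this is an adjunction of $1$-categories, I would invoke the final clause of the lemma, namely its $1$-categorical version with $i$ fully faithful.

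The two hypotheses of the corollary are designed to match verbatim the two hypotheses of the lemma. The first hypothesis guarantees that the inclusion $i:\abicat\op\hookrightarrow \Topos^s/_1\Sh(\cbicat,J)$ is fully faithful; the second supplies precisely the required factorization $\Lambda_{\Topos^s/_1\Sh(\cbicat,J)}\simeq i\bar{\Lambda}$. Feeding these into the lemma yields at once that $\bar{\Gamma}:=\Gamma_{\Topos^s/_1\Sh(\cbicat,J)}\circ i$ is a right adjoint to $\bar{\Lambda}$, together with a natural isomorphism $\Gamma_{\Topos^s/_1\Sh(\cbicat,J)}\Lambda_{\Topos^s/_1\Sh(\cbicat,J)}\simeq \bar{\Gamma}\bar{\Lambda}$.

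It then remains only to identify the left-hand composite with sheafification. By Proposition \ref{prop:fundadj_discrete}(iii) there is a natural isomorphism $\Gamma_{\Topos^s/_1\Sh(\cbicat,J)}\Lambda_{\Topos^s/_1\Sh(\cbicat,J)}\simeq i_J\sheafify_J$, so composing the two isomorphisms gives $\bar{\Gamma}\bar{\Lambda}(P)\simeq i_J\sheafify_J(P)$, naturally in $P$. As $i_J$ is merely the inclusion of $\Sh(\cbicat,J)$ into presheaves, this is exactly the asserted identification $\sheafify_J(P)\simeq \bar{\Gamma}\bar{\Lambda}(P)$.

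I do not expect a genuine obstacle: the entire argument is the observation that the corollary's two hypotheses align exactly with those of the lemma, so the proof is a short piece of bookkeeping. The only points requiring a moment's care are, first, that $i$ indeed lands in $\Topos^s/_1\Sh(\cbicat,J)$ (the geometric morphisms small relative to $\Sh(\cbicat,J)$, in the sense of Definition \ref{def:relativelysmallmorphism}) rather than in the full $\Topos/_1\Sh(\cbicat,J)$ — but this smallness is built into the first hypothesis — and second, that one is working with the discrete $1$-categorical adjunction, so that the $1$-categorical addendum of the lemma, and not its $2$-categorical form, is the tool being applied.
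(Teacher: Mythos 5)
Your proposal is correct and follows exactly the route the paper intends: the corollary is stated as an immediate consequence of the preceding lemma, instantiated with the discrete fundamental adjunction $\Lambda_{\Topos^s/_1\Sh(\cbicat,J)}\dashv\Gamma_{\Topos^s/_1\Sh(\cbicat,J)}$ of Proposition \ref{prop:fundadj_discrete}, with the final identification $\Gamma\Lambda\simeq i_J\sheafify_J$ supplied by part (iii) of that proposition. Your two points of care (smallness of the image of $i$, and using the $1$-categorical addendum of the lemma) are exactly the right ones.
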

\begin{cor}
	Consider a site $(\cbicat,J)$ and suppose that there is a functor $\abicat\rightarrow\Com/(\cbicat,J)$ such that the following hold: 
	\begin{itemize}
		\item The composite $\abicat \rightarrow \Cosite/(\cbicat,J) \rightarrow \Topos/\Sh(\cbicat,J)$ factors through $\Topos^s/_1\Sh(\cbicat,J)$ so that $i:\abicat\rightarrow \Topos^s/_1\Sh(\cbicat,J)$ is full and faithful;
		\item there is a functor $\bar{\Lambda}: [\cbicat\op,\Set]\rightarrow \abicat$ such that $\Lambda_{\Topos^s/_1\Sh(\cbicat,J)}\simeq i \bar{\Lambda}$:
	\end{itemize}
	then $\bar{\Lambda}$ admits a right adjoint $\bar{\Gamma}$, and $\sheafify_J(P)\simeq \bar{\Gamma}\bar{\Lambda}(P)$.
	\[
	\begin{tikzcd}
		\Cosite/(\cbicat,J) \ar[r, "C_{(-)}"] & \Topos/_1\Sh(\cbicat,J)\\
		\abicat \ar[u] \ar[r, hook, "i"', dashed] & \Topos^s/_1\Sh(\cbicat,J) \ar[u, hook]	
	\end{tikzcd}\]\[
	\begin{tikzcd}
		{[\cbicat\op,\Set]} \ar[r, "\Lambda_{\Topos^s/_1\Sh(\cbicat,J)}"{yshift=1ex}] \ar[dr, "\bar{\Lambda}"', dashed] & {\Topos^s/_1\Sh(\cbicat,J)}
		\\
		& \abicat\ar[u, hook, "i"']	
	\end{tikzcd}
	\]
\end{cor}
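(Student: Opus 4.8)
The plan is to deduce this statement as a direct instance of the general restrictibility lemma proved just above, in exactly the same way as the preceding corollary but with comorphisms in place of morphisms of sites. First I would instantiate the three categories of that lemma by taking its top-left category to be $[\cbicat\op,\Set]$, its top-right category to be $\Topos^s/_1\Sh(\cbicat,J)$, and its bottom category to be the given source category $\abicat$. The adjunction along the top edge is then the discrete fundamental adjunction $\Lambda_{\Topos^s/_1\Sh(\cbicat,J)}\dashv\Gamma_{\Topos^s/_1\Sh(\cbicat,J)}$ of Proposition \ref{prop:fundadj_discrete}, and the inclusion along the right edge is the functor $i$ supplied by the first hypothesis, obtained by composing $\abicat\to\Cosite/(\cbicat,J)$ with $C_{(-)}$ and then restricting to $\Topos^s/_1\Sh(\cbicat,J)$.

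The key observation is that the two bullet-point hypotheses of the corollary are verbatim the two hypotheses of the lemma. The first says precisely that $i:\abicat\to\Topos^s/_1\Sh(\cbicat,J)$ is full and faithful, and the second says precisely that $i\bar{\Lambda}\simeq\Lambda_{\Topos^s/_1\Sh(\cbicat,J)}$. Invoking the $1$-categorical form of the lemma (which is applicable since all the slices involved, being decorated with $/_1$, identify geometric morphisms up to equivalence) then yields at once that $\bar{\Gamma}:=\Gamma_{\Topos^s/_1\Sh(\cbicat,J)}\circ i$ is a right adjoint to $\bar{\Lambda}$, together with the natural isomorphism $\bar{\Gamma}\bar{\Lambda}\simeq\Gamma_{\Topos^s/_1\Sh(\cbicat,J)}\Lambda_{\Topos^s/_1\Sh(\cbicat,J)}$. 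To finish I would identify the right-hand composite with sheafification: by Proposition \ref{prop:fundadj_discrete}(iii) one has $\Gamma_{\Topos^s/_1\Sh(\cbicat,J)}\Lambda_{\Topos^s/_1\Sh(\cbicat,J)}\cong i_J\sheafify_J$, whence $\bar{\Gamma}\bar{\Lambda}(P)\simeq\sheafify_J(P)$ for every presheaf $P$, as claimed.

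Since the corollary merely records an instance of a purely formal lemma, there is no deep obstacle to overcome; the proof is a bookkeeping argument and I would keep it to a few lines. The only point genuinely requiring care — and the one that distinguishes this comorphism-version from its morphism-analogue — is the variance: I must make sure that the factorization $i\bar{\Lambda}\simeq\Lambda_{\Topos^s/_1\Sh(\cbicat,J)}$ is taken through the functor $C_{(-)}:\Cosite/(\cbicat,J)\to\Topos/_1\Sh(\cbicat,J)$ itself (rather than through an opposite, as happens in the morphism case via $\Sh(-):((\cbicat,J)/\Site)\op\to\Topos/_1\Sh(\cbicat,J)$), so that the hypotheses of the lemma hold literally with no reversal of $2$-cell directions. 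Because these hypotheses are assumed rather than verified here, even this reduces to checking that the diagrams in the statement are the ones feeding the lemma.
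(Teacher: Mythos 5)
Your proposal is correct and is precisely the argument the paper intends: the corollary is stated as an immediate instance of the restrictibility lemma (in its $1$-categorical form), with $\bar{\Gamma}:=\Gamma_{\Topos^s/_1\Sh(\cbicat,J)}\circ i$ and the identification $\bar{\Gamma}\bar{\Lambda}\simeq\Gamma_{\Topos^s/_1\Sh(\cbicat,J)}\Lambda_{\Topos^s/_1\Sh(\cbicat,J)}\cong i_J\sheafify_J$ supplied by Proposition \ref{prop:fundadj_discrete}(iii). Your remark on the variance of $C_{(-)}$ versus $\Sh(-)$ is a correct and worthwhile point of care, but the paper gives no further content beyond this instantiation.
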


\begin{remark}
	Consider a preorder site $(\cbicat,J)$: we can take as the functor $\abicat\rightarrow (\cbicat,J)/\Site$ the functor
	\[
	\Id_J(\cbicat)/\Frame\rightarrow (\Id_J(\cbicat), J\can_{\Id_J(\cbicat)})/\Site
	\]
	mapping $f:\Id_J(\cbicat)\rightarrow L$ to the same arrow seen as a morphism of sites 
	$f: (\Id_J(\cbicat), J\can_{\Id_J(\cbicat)})\rightarrow (L,J\can_L)$ (cfr. Lemma \ref{lemma:framehom_sse_morfsiti}), we obtain again the adjunctions of Proposition \ref{prop:adj_preord} and Corollary \ref{cor:adj_locales}.
\end{remark}

\section{Four site-theoretic points of view on the associated sheaf functor}\label{sec:point_of_view_sheafify}

As we have seen in Section \ref{sec:discrete_adj}, for any essentially small site $(\cbicat,J)$ the sheafification functor
$\sheafify_J:[\cbicat\op,\Set]\rightarrow \Sh(\cbicat,J)$
can be described as the composite
\[
[\cbicat\op,\Set] \xrightarrow{\Lambda_{\Topos^s/_1\Sh(\cbicat,J)}}\Topos^s/_1\Sh(\cbicat,J)\xrightarrow{\Gamma_{\Topos^s/_1\Sh(\cbicat,J)}}[\cbicat\op,\Set]:
\]
\ie, for any presheaf $P:\cbicat\op\rightarrow\Set$ and $X$ in $\cbicat$ we have
\[
\sheafify_J(P)(X)\simeq \Topos^s/_1\Sh(\cbicat,J) (\Sh(\cbicat/X, J_X), \Sh(\fib P, J_P)).
\]
The aim of the following sections is to provide various general descriptions of the geometric morphisms in $\Topos^s/_1\Sh(\cbicat,J) (\Sh(\cbicat/X, J_X), \Sh(\fib P, J_P))$ using sites.

\subsection{The algebraic point of view: morphisms of sites}
We will begin by describing elements of $\sheafify_J(P)(X)$ exploiting the following 1-categorial variation of Theorem \ref{thm:classthm_morfismi_siti}:
\begin{prop}
	Consider a comorphism of sites $p:(\dbicat,K)\rightarrow (\cbicat,J)$ and a geometric morphism $E:\Etopos\rightarrow \Sh(\cbicat,J)$: denote by $A_E$ the morphism of sites $(\cbicat,J)\rightarrow (\Etopos, J\can_\Etopos)$ corresponding to $E$. Geometric morphisms in $\Topos/_1\Sh(\cbicat,J)([E], [C_p])$, \ie equivalence classes of geometric morphisms $F$ making the diagram
	\[\begin{tikzcd}[column sep=5]
		\Etopos \ar[d, "E"'] \ar[r, "F"]& \Sh(\dbicat,K) \ar[dl, "C_p"]\\
		\Sh(\cbicat,J)&
	\end{tikzcd}\]
	commutative up to isomorphism, are in bijective correspondence with equivalence classes of morphisms of sites 
	\[A_F:(\dbicat,K)\rightarrow (\Etopos, J\can_\Etopos)\]
	such that there is a natural transformation $\bar{\phi}:A_F\Rightarrow A_Ep$ whose induced natural transformation $\tilde{\phi}:F^*\sheafify_J\Rightarrow E^*\sheafify_J\lan_{p\op}$ is such that the composite
	\[
	F^*\sheafify_Kp^*\xRightarrow{\tilde{\phi}\circ p^*}E^*\sheafify_J\lan_{p\op}p^*\xRightarrow{E^*\sheafify_J\circ \epsilon}E^*\sheafify_J
	\]
	is invertible (where $\epsilon$ is the counit of $\lan_{p\op}\dashv p^*$).
\end{prop}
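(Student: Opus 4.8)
The plan is to deduce this 1-categorical statement from the 2-categorical equivalence of Theorem \ref{thm:classthm_morfismi_siti} by cutting down to the appropriate sub-hom-categories on each side and then passing to isomorphism classes. Recall that Theorem \ref{thm:classthm_morfismi_siti} provides a pseudonatural equivalence
$$\Topos\sslash \Sh(\cbicat,J)([E], [C_p])\simeq \Site((\dbicat,K),(\Etopos, J\can_\Etopos))/E^*\ell_Jp,$$
under which a 1-cell $(F,\phi)$ with $\phi:F^*C_p^*\Rightarrow E^*$ corresponds to the morphism of sites $A_F:=F^*\ell_K$ together with the natural transformation $\bar\phi:A_F\Rightarrow E^*\ell_J p=A_E p$. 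The 1-category $\Topos/_1\Sh(\cbicat,J)$ is obtained from the lax slice by retaining only those 1-cells whose structural 2-cell is invertible and then identifying isomorphic 1-cells; thus $\Topos/_1\Sh(\cbicat,J)([E],[C_p])$ is precisely the set of isomorphism classes of objects of the full subcategory of $\Topos\sslash\Sh(\cbicat,J)([E],[C_p])$ on the 1-cells $(F,\phi)$ for which $\phi$ is an isomorphism.

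First I would transport this invertibility condition across the equivalence. Since the equivalence identifies isomorphism classes on the two sides, equivalence classes of morphisms of sites $A_F$ in the target correspond exactly to isomorphism classes of 1-cells $(F,\phi)$, so it remains only to characterize, in terms of $\bar\phi$, when the companion 2-cell $\phi$ is invertible. For this I would invoke the explicit reconstruction of $\phi$ from $\bar\phi$ recorded in Remark \ref{rmk:classthm_morfismi_relazione_2celle}: the transformation $\bar\phi:F^*\ell_K\Rightarrow E^*\ell_J p$, read as a transformation $F^*\sheafify_K\yo_\dbicat\Rightarrow E^*\sheafify_J\lan_{p\op}\yo_\dbicat$ between cocontinuous functors, extends uniquely along $\yo_\dbicat$ to $\tilde\phi:F^*\sheafify_K\Rightarrow E^*\sheafify_J\lan_{p\op}$, and $\phi$ is recovered as the restriction to sheaves of the composite
$$F^*C_p^*\sheafify_J \cong F^*\sheafify_K p^* \xRightarrow{\tilde\phi\circ p^*} E^*\sheafify_J\lan_{p\op}p^*\xRightarrow{E^*\sheafify_J\circ \epsilon} E^*\sheafify_J.$$

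The crux is then the observation that $\phi$ is invertible precisely when this whole composite is invertible. Both its source $F^*C_p^*\sheafify_J$ and its target $E^*\sheafify_J$ factor through $\sheafify_J$, and a natural transformation between two functors of the form $G\circ\sheafify_J,\ H\circ\sheafify_J:[\cbicat\op,\Set]\rightarrow\Etopos$ is invertible if and only if its restriction along $\iota_J$ is (this uses that the counit of $\sheafify_J\dashv\iota_J$ is invertible, the same fact underlying Corollary \ref{cor:ft_da_subcat_riflessiva} already exploited in the proof of Theorem \ref{thm:classthm_morfismi_siti}). Hence restricting to invertible $\phi$ on the topos side corresponds exactly to imposing the stated invertibility of $(E^*\sheafify_J\circ\epsilon)(\tilde\phi\circ p^*)$ on the site side, which yields the claimed bijection.

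The main obstacle I anticipate is bookkeeping rather than conceptual: making sure that the passage from the lax slice to the pseudo-slice and then to the 1-category $\Topos/_1\Sh(\cbicat,J)$ is compatible, step by step, with the corresponding passage on the $\Site$ side (first retaining the invertibility condition, then collapsing to equivalence classes), and verifying that the restriction to sheaves appearing in Remark \ref{rmk:classthm_morfismi_relazione_2celle} neither creates nor destroys invertibility. Once the equivalence of Theorem \ref{thm:classthm_morfismi_siti} and the reconstruction formula of that Remark are in hand, no further computation with the explicit colimit description of $\phi$ should be needed.
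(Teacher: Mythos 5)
Your proposal is correct and follows essentially the same route as the paper: invoke the lax-slice equivalence of Theorem \ref{thm:classthm_morfismi_siti}, use the reconstruction of $\phi$ from $\bar{\phi}$ in Remark \ref{rmk:classthm_morfismi_relazione_2celle} to translate the invertibility condition, and then pass to equivalence classes on both sides. Your explicit justification that invertibility of $\phi$ (the restriction along $\iota_J$) is equivalent to invertibility of the full composite $(E^*\sheafify_J\circ\epsilon)(\tilde{\phi}\circ p^*)$ — via naturality at the unit of $\sheafify_J\dashv\iota_J$ and the fact that both source and target factor through $\sheafify_J$ — is a detail the paper's proof leaves implicit, and it is a welcome addition.
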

\begin{proof}
	Theorem \ref{thm:classthm_morfismi_siti} showed the equivalence
	$$\Topos\sslash \Sh(\cbicat,J)([E], [C_p])\simeq \Site((\dbicat,K),(\Etopos, J\can_\Etopos))/E^*\ell_Jp:$$
	in particular, a geometric morphism $F:\Etopos\rightarrow \Sh(\dbicat,K)$ endowed with a natural transformation $\phi:F^*C_p^*\Rightarrow E^*$ corresponds to a morphism of sites $A_F:(\dbicat,K)\rightarrow (\Etopos, J\can_\Etopos)$ endowed with a natural transformation $\bar{\phi}:A_F\Rightarrow A_E p$. Using Remark \ref{rmk:classthm_morfismi_relazione_2celle}, we see that $\phi$ is invertible if and only if $\bar{\phi}:A_F\Rightarrow A_E p$ satisfies the condition in the claim. 
	
	Finally, let us consider objects of 
	\[
	\Topos/_1\Sh(\cbicat,J)([E], [C_p]):\]  
	first of all, we notice that two equivalent geometric morphisms $F\cong F':\Etopos\rightarrow \Sh(\dbicat,K)$ correspond to equivalent morphisms of sites $A_F\cong A_{F'}:(\dbicat,K)\rightarrow (\Etopos, J\can_\Etopos)$. Moreover, suppose that up to equivalence $C_pF=E$: this means that there exists an invertible 2-cell $\phi: F^*C_p^*\Isorightarrow E^*$, and thus a natural trasformation $\bar{\phi}$ as we claimed.
\end{proof}
\begin{remark}
	Using again Remark \ref{rmk:classthm_morfismi_relazione_2celle}, we have that $\bar{\phi}$ satisfies the claim of the result if and only if for every presheaf $H:\cbicat\op\rightarrow\Set$, if we consider for every $X$ in $\cbicat$ the collection of elements $x\in H(X)$ (with $\name{x}:\yo(X)\rightarrow H$ the corresponding arrow of $[\cbicat\op,\Set]$) and the collection of arrow $y:p(D)\rightarrow X$, the arrows $\alpha_{x,y}$ defined as
	\[
	A_F(D) \xrightarrow{\bar{\phi}(D)}A_E(p(D))\xrightarrow{A_E(y)} A_E(X)=E^*\ell_J(X)\xrightarrow{E^*\sheafify_J(\name{x})}E^*\sheafify_J(H)
	\]
	form a colimit cocone in $\Etopos$. 
\end{remark}
The previous considerations allow us to express the sheafification of a functor as a set of equivalence classes of morphisms of sites:
\begin{prop}\label{prop:fascificazione_morfsiti}
	Consider a site $(\cbicat,J)$ and a presheaf $P:\cbicat\op\rightarrow\Set$ with corresponding Grothendieck fibration $p:\fib P\rightarrow \cbicat$. For an object $X$ in $\cbicat$, denote by $B_X:\cbicat\rightarrow \Sh(\cbicat/X, J_X)$ the flat $J$-continuous functor associated to $C_{p_X}:\Sh(\cbicat/X, J_X)\rightarrow \Sh(\cbicat,J)$: it acts by mapping any $Y$ in $\cbicat$ to the sheaf $\ell_J(Y)\circ p_X\op$, and accordingly on arrows. The set $\sheafify_J(P)(X)$ is isomorphic to the set of equivalence classes of morphisms of sites 
	\[
	A:(\fib P, J_P)\rightarrow (\Sh(\cbicat/X, J_X),J\can_{\Sh(\cbicat/X, J_X)})
	\]
	such that there is a natural transformation
	\[
	\textstyle\phi:A\Rightarrow B_Xp:\fib P\rightarrow \Sh(\cbicat/X,J_X)
	\]
	satisfying the following: for every presheaf $H:\cbicat\op\rightarrow\Set$, if we consider for every $Y$ in $\cbicat$ the collection of arrows $y:\yo(Y)\rightarrow H$ of $[\cbicat\op,\Set]$ and for every $(Z,s)$ in $\fib P$ the collection of arrows $z:Z\rightarrow Y$, the composites 
	\[
	\alpha_{y,z}: A(Z,s) \xrightarrow{{\phi}(Z,s)}B_X(Z)\xrightarrow{B_X(z)} B_X(Y)\xrightarrow{B_X(y)}B_X(H)
	\]
	form a colimit cocone in $\Sh(\cbicat/X, J_X)$. 
\end{prop}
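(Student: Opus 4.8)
The plan is to assemble this statement as a direct specialization of the preceding results, since all the heavy lifting has already been done. The master identity is
\[
\sheafify_J(P)(X)\simeq \Topos^s/_1\Sh(\cbicat,J)(\Sh(\cbicat/X,J_X),\Sh(\fib P,J_P)),
\]
which follows from Proposition \ref{prop:fundadj_discrete}(iii) together with the equivalences $\Sh(\cbicat/X,J_X)\simeq \Sh(\cbicat,J)/\ell_J(X)$ and $\Sh(\fib P,J_P)\simeq \Sh(\cbicat,J)/\sheafify_J(P)$ from Proposition \ref{prop:fib_discreta_slice_topos}. First I would fix the comorphism of sites $p=\pi_P:(\fib P,J_P)\rightarrow(\cbicat,J)$ and the geometric morphism $E=C_{p_X}:\Sh(\cbicat/X,J_X)\rightarrow\Sh(\cbicat,J)$, noting that $C_{p_X}$ is essential and hence in particular a genuine geometric morphism over $\Sh(\cbicat,J)$. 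With these identifications, the hom-set above is precisely $\Topos/_1\Sh(\cbicat,J)([E],[C_p])$, so the task reduces to applying the classification result for such morphisms.

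The second step is to invoke the 1-categorical variant of Theorem \ref{thm:classthm_morfismi_siti} stated immediately before this proposition (the Proposition whose statement ends with the colimit-cocone condition on $\alpha_{x,y}$). Taking $\Etopos:=\Sh(\cbicat/X,J_X)$, that result identifies $\Topos/_1\Sh(\cbicat,J)([E],[C_p])$ with equivalence classes of morphisms of sites
\[
A:(\fib P,J_P)\rightarrow(\Sh(\cbicat/X,J_X),J\can_{\Sh(\cbicat/X,J_X)})
\]
equipped with a natural transformation $\phi:A\Rightarrow A_E\,p$ satisfying the invertibility/colimit condition. The only genuine bookkeeping is to check that the flat $J$-continuous functor $A_E$ associated with $E=C_{p_X}$ is exactly the functor $B_X:\cbicat\rightarrow\Sh(\cbicat/X,J_X)$ described in the claim, namely $Y\mapsto \ell_J(Y)\circ p_X\op$. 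This is where I would spend a line or two: by Proposition \ref{prop:eqv_essgeomorf_comorfcont} the morphism of sites corresponding to an essential $E$ is $E_!\ell_J$ (equivalently $C_{p_X}$ is presented by the comorphism $p_X$, whose inverse image acts as $-\circ(\fib y)\op$ by Proposition \ref{prop:fib_e_morfib_sono_can.comorf}), and unwinding this through the equivalence $\Sh(\cbicat/X,J_X)\simeq\Sh(\cbicat,J)/\ell_J(X)$ yields precisely $\ell_J(Y)\circ p_X\op$. Thus $A_E\,p=B_X\,p$, and $\phi$ becomes a natural transformation $A\Rightarrow B_X p$ as stated.

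The third and final step is to transport the invertibility condition from the general statement into the explicit colimit-cocone condition of the claim. Using the description of the relationship between $\phi$ and its transpose recorded in Remark \ref{rmk:classthm_morfismi_relazione_2celle} (the colimit computation of the components of $(E^*\sheafify_J\circ\epsilon)(\tilde\phi\circ p^*)$), the condition that the induced $\tilde\phi$-composite be invertible is equivalent, presheaf by presheaf, to the requirement that for every $H$ the arrows $\alpha_{y,z}$ built from $\phi(Z,s)$, $B_X(z)$ and $B_X(y)$ form a colimit cocone in $\Sh(\cbicat/X,J_X)$; the indexing by $y:\yo(Y)\rightarrow H$ and $z:Z\rightarrow Y$ matches the colimits $H\simeq\colim_{x\in H(X)}\yo(X)$ and $p^*\yo(Y)\simeq\colim_{z:p(Z,s)\rightarrow Y}\yo(Z,s)$ used there. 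I expect the main obstacle to be purely notational rather than conceptual: one must carefully match the direction and naming conventions of the abstract statement (where the base morphism is $A_E p=E^*\ell_J p$ and the cocone is indexed by $x\in H(X)$ and $y:p(D)\rightarrow X$) against the concrete naming here (where the roles of the letters $X,Y,Z$ and the functor $B_X$ are reassigned), and verify that the equivalence-class quotient on morphisms of sites corresponds exactly to the equivalence-class quotient on geometric morphisms. No new mathematical input is required beyond the cited results.
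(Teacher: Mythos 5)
Your proposal is correct and follows essentially the same route as the paper: the paper obtains this proposition as a direct specialization of the $1$-categorical classification result and its accompanying remark stated just before it (taking $E=C_{p_X}$ and $p=\pi_P$), combined with the identity $\sheafify_J(P)(X)\simeq \Topos^s/_1\Sh(\cbicat,J)(\Sh(\cbicat/X,J_X),\Sh(\fib P,J_P))$ from the discrete fundamental adjunction. Your extra care in identifying $A_E$ with $B_X$ and in matching the indexing of the colimit cocone is exactly the bookkeeping the paper leaves implicit.
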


\subsection{The geometric approach: comorphisms of sites that are indexed by $J$-covering families}\label{sec:sheafification_locmfam_comorfismi}
In the present section we will focus instead on presenting a geometric morphism
\[
H\in \Topos/_1\Sh(\cbicat,J)(\Sh(\cbicat/A, J_A), \Sh(\fib P, J_P))\simeq\sheafify_J(P)(A)\]
`locally' by morphisms of fibrations: \ie, we will see that one can consider a $J$-covering family $\{g_u:D_u\rightarrow A\ |\ u\in U \}$ such that restricting $H$ to each of the toposes $\Sh(\cbicat/D_u, J_{D_u})$ results in a geometric morphism induced by a morphism of fibrations $\cbicat/D_u\rightarrow \fib P$. This point of view connects with the definition of the elements of $\sheafify_J(P)(A)$ as locally matching families of elements of $P$, which can be found for instance in {\cite[Proposition 2.19]{denseness}}. The argument will be rather technical and we will just provide a sketch of it, but the strategy goes as follows:
\begin{itemize}
	\item we will express the topos $\Sh(\fib P, J_P)$ as a colimit of local homeomorphisms of the kind $\Sh(\cbicat,J)/\ell_J(p(X,s))$: by pulling the colimit cocone back along $H$ we will obtain an expression of $\Sh(\cbicat/A, J_A)$ as a colimit of local homeomorphisms of the kind $\Sh(\cbicat,J)/B(X,s)$;
	\item each of the toposes $\Sh(\cbicat,J)/B(X,s)$ will in turn be expressed as a colimit of local homeomorphisms of the kind $\Sh(\cbicat,J)/\ell_J(Y_\alpha)$;
	\item finally, we show that each topos $\Sh(\cbicat,J)/\ell_J(Y_\alpha)$ is covered by toposes $\Sh(\cbicat,J)/\ell_J(D_u)$ such that the composite geometric morphisms $$\Sh(\cbicat,J)/\ell_J(D_u)\rightarrow \Sh(\cbicat,J)/\ell_J(A)$$ and $$\Sh(\cbicat,J)/\ell_J(D_u)\rightarrow\Sh(\cbicat,J)/\ell_J(p(X,s))$$ are induced by arrows $g_u:D_u\rightarrow A$ and $g'_u:D_u\rightarrow p(X,s)$, and thus by comorphisms of sites between the corresponding fibrations.
\end{itemize} 
Basically, the gist of this technique is to exploit the stability of colimits of étale toposes with respect to pullbacks, and the fact that by refining the cocone enough we can present at the level of sites. 

First of all, combining Proposition \ref{prop:fib_discreta_slice_topos} and Corollary \ref{cor:topos_discfib_colimite_conico} we obtain the following:
\begin{lemma}\label{lemma:colimite_per_homeoloc_topos}
	Consider a site $(\cbicat,J)$ and a presheaf $P:\cbicat\op\rightarrow\Set$: then the cocone
	\[
	\begin{tikzcd}
		{\Sh(\cbicat,J)/\ell_J(p(Y,P(y)(s)))} \ar[dr, "\prod_{\sheafify_J(\name{s}\yo(y))}"'] \ar[r, "\prod_{\ell_J(y)}"] & {\Sh(\cbicat,J)/\ell_J(p(X,s))} \ar[d, "\prod_{\sheafify_J(\name{s})}"]\\
		& {\Sh(\cbicat,J)/\sheafify_J(P)}
	\end{tikzcd}\]
	is a colimit cocone, where each $(X,s)$ is an object of $\fib P$ and $\name{s}:\yo(X)\rightarrow P$ is the arrow corresponding to $s\in P(X)$ by Yoneda's lemma.
\end{lemma}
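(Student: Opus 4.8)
The plan is to recognize this lemma as a direct synthesis of two results already established in the excerpt, namely Corollary \ref{cor:topos_discfib_colimite_conico} (which presents $\Gir_J(P)$ as a conical pseudocolimit of the diagram $C_{(-)}\Giraud(\cbicat/-)p_P$ over $\fib P$) and Proposition \ref{prop:fib_discreta_slice_topos} (which identifies $\Sh(\cbicat/Z, J_Z)\simeq \Sh(\cbicat,J)/\ell_J(Z)$ and, more generally, $\Gir_J(P)\simeq \Sh(\cbicat,J)/\sheafify_J(P)$ via the dependent product geometric morphism $\prod_{\sheafify_J(P)}$). The whole proof is really a matter of transporting the colimit cocone of Corollary \ref{cor:topos_discfib_colimite_conico} across these equivalences and checking that the legs and transition morphisms take the explicit form claimed in the statement.

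First I would recall from Corollary \ref{cor:topos_discfib_colimite_conico} that $\Gir_J(P)$ is the conical pseudocolimit of the diagram $D:\fib P\to \Topos\co/\Sh(\cbicat,J)$ sending $(X,s)$ to $C_{(-)}\Giraud(\cbicat/p(X,s))$, with colimit cocone indexed by the objects of $\fib P$. Under the equivalence of Proposition \ref{prop:fib_discreta_slice_topos}, the topos $\Sh(\cbicat/Z, J_Z)$ becomes $\Sh(\cbicat,J)/\ell_J(Z)$, and the structural geometric morphism $C_{p_Z}$ becomes the dependent product $\prod_{\ell_J(Z)}$. Applying this to each $Z=p(X,s)$ and to the comparison equivalence $\Gir_J(P)\simeq \Sh(\cbicat,J)/\sheafify_J(P)$, the colimit cocone of $D$ is transported into a colimit cocone landing in $\Sh(\cbicat,J)/\sheafify_J(P)$. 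The leg at $(X,s)$ is precisely the geometric morphism $\Sh(\cbicat,J)/\ell_J(p(X,s))\to \Sh(\cbicat,J)/\sheafify_J(P)$ induced by the arrow $\ell_J(p(X,s))\to\sheafify_J(P)$ which under the equivalence $\Sh(\cbicat,J)(\ell_J(X),\sheafify_J(P))\simeq P(X)$ corresponds to $s$, i.e.\ to $\sheafify_J(\name{s})$ by Yoneda's lemma (using $\name{s}:\yo(X)\to P$ and $p(X,s)=X$). Thus the leg is exactly $\prod_{\sheafify_J(\name{s})}$ as claimed.

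It then remains to identify the transition morphisms. For an arrow $(y,1):(Y,P(y)(s))\to (X,s)$ of $\fib P$ lying over $y:Y\to X$ in $\cbicat$, the transition morphism in the original cocone of Corollary \ref{cor:topos_discfib_colimite_conico} is $C_{\fib y}$, which under Proposition \ref{prop:fib_discreta_slice_topos} corresponds to the geometric morphism $\prod_{\ell_J(y)}$ induced by $\ell_J(y):\ell_J(Y)\to\ell_J(X)$. I would then check the cocone compatibility condition: the composite $\prod_{\sheafify_J(\name{s})}\circ\prod_{\ell_J(y)}$ must agree with the leg at $(Y,P(y)(s))$, that is with $\prod_{\sheafify_J(\name{s}\,\yo(y))}$. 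This reduces, by functoriality of the dependent product construction $\Sh(\cbicat,J)\to\Topos/\Sh(\cbicat,J)$ (Lemma \ref{lemma:morfgeom_tra_topos_slice_su_base}), to the commutativity in $\Sh(\cbicat,J)$ of the evident triangle $\ell_J(Y)\xrightarrow{\ell_J(y)}\ell_J(X)\xrightarrow{\sheafify_J(\name{s})}\sheafify_J(P)$ against $\sheafify_J(\name{s}\,\yo(y))$, which holds because $\ell_J=\sheafify_J\yo_\cbicat$ and $\name{s}\,\yo(y)=\name{P(y)(s)}$ by naturality of the Yoneda correspondence. Since the diagram $D$ is discrete in its weight, no further coherence data are needed.

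The main obstacle, such as it is, is purely bookkeeping: one must verify that the chain of equivalences from Proposition \ref{prop:fib_discreta_slice_topos} is genuinely \emph{compatible} with the structural geometric morphisms to $\Sh(\cbicat,J)$ and is pseudonatural in the index $(X,s)$, so that the transported cone is again a colimit cone rather than merely a cone. This compatibility and naturality are exactly what the cited proposition guarantees (its commuting squares are natural in $P$ and compatible with the canonical morphisms to the base), so the argument goes through without genuinely new content; the work is in matching up the explicit formulas for the legs and transition maps rather than in establishing the colimit property, which is inherited from Corollary \ref{cor:topos_discfib_colimite_conico}.
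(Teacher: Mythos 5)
Your proposal is correct and is exactly the route the paper takes: the lemma is presented there as an immediate consequence of combining Corollary \ref{cor:topos_discfib_colimite_conico} with Proposition \ref{prop:fib_discreta_slice_topos}, and your identification of the legs as $\prod_{\sheafify_J(\name{s})}$ and the transition morphisms as $\prod_{\ell_J(y)}$ via the pseudonaturality of the equivalences is precisely the bookkeeping the paper leaves implicit.
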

Now, if we take a geometric morphism $H$ making the triangle
\[\begin{tikzcd}[column sep=0.3]
	{\Sh(\cbicat, J)/\ell_J(A)} \ar[rr, "H"] \ar[dr, "\prod_{\ell_J(A)}"'] && {\Sh(\cbicat, J)/\sheafify_J(P)} \ar[dl, "\prod_{\sheafify_J(P)}"]\\
	& {\Sh(\cbicat,J)}&
\end{tikzcd}\]
commutative, by Lemma \ref{lemma:morfgeom_tra_topos_slice_su_base} we have that $H=\prod_h$ for some $h:\ell_J(A)\rightarrow \sheafify_J(P)$. the inverse image $H^*$ corresponds to pulling back along $h$: in particular, we shall consider the pullbacks
\[
\begin{tikzcd}
	B(X,s)
	\ar[d, "b{(X,s)}"'] \ar[r, "c{(X,s)}"] & \ell_J(p(X,s)) \ar[d, "\sheafify_J(\name{s})"]\\
	\ell_J(A) \ar[r, "h"] & \sheafify_J(P) \ar[ul, phantom, "\lrcorner"near end]
\end{tikzcd}.\]
Pulling back the colimit in Lemma \ref{lemma:colimite_per_homeoloc_topos} along $H$, we obtain a colimit cocone
\[
\begin{tikzcd}
	{\Sh(\cbicat,J)/B(Y,P(y)(s))} \ar[dr, "\prod_{b(Y, P(y)(s))}"'] \ar[r] & {\Sh(\cbicat,J)/B(X,s)} \ar[d, "\prod_{b(X,s)}"]\\
	& {\Sh(\cbicat,J)/\ell_J(A)}
\end{tikzcd}:\]
this follows from the considerations at the end of Section \ref{sec:colimits_of_toposes}. Now, applying again Lemma \ref{lemma:colimite_per_homeoloc_topos} we have that each  of the toposes $\Sh(\cbicat,J)/B(X,s)$ can be expressed as a colimit whose legs are of the kind
\[
\Sh(\cbicat,J)/\ell_J(Y_\alpha)\xrightarrow{\prod_{\alpha}} \Sh(\cbicat,J)/B(X,s)
\]
for all arrows $\alpha:\ell_J(Y_\alpha)\rightarrow B(X,s)$; notice that by composing colimit cocones we obtain a jointly epic family of arrows
\[
\ell_J(Y)\xrightarrow{\alpha}B(X,s)\xrightarrow{b(X,s)} \ell_J(A)
.\]
Finally, let us recall the following result:
\begin{lemma}[{\cite[Proposition 2.5]{denseness}}]
	Consider a site $(\cbicat,J)$ and an arrow $\alpha:\ell_J(X)\rightarrow \ell_J(Y)$: there exist two families of arrows $\{f_u:D_u\rightarrow X \ |\ u\in U\}$ and $\{g_u:D_u\rightarrow Y\ |\ u\in U\}$ of $\cbicat$ such that $\{f_u\ |\ u\in U\}$ is $J$-covering, $\ell_J(g_u)=\alpha\ell_J(f_u)$ and for every span $h:W\rightarrow D_u$, $k:W\rightarrow D_v$ in $\cbicat$ such that $f_uh=f_vk$ then $\ell_J(g_uh)=\ell_J(g_vk)$.
\end{lemma}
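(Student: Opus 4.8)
The plan is to reduce the statement to the standard description of the associated sheaf functor in terms of locally matching families, which in the present notation amounts to the local surjectivity of the sheafification unit. The three clauses of the conclusion will then fall out by transporting equalities of sections across an adjunction-plus-Yoneda bijection.

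First I would translate the datum of $\alpha$ into a section of a presheaf. Since $\ell_J(X)=\sheafify_J\yo(X)$ and $\ell_J(Y)$ is a sheaf, for every object $W$ of $\cbicat$ the adjunction $\sheafify_J\dashv\iota_J$ composed with the Yoneda lemma gives a natural bijection
\[
\Sh(\cbicat,J)(\ell_J(W),\ell_J(Y))\cong[\cbicat\op,\Set](\yo(W),\iota_J\ell_J(Y))\cong(\iota_J\ell_J(Y))(W).
\]
Writing $\eta:\yo(Y)\rightarrow\iota_J\ell_J(Y)$ for the unit and $\varepsilon$ for the (invertible) counit, the triangle identity $\varepsilon_{\ell_J(Y)}\circ\sheafify_J(\eta)=\id$ shows that, for an arrow $g:W\rightarrow Y$ of $\cbicat$, the morphism $\ell_J(g)=\sheafify_J(\yo(g))$ corresponds under this bijection to the element $\eta_W(g)\in(\iota_J\ell_J(Y))(W)$; moreover, restriction of sections along an arrow $f:D\rightarrow W$ corresponds to precomposition with $\ell_J(f)$. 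In particular $\alpha$ corresponds to a unique element $a\in(\iota_J\ell_J(Y))(X)=\sheafify_J(\yo(Y))(X)$, and for any $f:D\rightarrow X$ one has $a\cdot f\leftrightarrow\alpha\circ\ell_J(f)$, where $a\cdot f:=(\iota_J\ell_J(Y))(f)(a)$.

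Next I would invoke the description of the elements of the associated sheaf as locally matching families, that is, the local surjectivity of the unit $\eta$ (cf.\ \cite[Proposition 2.19]{denseness}). This yields a $J$-covering family $\{f_u:D_u\rightarrow X\mid u\in U\}$ together with, for each $u$, an element $g_u\in\yo(Y)(D_u)=\cbicat(D_u,Y)$ such that $\eta_{D_u}(g_u)=a\cdot f_u$. Translating this equality back through the bijection of the previous paragraph gives precisely $\ell_J(g_u)=\alpha\,\ell_J(f_u)$, which is the first required identity. The compatibility condition for spans then follows purely formally from naturality of $\eta$: given $h:W\rightarrow D_u$ and $k:W\rightarrow D_v$ with $f_uh=f_vk$, naturality yields $\eta_W(g_uh)=\eta_{D_u}(g_u)\cdot h=(a\cdot f_u)\cdot h=a\cdot(f_uh)$ and likewise $\eta_W(g_vk)=a\cdot(f_vk)$; since $f_uh=f_vk$ the right-hand sides coincide, so $\eta_W(g_uh)=\eta_W(g_vk)$, and transporting back through the bijection once more gives $\ell_J(g_uh)=\ell_J(g_vk)$, as required.

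The only genuinely non-formal ingredient is the local surjectivity of the sheafification unit used in the second step. If one did not wish to quote it, the main obstacle would be to establish it directly by unwinding the double plus-construction $\yo(Y)^{++}$ and verifying that a section of it over $X$ is represented, after passage to a suitable covering family, by honest arrows $g_u:D_u\rightarrow Y$ of $\cbicat$; everything else in the argument is bookkeeping of equalities of sections across the adjunction-plus-Yoneda correspondence.
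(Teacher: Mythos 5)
The paper does not actually prove this lemma --- it is recalled verbatim from the cited reference \cite[Proposition 2.5]{denseness} --- so there is no in-text proof to compare against; your argument is correct and complete. Translating $\alpha$ into an element $a\in\sheafify_J(\yo(Y))(X)$ via the adjunction-plus-Yoneda bijection and then invoking the local surjectivity of the sheafification unit is precisely the standard proof of this fact (and the one underlying the cited reference), with the covering family and the $g_u$ supplied by local surjectivity and the span condition following, as you say, from naturality of the unit together with injectivity of the bijection.
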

This lemma states that every arrow between representables is `locally induced' at the level of the presentation site.
We can apply this lemma to the composite arrows
\[
\ell_J(Y)\xrightarrow{\alpha}B(X,s)\xrightarrow{b(X,s)} \ell_J(A),\ \ell_J(Y)\xrightarrow{\alpha}B(X,s)\xrightarrow{c(X,s)} \ell_J(p(X,s)):
\]
by refining enough the $J$-covering family $\{f_u:D_u\rightarrow Y_\alpha\}$ we conclude that there exist two families $\{g_u:D_u\rightarrow A\}$ and $\{g'_u:D_u\rightarrow X\}$, all indexed by a set $U$, such that $c(X,s)\alpha\ell_J(f_u)=\ell_J(g'_u)$ and $b(X,s)\alpha\ell_J(f_u)=\ell_J(g_u)$. Now one can check that the diagram
\[
\begin{tikzcd}
	{\Sh(\cbicat,J)/\ell_J(D_u)}\ar[d, "\prod_{\ell_J(f_u)}"'] \ar[ddr, "\prod_{\ell_J(g'_u)}"]
	\arrow[
	rounded corners, 
	to path={ 
		-- node[above]{\scriptsize $\prod_{\sheafify_J(\name{s}\yo(g'_u))}\simeq C_{\fib(\name{s}\yo(g'_u))}$} ([xshift=30ex]\tikztostart.east) 
		|- (\tikztotarget.east)}, anchor=center]{dddr}
	\arrow[
	rounded corners,
	to path={
		-- ([xshift=-2ex]\tikztostart.west)
		|- (\tikztotarget.west)}, anchor=center]{ddd}
	&
	\\
	{\Sh(\cbicat,J)/\ell_J(Y_\alpha)} \ar[d, "\prod_\alpha"', "\prod_{\ell_J(g_u)}"{xshift=-20ex}]&\\
	{\Sh(\cbicat,J)/B(X,s)} \ar[d, "\prod_{b(X,s)}"'] \ar[r, "\prod_{c(X,s)}"]& {\Sh(\cbicat,J)/\ell_J(p(X,s))} \ar[d, "\prod_{\sheafify_J(\name{s})}"]\\
	{\Sh(\cbicat,J)/\ell_J(A)} \ar[r, "H"']& {\Sh(\cbicat,J)/\sheafify_J(P)}
\end{tikzcd}
\] 
is commutative: hence for every arrow $g_u$ 
the composite $H\prod_{\ell_J(g_u)}$ is equivalent to the functor $C_{\fib (\name{s}\yo(g'_u)) }$, \ie it is presented by a comorphism of sites. Moreover, The family of all arrows
$\ell_J(g_u)$ is jointly epic (since the families of the arrows $b(X,s)$, $\alpha$ and $\ell_J(f_u)$ all are), and hence the family $\{g_u\}$ is $J$-covering.  We can thus sum up our conclusions as follows:
\begin{prop}
	Consider a site $(\cbicat,J)$, an object $A$ of $\cbicat$, a presheaf $P:\cbicat\op\rightarrow \Set$ and a geometric morphism $H$ making the diagram
	\[\begin{tikzcd}[column sep=0.3]
		{\Sh(\cbicat/A, J_A)} \ar[rr, "H"] \ar[dr, "C_{p_A}"'] && {\Sh(\fib P, J_P)} \ar[dl, "C_{p_P}"]\\
		& {\Sh(\cbicat,J)}&
	\end{tikzcd}\]
	commutative (up to equivalence): then there is a $J$-covering family $\{g_u:D_u\rightarrow A\ |\ u\in U \}$ and a family of morphisms of fibrations $\{G_u: \cbicat/D_u\rightarrow \fib P\ |\ u\in U\}$ such that the composite geometric morphism
	\[
	\textstyle \Sh(\cbicat/D_u, J_{D_u}) \xrightarrow{C_{\fib g_u}} \Sh(\cbicat/A, J_A)\xrightarrow{H} \Sh(\fib P, J_P)
	\]
	is equivalent to the geometric morphism
	\[
	\textstyle C_{G_u}:\Sh(\cbicat/D_u, J_{D_u})\rightarrow \Sh(\fib P, J_P).\]
\end{prop}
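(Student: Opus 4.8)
The plan is to realize the geometric morphism $H$ explicitly as a pullback of the canonical colimit cocone presenting $\Sh(\fib P,J_P)$ as a weighted colimit of étale toposes, and then to refine this presentation down to the level of sites. The key structural tool is the stability of colimits of slice toposes under pullback, established at the end of Section \ref{sec:colimits_of_toposes}, together with the explicit descriptions in Proposition \ref{prop:fib_discreta_slice_topos} and Corollary \ref{cor:topos_discfib_colimite_conico}. First I would invoke Lemma \ref{lemma:morfgeom_tra_topos_slice_su_base} to write $H$ as $\prod_h$ for a suitable arrow $h:\ell_J(A)\rightarrow\sheafify_J(P)$ in $\Sh(\cbicat,J)$ (since both domain and codomain are local homeomorphisms over $\Sh(\cbicat,J)$ and the triangle with $C_{p_A}$ and $C_{p_P}$ commutes). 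This reduces the whole problem to one about an arrow between representable and non-representable sheaves.

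Next I would present $\Sh(\cbicat,J)/\sheafify_J(P)$ as the colimit of the diagram of local homeomorphisms $\Sh(\cbicat,J)/\ell_J(p(X,s))$ indexed by $\fib P$, as in Lemma \ref{lemma:colimite_per_homeoloc_topos}, and then pull this colimit cocone back along $H=\prod_h$. By pullback-stability of colimits of slice toposes, the resulting cocone presents $\Sh(\cbicat,J)/\ell_J(A)$ as a colimit of slices $\Sh(\cbicat,J)/B(X,s)$, where $B(X,s)$ is the pullback of $\ell_J(p(X,s))\xrightarrow{\sheafify_J(\name{s})}\sheafify_J(P)$ along $h$. Each such slice over a general sheaf $B(X,s)$ is then further re-expressed, again via Lemma \ref{lemma:colimite_per_homeoloc_topos}, as a colimit of slices over \emph{representables} $\ell_J(Y_\alpha)$, producing a jointly epic family of arrows $\ell_J(Y_\alpha)\xrightarrow{\alpha}B(X,s)\xrightarrow{b(X,s)}\ell_J(A)$. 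At this stage everything is governed by arrows between representable sheaves.

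The final and most delicate step would be to descend from arrows between representables to actual comorphisms of sites. For this I would apply \cite[Proposition 2.5]{denseness} (the ``locally induced'' lemma) simultaneously to the two composites $\ell_J(Y_\alpha)\xrightarrow{b(X,s)\alpha}\ell_J(A)$ and $\ell_J(Y_\alpha)\xrightarrow{c(X,s)\alpha}\ell_J(p(X,s))$. After refining the $J$-covering family $\{f_u:D_u\rightarrow Y_\alpha\}$ finely enough to work for both composites at once, I obtain families $\{g_u:D_u\rightarrow A\}$ and $\{g'_u:D_u\rightarrow p(X,s)\}$ with $\ell_J(g_u)=b(X,s)\alpha\ell_J(f_u)$ and $\ell_J(g'_u)=c(X,s)\alpha\ell_J(f_u)$; the joint epimorphy accumulated through the composed colimit cocones forces $\{g_u\}_u$ to be $J$-covering. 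The morphism of fibrations $G_u$ would be taken to be $\fib(\name{s}\yo(g'_u))$, and a diagram chase through the large commutative square confirms that $H\circ C_{\fib g_u}\simeq C_{G_u}$.

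The main obstacle I anticipate is the simultaneous refinement step: \cite[Proposition 2.5]{denseness} applied separately to the two arrows $b(X,s)\alpha$ and $c(X,s)\alpha$ yields two a priori different covering families, and one must pass to a common $J$-covering refinement on which both factorizations hold compatibly, while verifying that the induced $g_u$ and $g'_u$ satisfy the cocone-compatibility needed to glue into genuine morphisms of fibrations rather than mere functors. Keeping track of the interplay between the two pullback squares (the one defining $B(X,s)$ and the étale-stability of the pulled-back colimit) and checking that the identification $H\circ C_{\fib g_u}\simeq C_{\fib(\name{s}\yo(g'_u))}$ holds \emph{over} $\Sh(\cbicat,J)$ is where the bulk of the (here suppressed) routine verification lies.
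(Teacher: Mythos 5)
Your proposal follows essentially the same route as the paper's own argument: writing $H$ as $\prod_h$ via Lemma \ref{lemma:morfgeom_tra_topos_slice_su_base}, pulling back the colimit cocone of Lemma \ref{lemma:colimite_per_homeoloc_topos} along $H$ using pullback-stability of colimits of slice toposes, re-expressing each $\Sh(\cbicat,J)/B(X,s)$ as a colimit over representables, and then descending to sites via \cite[Proposition 2.5]{denseness} with a common refinement producing $g_u$, $g'_u$ and $G_u=\fib(\name{s}\yo(g'_u))$. The steps, the key lemmas and the final identification $H\circ C_{\fib g_u}\simeq C_{G_u}$ all match the paper's sketch.
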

Of course, a geometric morphism $H$ admits multiple presentations in this way, since one can choose different families $\{f_u\}$, $\{g_u\}$ and $\{g'_u\}$ to present it at the level of sites. Notice that starting instead from the comorphisms $G_u$, one can recover $H$ quite easily: indeed, the geometric morphisms $C_{G_u}:\Sh(\cbicat/D_u, J_{D_u})\rightarrow \Sh(\fib P, J_P)$ form a cocone under the diagram indexed by the elements of the $J$-covering family $\{g_u\}$, which induces a geometric morphism from their colimit (which is $\Sh(\cbicat/A, J_A)$) to $\Sh(\fib P, J_P)$.

\subsection{Using comorphisms of sites to $[(\fib P)\op, \Set]$}
There is a third way of describing geometric morphisms  $H$ in the hom-category $\Topos^s/_1\Sh(\cbicat,J)(\Sh(\cbicat/X, J_X), \Sh(\fib P, J_P))$ that does not relie on locally matching conditions, but instead on $J$-equivalence. To do so, we first need to recall that any geometric morphism $H:\Sh(\cbicat/X, J_X)\rightarrow \Sh(\fib P, J_P)$ is a local homeomorphism and therefore is essential. Since both its domain and codomain are toposes over $\Sh(\cbicat,J)$ obtained from continuous comorphisms of sites, we may apply Proposition \ref{prop:classthm_essenziali_pshlifting} to obtain the following:
\begin{cor}\label{cor:sheafififcation_comorfismi_verso_prefasci}
	Consider a site $(\cbicat,J)$ and a presheaf $P:\cbicat\op\rightarrow \Set$. Denote by $p:\fib P\rightarrow \cbicat$ and $ p_X:\cbicat/X\rightarrow \cbicat$ the relevant fibrations. Then giving a geometric morphism $H$ such that the triangle
	\[
	\begin{tikzcd}[column sep=0.3]
		{\Sh(\cbicat/X, J_X)} \ar[rr, "H"] \ar[dr, "C_X"'] && {\Sh(\textstyle \fib P, J_P)} \ar[dl, "C_{p}"]\\
		& {\Sh(\cbicat,J)}&
	\end{tikzcd}
	\]
	is commutative (up to equivalence) is the same as giving a continuous comorphism of sites $B:(\cbicat/X, J_X)\rightarrow([(\fib P)\op, \Set], \widehat{J_P})$, up to $J_P$-equivalence, endowed with a natural transformation
	$\tau:B\Rightarrow p^*\yo_\cbicat p_X$ such that the composite
	\[\bar{\tau}:\lan_{p\op} B\xRightarrow{\lan_{p\op}
		\circ \tau}\lan_{p\op}p^*\yo_\cbicat p_X \xRightarrow{\epsilon'\circ \yo_\cbicat p_X}\yo_\cbicat p_X\]
	is sent by $\sheafify_J$ to an isomorphism, where $\epsilon'$ is the counit of $\lan_{p\op}\dashv p^*$. In particular, for any $X$ in $\cbicat$ it follows that 
	$\sheafify_J(P)(X)$ is isomorphic to the set of pairs $(B, \tau)$ as above, where $B$ is chosen up to $J$-equivalence.
\end{cor}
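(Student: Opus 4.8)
The plan is to read off this statement as a direct specialization of Proposition \ref{prop:classthm_essenziali_pshlifting}. I would apply that proposition to the two discrete fibrations $p_X\colon\cbicat/X\rightarrow\cbicat$ and $p\colon\fib P\rightarrow\cbicat$, taking the domain comorphism (the `$p$' of Proposition \ref{prop:classthm_essenziali_pshlifting}) to be $p_X$ and the codomain comorphism (the `$q$') to be $p$, so that $(\dbicat,K):=(\cbicat/X,J_X)$ and $(\ebicat,T):=(\fib P,J_P)$. Both $p_X$ and $p$ are $(J_X,J)$- and $(J_P,J)$-continuous comorphisms of sites respectively, by Proposition \ref{prop:fib_discreta_slice_topos} and the remark following it, so the hypotheses of Proposition \ref{prop:classthm_essenziali_pshlifting} are satisfied. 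Its restricted form then identifies the category $\EssTopos\co/\Sh(\cbicat,J)(\Sh(\cbicat/X,J_X),\Sh(\fib P,J_P))$ with the full subcategory of $\Cosite\cont\JPeqv((\cbicat/X,J_X),([(\fib P)\op,\Set],\widehat{J_P}))/p^*\yo_\cbicat p_X$ cut out by the stated condition on $\tau$, which is exactly the description appearing in the claim.

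Before invoking the proposition I would justify that we may work inside $\EssTopos$ rather than merely $\Topos$. Both toposes in the triangle are étale over the base: indeed $\Sh(\cbicat/X,J_X)\simeq\Sh(\cbicat,J)/\ell_J(X)$ and $\Sh(\fib P,J_P)\simeq\Sh(\cbicat,J)/\sheafify_J(P)$ by Proposition \ref{prop:fib_discreta_slice_topos}. By Lemma \ref{lemma:morfgeom_tra_topos_slice_su_base} any geometric morphism $H$ making the triangle commute is, up to equivalence, of the form $\prod_g$ for an arrow $g$ of $\Sh(\cbicat,J)$, hence a local homeomorphism and in particular essential. Therefore every geometric morphism of the triangle is essential, and the hom-category of such $H$ computed in $\Topos\co/\Sh(\cbicat,J)$ coincides with the one computed in $\EssTopos\co/\Sh(\cbicat,J)$; this is precisely the category the classification result describes.

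To obtain the final `in particular' clause I would pass to isomorphism classes. Recall from the opening of Section \ref{sec:point_of_view_sheafify} that $\sheafify_J(P)(X)\simeq\Topos^s/_1\Sh(\cbicat,J)(\Sh(\cbicat/X,J_X),\Sh(\fib P,J_P))$, the set of equivalence classes of objects of this hom-category. Since all the morphisms involved are essential, this set equals $\pi_0$ of $\EssTopos\co/\Sh(\cbicat,J)(\Sh(\cbicat/X,J_X),\Sh(\fib P,J_P))$; applying $\pi_0$ to the equivalence of categories furnished by Proposition \ref{prop:classthm_essenziali_pshlifting} then identifies $\sheafify_J(P)(X)$ with the set of pairs $(B,\tau)$ satisfying the displayed $\sheafify_J$-isomorphism condition, with $B$ taken up to $J_P$-equivalence. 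The pseudonaturality in $X$ inherited from Proposition \ref{prop:classthm_essenziali_pshlifting} moreover makes this identification compatible with the restriction maps of the sheaf $\sheafify_J(P)$.

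I do not expect a genuine obstacle here: the whole argument is a matter of matching the data of Proposition \ref{prop:classthm_essenziali_pshlifting} to the present situation. The only steps requiring care are the bookkeeping of the roles of the two comorphisms $p_X$ and $p$ (including the direction of the slice over $p^*\yo_\cbicat p_X$ and of the Kan extension $\lan_{p\op}$ in the defining condition on $\tau$) and the reduction to essential geometric morphisms via the étale argument; once these are settled, the statement follows formally.
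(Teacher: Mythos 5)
Your proposal is correct and follows essentially the same route as the paper: the text preceding the corollary observes that any such $H$ is a local homeomorphism (hence essential) because both toposes are étale over $\Sh(\cbicat,J)$, and then applies Proposition \ref{prop:classthm_essenziali_pshlifting} with exactly the role assignment you describe ($p_X$ as the domain comorphism, $p$ as the codomain one). Your added bookkeeping via Lemma \ref{lemma:morfgeom_tra_topos_slice_su_base} and the passage to isomorphism classes for the ``in particular'' clause just makes explicit what the paper leaves implicit.
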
 
\begin{remark}
	Recall that for $W:(\fib P)\op\rightarrow\Set$, $(Y,U)$ in $\fib P$ and $Z$ in $\cbicat$ then $\lan_{q\op}(W)(Z):=\colim_{\phi:Z\rightarrow q(Y,U)} W(Y,U)$. We can describe the natural transformation $\bar{\tau}:=(\epsilon'\circ \yo_\cbicat p_X)(\lan_{q\op}\circ \tau)$ componentwise as follows: first, set $[w:W\rightarrow X]$ in $\cbicat/X$, so that the component $\bar{\tau}([w])$ is an arrow
	$\lan_{q\op}B([w])\rightarrow \yo_\cbicat(W)$ of $[\cbicat\op,\Set]$; then notice that its component at $Z$ in $\cbicat$ is the unique arrow
	\[
	\colim_{\phi:Z\rightarrow q(Y,U)} B([w])(Y,U) \rightarrow \colim_{\phi:Z\rightarrow q(Y,U)} \cbicat(Y,W) \rightarrow \cbicat(Z,W)
	\]
	induced by colimit property from the cocone whose leg indexed by $\phi:Z\rightarrow q(Y,U)$ is the arrow
	\[
	B([w])(Y,U)\xrightarrow{\tau([w])(Y,U)}\cbicat(Y,W)\xrightarrow{-\circ \phi}\cbicat(Z,W).\]
\end{remark}

\subsection{The various forms of sheafification for a presheaf over a topological space}
Consider a topological space $X$ and a presheaf $P:\Ocal(X)\op\rightarrow\Set$. The content of the previous sections provides us with various possible ways of describing the sheafification $\sheafify(P)$: indeed, for $U$ in $\Ocal(X)$, we have
\begin{align}
	\sheafify(P)(U)&=\Top/X(U, E_P)\\
	&\simeq \Locale/\Ocal(X) (\Ocal(U), \Ocal(E_P))\\
	&\simeq(\Ocal(X), J\can_{\Ocal(X)})/\Site((\Ocal(E_P), J\can_{\Ocal(E_P)}), (\Ocal(U), J\can_{\Ocal(U)}))\\
	&\simeq\Cosite\cont/(\Ocal(X), J\can_{\Ocal(X)})((\Ocal(U), J\can_{\Ocal(U)}),(\Ocal(E_P), J\can_{\Ocal(E_P)}))\\
	&=\Topos^s/\Sh(X) (\Sh(U), \Sh(E_P))\\
	&\simeq \Topos^s/\Sh(X)(\Sh(U), \Sh(\fib P, J_P))\tag{8.5*}\\
	&\simeq\Site^\dagger ((\fib P, J_P), (\Ocal(U), J\can_{\Ocal(U)}))\\
	&\simeq\Cosite\cont\JPeqv ( (\Ocal(U), J\can_{\Ocal(U)}), ([(\fib P)\op,\Set], \widehat{J_P} ))/^\bullet p^*\yo_{\Ocal(X)} p_U\\
	&\simeq \mbox{Locally matching families of } \Fib_{\Ocal(X)}(\Ocal(-), \fib P)
\end{align} 
where the symbol $^\dagger$ denotes the morphisms of sites $$A:(\fib P, J_P)\rightarrow (\Ocal(U), J\can_{\Ocal(U)})$$ satisfying the condition
\begin{itemize}
	\item[($^\dagger$)] there is a natural transformation $A\Rightarrow i_U\inv p$, where $i_U\inv:=(-\cap U):\Ocal(X)\rightarrow \Ocal(U)$ and $p:\fib P\rightarrow \Ocal(X)$, whose induced natural transformation ${\phi}:\Sh(A)^*\sheafify_{J_P}\Rightarrow \Sh(i_U)^*\sheafify_{J\can_{\Ocal(X)}}\lan_{p\op}$ at the level of geometric morphism is such that the composite
	\[\begin{tikzcd}
		{\Sh(A)^*\sheafify_{J_P}p^*}
		\ar[d, Rightarrow, "{\phi}\circ p^*"]\\
		{\Sh(i_U)^*\sheafify_{J\can_{\Ocal(X)}}\lan_{p\op}p^*}
		\ar[d, Rightarrow, "{\Sh(i_U)^*\sheafify_{J\can_{\Ocal(X)}}\circ \epsilon}"]\\
		\Sh(i_U)^*\sheafify_{J\can_{\Ocal(X)}}{}
	\end{tikzcd}\]
	is invertible (where $\epsilon$ is the counit of $\lan_{p\op}\dashv p^*$) (this results from Proposition \ref{prop:fascificazione_morfsiti}).
\end{itemize} 
On the other hand, the symbol $^\bullet$ states that we consider $J_P$-equivalence classes of continuous comorphisms of sites $B:\Ocal(U)\rightarrow [(\fib P)\op,\Set]$ with a natural transformation $\tau:B\Rightarrow p^*\yo_{\Ocal(X)}p_U$ satisfying the requirement
\begin{itemize}
	\item[($^\bullet$)]: the composite
	\[\lan_{p\op} B\xRightarrow{\lan_{p\op}
		\circ \tau}\lan_{p\op}p^*\yo_\cbicat p_X \xRightarrow{\epsilon'\circ \yo_\cbicat p_X}\yo_\cbicat p_X\]
	is sent by $\sheafify_J$ to an isomorphism, where $\epsilon'$ is the counit of $\lan_{p\op}\dashv p^*$ (see Corollary \ref{cor:sheafififcation_comorfismi_verso_prefasci}).
\end{itemize}
Let us recapitulate how we can go from one form to the other. 

The connection between the first four items is quite easy. We already know how to go from (1) to (2) and (3), by mapping a section $s$ to $s\inv$; the viceversa is the content of Proposition \ref{prop:fascificazione_topologica_con_framehom}; instead, mapping $s$ to $s_!=s(-):\Ocal(U)\rightarrow \Ocal(E_P)$ provides a way to go from (1) to (4). Starting instead from a continuous comorphism of sites in (4), its right adjoint is a morphism of sites, and so we go back to (3).

By applying either $\Sh(-)$ or $C_{(-)}$ we can go from (1), (2), (3) and (4) to (5): conversely, one can easily go back from (5) to (2) by restricting a geometric morphism $\Sh(U)\rightarrow \Sh(E_P)$ to the locales of subterminal objects. 

One can go from (5) to (5$^*$) and viceversa since $\Sh(\fib P, J_P)\simeq \Sh(E_P)$ (see Proposition \ref{prop:fasci__etalebundle_eqv_fasci_grfibration}).

One can go from (5$^*$) to (6) simply by restricting the inverse image $H^*:\Sh(\fib P, J_P)\rightarrow \Sh(U)$ to representables, and viceversa a morphism of sites $\fib P\rightarrow \Sh(U)$ induces a geometric morphism $\Sh(U)\rightarrow \Sh(\fib P, J_P)$. 

The connection between (5$^*$) and (7) is the content of Corollary \ref{cor:sheafififcation_comorfismi_verso_prefasci}, while the connection between (5$^*$) and (8) was sketched in Section \ref{sec:sheafification_locmfam_comorfismi}.

To conclude, let us show the explicit connection between the first items and the last items, without going through the categories of geometric morphisms.

If we start from (1), \ie from a section $s:U\rightarrow E_P$, it is easy to verify that the corresponding morphism of sites
\[
A_s:\fib P \rightarrow \Ocal(U)\]
is defined by mapping any object $(V,t)$ of $\fib P$ to the open $s\inv (\dot{t}(V))\subseteq U$. The inclusions
\[
s\inv (\dot{t}(V))=\{x\in U\cap V\ |\ s(x)=t_x  \}\subseteq U\cap V
\]
provide the components of a natural transformation $A_s\Rightarrow i_U\inv p$ such that satisfying the condition ($^\dagger$): thus we have obtained an element of (6). Viceversa, consider such a morphism of sites
\[
A:(\fib P, J_P)\rightarrow (\Ocal(U), J\can_{\Ocal(U)})
\] 
satisfying the condition ($^\dagger$): it is a matter of computation to see that we can define a homomorphism of frames $f_A:\Ocal(E_P)\rightarrow \Ocal(U)$ by setting the image of the basic opens as
\[
f_A(\dot{t}(V)):=\bigcup\{W\subseteq U\ |\ W\subseteq A(V,t) \}
\]
and requiring that $f_A$ preserve arbitrary unions. This allows us to go from  (6) to (2).

Finally, let us connect (1) and (7). Starting from a section $s:U\rightarrow E_P$, first of all we consider the continuous comorphism of sites $s(-):\Ocal(U)\rightarrow \Ocal(E_P)\simeq \Id_{J_P}(\fib P)$ of item (4): by considering the corresponding geometric morphism $\Sh(U)\rightarrow \Sh(\Id_{J_P}(\fib P))$ and applying to it Proposition \ref{prop:eqv_essgeomorf_comcont_pshlifting} we obtain that $s$ corresponds to the continuous comorphism of sites 
\[
\Ocal(U) \xrightarrow{R} \Sh(\fib P, J_P)\hookrightarrow[(\fib P)\op,\Set],
\]
where in particular $R$ maps an open $V\subseteq U$ to the union of subterminals
\[
\bigcup_{\substack{W\subseteq V,\\
		r\in P(W)\\ \mbox{\scriptsize\st}\, s_W=\dot{r} }}\ell_{J_P}(W,r).
\]

Conversely, start from a continuous comorphism of sites $B:\Ocal(U)\rightarrow [(\fib P)\op,\Set]$ as in item (7): it induces a geometric morphism $F:\Sh(U)\rightarrow \Sh(\fib P, J_P)$, whose inverse image acts by mapping a $J_P$-sheaf $H:(\fib P)\op\rightarrow \Set$ to the $J\can_{\Ocal(U)}$-sheaf
\[
F^*(H):\Ocal(U)\op\rightarrow\Set,\ F^*(H)(V):=[(\fib P)\op, \Set](B(V),H).
\]
By restricting to subterminals we obtain a frame homomorphism $\Id_{J_P}(\fib P)\simeq \Ocal(E_P)\rightarrow \Ocal(U)$, \ie an element of (2) and (3).
\chapter{The categorical \emph{petit/gros} topos construction}

One of the key ingredients of a categorical approach to mathematics is the idea that the study of an object $X$ in an environment $\cbicat$ is essentially the study of all the possible points of view that the environment allows for that object: that is, of the representable presheaf $\cbicat(-,X)$. This `Yoneda paradigm' is justified by the fact that embedding $\cbicat$ into its topos of presheaves allows us to perform many categorical constructions that the base category may lack, such as limits or colimits. Of course, particular needs may require toposes other than the topos of presheaves over the base category. When working with a topological space $X$, the concept of \emph{gros topos}\index{topos!gros} $TOP(X)$ fulfills exactly that need: the category $\Top/X$ of bundles over $X$ embeds into it, and moreover all all the relevant topos-theoretic invariants of $X$ (namely its cohomology) are preserved. This latter fact is a consequence of the fact that the \emph{petit topos}\index{topos!petit} of $X$, \ie the topos of sheaves $\Sh(X)$, is in fact a retract of the big topos $TOP(X)$, as shown in Section 4.10 of \cite[Exposé IV]{SGA4_I}. Section 2.5 \textit{ibid.}, which is dedicated to the introduction of the \textit{gros site} and the \textit{gros topos}, lists them among the many contributions of J. Giraud to topos theory, and then goes forth:
\begin{quote}
	\emph{L’avantage du gros topos de $X$ sur le petit, c’est que le site qui le définit contient}
	$\Top/X$
	\emph{comme sous-catégorie pleine [...] Par
	suite, un espace $X'$ sur $X$ est connu à $X$-isomorphisme près quand on connaît le faisceau (}$\in TOP(X)$\emph{) qu’il définit; donc la notion de faisceau sur (le gros site de) $X$ peut être considéré comme une généralisation de celle d’espace topologique au-dessus de $X$, à l’aide de laquelle toutes les constructions de la théorie des faisceaux prennent un sens
	pour les espaces topologiques sur $X$.}\footnote{\textit{The advantage of the big topos of $X$ over the small topos is that the site defining it contains $\Top/X$ as a full subcategory [...] It follows that a space $X'$ over $X$ is known up to $X$-isomorphism whenever we know the sheaf ($\in TOP(X)$) which it defines; thus the notion of sheaf on (the big site of) $X$ can be considered as a generalization of that of topological space over $X$, thanks to which all the constructions in the theory of sheaves acquire meaning for topological spaces over $X$}.}
\end{quote}
In the following sections, we shall recall the basic results about the dichotomy \emph{petit-gros topos}, both in topology and algebra; after that we shall prove that \emph{every} Grothendieck topos can be naturally regarded as a small topos embedded in an associated (very) big topos, and that this embedding allows one to view any object of the original topos as an étale morphism to the terminal object in the associated big topos. This seems to realize Grothendieck's aspiration of viewing any object of a topos geometrically as an étale space over the terminal object, as expressed in his 1973 Buffalo lectures and brought to the public attention by Colin McLarty in his recent talks (for instance in \cite{mclarty.seminar}):
\begin{quote}
	\emph{The intuition is the following: viewing objects of a topos as being something like étalé spaces over the final object of the topos, and the induced topos over an object as just the object itself. That is I think the way one should handle the situation.}
	
	\emph{It's a funny situation because in strict terms, you see, the language which I want to push through doesn't make sense. But of course there are a number of mathematical statements which substantiate it.}	
\end{quote}
Indeed, in Grothendieck's work, the toposes that are most naturally thought as generalized spaces arise as `petit' toposes, connected to `gros' toposes by essential morphisms and local morphisms defining a retraction.

\section{The petit-gros topos dichotomy in topology and algebra}

We dedicate this section to recalling how the the \ac small' and \ac big' toposes of a topological space and of a ring are obtained, as guide examples. 

The literature on the subject is both sterminate and sparse, therefore only the very fundamental results will be recalled, without any claim of originality or exhaustiveness.

Let us start from the case of topological spaces: we refer to \cite[Exposé IV, Sections 2.5 and 4.10]{SGA4_I}, and \cite[pagg. 415-416]{maclanemoerdijk}. We have already said that the \emph{small topos} of $X$ is the topos
\[
\Sh(X):=\Sh(\Ocal(X), J\can_{\Ocal(X)}),\]
\ie the topos of sheaves over the frame $\Ocal(X)$ endowed with its canonical topology:
for an open subset $U$ of $X$, a covering family of $U$ is a family of opens $\{U_i\ |\ i\in I\}$ such that $\bigcup_{i\in I}U_i=U$ (see also Section \ref{sec:adj_topologica}). 

We can define a similar Grothendieck topology on any suitably small category of topological spaces. Given a universe $\ubicat$, consider the category $\Top_\ubicat$ of $\ubicat$-small topological spaces (see Appendix \ref{app:universes}). For every $\ubicat$-small topological space $X$, we consider covering any $\ubicat$-small family of open embeddings $\{U_i\hookrightarrow X\ |\ i\in I\}$ which is jointly surjective, and this provides a topology $J$ over $\Top_\ubicat$: this is called the \emph{open cover topology}\index{topology!open cover} (see for instance \cite[Chapter III, §2]{maclanemoerdijk}). We can then consider the topology $J_X$ induced by $J$ on $\Top_\ubicat/X$ along the fibration $\Top_\ubicat/X\rightarrow \Top_\ubicat$, in the sense of Proposition \ref{prop:top_minima_comorfismo}: the site $(\Top_\ubicat/X, J_X)$ is evidently neither $\ubicat$-small nor a $\ubicat$-site in principle, and this is why it is called the \emph{big site of $X$}. If we want to compute its topos of sheaves, we can avoid size issues either by choosing a suitably small subcategory of $\Top_\ubicat/X$ or by assuming the existence of a bigger universe $\vbicat\ni\ubicat$ such that the big site of $X$ is a $\vbicat$-site (we will exploit this latter point of view). We end up with two different toposes related to $X$:
\[
\Sh(X),\ \Sh_\vbicat(\Top_\ubicat/X, J_X).
\]  
The latter is the \emph{big topos of $X$}. We remark that the topology $J_X$ is subcanonical, and therefore the category of topological spaces $\Top_\ubicat/X$ embeds fully faithfully into $\Sh(\Top_\ubicat/X, J_X)$: this is the technical motivation behind the claim in the introduction to this chapter that the sheaves over the big site of $X$ generalize bundles over $X$. Moreover, the construction of big toposes for topological spaces is functorial: indeed, a map $f:X\rightarrow Y$ of $\ubicat$-topological spaces induces a functor $(f\circ-):\Top_\ubicat/X\rightarrow \Top_\ubicat/Y$, which is a comorphism of sites and thus induces a geometric morphism $\Sh_\vbicat(\Top_\ubicat/X, J_X)\rightarrow \Sh_\vbicat(\Top_\ubicat/Y, J_Y)$ (see \cite[Exposé IV, § 4.1.3]{SGA4_I}). Finally, if we consider the fully faithful functor
\[
i_X:\Ocal(X)\hookrightarrow \Top_\ubicat/X
\]
which maps every $U\subseteq X$ to the open immersion $U\hookrightarrow X$, one can easily verify that it is both a morphism and a comorphism of sites, when $\Ocal(X)$ is endowed with the canonical topology: therefore, it induces a pair of geometric morphisms of $\vbicat$-toposes
\[
Sh(X)\xrightarrow{C_{i_X}}\Sh_\vbicat(\Top_\ubicat/X, J_X),\ \Sh_\vbicat(\Top_\ubicat/X, J_X)\xrightarrow{\Sh(i_X)}\Sh(X),
\]
which satisfy $C_{i_X}^*=\Sh(i_X)_*$. The fact that $i_X$ is fully faithful implies that $C_{i_X}$ is an embedding, and hence $\Sh(X)$ is a retract of $\Sh_\vbicat(\Top_\ubicat/X, J_X)$, with $C_{i_X}$ being the section and $\Sh(i_X)$ the retraction. The sheaves in the image of $(C_{i_X})_*$ are usually called the \emph{big étale sheaves over $X$}, since they correspond to the étale bundles over $X$.

Another classical example of a {petit-gros} topos situation is provided by the Zariski topos of a ring. Starting with a commutative ring $A$,
we consider its spectrum
\(
\Spec(A)=\{\pfrak\subseteq A\ |\ \pfrak\mbox{ prime ideal} \},
\)
which is a topological space when endowed with the \emph{Zariski topology}\index{topology!Zariski} defined by the base of opens $D(a):=\{\pfrak\in \Spec(A)\ |\ a\notin \pfrak\}$ where $a$ varies in $A$. The \emph{small Zariski topos of $A$}\index{topos!small Zariski} is then defined as the topos of sheaves $\Sh(\Spec(A))$ over the spectrum of the ring.
To build the big site, we move to the category $\Ring\fp$ of finitely presented commutative rings, which will be the algebraic counterpart of $\Top_\ubicat$: to be more precise, we need to work with $\Ring\fp\op$, since it is the category holding the geometric information\footnote{This is one of the many instances of the common mantra that `geometry is algebra$\op$'.}. The category $\Ring\fp$ can easily be endowed with a Grothendieck co-topology defined by the co-basis of co-covering families of the form
\[
\{B\rightarrow B[s\inv]\ |\ s\in S\},
\]
where $S\subseteq B$ is any collection of elements of $B$ such that the generated ideal $\langle S\rangle$ coincides with $B$: we obtain therefore a topology on $\Ring\fp\op$. By considering the slice $A/\Ring\fp$, which is nothing but the category of finitely presented $A$-algebras, we can induce a co-topology on it, and hence a topology on the category on $(A/\Ring\fp)\op\simeq \Ring\fp\op/A$. Said topology is denoted by $\zbicat_A$ and called the \emph{Zariski topology on the big site of $A$}. The \emph{big Zariski topos of the ring $A$}\index{topos!big Zariski} is defined as the topos of sheaves
\[
\Sh(\Ring\op\fp/A,\zbicat_A).
\]
Finally, we compare the two toposes. Again, we consider a functor
\[
i_A:\Ocal(\Spec(A))\rightarrow (\Ring\fp/A)\op 
\]
defined on basic opens by mapping $D(a)$ to $A\rightarrow A[a\inv]$: one shows that said functor is a full and faithful morphism and comorphism of sites, and thus it induces a section
\[C_{i_A}:\Sh(\Spec(A))\rightarrow \Sh((\Ring\fp/A)\op,\zbicat_A).\]
Thus the ring-theoretic scenario essentially mimics the topological one. We mention in passing by that \cite{anel.spectra} provides a very general framework for the construction of small and big toposes stemming from factorization systems, which applies to most algebraic examples of the big-small topos dichotomy (including rings). It is also interesting to remark that the Zariski topology can be retrieved by the open cover topology in a canonical way, as the following results show. 
\begin{lemma}
	Consider a homomorphism of rings $f:A\rightarrow B$: it induces a frame homomorphism
	\[
	\bar{f}:=\Ocal(f\inv):\Ocal(\Spec(A))\rightarrow \Ocal(\Spec(B))
	\] 
	between topologies, defined as $\bar{f}(U):=\{\qfrak\ |\ f\inv(\qfrak)\in U \}$. Suppose that there exists $a\in A$ such that $\bar{f}$ factors via $D(a)\cap-:\Ocal(\Spec(A))\rightarrow D(a)\downarrow=\Ocal(\Spec(A[a\inv]))$, as in the diagram
	\[
	\begin{tikzcd}
		A \ar[d, "f"'] \ar[r] & A[a\inv] \ar[dl, "\beta", dashed] \\B&
	\end{tikzcd}\mapsto
	\begin{tikzcd}
		\Ocal(\Spec(A))\ar[d, "\bar{f}"'] \ar[r, "D(a)\cap-"] & D(a)\downarrow \ar[dl, "\alpha"]\\ \Ocal(\Spec(B))&
	\end{tikzcd}:
	\]
	then there exists a unique $\beta$ lifting the arrow $\alpha$.	
\end{lemma}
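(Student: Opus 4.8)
The plan is to show that the factorisation hypothesis forces $f(a)$ to be invertible in $B$, and then to invoke the universal property of the localisation $\iota:A\to A[a\inv]$ to produce $\beta$ and establish its uniqueness; the compatibility of $\beta$ with $\alpha$ will then follow formally by functoriality.

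First I would extract the unit condition. Recall that in a commutative ring with unit an element lies in no prime ideal precisely when it is invertible. Evaluating the given factorisation $\bar{f}=\alpha\circ(D(a)\cap-)$ on the open set $D(a)$ itself, and using that $\alpha$, being a frame homomorphism, preserves the top element $D(a)$ of $D(a)\downarrow$, one obtains
\[
\bar{f}(D(a))=\alpha(D(a)\cap D(a))=\alpha(D(a))=\Spec(B).
\]
On the other hand, by the very definition of $\bar{f}$,
\[
\bar{f}(D(a))=\{\qfrak \mid a\notin f\inv(\qfrak)\}=\{\qfrak \mid f(a)\notin \qfrak\}=D(f(a)).
\]
Comparing the two expressions yields $D(f(a))=\Spec(B)$, that is, $f(a)$ lies in no prime ideal of $B$ and is therefore a unit. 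This is precisely the step where the hypothesis is used, since in general $\bar{f}(D(a))$ is only the (possibly proper) open $D(f(a))$.

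With $f(a)$ invertible, the universal property of the localisation provides a unique ring homomorphism $\beta:A[a\inv]\to B$ such that $\beta\circ\iota=f$, which settles at once both existence and uniqueness of a lift making the left-hand triangle commute. It then remains to verify that $\beta$ induces $\alpha$. For this I would appeal to functoriality of $\Spec$ and of $\Ocal$: from $\beta\circ\iota=f$ we get $\Spec(\iota)\circ\Spec(\beta)=\Spec(f)$, whence at the level of frames $\bar{f}=\bar{\beta}\circ\Spec(\iota)\inv$, writing $\bar{\beta}:=\Spec(\beta)\inv$. Under the canonical isomorphism $\Ocal(\Spec(A[a\inv]))\cong D(a)\downarrow$, the homomorphism $\Spec(\iota)\inv$ is identified with $D(a)\cap-$, since pulling an open subset back along the open immersion $D(a)\hookrightarrow\Spec(A)$ is just intersection with $D(a)$. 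Thus $\bar{f}=\bar{\beta}\circ(D(a)\cap-)$, and comparing with the hypothesis $\bar{f}=\alpha\circ(D(a)\cap-)$ and cancelling the surjective (hence epic) frame homomorphism $D(a)\cap-$ gives $\bar{\beta}=\alpha$, as required.

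The argument is essentially routine commutative algebra combined with the point-set/point-free dictionary; the only point demanding care is the bookkeeping in the last paragraph, namely making the identification $\Ocal(\Spec(A[a\inv]))\cong D(a)\downarrow$ precise and checking that $\Spec(\iota)\inv$ corresponds to $D(a)\cap-$ under it, after which the cancellation of the epimorphism is immediate.
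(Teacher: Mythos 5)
Your proof is correct and follows essentially the same route as the paper: evaluate the factorisation at $D(a)$, use that $\alpha$ preserves the top element to conclude $D(f(a))=\Spec(B)$, deduce that $f(a)$ is a unit, and invoke the universal property of the localisation. Your final paragraph verifying that $\beta$ actually induces $\alpha$ (via functoriality and cancellation of the epimorphism $D(a)\cap-$) is a detail the paper leaves implicit, and it is carried out correctly.
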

\begin{proof}
	A quick computation shows that 
	\[
	\bar{f}(D(a))=D(f(a))
	\]
	and thus 
	\[
	D(f(a)) =\alpha(D(a)) =\alpha(\top_{D(a)\downarrow}) =\top_{\Ocal(\Spec(B))} =\Spec(B).
	\]
	This implies that $f(a)$ is invertible in $B$, and thus there exists a unique homomorphism $\beta$ as above.
\end{proof}

\begin{prop}
	Consider the functor $\Spec:\Ring\op\rightarrow \Top$ and endow $\Top$ with the open cover topology, denoted $T$: then the Zariski topology $Z$ on $\Ring\op$ coincides with the smallest topology $M^{\Spec}_T$ making $\Spec$ into a comorphism of sites.
\end{prop}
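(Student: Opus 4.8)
The statement asserts that the Zariski topology $Z$ on $\Ring\op$ is precisely $M^{\Spec}_T$, the smallest topology making $\Spec\colon\Ring\op\to\Top$ a comorphism of sites (with $\Top$ carrying the open cover topology $T$). By Proposition \ref{prop:top_minima_comorfismo}, $M^{\Spec}_T$ is generated by the sieves of the form
\[
S_B:=\{\,g\colon\dom(g)\to B \ \mid\ \Spec(g)\in S\,\}
\]
for $B$ in $\Ring\op$ and $S\in T(\Spec(B))$, where I am writing arrows in $\Ring\op$ and reading $\Spec$ covariantly on $\Ring\op$. So the proof reduces to a comparison of generating coverages: I must show that the two topologies declare exactly the same sieves covering. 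The natural strategy is to establish the two inclusions $Z\subseteq M^{\Spec}_T$ and $M^{\Spec}_T\subseteq Z$ separately, each time by checking it suffices to compare the respective generating families.

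\textbf{First inclusion, $Z\subseteq M^{\Spec}_T$.} First I would recall the generating $Z$-coverings: the basic Zariski covers of a ring $A$ are the families $\{A\to A[s^{-1}]\mid s\in S\}$ with $\langle S\rangle=A$. To see such a family is $M^{\Spec}_T$-covering, I would translate it through $\Spec$. The preceding lemma computes, for a ring homomorphism $f\colon A\to A[s^{-1}]$, that the induced frame map sends opens appropriately and in particular that $\Spec(A\to A[s^{-1}])$ identifies $\Spec(A[s^{-1}])$ with the basic open $D(s)\subseteq\Spec(A)$. The hypothesis $\langle S\rangle=A$ is exactly the condition that $\{D(s)\mid s\in S\}$ is an open cover of $\Spec(A)$, i.e.\ a $T$-covering family of $\Spec(A)$. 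Hence the Zariski generating family maps under $\Spec$ into a $T$-covering family, so by the description of $M^{\Spec}_T$ it generates an $M^{\Spec}_T$-covering sieve. Since $Z$ is generated by these families, $Z\subseteq M^{\Spec}_T$.

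\textbf{Second inclusion, $M^{\Spec}_T\subseteq Z$.} Here I would argue that $\Spec$ \emph{is already} a comorphism of sites $(\Ring\op,Z)\to(\Top,T)$, which by minimality of $M^{\Spec}_T$ immediately yields $M^{\Spec}_T\subseteq Z$. Concretely, by Definition \ref{def:comorfismo}(i) I must verify the covering-lifting property: given a ring $A$ and a $T$-covering sieve $R$ on $\Spec(A)$ (generated by an open cover), I need a $Z$-covering sieve $\mathcal R$ on $A$ with $\Spec(\mathcal R)\subseteq R$. Refining the open cover of $\Spec(A)$ to a cover by basic opens $D(s_i)$ (possible since the $D(a)$ form a basis), and using that such a basic-open cover corresponds to a family $\{A\to A[s_i^{-1}]\}$ with $\langle s_i\rangle=A$, I obtain a Zariski generating family whose image under $\Spec$ lands inside $R$. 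This exhibits the required lift, so $\Spec$ is a $Z$-to-$T$ comorphism and the inclusion follows.

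\textbf{Main obstacle.} The two inclusions are essentially mirror images of the same basic-open/localization dictionary, so the conceptual content is light; the delicate point is bookkeeping around \emph{sieves versus generating families} and the direction-reversal $(\Ring\fp/A)\op\simeq\Ring\op_{\fp}/A$. I expect the genuine work to be the refinement step in the second inclusion: a $T$-covering sieve of $\Spec(A)$ need not be generated by localizations, so I must pass from an arbitrary open cover to a cover by basic opens $D(s)$ and then check that such a cover is witnessed by genuine localization maps $A\to A[s^{-1}]$ — i.e.\ that the compactness/quasi-compactness of $\Spec(A)$ and the identity $\langle S\rangle=A \Leftrightarrow \bigcup D(s)=\Spec(A)$ are used correctly. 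Care is also needed in that the statement is phrased for all of $\Ring\op$ rather than the finitely presented slice, so I would note that the Zariski generating coverage and the lemma both make sense verbatim for arbitrary commutative rings, and that the argument does not secretly require finite presentation.
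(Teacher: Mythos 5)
Your overall decomposition into the two inclusions is the same as the paper's, and your argument for $M^{\Spec}_T\subseteq Z$ (i.e.\ that $\Spec$ is already a comorphism of sites $(\Ring\op,Z)\to(\Top,T)$, by refining an open cover of $\Spec(A)$ to basic opens $D(s_i)$ and lifting it to the Zariski family $\{A\to A[s_i\inv]\}$) is correct and matches the paper. The problem is in the other inclusion. You conclude that the sieve generated by a basic Zariski family $\{A\to A[s\inv]\mid s\in S\}$ is $M^{\Spec}_T$-covering ``because its image under $\Spec$ is a $T$-covering family''. That inference is not valid: by Proposition \ref{prop:top_minima_comorfismo}, the generating sieves of $M^{\Spec}_T$ on $A$ are the \emph{full preimage} sieves $\{f:A\to B \mid \Spec(f)\in R\}$ for $R$ a $T$-covering sieve on $\Spec(A)$ — they contain \emph{every} arrow whose spectrum lies in $R$, not just those factoring through your chosen family. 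A family whose image happens to cover need not generate a sieve containing such a preimage sieve, so you still owe the containment.

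Concretely, what is missing is exactly the content of the lemma preceding this proposition in the paper, which you cite but for the wrong fact: the lemma does not merely compute $\Spec(A[s\inv])\cong D(s)$; it states that if $f:A\to B$ is such that $\bar f=\Ocal(f\inv)$ factors through $D(a)\downarrow$, then $f$ factors (uniquely) through $A\to A[a\inv]$. This is what lets one show that the preimage sieve $R=\{f:A\to B\mid \Spec(f)\text{ factors through some }D(s_i)\}$, which is $M^{\Spec}_T$-covering by definition, is \emph{contained} in the sieve generated by $\{A\to A[s_i\inv]\}$, whence the latter is $M^{\Spec}_T$-covering. Without this factorization step the first inclusion does not go through; with it, your proof coincides with the paper's.
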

\begin{proof}
	Preliminarily, consider a $T$-covering family $\{U_i\hookrightarrow\Spec(A) \}$: without loss of generality every $U_i$ can be set to be of the form $D(a_i)$ and hence the covering family is the image of the family $\{A\rightarrow A[a_i\inv]\ |\ i\in I\}$, which is $Z$-covering since $\Spec(A)=\cup D(a_i)$ implies that $\langle a_i\rangle=A$. This shows that 
	\[
	\Spec:(\Ring\op, Z)\rightarrow (\Top, T)
	\] 
	is a comorphism of sites and thus $M^{\Spec}_T\subseteq Z$.
	
	Conversely, if we start from a $Z$-covering family $\{A\rightarrow A[a_i\inv]\ | \ i\in I,\ \langle a_i\rangle =A\}$, we want to show that the sieve $S$ it generates belongs to $\Spec^+T$. Notice that its image via $\Spec$ provides the open-covering family $\{D(a_i)\hookrightarrow\Spec(A)\ |\ i\in I\}$. We can then consider the sieve
	\[
	R=\{f:A\rightarrow B\ |\  f\inv:\Spec(B)\rightarrow \Spec(A)\mbox{ factors via some } D(a_i) \}:
	\]
	it is $M^{\Spec}_T$-covering, and it is in fact one of the sieves in the coverage that defines $M^{\Spec}_T$ (see Proposition \ref{prop:top_minima_comorfismo}). Now, if $f\inv:\Spec(B)\rightarrow \Spec(A)$ factors via some $D(a_i)$ then the arrow $\Ocal(f\inv)=\bar{f}$ factors via $D(a)\downarrow$: by the previous lemma, $f$ factors through $A\rightarrow A[a_i\inv]$ and thus it belongs to $S$. We have $R\subseteq S$ and therefore $S$ belongs to ${\Spec}^+T$. 
\end{proof}
Both the topological and the ring-theoretic petit-gros topos situation are encompassed by Exercise 4.10.6 in \cite[Exposé IV]{SGA4_I}, which consider any subcanonical site $(\cbicat, J)$ with pullbacks with a class $\mbicat\subseteq Mor(\cbicat)$ of morphisms satisfying the following properties:
\begin{enumerate}
	\item the property `$f\in\mbicat$' is stable under pullback;
	\item $\mbicat$ is closed for identities and compositions;
	\item given $y:Y\rightarrow X$ in $\cbicat$, if there exists a $J$-covering family $\{X_i\rightarrow X \}$ such that every pullback $Y\times_X X_i\rightarrow X_i$ belongs to $\mbicat$, then also $y\in\mbicat$;
	\item Every $J$-sieve $S$ is refined by a $J$-covering family of arrows in $\mbicat$: that is, there exist a $J$-covering family $\{y_i:Y_i\rightarrow X\ |\ i\in I\}\subseteq S$  such that each $y_i$ belongs to $\mbicat$.
\end{enumerate}
For every object $X$ of $\cbicat$, let us denote by $\mbicat/X$ the full subcategory of $\cbicat/X$ whose objects are morphisms in $\mbicat$: then we consider the Giraud topology $J_X$ on $\cbicat/X$, which induces a biggest topology $T_X$ over $\mbicat/X$ making the inclusion
\[
i_X:\mbicat/X\hookrightarrow \cbicat/X
\]
$(T_X,J_X)$-continuous. One can then show that $i_X$ is a morphism and comorphism of sites, and thus that it induces a section of toposes \[
\Sh(\mbicat/X, T_X)\hookrightarrow \Sh(\cbicat/X, J_X)
\]
The essential point in this framework is the existence of a class of `distinguished morphisms' $\mbicat$: these morphisms are the open embeddings in the topological case, and the localizations in the ring-theoretic case. One can be even more general by dropping the hypothesis that we work in a full and faithful subcategory of $\cbicat/X$:
\begin{prop}[{\cite[Theorem 7.20]{denseness}}] \label{prop:petitgros_monografia}
	Consider a site $(\cbicat,J)$ and an object $X$ of $\cbicat$. Suppose that there exist a site $(\dbicat_X,T_X)$ and a morphism and comorphism of sites
	\[
	i_X:(\dbicat_X, T_X)\rightarrow (\cbicat/X, J_X)
	\]
	which is $K$-full and $K$-faithful (see \cite[Definition 5.14]{denseness}): then it induces a section of geometric toposes
	\[
	C_{i_X}:\Sh(\dbicat_X, T_X)\rightarrow \Sh(\cbicat/X, J_X)
	\]
	with retraction the geometric morphism $\Sh(i_X)$.
\end{prop}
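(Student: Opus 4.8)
The statement to prove is Proposition \ref{prop:petitgros_monografia}: given a site $(\cbicat,J)$, an object $X$, and a $K$-full, $K$-faithful morphism-and-comorphism of sites $i_X:(\dbicat_X,T_X)\rightarrow(\cbicat/X,J_X)$, the induced geometric morphism $C_{i_X}$ is a section, with retraction $\Sh(i_X)$. The plan is to reduce everything to two facts: first, that a functor which is simultaneously a morphism and a comorphism of sites induces \emph{two} adjunctions between the sheaf toposes sharing a functor, so that $\Sh(i_X)_*\simeq C_{i_X}^*$; and second, that $K$-fullness and $K$-faithfulness are exactly the conditions making the induced geometric morphism an embedding, so that $C_{i_X}$ is a section of $\Sh(i_X)$. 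Both of these are packaged in the cited \cite[Theorem 7.20]{denseness}, so strictly speaking the proof is a citation; but I would like to explain \emph{why} it holds in a way that matches the surrounding development.

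\textbf{First step: the two geometric morphisms and the shared functor.} Since $i_X$ is a comorphism of sites $(\dbicat_X,T_X)\rightarrow(\cbicat/X,J_X)$, by Definition \ref{def:comorfismo} it induces a geometric morphism $C_{i_X}:\Sh(\dbicat_X,T_X)\rightarrow\Sh(\cbicat/X,J_X)$ whose direct image is (a restriction of) $\ran_{i_X\op}$ and whose inverse image $C_{i_X}^*$ acts, on a sheaf, by a suitable sheafification of $i_X^*=(-\circ i_X\op)$. Since $i_X$ is also a morphism of sites, by the discussion after Proposition \ref{prop:ft_cont_caratterizzazione} it is $(T_X,J_X)$-continuous and induces a geometric morphism $\Sh(i_X):\Sh(\cbicat/X,J_X)\rightarrow\Sh(\dbicat_X,T_X)$ whose \emph{direct} image is $\Sh(i_X)_*=(-\circ i_X\op)$ restricted to sheaves. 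The key observation is that these two geometric morphisms point in opposite directions but are built from the same precomposition functor: the inverse image of $C_{i_X}$ and the direct image of $\Sh(i_X)$ both derive from $i_X^*$. This yields the identity $C_{i_X}^*\simeq\Sh(i_X)_*$, which is exactly the relation exhibited in the topological and ring-theoretic examples ($C_{i_X}^*=\Sh(i_X)_*$) recalled in the previous section.

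\textbf{Second step: fullness/faithfulness gives a section.} To conclude that $C_{i_X}$ is a section of $\Sh(i_X)$, i.e.\ that $\Sh(i_X)\circ C_{i_X}\simeq \id$, it suffices to show that $C_{i_X}$ is an embedding of toposes, equivalently that $C_{i_X}^*$ is full and faithful (as a functor, on the level of inverse images the counit is invertible). This is precisely where the hypotheses of $K$-fullness and $K$-faithfulness of $i_X$ enter: by \cite[Proposition 7.18]{denseness} (the same result invoked in the proof of Proposition \ref{prop:fasci__etalebundle_eqv_fasci_grfibration}), a morphism-and-comorphism of sites induces an equivalence exactly when it is $J$-full and $J$-dense, and more generally $K$-fullness and $K$-faithfulness of such a functor translate into full faithfulness of the induced inverse image functor. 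Thus $C_{i_X}$ is an embedding; combined with the identity $C_{i_X}^*\simeq\Sh(i_X)_*$ from the first step, the composite $\Sh(i_X)\circ C_{i_X}$ has invertible unit and counit, and $\Sh(\dbicat_X,T_X)$ is realized as a retract of $\Sh(\cbicat/X,J_X)$ with section $C_{i_X}$ and retraction $\Sh(i_X)$.

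\textbf{Main obstacle.} The genuinely delicate point is the translation of the ``$K$-full and $K$-faithful'' conditions on the functor $i_X$ into full faithfulness of $C_{i_X}^*$ on sheaves. The conditions are stated site-theoretically (in terms of coverings and factorizations of arrows, as in \cite[Definition 5.14]{denseness}), whereas the desired conclusion is a $2$-categorical property of an induced inverse image functor; bridging this gap cleanly is the heart of the matter and is exactly the content of the cited \cite[Theorem 7.20]{denseness}. Since that theorem is available to us, I would simply cite it for the precise correspondence, and devote the written proof to assembling the two adjunctions and verifying the identity $C_{i_X}^*\simeq\Sh(i_X)_*$ together with the resulting retraction; the bulk of the verification is then bookkeeping about which precomposition functor plays which role, rather than any new construction.
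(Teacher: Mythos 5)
The paper offers no proof of this statement beyond the citation of \cite[Theorem 7.20]{denseness}, so there is nothing internal to compare against; your two-step reduction — first the identification $C_{i_X}^*\simeq \Sh(i_X)_*$ (valid because $i_X$, being a morphism of sites, is $(T_X,J_X)$-continuous, so $(-\circ i_X\op)$ already lands in $T_X$-sheaves and no sheafification intervenes), then the retraction via an embedding/full-faithfulness property — is exactly the mechanism the paper relies on implicitly (cf.\ the remark that $\Sh(L)$ is local and $C_L$ is an inclusion in the following section), and deferring the translation of $K$-fullness and $K$-faithfulness to the cited theorem is legitimate.

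One slip should be corrected, because in the presence of the adjoint triple $\Sh(i_X)^*\dashv C_{i_X}^*\dashv C_{i_X,*}$ it changes which composite you prove to be the identity. You write that $C_{i_X}$ being an embedding is ``equivalently that $C_{i_X}^*$ is full and faithful'': an embedding is characterized by full faithfulness of the \emph{direct} image $C_{i_X,*}$ (equivalently, by the adjoint-triple lemma, of the outer left adjoint $\Sh(i_X)^*$), i.e.\ by invertibility of the counit $C_{i_X}^*C_{i_X,*}\Rightarrow \id$. That is the condition you need, since the direct image of $\Sh(i_X)\circ C_{i_X}$ is $\Sh(i_X)_*\circ C_{i_X,*}=C_{i_X}^*\circ C_{i_X,*}$. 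Full faithfulness of $C_{i_X}^*$ itself is a genuinely different condition: it says $\Sh(i_X)_*$ is fully faithful, i.e.\ that $\Sh(i_X)$ is the embedding, and would instead yield $C_{i_X}\circ \Sh(i_X)\simeq \id$ — the wrong composite. Your parenthetical about the invertible counit shows you have the right criterion in mind, so this is a notational confusion rather than a structural flaw, but it should be fixed before the argument is written up.
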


\section{Every Grothendieck topos is a \ac small topos'}

We define a Grothendieck topology $J^{\textup{ét}}$\index{$J^{\textup{ét}}$} on $\Topos$, which we call the \emph{étale cover topology}\index{topology!étale cover}, by postulating that a sieve on a topos $\Etopos$ is $J^{\textup{ét}}$-covering if and only if it contains a family $\{\Etopos\slash A_{i} \to \Etopos \mid i\in I\}$ of canonical local homeomorphisms such that the family of arrows $\{!_{A_{i}}:A_{i}\to 1_\Etopos \mid i\in I\}$ is epimorphic in $\Etopos$. We should thus have a `big' topos $\Sh(\Topos, J^{\textup{ét}})$ with a canonical functor $\ell:\Topos\to \Sh(\Topos, J^{\textup{ét}})$, and for any Grothendieck topos $\Etopos$ we can consider the slice topos
\[
\Sh(\Topos, J^{\textup{ét}})\slash \ell(\Etopos)\simeq \Sh(\Topos\slash \Etopos, J^{\textup{ét}}_{\Etopos}),
\]  
where $J^{\textup{ét}}_{\Etopos}$\index{$J^{\textup{ét}}_{\Etopos}$} is the Grothendieck topology whose covering sieves are those which are sent by the forgetful functor $\Topos\slash {\Etopos}\to \Topos$ to $J^{\textup{ét}}$-covering families. We call this topos the \emph{big topos} associated with $\Etopos$. Note that the étale cover topology represents a natural topos-theoretic counterpart of the open-cover topology on the category of topological spaces. 

Now consider in particular a site $(\cbicat,J)$ of definition for $\Etopos$, and denote by $L$ the composite of the two canonical functors
\[
\cbicat\xrightarrow{\ell_J} \Sh(\cbicat,J) \xrightarrow{\ell} \Topos/\Sh(\cbicat,J): 
\]
then $L$ is fully faithful, and when $\Topos/\Sh(\cbicat,J)$ is endowed with the topology $J^{\textup{ét}}_{\Sh(\cbicat,J)}$ then $L$ is both a morphism and a comorphism of sites. By Proposition \ref{prop:petitgros_monografia}, it follows that the `petit' topos $\Sh({\cal C}, J)$ is a coadjoint retract of the `big' topos $\Sh(\Topos\slash \Sh({\cal C}, J), J^{\textup{ét}}_{\Sh({\cal C}, J)})\simeq \Sh(\Topos, J^{\textup{ét}})\slash \ell(\Sh({\cal C}, J))$ via the geometric morphisms 
\[
C_{L}:\Sh({\cal C}, J) \to \Sh(\Topos\slash \Sh({\cal C}, J), J^{\textup{ét}}_{\Sh({\cal C}, J)})
\] 
induced by $L$ as a comorphism of sites and 
\[
\Sh(L): \Sh(\Topos\slash \Sh({\cal C}, J), J^{\textup{ét}}_{\Sh({\cal C}, J)}) \to \Sh({\cal C}, J)
\] 
induced by $L$ as a morphism of sites; moreover, $\Sh(L)$ is local and $C_{L}$ is an essential inclusion.  

There are some size issues concerning the (very) `big' site $$(\Topos\slash \Sh({\cal C}, J), J^{\textup{ét}}_{\Sh({\cal C}, J)}):$$ indeed, this is site is \emph{not} small-generated. To fix this problem, we can resort to two possibilities:

\begin{itemize}
	\item We replace the site $(\Topos\slash \Sh({\cal C}, J), J^{\textup{ét}}_{\Sh({\cal C}, J)})$ with a suitable small-generated site containing as objects the étale morphisms. 
	\item We change the Grothendieck universe with respect to which considering sheaves. This is a sensible choice as it does not affect the properties of the geometric morphisms under consideration to be local or essential (see Appendix \ref{app:universes}).
\end{itemize}  
\chapter{Relative toposes, relative sites}\label{chap:relative_sites}

In this chapter we introduce the notion of \emph{relative site} (with respect to a small-generated site $({\cal C}, J)$), and the corresponding notion of \emph{relative topos}, that is, of topos of sheaves on a relative site  (endowed with a structure geometric morphism towards the base topos).

We first define the relative analogue of presheaf toposes, as the Giraud toposes associated with an arbitrary indexed category. Then we define relative sheaf toposes as the subtoposes of such toposes, which can be represented as toposes of sheaves on a relative site.
In this context, we introduce the notion of a relative topology generated by horizontal and vertical data (which we call an \emph{orthogonally generated} topology), and give a number of examples of such topologies naturally arising in the mathematical practice.

Then we introduce the notion of \emph{relative site of a geometric morphism}, and more generally of relative site of a morphism of sites. This allows us to prove that every geometric morphism towards a topos ${\cal E}$ is equivalent to the structure morphism of a relative topos over $\cal E$.  

\section{Relative presheaf toposes}

As observed in Remark \ref{remrelativepresheaves}(ii), the equivalence
\[
\Gir_J(\dcat) \simeq \Ind_\cbicat(\dcat\Vop, \canst_{(\cbicat,J)})
\]
of Corollary \ref{correlativepresheaves} shows that, given a small-generated site $({\cal C}, J)$ and a $\cal C$-indexed category ${\mathbb D}$, the relative topos $\Gir_J(\dcat)$ yields the appropriate notion of \ac topos of $\Sh({\cal C}, J)$-valued presheaves on $\mathbb D$'. This motivates the following definition.

\begin{defn}
Let $({\cal C}, J)$ be a small-generated site. A \emph{relative presheaf topos}\index{topos!relative presheaf -} over $\Sh({\cal C}, J)$ is a topos of the form $C_{p_{\mathbb D}}:\Gir_J(\dcat)=\Sh({\cal G}({\mathbb D}), J_{\mathbb D})\to \Sh({\cal C}, J)$, where $\mathbb D$ is a $\cal C$-indexed category.
\end{defn}

\begin{remark}
In the interest of maximal generality, we do not require $\mathbb D$ to be a stack on $({\cal C}, J)$, nor to be the $\cal C$-indexing of an internal category in $\Sh({\cal C}, J)$. In fact, indexed categories simultaneously generalize stacks and internal categories (on the other hand, not every internal category is a stack).
\end{remark}

\section{Relative sheaf toposes}

As any Grothendieck topos is a subtopos of a presheaf topos, so we define relative toposes as subtoposes of relative presheaf toposes. 
Recall that, for any small-generated site $({\cal C}, J)$, the subtoposes of the topos $\Sh({\cal C}, J)$ correspond precisely to the Grothendieck topologies $J'$ on $\cal C$ such that $J \subseteq  J'$. This leads us to the notion of \emph{relative site}: 

\begin{defn}
Let $({\cal C}, J)$ be a small-generated site. A \emph{relative site}\index{site!relative -} over $({\cal C}, J)$ is a site of the form $({\cal G}({\mathbb D}), J')$, where ${\mathbb D}$ is a $\cal C$-indexed category and $J'$ is a Grothendieck topology on ${\cal G}({\mathbb D})$ containing the Giraud topology $J_{\mathbb D}$. 

Any relative site $({\cal G}({\mathbb D}), J')$ is endowed with the structure comorphism of sites $p_{\mathbb D}:({\cal G}({\mathbb D}), J') \to ({\cal C}, J)$.
\end{defn}

Trivial relative sites are those such that the Grothendieck topology $J'$ coincides with Giraud's topology $J_{\mathbb D}$; as in the classical setting, they yield relative presheaf toposes. Accordingly, arbitrary relative sites yield arbitrary subtoposes of relative presheaf toposes:

\begin{defn}
Let $({\cal C}, J)$ be a small-generated site. A \emph{relative topos}\index{topos! relative sheaf -} over $\Sh({\cal C}, J)$ is a Grothendieck topos $\cal E$, together with a geometric morphism $p:{\cal E}\to \Sh({\cal C}, J)$.
\end{defn}

The following result shows that we could have alternatively defined relative toposes as the toposes of sheaves on a relative site:

\begin{thm}
Let $({\cal C}, J)$ be a small-generated site. Then any relative site over $({\cal C}, J)$ yields a relative topos over $\Sh({\cal C}, J)$; more precisely, any relative site $$p_{{\mathbb D}}:({\cal G}({\mathbb D}), J')\to ({\cal C}, J)$$ induces the relative topos
\[
C_{p_{\mathbb D}}:\Sh({\cal G}({\mathbb D}), J')\to \Sh({\cal C}, J).
\]
Conversely, any relative topos $p:{\cal E}\to \Sh({\cal C}, J)$ is of the form $C_{p_{\mathbb D}}$ for some relative site $p_{{\mathbb D}}:({\cal G}({\mathbb D}), J')\to ({\cal C}, J)$ (for instance, one can take $p_{{\mathbb D}}$ to be the canonical relative site of $p$, in the sense of Definition \ref{defrelativesiteofamorphism} - see Theorem \ref{thm:relativesitegeometricmorphism} below).
\end{thm}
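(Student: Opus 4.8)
The statement has two halves, and the plan is to dispatch the first (relative sites yield relative toposes) essentially by quotation, and concentrate effort on the converse. For the forward direction: given a relative site $p_{\mathbb D}:({\cal G}({\mathbb D}), J')\to ({\cal C}, J)$, by definition $J'\supseteq J_{\mathbb D}$, so the identity-on-objects functor realizes $J'$ as a topology containing Giraud's topology. The key input is Proposition \ref{prop:fib_e_morfib_sono_can.comorf}, which tells us $p_{\mathbb D}$ is a $(J_{\mathbb D}, J)$-continuous comorphism of sites; I would then invoke Lemma \ref{lemma:comorfismi_via_tplgir} (or directly the covering-lifting characterization of Definition \ref{def:comorfismo}) to check that enlarging the topology on the domain from $J_{\mathbb D}$ to $J'\supseteq J_{\mathbb D}$ keeps $p_{\mathbb D}$ a comorphism of sites to $({\cal C},J)$: the covering-lifting property only gets easier to satisfy when the domain has more covers. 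Hence $p_{\mathbb D}:({\cal G}({\mathbb D}),J')\to({\cal C},J)$ is a comorphism of sites and induces a geometric morphism $C_{p_{\mathbb D}}:\Sh({\cal G}({\mathbb D}),J')\to \Sh({\cal C},J)$, which is the asserted relative topos.

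\textbf{The converse.} For an arbitrary relative topos $p:{\cal E}\to \Sh({\cal C}, J)$ I would exhibit a relative site presenting it, and the natural choice is the canonical relative site of $p$ promised in Definition \ref{defrelativesiteofamorphism} and Theorem \ref{thm:relativesitegeometricmorphism}. The cleanest route is to appeal directly to that forthcoming theorem, which by the phrasing of the excerpt asserts precisely that $p$ is equivalent (over $\Sh({\cal C},J)$) to the structure morphism $C_{p_{\mathbb D}}$ of its canonical relative site. Thus the plan is: let $({\cal G}({\mathbb D}),J')$ be the canonical relative site of $p$, so that $\mathbb D$ is a $\cal C$-indexed category and $J'\supseteq J_{\mathbb D}$; then Theorem \ref{thm:relativesitegeometricmorphism} yields an equivalence $\Sh({\cal G}({\mathbb D}),J')\simeq {\cal E}$ commuting with the structure morphisms to $\Sh({\cal C},J)$, which is exactly the claim $p\cong C_{p_{\mathbb D}}$.

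\textbf{Where the real work lives.} Since the statement is explicitly allowed to cite Theorem \ref{thm:relativesitegeometricmorphism}, the present proof is genuinely short; the substance has been relocated to the construction of the canonical relative site. So I expect the main obstacle not to be in this proof at all but in verifying the two structural facts that feed it: first, that the canonical construction really produces a $\cal C$-indexed category $\mathbb D$ whose Giraud topology is contained in the chosen $J'$ (equivalently, that $J'$ is a legitimate relative topology, $J_{\mathbb D}\subseteq J'$); and second, that the induced $C_{p_{\mathbb D}}$ recovers $p$ up to equivalence over the base. For the first point I would rely on the explicit description of Giraud's topology in Proposition \ref{prop:girtpl_descrizione_sieves} (a sieve is $J_{\mathbb D}$-covering iff its cartesian arrows project to a $J$-cover) to confirm minimality of $J_{\mathbb D}$ among topologies making $p_{\mathbb D}$ a comorphism; the second is the content of Theorem \ref{thm:relativesitegeometricmorphism} itself.

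\textbf{Summary.} In short, the forward implication is a direct application of Proposition \ref{prop:fib_e_morfib_sono_can.comorf} together with the definition of comorphism of sites and the monotonicity of the covering-lifting property in the domain topology, while the converse is obtained by taking the canonical relative site of $p$ and quoting Theorem \ref{thm:relativesitegeometricmorphism}. No lengthy computation is required here; the burden of proof has been transferred, by design, to the construction and correctness of the canonical relative site in the next section.
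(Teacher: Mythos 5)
Your proposal is correct and follows essentially the same route as the paper, which in fact offers no written proof at all beyond the forward reference to Theorem \ref{thm:relativesitegeometricmorphism}: the forward direction is exactly the monotonicity of the covering-lifting property in the domain topology (equivalently, the upward-closedness implicit in Proposition \ref{prop:top_minima_comorfismo}), and the converse is the canonical relative site. The only point worth making explicit, which both you and the paper gloss over, is that the canonical relative site of $p$ must be taken for the morphism of sites $({\cal C}, J)\to ({\cal E}, J^{\textup{can}}_{\cal E})$ corresponding to $p$ (so that the resulting comma category is fibred over $\cal C$ and its Giraud topology is contained in the induced topology by minimality), rather than for $p^{\ast}$ on the canonical site of $\Sh({\cal C}, J)$, which would produce a fibration over the wrong base.
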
\qed

\begin{remark}
There are some size issues involved in the notion of relative site. We do not require smallness hypotheses in our definition as, for technical reasons, it is convenient to be able to work also with large presentation sites. One can resolve such issues either by working with respect to a bigger Grothendieck universe, or by showing that the relevant sites under consideration are in fact small-generated (for instance, one can show that the canonical site of a geometric morphism, in the sense of Definition \ref{defrelativesiteofamorphism}, is small-generated). 
\end{remark}

\subsection{Orthogonally generated topologies}

Every Grothendieck topology on a category of the form ${\cal G}({\mathbb D})$ has horizontal and vertical data associated with it, that is, the collection of sieves on the base category whose cartesian liftings are covering for the topology, and the collection of covering sieves for the topology which are generated by families of arrows entirely lying in some fibre of $\mathbb D$. 

It is natural to wonder under which conditions and to which extent a Grothendieck topology on ${\cal G}({\mathbb D})$ can be recovered from such horizontal and vertical data. This motivates the following definition:

\begin{defn}
Let $({\cal C}, J)$ be a small-generated site and $\mathbb D$ be a $\cal C$-indexed category. A Grothendieck topology on  $\gbicat(\dcat)$ is said to be \emph{orthogonally generated}\index{topology!orthogonally generated -} if there is a family of horizontal data (i.e. cartesian liftings of $J$-covering sieves) or vertical data (i.e. sieves entirely lying in some fibre ${\mathbb D}(X)$) generating it.
\end{defn}

As it can be naturally expected, and as is shown by the following example, not all topologies on $\gbicat(\dcat)$ are orthogonally generated:

\begin{ex}
	Let $\cal C$ be the category $\twocat$, with two objects $0$ and $1$ and one morphism $t$, and the indexed category $\dcat:\twocat\op\rightarrow\CAT$ mapping $0$ to the category ${\mathbb D}(0)$ (isomorphic to $\twocat$) having two objects $x$ and $x'$ and just one non-identical morphism $\alpha:x\to x'$, $1$ to the terminal category $\onecat=\{*\}$, and $t$ to the functor ${\mathbb D}(1)\to {\mathbb D}(0)$ sending $*$ to the object $x'$. The category  $\gbicat(\dcat)$ has three objects $(0, x)$, $(0, x')$ and $(1, *)$ related by the following morphisms:
	\[\begin{tikzcd}
		{(0, x)} \\
		{(0, x')} && {(1, *)}
		\arrow["{(1, \alpha)}"', from=1-1, to=2-1]
		\arrow["{(t, 1)}", from=2-1, to=2-3]
		\arrow["{(t, \alpha)}", from=1-1, to=2-3]
	\end{tikzcd}\]

    Now, the assignment sending the objects $(0, x)$ and $(0, x')$ to the collections $\{M_{(0, x)}\}$ and $\{M_{(0, x')}\}$ of maximal sieves on them and the object $(1, *)$ to the collection of sieves $\{M_{(1, *)}, \{(t, \alpha)\}, \{(t, \alpha), (t, 1)\}\}$ is clearly a Grothendieck topology which is not of the form $L(H, V)$, as the horizontal and vertical data associated with it \ac forget' the diagonal covering morphism $(t, \alpha)$.   
\end{ex}	

On the other hand, several important Grothendieck topologies on fibrations are generated by horizontal and vertical data:

\begin{enumerate}[(i)]
    \item Any Giraud topology is orthogonally generated (in fact, generated by horizontal data). 
    
    \item The total topology of a fibred site (in the sense of \cite{SGA4_II}) is orthogonally generated (in fact, generated by vertical data).
    
    \item Given an internal locale in $\Sh({\cal C}, J)$ the Grothendieck topology on the site externalizing it is orthogonally generated.
    
    \item The topology on the site presenting the over-topos at a model, introduced in \cite{CaramelloOsmond}, is orthogonally generated.
\end{enumerate}

We shall see in the next section that the canonical relative sites of a geometric morphisms provide a class of Grothendieck topologies which are not in general orthogonally generated; still, it can be shown that the relative topology of a locally connected morphism is always orthogonally generated.

\subsection{The canonical relative site of a geometric morphism}\label{sec:canonicalrelativesite}

Let $f:\Ftopos\rightarrow\Etopos$ be a geometric morphism. As observed in Example \ref{ex:indexedcategories}(ii), there is an $\Etopos$-indexed category $\icat_{f}$ associated with $f$, defined on objects by mapping $E$ of $\Etopos$ to the slice topos $\Ftopos/f^*(E)$, with its transition morphisms being the obvious pullback functors. We can perform this more in general. Consider two sites $(\cbicat,J)$ and $(\dbicat,K)$ and a $(J,K)$-continuous functors $A:\cbicat\rightarrow \dbicat$: then we can consider the $\cbicat$-indexed category $\icat_A$\index{$\icat_A$} defined by mapping any $X$ in $\cbicat$ to the slice $\Sh(\dbicat,K)/\ell_K(A(X))$, and whose transition morphisms are pullback functors. Notice that the fibration $\icat_f$ defined from the geometric morphism $f$ is now a particular instance of this, where we take as continuous functors the morphism of sites $f^*:(\Etopos, J\can_\Etopos)\rightarrow (\Ftopos, J\can_\Ftopos)$. Every fibration of this form is in fact a stack:
\begin{prop}
	Let $A:(\cbicat,J)\rightarrow (\dbicat,K)$ be a $(J,K)$-continuous functor: then the fibration $\icat_A$ defined above is a $J$-stack. In particular, for every geometric morphism $f:\Ftopos\rightarrow\Etopos$ the $\Etopos$-indexed category $\icat_{f}$ of Example \ref{ex:indexedcategories}(ii) is a $J\can_\Etopos$-stack.
\end{prop}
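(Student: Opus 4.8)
The plan is to recognise $\icat_A$ as the direct image, along $A$, of the canonical stack of the target site $(\dbicat,K)$, and then to conclude by combining two facts already established in this work: canonical fibrations are stacks (Theorem \ref{thm:canst_stack}), and direct images along continuous functors send stacks to stacks (Proposition \ref{prop:ftcont_preservano_stack}).

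First I would establish the identification
\[
\icat_A \simeq \canst_{(\dbicat,K)}\circ A\op = A^*(\canst_{(\dbicat,K)}).
\]
By Definition \ref{def:stack_canonica_su_sito}, the canonical stack $\canst_{(\dbicat,K)}:\dbicat\op\rightarrow\CAT$ sends an object $Y$ to $\Sh(\dbicat,K)/\ell_K(Y)$ and an arrow $y':Y'\rightarrow X'$ to the pullback functor $\ell_K(y')^*$; precomposing with $A\op$ thus yields the pseudofunctor sending $X$ in $\cbicat$ to $\Sh(\dbicat,K)/\ell_K(A(X))$ and $y:Y\rightarrow X$ to the pullback functor $\ell_K(A(y))^*$ along $\ell_K(A(y)):\ell_K(A(Y))\rightarrow\ell_K(A(X))$, which is exactly the pseudofunctor $\icat_A$. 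The point to check carefully is that the coherence isomorphisms of $\canst_{(\dbicat,K)}$, arising from the canonical choice of pullbacks in $\Sh(\dbicat,K)$, are transported unchanged by precomposition with the strict functor $A\op$, so that this is an equality of pseudofunctors and not merely a pointwise equivalence.

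Granting this, the argument is immediate: $\canst_{(\dbicat,K)}$ is a $K$-stack by Theorem \ref{thm:canst_stack}, and since $A$ is $(J,K)$-continuous, Proposition \ref{prop:ftcont_preservano_stack} ensures that $A^*=(-\circ A\op):\Ind_\dbicat\rightarrow\Ind_\cbicat$ restricts to a $2$-functor $\St(\dbicat,K)\rightarrow\St(\cbicat,J)$; hence $\icat_A\simeq A^*(\canst_{(\dbicat,K)})$ is a $J$-stack. (Note that there is no size obstruction, since Proposition \ref{prop:ftcont_preservano_stack} is stated for $\Ind$ and $\St$, and so applies to the large-valued stack $\canst_{(\dbicat,K)}$.)

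Finally, for the particular case I would use that any geometric morphism $f:\Ftopos\rightarrow\Etopos$ is a morphism of sites $f^*:(\Etopos, J\can_\Etopos)\rightarrow (\Ftopos, J\can_\Ftopos)$, hence in particular $(J\can_\Etopos, J\can_\Ftopos)$-continuous. Under the canonical equivalence $\Ftopos\simeq\Sh(\Ftopos, J\can_\Ftopos)$ the functor $\ell_{J\can_\Ftopos}$ is, up to equivalence, the identity, so that $\Sh(\Ftopos, J\can_\Ftopos)/\ell_{J\can_\Ftopos}(f^*(E))\simeq\Ftopos/f^*(E)$ and therefore $\icat_f\simeq\icat_{f^*}$; applying the first part to $A=f^*$ then shows that $\icat_f$ is a $J\can_\Etopos$-stack. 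The only mildly delicate step in the whole proof is the identification of $\icat_A$ as a precomposition; once that is secured, no genuine obstacle remains.
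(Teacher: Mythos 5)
Your proof is correct and follows essentially the same route as the paper's: both identify $\icat_A$ with the composite $\canst_{(\dbicat,K)}\circ A\op$, i.e.\ the direct image of the canonical stack of $(\dbicat,K)$ along $A$, and then invoke Theorem \ref{thm:canst_stack} together with Proposition \ref{prop:ftcont_preservano_stack}. The extra care you take over the coherence isomorphisms and the reduction of the geometric-morphism case to $A=f^*$ is sound but not needed beyond what the paper already records.
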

\begin{proof}
	Notice that $\icat_A$ corresponds to the composite pseudofunctor
	\[
	\cbicat\op\xrightarrow{A\op} \dbicat\op\xrightarrow{\canst_{(\dbicat,K)}}\CAT,
	\]
	where $\canst_{(\dbicat,K)}$ denotes the canonical stack for the site $(\dbicat,K)$ (see Definition \ref{def:stack_canonica_su_sito} and Theorem \ref{thm:canst_stack}).Then we can exploit the notion of direct image of fibrations, introduced in Section \ref{sec:dir_imm_fibrations}, and the fact that the direct image along a continuous functor maps stacks to stacks (Proposition \ref{prop:ftcont_preservano_stack}), to conclude that $\icat_A$ is a $J$-stack.
\end{proof}
For a geometric morphism $f:\Ftopos\rightarrow \Etopos$, the fibration associated to $\icat_f$ is made as follows: objects over $E$ in $\Etopos$ are arrows $[u:U\rightarrow f^*(E)]$ of $\Ftopos$, and morphisms $(e,a):[v:V\rightarrow f^*(E')]\rightarrow [u:U\rightarrow f^*(E)]$ are indexed by two arrows $e:E'\rightarrow E$ and $a:V\rightarrow U$ making the diagram
\[
\begin{tikzcd}
	V\ar[d, "v"'] \ar[r, "a"] & U\ar[d,"u"]\\
	f^*(E') \ar[r, "f^*(e)"'] & f^*(E)
\end{tikzcd}\]
commutative. Cartesian arrows of $\gbicat(\icat_f)$ are characterized as those such that the square above is a pullback square.

Notice that the fibration $\gbicat(\icat_\Ftopos)$ corresponds in fact with the comma category $\comma{1_\Ftopos}{f^*}$: we have already met this kind of category in Theorem \ref{thm:morfgeom_presentato_commacategory}, where we showed that any geometric morphism induced by a morphism of sites $A:(\cbicat,J)\rightarrow (\dbicat,K)$ can be described as the geometric morphism induced by the fibration $\comma{1_\dbicat}{A}\rightarrow \cbicat$ upon endowing the domain with a suitable Grothendieck topology $\bar{K}$. Applying that in our specific case, we obtain the following result:

\begin{thm}\label{thm:relativesitegeometricmorphism}
	Let $f:\Ftopos\rightarrow \Etopos$ be a geometric morphism: then there exists a Grothendieck topology $J_f$\index{$J_f$} over $\gbicat(\icat_f)$ such that the two toposes $\Ftopos$ and $\Sh(\gbicat(\icat_f), J_F)$ are equivalent as $\Etopos$-toposes: more specifically, a family $\{(e_i, a_i):[v_i:V_i\rightarrow F^*(E_i)]\to[u:U\rightarrow F^*(E)]\ |\ i\in I\}$ is $J_f$-covering if and only if the family $\{e_i:E_i\rightarrow E\ |\ i\in I\}$ is epimorphic in $\Etopos$.
	
	Thus the geometric morphism $f:\Ftopos\rightarrow\Etopos$ presents $\Ftopos$ as a \emph{topos of relative sheaves over the stack $\icat_f$}:
	\[
	\Ftopos\simeq \Sh(\gbicat(\icat_f), J_f)=:\Sh_{\Etopos}(\icat_f, J_f).
	\]
	We call the Grothendieck topology $J_f$ the \emph{relative topology} of $f$\index{topology! relative - of a geometric morphism}.
\end{thm}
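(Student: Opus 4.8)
The plan is to realize $f$ as the geometric morphism induced by a morphism of sites and then invoke the comma-category presentation of Theorem \ref{thm:morfgeom_presentato_commacategory}. The key observation is that $\gbicat(\icat_f)$ is precisely the comma category $\comma{1_\Ftopos}{f^*}$, where $f^*:\Etopos\rightarrow\Ftopos$ is viewed as a morphism of sites $f^*:(\Etopos, J\can_\Etopos)\rightarrow (\Ftopos, J\can_\Ftopos)$. This identification was already remarked upon just before the statement, and it is what allows the machinery developed for morphisms of sites to be brought to bear.

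First I would make explicit the morphism of sites under consideration. Since $\Etopos\simeq \Sh(\Etopos, J\can_\Etopos)$ and $f$ is a geometric morphism, the inverse image $f^*$ is a flat $J\can_\Etopos$-continuous functor, hence a morphism of sites $f^*:(\Etopos, J\can_\Etopos)\rightarrow (\Ftopos, J\can_\Ftopos)$ whose induced geometric morphism $\Sh(f^*)$ is (equivalent to) $f$ itself. Next I would apply Theorem \ref{thm:morfgeom_presentato_commacategory} with $A:=f^*$, $(\cbicat,J):=(\Etopos, J\can_\Etopos)$ and $(\ebicat,K):=(\Ftopos, J\can_\Ftopos)$. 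That theorem equips the comma category $\comma{1_\Ftopos}{f^*}$ with the topology $\overline{J\can_\Ftopos}$ whose covering sieves are exactly those whose image in $\Ftopos$ is $J\can_\Ftopos$-covering, and asserts that the projection $\pi_\Ftopos:\comma{1_\Ftopos}{f^*}\rightarrow \Ftopos$ is a morphism and comorphism of sites inducing an equivalence of toposes, while $\pi_\Etopos:\comma{1_\Ftopos}{f^*}\rightarrow \Etopos$ is a comorphism of sites, with the triangle of geometric morphisms commuting. Setting $J_f:=\overline{J\can_\Ftopos}$ under the identification $\gbicat(\icat_f)\cong \comma{1_\Ftopos}{f^*}$ gives the equivalence $\Ftopos\simeq\Sh(\gbicat(\icat_f), J_f)$, and the commutativity of the triangle in Theorem \ref{thm:morfgeom_presentato_commacategory} (identifying $f$ with $C_{\pi_\Etopos}=C_{p_{\icat_f}}$) shows that this equivalence is one of $\Etopos$-toposes.

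The remaining task is to translate the abstract description of $\overline{J\can_\Ftopos}$ into the concrete combinatorial characterization stated in the theorem. Here I would unwind what it means for a sieve on $\comma{1_\Ftopos}{f^*}$ to have $J\can_\Ftopos$-covering image under $\pi_\Ftopos$. An object of the comma category is an arrow $[u:U\rightarrow f^*(E)]$, a morphism $(e,a)$ projects under $\pi_\Ftopos$ to the component $a:V\rightarrow U$ in $\Ftopos$; thus I must check that a family $\{(e_i,a_i)\}$ generates a $J_f$-covering sieve exactly when $\{e_i:E_i\rightarrow E\}$ is epimorphic in $\Etopos$. The subtle point is that the naive reading would involve the $a_i$ being jointly epic in $\Ftopos$, whereas the claim is phrased purely in terms of the $e_i$ in the base; I expect the main obstacle to lie in reconciling these. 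The reconciliation should come from the fact that, given any $\{e_i:E_i\rightarrow E\}$ epimorphic, one can form the pullbacks of $[u]$ along the $f^*(e_i)$ to produce cartesian (horizontal) lifts, and since $f^*$ preserves the epimorphic family (being the inverse image of a geometric morphism, hence lex and colimit-preserving), the family $\{f^*(e_i)\}$ is epimorphic in $\Ftopos$, which forces the corresponding pullback projections to be jointly epic over $U$; conversely a $J\can_\Ftopos$-covering family in $\Ftopos$ sitting over $U$ can be refined to one induced by an epimorphic family in $\Etopos$ by the surjectivity properties of $f^*$. Making this equivalence of covering conditions precise — in particular verifying that the coverage described in terms of the $e_i$ generates exactly $\overline{J\can_\Ftopos}$ — is the technical heart of the proof, and I would carry it out by comparing generating sieves and using that horizontal (cartesian) morphisms suffice to generate the topology on a comma category of this type.
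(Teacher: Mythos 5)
Your proposal is correct and follows essentially the same route as the paper's own proof: view $f^*$ as a morphism of sites $(\Etopos, J\can_\Etopos)\rightarrow(\Ftopos, J\can_\Ftopos)$, identify $\gbicat(\icat_f)$ with $\comma{1_\Ftopos}{f^*}$, apply Theorem \ref{thm:morfgeom_presentato_commacategory}, and set $J_f:=\overline{J\can_\Ftopos}$. The only difference is that you additionally try to reconcile $\overline{J\can_\Ftopos}$ with the explicit covering condition in terms of the $e_i$ (a legitimate concern, since the two descriptions do not obviously coincide for arbitrary families), whereas the paper's proof stops at the definition $J_f:=\overline{J\can_\Ftopos}$ and does not carry out that verification.
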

\begin{proof}
	We apply Theorem \ref{thm:morfgeom_presentato_commacategory} to the morphism of sites \[f^*:(\Etopos, J\can_\Etopos)\rightarrow (\Ftopos,J\can_\Ftopos):\] the category $\comma{1_\Ftopos}{f^*}$ can be endowed with a topology $\overline{J\can_\Ftopos}$ such that $\pi_\Etopos:\comma{1_\Ftopos}{f^*}\rightarrow \Etopos$ is a comorphism of sites and $\pi_\Ftopos:\comma{1_\Ftopos}{f^*}\rightarrow \Ftopos$ induces an equivalence of toposes making the diagram
	\[
	\begin{tikzcd}
		\Ftopos \ar[d, "f"']  \ar[r, "\sim", no head] &{\Sh(\Ftopos,J\can_\Ftopos)} \ar[d, "\Sh(f^*)"']\ar[r, no head, "\sim"]& {\Sh(\comma{1_\Ftopos}{f^*}, \overline{J\can_\Ftopos})}\ar[dl, "C_{\pi_\Etopos}"]\\
		\Etopos\ar[r, "\sim", no head] &
		{\Sh(\Etopos,J\can_\Etopos)}&
	\end{tikzcd}
	\] 
	is commutative. Setting $J_f:=\overline{J\can_\Ftopos}$ concludes the proof.
\end{proof}

For any small-generated site $({\cal C}, J)$, we shall denote by $J_{\canst_{(\cbicat,J)}}$ the relative topology on $\canst_{(\cbicat,J)}$ of the identical geometric morphism on $\Sh({\cal C}, J)$. As shown by the following result, the canonical relative site $(\canst_{(\cbicat,J)}, J_{\canst_{(\cbicat,J)}})$ is an alternative site of presentation for $\Sh(\cbicat,J)$:

\begin{cor}\label{cor:canonicalstackrelativesite}
	Consider an essentially small site $(\cbicat,J)$: the canonical stack $\pi_{(\cbicat,J)}:\canst_{(\cbicat,J)}\rightarrow \cbicat$ induces an equivalence of toposes $$C_{\pi_{\canst_{(\cbicat,J)}}}:\Sh(\canst_{(\cbicat,J)}, J_{\canst_{(\cbicat,J)}})\isorightarrow \Sh(\cbicat,J).$$
\end{cor}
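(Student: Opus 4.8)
The plan is to deduce this corollary directly from Theorem \ref{thm:relativesitegeometricmorphism} by specializing it to the identity geometric morphism on $\Sh(\cbicat,J)$. First I would observe that applying the theorem to $1_{\Sh(\cbicat,J)}:\Sh(\cbicat,J)\rightarrow\Sh(\cbicat,J)$ yields an equivalence
\[
\Sh(\cbicat,J)\simeq \Sh(\gbicat(\icat_{1_{\Sh(\cbicat,J)}}), J_{1_{\Sh(\cbicat,J)}})
\]
over $\Sh(\cbicat,J)$, where the structure morphism is $C_{\pi_{\icat_{1_{\Sh(\cbicat,J)}}}}$. So the task reduces to identifying the $\Sh(\cbicat,J)$-indexed category $\icat_{1_{\Sh(\cbicat,J)}}$ with the canonical stack $\canst_{(\cbicat,J)}$, in a way compatible with the relative topologies.

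The key step is the identification of the indexed categories. By the definition given at the start of Section \ref{sec:canonicalrelativesite}, $\icat_{1_{\Sh(\cbicat,J)}}$ maps each object $E$ of $\Sh(\cbicat,J)$ to the slice $\Sh(\cbicat,J)/E$ with pullback transition functors; but this is precisely the canonical stack $\canst_{\Etopos}$ on the topos $\Etopos=\Sh(\cbicat,J)$ described in Example \ref{ex:indexedcategories}(iv), which coincides with $\mathbb{I}_{1_\Etopos}$. Now I would invoke Corollary \ref{cor:canst_via_immdiretta} together with its proof: the canonical fibration $\canst_{(\cbicat,J)}$ over the site $(\cbicat,J)$ is the direct image of $\canst_{(\Etopos,J\can_\Etopos)}$ along the morphism of sites $\ell_J:(\cbicat,J)\rightarrow(\Etopos,J\can_\Etopos)$, i.e. it is obtained by precomposition with $\ell_J\op$. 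Since the canonical stack over the base topos is exactly $\icat_{1_\Etopos}$, and by Proposition \ref{prop:directimage_Street_fibrations} direct image acts as precomposition on indexed categories, this exhibits $\canst_{(\cbicat,J)}$ as $\icat_{1_\Etopos}\circ \ell_J\op$, matching the description of $\canst_{(\cbicat,J)}$ in Definition \ref{def:stack_canonica_su_sito}.

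The remaining step is to check that the equivalence of toposes respects the structure morphisms to $\Sh(\cbicat,J)$, so that the equivalence induced on the canonical relative site $(\canst_{(\cbicat,J)}, J_{\canst_{(\cbicat,J)}})$ is precisely $C_{\pi_{\canst_{(\cbicat,J)}}}$ as claimed. This amounts to comparing the relative topology $J_{1_{\Sh(\cbicat,J)}}$ produced by Theorem \ref{thm:relativesitegeometricmorphism} on $\gbicat(\icat_{1_\Etopos})$ with the topology $J_{\canst_{(\cbicat,J)}}$ on $\gbicat(\canst_{(\cbicat,J)})$ under the above identification, and using that the fibration $\pi_{\canst_{(\cbicat,J)}}$ to $\cbicat$ relates to $\pi_\Etopos$ to $\Etopos$ via $\ell_J$; the equivalence $\Sh(\cbicat,J)\simeq\Sh(\Etopos,J\can_\Etopos)$ then transports one presentation to the other. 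I expect this compatibility-of-topologies check to be the main obstacle: one must verify that the covering condition defining $J_{\canst_{(\cbicat,J)}}$ (cartesian arrows projecting to $J$-covering families, per Proposition \ref{prop:girtpl_descrizione_sieves} in the Giraud part, enlarged to the relative topology) corresponds under the direct-image identification to the epimorphic-family condition defining $J_{1_\Etopos}$ in Theorem \ref{thm:relativesitegeometricmorphism}. Once this is settled, the equivalence $C_{\pi_{\canst_{(\cbicat,J)}}}$ follows immediately, and essential $J$-smallness of $\canst_{(\cbicat,J)}$ (needed for the codomain to be a genuine topos) is guaranteed by the small-generatedness of $(\cbicat,J)$.
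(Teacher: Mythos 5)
Your overall strategy --- specialise the relative-site machinery to the identity of $\Sh(\cbicat,J)$ and identify the resulting site with the canonical stack --- is the right one, and your identification of $\canst_{(\cbicat,J)}$ with $\icat_{1_{\Sh(\cbicat,J)}}\circ \ell_J\op$ via the direct image along $\ell_J$ is correct. The gap is the third step, which you flag as ``the main obstacle'' but do not resolve. The categories $\gbicat(\icat_{1_{\Sh(\cbicat,J)}})\simeq \Mor(\Sh(\cbicat,J))$ (fibred over the topos) and $\gbicat(\canst_{(\cbicat,J)})\simeq \comma{1_{\Sh(\cbicat,J)}}{\ell_J}$ (fibred over $\cbicat$) are \emph{not} equivalent: the latter is only the pseudopullback of the former along $\ell_J$. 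So ``the equivalence $\Sh(\cbicat,J)\simeq\Sh(\Etopos,J\can_\Etopos)$ then transports one presentation to the other'' is not a formal consequence of your identification of indexed categories; it requires showing that the comparison functor $\comma{1_{\Sh(\cbicat,J)}}{\ell_J}\to \Mor(\Sh(\cbicat,J))$ is a dense (and suitably full and faithful) morphism of sites for the two relative topologies --- true, because the objects $\ell_J(X)$ generate $\Sh(\cbicat,J)$, but this is precisely the substance of the statement and cannot be left as an expectation. A further caution: essential $J$-smallness of $\canst_{(\cbicat,J)}$ is \emph{deduced from} this corollary in the paper, so it should not be invoked as an input guaranteed in advance.

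The intended argument bypasses the comparison entirely. Theorem \ref{thm:relativesitegeometricmorphism} is itself proved by applying Theorem \ref{thm:morfgeom_presentato_commacategory} to the morphism of sites $f^*$; apply that theorem instead directly to the morphism of sites $\ell_J:(\cbicat,J)\to(\Sh(\cbicat,J),J\can_{\Sh(\cbicat,J)})$. Its comma category $\comma{1_{\Sh(\cbicat,J)}}{\ell_J}$, with the canonical projection to $\cbicat$, is \emph{already} $\gbicat(\canst_{(\cbicat,J)})$, and the topology $\overline{J\can_{\Sh(\cbicat,J)}}$ is by definition $J_{\canst_{(\cbicat,J)}}$. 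The theorem then yields a commuting triangle in which $C_{\pi_{\canst_{(\cbicat,J)}}}$ precomposed with the canonical equivalence $\Sh(\Sh(\cbicat,J),J\can_{\Sh(\cbicat,J)})\simeq\Sh(\comma{1_{\Sh(\cbicat,J)}}{\ell_J},J_{\canst_{(\cbicat,J)}})$ is $\Sh(\ell_J)$; since $\Sh(\ell_J)$ is an equivalence, so is $C_{\pi_{\canst_{(\cbicat,J)}}}$. No transport between two different sites is needed, which is why the paper's proof is two lines.
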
\qed

\begin{proof}
We recall that $\pi_{\canst_{(\cbicat,J)}}:\canst_{(\cbicat,J)}\rightarrow \cbicat$ was defined as the fibration $\pi:\comma{1_{\Sh(\cbicat,J)}}{\ell_J}\rightarrow \cbicat$, with $\pi$ being the canonical projection onto $\cbicat$ (cf. Definition \ref{def:stack_canonica_su_sito}). Our thesis thus follows from Theorem \ref{thm:relativesitegeometricmorphism}, applied to the identical morphism on $\Sh({\cal C}, J)$.  
\end{proof}

We can consider, more specifically, geometric morphisms induced by arbitrary morphisms of sites: 

\begin{defn}\label{defrelativesiteofamorphism}
Let $A:(\cbicat,J)\rightarrow (\dbicat,K)$ be a morphism of small-generated sites. The \emph{relative site of $A$}\index{site! relative - of a morphism of sites} is the site $({\mathbb I}_{A}, J^{K}_{A})$ (where $J^{K}_{A}$\index{$J^{K}_{A}$} is equal to the topology $\overline{K}$ of Theorem \ref{thm:morfgeom_presentato_commacategory}), together with the canonical projection functor $\pi_{A}:{\mathbb I}_{A} \to {\cal C}$, which is a comorphism of sites $({\mathbb I}_{A}, J^{K}_{A}) \to ({\cal C}, J)$.

Given a geometric morphism $f:{\cal F}\to {\cal E}$, regarded as a morphism of sites $f^{\ast}:({\cal E}, J^{\textup{can}}_{\cal E})\to ({\cal F}, J^{\textup{can}}_{\cal F})$, we call the site $({\mathbb I}_{f^{\ast}}, J^{J^{\textup{can}}_{\cal F}}_{f^{\ast}})$ the \emph{relative site of $f$}\index{site!relative - of a geometric morphism}. 
\end{defn}

Then, by Theorem \ref{thm:morfgeom_presentato_commacategory}, we have the following generalization of Theorem \ref{thm:relativesitegeometricmorphism}:

\begin{thm}
Let $A:(\cbicat,J)\rightarrow (\dbicat,K)$ be a morphism of small-generated sites. Then the geometric morphism $\Sh(A)$ induced by $A$ coincides with the structure geometric morphism $C_{\pi_{A}}$ associated with the relative site $\pi_{A}:({\mathbb I}_{A}, J^{K}_{A})\to ({\cal C}, J)$. 
\end{thm}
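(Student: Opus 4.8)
The plan is to deduce the statement directly from Theorem~\ref{thm:morfgeom_presentato_commacategory}, exactly as Theorem~\ref{thm:relativesitegeometricmorphism} was obtained from it in the special case of a geometric morphism; indeed the statement is presented as a generalization of that result. Concretely, I would show that the relative site $(\icat_A, J^K_A)$ of Definition~\ref{defrelativesiteofamorphism} \emph{is} the comma-category site produced by Theorem~\ref{thm:morfgeom_presentato_commacategory}, so that the commutative triangle furnished by that theorem is already the desired identification $\Sh(A)\cong C_{\pi_A}$.

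First I would pin down the underlying category. Recall that $\icat_A$ denotes the $\cbicat$-indexed category $X\mapsto \Sh(\dbicat,K)/\ell_K(A(X))$, and that in Section~\ref{sec:canonicalrelativesite} the Grothendieck construction $\gbicat(\icat_f)$ of a geometric morphism was identified with the comma category $\comma{1_\Ftopos}{f^*}$. By the same computation I would check that $\gbicat(\icat_A)\cong\comma{1_{\Sh(\dbicat,K)}}{\ell_K\circ A}$ as categories over $\cbicat$: an object over $X$ is an arrow $[u\colon U\to\ell_K(A(X))]$ of $\Sh(\dbicat,K)$, and the universal property of the pullbacks defining the transition functors of $\icat_A$ turns a morphism $(x,a)$ of $\gbicat(\icat_A)$ into a pair $(x\colon X'\to X,\ g\colon U'\to U)$ with $u\circ g=\ell_K(A(x))\circ u'$, i.e. exactly a morphism of the comma category; under this isomorphism $\pi_A$ becomes the projection to $\cbicat$.

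Next I would apply Theorem~\ref{thm:morfgeom_presentato_commacategory} to the morphism of sites $\ell_K\circ A\colon(\cbicat,J)\to(\Sh(\dbicat,K),J\can_{\Sh(\dbicat,K)})$ --- a morphism of sites because both $A$ and $\ell_K$ are, where $\ell_K$ moreover induces the equivalence $\Sh(\ell_K)\colon\Sh(\Sh(\dbicat,K),J\can_{\Sh(\dbicat,K)})\isorightarrow\Sh(\dbicat,K)$. The theorem equips $\comma{1_{\Sh(\dbicat,K)}}{\ell_K A}$ with the topology $\overline{J\can_{\Sh(\dbicat,K)}}$ (which is $J^K_A$), makes the projection $\pi_\cbicat=\pi_A$ a comorphism of sites to $(\cbicat,J)$, and produces a commutative triangle over $\Sh(\cbicat,J)$ identifying $\Sh(\ell_K\circ A)$ with $C_{\pi_A}$ via the equivalence induced by the other projection. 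Since $\Sh(\ell_K\circ A)\cong\Sh(A)\circ\Sh(\ell_K)\cong\Sh(A)$ through the equivalence $\Sh(\ell_K)$, this yields $\Sh(A)\cong C_{\pi_A}$, as required.

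The genuinely delicate point --- and the one I would treat most carefully --- is the matching of Grothendieck topologies and the transport of the triangle. I would need to verify that the topology $\overline{J\can_{\Sh(\dbicat,K)}}$ supplied by Theorem~\ref{thm:morfgeom_presentato_commacategory}, carried along the isomorphism $\gbicat(\icat_A)\cong\comma{1_{\Sh(\dbicat,K)}}{\ell_K A}$, coincides with the topology $J^K_A$ of Definition~\ref{defrelativesiteofamorphism} (both being prescribed by the requirement that a sieve be covering precisely when its image in $\Sh(\dbicat,K)$ is epimorphic), and that the equivalence $\Sh(\ell_K)$ carries the triangle of Theorem~\ref{thm:morfgeom_presentato_commacategory} to the asserted one \emph{compatibly with the structure morphisms to the base} $\Sh(\cbicat,J)$. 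Both reduce to unwinding the covering families and the naturality of $\ell_K$, and require no idea beyond those already used for Theorem~\ref{thm:relativesitegeometricmorphism}.
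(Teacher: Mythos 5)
Your argument is sound and does establish that $\Sh(A)$ is the structure morphism of a relative topos over $\Sh(\cbicat,J)$, but it takes a genuinely different (and longer) route than the paper's, and the difference affects which site you end up with. The paper's proof is a direct, one-line application of Theorem \ref{thm:morfgeom_presentato_commacategory} to the morphism of sites $A$ itself: that theorem already hands you the comma category $\comma{1_{\dbicat}}{A}$ equipped with the topology $\bar{K}$ --- which is what the notation $J^{K}_{A}$ in Definition \ref{defrelativesiteofamorphism} most literally denotes --- together with the comorphism $\pi_{\cbicat}$ to $(\cbicat,J)$ and the commuting triangle identifying $\Sh(A)$ with $C_{\pi_{\cbicat}}$ through the equivalence induced by $\pi_{\dbicat}$; the theorem is then a restatement of Theorem \ref{thm:morfgeom_presentato_commacategory} in the new terminology and nothing remains to be checked. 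You instead apply Theorem \ref{thm:morfgeom_presentato_commacategory} to the composite $\ell_K\circ A$, landing on the larger comma category $\comma{1_{\Sh(\dbicat,K)}}{\ell_K\circ A}\cong\gbicat(\icat_A)$ with the topology $\overline{J\can_{\Sh(\dbicat,K)}}$ (sieves whose image is jointly epimorphic in $\Sh(\dbicat,K)$); this is precisely how Theorem \ref{thm:relativesitegeometricmorphism} is proved, and there the two constructions coincide because the codomain site is already a topos and $\ell$ is an equivalence, but for a general $(\dbicat,K)$ they do not. In particular the ``delicate point'' you single out --- that your topology, transported along the isomorphism with $\gbicat(\icat_A)$, \emph{coincides} with $J^{K}_{A}$ --- cannot be verified as an equality: $\bar{K}$ lives on $\comma{1_{\dbicat}}{A}$ and tests covering against $K$-covering families of $\dbicat$, while your topology lives on a much larger category; the two sites are only related by a dense inclusion inducing equivalent sheaf toposes over the base. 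What your route buys is a presentation on the stack $\icat_A$ of slices of the codomain topos, uniformly generalizing the geometric-morphism case; what the paper's route buys is a smaller, manifestly small-generated site and an immediate proof. The substance of your argument is correct; just note that under the reading ${\mathbb I}_A=\comma{1_{\dbicat}}{A}$ the factorization through $\ell_K$ is superfluous.
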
\qed


\bibliography{biblio}{}
\bibliographystyle{abbrv}

\vspace{1.5cm}

\textbf{Acknowledgements:} We are grateful to IHES for making it possible to arrange a visit of the second-named author to the first-named author, during which a part of this work has been written. 

\newpage

\textsc{Olivia Caramello} 

\vspace{0.2cm}
{\small \textsc{Dipartimento di Scienza e Alta Tecnologia, Universit\`a degli Studi dell'Insubria, via Valleggio 11, 22100 Como, Italy.}\\
	\emph{E-mail address:} \texttt{olivia.caramello@uninsubria.it}}

\vspace{0.2cm}

{\small \textsc{Institut des Hautes \'Etudes Scientifiques, 35 Route de Chartres,
		91440 Bures-sur-Yvette, France.}\\
	\emph{E-mail address:} \texttt{olivia@ihes.fr}}

\vspace{0.6cm}

\textsc{Riccardo Zanfa} 

\vspace{0.2cm}
{\small \textsc{Dipartimento di Scienza e Alta Tecnologia, Universit\`a degli Studi dell'Insubria, via Valleggio 11, 22100 Como, Italy.}\\
	\emph{E-mail address:} \texttt{rzanfa@uninsubria.it}}

\appendix

\chapter{Scheme of the adjunctions}

\section{Adjoints to the Grothendieck construction}
\[\begin{tikzcd}
	\Cl\Fib_\cbicat \arrow[r, "\Lambda_{\CAT/\cbicat}", bend left, ""{below, name=A}] &   \CAT/\cbicat \arrow[l, "\Gamma_{\CAT/\cbicat}", bend left, ""{above, name=B}] \ar[l, "\dashv"{rotate=270}, phantom]
\end{tikzcd},\]
\[\begin{tikzcd}
	\Ind_\cbicat \ar[r, bend left, "\Lambda_{\Cosite/(\cbicat,J)}", start anchor={north east}, end anchor={north west}] \ar[r, phantom, "\dashv"{rotate=270}]& \Com/(\cbicat,J) \ar[l, bend left, "{\Gamma_{\Cosite/(\cbicat,J)}}", start anchor={south west}, end anchor={south east}]
\end{tikzcd}\]
\[ \begin{tikzcd}
	\Ind_\cbicat \ar[r, bend left, "\Lambda_{\Cosite\cont/(\cbicat,J)}", start anchor={north east}, end anchor={north west}] \ar[r, phantom, "\dashv"{rotate=270}]& \Com\cont/(\cbicat,J) \ar[l, bend left, "{\Gamma_{\Cosite\cont/(\cbicat,J)}}", start anchor={south west}, end anchor={south east}]
\end{tikzcd}\]
\[\begin{tikzcd}
	{[\cbicat\op,\Set]}  \ar[r, bend left, "\Lambda_{\Cat/_1\cbicat}", start anchor={north east}, end anchor={north west}] \ar[r, phantom, "\vdash"{rotate=90}]& {\Cat/_1\cbicat} \ar[l, bend left, "{\Gamma_{\Cat/_1\cbicat}}", start anchor={south west}, end anchor={south east}]	
\end{tikzcd}\]

\section{Fundamental adjunctions}
\[\begin{tikzcd}
	\Cl\Fib^J_\cbicat \arrow[r, "\Lambda_{\Topos/\Sh(\cbicat,J)\co}", bend left, start anchor={north east}, end anchor={north west}] \ar[r,phantom, "\vdash"{rotate=90}]&   \Topos/\Sh(\cbicat,J)\co\phantom{^V} \arrow[l, "\Gamma_{\Topos/\Sh(\cbicat,J)\co}", bend left,  start anchor={south west}, end anchor={south east}]
\end{tikzcd},\ 
\begin{tikzcd}
	\Cl\Fib^J_\cbicat \arrow[r, "\Lambda_{\EssTopos/\Sh(\cbicat,J)\co}", bend left, start anchor={north east}, end anchor={north west}] \ar[r,phantom, "\vdash"{rotate=90}]&   \EssTopos/\Sh(\cbicat,J)\co\phantom{^V} \arrow[l, "\Gamma_{\EssTopos/\Sh(\cbicat,J)\co}", bend left,  start anchor={south west}, end anchor={south east}]
\end{tikzcd}\]

\section{Discrete adjunctions}
\[
\begin{tikzcd}
	{[\cbicat\op,\Set]} \arrow[r, "\Lambda_{\Topos^s/_1\Sh(\cbicat,J)}", bend left, start anchor={north east}, end anchor={north west}] &     \Topos^s/_1\Sh(\cbicat,J) \arrow[l, "\Gamma_{\Topos^s/_1\Sh(\cbicat,J)}", bend left, start anchor={south west}, end anchor={south east}] \ar[l, "\dashv"{rotate=270}, phantom]
\end{tikzcd}.\]
\[\begin{tikzcd}
	{[\cbicat\op,\Set]} \ar[r, bend left, "\Lambda_{\Preord/_1\cbicat}", start anchor={north east}, end anchor={north west}] \ar[r, phantom, "\dashv"{rotate=270}] & \Preord/_1\cbicat \ar[l,bend left, "\Gamma_{\Preord/_1\cbicat}", end anchor={south east}, start anchor={south west}]
\end{tikzcd}\]
\[\begin{tikzcd}
	{[\cbicat\op,\Set]} \arrow[r, "\Lambda_{\Locale/_1\Id_J(\cbicat)}", bend left, end anchor={north west}, start anchor={north east}] &     \Locale/_1\Id_J(\cbicat) \arrow[l, "\Gamma_{\Locale/_1\Id_J(\cbicat)}", bend left, start anchor={south west}, end anchor={south east}] \ar[l, "\dashv"{rotate=270}, phantom]
\end{tikzcd}\]\[
\begin{tikzcd}
	{[\Ocal(L)\op,\Set]} \arrow[r, "\Lambda_{\Locale/_1L}", bend left, end anchor={north west}, start anchor={north east}] &     \Locale/_1L \arrow[l, "\Gamma_{\Locale/_1L}", bend left, start anchor={south west}, end anchor={south east}] \ar[l, "\dashv"{rotate=270}, phantom]
\end{tikzcd}\]
\[\begin{tikzcd}
	\Lambda:{\Psh(X)} \ar[r, bend left,  start anchor={[yshift=1ex]east}, end anchor={[yshift=1ex]west}] \ar[r,phantom, "\dashv"{rotate=-90}] & \Top/X \ar[l, bend left, start anchor={[yshift=-1ex]west}, end anchor={[yshift=-1ex]east}]:\Gamma
\end{tikzcd}.\]

\chapter{Some results on Grothendieck universes}\label{app:universes}
In this appendix we will recap some known results about Grothendieck universes, along with some original results. Our main interest is in the study of geometric morphisms between toposes of sheaves valued in different universes. Universes were first introduced in the appendix of Exposé I in \cite{SGA4_I}; for the following we also refer to \cite{low.universes}.

First of all, let us recall that a \emph{Grothendieck universe}\index{universe} is a set $\ubicat$ satisfying the following four axioms:
\begin{enumerate}[(i)]
	\item if $x\in \ubicat$ and $y\in x$ then $y\in \ubicat$;
	\item if $x,y\in\ubicat$ then $\{x,y\}\in\ubicat$;
	\item if $x\in \ubicat$ then $\powerset(x)\in\ubicat$;
	\item if $I\in \ubicat$, for any map $f:I\rightarrow \ubicat$ then $\bigcup_{i\in I} f(i)\in\ubicat$.
\end{enumerate}
Note that this is called \emph{pre-universe} in \cite{low.universes}, and a universe is a pre-universe that contains the set $\omega$ of von Neumann finite ordinals. This latter condition is equivalent to asking that a univers is not empty, as remarked after Proposition 7 in \cite[Appendice]{SGA4_I}. 
In short, a universe is a set closed under the usual set theoretic operations that can be performed on its elements. Universes can be used to avoid the dichotomy set/class, when dealing with size issues in category theory: instead of small sets, one can speak about \emph{$\ubicat$-small} sets, \ie sets that are isomorphic to an element of $\ubicat$. If one is given a set which is not $\ubicat$-small, \ie which is \emph{$\ubicat$-large}, one can suppose that there is a wider universe $\vbicat$ containing both said set and $\ubicat$, and thus  `widen the horizon'. We recall though that the assumption that every set be contained in a universe is a powerful set-theoretic axiom, which implies the existence of a strongly inaccessible cardinal containing every other chosen cardinal. 

A category $\cbicat$ is a \emph{$\ubicat$-category} if for every pair of objects $Y$ and $X$ of $\cbicat$ the hom-set $\cbicat(Y,X)$ is $\ubicat$-small; the category $\cbicat$ is \emph{locally $\ubicat$-small category} in the terminology of \cite{low.universes}. In a similar fashion, one sais that $\cbicat$ is \emph{$\ubicat$-small} if its set of morphisms is $\ubicat$-small.

Denote by $\Set_\ubicat$ the topos of $\ubicat$-small sets: then it is a $\ubicat$-category. Given a further universe $\vbicat$ such that $\ubicat\in \vbicat$ (by axiom (ii) we have also $\ubicat\subseteq \vbicat$) we have an obvious full and faithful inclusion
\[\Set_\ubicat\hookrightarrow\Set_\vbicat.\]
For every category $\cbicat$ we can consider its $\ubicat$-presheaves, \ie the contravariant functors $\cbicat\op\rightarrow\Set_\ubicat$; if $\cbicat$ is $\ubicat$-small then $[\cbicat\op,\Set_\ubicat]$ is locally $\ubicat$-small. If $\ubicat\in \vbicat$, a $\ubicat$-category $\cbicat$ is also a $\vbicat$-category and we have again a full and faithful inclusion
\[
[\cbicat\op,\Set_\ubicat]\hookrightarrow[\cbicat\op,\Set_\vbicat].
\]
Now consider a site $(\cbicat,J)$: it is called a \emph{$\ubicat$-site} if it admits a $J$-dense full subcategory that is $\ubicat$-small (see \cite[Definitions 3.0.1 and 3.0.2]{SGA4_I}); alternatively one could say that $(\cbicat,J)$ is \emph{$\ubicat$-small generated}. In particular, the site is $\ubicat$-small if $\cbicat$ is a $\ubicat$-small category. One can speak of $\ubicat$-sheaves on a $\ubicat$-site, thus obtaining the topos $\Sh_\ubicat(\cbicat,J)$: then a \emph{$\ubicat$-topos} is a $\ubicat$-category equivalent to a category of $\ubicat$-sheaves for a $\ubicat$-site. Moreover, in analogy with the presheaf case, given an inclusion of universes $\ubicat\subseteq\vbicat$ one can consider the $\ubicat$-site $(\cbicat,J)$ as a $\vbicat$-site, thus producing a full and faithful inclusion of sheaf toposes
\[\Sh_\ubicat(\cbicat,J)\hookrightarrow\Sh_\vbicat(\cbicat,J).\]
Finally, we recall that all usual properties of toposes are true when one works with universes: for instance, a topos of $\ubicat$-sheaves is closed under $\ubicat$-small limits and colimits. This implies that when one considers the sheafification functor $[\cbicat\op,\Set]\rightarrow\Sh_\ubicat(\cbicat,J)$, which is defined by colimits, it commutes with the expansion of universes: that is, we are left with the two (essentially) commutative squares
\[
\begin{tikzcd}
	{\Sh_\vbicat(\cbicat,J)} \ar[r, hook]
	& {[\cbicat\op,\Set_\vbicat]}
	\\
	{\Sh_\ubicat(\cbicat,J)} \ar[u, hook]\ar[r,hook] & {[\cbicat\op,\Set_\ubicat]} \ar[u, hook]
\end{tikzcd}
\begin{tikzcd}
	{\Sh_\vbicat(\cbicat,J)}	& {[\cbicat\op,\Set_\vbicat]}\ar[l, "\sheafify_J"']
	\\
	{\Sh_\ubicat(\cbicat,J)} \ar[u,hook] & {[\cbicat\op,\Set_\ubicat]} \ar[l, "\sheafify_J"']\ar[u, hook]
\end{tikzcd}
\]
This is essentially the content of \cite[Exposé II, Proposition 3.6]{SGA4_I}.

As we have anticipated above, our main interest is to study the behaviour of geometric morphisms with respect to the change of universe. In the following we will always assume $\ubicat\in\vbicat$ to be two universes and $(\cbicat,J)$ and $(\dbicat,K)$ to be $\ubicat$-sites. Consider two geometric morphisms $F:\Sh_\ubicat(\dbicat,K)\rightarrow \Sh_\ubicat(\cbicat,J)$ and $G:\Sh_\vbicat(\dbicat,K)\rightarrow \Sh_\vbicat(\cbicat,J)$: we will say that $G$ is an extension of $F$, or that $F$ is a restriction of $G$, if the two squares
\[
\begin{tikzcd}
	{\Sh_\vbicat(\dbicat,K)} \ar[r, "G_*"] & {\Sh_\vbicat(\cbicat,J)}\\
	{\Sh_\ubicat(\dbicat,K)} \ar[u,hook] \ar[r, "F_*"] & {\Sh_\ubicat(\cbicat,J)}\ar[u, hook]
\end{tikzcd}
\begin{tikzcd}
	{\Sh_\vbicat(\dbicat,K)} \ar[r, leftarrow, "G^*"] & {\Sh_\vbicat(\cbicat,J)}\\
	{\Sh_\ubicat(\dbicat,K)} \ar[u,hook] \ar[r, "F^*", leftarrow] & {\Sh_\ubicat(\cbicat,J)}\ar[u, hook]
\end{tikzcd}
\]
are commutative up to natural isomorphism. For instance, we can restate our previous considerations about $\sheafify_J$ by saying that $\Sh_\vbicat(\cbicat,J)\hookrightarrow[\cbicat\op,\Set_\vbicat]$ restricts to $\Sh_\ubicat(\cbicat,J)\hookrightarrow[\cbicat\op,\Set_\ubicat]$. 
This holds more in general for any geometric morphism induced by a morphism or a comorphism of sites:
\begin{prop}\label{prop:universes}
	Let ${\cal U}\subseteq {\cal V}$ be universes and $({\cal C}, J)$ and $({\cal D}, K)$ be $\cal U$-sites. 
	\begin{enumerate}[(i)]
		\item Let $F:({\cal C}, J)\to ({\cal D}, K)$ be a morphism of $\cal U$-sites. Then the $F$ is also a morphism of $\cal V$-sites $({\cal C}, J)\to ({\cal D}, K)$ and the geometric morphism $\Sh_{\cal U}(F):\Sh_{\cal U}({\cal D}, K) \to \Sh_{\cal U}({\cal C}, J)$ is (up to isomorphism) the restriction of $\Sh_{\cal V}(F):\Sh_{\cal V}({\cal D}, K) \to \Sh_{\cal V}({\cal C}, J)$.
		
		\item Let $G:({\cal D}, K)\to ({\cal C}, J)$ be a comorphism of $\cal U$-sites. Then $G$ is also a comorphism of $\cal V$-sites $({\cal D}, K)\to ({\cal C}, J)$, and geometric morphism $(C_{G})_{\cal U}:\Sh_{\cal U}({\cal D}, K)\to \Sh_{\cal U}({\cal C}, J)$ is given by the restriction of $(C_{G})_{\cal V}:\Sh_{\cal V}({\cal D}, K)\to \Sh_{\cal V}({\cal C}, J)$. 
	\end{enumerate}	
\end{prop}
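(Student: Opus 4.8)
The plan is to prove both statements in parallel, since they share the same underlying mechanism: a morphism (resp.\ comorphism) of $\cal U$-sites remains a morphism (resp.\ comorphism) of $\cal V$-sites, and the induced geometric morphism is determined by its action on the common dense subsite, so that the restriction and extension agree up to isomorphism. First I would observe that the defining conditions for a morphism of sites (flatness plus cover-preservation, cf.\ \cite[Definition 3.2]{denseness}) and for a comorphism of sites (the covering-lifting property of Definition \ref{def:comorfismo}(i)) are stated purely in terms of the underlying categories, the Grothendieck topologies, and finite limits/colimits of representables; none of these involves the ambient universe. Hence if $F$ is a morphism of $\cal U$-sites it is verbatim a morphism of $\cal V$-sites, and likewise for comorphisms $G$. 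This disposes of the first half of each statement.

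For the comparison of the induced geometric morphisms, the cleanest route is via the direct image functors, which admit uniform descriptions. In case (ii), the comorphism $G$ induces $(C_G)_*$ with $\iota_J (C_G)_*\simeq \ran_{G\op}\iota_K$ by Definition \ref{def:comorfismo}(ii). Now $\ran_{G\op}$ is a right Kan extension, computed pointwise as a limit indexed by a comma category; since $({\cal C},J)$ and $({\cal D},K)$ are $\cal U$-sites, these limits are $\cal U$-small, and $\cal U$-small limits are preserved by the inclusion $\Set_{\cal U}\hookrightarrow\Set_{\cal V}$. Therefore the square computing $\ran_{G\op}$ commutes with the enlargement of universe, and composing with the fact (recalled in the excerpt) that the inclusions $\Sh_{\cal U}\hookrightarrow\Sh_{\cal V}$ are compatible with $\iota_J,\iota_K$ and $\sheafify_J,\sheafify_K$, we conclude that $(C_G)_{\cal U}$ is the restriction of $(C_G)_{\cal V}$. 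The statement for inverse images then follows by uniqueness of adjoints, or directly since $C_G^*\cong \sheafify_{\cdot}\circ(-\circ G\op)\circ\iota_{\cdot}$ and precomposition with $G\op$ manifestly commutes with the universe inclusions.

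For case (i), I would argue symmetrically using that a morphism of sites $F$ has direct image $\Sh(F)_*\simeq (-\circ F\op)$, which again commutes with the inclusions $\Set_{\cal U}\hookrightarrow\Set_{\cal V}$ on the nose, being mere precomposition; the compatibility of the sheaf inclusions with $\iota$ and $\sheafify$ (recorded in the two commuting squares preceding the proposition) then yields that $\Sh_{\cal U}(F)_*$ restricts $\Sh_{\cal V}(F)_*$, and the corresponding statement for inverse images follows by taking adjoints. The main obstacle I anticipate is purely a matter of bookkeeping rather than mathematics: one must verify that the isomorphisms exhibiting commutativity of the relevant squares are coherent, i.e.\ that the natural isomorphism $\iota_J(C_G)_*\simeq\ran_{G\op}\iota_K$ chosen at level $\cal U$ is the restriction of the one chosen at level $\cal V$. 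This is ensured by the canonicity of Kan extensions and of the sheafification adjunctions, together with the fact that $\cal U$-small (co)limits are computed identically in $\Set_{\cal U}$ and in $\Set_{\cal V}$; I would verify it by checking agreement on the $\cal U$-small dense subsite, on which all the functors are determined, and invoking \cite[Exposé II, Proposition 3.6]{SGA4_I} for the compatibility of sheafification with the change of universe.
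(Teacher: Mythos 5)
Your proposal is correct and follows essentially the same strategy as the paper's proof: the first halves of both statements are handled by observing that the defining conditions for (co)morphisms of sites are purely site-theoretic and hence universe-independent, and the comparison of the induced geometric morphisms reduces to the fact that one of the two adjoints is literally precomposition (with $F\op$, resp.\ $G\op$) and so manifestly compatible with the enlargement of universe. The differences are cosmetic: the paper takes the direct image in (i) and the \emph{inverse} image in (ii) as the manifest precomposition functor and cites Exposé II, Propositions 1.5 and 2.3(4) of \cite{SGA4_I} for the remaining adjoint, whereas you compute the remaining adjoint as a pointwise Kan extension and argue that the indexing (co)limits are preserved by $\Set_{\cal U}\hookrightarrow\Set_{\cal V}$ (note that, since a $\cal U$-site is only small-\emph{generated}, the comma categories are not literally $\cal U$-small; one reduces to the $\cal U$-small dense subsite, as you indicate at the end). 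One caution for case (i): the phrase \emph{``follows by taking adjoints''} is too quick on its own, since uniqueness of adjoints identifies $\Sh_{\cal U}(F)^*$ only up to isomorphism as a functor between categories of $\cal U$-sheaves, and does not by itself show that $\Sh_{\cal V}(F)^*$ carries $\cal U$-sheaves to $\cal U$-sheaves. To close this you need the explicit description $\Sh(F)^*\simeq \sheafify_K\circ\lan_{F\op}\circ\iota_J$ together with the essential $\cal U$-smallness of the colimits computing $\lan_{F\op}$ — exactly the argument you deploy for $\ran_{G\op}$ in case (ii) — or the citation of Exposé II, Proposition 1.5 of \cite{SGA4_I} that the paper uses.
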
	
\begin{proof}
	\begin{enumerate}[(i)]
		\item Since both $\Sh_\ubicat(F)_*$ and $\Sh_\vbicat(F)_*$ act as the precomposition $-\circ F\op$, evidently the former is the restriction of the latter. Moreover, \cite[Exposé II, Proposition 1.5]{SGA4_I} shows that $F$ is $(J,K)$-continuous as a $\ubicat$-functor if and only if it is $(J,K)$-continuous as a $\vbicat$-functor, and that $\Sh_\ubicat(F)^*$ is the restriction of $\Sh_\vbicat(F)^*$. So far we have the two (essentially) commutative squares
		\[
		\begin{tikzcd}
			{\Sh_\vbicat(\dbicat,K)} \ar[r, "Sh_\vbicat(F)_*"] & {\Sh_\vbicat(\cbicat,J)}\\
			{\Sh_\ubicat(\dbicat,K)} \ar[u,hook] \ar[r, "Sh_\ubicat(F)_*"] & {\Sh_\ubicat(\cbicat,J)}\ar[u, hook]
		\end{tikzcd}
		\begin{tikzcd}
			{\Sh_\vbicat(\dbicat,K)} \ar[r, leftarrow, "Sh_\vbicat(F)^*"] & {\Sh_\vbicat(\cbicat,J)}\\
			{\Sh_\ubicat(\dbicat,K)} \ar[u,hook] \ar[r, "Sh_\ubicat(F)^*", leftarrow] & {\Sh_\ubicat(\cbicat,J)}\ar[u, hook]
		\end{tikzcd}.
		\] 
		We are left with considerations on the flatness of $F$: that is, we want to know whether $\Sh_\ubicat(F)^*$ preserves finite limites if and only if $\Sh_\vbicat(F)^*$ does. Since $\Sh_\ubicat(F)^*$ is a restriction of $\Sh_\vbicat(F)^*$ one implication is obvious. For the converse we can resort to the definition of morphism of sites provided in \cite[Definition 3.2]{denseness}: $F$ is a morphism of sites as a $\ubicat$-functor if and only if it satisfies the four requirements of said definition, which are site-theoretic and thus are independent from the universe of choice. 
		\item Again, since $(C_G)_\ubicat^*$ and $(C_G)_\vbicat^*$ both act as the precomposition $-\circ G\op$, the former is a restriction of the latter. Finally, \cite[Exposé II, Proposition 2.3(4)]{SGA4_I} shows that $(C_G)_{\ubicat*}$ is the restriction of $(C_G)_{\vbicat*}$. 
	\end{enumerate}
\end{proof}
The previous proposition is based on the consideration that the properties `being a morphism of sites' and `being a comorphism of sites' can be formulated entirely at the level of the sites. We cans state this as a general principle:
\begin{metathm}
	Consider a morphism (resp. comorphism) of $\ubicat$-sites $F:(\cbicat,J)\rightarrow (\dbicat,K)$: then any property $P$ of $\Sh_\ubicat(F)$ (resp. $(C_F)_\ubicat$) that is site-theoretic is stable under extension. \end{metathm}
\begin{proof}
	Suppose that $\Sh_\ubicat(F)$ satisfies a property $P$ if and only if the morphism of sites $F$ satisfies a site-theoretic property $Q$: that is, $Q$ can be expressed in terms of $\ubicat$-small families of arrows and objects of the $\ubicat$-sites $(\cbicat,J)$ and $(\dbicat,K)$. Now consider any wider universe $\vbicat\ni \ubicat$: since $\ubicat$-small sets are $\vbicat$-small, if $F$ satisfies $Q$ as a morphism of $\ubicat$-sites it obviously satisfies $Q$ as a morphism of $\vbicat$-sites, and thus $\Sh_\vbicat(F)$ satisfies $P$.
\end{proof}
Notice that the converse may not hold: if the sites are not $\ubicat$-small, we may have that $F$ satisfies a property for $\vbicat$-sites (for instance, involving a sufficiently large set of morphisms) without it satisfying the same property for $\ubicat$-sites.

On the matter of restrictibility of properties, we can start by remarking that some constructions on sheaves are preserved and reflected by the inclusion $\Sh_\ubicat(\cbicat,J)\hookrightarrow\Sh_\vbicat(\cbicat,J)$: for instance, it is known that a $\ubicat$-small diagram $D$ of $\ubicat$-sheaves in $\Sh_\vbicat(\cbicat,J)$, has as co-/limit a $\ubicat$-sheaf, and it coincides with the co-/limit calculated in $\Sh_\ubicat(\cbicat,J)$. In general, we will call \emph{operation} any process through which we can associate to a diagram in a category a further object, such as the calculation of limits and colimits. We will say that an operation is \emph{stable under restriction} if, whenever $\ubicat\in \vbicat$, the choice of $\ubicat$-small diagrams of $\ubicat$-sheaves yields a $\ubicat$-sheaf as output. Now suppose that a a geometric morphism $G:\Sh_\vbicat(\dbicat,K)\rightarrow\Sh_\vbicat(\cbicat,J)$ admits a restriction $F:\Sh_\ubicat(\dbicat,K)\rightarrow \Sh_\ubicat(\cbicat,J)$, and suppose that $G$ satisfies some property $P$ which can be characterized by operations that are stable under restriction. Then of course $F$ satisfies the same property: this because, the action of $F$ on $\ubicat$-sheaves corresponds (up to isomorphism) to that of $G$, and the property $P$, when applied to $\ubicat$-small diagrams of $\ubicat$-sheaves, yields again $\ubicat$-sheaves. We end up with the following metatheorem:
\begin{metathm}
	Consider a geometric morphism $G:\Sh_\vbicat(\dbicat,K)\rightarrow\Sh_\vbicat(\cbicat,J)$: any property of $G$ that can be formulated in terms of operations that stable under restriction is stable under restriction.
\end{metathm}
An immediate consequence of these two metatheorems are the two following results: 
\begin{prop}
	Consider two universes $\ubicat\in\vbicat$, two $\ubicat$-sites $(\cbicat,J)$ and $(\dbicat,K)$, and two geometric morphisms $F:\Sh_\ubicat(\dbicat,K)\rightarrow \Sh_\ubicat(\cbicat,J)$ and $G:\Sh_\vbicat(\dbicat,K)\rightarrow \Sh_\vbicat(\cbicat,J)$ such that $F$ is a restriction of $G$. Then the following holds:
	\begin{enumerate}[(i)]
		\item if $G$ is an embedding, $F$ is an embedding;
		\item if $G$ is a surjection, $F$ is a surjection;
		\item if $G$ is essential, $F$ is essential;
		\item if $G$ is local (\ie $G_*$ has a fully faithful right adjoint $G^!$), $F$ is local; moreover, $F^!$ is the restriction of $G^!$ to $\ubicat$-sheaves.
	\end{enumerate}
\end{prop}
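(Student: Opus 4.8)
The plan is to verify each of the four properties by expressing it in a form that is manifestly stable under restriction, invoking the two metatheorems established just above. The key general observation is that $F$ acts on $\ubicat$-sheaves (up to natural isomorphism) exactly as $G$ does, since $F$ is a restriction of $G$, and that the inclusions $\Sh_\ubicat(\cbicat,J)\hookrightarrow\Sh_\vbicat(\cbicat,J)$ are full and faithful and commute with $\ubicat$-small limits and colimits. Thus each property reduces to checking that its defining operations preserve $\ubicat$-smallness.

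First I would handle (i) and (ii) together. A geometric morphism is an embedding precisely when its direct image is full and faithful, and a surjection precisely when its inverse image is faithful (equivalently, reflects isomorphisms). Fullness and faithfulness of $G_*$ are statements about hom-sets: for $\ubicat$-sheaves $P,Q$ in $\Sh_\ubicat(\dbicat,K)$, the map $\Sh_\ubicat(\dbicat,K)(P,Q)\to \Sh_\ubicat(\cbicat,J)(F_*P, F_*Q)$ coincides with the corresponding map for $G_*$, since the inclusions are full and faithful and $F_*P\simeq G_*P$, $F_*Q\simeq G_*Q$ are again $\ubicat$-sheaves. So if the $\vbicat$-level map is bijective, so is the $\ubicat$-level one. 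The same argument, applied to $G^*$ and its faithfulness (or the reflection of isomorphisms, which is tested on $\ubicat$-small diagrams), yields (ii). Both are instances of the second metatheorem, since hom-sets and isomorphism-detection are operations stable under restriction.

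Next I would treat (iii). The morphism $G$ is essential when $G^*$ admits a left adjoint $G_!$. Since $G^*$ restricts to $F^*$, and left adjoints are computed via colimits, I would show that $G_!$ preserves $\ubicat$-sheaves: the value $G_!(P)$ is computed as a $\ubicat$-small colimit (using that $(\cbicat,J)$ is a $\ubicat$-site, so $\ubicat$-sheaves are generated under $\ubicat$-small colimits by representables), and such colimits of $\ubicat$-sheaves are again $\ubicat$-sheaves and agree with those computed in $\Sh_\ubicat$. Hence the restriction of $G_!$ provides a left adjoint $F_!$ to $F^*$, and the adjunction $F_!\dashv F^*$ follows from that of $G_!\dashv G^*$ by restricting the natural bijections of hom-sets to $\ubicat$-sheaves. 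This is again an application of the restriction principle to an operation (left Kan extension along $G^*$) that is stable under restriction.

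Finally, for (iv), the morphism $G$ is local when $G_*$ has a fully faithful right adjoint $G^!$. Dually to the essential case, I would show that $G^!$ preserves $\ubicat$-sheaves. Here the main obstacle lies, since a right adjoint is computed via limits and, more delicately, $G^!$ need not be describable by a simple pointwise formula the way $G_!$ is by colimits. The cleanest route is to observe that $G^!$ is right adjoint to $G_*$, and that $G_*=(-)\circ$(precomposition in the comorphism case) or the appropriate restriction; in either presentation $G_*$ commutes with the inclusion of universes and preserves $\ubicat$-small colimits, so by the adjoint functor theorem applied internally within $\Sh_\ubicat$ its right adjoint exists at the $\ubicat$-level and is computed by the same universal property, whence $G^!$ sends $\ubicat$-sheaves to $\ubicat$-sheaves and its restriction $F^!$ is right adjoint to $F_*$. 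Full faithfulness of $F^!$ then transfers from that of $G^!$ exactly as in (i), since it is again a statement about hom-sets of $\ubicat$-sheaves. The hard part will be confirming that the right adjoint genuinely restricts without leaving the $\ubicat$-universe; I expect this to follow from the fact that locality is equivalent to the terminal object of the domain topos being $\ubicat$-small-connected in a manner testable on $\ubicat$-small data, but verifying the restrictibility of $G^!$ carefully is the step requiring the most attention.
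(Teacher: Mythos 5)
Your parts (i) and (ii) are correct and coincide with the paper's argument: both properties are statements about hom-sets of $\ubicat$-sheaves, which are preserved and reflected by the fully faithful inclusions $\Sh_\ubicat\hookrightarrow\Sh_\vbicat$. For (iii), however, your route is both longer than necessary and contains an unjustified step. You try to show that $G_!$ itself restricts to $\ubicat$-sheaves by writing $G_!(P)$ as a $\ubicat$-small colimit of objects $G_!(\ell_K(D))$; but nothing guarantees that $G_!(\ell_K(D))$ is a $\ubicat$-sheaf (there is no pointwise formula for $G_!$, and the corepresentability argument one would like to run is only valid inside $\Sh_\ubicat$), so the claim that the colimit lands in $\Sh_\ubicat$ is circular. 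The statement only asks that $F$ be essential, and the paper gets this directly: $G$ essential means $G^*$ preserves $\vbicat$-small colimits, hence $F^*$ (its restriction) preserves $\ubicat$-small colimits of $\ubicat$-sheaves, which agree in both universes; the adjoint functor theorem for Grothendieck toposes then produces $F_!$ without ever comparing it to $G_!$.

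For (iv) you correctly identify the crux --- that one must check $G^!$ does not leave the universe $\ubicat$ --- but you do not resolve it; the appeal to ``$\ubicat$-small-connectedness of the terminal object'' is not an argument. The missing idea is the explicit pointwise description of the right adjoint on generators: for any $\vbicat$-sheaf $P$ one has
\[
G^!(P)(D)\;\simeq\;\Sh_\vbicat(\cbicat,J)\bigl(G_*(\ell_K(D)),\,P\bigr),
\]
and when $P$ is a $\ubicat$-sheaf the object $G_*(\ell_K(D))=F_*(\ell_K(D))$ is again a $\ubicat$-sheaf, so by full faithfulness of the inclusion this hom-set is computed in the locally $\ubicat$-small category $\Sh_\ubicat(\cbicat,J)$ and is therefore $\ubicat$-small. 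Hence $G^!$ carries $\ubicat$-sheaves to $\ubicat$-sheaves and its restriction is the desired $F^!$; its full faithfulness then transfers exactly as in (i). Without this formula (or an equivalent device), your proof of (iv) is incomplete.
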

\begin{proof}
	\begin{enumerate}[(i)]
		\item The geometric morphism $G$ is an inclusion if and only if $G_*$ is fully faithful: since the two functors $\Sh_\ubicat(\cbicat,J)\hookrightarrow \Sh_\vbicat(\cbicat,J)$ and $\Sh_\ubicat(\dbicat,K)\hookrightarrow \Sh_\vbicat(\dbicat,K)$ are also fully faithful, this immediately forces $F_*$ to be fully faithful and thus $F$ is an embedding.
		\item $G$ is a surjection if and only if $G^*$ is faithful: by applying an argument similar to that of the previous item, we immediately conclude that $F^*$ is faithful and hence $F$ is a surjection.
		\item $G$ is essential if and only if $G^*$ has a further left adjoint, and this happens if and only if $G^*$ preserves $\vbicat$-small colimits: but this implies that $F^*$ preserves $\ubicat$-small colimits, and thus $F$ is essential. 
		\item $G_*$ has a right adjoint if and only if $G_*$ preserves arbitrary limits: we then have the same argument as in the previous item. Notice now that the right adjoint $G^!$ of $G_*$ an be defined as follows: for $P\in \Sh_\vbicat(\cbicat,J)$, $G^!(P):\dbicat\op\rightarrow\Set$, $G^!(P)(D):=\Sh_\vbicat(\cbicat,J)(G_*(\ell_J(D)),P)$. If $P\in\Sh_\ubicat(\cbicat,J)$, a quick computation shows that $$G^!(P)(D)\simeq\Sh_\ubicat(\cbicat,J)(F_*(\ell_J(D)),P)=F^!(P)(D),$$ and thus $F^!$ restricts $G^!$.
		Finally, the considerations on the full faithfulness of $F^!$ are the same as those in the previous items.
	\end{enumerate}
\end{proof}
\begin{prop}
	Consider a morphism of $\ubicat$-sites $F:(\cbicat,J)\rightarrow (\dbicat,K)$:\begin{enumerate}[(i)]
		\item $\Sh_\ubicat(F)$ is a surjection if and only if $\Sh_\vbicat(F)$ is a surjection.
		\item $\Sh_\ubicat(F)$ is an embedding if and only if $\Sh_\vbicat(F)$ is an embedding.
	\end{enumerate}
	Consider a comorphism of $\ubicat$-sites $F:(\dbicat,J)\rightarrow (\cbicat,J)$:
	\begin{enumerate}[(i)]
		\setcounter{enumi}{2}
		\item $(C_F)_\ubicat$ is a surjection if and only if $(C_F)_\vbicat$ is a surjection.
		\item $(C_F)_\ubicat$ is an embedding if and only if $(C_F)_\vbicat$ is an embedding.
	\end{enumerate}
\end{prop}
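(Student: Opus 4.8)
The plan is to prove the four equivalences in the last proposition by invoking the two metatheorems together with known site-level characterizations of surjections and embeddings. The key observation is that for geometric morphisms induced by morphisms or comorphisms of sites, the properties of being a surjection or an embedding admit purely site-theoretic characterizations, and these are visibly independent of the ambient universe.

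First I would recall the relevant site-theoretic characterizations. For a morphism of sites $F:(\cbicat,J)\rightarrow(\dbicat,K)$, the induced geometric morphism $\Sh(F)$ is a surjection if and only if $F$ reflects covering sieves (more precisely, if and only if $F^*=(-\circ F\op)$ is faithful, which translates into a condition on $J$-dense monomorphisms that is expressible in terms of $\ubicat$-small families), and it is an embedding if and only if $F$ satisfies the corresponding fullness-type denseness condition. Similarly, for a comorphism of sites $F:(\dbicat,J)\rightarrow(\cbicat,J)$, surjectivity and embeddedness of $C_F$ correspond to the covering-lifting property behaving suitably with respect to dense monomorphisms. The essential point is that each of these conditions can be phrased entirely in terms of arrows and sieves of the $\ubicat$-sites involved, with all the quantifiers ranging over $\ubicat$-small data (since the sites are $\ubicat$-sites, their $J$-dense generating subcategories are $\ubicat$-small).

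The forward directions (from $\ubicat$ to $\vbicat$) are then immediate from the first metatheorem: the property $Q$ characterizing $P$ is site-theoretic, so if $F$ satisfies it as a morphism (resp. comorphism) of $\ubicat$-sites, it satisfies it verbatim as a morphism (resp. comorphism) of $\vbicat$-sites, whence $\Sh_\vbicat(F)$ (resp. $(C_F)_\vbicat$) inherits $P$. For the converse directions, I would use Proposition \ref{prop:universes}, which tells us that $\Sh_\ubicat(F)$ is a restriction of $\Sh_\vbicat(F)$ (and likewise $(C_F)_\ubicat$ restricts $(C_F)_\vbicat$); then I would apply the previous proposition's parts (i) and (ii), which show that surjections and embeddings restrict. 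Thus if $\Sh_\vbicat(F)$ is a surjection (resp. embedding), so is its restriction $\Sh_\ubicat(F)$, and symmetrically for comorphisms. Combining the two directions yields each of the four biconditionals.

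The main obstacle I anticipate is not the logical skeleton, which is a routine assembly of the two metatheorems and the two preceding propositions, but rather making precise that the relevant site-theoretic characterizations of surjectivity and embeddedness genuinely involve only $\ubicat$-small quantification. For a general $\ubicat$-site one must reduce to a $\ubicat$-small $J$-dense subcategory before the denseness conditions become $\ubicat$-small statements, and one must check that the characterizations of surjection and embedding (e.g.\ via $J$-dense monomorphisms, as in Definition \ref{def:comorfismo}(iii)) are stable under this reduction and do not secretly require quantification over all of $\cbicat$ or $\dbicat$. Once this smallness bookkeeping is settled, the converse directions via restriction of properties close the argument without further difficulty.
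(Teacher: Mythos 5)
Your proof follows essentially the same route as the paper's: the forward (extension) directions come from the first metatheorem applied to site-theoretic characterizations of surjectivity and embeddedness (the paper cites cover-reflection and weak denseness for morphisms of sites, and the corresponding characterizations for comorphisms), while the converse directions come from the restriction results of the preceding proposition combined with Proposition \ref{prop:universes}. One small imprecision: the surjection criterion is that the \emph{inverse} image $\Sh(F)^*$ is faithful (equivalently, that $F$ is cover-reflecting), not that $(-\circ F\op)$ is faithful --- the latter is the direct image --- but this does not affect the structure of your argument.
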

\begin{proof}
	All the restrictions were proven in the previous result. Conversely:
	\begin{enumerate}[(i)]
		\item $\Sh_\ubicat(F)$ is a surjection if and only if $F$ is cover-reflecting, by \cite[Theorem 6.3(i)]{denseness}: since said property is site-theoretic, $\Sh_\vbicat(F)$ is also a surjection.
		\item By \cite[Theorem 6.3(iii)]{denseness}, $\Sh_\ubicat(F)$ is an embedding if and only if $F:(\cbicat,J_F)\rightarrow (\dbicat,K)$ is a weakly dense morphism of sites, where $J_F$ is the topology over $\cbicat$ of those sieves that are sent by $F$ to $K$-covering families. In turn, \cite[Proposition 5.5]{denseness} formulates the weak denseness condition purely in site-theoretic terms, and thus $\Sh_\vbicat(F)$ is also an embedding.
		\item \cite[Proposition 7.1]{denseness} provides a purely site-theoretic description for $(C_F)_\ubicat$ to be surjective, and thus it is stable under extension.
		\item \cite[Proposition 7.6]{denseness} characterizes the property of $(C_F)_\ubicat$ to be an inclusion quantifying over arrows of the site $(\dbicat,K)$ and the presheaf topos $[\dbicat\op,\Set_\ubicat]$: therefore, said property is still satisfied when $F$ is considered as a comorphism of $\vbicat$-sites.
	\end{enumerate}
\end{proof}

\printindex
\end{document}